\definecolor{darkmidnightblue}{HTML}{003366}    
\definecolor{midnightblue}{HTML}{0059b3}
\definecolor{chromered}{HTML}{f14233}
\definecolor{darkpowderblue}{rgb}{0.0, 0.2, 0.6}
\definecolor{dukeblue}{rgb}{0.0, 0.0, 0.61}
\newtheorem{theorem}{Theorem}
\newtheorem{lemma}[theorem]{Lemma}
\newtheorem{proposition}{Proposition}
\theoremstyle{remark}
\newtheorem{assumption}{Assumption}
\newtheorem*{notation}{Notation}
\newtheorem{remark}{Remark}
\newtheorem{definition}[theorem]{Definition}
\newtheorem{example}{Example}
\DeclareRobustCommand{\varamalg}{%
  \mathbin{\mathpalette\var@malg\perp}%
}
\newcommand\var@malg[2]{%
  \rlap{$\m@th#1#2$}\mkern6mu{#1#2}%
}
\def\@tvsp{\mathchoice{{}\mkern-4.5mu}{{}\mkern-4.5mu}{{}\mkern-2.5mu}{}}
\def\ltrivert{\left|\@tvsp\left|\@tvsp\left|}
\def\rtrivert{\right|\@tvsp\right|\@tvsp\right|}
\def\@tvsp{\mathchoice{{}\mkern-4.5mu}{{}\mkern-4.5mu}{{}\mkern-2.5mu}{}}
\def\llangle{\langle\@tvsp\langle}
\def\rrangle{\rangle\@tvsp\rangle}
\newcommand{\esp}{\mathbb{E}}
\newcommand{\prob}{\mathbb{P}}
\newcommand{\probn}{\mathbf{P}}
\newcommand{\re}{\mathbb{R}}
\newcommand{\tr}{\mathsf{tr}}
\newcommand{\KL}{\mathsf{KL}}
\DeclareMathOperator*{\op}{op}
\DeclareMathOperator*{\rank}{rank}
\DeclareMathOperator*{\TP}{RSC}
\DeclareMathOperator*{\IP}{IP}
\DeclareMathOperator*{\PP}{PP}
\DeclareMathOperator*{\ATP}{ARSC}
\DeclareMathOperator*{\MP}{MP}
\def\bfDelta{\boldsymbol{\Delta}}
\def\bfGamma{\boldsymbol{\Gamma}}
\def\bfSigma{\boldsymbol{\Sigma}}
\def\bfTheta{\boldsymbol{\Theta}}
\def\bfXi{\boldsymbol{\Xi}}
\def\bfB{\mathbf{B}}
\def\bfI{\mathbf{I}}
\def\bfP{\mathbf{P}}
\def\bfQ{\mathbf{Q}}
\def\bfV{\mathbf{V}}
\def\bfW{\mathbf{W}}
\def\bfX{\mathbf{X}}
\def\bfY{\mathbf{Y}}
\def\bb{\boldsymbol b}
\def\boldf{\boldsymbol f}
\def\bx{\boldsymbol x}
\def\by{\boldsymbol y}
\def\bu{\boldsymbol u}
\def\bv{\boldsymbol v}
\def\bz{\boldsymbol z}
\def\bf0{\mathbf{0}}
\def\bomega{\boldsymbol\omega}
\def\btheta{\boldsymbol\theta}
\def\bxi{\boldsymbol\xi}
\def\calA{\mathcal A}
\def\calB{\mathcal B}
\def\calC{\mathcal C}
\def\calE{\mathcal E}
\def\calF{\mathcal F}
\def\calG{\mathcal G}
\def\calI{\mathcal I}
\def\calJ{\mathcal J}
\def\calL{\mathcal L}
\def\calN{\mathcal N}
\def\calO{\mathcal O}
\def\calP{\mathcal P}
\def\calR{\mathcal R}
\def\calS{\mathcal S}
\def\calQ{\mathcal Q}
\def\calU{\mathcal U}
\def\calV{\mathcal V}
\def\sfC{\mathsf{C}}
\def\sa{\mathsf{a}}
\def\sb{\mathsf{b}}
\def\sc{\mathsf{c}}
\def\sd{\mathsf{d}}
\def\sf{\mathsf{f}}
\def\frM{\mathfrak{M}}
\def\frS{\mathfrak{S}}
\def\frX{\mathfrak{X}}
\def\mbB{\mathbb{B}}
\def\mbC{\mathbb{C}}
\def\mbS{\mathbb{S}}
\def\mbN{\mathbb{N}}
\def\mbX{\mathbb{X}}
\def\mcP{\mathscr{P}}
\def\mdR{\mathds{R}}
\newcommand{\vertiii}[1]{{\left\vert\kern-0.25ex\left\vert\kern-0.25ex\left\vert #1 
    \right\vert\kern-0.25ex\right\vert\kern-0.25ex\right\vert}}
\newcommand{\verti}[1]{{\left\vert\kern-0.4ex #1 
\kern-0.4ex\right\vert}}
\newcommand{\Lpnorm}[1]{\verti{\,#1\,}_{p}}
\newcommand{\dist}{\mathsf{d}}
\DeclareMathOperator*{\argmin}{argmin}
\DeclareMathOperator*{\eff}{\tiny{eff}}
\title{Outlier-robust additive matrix decomposition}
\author{Philip Thompson}
\begin{document}

\maketitle

\begin{abstract}
We study least-squares trace regression when the parameter is the sum of a $r$-low-rank matrix and a $s$-sparse matrix and a fraction $\epsilon$ of the labels is corrupted. For subgaussian distributions and feature-dependent noise, we highlight three needed design properties, each one derived from a different process inequality: a ``product process inequality'', ``Chevet's inequality'' and a ``multiplier process inequality''. These properties handle, simultaneously, additive decomposition, label contamination and design-noise interaction. They imply the near-optimality of a tractable estimator with respect to the effective dimensions  $d_{\eff,r}$ and $d_{\eff,s}$ of the low-rank and sparse components, 
$\epsilon$ and the failure probability $\delta$. The near-optimal rate is
$
\mathsf{r}(n,d_{\eff,r}) + \mathsf{r}(n,d_{\eff,s}) + \sqrt{(1+\log(1/\delta))/n}
+ \epsilon\log(1/\epsilon), 
$
where $\mathsf{r}(n,d_{\eff,r})+\mathsf{r}(n,d_{\eff,s})$ is the optimal rate in average with no contamination. Our estimator is adaptive to $(s,r,\epsilon,\delta)$ and, for fixed absolute constant $c>0$, it attains the mentioned rate with probability $1-\delta$ uniformly over all 
$\delta\ge\exp(-cn)$. Without matrix decomposition, our analysis also entails optimal bounds for a robust estimator adapted to the noise variance. Our estimators are based on ``sorted'' versions of Huber's loss. We present simulations matching the theory. In particular, it reveals the superiority of ``sorted'' Huber's losses over the classical Huber's loss.
\end{abstract}

\section{Introduction}\label{s:intro}

Outlier-robust estimation has been a topic studied for many decades since the seminal work by Huber \cite{1964huber}. 
One of the objectives of the field is to device estimators which are less sensitive to outlier contamination. The formalization of outlyingness and the construction of robust estimators matured in several directions. One common assumption is that the adversary can only change a fraction $\epsilon$ of the original sample. 
For an extensive overview we refer, e.g., to 
\cite{2005hampel:ronchetti:rousseeuw:stahel, 2006maronna:martin:yohai, 2011huber:ronchetti} and references therein. 

Within a very general framework, the minimax optimality of several robust estimation problems has been recently obtained in a series of elegant works by \cite{2016chen:gao:ren,2018chen:gao:ren,2020gao}. The construction, however, is based on Tukey's depth, a hard computational problem in higher dimensions. A recent trend of research, initiated by   \cite{2016diakonikolas:kane:karmalkar:price:stewart,2016lai:rao:vempala}, has focused on the optimality of robust estimators within computationally tractable algorithms. The \emph{oblivious} model assumes the contamination is independent of the original  sample. In the \emph{adversarial} model, the outliers may depend arbitrarily on the sample. For instance, optimal mean estimators for the adversarial model can now be computed in nearly-linear time \cite{2019cheng:diakonikolas:ge, 2019dong:hopkins:li,  2019depersin:lecue, 2020dalalyan:minasyan}.  We refer to  \cite{2019diakonikolas:kane} for an extensive survey.  

In the realm of robust linear regression, two broad lines of investigations exist: (1) one in which only the response (label) is contaminated and (2) the more general setting in which the covariates (features) are also corrupted  \cite{2019diakonikolas:kamath:kane:li:steinhardt:stewart,   2019diakonikolas:kong:stewart}. Model (1), albeit less general, has been considered in many applications and studied in numerous past and recent works \cite{2008candes:randall, 2013li, 2019suggala:bhatia:ravikumar:jain, 2020gao:lafferty, 2020pesmse:flammarion,2023diakonikolas:karmalkar:park:tzamos}. It has also some connection with the problems of robust matrix completion \cite{2011candes:li:ma:wright, 2011chen:xu:caramanis:sanghavi, 2013chen:jalali:sanghavi:caramanis, 2013li,  2017klopp:lounici:tsybakov} and matrix decomposition \cite{2011chandrasekaran:sanghavi:parrilo:willsky, 2011candes:li:ma:wright, 2011xu:caramanis:sanghavi, 2011hsu:kakade:zhang,  2012agarwal:negahban:wainwright}.          
Both models (1)-(2) have been considered assuming adversarial or oblivious contamination. For instance, an interesting property of model (1) with oblivious contamination is the existence of consistent estimators, a property not shared by the adversary model. See for instance \cite{2014tsakonas:jalden:sidiropoulos:ottersten, 2017consistent:robust:regression, 2019suggala:bhatia:ravikumar:jain, 2020gao:lafferty, 2020pesmse:flammarion}.

In this work, we consider a new model: robust trace regression with additive matrix decomposition (RTRMD). It corresponds to least-squares trace regression when the parameter is the sum of a low-rank matrix and a sparse matrix and, simultaneously, the labels are adversarially contaminated. We assume there are at most $o$ outliers and the sample size $n$ is much smaller than the extrinsic dimension $p$. We focus on subgaussian distributions and pay attention to the following points: 
\begin{itemize}
\item[\rm{(a)}] \emph{Adversarial label corruption in high dimensions.} The parameter is a $d_1\times d_2$ matrix and $n\ll p:=d_1d_2$. RTRMD includes, as particular cases, $s$-sparse linear regression \cite{1996tibshirani} and noisy $r$-low-rank matrix sensing \cite{2011negahban:wainwright}. One practical appeal of the established theory of high-dimensional estimation is the existence of efficient estimators adapted to 
$(s,r)$ --- without resorting to Lepski's method. Likewise, we assume no knowledge of $(s,r,o)$.
\item[\rm{(b)}] \emph{Noise heterogeneity.} The majority of the literature on (robust) least-squares regression, within the framework of $M$-estimation with decomposable regularizers \cite{2012negahban:ravikumar:wainwright:yu},  assumes feature-independent noise. We avoid this assumption and identify design properties and concentration inequalities needed for this case. See Section \ref{s:MP:in:M-estimation} for a discussion. 
\item[\rm{(c)}] \emph{Subgaussian rates and uniform confidence level.}  Minimax rates are defined on average or as a function of the failure probability $\delta$. For instance, the first seminal bounds in sparse linear regression were of the form $\sqrt{s\log(p/s\delta)/n}$ --- optimal in average but suboptimal in $\delta$. The optimal rate is the ``subgaussian'' rate $\sqrt{s\log (p/s)/n}+\sqrt{\log(1/\delta)/n}$ --- for which $\log(1/\delta)$ does not multiply the ``effective dimension''.\footnote{The decoupling of $\log(1/\delta)$ with the dimension is of major concern in the recent literature of heavy-tailed estimation \cite{2019lugosi:mendelson-survey}. There are significant additional challenges. For instance, the optimal estimator depends on $\delta$ \cite{2016devroye:lerasle:lugosi:oliveira}.} A second point is to what extent the estimator \emph{depends on $\delta$}. Let $c>0$ be an absolute constant.  
Without knowing $\delta$, is there  an estimator that ``automatically'' attains  the optimal rate \emph{across all} $\delta\ge\exp(-cn)$ with probability at least $1-\delta$? For sparse linear regression,  \cite{2018bellec:lecue:tsybakov} was the first  to answer these points affirmatively when the \emph{noise is independent of the features}. We ask the same questions for the general model RTRMD in case the noise is feature-dependent. See Section \ref{s:MP:in:M-estimation} for a discussion. 
\item[\rm{(d)}] \emph{Matrix decomposition.} It is considered in \cite{2009wright:ganesh:rao:peng:ma, 2011chandrasekaran:sanghavi:parrilo:willsky, 2011candes:li:ma:wright, 2011xu:caramanis:sanghavi, 2011hsu:kakade:zhang, 2011mccoy:tropp} assuming the ``incoherence'' condition and in \cite{2012agarwal:negahban:wainwright} assuming the milder ``low-spikeness'' condition. See also Chapter 7 of the book \cite{2015hastie:tibshirani:wainwright}. These works do not consider label contamination. A general framework is proposed in \cite{2012agarwal:negahban:wainwright} assuming a specific design property (see their Definition 2). In the applications considered in \cite{2012agarwal:negahban:wainwright}, this property is straightforwardly satisfied: either the design is the identity or the fixed design is invertible. For instance,  multi-task learning has an invertible design in high dimensions (one has $n\ge d_1$ albeit $n\ll d_1d_2$). In this regard, trace regression with additive matrix decomposition is fundamentally a different model: with high probability, the random design is singular in the regime $n\ll d_1d_2$. To our knowledge, there is currently no optimal statistical theory for RTRMD --- even without label contamination. Under assumptions (a)-(c), this work identifies three design properties to prove optimality for this problem: 
$\PP$, $\IP$ and $\MP$. Respectively, each one is derived from a different process inequality: a ``product process inequality'', ``Chevet's inequality'' and a ``multiplier process inequality''. See Sections  \ref{ss:related:work:trace:regression:decomposition} and \ref{s:subgaussian:properties}. 
\end{itemize}

The rest of the paper is organized as follows. We start presenting some useful notation. Section \ref{s:framework} states our general framework, Section \ref{s:estimators} presents our estimators and Section \ref{s:motivating:examples} exemplifies with concrete models. Section \ref{s:main:result:1} state new results for these models. It also presents reference to later sections concerning technical contributions. In Section \ref{s:related:work}, we review related literature and compare with our results. In Section \ref{s:multiplier:process:main}, we state two concentration inequalities for the multiplier and product processes. In Section \ref{s:subgaussian:properties}, we state our needed design properties and apply these inequalities to prove them. Section \ref{s:MP:in:M-estimation} presents a preliminary discussion. Sections \ref{s:proof:main:paper}-\ref{s:proof:main:paper:q=1} presents the proofs of our main results, namely, Theorems \ref{thm:improved:rate} and \ref{thm:improved:rate:q=1'}. Finally, Sections \ref{s:simulation} and \ref{s:discussion} finish with simulations and a final discussion. Additional proofs are presented in the Supplemental Material.

\begin{notation}
We set $\mdR^p:=\re^{d_1\times d_2}$. For a vector $\bv$, $\Vert\bv\Vert_k$ denotes its $\ell_k$-norm ($1\le k\le\infty$) and 
$\Vert\bv\Vert_0$ is the number of its nonzero entries. We use similar notation for matrices (considered as vectors). We denote the  Frobenius norm by $\Vert\cdot\Vert_F$, the nuclear norm by $\Vert\cdot\Vert_N$ and the operator norm by $\Vert\cdot\Vert_{\op}$. Given norm $\calR$ and
$\bfV\in\mdR^{p}\setminus\{0\}$,
$
\Psi_{\calR}(\bfV):=\nicefrac{\calR(\bfV)}{\Vert\bfV\Vert_F}.
$  
The inner product in $\mdR^p$ will be denoted by $\llangle\bfV,\bfW\rrangle=\tr(\bfV^\top\bfW)$. For vectors, we use the notation 
$\langle\cdot,\cdot\rangle$. If $a\le Cb$ for some absolute constant $C>0$, we write $a\lesssim b$ or $a\le\calO(b)$. We write $a\asymp b$ if $a\lesssim b$ and $b\lesssim a$. Given $\ell\in\mathbb{N}$, 
$[\ell]:=\{1,\ldots, \ell\}$. The
$\psi_2$-Orlicz norm will be denoted by $|\cdot|_{\psi_2}$. Throughout the paper, given $\ell\in\mathbb{N}$, $A^{(\ell)}:=A/\sqrt{\ell}$ whenever $A$ is a number, vector or function. 

We now recall the definition of the Slope norm \cite{2015bogdan:berg:sabatti:su:candes}. Given nonincreasing positive sequence $\bomega:=\{\omega_i\}_{i\in[n]}$, the Slope norm at a point 
$\bu\in\re^n$ is defined by
$
\Vert\bu\Vert_\sharp:=\sum_{i\in[n]}\omega_i\bu_i^\sharp,
$
where $\bu_1^\sharp\ge\ldots\ge\bu_n^\sharp$ denotes the nonincreasing rearrangement of the absolute coordinates of $\bu$. Throughout this paper, unless otherwise stated, $\bomega\in\re^n$ will be the sequence with coordinates $\omega_i=\sqrt{\log(An/i)}$ for some $A\ge2$. Recall that $\Omega:=\{\sum_{i=1}^o\omega_i^2\}^{1/2}\asymp o\log(n/o)$  \cite{2018bellec:lecue:tsybakov}. With some abuse of notation, 
$\Vert\cdot\Vert_\sharp$ will also denote the Slope norm in $\re^p$ with sequence $\bar w_j=\sqrt{\log(\bar A p/j)}$ for some $\bar A\ge2$.

$\bfX\in\mdR^p$ will denote a random matrix with distribution $\Pi$ and covariance operator $\frS$ --- seen as a vector, $\bfSigma$ is its covariance matrix. Given $[\bfV,\bfW,\bu]\in(\mdR^p)^2\times\re^n$, we define the bilinear form
$
\llangle\bfV,\bfW\rrangle_\Pi:=\esp[\llangle\bfX,\bfV\rrangle\llangle\bfX,\bfW\rrangle]=\llangle\frS(\bfV),\bfW\rrangle
$
and the pseudo-norms
$
\Vert\bfV\Vert_\Pi:=\llangle\bfV,\bfV\rrangle_\Pi^{1/2}
$, 
$\Vert[\bfV,\bfW,\bu]\Vert_\Pi:=\{\Vert\bfV\Vert_\Pi^2+\Vert\bfW\Vert_\Pi^2+\Vert\bu\Vert_2^2\}^{1/2}$, 
$\Vert[\bfV,\bfW]\Vert_\Pi:=\Vert[\bfV,\bfW,\bf0]\Vert_\Pi$
and 
$\Vert[\bfV,\bu]\Vert_\Pi:=\Vert[\bfV,\bf0,\bu]\Vert_\Pi$.
Given $\mbC\subset\mdR^p$, let
$
\mu(\mbC):=\sup_{\bfV\in\mbC}\nicefrac{\Vert\bfV\Vert_F}{\Vert\bfV\Vert_\Pi}.
$
Next, we define the unit balls
$
\mbB_\Pi:=\{\bfV\in\mdR^p:\Vert\bfV\Vert_\Pi\le1\},
$
$
\mbB_F:=\{\bfV\in\mdR^p:\Vert\bfV\Vert_F\le1\},
$
$
\mbB_\ell^k:=\{\bv\in\re^k:\Vert\bv\Vert_\ell\le1\}
$ 
and, for given norm $\calR$ in $\re^k$,
$
\mbB_\calR:=\{\bv:\calR(\bv)\le1\}
$. 
All the corresponding unit spheres will take the symbol $\mbS$. 
Finally, the \emph{Gaussian width} of a compact set 
$\calB\subset\re^{k\times \ell}$ is the quantity
$
\mathscr G(\calB):=\esp[\sup_{\bfV\in\calB}\llangle\bfV,\bfXi\rrangle],
$
where $\bfXi\in\re^{k\times \ell}$ is random matrix with iid $\calN(0,1)$ entries. 
\end{notation}

\section{General framework}\label{s:framework}
\begin{assumption}[Adversarial label contamination]\label{assump:label:contamination}
Let $\{(y_i^\circ,\bfX_i^\circ)\}_{i\in[n]}$ be an iid copy of a feature-label pair $(\bfX,y)\in\mdR^p\times\re$. We assume available a sample
$\{(y_i,\bfX_i)\}_{i\in[n]}$ such that 
$\bfX_i=\bfX_i^\circ$ for all $i\in[n]$ and at most $o$ arbitrary outliers modify the label sample $\{y_i^\circ\}_{i\in[n]}$.
\end{assumption}

Letting $\by=(y_i)_{i\in[n]}$, our underlying model is
\begin{align}
\by=\boldf+\sqrt{n}\btheta^*+\bxi, \label{equation:structural:equation}
\end{align} 
where $\boldf:=(f_i)_{i\in[n]}$ and 
$\bxi:=(\xi_i)_{i\in[n]}$ are, respectively, iid copies of unknown random variables $f,\xi\in\re$ and $\btheta^*\in\re^{n}$ is an arbitrary unknown vector with at most $o$ nonzero coordinates.\footnote{Nothing more is assumed on $\btheta^*$. For instance, it can depend arbitrarily on the data set. We are not concerned with $\btheta^*$ itself and rather see it as a nuisance parameter.}
We assume $\xi$ and $\bfX$ are centered and 
$\esp[\xi\bfX]=0$. The number 
$\epsilon:=o/n$ is referred as the ``contamination fraction''. 

Define the \emph{design operator} 
$\frX:\mdR^p\rightarrow\re^n$ with components 
$\frX_i(\bfV):=\llangle\bfX_i,\bfV\rrangle$. In its general form, our goal is to estimate 
$\boldf$ assuming that, for some $[\bfB,\bfGamma]\in\mdR^p$, the average approximation error 
$
(\nicefrac{1}{n})\Vert\frX(\bfB+\bfGamma)-\boldf\Vert_2^2
$
is ``small''. To be precise, assuming $\epsilon\le c$ for some constant $c\in(0,1/2)$, we would like to design an estimator $[\hat\bfB,\hat\bfGamma]$ satisfying, with probability at least $1-\delta$, ``oracle inequalities'' of the form 
\begin{align}
\Vert\frX^{(n)}(\hat\bfB+\hat\bfGamma)-\boldf^{(n)}\Vert_2^2
\le \inf_{[\bfB,\bfGamma]\in\calF}\left\{
C\Vert\frX^{(n)}(\bfB+\bfGamma)-\boldf^{(n)}\Vert_2^2
+ r_{\bfB,\bfGamma}(n,d_{\eff},\epsilon,\delta)
\right\}.\label{oracle:inequality:eq}
\end{align}
In above, $C\approx1$ is an universal constant, $\calF$ is a class of parameters associated to well known parsimonious properties --- e.g, sparsity or low-rankness --- and $r_{\bfB,\bfGamma}(n,d_{\eff},\epsilon,\delta)$ is an appropriate rate that depends on the ``effective dimension'' $d_{\eff}$ of $\calF$. In Section \ref{s:main:result:1}, we specify different classes of interest, each associated to the examples of Section \ref{s:motivating:examples}.

When the model is ``well-specified'', there is $[\bfB^*,\bfGamma^*]\in\calF$ such that 
\begin{align}
[\bfB^*,\bfGamma^*]\in\argmin_{[\bfB,\bfGamma]\in(\mdR^p)^2}\esp\left[y-\llangle\bfX,\bfB+\bfGamma\rrangle\right]^2. \label{equation:least-squares:regression}
\end{align}
This corresponds to \eqref{equation:structural:equation} with $f:=\llangle\bfX,\bfB^*+\bfGamma^*\rrangle$. In this case, 
$\Vert\frX^{(n)}(\bfB^*+\bfGamma^*)-\boldf^{(n)}\Vert_2=0$, Assumption \ref{assump:label:contamination} and \eqref{equation:structural:equation} are equivalent and the rate $r_{\bfB^*,\bfGamma^*}(n,d_{\eff},\epsilon,\delta)$ is the minimax optimal rate for the well-specified model. More broadly, our goal is to obtain, under the assumptions discussed in the introduction, inequalities of the form \eqref{oracle:inequality:eq}, ensuring optimal statistical guarantees for least-squares trace regression in presence of either additive matrix decomposition, label contamination or inexact parsimony. 

 \section{Our estimators}\label{s:estimators}

Our estimators are based on the following class of losses.
\begin{tcolorbox}
\begin{definition}[Sorted Huber-type losses]\label{def:sorted:Huber:loss}
Define the functions 
$
\rho_1(\bu):=\Vert\bu\Vert_2
$
and
$
\rho_2(\bu):=\frac{1}{2}\Vert\bu\Vert_2^2
$
over $\re^n$. For $q\in\{1,2\}$ and $\tau>0$, let
$\rho_{\tau\bomega,q}:\re^n\rightarrow\re_+$ be the infimal convolution of $\rho_q$ and $\tau\Vert\cdot\Vert_\sharp$, i.e., 
$$
\rho_{\tau\bomega,q}(\bu):=\min_{\bz\in\re^n}\rho_q(\bu-\bz)+\tau\Vert\bz\Vert_{\sharp}.
$$
Finally, define the loss
$
\calL_{\tau\bomega,q}(\bfB):=\rho_{\tau\bomega,q}\left(\nicefrac{\by-\frX(\bfB)}{\sqrt{n}}\right).
$
\end{definition}
\end{tcolorbox}
These losses are convex. For instance, when $q=2$, $\rho_{\tau\bomega,2}$ is the optimal value of the problem defining the proximal mapping of $\tau\Vert\cdot\Vert_\sharp$. When 
$\omega_1=\ldots=\omega_n=1$, separability implies the explicit expression:
\begin{align}
\calL_{\tau\bomega,2}(\bfB)=\tau^2\sum_{i=1}^n\Phi\left(\frac{y_i-\frX_i(\bfB)}{\tau\sqrt{n}}\right),
\end{align}
where $\Phi:\re\rightarrow\re$ is the Huber's function $\Phi(t)=\min\{(\nicefrac{1}{2})t^2,|t|-\nicefrac{1}{2}\}$. Thus, Huber regression corresponds to $M$-estimation with the loss $\calL_{\tau\bomega,2}$ with \emph{constant} weighting sequence $\bomega$. In this work we advocate the use of the loss  
$\calL_{\tau\bomega,2}$ with \emph{varying weights}. Throughout this paper, we fix the sequence 
$\bomega:=(\omega_i)_{i\in[n]}$ to be 
$
\omega_i:=\sqrt{\log(A n/i)}
$
for some $A\ge2$. It corresponds to a ``Sorted'' generalization of Huber's loss.

In high-dimensions, we additionally use regularization norms $(\calR,\calS)$. We consider the estimator
\begin{align}\label{equation:sorted:Huber:general}
\begin{array}{ccl}
[\hat\bfB,\hat\bfGamma]&\in
&\argmin_{[\bfB,\bfGamma]\in(\mdR^p)^2}\calL_{\tau\bomega,2}(\bfB+\bfGamma)+\lambda\calR(\bfB)+\chi\calS(\bfGamma)\\
&&\mbox{s.t.}\quad	 \Vert\bfB\Vert_\infty\le\sa,
\end{array}
\end{align}
where $\lambda,\chi,\tau>0$ and $\sa\in(0,\infty]$ are tuning parameters. If we are not concerned with matrix decomposition, we instead consider, for $q\in\{1,2\}$, estimators of the form 
\begin{align}\label{equation:sorted:Huber}
\begin{array}{ccl}
\hat\bfB&\in
&\argmin_{\bfB\in\mdR^p}\calL_{\tau\bomega,q}(\bfB)+\lambda\calR(\bfB).
\end{array}
\end{align} 

By adding an extra variable $\hat\btheta$ --- aiming in estimating the nuisance parameter $\btheta^*$ --- the solution  
of \eqref{equation:sorted:Huber:general} can be equivalently found solving
\begin{align}
\begin{array}{ccl}
&&\min_{[\bfB,\bfGamma,\btheta]\in(\mdR^p)^2\times\re^n}\frac{1}{2n}\sum_{i=1}^n\left(y_i-\llangle\bfX_i,\bfB+\bfGamma\rrangle+\sqrt{n}\btheta_i\right)^2+\lambda\calR(\bfB)+\chi\calS(\bfGamma)
+\tau\Vert\btheta\Vert_\sharp\\
&&\mbox{s.t.}\quad	 \Vert\bfB\Vert_\infty\le \sa.
\label{equation:aug:slope:rob:estimator:general}
\end{array}
\end{align}
Estimator \eqref{equation:aug:slope:rob:estimator:general} is a concrete example of  regularization with three norms.

Similarly, the solution  of \eqref{equation:sorted:Huber} with $q=2$ can be found solving 
\begin{align}
\begin{array}{ccl}
&&\min_{[\bfB,\btheta]\in\mdR^p\times\re^n}\frac{1}{2n}\sum_{i=1}^n\left(y_i-\llangle\bfX_i,\bfB\rrangle+\sqrt{n}\btheta_i\right)^2
+\lambda\calR(\bfB)+\tau\Vert\btheta\Vert_\sharp.
\label{equation:aug:slope:rob:estimator:q=2}
\end{array}
\end{align}
The practical appeal of these problems is that they can be computed by alternated convex optimization using standard Lasso, Slope or nuclear norm solvers  \cite{2015bogdan:berg:sabatti:su:candes}.

Finally, the solution of \eqref{equation:sorted:Huber} with $q=1$ can be found solving
\begin{align}
\begin{array}{ccl}
&&\min_{[\bfB,\btheta]\in\mdR^p\times\re^n}\left\{\frac{1}{n}\sum_{i=1}^n\left(y_i-\llangle\bfX_i,\bfB\rrangle+\sqrt{n}\btheta_i\right)^2\right\}^{\frac{1}{2}}+\lambda\calR(\bfB)+\tau\Vert\btheta\Vert_\sharp.
\label{equation:aug:slope:rob:estimator:q=1}
\end{array}
\end{align}
When $\btheta\equiv\bf0$ and $\calR=\Vert\cdot\Vert_1$, the above estimator corresponds to the square-root lasso estimator \cite{2011belloni:chernozhukov:wang}. When not concerned with matrix decomposition, we will show that the robust estimator  \eqref{equation:aug:slope:rob:estimator:q=1} achieves the same guarantees as \eqref{equation:aug:slope:rob:estimator:q=2} --- under the same set of assumptions (a)-(c) in Section \ref{s:intro} --- while being adaptive to the variance of the noise. Of course, the computational cost of \eqref{equation:aug:slope:rob:estimator:q=1} is higher. 

\begin{remark}[Huber loss versus ``Sorted'' Huber loss]
Let us illustrate considering the well-specified model with 
$\bfGamma^*\equiv\bf0$ and $q=2$. It is well known that, in linear regression, Huber's estimator can be cast as a least-squares estimator in the augmented variable 
$[\bb,\btheta]$ with the penalization $\Vert\btheta\Vert_1$ \cite{2011she:owen, 2016donoho:montanari}.
In this work, we penalize $\Vert\btheta\Vert_\sharp$ --- i.e., we fit with the loss in Definition \ref{def:sorted:Huber:loss}. Figure \ref{fig.robust.linear.reg:HS} plots the estimation error as a function of $\epsilon$ in sparse linear regression using synthetic contaminated data. We refer to Section \ref{s:simulation} for details. We can see that the ``Sorted'' Huber loss significantly outperforms Huber regression. In this work, we present near-optimal bounds trying to explain this significant empirical observation. Roughly, the intuition is that the former loss assigns more weight to outliers with larger magnitude. Huber regression processes all label data points indistinguishably.
\begin{figure}
\hspace*{\fill}%
\includegraphics[scale=0.27]{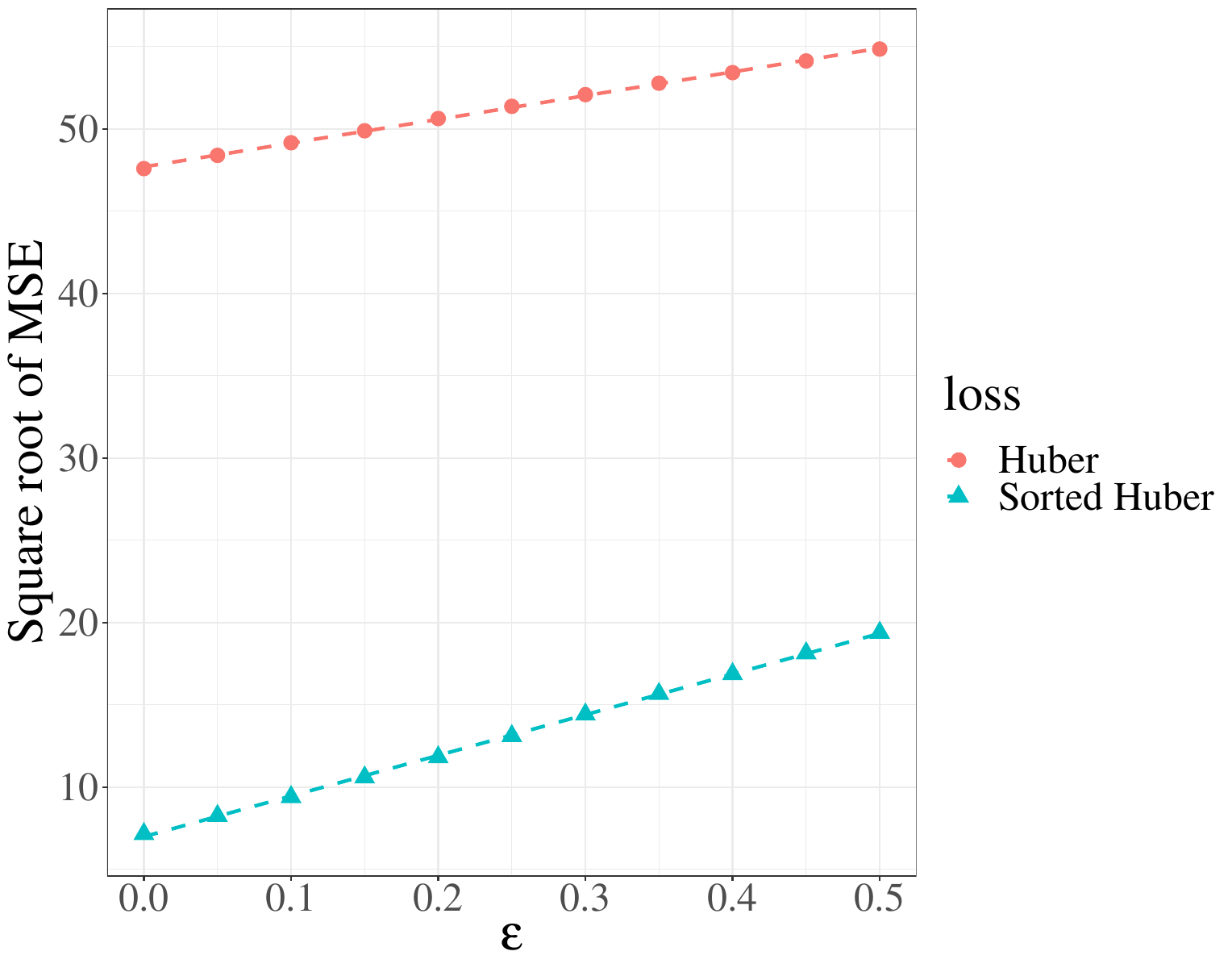}
\hspace*{\fill}%
\caption{Huber vs ``Sorted'' Huber losses in sparse regression: 
$\sqrt{\texttt{MSE}}$ versus $\epsilon$.}\label{fig.robust.linear.reg:HS}
\end{figure}
\end{remark}

\section{Motivating examples}\label{s:motivating:examples}

Matrix decomposition is motivated by several applications. We refer to \cite{2012agarwal:negahban:wainwright} and Chapter 7 of the book \cite{2015hastie:tibshirani:wainwright} for a precise discussion and further references. They consider the general framework: to estimate the pair 
$[\bfB^*,\bfGamma^*]\in(\mdR^p)^2$ given a noisy linear observation of its sum. Precisely, their model is
$
\bfY=\frX(\bfB^*+\bfGamma^*)+\bfXi
$
where the design $\frX:\mdR^p\rightarrow\re^{n\times m}$ takes values on matrices with $n$ iid rows and $\bfXi\in\re^{n\times m}$ is a noise matrix  independent of $\frX$ with $n$ centered iid rows. It is further required that $\bfB^*$ is low-rank, 
$\bfGamma^*$ is a sparse matrix and that a ``low-spikeness'' conditions holds. Three subproblems are analyzed in the framework of \cite{2012agarwal:negahban:wainwright}: factor analysis, robust covariance estimation and multi-task learning. The first two problems have identity designs ($\frX=I$). In multi-task learning, the design is invertible with high probability.
Trace regression with additive matrix decomposition corresponds to their model with $m=1$ and design 
$\frX_i(\bfB):=\llangle\bfX_i,\bfB\rrangle$. Alternatively, it corresponds to \eqref{equation:structural:equation} assuming the well-specified case, 
$\rank(\bfB^*)\le r$ 
and $\Vert\bfGamma^*\Vert_0\le s$ and 
$\btheta^*\equiv\mathbf{0}$. As discussed in item (d) of the introduction, the design property in \cite{2012agarwal:negahban:wainwright} is not guaranteed to hold for this problem. More broadly, we consider additive matrix decomposition in trace regression when labels are contaminated. 

To finish, we require the so called ``low-spikeness'' assumption: there exists $\sa^*>0$ such that for any potential parameter 
$\bfB$, 
$
\Vert\bfB\Vert_\infty\le\nicefrac{\sa^*}{\sqrt{n}}. 
$ 
For this problem, we consider estimator \eqref{equation:sorted:Huber:general} with tuning 
$\sa:=\sa^*/\sqrt{n}$, $\calR:=\Vert\cdot\Vert_N$ and $\calS:=\Vert\cdot\Vert_1$. Two well known particular submodels of RTRMD are:
\begin{itemize}
\item[a)] \emph{Sparse regression}  \cite{2009geer:buhlmann, BRT} is a particular case of model \eqref{equation:least-squares:regression} with $d_1=p$, $d_2=1$, 
$\bx:=\bfX\in\re^p$, $\bb^*:=\bfB^*\in\re^p$, 
$\bfGamma^*\equiv\bf0$ and $\Vert\bb^*\Vert_0\le s\ll p$. In case of label contamination ($0\neq\Vert\btheta^*\Vert_0\le o$), we consider estimators \eqref{equation:sorted:Huber} with $q\in\{1,2\}$ and 
$\calR:=\Vert\cdot\Vert_1$ or $\calR=\Vert\cdot\Vert_\sharp$.
\item[b)] \emph{Trace regression} (or matrix sensing) \cite{2010recht:fazel:parrilo,2011recht:xu:hassibi,2011rohde:tsybakov, 2011negahban:wainwright} correspond to model \eqref{equation:least-squares:regression} with 
$\bfB^*\in\mdR^{p}$ having rank $r\ll d_1\wedge d_2$ and 
$\bfGamma^*\equiv\bf0$. With contaminated labels, we consider estimators \eqref{equation:sorted:Huber} with $q\in\{1,2\}$ and 
$\calR:=\Vert\cdot\Vert_N$.
\end{itemize}

\section{Results for RTRMD and robust sparse/low-rank regression}\label{s:main:result:1}

We state in this section optimal guarantees for the estimators \eqref{equation:sorted:Huber:general}-\eqref{equation:sorted:Huber} of models of Section \ref{s:motivating:examples}. These are in fact particular consequences of our main results: Theorems \ref{thm:mult:process}-\ref{thm:product:process}, Proposition \ref{proposition:properties:subgaussian:designs}, Theorem \ref{thm:improved:rate} and Theorem \ref{thm:improved:rate:q=1'}. Being somewhat technical, we state them, respectively, in the later sections: Sections \ref{s:multiplier:process:main}, \ref{s:subgaussian:properties}, \ref{s:proof:main:paper} and \ref{s:proof:main:paper:q=1}. As a prelude to the proofs in Sections \ref{s:proof:main:paper}-\ref{s:proof:main:paper:q=1}, we include Section \ref{s:MP:in:M-estimation}. It discusses the role of the multiplier process inequality  (Theorem \ref{thm:mult:process} in Section \ref{s:multiplier:process:main}) within the framework of $M$-estimation with decomposable regularizers. Theorem \ref{thm:improved:rate}, the main result of the paper,  is a general deterministic result for estimator \eqref{equation:sorted:Huber:general} and RTRMD. It assumes specific design properties presented in Definition \ref{def:design:property} in Section \ref{s:subgaussian:properties}. Proposition \ref{proposition:properties:subgaussian:designs} in this section ensures these properties are satisfied with high probability. Its proof requires concentration inequalities stated in Theorems \ref{thm:mult:process}-\ref{thm:product:process} of Section \ref{s:multiplier:process:main}. Theorem \ref{thm:improved:rate:q=1'} in Section \ref{s:proof:main:paper:q=1} is a general deterministic result for estimator \eqref{equation:sorted:Huber} with $q=1$ and the problem of robust trace regression (with no matrix decomposition). This theorem ensures this estimator is near-optimal adaptively to the noise variance. The proof of Theorem \ref{thm:improved:rate:q=1'} is largely inspired by the proof of Theorem \ref{thm:improved:rate}. These points are explained in detail later. 

Next, we work with distributions satisfying the following assumption.
\begin{assumption}\label{assump:distribution:subgaussian}
$\bfX$ is centered and $L$-subgaussian for some $L\ge1$, that is, 
$
|\llangle\bfX,\bfV\rrangle|_{\psi_2}\le L\Vert\bfV\Vert_\Pi
$
for all $\bfV\in\mdR^{p}$. Additionally, $1\le|\xi|_{\psi_2}=:\sigma<\infty$.
\end{assumption}

Define
$$
\rho_1(\bfSigma):=\max_{j\in[p]}\bfSigma_{jj}^{1/2},\quad\mbox{ and }\quad
\rho_N(\bfSigma):=\sup_{\Vert[\bz,\bv]\Vert_2=1}\{\esp(\bz^\top\bfX\bv)^2\}^{1/2}.
$$
Throughout the paper, we let $\bfDelta^{\hat\btheta}:=\hat\btheta-\btheta^*$, 
$
\bfDelta_{\bfB}:= \widehat\bfB - \bfB
$
and 
$
\bfDelta_{\bfGamma}:= \widehat\bfGamma - \bfGamma,
$
where $[\hat\bfB,\hat\bfGamma,\hat\btheta]$ denote estimators and $\bfB,\bfGamma\in\mdR^p$ are given points. 

Next, we give guarantees for the estimator \eqref{equation:sorted:Huber:general} and RTRMD. Given $r,s\in\mathbb{N}$ and $\sa^*,\sc>0$, we define the class
\begin{align}
\calF(r,s,\sa^*,\sc)&:=\left\{[\bfB,\bfGamma]\in(\mdR^p)^2:
\begin{array}{c}
\rank(\bfB)\le r,\Vert\bfGamma\Vert_0\le s,\\
(\nicefrac{1}{n})\Vert\frX(\bfB+\bfGamma)-\boldf\Vert_2^2\le \sc\sigma^2,\\
\Vert\bfB\Vert_\infty\le\frac{\sa^*}{\sqrt{n}}
\end{array}
\right\}.
\end{align}
Let $\omega(\epsilon) := \epsilon\log(1/\epsilon)$. Given $C_1>0$, define
\begin{align}
r_{n,r,s,\delta}(\sa^*,C_1) := L\frac{1+\sqrt{\log(1/\delta)}}{\sqrt{n}}
+ L^2\left[\sqrt{\frac{r(d_1+d_2)}{n}} + \sqrt{\frac{s\log p}{n}}\right]
+\left(1+\frac{1}{C_1\sigma L}\right)\sa^*\sqrt{\frac{s}{n}}. 
\end{align}

\begin{theorem}[Robust trace regression with additive matrix decomposition]
\label{thm:tr:reg:matrix:decomp}
Grant Assumptions \ref{assump:label:contamination}-\ref{assump:distribution:subgaussian}, model \eqref{equation:structural:equation} and assume $\bfX$ is isotropic. Then there are absolute constants $\sc\in(0,1)$, $c_1\in(0,1/2)$ and $C_0,C_1\ge1$ such that the following holds. Suppose $C_1^2L^4\epsilon\log(1/\epsilon)\le c_1$. Given 
$\sa^*>0$, let $[\hat\bfB,\hat\bfGamma]$ be the solution of \eqref{equation:sorted:Huber:general} with $\calR:=\Vert\cdot\Vert_N$, 
$\calS:=\Vert\cdot\Vert_1$ and with tuning parameters $\sa:=\sa^*/\sqrt{n}$, 
$
\lambda\asymp \sigma L^2\sqrt{\nicefrac{d_1+d_2}{n}},
$
$
\chi\asymp \sigma L^2\sqrt{\nicefrac{\log p}{n}}+\nicefrac{\sa^*}{\sqrt{n}}
$
and 
$\tau\asymp C_1L^2\sigma/\sqrt{n}$. Assume that 
\begin{align}
n&\gtrsim \left[L^4\cdot r(d_1+d_2)\right]\bigvee
\left[\left(L^4\log p
+(\sa^*)^2\right)s\right].
\end{align}

Then, for any $\delta\in(0,1)$ such that
$
\delta\ge \exp\left(-\frac{n}{C_0L^4}\right),
$ 
on an event of probability $\ge1-\delta$, for all 
$[\bfB,\bfGamma]\in\calF(r,s,\sa^*,\sc)$,
\begin{align}
(\nicefrac{\lambda}{2})\Vert\bfDelta_{\bfB}\Vert_N
+ (\nicefrac{\chi}{2})\Vert\bfDelta_{\bfGamma}\Vert_1
+ \Vert\frX^{(n)}(\hat\bfB+\hat\bfGamma)-\boldf^{(n)}\Vert_2^2 &\le \left(1+\frac{\calO(1)}{C_1^2L^2}\right)\Vert\frX^{(n)}(\bfB+\bfGamma)-\boldf^{(n)}\Vert_2^2\\
&+ \calO(\sigma^2)r_{n,r,s,\delta}^2(\sa^*,C_1)+\calO(C_1^2\sigma^2L^6)\omega^2(\epsilon),\label{thm:tr:reg:matrix:decomp:eq1}
\end{align}
and also
\begin{align}
\Vert[\bfDelta_{\bfB},\bfDelta_{\bfGamma}]\Vert_{\Pi}&\le\left(\calO(1)+\frac{\calO(1)}{C_1L}\right)\Vert\frX^{(n)}(\bfB+\bfGamma)-\boldf^{(n)}\Vert_2
+ \calO(\sigma) r_{n,r,s,\delta}(\sa^*,C_1) + \calO(C_1\sigma L^3)\omega(\epsilon).\label{thm:tr:reg:matrix:decomp:eq2}
\end{align}
\end{theorem}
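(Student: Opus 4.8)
The plan is to derive Theorem \ref{thm:tr:reg:matrix:decomp} as a corollary of the general deterministic result Theorem \ref{thm:improved:rate} (for estimator \eqref{equation:sorted:Huber:general}) combined with the high-probability verification of the design properties $\PP$, $\IP$, $\MP$ from Proposition \ref{proposition:properties:subgaussian:designs}. First I would fix the tuning parameters as prescribed ($\sa = \sa^*/\sqrt{n}$, $\calR = \Vert\cdot\Vert_N$, $\calS = \Vert\cdot\Vert_1$, and $\lambda, \chi, \tau$ of the stated orders) and unpack what Definition \ref{def:design:property} requires: the product process inequality $\PP$ controlling $\sup \llangle \frX(\bfDelta), \bxi\rrangle$-type terms over the relevant cone, Chevet-type control $\IP$ of the interaction between the design and the sparse/low-rank error directions, and the multiplier process bound $\MP$ handling the feature-dependent noise. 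Since $\bfX$ is assumed isotropic here, $\Vert\cdot\Vert_\Pi = \Vert\cdot\Vert_F$, $\mu(\mbC) = 1$, $\rho_1(\bfSigma) = 1$ and $\rho_N(\bfSigma) \asymp \sqrt{d_1}+\sqrt{d_2}$ (up to constants), so the abstract quantities appearing in Theorem \ref{thm:improved:rate} collapse to the explicit expressions $\sqrt{r(d_1+d_2)/n}$, $\sqrt{s\log p/n}$ and the Slope-norm constant $\Omega \asymp o\log(n/o)$, the last of which produces the $\omega(\epsilon) = \epsilon\log(1/\epsilon)$ term after dividing by $\sqrt{n}$ (recall $A^{(n)} = A/\sqrt{n}$ and $\Omega^{(n)} \asymp \epsilon\log(1/\epsilon)\sqrt{n}/\sqrt{n}$).

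Next I would invoke Proposition \ref{proposition:properties:subgaussian:designs}: under Assumption \ref{assump:distribution:subgaussian} and the stated sample-size condition $n \gtrsim L^4 r(d_1+d_2) \bigvee (L^4\log p + (\sa^*)^2)s$, the three design properties hold simultaneously on an event of probability at least $1-\delta$ for every $\delta \ge \exp(-n/(C_0 L^4))$ — this is exactly where the uniform-in-$\delta$ feature and the $\exp(-cn)$ threshold come from, since the underlying process inequalities (Theorems \ref{thm:mult:process}–\ref{thm:product:process}) are stated with subgaussian tails whose deviation term is $(1+\sqrt{\log(1/\delta)})/\sqrt{n}$, matching the first summand of $r_{n,r,s,\delta}(\sa^*,C_1)$. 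On this event, Theorem \ref{thm:improved:rate} applies deterministically and yields oracle inequalities of the form \eqref{oracle:inequality:eq}; I would then substitute the explicit isotropic values of the effective dimensions and the Slope constants, track that the smallness hypothesis $C_1^2 L^4 \epsilon\log(1/\epsilon) \le c_1$ is precisely the condition under which the contamination term does not swamp the leading term in Theorem \ref{thm:improved:rate}, and read off \eqref{thm:tr:reg:matrix:decomp:eq1}. Finally, \eqref{thm:tr:reg:matrix:decomp:eq2} follows from \eqref{thm:tr:reg:matrix:decomp:eq1} by a standard curvature/restricted-strong-convexity argument: the $\Vert\frX^{(n)}(\hat\bfB+\hat\bfGamma) - \boldf^{(n)}\Vert_2^2$ term on the left, together with the cone membership of $[\bfDelta_\bfB, \bfDelta_\bfGamma]$ forced by the regularization inequality and the $\PP$/$\IP$ lower bounds, gives $\Vert[\bfDelta_\bfB,\bfDelta_\bfGamma]\Vert_\Pi^2 \lesssim \Vert\frX^{(n)}(\bfDelta_\bfB+\bfDelta_\bfGamma)\Vert_2^2 + (\text{dimension terms})$, and then taking square roots and using $\Vert\frX^{(n)}(\hat\bfB+\hat\bfGamma)-\boldf^{(n)}\Vert_2 \le \Vert\frX^{(n)}(\bfB+\bfGamma)-\boldf^{(n)}\Vert_2 + \Vert\frX^{(n)}(\bfDelta_\bfB+\bfDelta_\bfGamma)\Vert_2$ and the already-established bound on the latter.

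The main obstacle I anticipate is not any single computation but the bookkeeping of constants and norms when specializing the abstract Theorem \ref{thm:improved:rate}: one must verify that, with the isotropic simplification, the cone over which the design properties are needed is the one that Proposition \ref{proposition:properties:subgaussian:designs} actually handles (in particular that the $\Vert\bfB\Vert_\infty \le \sa^*/\sqrt{n}$ constraint and the $\chi \asymp \sigma L^2\sqrt{\log p/n} + \sa^*/\sqrt{n}$ choice interact correctly to produce the $(1 + 1/(C_1\sigma L))\sa^*\sqrt{s/n}$ contribution to the rate), and that the threshold $c_1$ and the constant $C_1$ can be chosen absolute and consistently across the three process inequalities. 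A secondary subtlety is ensuring the contamination term in Theorem \ref{thm:improved:rate}, which a priori carries extra factors of $L$ from the multiplier process, indeed reduces to $\calO(C_1^2\sigma^2 L^6)\omega^2(\epsilon)$ and $\calO(C_1\sigma L^3)\omega(\epsilon)$ respectively — this is a matter of carefully propagating the $L$-dependence through the $\MP$ property and the smallness condition, rather than a genuinely hard argument.
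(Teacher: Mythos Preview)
Your high-level strategy is correct and matches the paper's: Theorem \ref{thm:tr:reg:matrix:decomp} is obtained by specializing the deterministic Theorem \ref{thm:improved:rate} after verifying its hypotheses via Proposition \ref{proposition:properties:subgaussian:designs} (and Lemma \ref{lemma:ATP}, which assembles $\ATP$ from $\PP$, $\TP$, $\IP$). Two deviations are worth flagging. First, a minor slip: for isotropic $\bfX$ one has $\rho_N(\bfSigma)\asymp 1$, not $\sqrt{d_1}+\sqrt{d_2}$; the $\sqrt{d_1+d_2}$ enters through the Gaussian width $\mathscr G(\mbB_{\Vert\cdot\Vert_N})\lesssim\sqrt{d_1}+\sqrt{d_2}$, which feeds the constants $\sc_2,\sb_2,\sf_2,\sd_2$ of Proposition \ref{proposition:properties:subgaussian:designs}. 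Second, and more substantively, your plan to derive \eqref{thm:tr:reg:matrix:decomp:eq2} from \eqref{thm:tr:reg:matrix:decomp:eq1} via an extra RSC/triangle-inequality step is an unnecessary detour: Theorem \ref{thm:improved:rate} already has \emph{two} conclusions, \eqref{thm:improved:rate:main:rate:eq1} and \eqref{thm:improved:rate:main:rate:eq2}, and the paper reads \eqref{thm:tr:reg:matrix:decomp:eq2} directly off \eqref{thm:improved:rate:main:rate:eq2} after substituting the explicit values of $F$, $\hat r$, $D$, $\sd_1$. Your route could be made to work (using $\ATP$ with $\bu=\mathbf 0$ and the regularization bounds on $\calR(\bfDelta_{\bfB}),\calS(\bfDelta_{\bfGamma})$ already present in \eqref{thm:tr:reg:matrix:decomp:eq1}), but it duplicates machinery that Theorem \ref{thm:improved:rate} has internalized. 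As you correctly anticipate, the bulk of the actual work is bookkeeping: verifying conditions (iv)--(v) and constraints \eqref{cond4:general:norm}--\eqref{cond9:general:norm} of Theorem \ref{thm:improved:rate} with the explicit constants, in particular checking \eqref{cond6:general:norm} via $|\llangle\bfDelta_{\bfB},\bfDelta_{\bfGamma}\rrangle_\Pi|\le\Vert\bfDelta_{\bfB}\Vert_\infty\Vert\bfDelta_{\bfGamma}\Vert_1\le(2\sa^*/\sqrt n)\Vert\bfDelta_{\bfGamma}\Vert_1$ and showing that \eqref{cond8:general:norm}--\eqref{cond9:general:norm} hold with a fixed $\sc_*\asymp 1$ once the sample-size and breakdown conditions are in place.
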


Theorem \ref{thm:tr:reg:matrix:decomp} states oracle inequalities of the form \eqref{oracle:inequality:eq}.\footnote{In case the approximation error is $\calO(\sigma) r_{n,r,s,\delta}(\sa^*,1)$ --- and assuming $L^2\epsilon\log(1/\epsilon)\le c_1$ and $\tau\asymp\sigma L/\sqrt{n}$ --- we can obtain slightly improved bounds for the estimator in Theorem \ref{thm:tr:reg:matrix:decomp}. Namely, the error coefficients $\calO(1)/C_1^2L^2$ in \eqref{thm:tr:reg:matrix:decomp:eq1} and $\calO(1)/C_1L$ in \eqref{thm:tr:reg:matrix:decomp:eq2} are improved to 
$\calO(\sigma^2)r_{n,r,s,\delta}^2(\sa^*,1)$ and 
$\calO(\sigma)r_{n,r,s,\delta}(\sa^*,1)$ respectively. Additionally, the corruption errors $\calO(C_1^2\sigma^2 L^6)\omega^2(\epsilon)$ in \eqref{thm:tr:reg:matrix:decomp:eq1} and $\calO(C_1\sigma L^3)\omega(\epsilon)$ in \eqref{thm:tr:reg:matrix:decomp:eq2} are improved to 
$\calO(\sigma^2 L^4)\omega^2(\epsilon)$ and $\calO(\sigma L^2)\omega(\epsilon)$ respectively.}
The next proposition ensures the rate in Theorem \ref{thm:tr:reg:matrix:decomp} is optimal up to a log factor.\footnote{By the general theory of \cite{2016chen:gao:ren}, the corruption term $\omega(\epsilon)$ is optimal (up to a log term). Thus, it is sufficient to give a lower bound for the non-corrupted model.} Its proof follows from similar arguments in \cite{2012agarwal:negahban:wainwright} for the noisy matrix decomposition problem with identity design. Define the class
\begin{align}
\calA(r,s,\sa^*)=\left\{\bfTheta^*:=[\bfB^*,\bfGamma^*]\in(\mdR^p)^2:\rank(\bfB^*)\le r, \Vert\bfGamma^*\Vert_0\le s, \Vert\bfB^*\Vert_\infty\le\frac{\sa^*}{\sqrt{n}}\right\}. 
\end{align}
For any $\bfTheta^*:=[\bfB^*,\bfGamma^*]\in(\mdR^p)^2$, let $\prob_{\bfTheta^*}$ denote the distribution of the data $\{y_i,\bfX_i\}_{i\in[n]}$ satisfying \eqref{equation:least-squares:regression} with parameters 
$[\bfB^*,\bfGamma^*]$. Finally, for some 
$\sigma>0$, let
\begin{align}
\Psi_n(r,s,\sa^*):=\sigma^2\left\{\frac{r(d_1+d_2)}{n}
+\frac{s}{n}\log\left(\frac{p-s}{s/2}\right)\right\}
+(\sa^*)^2\frac{s}{n}. 
\end{align}

\begin{proposition}\label{prop:lower:bound:tr:matrix:decomp}
Assume that $\{\xi_i\}_{i\in[n]}$ are iid $\calN(0,\sigma^2)$ independent of $\{\bfX_i\}_{i\in[n]}$, $\bfX$ is isotropic and $\Vert\bfB^*\Vert_\infty\le\sa^*/\sqrt{n}$. Assume $d_1,d_2\ge10$, $\sa^*\ge 32\sqrt{\log p}$ and $s<p$. 

Then there exists universal constants $c>0$ and $\beta\in(0,1)$ such that
\begin{align}
\inf_{\hat\bfTheta}\sup_{\bfTheta^*\in\calA(r,s,\sa^*)}\prob_{\bfTheta^*}
\left\{
\Vert[\bfDelta_{\bfB^*},\bfDelta_{\bfGamma^*}]\Vert_{\Pi}
\ge c\Psi_n^{\frac{1}{2}}(r,s,\sa^*)\right\}\ge\beta,
\end{align}
where the infimum is taken over all estimators 
$\hat\bfTheta=[\hat\bfB,\hat\bfGamma]$ constructed from the data $\{y_i,\bfX_i\}_{i\in[n]}$. 
\end{proposition}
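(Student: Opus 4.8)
The plan is to establish the lower bound via Fano's method (equivalently, a careful application of Assouad's lemma or the standard packing argument à la \cite{2012agarwal:negahban:wainwright}), separately for each of the three terms in $\Psi_n(r,s,\sa^*)$, and then combine them. Since the minimax risk is bounded below by the maximum over sub-families, it suffices to produce three packing sets: one varying $\bfB^*$ over low-rank matrices with $\bfGamma^*=0$ (yielding the $\sigma^2 r(d_1+d_2)/n$ term), one varying $\bfGamma^*$ over $s$-sparse matrices with $\bfB^*=0$ (yielding the $(\sigma^2 s/n)\log((p-s)/(s/2))$ term), and one varying $\bfB^*$ over the ``spikiness-saturated'' regime to capture the $(\sa^*)^2 s/n$ term. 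For the first two, because $\bfX$ is isotropic we have $\Vert[\bfDelta_{\bfB^*},\bfDelta_{\bfGamma^*}]\Vert_\Pi = \Vert[\bfDelta_{\bfB^*},\bfDelta_{\bfGamma^*}]\Vert_F$, so the problem reduces to the classical Frobenius-norm lower bounds for noisy matrix sensing and sparse regression with a $\calN(0,\sigma^2)$ Gaussian noise: the KL divergence between $\prob_{\bfTheta^*}$ and $\prob_{\bfTheta'}$ is $\tfrac{n}{2\sigma^2}\Vert[\bfB^*+\bfGamma^*]-[\bfB'+\bfGamma']\Vert_\Pi^2 = \tfrac{n}{2\sigma^2}\Vert(\bfB^*+\bfGamma^*)-(\bfB'+\bfGamma')\Vert_F^2$ using isotropy and iid rows.

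Concretely, for the low-rank term I would invoke a Gilbert–Varshamov-type packing of the set of rank-$\le r$ matrices in $\mdR^p$: there is a subset $\{\bfB^{(1)},\ldots,\bfB^{(M)}\}$ with $\log M \gtrsim r(d_1+d_2)$ such that the pairwise Frobenius distances are all $\asymp \gamma$ and each $\Vert\bfB^{(k)}\Vert_F \asymp \gamma$ for a radius $\gamma$ we are free to choose; taking $\gamma^2 \asymp \sigma^2 r(d_1+d_2)/n$ makes the KL divergences $\lesssim \log M$, and Fano gives the first term (one must also check the spikiness constraint $\Vert\bfB^{(k)}\Vert_\infty \le \sa^*/\sqrt n$ is satisfied, which holds for the standard construction when $\sa^* \gtrsim \sqrt{\log p}$, hence the hypothesis $\sa^*\ge 32\sqrt{\log p}$). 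For the sparse term I would use the standard $s$-sparse packing of $\{0,1\}$-supported vectors in $\re^p$ with $\log M \gtrsim s\log((p-s)/(s/2))$ and Frobenius radius $\gamma^2 \asymp \sigma^2 \tfrac{s}{n}\log((p-s)/(s/2))$; the support-based construction automatically obeys $\bfGamma^*$ having $\Vert\bfGamma^*\Vert_0 \le s$. For the spikiness term one uses the fact that the estimator is only given $n$ observations while $\bfB^*$ ranges over a set where the per-entry magnitude can be as large as $\sa^*/\sqrt n$: a packing of $s$-sparse matrices with entries of size $\sa^*/\sqrt n$ on a fixed support of size $s$ has $\log M \gtrsim s$ and Frobenius radius $\gamma \asymp \sa^*\sqrt{s/n}$ while the KL divergence stays $\lesssim s$ — this is exactly the regime where the spikiness constraint, rather than the noise, limits recovery. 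Then I would apply Fano to each family in turn, obtaining each of the three lower-bound terms up to constants, and conclude by $\Psi_n \lesssim$ (sum of the three) so that the maximum is $\gtrsim \Psi_n$ up to an absolute constant, which can be absorbed into $c$.

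The main obstacle, as usual in these multi-term lower bounds, is twofold. First, one must verify that all packing elements genuinely lie in $\calA(r,s,\sa^*)$, i.e. simultaneously satisfy the rank bound, the sparsity bound, \emph{and} the spikiness bound $\Vert\bfB^*\Vert_\infty \le \sa^*/\sqrt n$; the low-rank Gilbert–Varshamov construction produces matrices whose entries are not a priori controlled in $\ell_\infty$, so one needs either a block-diagonal / sparse-low-rank construction whose $\ell_\infty$-norm is controllable, or to appeal to the condition $\sa^*\ge 32\sqrt{\log p}$ together with concentration of the entries of a random rank-$r$ matrix — this is precisely the delicate point handled in \cite{2012agarwal:negahban:wainwright} and the place where $d_1,d_2\ge 10$ is used. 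Second, one must ensure the radius $\gamma$ in each family is chosen so that the KL ball of radius $\asymp\log M$ condition of Fano is met while still being large enough to deliver the claimed rate; this is a routine balancing but requires care with the isotropy-plus-iid-rows reduction $\KL(\prob_{\bfTheta^*}\Vert\prob_{\bfTheta'}) = \tfrac{n}{2\sigma^2}\Vert(\bfB^*+\bfGamma^*)-(\bfB'+\bfGamma')\Vert_F^2$, and with the fact that in the combined family $\bfB$ and $\bfGamma$ can partially cancel — handled by fixing one component to zero in each of the three sub-families so no cancellation occurs. Everything else is bookkeeping, and since the proposition explicitly says the proof ``follows from similar arguments in \cite{2012agarwal:negahban:wainwright}'', I would present the argument at the level of detail above and refer to that paper for the packing-number estimates and the $\ell_\infty$ control of the low-rank packing.
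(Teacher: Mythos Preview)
Your handling of the low-rank and sparse estimation terms is fine and matches the paper's approach (the paper actually combines them into a single product packing, but doing them separately works equally well). The gap is in the spikiness term $(\sa^*)^2 s/n$.

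Your proposed packing of $s$-sparse matrices with entries $\pm\sa^*/\sqrt{n}$ on a fixed support does \emph{not} have KL divergence $\lesssim s$. With one component set to zero (as you insist, to avoid cancellation), the KL between two packing points is $\tfrac{n}{2\sigma^2}\gamma^2 \asymp (\sa^*)^2 s/\sigma^2$, and Fano requires this to be $\lesssim \log M \lesssim s$. That forces $\sa^*\lesssim\sigma$, which is not assumed --- indeed $\sa^*\ge 32\sqrt{\log p}$ while $\sigma$ is arbitrary. So your argument only recovers $\min\{(\sa^*)^2,\sigma^2\}\,s/n$, not $(\sa^*)^2 s/n$.

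The paper's argument for this term is an \emph{identifiability} lower bound, not a statistical one: it uses the four-point set
\[
\{[\bfB^*,-\bfB^*],\ [-\bfB^*,\bfB^*],\ \tfrac12[\bfB^*,-\bfB^*],\ [\mathbf 0,\mathbf 0]\}
\]
where $\bfB^*$ is a rank-one matrix with exactly $s$ nonzero entries, each equal to $\sa^*/\sqrt{n}$. Every element has $\bfB+\bfGamma=\mathbf 0$, so the data distributions are \emph{identical} and all pairwise KL divergences vanish; yet the pairs $[\bfB,\bfGamma]$ are $\asymp \sa^*\sqrt{s/n}$ apart in $\Vert[\cdot,\cdot]\Vert_F$. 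Fano (or even Le Cam) then gives the $(\sa^*)^2 s/n$ term with no dependence on $\sigma$ or $n$ in the information bound. In other words, the cancellation you deliberately excluded is precisely the mechanism that drives this term: the spikiness constraint caps how much of $\bfGamma$ can be absorbed into $\bfB$, and the residual unidentifiable part has Frobenius norm $\asymp\sa^*\sqrt{s/n}$.
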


Next, we state guarantees for the estimator \eqref{equation:sorted:Huber} with $q=2$ for three different parameter classes associated to the subproblems a)-b) in Section \ref{s:motivating:examples}. Let $\Vert\cdot\Vert$ denote either 
$\Vert\cdot\Vert_0$ or the rank operation 
$\rank(\cdot)$ and $\mbC_{\bfB}\subset\mdR^p$ be a cone parametrized by a point $\bfB\in\mdR^p$. Given $d,d_{\eff}\in\mathbb{N}$ and
$\rho,\sc,\sfC>0$, let
$
\calF(d,d_{\eff},\rho,\sfC):=\{\bfB\in\mdR^p:
\Vert\bfB\Vert\le d,
\sfC L^2\rho\mu\left(\mbC_{\bfB}\right)
\sqrt{\nicefrac{d_{\eff}}{n}}\le1\}
$
and 
\begin{align}
\calF(d,d_{\eff},\rho,\sc,\sfC):=\left\{\bfB\in \calF(d,d_{\eff},\rho,\sfC):
(\nicefrac{1}{n})\Vert\frX(\bfB)-\boldf\Vert_2^2\le \sc\sigma^2
\right\}.\label{def:class1}
\end{align}
Consider three cases:
\begin{itemize}
\item[\rm (i)] In sparse regression, take $\calR:=\Vert\cdot\Vert_1$ and 
$
\lambda\asymp L^2\sigma\rho_1(\bfSigma)\sqrt{\nicefrac{\log p}{n}}.
$
Set $\Vert\cdot\Vert:=\Vert\cdot\Vert_0$, $\rho:=\rho_1(\bfSigma)$, $d:=s$,  $d_{\eff}:=s\log p$, and $\mbC_{\bb}:=\calC_{\bb,\Vert\cdot\Vert_1}(6)$ for given $\bb$ --- see Section \ref{s:proof:main:paper} for the definition of this cone.
\item[\rm (ii)] In sparse regression, take
$\calR:=\Vert\cdot\Vert_\sharp$, the Slope norm in $\re^p$, and 
$
\lambda\asymp L^2\sigma\rho_1(\bfSigma)/\sqrt{n}.
$
Set $\Vert\cdot\Vert:=\Vert\cdot\Vert_0$, $\rho:=\rho_1(\bfSigma)$, $d:=s$,  $d_{\eff}:=s\log (ep/s)$, and the cone
$\mbC_{\bb}:=\overline\calC_s(6)$ for each $\bb$ such that 
$\Vert\bb\Vert_0\le s$ --- see Section 31
\item[\rm (iii)] In trace regression, take $\calR:=\Vert\cdot\Vert_N$ and 
$
\lambda\asymp L^2\sigma\rho_N(\bfSigma)\sqrt{\nicefrac{d_1+d_2}{n}}.
$
Set $\Vert\cdot\Vert:=\rank(\cdot)$, $\rho:=\rho_N(\bfSigma)$, $d:=r$,  $d_{\eff}:=r(d_1+d_2)$ and the cone
$\mbC_{\bfB}:=\calC_{\bfB,\Vert\cdot\Vert_N}(6)$ for given $\bfB$ --- see Section \ref{s:proof:main:paper}.  
\end{itemize} 
Let us define 
\begin{align}
r_{n,d_{\eff},\delta}(\rho,\mu) := L\frac{1+\sqrt{\log(1/\delta)}}{\sqrt{n}}
+ L^2\rho\mu\sqrt{\frac{d_{\eff}}{n}}. 
\label{def:r:n}
\end{align}

\begin{theorem}[Robust sparse/low-rank regression]\label{thm:response:sparse-low-rank:regression}
Grant Assumptions \ref{assump:label:contamination}-\ref{assump:distribution:subgaussian} and model \eqref{equation:structural:equation}. Then there are absolute constants $\sc,c_1\in(0,1/2)$ and $\sfC,C_0,C_1\ge1$ such that the following holds. Suppose $C_1^2L^4\epsilon\log(1/\epsilon)\le c_1$ and take $\tau\asymp C_1L^2\sigma/\sqrt{n}$. Let $\hat\bfB$ be the solution of \eqref{equation:sorted:Huber} with $q=2$ --- correspondingly to each tuning in cases (i)-(iii). Consider the three different classes of type \eqref{def:class1} for each of the cases (i)-(iii). 

Then, for any $\delta\in(0,1)$ such that
$
\delta\ge \exp\left(-\frac{n}{C_0L^4}\right),
$
on an event of probability $\ge1-\delta$, for all 
$\bfB\in\calF(d,d_{\eff},\rho,\sc,\sfC)$,
\begin{align}
(\nicefrac{\lambda}{2})\calR(\bfDelta_{\bfB}) + \Vert\frX^{(n)}(\hat\bfB)-\boldf^{(n)}\Vert_2^2 &\le \left(1+\frac{\calO(1)}{C_1^2L^2}\right)\Vert\frX^{(n)}(\bfB)-\boldf^{(n)}\Vert_2^2\\
&+ \calO(\sigma^2)r_{n,d_{\eff},\delta}^2(\rho,\mu(\mbC_{\bfB}))
+\calO(C_1^2\sigma^2L^6) \omega^2(\epsilon),\label{thm:response:sparse-low-rank:regression:eq1}
\end{align}
and also
\begin{align}
\Vert\bfDelta_{\bfB}\Vert_{\Pi}&\le 
\left(\calO(1)+\frac{\calO(1)}{C_1L}\right)\Vert\frX^{(n)}(\bfB)-\boldf^{(n)}\Vert_2
+ \calO(\sigma) r_{n,d_{\eff},\delta}(\rho,\mu(\mbC_{\bfB}))
+ \calO(C_1\sigma L^3) \omega(\epsilon).
\label{thm:response:sparse-low-rank:regression:eq2}
\end{align}
\end{theorem}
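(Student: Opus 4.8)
The plan is to obtain Theorem~\ref{thm:response:sparse-low-rank:regression} as the ``no-decomposition'' specialization of the machinery underlying Theorem~\ref{thm:improved:rate}: estimator \eqref{equation:sorted:Huber} with $q=2$ is exactly \eqref{equation:sorted:Huber:general} with the sparse block switched off (formally $\bfGamma\equiv\bf0$, $\calS$ absent, $\sa=\infty$), and, via the augmented reformulation \eqref{equation:aug:slope:rob:estimator:q=2}, it is a doubly-penalized least-squares problem in $(\bfB,\btheta)$ with penalties $\lambda\calR(\bfB)$ and $\tau\Vert\btheta\Vert_\sharp$. The proof then has three layers: a deterministic oracle inequality on a ``good design event'', a probabilistic step certifying that event holds with probability $\ge 1-\delta$ simultaneously over all $\delta\ge\exp(-n/(C_0L^4))$, and bookkeeping that specializes the abstract rate $r_{n,d_{\eff},\delta}(\rho,\mu)$ of \eqref{def:r:n} to the three regularizers.

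First I would invoke Theorem~\ref{thm:improved:rate} (its deterministic core), reading off the good event as the one on which the design properties $\PP$, $\IP$, $\MP$ of Definition~\ref{def:design:property} hold with the stated parameters. With $\lambda$ chosen above the effective noise level $\asymp L^2\sigma\rho\sqrt{d_{\eff}/n}$ and $\tau\asymp C_1L^2\sigma/\sqrt n$ above the sorted score of the contamination (this is where $\Omega\asymp o\log(n/o)$, hence $\omega(\epsilon)=\epsilon\log(1/\epsilon)$, enters), decomposability of $\calR$ at the oracle point $\bfB$ confines $\bfDelta_{\bfB}$ to the cone $\mbC_{\bfB}$ (the factor $6$ being the ratio of tuning to noise level), the property $\PP$ supplies restricted strong convexity of the quadratic part along $\mbC_{\bfB}$, and $\IP$, $\MP$ control the cross and multiplier terms. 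The side constraints in $\calF(d,d_{\eff},\rho,\sfC)$, namely $\Vert\bfB\Vert\le d$ and $\sfC L^2\rho\mu(\mbC_{\bfB})\sqrt{d_{\eff}/n}\le1$, are precisely what is needed so the restricted-eigenvalue term absorbs the curvature remainder; using $\calR(\bfDelta_{\bfB})\lesssim\sqrt d\,\mu(\mbC_{\bfB})\Vert\bfDelta_{\bfB}\Vert_\Pi$ inside $\mbC_{\bfB}$ then turns the $\Vert\cdot\Vert_\Pi$-bound into the $\calR$-bound in \eqref{thm:response:sparse-low-rank:regression:eq1} and the prediction bound in \eqref{thm:response:sparse-low-rank:regression:eq2}.

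Second, I would apply Proposition~\ref{proposition:properties:subgaussian:designs} to show that $\PP$, $\IP$, $\MP$ hold on an event of probability $\ge1-\delta$ uniformly in $\delta\ge\exp(-n/(C_0L^4))$; these are, respectively, the high-probability conclusions of the product process inequality (Theorem~\ref{thm:product:process}), Chevet's inequality, and the multiplier process inequality (Theorem~\ref{thm:mult:process}), and the $L(1+\sqrt{\log(1/\delta)})/\sqrt n$ term in \eqref{def:r:n} is the deviation part of the $\MP$ bound. Uniformity over $\delta$ is free here: each inequality is a single event carrying a $\delta$-indexed radius, so no grid union bound over $\delta$ is incurred (this is the analogue, for feature-dependent noise, of the phenomenon exploited in \cite{2018bellec:lecue:tsybakov}).

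Third, for cases (i)--(iii) I would compute the cone data: $\calC_{\bb,\Vert\cdot\Vert_1}(6)$ has Gaussian width $\lesssim\sqrt{s\log p}$, giving $d_{\eff}=s\log p$, $\rho=\rho_1(\bfSigma)$; $\calC_{\bfB,\Vert\cdot\Vert_N}(6)$ has width $\lesssim\sqrt{r(d_1+d_2)}$, giving $d_{\eff}=r(d_1+d_2)$, $\rho=\rho_N(\bfSigma)$; and the weighted-Slope cone $\overline\calC_s(6)$ has width $\lesssim\sqrt{s\log(ep/s)}$, giving the improved $d_{\eff}=s\log(ep/s)$. Substituting into $r_{n,d_{\eff},\delta}(\rho,\mu(\mbC_{\bfB}))$ and collecting the $\omega(\epsilon)$ terms from the $\btheta$-block yields \eqref{thm:response:sparse-low-rank:regression:eq1}--\eqref{thm:response:sparse-low-rank:regression:eq2}. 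The main obstacle I anticipate is twofold: handling the Slope case (ii), where $\overline\calC_s(6)$ is a closure over supports rather than a ball section, so both $\mu(\overline\calC_s(6))$ and its Gaussian width must be controlled by sorted-norm/rearrangement arguments; and, underneath the invocation of Theorem~\ref{thm:improved:rate}, making the $\MP$ property deliver exactly the $1+\sqrt{\log(1/\delta)}$ factor with \emph{no} dimension multiplying $\log(1/\delta)$, which is the delicate point where the multiplier process inequality of Theorem~\ref{thm:mult:process} must cope with $\xi$ being merely uncorrelated with (not independent of) $\bfX$.
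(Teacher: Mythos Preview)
Your proposal is correct and follows essentially the same route as the paper: specialize Theorem~\ref{thm:improved:rate} to the no-decomposition setting ($\calS\equiv0$, $\bfGamma\equiv\bf0$, $\sa=\infty$, $\chi=\sf_*=0$), certify the design properties on a high-probability event via Proposition~\ref{proposition:properties:subgaussian:designs} (the paper phrases this through $\TP$ and $\ATP$, derived from $\PP$ and $\IP$ by Lemmas~\ref{lemma:TP} and~\ref{lemma:ATP:calS=0}, and computes Gaussian widths of the balls $\frS^{1/2}(\mbB_\calR)$ rather than of the cones, but the content is the same), verify conditions (iv)--(v) and \eqref{cond4:general:norm}--\eqref{cond9:general:norm} of Theorem~\ref{thm:improved:rate}, and then do the bookkeeping for each regularizer. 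Your anticipation that case~(ii) requires separate handling is exactly right: the paper states a Slope-specific variant ``Theorem~\ref{thm:improved:rate}$'$'' built on the cones $\overline\calC_s(c_0)$ of Definition~\ref{def:cones:slope:norm} and a modified Lemma~\ref{lemma:aug:rest:convexity}, precisely because the Slope norm is not decomposable in the sense of Definition~\ref{def:decomposable:norm}.
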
 

From the quadratic process inequality, we may replace $\rho_1(\bfSigma)$ with 
$\hat\rho_1:=\max_{j\in[p]}\Vert\mbX_{\bullet,j}\Vert_2$ and 
$\rho_N(\bfSigma)$ by its empirical counterpart $\hat\rho_N$. We now present estimation guarantees for the estimator \eqref{equation:sorted:Huber} with $q=1$. Consider three cases:
\begin{itemize}
\item[\rm(i')] Grant case (i) above but with
$
\lambda\asymp L\rho_1(\bfSigma)\sqrt{\nicefrac{\log p}{n}}.
$
\item[\rm(ii')] Grant case (ii) above but with
$
\lambda\asymp L\rho_1(\bfSigma)/\sqrt{n}. 
$
\item[\rm(iii')] Grant case (iii) above but with
$
\lambda\asymp L\rho_N(\bfSigma)\sqrt{\nicefrac{d_1+d_2}{n}}. 
$
\end{itemize}

\begin{theorem}[$\sigma$-adaptive robust sparse/low-rank regression]\label{thm:response:sparse-low-rank:regression:q=1}
Grant Assumptions \ref{assump:label:contamination}-\ref{assump:distribution:subgaussian} and model \eqref{equation:structural:equation}. Then there are absolute constants 
$c_1\in(0,1/2)$ and $\sfC,C_0\ge1$ such that the following holds. Suppose $L^2\epsilon\log(1/\epsilon)\le c_1$ and take $\tau\asymp L/\sqrt{n}$. Let 
$\hat\bfB$ be the solution of \eqref{equation:sorted:Huber} with $q=1$ --- correspondingly to each tuning in cases (i')-(iii'). Consider the three different classes of type \eqref{def:class1} for each of the cases (i')-(iii'). 

Let $\delta\in(0,1)$ such that
$
\delta\ge \exp\left(-\frac{n}{C_0(L^4\vee\sigma^2)}\right).
$ 
Let $\mu_*:=\sup_{\bfB\in\calF(d,d_{\eff},\rho,\sfC)}\mu(\mbC_{\bfB})$ and 
$\sc_{0,n}\asymp r_{n,d_{\eff},\delta}(\rho,\mu_*)$. Then, on an event of probability $\ge1-\delta$, it holds that, for all 
$\bfB\in\calF(d,d_{\eff},\rho,\sc_{0,n}^2,\sfC)$,
\begin{align}
(\nicefrac{\lambda}{2})\calR(\bfDelta_{\bfB}) + \Vert\frX^{(n)}(\hat\bfB)-\boldf^{(n)}\Vert_2^2 &\le \left(1+\calO(\sc_{0,n}^2)\right)\Vert\frX^{(n)}(\bfB)-\boldf^{(n)}\Vert_2^2\\
&+ \calO(\sigma^2)r_{n,d_{\eff},\delta}^2(\rho,\mu_*)
+\calO(\sigma^2L^4)\omega^2(\epsilon),\label{thm:response:sparse-low-rank:regression:q=1':eq1}
\end{align}
and also
\begin{align}
\Vert\bfDelta_{\bfB}\Vert_{\Pi}&\le 
\left(\calO(1)+\calO(\sc_{0,n})\right)\Vert\frX^{(n)}(\bfB)-\boldf^{(n)}\Vert_2
+ \calO(\sigma) r_{n,d_{\eff},\delta}(\rho,\mu_*)
+ \calO(\sigma L^2)\omega(\epsilon).\label{thm:response:sparse-low-rank:regression:q=1':eq2}
\end{align}
\end{theorem}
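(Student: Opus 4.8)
The plan is to obtain Theorem~\ref{thm:response:sparse-low-rank:regression:q=1} as a corollary of the general deterministic oracle inequality Theorem~\ref{thm:improved:rate:q=1'} by verifying that an $L$-subgaussian design satisfies, on an event of probability at least $1-\delta$, the design properties of Definition~\ref{def:design:property} --- $\PP$, $\IP$ and $\MP$, together with the restricted strong convexity $\TP$ they yield --- with the parameters dictated by each of the settings (i$'$)--(iii$'$). This verification is precisely the content of Proposition~\ref{proposition:properties:subgaussian:designs}: for case (i$'$) one instantiates it with $\Vert\cdot\Vert=\Vert\cdot\Vert_0$, $\calR=\Vert\cdot\Vert_1$, $d=s$ and the cone $\calC_{\bb,\Vert\cdot\Vert_1}(6)$; for (ii$'$) with $\calR=\Vert\cdot\Vert_\sharp$ and the Slope cone $\overline\calC_s(6)$; for (iii$'$) with $\Vert\cdot\Vert=\rank(\cdot)$, $\calR=\Vert\cdot\Vert_N$, $d=r$ and $\calC_{\bfB,\Vert\cdot\Vert_N}(6)$. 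The proof of Proposition~\ref{proposition:properties:subgaussian:designs} in turn rests on the product-process and multiplier-process concentration bounds of Theorems~\ref{thm:mult:process}--\ref{thm:product:process}, applied to these cones; this is exactly the route already used for the $q=2$ statement, Theorem~\ref{thm:response:sparse-low-rank:regression}, so the skeleton is the same and only the loss-specific analysis (Step~3 below) differs.

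Second, I would feed in the Gaussian-width computations that pin down the effective dimensions. One has $\mathscr G$ of the localized $\ell_1$-cone and of the Slope cone over $s$-sparse directions both $\asymp\sqrt{s\log(ep/s)}$, and $\mathscr G$ of the rank-$r$ nuclear cone $\asymp\sqrt{r(d_1+d_2)}$; combined with $\rho\in\{\rho_1(\bfSigma),\rho_N(\bfSigma)\}$ and $\mu(\mbC_{\bfB})$ this gives $d_{\eff}\in\{s\log p,\,s\log(ep/s),\,r(d_1+d_2)\}$ and hence the rate $r_{n,d_{\eff},\delta}(\rho,\mu)$ of \eqref{def:r:n}. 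The confidence-uniform term $L(1+\sqrt{\log(1/\delta)})/\sqrt{n}$, in which $\log(1/\delta)$ does not multiply $d_{\eff}$, comes from the sub-Gaussian tail in the multiplier-process inequality (Theorem~\ref{thm:mult:process}), which separates the $\sqrt{\log(1/\delta)/n}$ deviation from the $\sqrt{d_{\eff}/n}$ expectation; uniformity over all $\delta\ge\exp(-cn)$ holds because the estimator \eqref{equation:sorted:Huber} with $q=1$ and the tunings in (i$'$)--(iii$'$) do not depend on $\delta$, so a union bound over a dyadic grid of $\delta$'s --- or simply invoking the single good event at $\delta=\exp(-cn)$ --- suffices.

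Third, and this is the genuinely new ingredient relative to the $q=2$ theorem, I would track the self-normalization of $\rho_1(\bu)=\Vert\bu\Vert_2$, whose subgradient lies on the unit sphere. First-order optimality of $\hat\bfB$ then produces an inequality in which the \emph{empirical} noise level $\hat\sigma:=\Vert\by-\frX(\hat\bfB)\Vert_2/\sqrt{n}$ plays the role of $\sigma$ in all stochastic terms; the crux is a two-sided argument showing $\hat\sigma\asymp\sigma$ on the good event (this is exactly why $\lambda$ in (i$'$)--(iii$'$) carries no $\sigma$ factor and the estimator is variance-adaptive), using the restriction $(\nicefrac{1}{n})\Vert\frX(\bfB)-\boldf\Vert_2^2\le\sc_{0,n}^2\sigma^2$ to bound the bias contribution to $\hat\sigma$ and $\MP$ (and $\IP$ for the $\btheta$-block) to bound the fluctuation. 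I expect this to be the main obstacle: closing the loop ``$\hat\sigma\asymp\sigma\Rightarrow$ error bound $\Rightarrow\hat\sigma\asymp\sigma$'' requires a localization/fixed-point (peeling) argument over the scale of $\Vert\bfDelta_{\bfB}\Vert_\Pi$, and the restricted-strong-convexity lower bound for $\rho_{\tau\bomega,1}$ holds only \emph{modulo normalization by $\hat\sigma$}, mirroring the square-root-lasso analysis of \cite{2011belloni:chernozhukov:wang}; the ``sorted'' Huber term $\tau\Vert\btheta\Vert_\sharp$ must be carried through this argument, contributing the $\omega(\epsilon)$ term through the Slope parameter $\Omega$ and the contamination-handling parts of $\IP$ and $\MP$, just as in the proof of Theorem~\ref{thm:improved:rate}.

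Finally I would assemble the pieces: substituting the verified design properties into the deterministic Theorem~\ref{thm:improved:rate:q=1'} yields \eqref{thm:response:sparse-low-rank:regression:q=1':eq1}--\eqref{thm:response:sparse-low-rank:regression:q=1':eq2} with $\mu_*=\sup_{\bfB\in\calF(d,d_{\eff},\rho,\sfC)}\mu(\mbC_{\bfB})$, $\sc_{0,n}\asymp r_{n,d_{\eff},\delta}(\rho,\mu_*)$ and the stated absolute constants $c_1,\sfC,C_0$; the population quantities $\rho_1(\bfSigma)$, $\rho_N(\bfSigma)$ may be swapped for their empirical counterparts $\hat\rho_1,\hat\rho_N$ using the quadratic/product-process inequality, exactly as noted after Theorem~\ref{thm:response:sparse-low-rank:regression}.
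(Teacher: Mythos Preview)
Your overall skeleton --- derive Theorem~\ref{thm:response:sparse-low-rank:regression:q=1} from the deterministic Theorem~\ref{thm:improved:rate:q=1'} after verifying the design properties via Proposition~\ref{proposition:properties:subgaussian:designs}, exactly as in the $q=2$ case --- is correct and matches the paper.

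Where you diverge is in Step~3, the handling of the self-normalization. You define $\hat\sigma:=\Vert\by-\frX(\hat\bfB)\Vert_2/\sqrt{n}$ (the residual at the estimate) and anticipate a fixed-point/peeling argument to close the loop ``$\hat\sigma\asymp\sigma\Rightarrow$ error bound $\Rightarrow\hat\sigma\asymp\sigma$''. The paper does something much simpler. In Theorem~\ref{thm:improved:rate:q=1'} the quantity $\hat\sigma$ is defined as $\Vert\bxi^{(n)}\Vert_2$, the empirical norm of the \emph{true} noise, which does not depend on $\hat\bfB$ at all. Since the $\xi_i$ are iid subgaussian, a one-line application of Bernstein's inequality gives $\sigma/2\le\hat\sigma\le 3\sigma/2$ on an event of probability $\ge1-\delta$ once $n\gtrsim\sigma^2(1+\log(1/\delta))$; this is the only place the extra requirement $\delta\ge\exp(-n/C_0\sigma^2)$ enters, and there is no circularity whatsoever.

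The residual norm $\Vert\hat\bxi^{(n)}\Vert_2$ (with $\hat\bxi=\by-\frX(\hat\bfB)-\sqrt{n}\hat\btheta$) \emph{does} appear in the first-order conditions, as you correctly sense. But the paper disposes of it algebraically, not analytically: in the analogues of Lemmas~\ref{lemma:recursion:1st:order:condition} and \ref{lemma:recursion:delta:bb:general:norm} one writes $\Vert\hat\bxi^{(n)}\Vert_2=\Vert\bxi^{(n)}\Vert_2+(\Vert\hat\bxi^{(n)}\Vert_2-\Vert\bxi^{(n)}\Vert_2)$ and bounds the difference by $\Vert\frM^{(n)}(\bfDelta_{\bfB},\bfDelta^{\hat\btheta})\Vert_2+\Vert\calE_{\bfB}^{(n)}\Vert_2$ via the triangle inequality. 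This produces the extra error term $(\Vert\frM^{(n)}(\bfDelta_{\bfB},\bfDelta^{\hat\btheta})\Vert_2+\Vert\calE_{\bfB}^{(n)}\Vert_2)(\lambda\calR(\bfDelta_{\bfB})+\tau\Vert\bfDelta^{\hat\btheta}\Vert_\sharp)$ flagged in Section~\ref{s:proof:main:paper:q=1}, which is then absorbed into the same quadratic recursion used for $q=2$ --- at the price of requiring the approximation error and the constants $\sc_n,\sc_*$ to be $\calO(r_{n,d_{\eff},\delta}(\rho,\mu_*))$ rather than merely $\calO(1)$. This is exactly why the class $\calF(d,d_{\eff},\rho,\sc_{0,n}^2,\sfC)$ in the statement carries the tighter constraint $\sc_{0,n}^2$, a point your sketch does not explain. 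No peeling over the scale of $\Vert\bfDelta_{\bfB}\Vert_\Pi$ is needed.
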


In all previous theorems, the correspondent estimator is adaptive to $\delta$ and the confidence level is $1-\delta$ across any $\delta\ge\exp(-cn)$ for a fixed constant $c>0$. The estimators are also adaptive to $(s,r,o)$, and, in Theorems \ref{thm:response:sparse-low-rank:regression}-\ref{thm:response:sparse-low-rank:regression:q=1}, adaptive to $\mu(\mbC_{\bfB})$. In case there is no matrix decomposition, estimator \eqref{equation:sorted:Huber} with $q=1$ achieves, up to constants, the same rate of estimator \eqref{equation:sorted:Huber} with $q=2$ with the advantage of being adaptive to $\sigma$. On the other hand, for $q=1$ the approximation must be $\calO(\sigma) r_{n,d_{\eff},\delta}(\rho,\mu_*)$ while for $q=2$ the approximation error is only required to be $\calO(\sigma)$.\footnote{In case the approximation error is $\calO(\sigma) r_{n,d_{\eff},\delta}(\rho,\mu_*)$ and assuming $L^2\epsilon\log(1/\epsilon)\le c_1$, the same bounds  \eqref{thm:response:sparse-low-rank:regression:q=1':eq1}-\eqref{thm:response:sparse-low-rank:regression:q=1':eq2} are valid for the estimator \eqref{equation:sorted:Huber} with $q=2$ --- with the tuning specified in cases (i)-(iii) and $\tau\asymp\sigma L/\sqrt{n}$.} Nothing is assumed beyond marginal subgaussianity of $(\bfX,\xi)$. In particular, the noise can depend arbitrarily on $\bfX$, be asymmetric and have zero mass around the origin. From \cite{2016chen:gao:ren}, the displayed rates are optimal up to the factor $\log(1/\epsilon)$. The approximately linear growth $\epsilon\mapsto\omega(\epsilon)$  --- in case the sorted Huber loss is used --- is confirmed in our numerical experiments (see Section \ref{s:simulation}).

\begin{remark}\label{rem:delta:adaptive:no:corruption}
Within the framework of $M$-estimation with decomposable regularizers, the first obtained oracle inequalities for sparse and trace regression give optimal rates in average \cite{BRT,2012negahban:ravikumar:wainwright:yu}. Still, as discussed in Section \ref{s:MP:in:M-estimation}, the rates in these seminal works are suboptimal in $\delta$. Additionally, their tuning assumes knowledge of $\delta$. \cite{2018bellec:lecue:tsybakov} was the first work to obtain the subgaussian rate with 
$\delta$-adaptive estimators for sparse linear regression. \cite{2018derumigny} later generalized the bounds of \cite{2018bellec:lecue:tsybakov} for the square-root Lasso estimator. Their proof strategy, however, fundamentally assumes the \emph{noise is independent of features} (see Section \ref{s:MP:in:M-estimation}). When $\epsilon=0$, a corollary of Theorems \ref{thm:response:sparse-low-rank:regression}-\ref{thm:response:sparse-low-rank:regression:q=1} is that the same estimators in \cite{2018bellec:lecue:tsybakov,2018derumigny} attain the subgaussian rate adaptively to 
$\delta$ assuming only $(\bfX,\xi)$ are marginally subgaussian. We refer to Section \ref{s:MP:in:M-estimation} and Remark \ref{rem:multiplier:process:mendelson} in Section \ref{s:multiplier:process:main} for an explanation on these technical issues. See also Section  \ref{ss:related:work:trace:regression:decomposition}. To finish, we remark that when $\epsilon=0$ we could prove ``sharp'' oracle inequalities --- that is, with constant $C=1$ in \eqref{oracle:inequality:eq}.
\end{remark}

\section{Related work and contributions}\label{s:related:work}

\subsection{Robust sparse regression}\label{ss:related:work:sparse:regression}
This model has been the subject of numerous works. From a methodological point of view, the $\ell_1$-penalized Huber's estimator has been considered in \cite{2001sardy:tseng:bruce, 2011she:owen, 2012lee:maceachern:jung}. Empirical evaluation for the choice of tuning parameters is comprehensively studied in these papers. For the adversarial model with Gaussian data, fast rates for such estimator have been obtained in \cite{2008candes:randall, 2009laska:davenport:baraniuk, 2009dalalyan:keriven, 2012dalalyan:chen, 2013nguyen:tran}. The average near optimal rate was shown only recently in \cite{2019dalalyan:thompson}. They obtain the rate 
$
\sqrt{s\log(p/\delta)/n}+\epsilon\log(n/\delta), 
$
with a breakdown point $\epsilon\le c/\log n$ for a constant $c>0$. Under different conditions, the same estimator was later shown in \cite{2019chinot} to attain the subgaussian rate with breakdown point $\epsilon\le c$ without the extra factor $\log(1/\epsilon)$ and allowing feature-dependent heavy-tailed noise. These two works are the most closely related to our particular Theorem \ref{thm:response:sparse-low-rank:regression} for robust sparse linear regression.

\begin{remark}[Comparison with \cite{2019chinot}]\label{rem:comparison:chinot}
The result in \cite{2019chinot} is interesting: the subgaussian rate is attainable with the standard Huber loss --- without the extra $\log (1/\epsilon)$. It also allows feature-dependent heavy-tailed noise --- assuming some extra mild conditions. Still, we argue that, in the subgaussian setting, this result is weaker than our Theorem \ref{thm:response:sparse-low-rank:regression} for a sparse parameter. We give two main reasons:
\begin{itemize}
\item[\rm (i)] Even though not explicitly stated, the proof in \cite{2019chinot} is specific to the \emph{oblivious model} --- a much weaker model than the adversarial one considered in this work. If $\calO$ denotes the index set of outliers, they fundamentally use that,\footnote{See equation (42) in page 3595 in \cite{2019chinot}.} for all $[\bb,\bb']$,
\begin{align}\label{rem:comparison:chinot:Hoeff}
\left|
\frac{1}{o}\sum_{i\in\calO}(|\langle\bx_i,\bb-\bb'\rangle|-\esp[|\langle\bx,\bb-\bb'\rangle|])
\right|_{\psi_2}
\lesssim \frac{\Vert\bb-\bb'\Vert_{\Pi}}{\sqrt{o}}.
\end{align}
This follows from Hoeffding's inequality, but only if $\{\bx_i\}_{i\in\calO}$ is iid for \emph{fixed} $\calO$. In the adversarial model, $\calO$ is an arbitrary random variable dependent on the data set. 
\item[\rm (ii)] \cite{2019chinot} attains the optimal rate for $\ell_1$-regularized Huber regression with penalization
\begin{align}\label{equation:tuning:chinot}
\lambda\asymp\sigma\left(\sqrt{\frac{\log p}{n}}\bigvee\mu(\mbC_{\bb^*})\sqrt{\frac{\log(1/\delta)}{s n}} \bigvee \mu(\mbC_{\bb^*})\frac{o}{\sqrt{s} n}\right),
\end{align}
and, in case the noise is  subgaussian, $\tau\asymp\sigma$. Our tuning $(\lambda,\tau)$ follows the very different scaling
$
\lambda\asymp \sigma\sqrt{\log p/n},
$
$ 
\tau\asymp\sigma/\sqrt{n}.
$
One notable difference is that our tuning is adaptive to $(s,o,\mu(\mbC_{\bb^*}),\delta)$, without resorting to Lepski's method. If we focus on  $(s,o,\mu(\mbC_{\bb^*}),\delta)$-adaptive estimators, our guarantees and simulation results are significantly in favor of sorted Huber-type losses instead of the standard Huber loss.
\end{itemize}

We argue that (i) and the different scaling \eqref{equation:tuning:chinot} follows from a different proof method. The proof in \cite{2019chinot} is based on \emph{``localization'' arguments} for regularized empirical risk minimization (ERM) \cite{2014mendelson,2018lecue:mendelson}.\footnote{This approach has a vast history. State-of-the art results were given in the seminal paper \cite{2014mendelson} --- introducing the ``small-ball method'' for ERM with the square loss. With it, proper localized control of the quadratic and multiplier processes entail optimal rates. \cite{2018lecue:mendelson} generalized this method to analyze regularized ERM. A key tool in this work is the so called ``sparsity equation''. This elegant method entails, in particular, optimality of Lasso, Slope and trace regression.} Being more precise, \cite{2019chinot} is able to show that regularized ERM with convex Lipschitz losses  \cite{2019alquier:cottet:lecue,  2020chinot:lecue:lerasle} is robust against contaminated labels\footnote{The works \cite{2019alquier:cottet:lecue,  2020chinot:lecue:lerasle} also use the ``sparsity equation'' but, unlike \cite{2014mendelson,2018lecue:mendelson}, do not use  explicit concentration of the quadratic/multiplier processes. For convex Lipschitz losses satisfying the ``Bernstein condition'', localized concentration of the empirical process suffices.}, assuming the \emph{oblivious} model. In this approach, one uses the fact that the loss based on Huber's function satisfies the so called ``Bernstein's condition'' --- under additional mild noise conditions.\footnote{\cite{2019chinot} uses Theorem 7 in \cite{2020chinot:lecue:lerasle} stating that, under subgaussian designs and noises with positive mass around the origin, the so called ``Bernstein's condition'' is satisfied by most convex Lipschitz losses of interest. This additional noise condition is not a serious restriction in many settings --- it also allows heavy-tailed noise. Still, it is unnecessary in the subgaussian setting --- giving some additional evidence that both proof methods are different.} Our proof method does not follow the ``localization'' literature but rather the literature on \emph{$M$-estimation with decomposable regularizers} \cite{BRT,2012negahban:ravikumar:wainwright:yu, 2018bellec:lecue:tsybakov}. In this approach, we do not use Lipschitz continuity of Huber-type losses. In fact, our analysis uses a loss based on the square cost and defined over an augmented variable --- see \eqref{equation:aug:slope:rob:estimator:general}-\eqref{equation:aug:slope:rob:estimator:q=2}. 
\end{remark}

\begin{remark}[Comparison with \cite{2019dalalyan:thompson}]\label{rem:comparison:dalalyan:thompson}
Granting reasons (i)-(ii) in Remark \ref{rem:comparison:chinot}, \cite{2019dalalyan:thompson} is the closest work to ours --- indeed, they consider the adversarial model and $(s,o,\mu(\mbC_{\bb^*}))$-adaptive estimators. Our most noted improvements in terms of rate guarantees are three-fold. First, we show that sorted Huber-type regression has improved bounds compared to Huber-regression: the corruption error $\epsilon\log n$ and breakdown point $c/\log n$ of the latter is replaced by $\epsilon\log(1/\epsilon)$ and breakdown point $c$. We give numerical evidence of the superiority of sorted Huber regression compared to standard Huber regression (see Figure \ref{fig.robust.linear.reg:HS}). Adaptations of Huber regression have been studied before. Still, they usually involve modifying the scaling of the tuning parameter. To our knowledge, our theoretical and empirical results for sorted Huber-type losses give new insights. Second, unlike the results in \cite{2019dalalyan:thompson}, our rates for sparse regression use $\delta$-adaptive estimators attaining the optimal $\delta$-subgaussian rate under weaker assumptions --- namely, subgaussian feature-dependent noise. See Remark \ref{rem:delta:adaptive:no:corruption},   Section \ref{ss:related:work:trace:regression:decomposition} and pointers therein. Thirdly, we give optimal guarantees for robust sparse regression with $\sigma$-adaptive estimators under the same set of assumptions. In Section \ref{s:proof:main:paper:q=1}, we explain that the proof of Theorem   \ref{thm:response:sparse-low-rank:regression:q=1} is an adaptation of the proof of Theorem \ref{thm:response:sparse-low-rank:regression}.

To finish, we remark that our improvements on \cite{2019dalalyan:thompson} are substantial in terms of proof techniques and structural properties. In fact, our main focus is the broader model RTRMD. See the next Section \ref{ss:related:work:trace:regression:decomposition} and pointers therein. 
\end{remark}

\begin{remark}[Further references in robust sparse regression]
We complement our review mentioning some literature analyzing different contamination models, e.g. dense bounded noise \cite{2010wright:ma, 2013li, 2013nguyen:tran-dense, 2014foygel:mackey, 2018adcock:bao:jakeman:narayan}. This setting is also studied in \cite{2018karmalkar:price} with the LAD-estimator \cite{2007wang:li:jiang}. Alternatively, a refined analysis of iterative thresholding methods were considered in \cite{2015bhatia:jain:kar,   2017consistent:robust:regression, 2019suggala:bhatia:ravikumar:jain,   2019mukhoty:gopakumar:jain:kar}. They obtain sharp breakdown points and consistency bounds for the oblivious model. Works on sparse linear regression with covariate contamination were considered early on by \cite{2013chen:caramanis:mannor} and, more recently, in \cite{2017balakrishnan:du:li:singh}, albeit with worst rates and breakdown points compared to the response contamination model. Works by \cite{2012loh:wainwright, 2012loh:wainwright-AoS} have also studied the optimality of sparse linear regression in models with error-in-variables and missing-data covariates. Although out of scope, we mention for completeness that tractable algorithms for linear regression with covariate contamination have been  intensively investigated in the low-dimensional scaling ($n\ge p$), with initial works by \cite{2019diakonikolas:kamath:kane:li:steinhardt:stewart,     2019diakonikolas:kong:stewart,  2018prasad:suggala:balakrishnan:ravikumar} and more recent ones in \cite{2020depersin, 2020cherapanamjeri:aras:tripuraneni:jordan:flammarion:bartlett, 2020pensia:jog:loh,2022oliveira:rico:thompson}.
\end{remark}

\subsection{Robust trace regression}\label{ss:related:work:trace:regression}
The first bounds on trace regression (with no corruption) were presented, e.g., in \cite{2011negahban:wainwright, 2011rohde:tsybakov, 2012negahban:ravikumar:wainwright:yu,2016xia:koltchinskii} following the framework of $M$-estimation with decomposable regularizers.\footnote{The complementary works \cite{2018lecue:mendelson, 2019alquier:cottet:lecue, 2020chinot:lecue:lerasle} also study trace regression with different techniques. See Remark \ref{rem:comparison:chinot}.} Trace regression with label-feature contamination is studied in detail in \cite{2020gao}. This paper is based on Tukey's depth, a hard computational problem in high dimensions. The recent papers \cite{2021fan:wang:zhu,2023shen:li:jian-feng:xia} focus on models with heavy-tailed noise. The interesting paper \cite{2023shen:li:jian-feng:xia} considers label contamination as well, but follows a methodology based on non-convex optimization, presenting bounds for a gradient-descent method. As such, it is hard to compare their results with ours, as they follow different set of assumptions. For instance, they assume the oblivious model --- a more restrictive model than the adversarial one. Other minor differences include the assumptions that the covariance matrix is invertible, the noise has positive density around the origin\footnote{They use similar assumptions as in \cite{2018elsener:geer,2019alquier:cottet:lecue, 2020chinot:lecue:lerasle,2019chinot}.} and that some conditions are satisfied to ensure good initialization.

Robust trace regression is not considered in \cite{2019chinot,2019dalalyan:thompson}. Still, their methods could be applied to this problem. In that case, the exact same comments in Remarks  \ref{rem:comparison:chinot}-\ref{rem:comparison:dalalyan:thompson} would still apply --- changing $(s,\log p)$ by $(r,d_1+d_2)$. As mentioned in Section \ref{ss:related:work:sparse:regression}, our improvements on guarantees and assumptions are in fact consequences of new proof techniques and structural design properties motivated to study the broader model RTRMD. We discuss this point in the next section. 

\subsection{Robust trace regression with additive matrix decomposition}\label{ss:related:work:trace:regression:decomposition}
The statistical theory for this model is the main concern of this work. In fact, the results in Sections \ref{ss:related:work:sparse:regression}-\ref{ss:related:work:trace:regression} are consequences of the techniques needed to analyze the broader model RTRMD. As mentioned in the introduction, additive matrix decomposition was extensively studied in \cite{2009wright:ganesh:rao:peng:ma, 2011chandrasekaran:sanghavi:parrilo:willsky, 2011candes:li:ma:wright, 2011xu:caramanis:sanghavi, 2011hsu:kakade:zhang, 2011mccoy:tropp, 2012agarwal:negahban:wainwright}. Still, there is currently no optimality theory for additive matrix decomposition in trace regression --- nor its extension with label contamination.
In this preliminary section, we briefly comment on three design properties needed to establish an optimality theory for RTRMD. A detailed discussion is referred to later sections.

When the parameter is the sum of a low-rank and sparse matrices, the random design in trace regression is singular with high probability. We identify a concentration inequality for the product process (see our Theorem \ref{thm:product:process}) as the sufficient property to prove restricted strong convexity for this model. In Definition \ref{def:design:property} in Section \ref{s:subgaussian:properties}, we denote this property by $\PP$. To the best of our knowledge, this is a novel application of product processes in high-dimensional statistics. This technical property is ``sharp'', in the sense that replacing it with other naive methods, e.g. dual-norm inequalities, fail to entail the optimal rate.  

$\PP$ is no longer sufficient in case of label contamination. To handle it, we take inspiration from \cite{2019dalalyan:thompson}. This work identified one design property sufficient to handle label contamination when the parameter is sparse. Termed ``incoherence principle'' ($\IP$), it is  derived from Chevet's inequality. \cite{2019dalalyan:thompson} is not concerned with matrix decomposition, and as such, $\PP$ is unnecessary. We show that a generalized version of $\IP$ (see Definition \ref{def:design:property} in Section \ref{s:subgaussian:properties}) and the new property $\PP$ are \emph{jointly} sufficient properties to ensure restricted strong convexity for RTRMD and to optimaly control the ``design-corruption interaction''. Again, $\PP$ and $\IP$ are ``sharp'': mere use of dual-norm inequalities fail to achieve optimality. See Remarks \ref{rem:design:properties:agarwal:negahban:wainwright}-\ref{rem:design:properties:dalalyan:thompson} in Section \ref{s:subgaussian:properties} and Remarks \ref{rem:relevance:PP+IP}-\ref{rem:comparison:dalalyan:thompson:2} in Section \ref{s:proof:main:paper}. 

The third design property we use enables us to achieve the optimal $\delta$-subgaussian rate with $\delta$-adaptive estimators, even when the noise is feature-dependent. This property, denoted by $\MP$ in Definition \ref{def:design:property} in Section \ref{s:subgaussian:properties}, follows from a concentration inequality for the multiplier process (see Theorem \ref{thm:mult:process} in Section \ref{s:multiplier:process:main}). In the framework of $M$-estimation with decomposable regularizers, $\MP$ is a classical property  used to control the ``design-noise interaction'' \cite{BRT,2012agarwal:negahban:wainwright,2018bellec:lecue:tsybakov,2019dalalyan:thompson}. The typical way to prove it is via the dual-norm inequality \cite{BRT,2012agarwal:negahban:wainwright,2019dalalyan:thompson}. This approach fails to entail the subgaussian rate and $\delta$-adaptivity. \cite{2018bellec:lecue:tsybakov} was the first to succeed on this point, using a suitable version of 
$\MP$  (see Remark  \ref{rem:delta:adaptive:no:corruption}). Still, they assume feature-independent noise --- in case the parameter is sparse  and there is no contamination. Our version of $\MP$ in Definition \ref{def:design:property} in Section \ref{s:subgaussian:properties} is more general than \cite{2018bellec:lecue:tsybakov} so to handle feature-dependent noise, additive matrix decomposition and label contamination.

To finish, as discussed in Section \ref{ss:related:work:sparse:regression},  the relevance of sorted Huber-type losses also applies to RTRMD. Using the standard Huber's loss, we would have rate $\omega(\epsilon)\asymp\epsilon\log n$ instead of $\omega(\epsilon)\asymp\epsilon\log(1/\epsilon)$, breakdown point $\epsilon\le c/\log n$ instead of $c$, for some constant $c\in(0,1/2)$. See also Figure \ref{fig.robust.trace-reg-MD}(b) in Section \ref{s:simulation}.

\section{The multiplier and product processes}
\label{s:multiplier:process:main}
In this section we present concentration inequalities for subgaussian Multiplier and Product processes with optimal dependence on $(d_{\eff},\delta)$. The notation in this section is independent of all previous sections. Throughout this section, $(B,\calB,\probn)$ is a probability space, $(\xi,X)$ is a random (possibly not independent) pair taking values 
on $\re\times B$ and $X$ has marginal distribution $\probn$. $\{(\xi_i, X_i)\}_{i\in[n]}$ will denote an iid copy of $(\xi,X)$ and $\hat\probn$ be denotes the empirical measure associated to $\{X_i\}_{i\in[n]}$. The \emph{multiplier process} over functions $f\in F$ is defined as
$$
M(f):=\frac{1}{n}\sum_{i\in[n]}(\xi_if(X_i)-\esp[\xi f(X)]).
$$ 
For instance, the \emph{empirical process} is a particular case when 
$\xi\equiv1$. The \emph{product process} is defined as
$$
A(f,g):=\frac{1}{n}\sum_{i\in[n]}
\bigg\{f(X_i)g(X_i)-\esp f(X_i)g(X_i) \bigg\},
$$
over two distinct classes $F$ and $G$ of measurable functions. When $F=G$, the correspondent process is often termed the \emph{quadratic process}. 

There is a large literature on concentration of these processes and its use in risk minimization. One pioneering idea is of ``generic chaining'', first developed by Talagrand for the empirical process \cite{2014talagrand}. This method was refined by Dirksen, Bednorz, Mendelson and collaborators, e.g., in \cite{2007mendelson:pajor:tomczak-jaegermann, 2015dirksen, 2014bednorz, 2016mendelson}. The following notion of complexity is used in generic chaining bounds.\footnote{The pioneering work by Talagrand presented the, somewhat mysterious, $\gamma_2$-functional as a measure of complexity of the class. The ``truncated''  
$\gamma_{2,p}$-functional was presented recently by Dirksen \cite{2015dirksen}.}

\begin{definition}[$\gamma_{2,p}$-functional]
Let $(T,\dist)$ be a pseudo-metric space. We say a sequence $(T_k)$ of subsets of $T$ is \emph{admissible} if $|T_0|=1$ and $|T_k|\le 2^{2^{k}}$ for $k\in\mathbb{N}$ and $\cup_{k\ge0} T_k$ is dense in $T$. Let $\calA$ denote the class of all such admissible subset sequences. Given $p\ge1$, the 
$\gamma_{2,p}$-functional with respect to $(T,\dist)$ is the quantity
\begin{align}
\gamma_{2,p}(T):=\inf_{(T_k)\in\calA}\sup_{t\in T}\sum_{k\ge\lfloor\log_2 p\rfloor}2^{k/2}\dist(t,T_k).
\end{align}
We will say that $(T_k)\in\calA$ is optimal if it achieves the infimum above. Set $\gamma_{2}(T):=\gamma_{2,1}(T)$.
\end{definition}
Let $L_{\psi_2}=L_{\psi_2}(\probn)$ be the family of measurable functions $f:B\rightarrow\re$ having finite $\psi_2$-norm 
$$
\|f\|_{\psi_2}:=|f(X)|_{\psi_2}:=\inf\{c>0:\esp[\psi_2(\nicefrac{f(X)}{c})]\le1\}
$$
where $\psi_2(t):=e^{t^2}-1$. We assume that the $\psi_2$-norm of $\xi$, denoted also by 
$\Vert\xi\Vert_{\psi_2}$, is finite. Given $f,g\in L_{\psi_2}$, we define the pseudo-distance
$
\dist(f,g):=\|f-g\|_{\psi_2}.
$
Given a subclass $F\subset L_{\psi_2}$, we let $\Delta(F):=\sup_{f,f'\in F}\dist(f,f')$ and $\bar\Delta(F):=\sup_{f\in F}\dist(f,0)$. We prove the following two results in Sections B 
and C

\begin{theorem}[Multiplier process]\label{thm:mult:process}
There exists universal constant $c>0$, such that for all $f_0\in F$, $n\ge1$, $u\ge 1$ and $v\ge1$, with probability at least $1-ce^{-u/4}-ce^{-nv}$,
\begin{align}
\sup_{f\in F}|M(f)-M(f_0)|&\lesssim
\left(\sqrt{v}+1\right)\Vert\xi\Vert_{\psi_2}\frac{\gamma_2(F)}{\sqrt{n}}
+\left(\sqrt{\frac{2u}{n}}+\frac{u}{n}
+\sqrt{\frac{uv}{n}}\right)\Vert\xi\Vert_{\psi_2}\bar\Delta(F). 
\end{align}
\end{theorem}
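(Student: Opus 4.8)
The plan is to prove Theorem~\ref{thm:mult:process} by a chaining argument combined with Talagrand-type concentration for the supremum of the centered multiplier process. Since $(\xi,X)$ need not be independent, I cannot treat $\xi$ as an external multiplier with fixed sign; instead I would work directly with the $\psi_2$-norm of the product $\xi(f(X)-f_0(X))$. The starting observation is that for $f,f'\in F$ the increment $\xi(f(X)-f'(X))$ has $\psi_1$-norm controlled by $\Vert\xi\Vert_{\psi_2}\dist(f,f')$ (product of two $\psi_2$ random variables is $\psi_1$), so the centered increments $M(f)-M(f')$ satisfy a Bernstein-type tail: a sub-gaussian regime at scale $\Vert\xi\Vert_{\psi_2}\dist(f,f')/\sqrt n$ and a sub-exponential regime at scale $\Vert\xi\Vert_{\psi_2}\dist(f,f')/n$. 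This is exactly the mixed-tail setting handled by Dirksen's refinement of generic chaining, which produces a bound in terms of $\gamma_2(F)$ (from the sub-gaussian part) and $\gamma_1(F)$ or, more sharply, a $\gamma_{2,p}$-type term (from the sub-exponential part), plus a deviation term driven by the diameter $\Delta(F)$.

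The key steps, in order, are as follows. First I would fix the anchor $f_0\in F$ and, without loss of generality, recenter so that one only studies $\sup_{f\in F}|M(f)-M(f_0)|$; note $\dist(f,f_0)\le\bar\Delta(F)$ for all $f$, but the chaining itself uses the full pseudo-metric $\dist$ on $F$, not just distances to $0$. Second, I would invoke the generic chaining theorem for processes with mixed sub-gaussian/sub-exponential increments (e.g.\ Theorem~3.5 / Theorem~3.2 in \cite{2015dirksen}, or the version in \cite{2014bednorz}): with probability at least $1-ce^{-t}$,
\begin{align}
\sup_{f\in F}|M(f)-M(f_0)|
&\lesssim \frac{\Vert\xi\Vert_{\psi_2}}{\sqrt n}\big(\gamma_2(F)+\sqrt t\,\Delta(F)\big)
+ \frac{\Vert\xi\Vert_{\psi_2}}{n}\big(\gamma_1(F)+ t\,\Delta(F)\big).
\end{align}
Third, I would reconcile this with the statement's form, which has only $\gamma_2(F)$ and no $\gamma_1(F)$ term. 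The route is that Dirksen's ``truncated'' $\gamma_{2,p}$-functional lets one replace the problematic $\gamma_1$ term by $\gamma_{2,n}(F)/\sqrt n$-type contributions which, after the substitution $p\asymp n$, are absorbed into the $\gamma_2(F)/\sqrt n$ term and a diameter term; more directly, since in this paper $F$ is always a nice subset of a finite-dimensional space (image of a compact convex set under a linear map), one can bound $\gamma_1(F)\lesssim \gamma_2(F)\cdot\diam_{\psi_2}(F)^{?}$ — or, cleaner, just carry $\gamma_1$ through and observe it never dominates in the regime $u,v\ge 1$ used later. Fourth, I would do the change of variables from the single tail parameter $t$ to the two parameters $(u,v)$: set $t = u/4 + nv$ roughly, so that $e^{-t}\asymp e^{-u/4}e^{-nv}$, and expand $\sqrt t\lesssim \sqrt u+\sqrt{nv}$, $t\lesssim u + nv$. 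Plugging in gives the diameter contributions
\begin{align}
\frac{\Vert\xi\Vert_{\psi_2}}{\sqrt n}(\sqrt u+\sqrt{nv})\Delta(F)
+\frac{\Vert\xi\Vert_{\psi_2}}{n}(u+nv)\Delta(F)
\asymp \Vert\xi\Vert_{\psi_2}\Big(\sqrt{\tfrac{u}{n}}+\sqrt v + \tfrac{u}{n}+ v\Big)\Delta(F),
\end{align}
and the $\sqrt v\,$-part combines with the chaining term to yield the factor $(\sqrt v+1)$ in front of $\gamma_2(F)/\sqrt n$, exactly as stated; the remaining terms match $\big(\sqrt{2u/n}+u/n+\sqrt{uv/n}\big)$ up to constants and the replacement of $\Delta(F)$ by $\bar\Delta(F)$ (legitimate since $\Delta(F)\le 2\bar\Delta(F)$, or one recenters at $f_0$ and uses $\dist(f,f_0)\le\bar\Delta(F)$).

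The main obstacle I expect is step three: getting rid of the $\gamma_1(F)$ term cleanly and honestly. Naively, mixed-tail chaining genuinely produces a $\gamma_1$ contribution, and $\gamma_1$ can be much larger than $\gamma_2$. The resolution must exploit that here we only need the bound for $u,v\ge 1$ and that the sub-exponential scale carries an extra $1/n$ (versus $1/\sqrt n$ for the sub-gaussian scale): by Dirksen's truncated functional one has, for the product-type increments, $\gamma_1$-like terms appearing only as $\gamma_{2,n}(F)$ which satisfies $\gamma_{2,n}(F)\lesssim \gamma_2(F)$ plus lower-order pieces, so the net effect is an $n^{-1}$ term that is dominated by the stated bound whenever $n\ge 1$ — essentially because $\gamma_2(F)/\sqrt n$ already appears. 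A careful bookkeeping of which functional shows up at which scale, and verifying the promised clean two-parameter tail $ce^{-u/4}-ce^{-nv}$ rather than a single-parameter one, is where the real work lies; everything else is the standard dictionary between chaining bounds and Bernstein increments.
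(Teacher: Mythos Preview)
Your proposal has a genuine gap at exactly the point you flag as the ``main obstacle'': eliminating the $\gamma_1(F)$ term. None of the fixes you sketch actually work. The bound $\gamma_{2,n}(F)\lesssim\gamma_2(F)$ is true but irrelevant, since Dirksen's mixed-tail chaining produces $\gamma_1(F)/n$, not $\gamma_{2,n}(F)/n$; and appealing to the special structure of $F$ in the paper's applications is not allowed, since the theorem is stated for arbitrary subgaussian classes. More seriously, your two-parameter tail is manufactured incorrectly: setting $t=u/4+nv$ gives failure probability $e^{-u/4}e^{-nv}$, a product, whereas the theorem asserts a \emph{sum} $ce^{-u/4}+ce^{-nv}$, which signals an underlying union of two separate events. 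Finally, in your expansion the $\sqrt v$ factor lands on $\Delta(F)$, not on $\gamma_2(F)/\sqrt n$ as in the statement; your claim that it ``combines with the chaining term'' does not parse.

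The paper's proof resolves all three issues with one idea you are missing: on the subexponential portion of the chain (indices $k$ with $2^{k/2}\ge\sqrt n$), it does \emph{not} apply Bernstein to $M(\pi_{k+1}(f))-M(\pi_k(f))$ directly. Instead it uses Cauchy--Schwarz to decouple the multiplier from the class,
\[
|\hat\bfQ\,\xi(\pi_{k+1}(f)-\pi_k(f))|\le \Vert\bxi\Vert_n\cdot\Vert\pi_{k+1}(f)-\pi_k(f)\Vert_n,
\]
and then controls the two factors by \emph{separate} events: $\Vert\bxi\Vert_n\lesssim\sqrt v\,\Vert\xi\Vert_{\psi_2}$ with probability $1-ce^{-nv}$ (Bernstein on $\sum\xi_i^2$), and $\Vert\pi_{k+1}(f)-\pi_k(f)\Vert_n\lesssim(\sqrt u+2^{k/2})n^{-1/2}\dist(\pi_{k+1}(f),\pi_k(f))$ with probability $1-ce^{-u/4}$ (chaining on the quadratic process). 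This is why only $\gamma_2$ appears, why $\sqrt v$ multiplies $\gamma_2(F)/\sqrt n$, and why the failure probability is a sum. The subgaussian portion ($2^{k/2}\le\sqrt n$) is handled as you describe, and a lazy-walk chain is used so that $\sum_j\dist(\pi_j(f),\pi_{j-1}(f))\le 4\bar\Delta(F)$ geometrically, yielding the diameter terms.
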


\begin{theorem}[Product process]\label{thm:product:process}
Let $F,G$ be subclasses of $L_{\psi_2}$. There exist universal constants $c,C>0$, such that for all $n\ge1$ and $u\ge 1$, with probability at least $1-e^{-u}$,
\begin{align}
\sup_{(f,g)\in F\times G}\left|A(f,g)\right|&\le C\left[\frac{\gamma_{2}(F)\gamma_{2}(G)}{n}
+\bar\Delta(F)\frac{\gamma_{2}(G)}{\sqrt{n}}+\bar\Delta(G)\frac{\gamma_{2}(F)}{\sqrt{n}}\right]\\
&+c\sup_{(f,g)\in F\times G}\Vert fg-\probn fg\Vert_{\psi_1}\left(\sqrt{\frac{u}{n}}+\frac{u}{n}\right).
\end{align}
\end{theorem}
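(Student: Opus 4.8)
The plan is to derive the product process bound from a generic chaining estimate for the quadratic-type empirical process, splitting the centered product $fg-\probn fg$ into a ``diagonal'' piece and a genuine bilinear remainder. First I would fix optimal admissible sequences $(F_k)\in\calA$ for $F$ and $(G_k)\in\calA$ for $G$ achieving $\gamma_2(F)$ and $\gamma_2(G)$, and write the standard chaining decomposition $f=\sum_{k\ge0}(\pi_k f-\pi_{k-1}f)$, $g=\sum_{k\ge0}(\pi_k g-\pi_{k-1}g)$, where $\pi_k$ denotes the nearest-point projection onto $F_k$ (resp.\ $G_k$). Multiplying these out, the product $fg$ becomes a double sum over chaining links; the key algebraic identity is $f(X_i)g(X_i)=\sum_{j,k}\Delta_j f(X_i)\,\Delta_k g(X_i)$ with $\Delta_j f:=\pi_j f-\pi_{j-1}f$, and one controls the centered sum $A(f,g)$ link by link. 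Each increment $\Delta_j f(X)\Delta_k g(X)$ is a product of two subgaussian random variables, hence subexponential, with $\psi_1$-norm bounded by $\|\Delta_j f\|_{\psi_2}\|\Delta_k g\|_{\psi_2}$; for these one invokes Bernstein's inequality (or the subexponential version of Bernstein) to get, with probability at least $1-e^{-u_{jk}}$,
\begin{align}
\left|\frac1n\sum_{i\in[n]}\big(\Delta_j f(X_i)\Delta_k g(X_i)-\probn \Delta_j f\,\Delta_k g\big)\right|
\lesssim \|\Delta_j f\|_{\psi_2}\|\Delta_k g\|_{\psi_2}\left(\sqrt{\tfrac{u_{jk}}{n}}+\tfrac{u_{jk}}{n}\right).
\end{align}

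The second step is the bookkeeping of the union bound over all pairs $(j,k)$ and all pairs of points in $F_j\times G_k$. Since $|F_j|\le 2^{2^j}$ and $|G_k|\le 2^{2^k}$, there are at most $2^{2^j}\cdot 2^{2^k}\le 2^{2^{j+1}+2^{k+1}}$ relevant increment pairs at level $(j,k)$; choosing the exceedance level $u_{jk}\asymp u+2^j+2^k$ makes the failure probabilities summable to $c\,e^{-u}$ after summing the geometric series. With this choice, the ``slow'' term $\sqrt{u_{jk}/n}$ contributes $\|\Delta_j f\|_{\psi_2}\|\Delta_k g\|_{\psi_2}(2^{j/2}+2^{k/2})/\sqrt n$; summing over $j$ and $k$ and using $\sum_j 2^{j/2}\|\Delta_j f\|_{\psi_2}\le 2\gamma_2(F)$ produces exactly the three terms $\gamma_2(F)\gamma_2(G)/n$, $\bar\Delta(F)\gamma_2(G)/\sqrt n$, $\bar\Delta(G)\gamma_2(F)/\sqrt n$ (the two middle ones coming from the $j=0$ or $k=0$ edge of the double sum, where $\|\pi_0 f\|_{\psi_2}\le\bar\Delta(F)$). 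The ``fast'' term $u_{jk}/n$ is handled separately: after extracting the part proportional to $u$ one is left with $\sum_{j,k}\|\Delta_j f\|_{\psi_2}\|\Delta_k g\|_{\psi_2}(2^j+2^k)/n$, which again telescopes against the $\gamma_2$ functionals; and the genuinely $u$-dependent piece gets absorbed into the last displayed term $c\sup_{F\times G}\|fg-\probn fg\|_{\psi_1}(\sqrt{u/n}+u/n)$ once one notes that $\|fg-\probn fg\|_{\psi_1}$ dominates the increments at the top of the chain.

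The step I expect to be the main obstacle is making the union-bound accounting uniform over the whole classes $F\times G$ rather than over fixed finite nets, i.e.\ handling the tails of the chaining sums so that the exceptional set does not depend on the particular $(f,g)$. The clean way is to run the chaining only down to the levels where the increments are still ``large'' and to absorb the residual tail $\sum_{j>j_0}\Delta_j f$ into a single term controlled by $\Delta(F)$ or by the $\psi_1$-diameter of the product class; this is exactly where the last term of the statement, with $\sup_{(f,g)}\|fg-\probn fg\|_{\psi_1}$, enters, and getting its coefficient to be a pure absolute constant (independent of $n$) requires care with how the $u/n$ contributions from different levels are combined. An alternative, which I would keep in reserve, is to bound $A(f,g)$ by a quadratic process applied to $f+g$ and to $f-g$ via polarization, reducing everything to Dirksen's $\gamma_{2,p}$-type bound for the quadratic/empirical process \cite{2015dirksen}; that shortcut replaces the delicate diagonal/off-diagonal split with an appeal to an existing theorem, at the cost of slightly worse absolute constants and a less transparent dependence on $\bar\Delta(F)$, $\bar\Delta(G)$ separately. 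Either route yields the stated high-probability bound with universal constants $c,C$.
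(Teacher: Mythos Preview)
Your main chaining scheme has a genuine gap in the handling of the ``fast'' Bernstein term. With $u_{jk}\asymp u+2^j+2^k$, the contribution
\[
\sum_{j,k}\|\Delta_j f\|_{\psi_2}\|\Delta_k g\|_{\psi_2}\,\frac{2^j+2^k}{n}
\]
does \emph{not} telescope against the $\gamma_2$ functionals: the inner sum $\sum_j 2^j\|\Delta_j f\|_{\psi_2}$ is a $\gamma_1$-type quantity, not $\gamma_2$, and in general is not controlled by $\gamma_2(F)\gamma_2(G)/n$ or by $\sqrt{n}\,\gamma_2(F)$. This is exactly the obstruction Dirksen's method is built to avoid for the quadratic process, and the same device is needed here. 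The paper chains along a \emph{single} index $k$ (moving $f$ and $g$ simultaneously from level $k$ to $k\pm1$) and splits at $k_0:=\min\{k:2^{k/2}>\sqrt n\}$. On the subgaussian path $k<k_0$ one has $2^k/n\le 2^{k/2}/\sqrt n$, so Bernstein suffices and the fast term is absorbed by the slow one, yielding the cross terms $\bar\Delta(F)\gamma_2(G)/\sqrt n+\bar\Delta(G)\gamma_2(F)/\sqrt n$. On the subexponential path $k\ge k_0$ Bernstein is abandoned entirely: one writes $|\hat\probn\,\pi_k(f)[\Pi_{k+1}(g)-\Pi_k(g)]|\le\|\pi_k(f)\|_n\|\Pi_{k+1}(g)-\Pi_k(g)\|_n$ by Cauchy--Schwarz, bounds $\|\pi_k(f)\|_n$ via Dirksen's quadratic-process inequality (which contributes $\bar\Delta(F)+\gamma_2(F)/\sqrt n$), and bounds the second factor by the pointwise deviation $\lesssim 2^{k/2}\dist(\Pi_{k+1}(g),\Pi_k(g))/\sqrt n$. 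Summing over $k\ge k_0$ then produces the product term $\gamma_2(F)\gamma_2(G)/n$ together with the cross terms. Your truncation remark (``run the chaining only down to the levels where the increments are still large'') is pointed at the uniformity issue, not at this $\gamma_1$-versus-$\gamma_2$ obstruction; it does not repair the argument as written.

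Your polarization fallback also fails to give the stated bound. Applying Dirksen's quadratic-process inequality to $F\pm G$ yields diagonal terms of order $\gamma_2(F)^2/n$, $\bar\Delta(F)\gamma_2(F)/\sqrt n$ and their $G$ analogues, which are not dominated by the cross terms in the theorem when $F$ and $G$ have very different complexities. The paper explicitly notes this in Remark~\ref{rem:multiplier:process:mendelson}: the product-process bound cannot be derived from the quadratic-process inequality via the parallelogram law, and the asymmetric dependence on $\bar\Delta(F),\gamma_2(F)$ versus $\bar\Delta(G),\gamma_2(G)$ is essential for the downstream application to $\PP_{\calR,\calS}$.
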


\begin{remark}[Confidence level \& complexity]\label{rem:multiplier:process:mendelson}
Mendelson \cite{2016mendelson} established impressive concentration inequalities for the multiplier and product processes. In fact, they hold for much more general $(\xi,X)$  having heavier tails (see Theorems 1.9, 1.13 and 4.4 in \cite{2016mendelson}). When specifying these bounds to  subgaussian classes and noise, however, the confidence parameter $u>0$ multiplies the complexities $\gamma_2(F)$ and 
$\gamma_2(G)$ --- unlike our Theorems \ref{thm:mult:process}-\ref{thm:product:process}.
For a related discussion regarding the empirical and quadratic processes, we refer to Remark 3.3(ii) and observations before Corollary 5.7 in Dirksen's paper \cite{2015dirksen}. This technical point is crucial in our proof to show that our class of estimators attain the $\delta$-subgaussian rate in the high-dimensional regime with $\delta$-adaptive estimators. We refer to Section \ref{s:MP:in:M-estimation} for a discussion on this topic. Note that we can take $v\asymp1$ for failure probability $\delta\ge e^{-c'n}$ for absolute constant $c'>0$. Our proofs are motivated by  Dirksen's method for the quadratic process \cite{2015dirksen} and Talagrand's proof for the empirical process \cite{2014talagrand}.\footnote{They are not corollaries of Dirksen's results. For instance, Theorem \ref{thm:product:process} cannot be derived from the quadratic process inequality and the parallelogram law. Indeed, we fundamentally need $F\neq Q$.}
\end{remark}

\section{Properties for subgaussian distributions}\label{s:subgaussian:properties}
In what follows,
$
\frM(\bfV,\bu):=\frX(\bfV)+\sqrt{n}\bu, 
$
$\calR$ and $\calS$ are norms on $\mdR^p$ and $\calQ$ is a norm on $\re^n$. Throughout this section, $(\bfX,\xi)\in\mdR^p\times\re$ satisfies Assumption \ref{assump:distribution:subgaussian}. Next, we define the empirical bilinear form
$$
\llangle\bfV,\bfW\rrangle_n:=\frac{1}{n}\sum_{i\in[n]}
\llangle\bfX_i,\bfV\rrangle\llangle\bfX_i,\bfW\rrangle = \langle\frX^{(n)}(\bfV),\frX^{(n)}(\bfW)\rangle. 
$$
In what follows, $\{\sa_i,\sb_i,\sc_i,\sd_i,\sf_i\}$ are fixed positive numbers.

\begin{definition}\label{def:design:property}
\begin{itemize}
\quad
\item[\rm (i)] $\frX$ satisfies 
$\TP_{\calR}(\sa_1,\sa_2)$ if for all $\bfV\in\mdR^p$, 
\begin{align}
\big\|\frX^{(n)}(\bfV)\big\|_2\ge \sa_1\Vert\bfV\Vert_\Pi-\sa_2\calR(\bfV).
\end{align}
\item[\rm (ii)] $\frX$ satisfies $\PP_{\calR,\calS}(\sc_1,\sc_2,\sc_3,\sc_4)$ if for all $[\bfV,\bfW]\in(\mdR^p)^2$, 
\begin{align}
\left|\llangle\bfV,\bfW\rrangle_n-\llangle\bfV,\bfW\rrangle_\Pi\right|&\le \sc_1\left\Vert\bfV\right\Vert_{\Pi}
\Vert\bfW\Vert_\Pi+\sc_2\calR(\bfV)\Vert\bfW\Vert_\Pi+\sc_3\left\Vert\bfV\right\Vert_\Pi\calS(\bfW)\\
&\quad+\sc_4\calR(\bfV)\calS(\bfW).
\end{align}
\item[\rm (iii)] $\frX$ satisfies 
$\IP_{\calR,\calS,\calQ}(\sb_1,\sb_2,\sb_3,\sb_4)$ if for all $[\bfV,\bfW,\bu]\in(\mdR^p)^2\times\re^n$,
\begin{align}
|\langle\bu,\frX^{(n)}(\bfV+\bfW)\rangle|&\le \sb_1\left\Vert[\bfV,\bfW]\right\Vert_{\Pi}
\Vert\bu\Vert_2 +\sb_2\calR(\bfV)\Vert\bu\Vert_2 +\sb_3\calS(\bfW)\Vert\bu\Vert_2\\
&+\sb_4\left\Vert[\bfV,\bfW]\right\Vert_\Pi\calQ(\bu).
\end{align}
\item[\rm (iv)] $\frX$ satisfies $\ATP_{\calR,\calS,\calQ}(\sd_1,\sd_2,\sd_3,\sd_4)$ if for all 
$[\bfV,\bfW,\bu]\in(\mdR^p)^2\times\re^n$,
\begin{align}
\left\{
\Vert\frM^{(n)}(\bfV+\bfW,\bu)\Vert_2^2 -2\llangle\bfV,\bfW\rrangle_{\Pi}
\right\}_+^{\frac{1}{2}}&\ge 
\sd_1\Vert[\bfV,\bfW,\bu]\Vert_\Pi-\sd_2\calR(\bfV)-\sd_3\calS(\bfW)-\sd_4\calQ(\bu). 
\end{align}
\item[\rm (v)] $(\frX,\bxi)$ satisfies $\MP_{\calR,\calS,\calQ}(\sf_1,\sf_2,\sf_3,\sf_4)$ if 
for all $[\bfV,\bfW,\bu]\in(\mdR^p)^2\times\re^n$,
\begin{align}
|\langle\bxi^{(n)},\frM^{(n)}(\bfV+\bfW,\bu)\rangle|\le 
\sf_1\Vert[\bfV,\bfW,\bu]\Vert_\Pi
+\sf_2\calR(\bfV)
+\sf_3\calS(\bfW)
+\sf_4\calQ(\bu).
\end{align}
\end{itemize}
\end{definition}

In the next lemmas, we show that $\TP$ and $\ATP$ are consequences of $\PP$ and $\IP$.
\begin{lemma}\label{lemma:TP}
Suppose $\frX$ satisfies $\PP_{\calR,\calR}(\alpha_1,\alpha_2,\alpha_3,\alpha_4)$ with $\alpha_1\in(0,1)$. Then $\TP_{\calR}(\sa_1,\sa_2)$ holds with constants
$
\sa_1:=\sqrt{(3/4)(1-\alpha_1)}
$
and
$
\sa_2:=\left\{
\frac{(\alpha_2+\alpha_3)^2}{(1-\alpha_1)} + \alpha_4
\right\}^{1/2}.
$
\end{lemma}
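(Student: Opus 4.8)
The plan is to derive the claimed lower bound $\|\frX^{(n)}(\bfV)\|_2 \ge \sa_1 \Vert\bfV\Vert_\Pi - \sa_2\calR(\bfV)$ directly from the product-process property with $\calS=\calR$. First I would note the key identity $\|\frX^{(n)}(\bfV)\|_2^2 = \llangle\bfV,\bfV\rrangle_n$, which follows from the definition of the empirical bilinear form. Applying $\PP_{\calR,\calR}(\alpha_1,\alpha_2,\alpha_3,\alpha_4)$ with $\bfW=\bfV$ gives
\begin{align*}
\llangle\bfV,\bfV\rrangle_n \ge \llangle\bfV,\bfV\rrangle_\Pi - \alpha_1\Vert\bfV\Vert_\Pi^2 - (\alpha_2+\alpha_3)\calR(\bfV)\Vert\bfV\Vert_\Pi - \alpha_4\calR(\bfV)^2,
\end{align*}
and since $\llangle\bfV,\bfV\rrangle_\Pi = \Vert\bfV\Vert_\Pi^2$, the right-hand side equals $(1-\alpha_1)\Vert\bfV\Vert_\Pi^2 - (\alpha_2+\alpha_3)\calR(\bfV)\Vert\bfV\Vert_\Pi - \alpha_4\calR(\bfV)^2$.

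The remaining task is a purely algebraic lower bound: I want to show that this quadratic-form-in-$(\Vert\bfV\Vert_\Pi,\calR(\bfV))$ expression is at least $(\sa_1\Vert\bfV\Vert_\Pi - \sa_2\calR(\bfV))^2$ for the stated constants, after which taking square roots (both sides nonnegative, using $\{t\}_+$ implicitly since $\|\frX^{(n)}(\bfV)\|_2\ge 0$ anyway) finishes the proof. Writing $a := \Vert\bfV\Vert_\Pi \ge 0$ and $b := \calR(\bfV)\ge 0$, I would handle the cross term via Young's inequality: for any $\eta>0$, $(\alpha_2+\alpha_3)ab \le \tfrac{\eta}{2}a^2 + \tfrac{(\alpha_2+\alpha_3)^2}{2\eta}b^2$. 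Choosing $\eta = (1-\alpha_1)/2$ absorbs half of the $(1-\alpha_1)a^2$ term, leaving $\tfrac{1-\alpha_1}{2}a^2 - \bigl(\tfrac{(\alpha_2+\alpha_3)^2}{1-\alpha_1} + \alpha_4\bigr)b^2$. One then checks $\tfrac{1-\alpha_1}{2}a^2 - \sa_2^2 b^2 \ge (\sqrt{3(1-\alpha_1)/4}\,a - \sa_2 b)^2$? — actually the cleaner route is to use the elementary bound $x^2 - y^2 \le (x-y)^2$ is false in general, so instead I would verify directly that $\tfrac{3}{4}(1-\alpha_1)a^2 - \sa_2^2 b^2 \le \bigl((1-\alpha_1)a^2 - (\alpha_2+\alpha_3)ab - \alpha_4 b^2\bigr)$ by rearranging into $\tfrac{1}{4}(1-\alpha_1)a^2 - (\alpha_2+\alpha_3)ab + \tfrac{(\alpha_2+\alpha_3)^2}{1-\alpha_1}b^2 \ge 0$, which is exactly $\bigl(\tfrac{\sqrt{1-\alpha_1}}{2}a - \tfrac{\alpha_2+\alpha_3}{\sqrt{1-\alpha_1}}b\bigr)^2 \ge 0$. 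This is a perfect square, so the inequality holds unconditionally, and then $(\sqrt{(3/4)(1-\alpha_1)}\,a - \sa_2 b)^2 \le \sqrt{(3/4)(1-\alpha_1)}\,a^2\cdots$ — more precisely, since $\sqrt{(3/4)(1-\alpha_1)}\,a - \sa_2 b$ may be negative I use $\|\frX^{(n)}(\bfV)\|_2^2 \ge \tfrac{3}{4}(1-\alpha_1)a^2 - \sa_2^2 b^2 \ge (\sa_1 a - \sa_2 b)^2$ only when the latter is the relevant comparison; cleanly, one notes $\|\frX^{(n)}(\bfV)\|_2 \ge \{\tfrac34(1-\alpha_1)a^2 - \sa_2^2 b^2\}_+^{1/2}$ and then uses $\{u^2 - w^2\}_+^{1/2} \ge u - w$ for $u,w\ge0$.

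I do not anticipate any serious obstacle: the entire argument is the substitution $\bfW = \bfV$ into $\PP$ followed by completing the square. The only point requiring a little care is matching the precise constants $\sa_1 = \sqrt{(3/4)(1-\alpha_1)}$ and $\sa_2 = \{(\alpha_2+\alpha_3)^2/(1-\alpha_1) + \alpha_4\}^{1/2}$, which pins down the splitting coefficient in Young's inequality (taking $\tfrac14$ of the $(1-\alpha_1)a^2$ mass for the square and leaving $\tfrac34$), and the final step of passing from the squared inequality to the stated linear one via the elementary fact that $u \ge \{u^2\}_+^{1/2} - $ nothing, i.e. $\sqrt{s} \ge \sqrt{s - t} \ge \sqrt{s}-\sqrt{t}$ for $s\ge t\ge 0$ applied with $s = \tfrac34(1-\alpha_1)a^2$ and $t = \sa_2^2 b^2$. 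That last fact is just subadditivity of $\sqrt{\cdot}$, so the whole proof is a few lines.
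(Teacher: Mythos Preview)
Your proposal is correct and takes essentially the same approach as the paper: apply $\PP_{\calR,\calR}$ with $\bfW=\bfV$, absorb the cross term $(\alpha_2+\alpha_3)ab$ via Young's inequality/completing the square with exactly the coefficient $\tfrac14(1-\alpha_1)$ (the paper phrases this as choosing $\alpha^2 = (1-\alpha_1)/2$ in $ab\le \tfrac{\alpha^2}{2}a^2+\tfrac{1}{2\alpha^2}b^2$, which is your perfect-square identity), and then pass to the linear bound using $\{u^2-w^2\}_+^{1/2}\ge u-w$. The exposition could be tightened --- the false starts and self-corrections should be excised --- but the argument is right and matches the paper's line for line.
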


\begin{lemma}[Lemma 7 in \cite{2019dalalyan:thompson}]\label{lemma:ATP:calS=0}
Suppose $\frX$ satisfies $\TP_{\calR}(\sa_1,\sa_2)$ and $\IP_{\calR,0,\calQ}(\sb_1,\sb_2,0,\sb_4)$. Suppose further that $\sa_1\in(0,1)$ and 
$
\sb_1<3\sa_1^2/4. 
$
Then $\ATP_{\calR,0,\calQ}(\sd_1,\sd_2,0,\sd_4)$ holds with constants 
$\sd_1:=\sqrt{(\nicefrac{3}{4})\sa_1^2 - \sb_1}$, 
$
\sd_2:=\frac{2\sb_2}{\sa_1}+\sa_2, 
$
and $\sd_4:=\frac{2\sb_4}{\sa_1}$.
\end{lemma}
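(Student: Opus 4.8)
The plan is to derive $\ATP_{\calR,0,\calQ}$ directly from $\TP_\calR$ and $\IP_{\calR,0,\calQ}$ by a quadratic‑form computation, mirroring the argument behind Lemma~7 of \cite{2019dalalyan:thompson}. Since $\calS\equiv 0$ one takes $\bfW=\bf0$, so that $\frM^{(n)}(\bfV,\bu)=\frX^{(n)}(\bfV)+\bu$, $\llangle\bfV,\bf0\rrangle_\Pi=0$, $\Vert[\bfV,\bf0,\bu]\Vert_\Pi=\Vert[\bfV,\bu]\Vert_\Pi$, and $\{\Vert\frM^{(n)}(\bfV,\bu)\Vert_2^2\}_+^{1/2}=\Vert\frX^{(n)}(\bfV)+\bu\Vert_2$; the target becomes
\[
\Vert\frX^{(n)}(\bfV)+\bu\Vert_2\ \ge\ \sd_1\Vert[\bfV,\bu]\Vert_\Pi-\sd_2\calR(\bfV)-\sd_4\calQ(\bu)\qquad\text{for all }[\bfV,\bu]\in\mdR^p\times\re^n,
\]
with the stated $\sd_1,\sd_2,\sd_4$. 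I would prove this by lower bounding the squared left‑hand side and then taking square roots, using repeatedly the elementary inequalities $[a-b]_+^2\ge\tfrac34a^2-3b^2$, $2ab\le a^2+b^2$, and $\sqrt{[a-b_1-\cdots-b_k]_+}\ge\sqrt a-\sqrt{b_1}-\cdots-\sqrt{b_k}$ for nonnegative reals.

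First, expand $\Vert\frX^{(n)}(\bfV)+\bu\Vert_2^2=\Vert\frX^{(n)}(\bfV)\Vert_2^2+2\langle\frX^{(n)}(\bfV),\bu\rangle+\Vert\bu\Vert_2^2$. The diagonal term is handled by $\TP_\calR(\sa_1,\sa_2)$: squaring it and using $[a-b]_+^2\ge\tfrac34a^2-3b^2$ yields $\Vert\frX^{(n)}(\bfV)\Vert_2^2\ge\tfrac34\sa_1^2\Vert\bfV\Vert_\Pi^2-3\sa_2^2\calR(\bfV)^2$, while its \emph{linear} rearrangement $\sa_1\Vert\bfV\Vert_\Pi\le\Vert\frX^{(n)}(\bfV)\Vert_2+\sa_2\calR(\bfV)$ will be kept on hand to trade stray factors of $\Vert\bfV\Vert_\Pi$ for $\Vert\frX^{(n)}(\bfV)\Vert_2$ (this is what produces the $1/\sa_1$ in $\sd_2,\sd_4$). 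The cross term is bounded below via $\IP_{\calR,0,\calQ}(\sb_1,\sb_2,0,\sb_4)$, namely $2|\langle\frX^{(n)}(\bfV),\bu\rangle|\le 2\sb_1\Vert\bfV\Vert_\Pi\Vert\bu\Vert_2+2\sb_2\calR(\bfV)\Vert\bu\Vert_2+2\sb_4\Vert\bfV\Vert_\Pi\calQ(\bu)$, together with $2\sb_1\Vert\bfV\Vert_\Pi\Vert\bu\Vert_2\le\sb_1\bigl(\Vert\bfV\Vert_\Pi^2+\Vert\bu\Vert_2^2\bigr)$. Collecting the three pieces and using $\sa_1<1$ to lower the coefficient $1-\sb_1$ of $\Vert\bu\Vert_2^2$ down to $\tfrac34\sa_1^2-\sb_1=\sd_1^2$ (here the hypothesis $\sb_1<\tfrac34\sa_1^2$ enters, guaranteeing $\sd_1^2>0$), one reaches an intermediate bound of the shape
\[
\Vert\frX^{(n)}(\bfV)+\bu\Vert_2^2\ \ge\ \sd_1^2\Vert[\bfV,\bu]\Vert_\Pi^2-3\sa_2^2\calR(\bfV)^2-2\sb_2\calR(\bfV)\Vert\bu\Vert_2-2\sb_4\Vert\bfV\Vert_\Pi\calQ(\bu).
\]

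The remaining — and main — work is to absorb the two mixed terms $\calR(\bfV)\Vert\bu\Vert_2$ and $\Vert\bfV\Vert_\Pi\calQ(\bu)$ and to consolidate every contribution of $\calR(\bfV)$ into the single coefficient $\sd_2=\tfrac{2\sb_2}{\sa_1}+\sa_2$ and of $\calQ(\bu)$ into $\sd_4=\tfrac{2\sb_4}{\sa_1}$. For the $\sb_4$‑term I would substitute $\sa_1\Vert\bfV\Vert_\Pi\le\Vert\frX^{(n)}(\bfV)\Vert_2+\sa_2\calR(\bfV)$ and then the triangle inequality $\Vert\frX^{(n)}(\bfV)\Vert_2\le\Vert\frX^{(n)}(\bfV)+\bu\Vert_2+\Vert\bu\Vert_2$, so that $2\sb_4\Vert\bfV\Vert_\Pi\calQ(\bu)$ turns into $\sd_4\calQ(\bu)\bigl(\Vert\frX^{(n)}(\bfV)+\bu\Vert_2+\Vert\bu\Vert_2+\sa_2\calR(\bfV)\bigr)$; the piece proportional to $\Vert\frX^{(n)}(\bfV)+\bu\Vert_2$ is reabsorbed by completing the square on the left, and the rest by Young's inequality with weights calibrated so that, after the square root, the coefficient of $\calQ(\bu)$ is exactly $\tfrac{2\sb_4}{\sa_1}$; the $\sb_2$‑term (and the residual $\calR(\bfV)\calQ(\bu)$ and $\Vert\bu\Vert_2\calQ(\bu)$ terms it spawns) is treated symmetrically, bounding $\Vert\bu\Vert_2\le\Vert[\bfV,\bu]\Vert_\Pi$ where necessary. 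A final application of $\sqrt{[a-\sum_i b_i]_+}\ge\sqrt a-\sum_i\sqrt{b_i}$ converts the squared bound into the claimed linear one, once one checks that the accumulated coefficient of $\calR(\bfV)$ does not exceed $\tfrac{2\sb_2}{\sa_1}+\sa_2$. The genuinely delicate point throughout is the calibration of these Young‑type weights and the order of the two uses of $\TP$ — its linear form versus its squared consequence — which is precisely what makes the constants come out as $\sd_1,\sd_2,\sd_4$ rather than larger ones.
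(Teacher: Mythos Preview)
Your overall plan---expand the square, feed in $\TP$ and $\IP$, absorb cross terms with Young, then take a square root---is the right shape, but the execution you describe cannot produce the stated constants, and the part you flag as ``genuinely delicate'' is in fact where the argument breaks.

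The first issue is your use of $[a-b]_+^2\ge\tfrac34 a^2-3b^2$ on $\TP_\calR$. This immediately puts $-3\sa_2^2\calR(\bfV)^2$ into the squared bound; after $\sqrt{[a-\sum b_i]_+}\ge\sqrt a-\sum\sqrt{b_i}$ this term alone contributes $\sqrt3\,\sa_2\calR(\bfV)$, already exceeding the $\sa_2$ portion of $\sd_2=\tfrac{2\sb_2}{\sa_1}+\sa_2$. No later calibration can undo this loss, so the final ``check that the accumulated coefficient of $\calR(\bfV)$ does not exceed $\tfrac{2\sb_2}{\sa_1}+\sa_2$'' fails. The paper (see the proof of Lemma~\ref{lemma:ATP}, which specializes to the present case with $\bfW=\mathbf 0$ and $\sc_i=0$) avoids this entirely: it does \emph{not} square $\TP$, but instead uses the elementary inequality $\sqrt{(A+B)^2+C}\le\sqrt{A^2+C}+B$ with $A=\Vert\frX^{(n)}(\bfV)\Vert_2$, $B=\sa_2\calR(\bfV)$, $C=(\Vert\bu\Vert_2^2-b\Vert[\bfV,\bu]\Vert_\Pi^2)_+$, thereby pulling $\sa_2\calR(\bfV)$ out \emph{linearly} before any quadratic manipulation. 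This is what makes the coefficient come out as $\sa_2$ rather than $\sqrt3\,\sa_2$.

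The second issue is your treatment of the $\sb_2$ and $\sb_4$ terms. Routing $2\sb_4\Vert\bfV\Vert_\Pi\calQ(\bu)$ through the linear $\TP$ bound and then the triangle inequality for $\Vert\frX^{(n)}(\bfV)\Vert_2$ spawns residual products $\calR(\bfV)\calQ(\bu)$ and $\Vert\bu\Vert_2\calQ(\bu)$ that have no counterpart in the $\ATP$ inequality; absorbing them by further Young steps necessarily worsens the coefficients of $\calR$ and $\calQ$, so you cannot land on exactly $\tfrac{2\sb_2}{\sa_1}$ and $\tfrac{2\sb_4}{\sa_1}$. The paper's route is much simpler: apply Young's inequality \emph{directly} to the $\IP$ cross terms with the single weight $\alpha=\sa_1/2$, namely $2\sb_2\calR(\bfV)\Vert\bu\Vert_2\le\alpha^2\Vert\bu\Vert_2^2+\tfrac{\sb_2^2}{\alpha^2}\calR(\bfV)^2$ and $2\sb_4\Vert\bfV\Vert_\Pi\calQ(\bu)\le\alpha^2\Vert\bfV\Vert_\Pi^2+\tfrac{\sb_4^2}{\alpha^2}\calQ(\bu)^2$, together with $2\sb_1\Vert\bfV\Vert_\Pi\Vert\bu\Vert_2\le\sb_1(\Vert\bfV\Vert_\Pi^2+\Vert\bu\Vert_2^2)$. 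The total $(\sb_1+\alpha^2)\Vert[\bfV,\bu]\Vert_\Pi^2$ is precisely what is subtracted in the definition of $b$, leaving $\sd_1^2=\sa_1^2-\sb_1-\alpha^2=\tfrac34\sa_1^2-\sb_1$; and after the square root the remaining squared terms give exactly $\tfrac{\sb_2}{\alpha}=\tfrac{2\sb_2}{\sa_1}$ and $\tfrac{\sb_4}{\alpha}=\tfrac{2\sb_4}{\sa_1}$. No completing-the-square or reabsorption step is needed.
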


\begin{lemma}\label{lemma:ATP}
Suppose $\frX$ satisfies $\TP_{\calR}(\sa_1,\sa_2)$, $\TP_{\calS}(\bar\sa_1,\bar\sa_2)$, $\IP_{\calR,\calS,\calQ}(\sb_1,\sb_2,\sb_3,\sb_4)$ and 
$\PP_{\calR,\calS}(\sc_1,\sc_2,\sc_3,\sc_4)$ with $\sc_4=\gamma\sc_2\sc_3$ for some $\gamma>0$. Suppose further that $\sa_1,\bar\sa_1\in(0,1)$ and 
$
\sb_1+\sc_1<3(\sa_1\wedge\bar\sa_1)^2/4. 
$
Then $\ATP_{\calR,\calS,\calQ}(\sd_1,\sd_2,\sd_3,\sd_4)$ holds with constants $\sd_1:=\sqrt{(\nicefrac{3}{4})(\sa_1\wedge\bar\sa_1)^2 - (\sb_1+\sc_1)}$ and
\begin{align}
\sd_2:=\left\{\frac{8(\sb_2^2+\sc_2^2)}{(\sa_1\wedge\bar\sa_1)^2}+\gamma\sc_2^2\right\}^{\frac{1}{2}}+\sa_2,\quad
\sd_3:=\left\{\frac{8(\sb_3^2+\sc_3^2)}{(\sa_1\wedge\bar\sa_1)^2}
+\gamma\sc_3^2\right\}^{\frac{1}{2}}+\bar\sa_2,\quad
\sd_4:=\frac{2\sqrt{2}\sb_4}{(\sa_1\wedge\bar\sa_1)}.
\end{align}
\end{lemma}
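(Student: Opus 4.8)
The plan is to mimic the structure of Lemma \ref{lemma:ATP:calS=0} (Lemma 7 in \cite{2019dalalyan:thompson}) but carry the extra sparse component $\bfW$ through the argument. Fix $[\bfV,\bfW,\bu]\in(\mdR^p)^2\times\re^n$ and abbreviate $\frM:=\frM^{(n)}(\bfV+\bfW,\bu)=\frX^{(n)}(\bfV+\bfW)+\bu$. First I would expand
\begin{align}
\Vert\frM\Vert_2^2 = \Vert\frX^{(n)}(\bfV+\bfW)\Vert_2^2 + 2\langle\bu,\frX^{(n)}(\bfV+\bfW)\rangle + \Vert\bu\Vert_2^2,
\end{align}
and then subtract $2\llangle\bfV,\bfW\rrangle_\Pi$. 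Using $\llangle\bfV,\bfW\rrangle_n = \langle\frX^{(n)}(\bfV),\frX^{(n)}(\bfW)\rangle$ together with the parallelogram-type identity $\Vert\frX^{(n)}(\bfV+\bfW)\Vert_2^2 = \Vert\frX^{(n)}(\bfV)\Vert_2^2 + 2\llangle\bfV,\bfW\rrangle_n + \Vert\frX^{(n)}(\bfW)\Vert_2^2$, the quantity inside the braces becomes
\begin{align}
\Vert\frX^{(n)}(\bfV)\Vert_2^2 + \Vert\frX^{(n)}(\bfW)\Vert_2^2 + \Vert\bu\Vert_2^2 + 2\big(\llangle\bfV,\bfW\rrangle_n - \llangle\bfV,\bfW\rrangle_\Pi\big) + 2\langle\bu,\frX^{(n)}(\bfV+\bfW)\rangle.
\end{align}
The two cross terms are controlled from below: the $\PP$ term by $-2\big[\sc_1\Vert\bfV\Vert_\Pi\Vert\bfW\Vert_\Pi + \sc_2\calR(\bfV)\Vert\bfW\Vert_\Pi + \sc_3\Vert\bfV\Vert_\Pi\calS(\bfW) + \sc_4\calR(\bfV)\calS(\bfW)\big]$ via $\PP_{\calR,\calS}$, and the $\IP$ term by $-2\big[\sb_1\Vert[\bfV,\bfW]\Vert_\Pi\Vert\bu\Vert_2 + \sb_2\calR(\bfV)\Vert\bu\Vert_2 + \sb_3\calS(\bfW)\Vert\bu\Vert_2 + \sb_4\Vert[\bfV,\bfW]\Vert_\Pi\calQ(\bu)\big]$ via $\IP_{\calR,\calS,\calQ}$. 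For the three leading square terms I would invoke $\TP_\calR$ and $\TP_\calS$ — here $(a-b)^2\ge \tfrac12 a^2-b^2$ gives $\Vert\frX^{(n)}(\bfV)\Vert_2^2\ge\tfrac12\sa_1^2\Vert\bfV\Vert_\Pi^2-\sa_2^2\calR(\bfV)^2$ and similarly for $\bfW$ — leaving $\Vert\bfV\Vert_\Pi^2,\Vert\bfW\Vert_\Pi^2,\Vert\bu\Vert_2^2$ with coefficient of order $(\sa_1\wedge\bar\sa_1)^2$ on the positive side, while $\Vert[\bfV,\bfW]\Vert_\Pi^2=\Vert\bfV\Vert_\Pi^2+\Vert\bfW\Vert_\Pi^2$.

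The core of the argument is then a quadratic-form positivity calculation: after collecting terms, the bracketed quantity is bounded below by
\begin{align}
\Big(\tfrac34(\sa_1\wedge\bar\sa_1)^2 - (\sb_1+\sc_1)\Big)\Vert[\bfV,\bfW,\bu]\Vert_\Pi^2 \;-\; (\text{cross terms involving }\calR,\calS,\calQ),
\end{align}
where one slice of the $\tfrac12$ coefficient is spent matching $\Vert\frX^{(n)}\cdot\Vert_2^2$ to $\TP$ and the remaining $\tfrac14$ absorbs, via Cauchy–Schwarz/AM–GM, the $\sc_1,\sb_1$ cross terms $\Vert\bfV\Vert_\Pi\Vert\bfW\Vert_\Pi$ and $\Vert[\bfV,\bfW]\Vert_\Pi\Vert\bu\Vert_2$ into the diagonal. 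The mixed penalty terms like $\sc_2\calR(\bfV)\Vert\bfW\Vert_\Pi$, $\sb_2\calR(\bfV)\Vert\bu\Vert_2$, $\sc_3\Vert\bfV\Vert_\Pi\calS(\bfW)$, $\sb_3\calS(\bfW)\Vert\bu\Vert_2$ are each split by Young's inequality $xy\le \tfrac{\eta}{2}x^2+\tfrac1{2\eta}y^2$, sending the $\Vert\cdot\Vert_\Pi$-part into the (still positive) diagonal budget and the penalty-part into the $\sd_2,\sd_3$ constants; the purely quadratic penalty term $\sc_4\calR(\bfV)\calS(\bfW)=\gamma\sc_2\sc_3\calR(\bfV)\calS(\bfW)$ is split symmetrically using $\gamma$, contributing $\tfrac{\gamma}{2}\sc_2^2\calR(\bfV)^2+\tfrac{\gamma}{2}\sc_3^2\calS(\bfW)^2$ (a factor-$2$ bookkeeping choice explains the stated $\gamma\sc_2^2,\gamma\sc_3^2$ versus the $8(\sb_2^2+\sc_2^2)/(\sa_1\wedge\bar\sa_1)^2$ from the Young constants). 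The $\sb_4\Vert[\bfV,\bfW]\Vert_\Pi\calQ(\bu)$ term is handled last: splitting it puts a $\calQ(\bu)^2$ piece into $\sd_4$ and a $\Vert[\bfV,\bfW]\Vert_\Pi^2$ piece into the diagonal, which forces the $2\sqrt2\,\sb_4/(\sa_1\wedge\bar\sa_1)$ form after optimizing the Young parameter against the $(\sa_1\wedge\bar\sa_1)^2$ budget. Finally I would take the positive part, use $\sqrt{x-\sum y_i}\ge\sqrt{x}-\sum\sqrt{y_i}$ on $\re_+$ (subadditivity of $\sqrt{\cdot}$), add back the $\sa_2\calR(\bfV),\bar\sa_2\calS(\bfW)$ leftovers from the $\TP$ step, and read off $\sd_1=\sqrt{\tfrac34(\sa_1\wedge\bar\sa_1)^2-(\sb_1+\sc_1)}$ together with the claimed $\sd_2,\sd_3,\sd_4$.

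The main obstacle is the bookkeeping of the Young/AM–GM constants so that the diagonal coefficient stays exactly $\tfrac34(\sa_1\wedge\bar\sa_1)^2-(\sb_1+\sc_1)$ and is not eroded below it by the penalty-cross-term splits; one has to reserve a fixed sliver of the $\tfrac14$-budget for the $\sb_1,\sc_1$ terms and let the $\eta$-parameters of the remaining splits be chosen so their $\Vert\cdot\Vert_\Pi^2$-contributions are absorbed by what is left, which is where the somewhat unusual constants ($8$, $\gamma$, $2\sqrt2$) come from. A secondary subtlety is that $\Vert[\bfV,\bfW]\Vert_\Pi$ couples $\bfV$ and $\bfW$, so the $\sb_2,\sb_3$ splits must be done against $\Vert\bfV\Vert_\Pi^2$ and $\Vert\bfW\Vert_\Pi^2$ separately (not against the combined norm) to land on the stated $\sd_2,\sd_3$; the hypothesis $\sc_4=\gamma\sc_2\sc_3$ is exactly what lets the last quadratic penalty term be distributed cleanly between the two. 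This is essentially the $\calS\neq0$ generalization of the proof of Lemma \ref{lemma:ATP:calS=0}, so no genuinely new idea is needed beyond careful constant-tracking.
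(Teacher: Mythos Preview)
Your overall architecture---expand $\Vert\frM\Vert_2^2-2\llangle\bfV,\bfW\rrangle_\Pi$, control the two cross terms by $\PP$ and $\IP$, use Young's inequality to push mixed terms into $\sd_2,\sd_3,\sd_4$---matches the paper's. The gap is in how you invoke $\TP$. You propose to use $(a-b)_+^2\ge\tfrac12 a^2-b^2$, which yields $\Vert\frX^{(n)}(\bfV)\Vert_2^2\ge\tfrac12\sa_1^2\Vert\bfV\Vert_\Pi^2-\sa_2^2\calR(\bfV)^2$. After absorbing the $\sb_1,\sc_1$ cross terms via Young with some parameter $\alpha>0$, the diagonal coefficient in front of $\Vert\bfV\Vert_\Pi^2$ becomes at best $\tfrac12(\sa_1\wedge\bar\sa_1)^2-(\sb_1+\sc_1)-2\alpha^2$, which is strictly below the claimed $\sd_1^2=\tfrac34(\sa_1\wedge\bar\sa_1)^2-(\sb_1+\sc_1)$ for any $\alpha>0$. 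So your route proves an $\ATP$, but not with the stated $\sd_1$; the ``bookkeeping obstacle'' you flag is in fact unresolvable with that squared-form $\TP$ step.

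The paper avoids this loss by reversing the order: it first writes $\sa_1\Vert\bfV\Vert_\Pi\le\Vert\frX^{(n)}(\bfV)\Vert_2+\sa_2\calR(\bfV)$ (i.e.\ $\TP$ at the norm level, not the squared level), squares, and then peels off $\sa_2\calR(\bfV)$ and $\bar\sa_2\calS(\bfW)$ additively via the elementary inequality
\[
\sqrt{(A+B)^2+(\bar A+\bar B)^2+C^2}\le\sqrt{A^2+\bar A^2+C^2}+B+\bar B.
\]
This preserves the full coefficient $(\sa_1\wedge\bar\sa_1)^2$ on the diagonal. Only afterwards does the paper expand $\Vert\frX^{(n)}(\bfV)\Vert_2^2+\Vert\frX^{(n)}(\bfW)\Vert_2^2+\Vert\bu\Vert_2^2$ through the parallelogram identity and bound the $\PP$/$\IP$ cross terms by Young with $\alpha:=(\sa_1\wedge\bar\sa_1)/(2\sqrt2)$, which eats exactly $2\alpha^2=\tfrac14(\sa_1\wedge\bar\sa_1)^2$ and leaves $\sd_1^2=\tfrac34(\sa_1\wedge\bar\sa_1)^2-(\sb_1+\sc_1)$. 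The penalty constants $8(\sb_i^2+\sc_i^2)/(\sa_1\wedge\bar\sa_1)^2$, $\gamma\sc_i^2$, and $2\sqrt2\,\sb_4/(\sa_1\wedge\bar\sa_1)$ then fall out directly from this single choice of $\alpha$, with no further tuning needed.
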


In the following sections, we will not explicitly use $\PP$. From the previous lemmas, it should be clear that this property is implicitly used when we invoke $\TP$ and $\ATP$. The fundamental properties  we need to show for subgaussian designs and noises are $\PP$, $\IP$ and 
$\MP$. The next proposition ensures these properties hold with high probability.
\begin{proposition}\label{proposition:properties:subgaussian:designs}
Grant Assumption \ref{assump:distribution:subgaussian}. There is universal constant $C>0$ such that the following holds. Let 
$\delta\in(0,1)$. 
\begin{itemize}
\item[\rm(i)] With probability $\ge1-\delta$, $\PP_{\calR,\calS}(\sc_1,\sc_2,\sc_3,\sc_4)$ is satisfied with constants
\begin{align}
\sc_1 &= CL^2\left(
\frac{1+\log(1/\delta)}{n} + \frac{1+\sqrt{\log(1/\delta)}}{\sqrt{n}}
\right),\\
\sc_2 &= CL^2\left(\frac{1}{n}+\frac{1}{\sqrt{n}}\right)\mathscr G\left(\frS^{1/2}(\mbB_\calR)\right),\\
\sc_3 &= CL^2\left(\frac{1}{n}+\frac{1}{\sqrt{n}}\right)\mathscr G\left(\frS^{1/2}(\mbB_\calS)\right),\\
\sc_4 &= \frac{CL^2}{n}
\mathscr G\left(\frS^{1/2}(\mbB_\calR)\right)\cdot
\mathscr G\left(\frS^{1/2}(\mbB_\calS)\right). 
\end{align}

\item[\rm(ii)] Suppose that 
$\sc(\delta):=CL^2(\frac{1+\log(1/\delta)}{n} + \frac{1+\sqrt{\log(1/\delta)}}{\sqrt{n}})<1$. Then, with probability $\ge1-\delta$, 
$\TP_{\calR}(\sa_1,\sa_2)$ holds with constants
$
\sa_1 = \sqrt{(3/4)(1-\sc(\delta))}
$
and 
$$
\sa_2 = \left(\frac{2CL^2}{\sqrt{1-\sc(\delta)}}\left(\frac{1}{n}+\frac{1}{\sqrt{n}}\right) + \frac{CL}{\sqrt{n}}\right)\mathscr G\left(\frS^{1/2}(\mbB_\calR)\right).
$$
\item[\rm(iii)] With probability $\ge1-\delta$, 
$\IP_{\calR,\calS,\calQ}(\sb_1,\sb_2,\sb_3,\sb_4)$ holds with constants
$
\sb_1 = CL\frac{1+\sqrt{\log(1/\delta)}}{\sqrt{n}},
$ 
$
\sb_2 = CL\frac{\mathscr G(\frS^{1/2}(\mbB_\calR))}{\sqrt{n}},
$
$
\sb_3 = CL\frac{\mathscr G(\frS^{1/2}(\mbB_\calS))}{\sqrt{n}}
$
and 
$
\sb_4 = CL\frac{\mathscr G(\mbB_\calQ)}{\sqrt{n}}.
$
\item[\rm(iv)] Define the quantities
\begin{align}
\triangle_{n}(\delta)&:=(\nicefrac{1}{\sqrt{n}})[1 + \sqrt{\log(1/\delta)}]
+(\nicefrac{1}{n})[1 + \log(1/\delta) + \sqrt{\log(1/\delta)}],\\
\lozenge_n(\delta)&:=(\nicefrac{1}{\sqrt{n}})[1+(\nicefrac{1}{\sqrt{n}})\sqrt{\log(1/\delta)}] + (\nicefrac{1}{n}).
\end{align}
With probability $\ge1-\delta$, $\MP_{\calR,\calS,\calQ}(\sf_1,\sf_2,\sf_3,\sf_4)$ holds with constants $\sf_1 := C\sigma L\triangle_{n}(\delta)$, $\sf_2:=C\sigma L\lozenge_n(\delta)\cdot\mathscr
G(\frS^{1/2}(\mbB_\calR))$, $\sf_3:=C\sigma L\lozenge_n(\delta)\cdot\mathscr
G(\frS^{1/2}(\mbB_\calS))$ and $\sf_4:=\frac{\sigma}{\sqrt{n}}\mathscr G\left(\mbB_\calQ\right)$. 
\end{itemize} 
\end{proposition}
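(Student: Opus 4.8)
The plan is to establish (i)--(iv) by matching each design property to the process inequality it is built from: $\PP$, and via Lemma~\ref{lemma:TP} also $\TP$, to the product–process bound Theorem~\ref{thm:product:process}; $\MP$ to the multiplier–process bound Theorem~\ref{thm:mult:process}; and $\IP$ to a subgaussian Chevet-type inequality together with Gaussian concentration of the supremum, following the ``incoherence principle'' of \cite{2019dalalyan:thompson} but now for the two-component argument $\bfV+\bfW$. The common device is the identification $f_\bfV(\bfX):=\llangle\bfX,\bfV\rrangle$, under which $\llangle\bfV,\bfW\rrangle_n-\llangle\bfV,\bfW\rrangle_\Pi=A(f_\bfV,f_\bfW)$ and $\tfrac1n\langle\bxi,\frX(\bfV)\rangle=M(f_\bfV)$ is the \emph{centered} multiplier process, centering being valid because $\esp[\xi\bfX]=0$. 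Assumption~\ref{assump:distribution:subgaussian} gives $\|f_\bfV\|_{\psi_2}\le L\|\bfV\|_\Pi$, so the $\psi_2$-geometry on $\{f_\bfV\}$ is $L$ times the $\Pi$-geometry; since $\|\bfV\|_\Pi=\|\frS^{1/2}\bfV\|_F$, monotonicity of $\gamma_2$ under Lipschitz maps and Talagrand's majorizing–measure theorem yield the dictionary $\gamma_2(\{f_\bfV:\bfV\in T\})\lesssim L\,\gamma_2(T,\|\cdot\|_\Pi)\asymp L\,\mathscr{G}(\frS^{1/2}(T))$, together with $\bar\Delta(\{f_\bfV:\bfV\in T\})\le L\sup_{T}\|\cdot\|_\Pi$ and $\|f_\bfV f_\bfW-\probn f_\bfV f_\bfW\|_{\psi_1}\lesssim L^2\|\bfV\|_\Pi\|\bfW\|_\Pi$. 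This converts the abstract bounds into the stated width form, keeping track of the anisotropy: widths are of $\frS^{1/2}$-images, except for the $\re^n$-ball $\mbB_\calQ$, which appears undistorted. Note that in the applications $\calR\neq\calS$ (e.g.\ $\Vert\cdot\Vert_N$ versus $\Vert\cdot\Vert_1$), so the classes $F,G$ for $\PP$ genuinely differ and Theorem~\ref{thm:product:process}, not the quadratic-process inequality, is needed.

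For (i), I would homogenize. By bilinearity of $A$ it suffices to bound $M:=\sup\{|A(f_\bfV,f_\bfW)|:\|\bfV\|_\Pi\le1,\ \calR(\bfV)\le\rho^{-1},\ \|\bfW\|_\Pi\le1,\ \calS(\bfW)\le\eta^{-1}\}$ and then multiply by $(\|\bfV\|_\Pi+\rho\calR(\bfV))(\|\bfW\|_\Pi+\eta\calS(\bfW))$; expanding the product yields the four $\PP$-terms with $\sc_1=M$, $\sc_2=\rho M$, $\sc_3=\eta M$, $\sc_4=\rho\eta M$. Applying Theorem~\ref{thm:product:process} to $F=\{f_\bfV\}$, $G=\{f_\bfW\}$ over these normalized sets gives $\bar\Delta(F),\bar\Delta(G)\le L$ from the $\Pi$-constraints, $\gamma_2(F)\lesssim L\rho^{-1}\mathscr{G}(\frS^{1/2}(\mbB_\calR))$ and $\gamma_2(G)\lesssim L\eta^{-1}\mathscr{G}(\frS^{1/2}(\mbB_\calS))$ from the $\calR$-, $\calS$-constraints, and a $\psi_1$-diameter $\lesssim L^2$. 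The key structural point is that the confidence factor $\sqrt{u/n}+u/n$ in Theorem~\ref{thm:product:process} multiplies \emph{only} this $\psi_1$-diameter, which stays $\lesssim L^2$ uniformly over the normalized sets because it is governed by the $\Pi$-norm alone; so taking $u\asymp1+\log(1/\delta)$ and choosing $\rho\asymp\mathscr{G}(\frS^{1/2}(\mbB_\calR))/\sqrt{1+\log(1/\delta)}$, $\eta\asymp\mathscr{G}(\frS^{1/2}(\mbB_\calS))/\sqrt{1+\log(1/\delta)}$ equalizes the chaining contributions with the confidence contribution, making $\sc_1=M\asymp L^2(\sqrt{u/n}+u/n)$ carry the entire $\delta$-dependence, while $\sc_2=\rho M$, $\sc_3=\eta M$, $\sc_4=\rho\eta M$ come out in the stated $\delta$-free form once the residual $\sqrt{\log(1/\delta)}/n$ is absorbed into $1/\sqrt n$ (resp.\ $1/n$), which is legitimate in the regime $\log(1/\delta)\lesssim n$ in which the proposition is used. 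Assertion (ii) is then immediate: specialize (i) to $\calS=\calR$ and invoke Lemma~\ref{lemma:TP}; in its notation $\sa_2=\{(\alpha_2+\alpha_3)^2/(1-\alpha_1)+\alpha_4\}^{1/2}$, whose two summands after taking square roots are exactly the displayed $\tfrac{2CL^2}{\sqrt{1-\sc(\delta)}}(\tfrac1n+\tfrac1{\sqrt n})\mathscr{G}(\frS^{1/2}(\mbB_\calR))$ and $\tfrac{CL}{\sqrt n}\mathscr{G}(\frS^{1/2}(\mbB_\calR))=\sqrt{\sc_4}$.

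For (iii), the process $(\bfV,\bfW,\bu)\mapsto\langle\bu,\frX^{(n)}(\bfV+\bfW)\rangle=\tfrac1{\sqrt n}\sum_i\bu_i\llangle\bfX_i,\bfV+\bfW\rrangle$ is bilinear and subgaussian; after homogenizing each of $\bfV$, $\bfW$, $\bu$ (mixing $\|\cdot\|_\Pi$ with $\calR$, with $\calS$, and $\|\cdot\|_2$ with $\calQ$ respectively) I would bound its expected supremum by Chevet's inequality — schematically, $L/\sqrt n$ times the sum of (the $\ell_2$-radius of the $\bu$-set)$\times$(the $\frS^{1/2}$-Gaussian-width of the $(\bfV,\bfW)$-set) and (the Gaussian width of the $\bu$-set)$\times$(the $\Pi$-radius of the $(\bfV,\bfW)$-set) — and add Gaussian concentration of the supremum for the tail; the normalized $\bu$-set has $\ell_2$-radius $\le1$, giving $\sb_1,\sb_2,\sb_3\asymp L(1+\sqrt{\log(1/\delta)})/\sqrt n$ against $1$, $\mathscr{G}(\frS^{1/2}(\mbB_\calR))$, $\mathscr{G}(\frS^{1/2}(\mbB_\calS))$, and its Gaussian width gives $\sb_4\asymp L\,\mathscr{G}(\mbB_\calQ)/\sqrt n$; the only new point relative to \cite{2019dalalyan:thompson} is keeping $\bfV$ (charged to $\calR$) and $\bfW$ (charged to $\calS$) separate. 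For (iv), split $\langle\bxi^{(n)},\frM^{(n)}(\bfV+\bfW,\bu)\rangle=\tfrac1n\langle\bxi,\frX(\bfV+\bfW)\rangle+\langle\bxi^{(n)},\bu\rangle$. The first piece equals $M(f_\bfV)+M(f_\bfW)$; bounding $|M(f_\bfV)|$ and $|M(f_\bfW)|$ on homogenized sets via Theorem~\ref{thm:mult:process} with $f_0=0$, $u\asymp1+\log(1/\delta)$ and $v\asymp(1+\log(1/\delta))/n$ (so the event has probability $\ge1-\delta$ whenever $\delta\ge e^{-cn}$, while $\sqrt v\lesssim1$) produces $\sf_1,\sf_2,\sf_3$: the factor $(\sqrt v+1)$ on $\Vert\xi\Vert_{\psi_2}\gamma_2(F)/\sqrt n$ is precisely what generates the $\sqrt{\log(1/\delta)}/n$ inside $\lozenge_n(\delta)$, and $\bar\Delta(F)\le L$ with the $(\sqrt{2u/n}+u/n+\sqrt{uv/n})$ factor yields $\triangle_n(\delta)$, after the same balancing of the homogenization scale $\rho\asymp\mathscr{G}(\frS^{1/2}(\mbB_\calR))/\sqrt{1+\log(1/\delta)}$. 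The second piece is handled by the dual-norm pairing $|\langle\bxi^{(n)},\bu\rangle|\le\calQ(\bu)\cdot(\text{the norm dual to }\calQ\text{ of }\bxi^{(n)})$, and the dual norm of $\bxi^{(n)}$ concentrates around $\lesssim\tfrac\sigma{\sqrt n}\mathscr{G}(\mbB_\calQ)$ by comparison of its subgaussian value with the Gaussian one plus Gaussian concentration; this is $\sf_4=\tfrac\sigma{\sqrt n}\mathscr{G}(\mbB_\calQ)$ and feeds into no other coefficient. A union bound over (i)--(iv), reallocating $\delta$ by a constant factor, completes the proof.

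I expect the main obstacle to be (i): routing all of the confidence cost into $\sc_1$ while keeping $\sc_2,\sc_3,\sc_4$ free of $\delta$. This hinges on the architecture of Theorem~\ref{thm:product:process} — that the $u$-dependent term involves only the $\psi_1$-diameter, which is insensitive to the $\calR$/$\calS$-radius of the index set — combined with the balanced choice of the homogenization scales $\rho,\eta$ that equalizes the $\gamma_2$-contributions and the confidence contribution (and relies on $\log(1/\delta)\lesssim n$). A secondary difficulty is bookkeeping the anisotropy: consistently passing every geometric quantity through $\frS^{1/2}$, distinguishing $\mathscr{G}(\frS^{1/2}(\cdot))$ from the undistorted $\re^n$-width $\mathscr{G}(\mbB_\calQ)$, and invoking the majorizing–measure theorem to replace $\gamma_2$-functionals by Gaussian widths. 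The remaining work — propagating constants through Lemmas~\ref{lemma:TP}, \ref{lemma:ATP:calS=0} and \ref{lemma:ATP} for (ii), and matching the exact expressions $\triangle_n(\delta)$, $\lozenge_n(\delta)$ in (iv) — is routine.
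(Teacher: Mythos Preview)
Your high-level map---matching $\PP$ to Theorem~\ref{thm:product:process}, $\IP$ to Chevet, $\MP$ to Theorem~\ref{thm:mult:process}, and reading $\TP$ off via Lemma~\ref{lemma:TP}---is exactly the paper's. The substantive divergence is \emph{how} you pass from a fixed-radius process bound to a statement valid for all $[\bfV,\bfW,\bu]$. The paper does this by \emph{peeling}: it applies the process inequality at every scale $(r,\bar r)$ of $(\calR,\calS)$ (or $(\calR,\calQ)$), takes a union bound over a geometric grid of scales, and only then uses homogeneity in $\Vert\cdot\Vert_\Pi$ and $\Vert\cdot\Vert_2$. You instead fix a \emph{single} balanced scale $(\rho,\eta)$ (or $(\rho,\kappa)$), bound once, and multiply out $(\Vert\bfV\Vert_\Pi+\rho\calR(\bfV))(\Vert\bfW\Vert_\Pi+\eta\calS(\bfW))$.

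This single-scale shortcut does not reproduce the stated constants. For $\PP$: with your choice $\rho\asymp\mathscr G_\calR/\sqrt u$, $\eta\asymp\mathscr G_\calS/\sqrt u$ and $M\asymp L^2(\sqrt{u/n}+u/n)$, one gets
\[
\sc_4=\rho\eta M\asymp L^2\mathscr G_\calR\mathscr G_\calS\Big(\tfrac1n+\tfrac{1}{\sqrt{un}}\Big),
\]
and $1/\sqrt{un}$ cannot be ``absorbed into $1/n$'' unless $u\gtrsim n$---precisely the opposite of the regime $\log(1/\delta)\lesssim n$ you invoke. (Your absorption remark is correct for $\sc_2,\sc_3$, where the residual is $\sqrt u/n\le 1/\sqrt n$, but reversed for $\sc_4$.) This matters: the looser $\sc_4$ inflates $\gamma$ in Lemma~\ref{lemma:ATP} and hence $\sd_2,\sd_3$, degrading the sample-size conditions downstream. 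For $\IP$: multiplying out $(\Vert\bfV\Vert_\Pi+\rho\calR(\bfV))(\Vert\bu\Vert_2+\kappa\calQ(\bu))$ produces a $\calR(\bfV)\calQ(\bu)$ cross term that is \emph{absent} from the definition of $\IP$; the paper avoids it because Chevet is \emph{additive} in the two widths, and peeling (the paper's Lemma in the $\IP$ section) preserves that additivity before one homogenizes in $\Vert\cdot\Vert_\Pi,\Vert\cdot\Vert_2$. Your single-scale multiplication destroys it. (Also, as written your $\sb_2,\sb_3$ carry an extra $1+\sqrt{\log(1/\delta)}$; with the balanced $\rho,\kappa$ they in fact come out $\delta$-free, but the cross term remains.)

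In short: the architecture is right, but to get the proposition \emph{as stated}---$\delta$-free $\sc_2,\sc_3,\sc_4,\sb_2,\sb_3,\sb_4$ with the displayed $n$-scaling and no cross term in $\IP$---you need the multi-scale peeling lemmas the paper develops, not a single balanced scale.
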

\begin{proof}[Proof sketch]
$\PP$ follows from the concentration inequality for the product process given in Theorem \ref{thm:product:process} and a two-parameter peeling argument. $\TP$ follows from $\PP$ and Lemma  \ref{lemma:TP}.\footnote{Alternatively, $\TP$ could be proved from Dirksen-Bednorz inequality for the quadratic process \cite{2015dirksen, 2014bednorz} and a one-parameter peeling lemma.}
To prove $\IP$, we invoke Chevet's inequality for subgaussian processes twice --- for each of the pair of norms $(\calR,\calQ)$ and $(\calS,\calQ)$ --- and a two-parameter peeling lemma.  To prove 
$\MP$, we invoke the multiplier process inequality of Theorem \ref{thm:mult:process} twice --- for each of the norms $(\calR,\calQ)$. We also concentrate the linear process 
$\bu\mapsto\langle\bu,\bxi\rangle$ using a symmetrization-comparison argument with a Gaussian linear process. From these three bounds and a one-parameter peeling lemma, $\MP$ follows. The proofs of these claims are referred to Sections \ref{s:prop:PP}, \ref{s:prop:TP}, \ref{s:prop:IP} and \ref{s:prop:MP} of the supplement.
\end{proof}

$\TP$ is the well known ``restricted strong convexity'' used to analyze regularized $M$-estimators with decomposable norms \cite{BRT, 2012loh:wainwright-AoS,2012negahban:ravikumar:wainwright:yu, 2018bellec:lecue:tsybakov}. $\MP$ over a single variable $\bfV$ and norm 
$\calR$ is also well known to be useful. We generalize this concept over the triple $[\bfV,\bfW,\bu]$ and norms $(\calR,\calS,\calQ)$. See the next Section \ref{s:MP:in:M-estimation} for a discussion on this notion. Similarly, $\IP$, $\PP$ and $\ATP$ --- an abbreviation for ``augmented'' restricted convexity --- are properties over the triplet $[\bfV,\bfW,\bu]$ which we show to be useful for RTRMD. 

\begin{remark}[Design properties in \cite{2012agarwal:negahban:wainwright}]\label{rem:design:properties:agarwal:negahban:wainwright}
The main design property in \cite{2012agarwal:negahban:wainwright} is stated in their Definition 2. In our terminology, it is equivalent to $\ATP_{\calR,\calS,0}(\sd_1,\sd_2,\sd_3,0)$, a particular version over the pair $[\bfV,\bfW]\in(\mdR^p)^2$. Their Theorem 1 states deterministic bounds assuming this property. Still, they guarantee this property holds only for two simple cases. The first is identity designs, for which $\ATP$ is trivially satisfied with $\sd_1=1$ and $\sd_2=\sd_3=0$. The second case is multi-task learning. Dirksen's inequality for the quadratic process implies the design is invertible with high probability.\footnote{The design components are $\frX_i(\bfB):=\bx_i^\top\bfB$. When $n\gtrsim d_1$, standard concentration inequalities imply
$
\frac{1}{n}\sum_{i=1}^n(\bx_i^\top\bb)^2\ge c\Vert\bb\Vert_\Pi^2
$
for all $\bb\in\re^{d_1}$ with high probability for some absolute constant $c\in(0,1)$. Thus, 
$
\frac{1}{n}\Vert\frX(\bfB)\Vert_F^2\ge c\Vert\bfB\Vert_\Pi^2
$
for all $\bfB\in\mdR^p$.} Even without label contamination, proving 
$\ATP_{\calR,\calS,0}(\sd_1,\sd_2,\sd_3,0)$ for trace regression with additive matrix decomposition requires a more refined argument using $\PP$.

The model in \cite{2012agarwal:negahban:wainwright} is not concerned with label contamination. Using our terminology, they do not need $\IP$ 
($\sb_i\equiv0$) and it is enough to set $\calQ\equiv0$, 
$\sc_4=\sd_4=\sf_4=0$. Additionally, they implicitly use $\MP_{\calR,\calS,0}(0,\sf_2,\sf_3,0)$ with 
$\sf_1=0$, resorting to the dual-norm inequality. As explained in the next section, this is the technical reason their bounds are optimal in average but sub-optimal in 
$\delta$. We remind that they assume the noise is independent of the features. 
\end{remark}

\begin{remark}[Design properties in \cite{2019dalalyan:thompson}]\label{rem:design:properties:dalalyan:thompson}
\cite{2019dalalyan:thompson} studies robust sparse regression with Huber's loss in the Gaussian setting. In this quest, they require the particular properties $\IP_{\Vert\cdot\Vert_1,0,\Vert\cdot\Vert_1}(\sb_1,\sb_2,0,\sb_4)$ and $\ATP_{\Vert\cdot\Vert_1,0,\Vert\cdot\Vert_1}(\sd_1,\sd_2,0,\sd_4)$ over the pair $[\bv,\bu]\in\re^p\times\re^n$.\footnote{They use the notation TP for $\TP$ and ATP for $\ATP$.} Explicitly:
\begin{align}
|\langle\bu,\frX^{(n)}(\bv)\rangle|
&\le \sb_1\left\Vert\bv\right\Vert_{\Pi}
\Vert\bu\Vert_2 
+\sb_2\Vert\bv\Vert_1\Vert\bu\Vert_2
+\sb_4\left\Vert\bv\right\Vert_\Pi\Vert\bu\Vert_1,\\
\Vert\frM^{(n)}(\bv,\bu)\Vert_2&\ge 
\sd_1\Vert[\bv,\bu]\Vert_\Pi
-\sd_2\Vert\bv\Vert_1-\sd_4\Vert\bu\Vert_1.
\end{align}
Their model is not concerned with matrix decomposition. Using our terminology, they do not need $\PP$ ($\sc_i\equiv0$) and it is enough to set $\calS\equiv0$, 
$\sb_3=\sd_3=\sf_3=0$. Our framework deals with the broader model RTRMD. By Lemma \ref{lemma:ATP} and Proposition \ref{proposition:properties:subgaussian:designs}, RTRMD requires a non trivial interplay between the product process inequality of Theorem \ref{thm:product:process} 
and Chevet's inequality (see Section \ref{s:prop:IP} in the supplement). Both inequalities are needed as they imply, respectively, $\PP$ and the general version of 
$\IP$ over the triple $[\bfV,\bfW,\bu]\in(\mdR^p)^2\times\re^n$ and norms $(\calR,\calS,\calQ)$. 

We now justify the relevance of regressing with sorted Huber-type losses. In properties $\IP$  and $\MP$ in Proposition    \ref{proposition:properties:subgaussian:designs}, 
$\sb_4\asymp\sf_4\asymp\sigma\mathscr G(\mbB_\calQ)
/\sqrt{n}$. In Huber regression 
($\calQ=\Vert\cdot\Vert_1$), one has 
$\sb_4\asymp\sf_4\asymp\sigma\{\log n/n\}^{1/2}$. With sorted Huber-type losses 
($\calQ=\Vert\cdot\Vert_\sharp$), 
$\sb_4\asymp\sf_4\asymp\sigma/\sqrt{n}$. Using this observation in Theorem \ref{thm:improved:rate}, we obtain the improvement on the corruption rate $\omega(\epsilon)$ and breakdown point by a factor $\log n$. For the details, see the proof of Theorem \ref{thm:tr:reg:matrix:decomp} in Section 28 
in the supplement. 
Our simulations also show an improvement on the ``practical'' constant in the MSE (recall Figure  \ref{fig.robust.linear.reg:HS}).

To conclude, the analysis in \cite{2019dalalyan:thompson} implicitly uses the particular version $\MP_{\calR,0,\calQ}(0,\sf_2,0,\sf_4)$ with $\sf_1=0$ --- indeed, they resort to the dual-norm inequality. As in the previous remark, this approach leads to near-optimal bounds in $(n,s,p,\epsilon)$ in average but sub-optimal in $\delta$. We prove the general version of $\MP$ for RTRMD is defined over the triple $[\bfV,\bfW,\bu]\in(\mdR^p)^2\times\re^n$ and norms $(\calR,\calS,\Vert\cdot\Vert_\sharp)$. \cite{2019dalalyan:thompson} also assumes the noise is independent of the features.
\end{remark}

\section{$\MP$ in $M$-estimation with decomposable regularizers}\label{s:MP:in:M-estimation}
Let $\hat\bfB$ be the least-squares estimator with penalization 
$\lambda\calR$ --- when there is no contamination, 
the model is well-specified and $\bfGamma^*\equiv\bf0$. By the first-order condition, one gets
$$
\Vert\frX^{(n)}(\bfDelta_{\bfB^*})\Vert_2^2\le (\nicefrac{1}{n})\langle\bxi,\frX(\bfDelta_{\bfB^*})\rangle 
+ \lambda \big(\calR(\bfB^*) - \calR(\hat\bfB)\big).
$$
In the ``Lasso proof'', the standard argument is to properly upper bound the RHS and lower bound the LHS --- using the regularization effect of the decomposable norm 
$\calR$. For the upper bound, the typical way is to use the dual-norm inequality:
\begin{align}
(\nicefrac{1}{n})\langle\bxi,\frX(\bfDelta_{\bfB^*})\rangle\le (\nicefrac{1}{n})\calR^*(\frX^*(\bxi))\cdot\calR(\bfDelta_{\bfB^*}),\label{equation:dual-norm:inequality}
\end{align}
where $\calR^*$ denotes the dual-norm of 
$\calR$ and $\frX^*(\bxi):=\sum_{i=1}^n\xi_i\bfX_i$ is the adjoint operator of 
$\frX$. The displayed bound is 
$\MP_{\calR,0,0}(0,\sf_2,0,0)$ with 
$\sf_2=(1/n)\calR^*(\frX^*(\bxi))$. As well known, the choice $\lambda\gtrsim (1/n)\calR^*(\frX^*(\bxi))$ ensures 
$\bfDelta_{\bfB^*}$ lies in a dimension-reduction cone. $\TP$ can then be invoked for the lower bound --- indeed, it implies  strong-convexity over the dimension-reduction cone.

Consider trace regression with 
$\calR=\Vert\cdot\Vert_N$. As first shown in \cite{2011negahban:wainwright}, if we use \eqref{equation:dual-norm:inequality} we must take
$\lambda\asymp\sigma\rho_N(\bfSigma)(\sqrt{(d_1+d_2)/n}+\sqrt{\log(1/\delta)/n})$, implying the estimation rate
$$
\sigma\rho_N(\bfSigma)\sqrt{r(d_1+d_2)/n} + \sigma\rho_N(\bfSigma)\sqrt{r\log(1/\delta)/n}.
$$
This seminal result is optimal --- in average but subptimal in 
$\delta$. In case of sparse regression, the same approach leads to the rate
$
\sigma\rho_1(\bfSigma)\sqrt{s\log p/n} + \sigma\rho_1(\bfSigma)\sqrt{s\log(1/\delta)/n}.
$ 
To our knowledge, \cite{2018bellec:lecue:tsybakov} was the first to attain the $\delta$-subgaussian rate for sparse regression. See also \cite{2018derumigny} for extensions on their results for the square-root Lasso and Slope estimators. Let $\mbX$ denote the design matrix satisfying 
$\max_{j\in[p]}\Vert\mbX_{\bullet,j}\Vert_2\le1$ and assume that $\bxi$ is independent of $\mbX$. In their Theorem 9.1, they show that, with probability $\ge1-\delta$, for all $\bv\in\re^p$, 
\begin{align}
(\nicefrac{1}{n})\langle\bxi,\mbX\bv\rangle\le\tilde\sf_1\Vert\mbX^{(n)}\bv\Vert_2 + \tilde\sf_2\Vert\bv\Vert_\sharp, 
\label{equation:MP:bellec}
\end{align}
with $\tilde\sf_1\asymp\sigma(\nicefrac{1+\sqrt{\log(1/\delta)}}{\sqrt{n}})$ and $\tilde\sf_2\asymp\sigma/\sqrt{n}$. The above bound and an upper  bound on the quadratic process imply
$\MP_{\Vert\cdot\Vert_{\sharp},0,0}(\sf_1,\sf_2,0,0)$ with 
$\sf_1\asymp\tilde\sf_1$ and $\sf_2\asymp\tilde\sf_2$.\footnote{Dirksen's inequality implies, for suitable constants $\tilde\sa_1,\tilde\sa_2>0$, $\Vert\mbX^{(n)}\bv\Vert_2\le \tilde\sa_1\Vert\bv\Vert_\Pi + \tilde\sa_2\Vert\bv\Vert_\sharp$ for all $\bv$ with high probability.} It is a substantial improvement upon \eqref{equation:dual-norm:inequality} in the sense that $\tilde \sf_2$ does not depend on $\delta$ --- appearing only in the ``variance'' constant $\tilde\sf_1$. This allows us to take 
$\lambda\asymp\sigma/\sqrt{n}$, entailing the subgaussian rate 
$
\sigma\sqrt{s\log(p/s)/n} + \sigma\sqrt{\log(1/\delta)/n}.
$
In conclusion, showing $\MP_{\calR,0,0}(\sf_1,\sf_2,0,0)$ with finer arguments than \eqref{equation:dual-norm:inequality} --- with proper coefficient $\sf_1\neq0$ --- is the technical reason \cite{2018bellec:lecue:tsybakov} attains the $\delta$-subgaussian rate with $\delta$-adaptive estimators in the framework of $M$-estimation with decomposable regularizers.

Theorem 9.1 in \cite{2018bellec:lecue:tsybakov} fundamentally assumes 
$\bxi$ is independent of $\bfX$.\footnote{Indeed, define the random norm 
$\hat T(\bv):=\frac{\Vert\mbX^{(n)}\bv\Vert_2}{L}\vee\Vert\bv\Vert_\sharp$ with $L:=\sqrt{n/\log(1/\delta)}/\sigma$. In case $\mbX$ is fixed, they bound the multiplier process concentrating the linear process
$
\sup_{\bu\in \mbB_{\hat T}}\langle\bxi,\bu\rangle
$
--- see Proposition 9.2 in  \cite{2018bellec:lecue:tsybakov}. The proof of this elegant result follows from a simple application of a tail symmetrization-comparison argument and the gaussian concentration inequality. Peeling is not necessary --- as homogeneity of norms suffices.}
Without this assumption, Theorem \ref{thm:mult:process} and a peeling argument entail 
$\MP_{\Vert\cdot\Vert_\sharp,0,0}(\sf_1,\sf_2,0,0)$ with high probability and constants $\sf_1\asymp\sigma L(\nicefrac{1+\sqrt{\log(1/\delta)}}{\sqrt{n}})$ and $\sf_2\asymp\sigma L\rho_1(\bfSigma)/\sqrt{n}$ --- assuming $n\gtrsim 1 + \log(1/\delta)$. More generally, we prove $\MP_{\calR,\calS,\calQ}(\sf_1,\sf_2,\sf_3,\sf_4)$  with non-zero constants $\sf_3$ and $\sf_4\asymp\sigma L\mathscr{G}(\mbB_\calQ)/\sqrt{n}$ for general regularization norms 
$(\calR,\calS,\calQ)$. When 
$\calQ=\Vert\cdot\Vert_\sharp$, we show this property is useful to handle label contamination and/or additive matrix decomposition. These properties entail the $\delta$-subgaussian rate for the robust $\delta$-adaptive estimators \eqref{equation:sorted:Huber:general}-\eqref{equation:sorted:Huber} --- assuming just marginal subgaussianity of $\bfX$ and $\bxi$.

\section{First general theorem}\label{s:proof:main:paper}
The main result of this section is the general Theorem \ref{thm:improved:rate}. To stated it, we fix the positive constants $\{\sa_i\}$, $\{\sb_i\}$, $\{\sc_i\}$, $\{\sd_i\}$ and $\{\sf_i\}$ in Definition \ref{def:design:property}. Theorems \ref{thm:tr:reg:matrix:decomp}-\ref{thm:response:sparse-low-rank:regression} are consequences of Theorem \ref{thm:improved:rate} and Proposition  \ref{proposition:properties:subgaussian:designs} --- which ensure that the required design properties hold with high probability.

We start with some definitions. 
\begin{definition}[Decomposable norms \cite{2012negahban:ravikumar:wainwright:yu, 2011koltchinskii:lounici:tsybakov}]\label{def:decomposable:norm}
A norm $\calR$ over $\mdR^{p}$ is said to be decomposable if, for all $\bfB\in\mdR^{p}$, there exists linear map $\bfV\mapsto\calP_{\bfB}^\perp(\bfV)$ such that, for all $\bfV\in\mdR^{p}$, defining 
$\calP_{\bfB}(\bfV):=\bfV-\calP^\perp_{\bfB}(\bfV)$,
\begin{itemize}
\item $\calP_{\bfB}^\perp(\bfB)=0$,
\item $\llangle\calP_{\bfB}(\bfV),\calP^\perp_{\bfB}(\bfV)\rrangle=0$,
\item $\calR(\bfV)=\calR(\calP_{\bfB}(\bfV))+\calR(\calP^\perp_{\bfB}(\bfV))$.
\end{itemize}
\end{definition}
When $\calR=\Vert\cdot\Vert_1$, $\calP_{\bb}$ is the projection onto the support of vector $\bb$. When $\calR=\Vert\cdot\Vert_N$, $\calP_{\bfB}$ is the projection onto the ``low-rank'' support of the matrix $\bfB$ --- see Section \ref{s:decomposable:norms} in the supplement for a precise definition. In what follows, $\calR$ and $\calS$ will be decomposable norms on $\mdR^p$. We shall need the following definition. 
\begin{definition}[Dimension-reduction cone]\label{def:dim:red:cones}
Given $[\bfB,\bfGamma]\in(\mdR^p)^2$, let 
$\calP_{\bfB}$ and $\calP_{\bfGamma}$ be the projection maps  associated to $(\calR,\bfB)$ and $(\calS,\bfGamma)$ respectively. Fix $c_0,\gamma_{\calR},\gamma_{\calS},\eta\ge0$. Let $\calC_{\bfB,\bfGamma,\calR,\calS}(c_0,\gamma_{\calR},\gamma_{\calS},\eta)$ be the cone of points $[\bfV,\bfW,\bu]\in(\mdR^p)^2\times\re^n$ satisfying
$$
\gamma_{\calR}\calR(\calP_{\bfB}^\perp(\bfV))
+\gamma_{\calS}\calS(\calP_{\bfGamma}^\perp(\bfW))
+\sum_{i=o+1}^n\omega_i\bu_i^\sharp
\le 
c_0\left[\gamma_{\calR}\calR(\calP_{\bfB}(\bfV)) + \gamma_{\calS}\calS(\calP_{\bfGamma}(\bfW))
+\eta\Vert\bu\Vert_2
\right].
$$
We will sometimes omit some of the subscripts when they are clear in the context.
\end{definition}
The one-dimensional cone $\calC_{\bfB,\calR}(c_0):=\calC_{\bfB,\bf0,\calR,0}(c_0,1,0,0)$ is well known in high-dimensional statistics. In the analysis of RTRMD the three-dimensional cone $\calC_{\bfB,\bfGamma,\calR,\calS}$ is useful. Next, we will need some additional notation when dealing with contamination in miss-specified models.
\begin{definition}\label{def:rate:notation}
Let $\bfB,\bfGamma,\bfV,\bfW\in\mdR^p$ and non-negative numbers $(c_0,\alpha,\sf,r)$. Define the quantities $R_{\calR,c_0}(\bfV|\bfB):=\Psi_{\calR}(\calP_{\bfB}(\bfV))\mu(\calC_{\bfB,\calR}(2c_0))$ and $R_{\calS,c_0}(\bfW|\bfGamma):=\Psi_{\calS}(\calP_{\bfGamma}(\bfW))\mu(\calC_{\bfGamma,\calS}(2c_0))$. Additionally, set 
\begin{align}
r_{\lambda,\chi,\alpha,c_0}(\bfV,\bfW|\bfB,\bfGamma)&:=\{\lambda^2R_{\calR,c_0}^2(\bfV|\bfB) + \chi^2R_{\calS,c_0}^2(\bfW|\bfGamma) + \alpha^2\}^{1/2}.
\end{align}
Finally, set
$
\spadesuit_2(\sf,r):=(\nicefrac{1}{\sd_1^2})\left(
4\sf + 3r \right)^2
$
and 
$
\clubsuit_2(\sf,r):=(\nicefrac{1}{\sd_1^2})\left(
16\sf + 12r\right).
$
\end{definition}

\begin{theorem}[$q=2$ \& matrix decomposition]\label{thm:improved:rate}
Grant Assumption \ref{assump:label:contamination} and model \eqref{equation:structural:equation}. Consider the solution $[\hat\bfB,\hat\bfGamma]$ of  \eqref{equation:sorted:Huber:general} with hyper-parameters 
$\lambda,\chi,\tau>0$ and $\sa\in(0,\infty]$. Let $\sc_*,\sf_*\ge0$ be absolute constants and suppose:
\begin{itemize}\itemsep=0pt
\item[\rm (i)] $(\frX,\bxi)$ satisfies $\MP_{\calR,\calS,\Vert\cdot\Vert_\sharp}(\sf_1,\sf_2,\sf_3,\sf_4)$. 
\item[\rm{(ii)}] $\frX$ satisfies $\ATP_{\calR,\calS,\Vert\cdot\Vert_\sharp}(\sd_1,\sd_2,\sd_3,\sd_4)$.
\item[\rm(iii)] $\frX$ satisfies $\IP_{\calR,\calS,\Vert\cdot\Vert_\sharp}\left(\sb_1,\sb_2,\sb_3,\sb_4\right)$.
\item[\rm (iv)] The hyper-parameters $(\lambda,\chi,\tau)$ satisfy $\tau\ge 4[(\sigma\sd_4)\vee(2\sf_4)]$,
\begin{align}
\lambda \ge4[(\sigma\sd_2)\vee(2\sf_2+2\sc_*\sigma\sb_2)] \quad\mbox{ and }\quad
\chi\ge4[(\sigma\sd_3)\vee(2\sf_3+2\sf_*+2\sc_*\sigma\sb_3)].
\end{align}
\item[\rm (v)] $\hat\sf_1:=\sf_1 + \sb_1(\sc_*\sigma) + (\nicefrac{2\sb_4}{\tau})(\sc_*^2\sigma^2)\le\sigma\sd_1/2$. 
\end{itemize}

Let any $D\ge0$ and $[\bfB,\bfGamma]$ satisfying the constraints
\begin{align}
\Vert\frX^{(n)}(\bfB+\bfGamma)-\boldf^{(n)}\Vert_2 &= D,\label{cond4:general:norm}\\
\Vert\bfB\Vert_\infty &\le\sa,\label{cond5:general:norm}\\
|\llangle\bfDelta_{\bfB},\bfDelta_{\bfGamma}\rrangle_\Pi|&\le \sf_*\calS(\bfDelta_{\bfGamma}),\label{cond6:general:norm}\\
r := r_{\lambda,\chi,\tau\Omega,3}(\bfDelta_{\bfB},\bfDelta_{\bfGamma}|\bfB,\bfGamma)&<\sigma\sd_1, \label{cond1:general:norm}\\
D^2 + \spadesuit_2(\sf_1,r) &\le \sc_*^2\sigma^2,\label{cond8:general:norm}\\
\left[(\nicefrac{D^2}{\sigma\sd_1})\right]\bigvee\left[(\nicefrac{2\sqrt{2}}{\sd_1})D + \clubsuit_2(\sf_1,r)\right]&\le \sc_*\sigma.\label{cond9:general:norm}
\end{align}
Define the quantities
$\hat r:=r_{\lambda,\chi,0,3}(\bfDelta_{\bfB},\bfDelta_{\bfGamma}|\bfB,\bfGamma)$ and
\begin{align}
F&:=\sf_1 + \sb_1\left(
(\nicefrac{D^2}{\sigma\sd_1})\bigvee\left((\nicefrac{2\sqrt{2}}{\sd_1})D + \clubsuit_2(\sf_1,r)\right)
\right) + (2\sb_4/\tau)\left(
D^2 + \spadesuit_2(\sf_1,r)
\right).
\end{align}

Then, it holds that 
\begin{align}
&(\nicefrac{1}{2})\left(\lambda\calR(\bfDelta_{\bfB}) + \chi\calS(\bfDelta_{\bfGamma})\right) + \Vert\frX^{(n)}(\hat\bfB+\hat\bfGamma)-\boldf^{(n)}\Vert_2^2
\le D^2 + \spadesuit_2(F,\hat r).\label{thm:improved:rate:main:rate:eq1}\\
&\Vert[\bfDelta_{\bfB},\bfDelta_{\bfGamma}]\Vert_{\Pi} \le 
(\nicefrac{D^2}{\sigma\sd_1})\bigvee\left[(\nicefrac{2\sqrt{2}}{\sd_1})D + \clubsuit_2(F,\hat r)\right]. \label{thm:improved:rate:main:rate:eq2}
\end{align}
\end{theorem}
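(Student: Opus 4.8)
The plan is to run the standard ``$M$-estimation with decomposable regularizers'' argument (as in \cite{BRT,2012negahban:ravikumar:wainwright:yu,2018bellec:lecue:tsybakov,2019dalalyan:thompson}), carried on the augmented reformulation \eqref{equation:aug:slope:rob:estimator:general} so that the three blocks $[\bfB,\bfGamma,\btheta]$ are treated jointly, and adapted to mis-specification and to the non-decomposable penalty $\Vert\cdot\Vert_\sharp$ on the contamination block. Write $\mathbf{e}^{(n)}:=\boldf^{(n)}-\frX^{(n)}(\bfB+\bfGamma)$ (so $\Vert\mathbf{e}^{(n)}\Vert_2=D$ by \eqref{cond4:general:norm}), let $\btheta^\circ$ be the $o$-sparse reference vector for the contamination block for which the residual at $[\bfB,\bfGamma,\btheta^\circ]$ equals $\mathbf{e}^{(n)}+\bxi^{(n)}$, set $\bfDelta^{\hat\btheta}:=\hat\btheta-\btheta^\circ$ and $\mathbf{R}^{(n)}:=\frM^{(n)}\big(-(\bfDelta_{\bfB}+\bfDelta_{\bfGamma}),\bfDelta^{\hat\btheta}\big)=\bfDelta^{\hat\btheta}-\frX^{(n)}(\bfDelta_{\bfB}+\bfDelta_{\bfGamma})$. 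Feasibility of $[\bfB,\bfGamma,\btheta^\circ]$ (guaranteed by \eqref{cond5:general:norm}) gives the basic inequality $G(\hat\bfB,\hat\bfGamma,\hat\btheta)\le G(\bfB,\bfGamma,\btheta^\circ)$ for the augmented objective $G$. Expanding the two squares and cancelling $\tfrac12\Vert\bxi^{(n)}\Vert_2^2$ gives the plain rearrangement $\tfrac12\Vert\mathbf{R}^{(n)}\Vert_2^2\le -\langle\mathbf{e}^{(n)},\mathbf{R}^{(n)}\rangle-\langle\bxi^{(n)},\mathbf{R}^{(n)}\rangle+\lambda(\calR(\bfB)-\calR(\hat\bfB))+\chi(\calS(\bfGamma)-\calS(\hat\bfGamma))+\tau(\Vert\btheta^\circ\Vert_\sharp-\Vert\hat\btheta\Vert_\sharp)$; writing the fitted residual as $\bxi^{(n)}+\bfDelta^{\hat\btheta}-(\frX^{(n)}(\hat\bfB+\hat\bfGamma)-\boldf^{(n)})$ and completing the square in $\bfDelta^{\hat\btheta}+\mathbf{e}^{(n)}$ gives instead the \emph{pivotal inequality}
\begin{align*}
\tfrac12\big\Vert\frX^{(n)}(\hat\bfB+\hat\bfGamma)-\boldf^{(n)}\big\Vert_2^2 \le{} & D^2 + \big\langle\frX^{(n)}(\bfDelta_{\bfB}+\bfDelta_{\bfGamma}),\bfDelta^{\hat\btheta}\big\rangle + \big|\langle\bxi^{(n)},\mathbf{R}^{(n)}\rangle\big|\\
& + \lambda\big(\calR(\bfB)-\calR(\hat\bfB)\big) + \chi\big(\calS(\bfGamma)-\calS(\hat\bfGamma)\big) + \tau\big(\Vert\btheta^\circ\Vert_\sharp-\Vert\hat\btheta\Vert_\sharp\big),
\end{align*}
in which the coefficient of $D^2$ is exactly $1$ and the two genuine interaction terms are the \emph{design--corruption} form $\langle\frX^{(n)}(\bfDelta_{\bfB}+\bfDelta_{\bfGamma}),\bfDelta^{\hat\btheta}\rangle$, to be handled by $\IP$, and the \emph{multiplier} form $\langle\bxi^{(n)},\mathbf{R}^{(n)}\rangle=\langle\bxi^{(n)},\frM^{(n)}(-(\bfDelta_{\bfB}+\bfDelta_{\bfGamma}),\bfDelta^{\hat\btheta})\rangle$, to be handled by $\MP$.

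\emph{Step 1 (crude bounds).} Starting from the plain rearrangement: bound $\langle\mathbf{e}^{(n)},\mathbf{R}^{(n)}\rangle$ by $D\Vert\mathbf{R}^{(n)}\Vert_2$ and $\langle\bxi^{(n)},\mathbf{R}^{(n)}\rangle$ by $\MP$ (hypothesis (i)); handle the $\calR$-, $\calS$- and sorted-norm differences by decomposability of $\calR,\calS$ (Definition \ref{def:decomposable:norm}) and the usual sorted-$\ell_1$ inequality; using the tuning inequalities (iv) (which make $\lambda,\chi,\tau$ dominate four times the relevant $\sd$-, $\sf$- and $\sb$-constants) one forces $[\bfDelta_{\bfB},\bfDelta_{\bfGamma},\bfDelta^{\hat\btheta}]$ into the dimension-reduction cone $\calC_{\bfB,\bfGamma,\calR,\calS}(3,\lambda,\chi,\tau\Omega)$ of Definition \ref{def:dim:red:cones}. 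On that cone $\lambda\calR(\bfDelta_{\bfB})$, $\chi\calS(\bfDelta_{\bfGamma})$, $\sum_{i\le o}\omega_i(\bfDelta^{\hat\btheta})^\sharp_i$ and $\tau\Vert\bfDelta^{\hat\btheta}\Vert_\sharp$ are all $\lesssim r\,\Vert[\bfDelta_{\bfB},\bfDelta_{\bfGamma},\bfDelta^{\hat\btheta}]\Vert_\Pi$, with $r=r_{\lambda,\chi,\tau\Omega,3}(\bfDelta_{\bfB},\bfDelta_{\bfGamma}|\bfB,\bfGamma)$ the oracle rate of Definition \ref{def:rate:notation} (built from the $\Psi_{\calR},\Psi_{\calS},\mu$ and $\Omega$ quantities), so \eqref{cond1:general:norm} ($r<\sigma\sd_1$) lets them be absorbed. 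Applying $\ATP$ (hypothesis (ii)) to $(\bfV,\bfW,\bu)=(\bfDelta_{\bfB},\bfDelta_{\bfGamma},\bfDelta^{\hat\btheta})$ --- with the cross term $-2\llangle\bfDelta_{\bfB},\bfDelta_{\bfGamma}\rrangle_\Pi$ controlled by \eqref{cond6:general:norm} --- converts $\Vert[\bfDelta_{\bfB},\bfDelta_{\bfGamma},\bfDelta^{\hat\btheta}]\Vert_\Pi$ into $\lesssim\sd_1^{-1}(\Vert\mathbf{R}^{(n)}\Vert_2+r)$ and thus bounds $\Vert\bfDelta^{\hat\btheta}\Vert_2$ and $\Vert[\bfDelta_{\bfB},\bfDelta_{\bfGamma}]\Vert_\Pi$; feeding this back collapses the plain rearrangement to a scalar quadratic inequality in $\Vert\mathbf{R}^{(n)}\Vert_2$, whose solution is of the form $\Vert\mathbf{R}^{(n)}\Vert_2^2\le D^2+\spadesuit_2(\sf_1,r)$ and $\Vert[\bfDelta_{\bfB},\bfDelta_{\bfGamma}]\Vert_\Pi\le (D^2/(\sigma\sd_1))\vee\big((2\sqrt2/\sd_1)D+\clubsuit_2(\sf_1,r)\big)$. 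Conditions \eqref{cond8:general:norm}--\eqref{cond9:general:norm} say exactly that these are at most $\sc_*^2\sigma^2$ and $\sc_*\sigma$.

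\emph{Step 2 (bootstrap).} Now use the pivotal inequality and bound $\langle\frX^{(n)}(\bfDelta_{\bfB}+\bfDelta_{\bfGamma}),\bfDelta^{\hat\btheta}\rangle$ by $\IP$ (hypothesis (iii)): its pieces $\sb_1\Vert[\bfDelta_{\bfB},\bfDelta_{\bfGamma}]\Vert_\Pi\Vert\bfDelta^{\hat\btheta}\Vert_2$ and $\sb_4\Vert[\bfDelta_{\bfB},\bfDelta_{\bfGamma}]\Vert_\Pi\Vert\bfDelta^{\hat\btheta}\Vert_\sharp$ are \emph{quadratic} in $\Vert[\bfDelta_{\bfB},\bfDelta_{\bfGamma},\bfDelta^{\hat\btheta}]\Vert_\Pi$, and replacing one factor by the Step-1 bounds ($\Vert[\bfDelta_{\bfB},\bfDelta_{\bfGamma}]\Vert_\Pi\le\sc_*\sigma$; and $\tau\Vert\bfDelta^{\hat\btheta}\Vert_\sharp$ re-absorbed against the crude $\Vert\mathbf{R}^{(n)}\Vert_2^2\le\sc_*^2\sigma^2$) upgrades the effective multiplier constant from $\sf_1$ to $F$ and the rate from $r$ to $\hat r=r_{\lambda,\chi,0,3}(\bfDelta_{\bfB},\bfDelta_{\bfGamma}|\bfB,\bfGamma)$ --- the $\tau\Omega$ summand dropping because the $\Vert\bfDelta^{\hat\btheta}\Vert_\sharp$ contribution has by now been fully absorbed through $\MP$ and $\IP$. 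Hypothesis (v) ($\hat\sf_1\le\sigma\sd_1/2$, which forces $F\le\hat\sf_1\le\sigma\sd_1/2$ by \eqref{cond8:general:norm}--\eqref{cond9:general:norm}) keeps the refined quadratic inequality closable, and re-running the Step-1 algebra --- now keeping the terms $\tfrac12(\lambda\calR(\bfDelta_{\bfB})+\chi\calS(\bfDelta_{\bfGamma}))$ on the left, which survive the cone estimates --- delivers \eqref{thm:improved:rate:main:rate:eq1} and \eqref{thm:improved:rate:main:rate:eq2} with $F,\hat r$ in place of $\sf_1,r$.

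The hardest part is the sharp-constant bookkeeping. Keeping the factor on $D^2$ equal to $1$ forces the exact cancellation used to derive the pivotal inequality (one must \emph{not} split $\langle\mathbf{e}^{(n)},\cdot\rangle$ crudely and then reconcile $\Vert\mathbf{R}^{(n)}\Vert_2$ with $\Vert\frX^{(n)}(\hat\bfB+\hat\bfGamma)-\boldf^{(n)}\Vert_2$ by triangle inequality, which would cost a factor $2$), while simultaneously the $\IP$ cross term --- genuinely quadratic in the $\Pi$-norm --- must be linearized through the Step-1 bounds so as to feed exactly the quantities $F$ and $\spadesuit_2,\clubsuit_2$ appearing in the statement; getting all the constants $(3;\,4;\,16,12;\,2\sqrt2;\ldots)$ to line up is the real work. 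A secondary obstacle is the a priori verification of cone membership, where the non-decomposable sorted norm must be split into its first $o$ and last $n-o$ weighted terms --- the head being $\le\Omega\Vert\bfDelta^{\hat\btheta}\Vert_2$ with $\Omega=\{\sum_{i\le o}\omega_i^2\}^{1/2}$ --- so as to match the $\eta=\tau\Omega$ slot of Definition \ref{def:dim:red:cones}. Everything else is routine, since the needed concentration is already packaged as the hypotheses $\MP$, $\ATP$, $\IP$.
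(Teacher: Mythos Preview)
Your two-stage architecture is exactly the paper's: a crude pass (Proposition~\ref{prop:suboptimal:rate}) bounding the full triplet $[\bfDelta_{\bfB},\bfDelta_{\bfGamma},\bfDelta^{\hat\btheta}]$ at rate $r$, then a refined pass that treats $\bfDelta^{\hat\btheta}$ as a nuisance, uses $\IP$ together with the Stage-1 bounds $\|\bfDelta^{\hat\btheta}\|_2\le\sc_*\sigma$ and $(\tau/2)\|\bfDelta^{\hat\btheta}\|_\sharp\le\sc_*^2\sigma^2$, and outputs the pair bound with $(F,\hat r)$. The ingredients you list ($\MP$, $\ATP$, decomposability, the sorted head/tail split, the cone of Definition~\ref{def:dim:red:cones}) all match.

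There is, however, a real gap in your Step~2. You reuse the same ``pivotal inequality'', which still carries the term $\tau(\Vert\btheta^\circ\Vert_\sharp-\Vert\hat\btheta\Vert_\sharp)$ and, via $\MP$ applied to $\mathbf{R}^{(n)}$, the full norm $\Vert[\bfDelta_{\bfB},\bfDelta_{\bfGamma},\bfDelta^{\hat\btheta}]\Vert_\Pi$. With the $\tau$-block present, the decomposability/cone argument still lands you in the \emph{three}-block cone $\calC_{\bfB,\bfGamma}(3,\gamma_{\calR},\gamma_{\calS},\Omega)$, producing $r$, not $\hat r$; your sentence ``the $\tau\Omega$ summand drops because $\Vert\bfDelta^{\hat\btheta}\Vert_\sharp$ has been absorbed'' is precisely the step that does not go through. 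The paper's fix is to derive a \emph{second} first-order condition (Lemma~\ref{lemma:recursion:delta:bb:general:norm}) by viewing $[\hat\bfB,\hat\bfGamma]$ as the minimizer in the $[\bfB,\bfGamma]$ variables with $\btheta\equiv\hat\btheta$ \emph{held fixed}. This removes the $\tau$-regularization term entirely, so the cone becomes the two-block cone $\calC_{\bfB,\bfGamma}(3,\gamma_{\calR},\gamma_{\calS},0)$ and the rate is $\hat r=r_{\lambda,\chi,0,3}$. The vector $\bfDelta^{\hat\btheta}$ now reappears only as the shift $\bxi^{(n)}\mapsto\bxi^{(n)}-\bfDelta^{\hat\btheta}$ in the multiplier term, and \emph{that} is where $\IP$ and the Stage-1 bounds enter (Lemma~\ref{lemma:MP+IP+ATP}), producing exactly the constant $F$.

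A secondary issue: you work from the basic inequality $G(\hat\cdot)\le G(\cdot)$, whereas the paper uses the KKT conditions (Lemmas~\ref{lemma:recursion:1st:order:condition} and~\ref{lemma:recursion:delta:bb:general:norm}). For the quadratic loss the two differ by the nonnegative term $\tfrac12\Vert\frM^{(n)}\Vert_2^2$; combined with the polarization identity, the KKT version puts \emph{both} $\Vert\bfDelta^{(n)}\Vert_2^2$ and $(\sd_1\Vert[\cdot]\Vert_\Pi-\cdots)_+^2$ on the left against $D^2$ on the right, all with coefficient~$1$. Your pivotal inequality has $\tfrac12\Vert\bfDelta^{(n)}\Vert_2^2\le D^2+\cdots$, which costs a factor~$2$ on $D^2$ and loses the $\Pi$-norm term on the left --- so the case analysis that yields the exact $\spadesuit_2$, $\clubsuit_2$ constants does not quite close.
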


The conditions (i)-(iii) are the required design properties. Condition (iv) prescribes the ``optimal'' level of the hyper-parameters 
$(\lambda,\chi,\tau)$ in terms of the design constants, the noise level and the low-spikeness constant $\sf_*$. Notice that $(\lambda,\chi)$  also depend on the constant $\sc_*$ --- this constant is related with the constraints \eqref{cond4:general:norm} and \eqref{cond8:general:norm}-\eqref{cond9:general:norm}. As explained later, these constraints identify the effect of the corruption error $\bfDelta^{\hat\btheta}$ and miss-specification error on the choice of $(\lambda,\chi)$. The constraints \eqref{cond5:general:norm}-\eqref{cond6:general:norm} encode the low-spikeness assumption. Finally, condition (v) and constraint \eqref{cond1:general:norm} encode the minimal sample size and maximum breakdown point. Notice that the corruption and miss-specification errors also impact condition (v) via the constant $\sc_*$. 

The proof of Theorem \ref{thm:improved:rate} will be done via intermediate lemmas, stated next. These are proven in the supplement. We start with the next lemma, a consequence of the first order condition of  \eqref{equation:aug:slope:rob:estimator:general}. In the following, we grant Assumption \ref{assump:label:contamination} and model  \eqref{equation:structural:equation} and set $\bfDelta=\frX(\hat\bfB+\hat\bfGamma)-\boldf$.  

\begin{lemma}\label{lemma:recursion:1st:order:condition}
For all $[\bfB,\bfGamma]\in(\mdR^p)^2$ such that 
$\Vert\bfB\Vert_\infty\le\sa$,
\begin{align}
\langle \bfDelta^{(n)} +\bfDelta^{\hat\btheta}, \frM^{(n)}(\bfDelta_{\bfB}+\bfDelta_{\bfGamma},\bfDelta^{\hat\btheta})\rangle  
&\le \langle\bxi^{(n)},\frM^{(n)}(\bfDelta_{\bfB} + \bfDelta_{\bfGamma},\bfDelta^{\hat\btheta})\rangle\\
&+\lambda \big(\calR(\bfB) - \calR(\hat\bfB)\big) 
+\chi \big(\calS(\bfGamma) - \calS(\hat\bfGamma)\big)
+\tau\big(\|\btheta^*\|_\sharp -\|\hat\btheta\|_\sharp\big).
\label{lemma:recursion:1st:order:condition:eq}
\end{align}
\end{lemma}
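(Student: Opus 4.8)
The plan is to read \eqref{lemma:recursion:1st:order:condition:eq} off the first-order optimality condition of the estimator in its augmented form. By the equivalence of \eqref{equation:sorted:Huber:general} with \eqref{equation:aug:slope:rob:estimator:general}, the estimator $[\hat\bfB,\hat\bfGamma,\hat\btheta]$ is a minimizer of the convex function $Q+\lambda\calR+\chi\calS+\tau\Vert\cdot\Vert_\sharp$ over the convex set $\calK:=\{[\bfB,\bfGamma,\btheta]\in(\mdR^p)^2\times\re^n:\Vert\bfB\Vert_\infty\le\sa\}$, where
\[
Q(\bfB,\bfGamma,\btheta):=\tfrac{1}{2n}\big\Vert\by-\frX(\bfB+\bfGamma)-\sqrt n\,\btheta\big\Vert_2^2
\]
is the smooth convex data-fit term (I pick the sign of $\btheta$ so as to be consistent with the convention $\bfDelta^{\hat\btheta}=\hat\btheta-\btheta^*$; this is harmless, since $\Vert\cdot\Vert_\sharp$ is invariant under coordinate sign flips). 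Since $\calK$ has nonempty interior ($\sa>0$, or $\calK$ is the whole space if $\sa=\infty$), standard subdifferential calculus gives subgradients $\bg_\calR\in\partial\calR(\hat\bfB)$, $\bg_\calS\in\partial\calS(\hat\bfGamma)$ and $\bg_\sharp\in\partial\Vert\cdot\Vert_\sharp(\hat\btheta)$ with $-\nabla Q(\hat\bfB,\hat\bfGamma,\hat\btheta)-(\lambda\bg_\calR,\chi\bg_\calS,\tau\bg_\sharp)$ in the normal cone $N_\calK(\hat\bfB,\hat\bfGamma,\hat\btheta)$.

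Next, I would fix $[\bfB,\bfGamma]$ with $\Vert\bfB\Vert_\infty\le\sa$ and test this inclusion against the competitor $[\bfB,\bfGamma,\btheta^*]\in\calK$ (it is feasible precisely because $\Vert\bfB\Vert_\infty\le\sa$, the $\btheta$-block being unconstrained). Pairing the normal-cone inclusion with $[\bfB,\bfGamma,\btheta^*]-[\hat\bfB,\hat\bfGamma,\hat\btheta]$ and using the subgradient inequalities $\calR(\bfB)\ge\calR(\hat\bfB)+\llangle\bg_\calR,\bfB-\hat\bfB\rrangle$ and its analogues yields
\[
\big\langle\nabla Q(\hat\bfB,\hat\bfGamma,\hat\btheta),\ (\hat\bfB-\bfB,\ \hat\bfGamma-\bfGamma,\ \hat\btheta-\btheta^*)\big\rangle\le\lambda\big(\calR(\bfB)-\calR(\hat\bfB)\big)+\chi\big(\calS(\bfGamma)-\calS(\hat\bfGamma)\big)+\tau\big(\Vert\btheta^*\Vert_\sharp-\Vert\hat\btheta\Vert_\sharp\big).
\]
A direct computation gives $\nabla_\bfB Q=\nabla_\bfGamma Q=-\tfrac1n\frX^*(\hat r)$ and $\nabla_\btheta Q=-\tfrac1{\sqrt n}\hat r$, where $\hat r:=\by-\frX(\hat\bfB+\hat\bfGamma)-\sqrt n\hat\btheta$ and $\frX^*(\bu)=\sum_{i\in[n]}u_i\bfX_i$ is the adjoint of $\frX$; hence, using $\llangle\frX^*(\hat r),\bfV\rrangle=\langle\hat r,\frX(\bfV)\rangle$, the left-hand side equals $-\langle\hat r^{(n)},\ \frX^{(n)}(\bfDelta_{\bfB}+\bfDelta_{\bfGamma})+\bfDelta^{\hat\btheta}\rangle=-\langle\hat r^{(n)},\ \frM^{(n)}(\bfDelta_{\bfB}+\bfDelta_{\bfGamma},\bfDelta^{\hat\btheta})\rangle$.

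Finally, I would substitute the structural equation \eqref{equation:structural:equation}: from $\bfDelta=\frX(\hat\bfB+\hat\bfGamma)-\boldf$ and $\by=\boldf+\sqrt n\btheta^*+\bxi$ one gets $\hat r^{(n)}=\bxi^{(n)}-\bfDelta^{(n)}-\bfDelta^{\hat\btheta}$, so that
\[
-\big\langle\hat r^{(n)},\frM^{(n)}(\bfDelta_{\bfB}+\bfDelta_{\bfGamma},\bfDelta^{\hat\btheta})\big\rangle=\big\langle\bfDelta^{(n)}+\bfDelta^{\hat\btheta},\frM^{(n)}(\bfDelta_{\bfB}+\bfDelta_{\bfGamma},\bfDelta^{\hat\btheta})\big\rangle-\big\langle\bxi^{(n)},\frM^{(n)}(\bfDelta_{\bfB}+\bfDelta_{\bfGamma},\bfDelta^{\hat\btheta})\big\rangle;
\]
plugging this into the previous display and rearranging gives exactly \eqref{lemma:recursion:1st:order:condition:eq}. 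The argument is entirely routine convex analysis. The one point requiring attention is that minimizing over the \emph{constrained} set $\calK$ furnishes only a one-sided variational inequality, so it may be tested only against feasible competitors — which is exactly why the statement is restricted to $[\bfB,\bfGamma]$ with $\Vert\bfB\Vert_\infty\le\sa$ (while $\btheta$ may be freely set to $\btheta^*$). Keeping the sign of the $\btheta$-block consistent with $\bfDelta^{\hat\btheta}=\hat\btheta-\btheta^*$ is the only other piece of bookkeeping; no probabilistic estimates enter.
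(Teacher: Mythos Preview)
Your proof is correct and follows essentially the same approach as the paper: both derive \eqref{lemma:recursion:1st:order:condition:eq} from the first-order optimality conditions of the augmented problem \eqref{equation:aug:slope:rob:estimator:general}, test against the feasible competitor $[\bfB,\bfGamma,\btheta^*]$, and use the subgradient inequality for each of the three regularizers. The only cosmetic difference is that the paper writes out the three block inequalities separately and sums them, whereas you package the same computation into a single normal-cone variational inequality; your remark about the sign of the $\btheta$-block (harmless by sign-invariance of $\Vert\cdot\Vert_\sharp$) is well placed.
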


Next, we upper (and lower) bound \eqref{lemma:recursion:1st:order:condition:eq} using $\MP$ (and $\ATP$). In case of additive matrix decomposition, we require an additional condition related to the spikeness assumption.
\begin{lemma}\label{lemma:MP+ATP}
Suppose conditions (i)-(ii) of Theorem \ref{thm:improved:rate} hold. For some $\sf_*\ge0$, let $[\bfB,\bfGamma]$ such that
$\Vert\bfB\Vert_\infty\le\sa$ and 
$
|\llangle\bfDelta_{\bfB},\bfDelta_{\bfGamma}\rrangle_{\Pi}|\le \sf_*\calS(\bfDelta_{\bfGamma}).
$
Define the quantities
\begin{align}
\blacktriangle & := ((\sigma\sd_2)\vee(2\sf_2))\calR(\bfDelta_{\bfB}) 
+ ((\sigma\sd_3)\vee(2\sf_3 + 2\sf_*))\calS(\bfDelta_{\bfGamma}) 
+ ((\sigma\sd_4)\vee(2\sf_4))\Vert\bfDelta^{\hat\btheta}\Vert_\sharp,\\
\blacktriangledown & := \lambda \big(\calR(\bfB) - \calR(\hat\bfB)\big) 
+\chi\big(\calS(\bfGamma) - \calS(\hat\bfGamma)\big)
+\tau\big(\|\btheta^*\|_\sharp -\|\hat\btheta\|_\sharp\big).
\end{align}

Then 
\begin{align}
\Vert\bfDelta^{(n)} +\bfDelta^{\hat\btheta}\Vert_2^2
+ \left(\sd_1\Vert[\bfDelta_{\bfB},\bfDelta_{\bfGamma},\bfDelta^{\hat\btheta}]\Vert_{\Pi} - (\nicefrac{\blacktriangle}{\sigma}) \right)_+^2&\le
\Vert\frX^{(n)}(\bfB+\bfGamma)-\boldf^{(n)}\Vert_2^2\\
&+ 2\sf_1\Vert[\bfDelta_{\bfB},\bfDelta_{\bfGamma},\bfDelta^{\hat\btheta}]\Vert_{\Pi}
+ \blacktriangle + 2\blacktriangledown.
\label{lemma:MP+ATP:eq}
\end{align}
\end{lemma}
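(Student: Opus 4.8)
The plan is to combine the first-order recursion of Lemma \ref{lemma:recursion:1st:order:condition} with the augmented restricted strong convexity property $\ATP$ for the left-hand side and the multiplier process property $\MP$ for the noise term on the right-hand side. I will write $\bfV:=\bfDelta_{\bfB}$, $\bfW:=\bfDelta_{\bfGamma}$, $\bu:=\bfDelta^{\hat\btheta}$ and $\bfDelta=\frX(\hat\bfB+\hat\bfGamma)-\boldf$ throughout. First I would expand the inner product on the left of \eqref{lemma:recursion:1st:order:condition:eq}. Writing $\frM^{(n)}(\bfV+\bfW,\bu)=\frX^{(n)}(\bfV+\bfW)+\bu = \bfDelta^{(n)}+\bu + \bigl(\frX^{(n)}(\bfB+\bfGamma)-\boldf^{(n)}\bigr)$ (using $\frX(\hat\bfB+\hat\bfGamma)-\boldf=\bfDelta$ and linearity), the left-hand side becomes $\Vert\bfDelta^{(n)}+\bu\Vert_2^2$ plus a cross term $\langle\bfDelta^{(n)}+\bu,\frX^{(n)}(\bfB+\bfGamma)-\boldf^{(n)}\rangle$. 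That cross term is handled by splitting it off and absorbing half of $\Vert\frX^{(n)}(\bfB+\bfGamma)-\boldf^{(n)}\Vert_2^2$; this is the standard Young/completion-of-square manoeuvre which produces the $\Vert\frX^{(n)}(\bfB+\bfGamma)-\boldf^{(n)}\Vert_2^2$ term on the RHS of \eqref{lemma:MP+ATP:eq}. Actually, looking more carefully, I expect one keeps the left side as is and instead uses $\langle\bfDelta^{(n)}+\bu,\frM^{(n)}\rangle = \Vert\bfDelta^{(n)}+\bu\Vert_2^2 + \langle\bfDelta^{(n)}+\bu,\frX^{(n)}(\bfB+\bfGamma)-\boldf^{(n)}\rangle$ and bounds the cross term by $\tfrac12\Vert\bfDelta^{(n)}+\bu\Vert_2^2 + \tfrac12\Vert\frX^{(n)}(\bfB+\bfGamma)-\boldf^{(n)}\Vert_2^2$ — but that loses a factor. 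The cleaner route, which I would pursue, is to observe that by $\ATP_{\calR,\calS,\Vert\cdot\Vert_\sharp}$ applied to $[\bfV,\bfW,\bu]$,
\begin{align}
\Vert\frM^{(n)}(\bfV+\bfW,\bu)\Vert_2^2 - 2\llangle\bfV,\bfW\rrangle_\Pi \ge \bigl(\sd_1\Vert[\bfV,\bfW,\bu]\Vert_\Pi - \sd_2\calR(\bfV) - \sd_3\calS(\bfW) - \sd_4\Vert\bu\Vert_\sharp\bigr)_+^2,
\end{align}
and then relate $\Vert\frM^{(n)}(\bfV+\bfW,\bu)\Vert_2^2$ to $\Vert\bfDelta^{(n)}+\bu\Vert_2^2$ and the approximation error via $\frM^{(n)}(\bfV+\bfW,\bu) = \bfDelta^{(n)}+\bu - (\frX^{(n)}(\bfB+\bfGamma)-\boldf^{(n)})$ reversed sign — I need to chase signs carefully here.

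The second main ingredient is to bound the noise term $\langle\bxi^{(n)},\frM^{(n)}(\bfV+\bfW,\bu)\rangle$ on the RHS of \eqref{lemma:recursion:1st:order:condition:eq} using $\MP_{\calR,\calS,\Vert\cdot\Vert_\sharp}(\sf_1,\sf_2,\sf_3,\sf_4)$, giving $\langle\bxi^{(n)},\frM^{(n)}(\bfV+\bfW,\bu)\rangle \le \sf_1\Vert[\bfV,\bfW,\bu]\Vert_\Pi + \sf_2\calR(\bfV) + \sf_3\calS(\bfW) + \sf_4\Vert\bu\Vert_\sharp$. The spikeness hypothesis $|\llangle\bfV,\bfW\rrangle_\Pi|\le\sf_*\calS(\bfW)$ is used in two places: once to control the $-2\llangle\bfV,\bfW\rrangle_\Pi$ term appearing from $\ATP$ (it contributes $2\sf_*\calS(\bfDelta_{\bfGamma})$, which is exactly why $\sf_3$ gets the extra $+2\sf_*$ inside the $\calS(\bfDelta_{\bfGamma})$ coefficient in the definition of $\blacktriangle$), and again implicitly when converting $\Vert[\bfV,\bfW,\bu]\Vert_\Pi$ type quantities. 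The factor-of-2 discrepancies between the $\sd_i$ and the $2\sf_i$ inside the maxima defining $\blacktriangle$ come from the fact that $\ATP$ appears once but $\MP$-type terms are multiplied by $2$ after completing the square (the $2\blacktriangledown$ and $2\sf_1\Vert\cdot\Vert_\Pi$ on the RHS are likewise doubled), so I would organize the bookkeeping so that every $\calR(\bfDelta_{\bfB})$, $\calS(\bfDelta_{\bfGamma})$, $\Vert\bfDelta^{\hat\btheta}\Vert_\sharp$ coefficient is a clean $\max$ of the $\ATP$ constant and twice the $\MP$ constant (plus spikeness where relevant).

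Assembling: starting from $\langle\bfDelta^{(n)}+\bu,\frM^{(n)}\rangle \le \langle\bxi^{(n)},\frM^{(n)}\rangle + \blacktriangledown$ (Lemma \ref{lemma:recursion:1st:order:condition} with $\blacktriangledown$ collecting the three norm-difference terms), I substitute the $\MP$ bound on the right; on the left I use $\ATP$ together with the algebraic identity linking $\langle\bfDelta^{(n)}+\bu,\frM^{(n)}\rangle$, $\Vert\bfDelta^{(n)}+\bu\Vert_2^2$, $\Vert\frM^{(n)}\Vert_2^2$ and $\Vert\frX^{(n)}(\bfB+\bfGamma)-\boldf^{(n)}\Vert_2^2$ (a polarization/parallelogram-type step), plus the spikeness bound on $\llangle\bfV,\bfW\rrangle_\Pi$. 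After moving the $\ATP$ square to the left and the noise/penalty norm-of-$\bfDelta$ terms into $\blacktriangle$ and $2\blacktriangledown$, and keeping $2\sf_1\Vert[\bfDelta_{\bfB},\bfDelta_{\bfGamma},\bfDelta^{\hat\btheta}]\Vert_\Pi$ explicit, one lands on \eqref{lemma:MP+ATP:eq}. I expect the main obstacle to be precisely the algebraic reconciliation of the quadratic terms — i.e.\ getting from $\langle\bfDelta^{(n)}+\bu,\frM^{(n)}\rangle$ to the form $\Vert\bfDelta^{(n)}+\bu\Vert_2^2 + (\ATP\text{-square})_+^2 \le \Vert\text{approx}\Vert_2^2 + (\text{linear terms})$ without losing constants, since the $\frM^{(n)}$ in the inner product and the $\frM^{(n)}$ inside $\ATP$ must be matched and the cross term with the approximation error must be absorbed cleanly (this is where well-specifiedness $D=0$ would trivialize things but the general miss-specified case needs care). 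The rest is routine norm bookkeeping.
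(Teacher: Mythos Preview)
Your proposal is correct and follows essentially the same approach as the paper: start from Lemma \ref{lemma:recursion:1st:order:condition}, apply $\MP$ to the noise term, apply $\ATP$ (plus the spikeness bound on $\llangle\bfDelta_{\bfB},\bfDelta_{\bfGamma}\rrangle_\Pi$) to lower-bound $\Vert\frM^{(n)}\Vert_2^2$, and group coefficients into $\blacktriangle$. The ``algebraic reconciliation'' you flag as the main obstacle is in fact the clean parallelogram identity $\langle a,c\rangle=\tfrac12\Vert a\Vert_2^2+\tfrac12\Vert c\Vert_2^2-\tfrac12\Vert b\Vert_2^2$ with $a=\bfDelta^{(n)}+\bfDelta^{\hat\btheta}$, $b=\frX^{(n)}(\bfB+\bfGamma)-\boldf^{(n)}$, $c=\frM^{(n)}=a-b$, so no Young-type loss occurs; also, the spikeness hypothesis is used only once (to control the $-2\llangle\bfV,\bfW\rrangle_\Pi$ coming out of $\ATP$), not twice.
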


To illustrate, consider well-specified trace regression with label contamination --- namely, $\bfB=\bfB^*$, 
$\bfGamma=\bfGamma^*=\bf0$ and $\frX^{(n)}(\bfB^*)-\boldf^{(n)}=\bf0$. In this case, $\chi=\sf_*=0$, $\sa=\infty$ and it is sufficient that $\MP$ and $\ATP$ hold with $\sd_3=\sf_3=0$ and $\calS\equiv0$. Using \eqref{lemma:recursion:1st:order:condition:eq}, a similar proof of \eqref{lemma:MP+ATP:eq} entails
\begin{align}
\Vert\frM^{(n)}(\bfDelta_{\bfB^*},\bfDelta^{\hat\btheta})\Vert_2^2
&\le \frac{\sf_1}{\sc_1}\Vert\frM^{(n)}(\bfDelta_{\bfB^*},\bfDelta^{\hat\btheta})\Vert_2
+ \left(\sf_2+\frac{\sf_1\sc_2}{\sc_1}\right)\calR(\bfDelta_{\bfB^*})
+ \left(\sf_3+\frac{\sf_1\sc_3}{\sc_1}\right)\Vert\bfDelta^{\hat\btheta}\Vert_\sharp\\
&+\lambda \big(\calR(\bfB) - \calR(\hat\bfB)\big) 
+\tau\big(\|\btheta^*\|_\sharp -\|\hat\btheta\|_\sharp\big). 
\label{lemma:MP+ATP:illustration:eq}
\end{align}
In case $\sf_1=0$, the above bound and decomposability of norms can be used to show that $[\bfDelta_{\bfB^*},\bf0,\bfDelta^{\hat\btheta}]\in\calC_{\bfB^*,\bf0}(c_0,\gamma,0,\Omega)$ for some $c_0>0$ and $\gamma:=\lambda/\tau$  --- provided the penalization $(\lambda,\tau)$ is large enough. This would be the approach using the dual-norm inequality. Instead, we resort to Theorem \ref{thm:mult:process} to obtain $\MP$ with $\sf_1\neq0$ --- enabling us to obtain the optimal rate in 
$\delta$. $\ATP$ can be further used to lower bound \eqref{lemma:MP+ATP:illustration:eq} --- as it implies restricted strong convexity over $\calC_{\bfB^*,\bf0}(c_0,\gamma,0,\Omega)$. Inequality \eqref{lemma:MP+ATP:eq} is a non trivial generalization of \eqref{lemma:MP+ATP:illustration:eq} to handle miss-specification and additive matrix decomposition.\footnote{More precisely, \eqref{lemma:MP+ATP:eq} gives a recursion in the variable $\Vert[\bfDelta_{\bfB},\bfDelta_{\bfGamma},\bfDelta^{\hat\btheta}]\Vert_{\Pi}$ --- instead of 
$\Vert\frM^{(n)}(\bfDelta^{\hat\bfB}+\bfDelta^{\hat\bfGamma},\bfDelta^{\hat\btheta})\Vert_2$. This technical point is needed in case of matrix decomposition, accounting for the ``bias'' 
$
|\llangle\bfDelta_{\bfB},\bfDelta_{\bfGamma}\rrangle_{\Pi}|\le \sf_*\calS(\bfDelta_{\bfGamma}).
$}

\begin{remark}[Relevance of $\PP$ and $\IP$]\label{rem:relevance:PP+IP}
One should not take for granted the fact that $\PP_{\calR,\calS}(\sc_1,\sc_2,\sc_3,\sc_4)$ and $\IP_{\calR,\calS,\Vert\cdot\Vert_\sharp}\left(\sb_1,\sb_2,\sb_3,\sb_4\right)$ over the triplet $[\bfV,\bfW,\bu]$ are \emph{implicitly} invoked in Lemma \ref{lemma:MP+ATP}. Indeed, by Lemmas \ref{lemma:TP} and  \ref{lemma:ATP}, both properties entail 
$\ATP_{\calR,\calS,\Vert\cdot\Vert_\sharp}(\sd_1,\sd_2,\sd_3,\sd_4)$. The optimal bound for $\Vert[\bfDelta_{\bfB},\bfDelta_{\bfGamma},\bfDelta^{\hat\btheta}]\Vert_{\Pi}$ is obtained invoking $\ATP$ with sharp constants (as stated in Proposition \ref{proposition:properties:subgaussian:designs}). One could argue if a more ``direct'' approach could lead to the optimal rate, for instance, one without resorting to such technical definitions. It turns out that the mere use of dual-norm inequalities is suboptimal.  
\end{remark}

Before proceeding, we state the next lemma stating the useful bounds \eqref{lemma:aug:rest:convexity:eq2}-\eqref{lemma:aug:rest:convexity:eq3} for points in $\calC_{\bfB,\bfGamma}$. For convenience, given $[\bfV,\bfW,\bfB,\bfGamma,\bu]$,  we define
\begin{align}
\triangle_{\lambda,\chi,\tau}(\bfV,\bfW,\bu|\bfB,\bfW)&:=(\nicefrac{3\lambda}{2})(\calR\circ\calP_{\bfB})(\bfV) 
-(\nicefrac{\lambda}{2})(\calR\circ\calP_{\bfB}^\perp)(\bfV)\\
&+(\nicefrac{3\chi}{2})(\calS\circ\calP_{\bfGamma})(\bfW) - (\nicefrac{\chi}{2})(\calS\circ\calP_{\bfGamma}^\perp)(\bfW)\\
&+(\nicefrac{3\tau\Omega}{2})\Vert\bu\Vert_2 -(\nicefrac{\tau}{2})\sum_{i=o+1}^n\omega_i\bu_i^\sharp.
\end{align}
\begin{lemma}\label{lemma:aug:rest:convexity}
Define $\gamma_{\calR}:=\lambda/\tau$ and $\gamma_{\calS}:=\chi/\tau$ and let $c_0,\eta>0$. Then, for any $[\bfB,\bfGamma]$ and $[\bfV,\bfW,\bu]\in\calC_{\bfB,\bfGamma}(c_0,\gamma_{\calR},\gamma_{\calS},\eta)$, 
\begin{align}
\triangle_{\lambda,\chi,\tau}(\bfV,\bfW,\bu|\bfB,\bfGamma)&\le(\nicefrac{3}{2}) r_{\lambda,\chi,\tau\eta,c_0}(\bfV,\bfW|\bfB,\bfGamma)\cdot\Vert[\bfV,\bfW,\bu]\Vert_\Pi,\label{lemma:aug:rest:convexity:eq2}\\
\lambda\calR(\bfV) + \chi\calS(\bfW)+\tau\big\|\bu\big\|_\sharp&\le 
2(c_0+1)\cdot r_{\lambda,\chi,\tau\eta,c_0}(\bfV,\bfW|\bfB,\bfGamma)\cdot
\Vert[\bfV,\bfW,\bu]\Vert_\Pi.\label{lemma:aug:rest:convexity:eq3}
\end{align}
\end{lemma}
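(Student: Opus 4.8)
The plan is to reduce both \eqref{lemma:aug:rest:convexity:eq2} and \eqref{lemma:aug:rest:convexity:eq3} to the single estimate
\begin{align}
\lambda\calR(\calP_{\bfB}(\bfV)) + \chi\calS(\calP_{\bfGamma}(\bfW)) + \tau\Omega\Vert\bu\Vert_2 \;\le\; r_{\lambda,\chi,\tau\eta,c_0}(\bfV,\bfW|\bfB,\bfGamma)\cdot\Vert[\bfV,\bfW,\bu]\Vert_\Pi ,\label{eq:plan:star}
\end{align}
which I prove at the end; throughout I take $\eta=\Omega$, as in every application of the lemma (so that the $\tau\Omega\Vert\bu\Vert_2$ occurring in $\triangle_{\lambda,\chi,\tau}$ and in \eqref{eq:plan:star} fits the $\tau\eta$ slot inside $r_{\lambda,\chi,\tau\eta,c_0}$). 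For \eqref{lemma:aug:rest:convexity:eq2}: in the definition of $\triangle_{\lambda,\chi,\tau}(\bfV,\bfW,\bu|\bfB,\bfGamma)$ the three ``off-support'' terms $-(\nicefrac{\lambda}{2})(\calR\circ\calP_{\bfB}^\perp)(\bfV)$, $-(\nicefrac{\chi}{2})(\calS\circ\calP_{\bfGamma}^\perp)(\bfW)$ and $-(\nicefrac{\tau}{2})\sum_{i=o+1}^n\omega_i\bu_i^\sharp$ are all nonpositive, so $\triangle_{\lambda,\chi,\tau}(\bfV,\bfW,\bu|\bfB,\bfGamma)\le\tfrac32\bigl(\lambda\calR(\calP_{\bfB}(\bfV))+\chi\calS(\calP_{\bfGamma}(\bfW))+\tau\Omega\Vert\bu\Vert_2\bigr)$ and \eqref{eq:plan:star} concludes. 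For \eqref{lemma:aug:rest:convexity:eq3}: decomposability gives $\calR(\bfV)=\calR(\calP_{\bfB}(\bfV))+\calR(\calP_{\bfB}^\perp(\bfV))$ and likewise for $\calS$; Cauchy--Schwarz over the $o$ largest coordinates of $\bu$ gives $\sum_{i=1}^o\omega_i\bu_i^\sharp\le\Omega\Vert\bu\Vert_2$, so $\tau\Vert\bu\Vert_\sharp\le\tau\Omega\Vert\bu\Vert_2+\tau\sum_{i=o+1}^n\omega_i\bu_i^\sharp$; and multiplying the inequality defining $\calC_{\bfB,\bfGamma}(c_0,\gamma_\calR,\gamma_\calS,\eta)$ by $\tau$ (recall $\gamma_\calR=\lambda/\tau$, $\gamma_\calS=\chi/\tau$) bounds $\lambda\calR(\calP_{\bfB}^\perp(\bfV))+\chi\calS(\calP_{\bfGamma}^\perp(\bfW))+\tau\sum_{i=o+1}^n\omega_i\bu_i^\sharp$ by $c_0\bigl(\lambda\calR(\calP_{\bfB}(\bfV))+\chi\calS(\calP_{\bfGamma}(\bfW))+\tau\Omega\Vert\bu\Vert_2\bigr)$. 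Adding these three facts yields $\lambda\calR(\bfV)+\chi\calS(\bfW)+\tau\Vert\bu\Vert_\sharp\le(1+c_0)\bigl(\lambda\calR(\calP_{\bfB}(\bfV))+\chi\calS(\calP_{\bfGamma}(\bfW))+\tau\Omega\Vert\bu\Vert_2\bigr)$, and \eqref{eq:plan:star} together with $1+c_0\le 2(c_0+1)$ gives \eqref{lemma:aug:rest:convexity:eq3}.

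For \eqref{eq:plan:star}, the first ingredient is the identity $\calR(\calP_{\bfB}(\bfV))=\Psi_{\calR}(\calP_{\bfB}(\bfV))\,\Vert\calP_{\bfB}(\bfV)\Vert_F$ and the Frobenius--Pythagoras relation $\Vert\bfV\Vert_F^2=\Vert\calP_{\bfB}(\bfV)\Vert_F^2+\Vert\calP_{\bfB}^\perp(\bfV)\Vert_F^2$ (coming from $\llangle\calP_{\bfB}(\bfV),\calP_{\bfB}^\perp(\bfV)\rrangle=0$), which gives $\Vert\calP_{\bfB}(\bfV)\Vert_F\le\Vert\bfV\Vert_F$; the same holds for $\calS$. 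The second, and crucial, ingredient is that the membership $[\bfV,\bfW,\bu]\in\calC_{\bfB,\bfGamma}(c_0,\gamma_\calR,\gamma_\calS,\eta)$ forces $\bfV\in\calC_{\bfB,\calR}(2c_0)$ and $\bfW\in\calC_{\bfGamma,\calS}(2c_0)$: when the block $\gamma_\calR\calR(\calP_{\bfB}(\bfV))$ carries at least half of the on-support budget $\gamma_\calR\calR(\calP_{\bfB}(\bfV))+\gamma_\calS\calS(\calP_{\bfGamma}(\bfW))+\eta\Vert\bu\Vert_2$, the cone inequality reads $\gamma_\calR\calR(\calP_{\bfB}^\perp(\bfV))\le 2c_0\,\gamma_\calR\calR(\calP_{\bfB}(\bfV))$ at once; in the complementary configuration the $\calR$-block is itself comparable to the $\calS$- and $\bu$-blocks and its contribution to \eqref{eq:plan:star} is reabsorbed into theirs, and symmetrically for $\bfW$. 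Hence $\Vert\bfV\Vert_F\le\mu(\calC_{\bfB,\calR}(2c_0))\Vert\bfV\Vert_\Pi$, so $\lambda\calR(\calP_{\bfB}(\bfV))\le\lambda R_{\calR,c_0}(\bfV|\bfB)\Vert\bfV\Vert_\Pi$ and likewise $\chi\calS(\calP_{\bfGamma}(\bfW))\le\chi R_{\calS,c_0}(\bfW|\bfGamma)\Vert\bfW\Vert_\Pi$. Cauchy--Schwarz in $\re^3$ applied to $\bigl(\lambda R_{\calR,c_0}(\bfV|\bfB),\,\chi R_{\calS,c_0}(\bfW|\bfGamma),\,\tau\Omega\bigr)$ and $\bigl(\Vert\bfV\Vert_\Pi,\Vert\bfW\Vert_\Pi,\Vert\bu\Vert_2\bigr)$, recalling $\Vert[\bfV,\bfW,\bu]\Vert_\Pi=(\Vert\bfV\Vert_\Pi^2+\Vert\bfW\Vert_\Pi^2+\Vert\bu\Vert_2^2)^{1/2}$ and the definition of $r_{\lambda,\chi,\tau\eta,c_0}$, then gives \eqref{eq:plan:star}.

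The routine parts are the two reductions in the first paragraph and the final Cauchy--Schwarz. The main obstacle I expect is the step $\bfV\in\calC_{\bfB,\calR}(2c_0)$ (and its $\calS$-analogue): the \emph{joint} three-block cone inequality does not hand this over block-by-block, so one must split on which on-support block carries a constant fraction of the budget and carefully track the dominated block — this is precisely what pins down the constant $2c_0$ appearing inside $R_{\calR,c_0}$ in Definition \ref{def:rate:notation} and the factor $2(c_0+1)$ (rather than $c_0+1$) in \eqref{lemma:aug:rest:convexity:eq3}. Everything else is bookkeeping with decomposability, the elementary bound $\sum_{i\le o}\omega_i\bu_i^\sharp\le\Omega\Vert\bu\Vert_2$, and two Cauchy--Schwarz applications.
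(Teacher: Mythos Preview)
Your approach is essentially the paper's: \eqref{lemma:aug:rest:convexity:eq2} follows by dropping the three negative terms and applying Cauchy--Schwarz, and \eqref{lemma:aug:rest:convexity:eq3} follows from decomposability, the joint-cone inequality, and a case analysis on the individual blocks. One correction on framing: the joint cone does \emph{not} force $\bfV\in\calC_{\bfB,\calR}(2c_0)$ in general, and the paper organizes the split directly into the four cases according to whether $\bfV\in\calC_{\bfB,\calR}(2c_0)$ and $\bfW\in\calC_{\bfGamma,\calS}(2c_0)$, noting that $\bfV\notin\calC_{\bfB,\calR}(2c_0)$ combined with the joint-cone inequality yields $\lambda\calR(\calP_{\bfB}(\bfV))\le\chi\calS(\calP_{\bfGamma}(\bfW))+\tau\eta\Vert\bu\Vert_2$ --- this is exactly your ``reabsorption'' step and the source of the extra factor $2$ in $2(c_0+1)$, as you correctly anticipate.
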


The previous lemmas entail the next proposition.  
\begin{proposition}\label{prop:suboptimal:rate}
Suppose the conditions (i)-(ii) of Theorem \ref{thm:improved:rate} hold and, additionally, 
\begin{itemize}
\item[\rm (iii')] 
$
\lambda\ge4[(\sigma\sd_2)\vee(2\sf_2)], 
$
$\chi\ge4[(\sigma\sd_3)\vee(2\sf_3+2\sf_*)]$
and 
$
\tau\ge 4[(\sigma\sd_4)\vee(2\sf_4)].
$
\item[\rm (iv')] $2\sf_1\le\sigma\sd_1$. 
\end{itemize}
For any $D\ge0$ and $[\bfB,\bfGamma]$ satisfying the constraints
\eqref{cond4:general:norm}, \eqref{cond5:general:norm}, \eqref{cond6:general:norm} and \eqref{cond1:general:norm},
\begin{align}
(\nicefrac{1}{2})(\lambda\calR(\bfDelta_{\bfB}) + \chi\calS(\bfDelta_{\bfGamma}) + \tau\Vert\bfDelta^{\hat\btheta}\Vert_\sharp)
+ \Vert\bfDelta^{(n)} + \bfDelta^{\hat\btheta}\Vert_2^2
&\le D^2 + \spadesuit_2(\sf_1,r), \label{prop:suboptimal:rate:eq1}
\end{align}
where $r := r_{\lambda,\chi,\tau\Omega,3}(\bfDelta_{\bfB},\bfDelta_{\bfGamma}|\bfB,\bfGamma)$. 
Additionally, 
\begin{align}
\Vert[\bfDelta_{\bfB},\bfDelta_{\bfGamma},\bfDelta^{\hat\btheta}]\Vert_{\Pi} &\le
\left[(\nicefrac{D^2}{\sigma\sd_1})\right]\bigvee\left[(\nicefrac{2\sqrt{2}}{\sd_1})D + \clubsuit_2(\sf_1,r)\right]. 
\label{prop:suboptimal:rate:eq2}
\end{align}
\end{proposition}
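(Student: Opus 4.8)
\emph{Proof plan.}
The plan is to chain Lemmas~\ref{lemma:recursion:1st:order:condition}, \ref{lemma:MP+ATP} and \ref{lemma:aug:rest:convexity} in the order they are stated. Since $[\bfB,\bfGamma]$ satisfies \eqref{cond5:general:norm}--\eqref{cond6:general:norm} and conditions (i)--(ii) of Theorem~\ref{thm:improved:rate} hold, Lemma~\ref{lemma:recursion:1st:order:condition} fed into Lemma~\ref{lemma:MP+ATP} produces the master recursion
\begin{equation}
\Vert\bfDelta^{(n)}+\bfDelta^{\hat\btheta}\Vert_2^2+\big(\sd_1 N-\blacktriangle/\sigma\big)_+^2\le D^2 + 2\sf_1 N+\blacktriangle + 2\blacktriangledown,
\label{plan:master}
\end{equation}
where $N:=\Vert[\bfDelta_{\bfB},\bfDelta_{\bfGamma},\bfDelta^{\hat\btheta}]\Vert_{\Pi}$, the quantities $\blacktriangle,\blacktriangledown$ are those of Lemma~\ref{lemma:MP+ATP}, and we used \eqref{cond4:general:norm}. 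I also abbreviate $R:=\lambda\calR(\bfDelta_{\bfB})+\chi\calS(\bfDelta_{\bfGamma})+\tau\Vert\bfDelta^{\hat\btheta}\Vert_\sharp$ and $P:=\lambda\calR(\calP_{\bfB}(\bfDelta_{\bfB}))+\chi\calS(\calP_{\bfGamma}(\bfDelta_{\bfGamma}))+\tau\Omega\Vert\bfDelta^{\hat\btheta}\Vert_2$.

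Next I would dispose of $\blacktriangle$ and $\blacktriangledown$. Condition (iii') forces every coefficient of $\blacktriangle$ below a quarter of the matching hyper-parameter, hence $\blacktriangle\le R/4$. For $\blacktriangledown$, decomposability of $\calR$ and $\calS$ (Definition~\ref{def:decomposable:norm}) gives $\calR(\bfB)-\calR(\hat\bfB)\le\calR(\calP_{\bfB}(\bfDelta_{\bfB}))-\calR(\calP_{\bfB}^\perp(\bfDelta_{\bfB}))$, likewise for $\calS$, and a standard Slope-norm inequality for the $o$-sparse vector $\btheta^*$ (cf.\ \cite{2018bellec:lecue:tsybakov}) gives $\Vert\btheta^*\Vert_\sharp-\Vert\hat\btheta\Vert_\sharp\le\Omega\Vert\bfDelta^{\hat\btheta}\Vert_2-\sum_{i=o+1}^n\omega_i(\bfDelta^{\hat\btheta})_i^\sharp$. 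Adding $R/2$ to both sides of \eqref{plan:master}, routing the $\calP^\perp$--mass and the tail $\sum_{i>o}\omega_i(\bfDelta^{\hat\btheta})_i^\sharp$ to the left and the $\calP$--mass and $\Omega\Vert\bfDelta^{\hat\btheta}\Vert_2$ to the right, and discarding the nonnegative terms $\Vert\bfDelta^{(n)}+\bfDelta^{\hat\btheta}\Vert_2^2$ and $(\sd_1 N-\blacktriangle/\sigma)_+^2$, collapses everything to a dimension-reduction inequality of the form $Q\le\tfrac97 P+\tfrac47(D^2+2\sf_1 N)$, where $Q:=\lambda\calR(\calP_{\bfB}^\perp(\bfDelta_{\bfB}))+\chi\calS(\calP_{\bfGamma}^\perp(\bfDelta_{\bfGamma}))+\tau\sum_{i>o}\omega_i(\bfDelta^{\hat\btheta})_i^\sharp$.

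Now I would branch. If $D^2+2\sf_1 N\le 3P$, the last inequality becomes $Q\le 3P$, i.e.\ $[\bfDelta_{\bfB},\bfDelta_{\bfGamma},\bfDelta^{\hat\btheta}]\in\calC_{\bfB,\bfGamma,\calR,\calS}(3,\lambda/\tau,\chi/\tau,\Omega)$; then Lemma~\ref{lemma:aug:rest:convexity} applies with $c_0=3$, $\gamma_{\calR}=\lambda/\tau$, $\gamma_{\calS}=\chi/\tau$, $\eta=\Omega$ and yields simultaneously $\triangle_{\lambda,\chi,\tau}(\bfDelta_{\bfB},\bfDelta_{\bfGamma},\bfDelta^{\hat\btheta}|\bfB,\bfGamma)\le\tfrac32 rN$ and $R\le 8rN$, with $r=r_{\lambda,\chi,\tau\Omega,3}(\bfDelta_{\bfB},\bfDelta_{\bfGamma}|\bfB,\bfGamma)$ exactly the quantity in \eqref{cond1:general:norm} (this is why $r$ carries the cone radius $2c_0=6$, one doubling above $\calC(3)$). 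Feeding these into \eqref{plan:master} --- using $\blacktriangle\le R/4\le 2rN$, so $\blacktriangle/\sigma\le 2rN/\sigma$, and $\tfrac14 R+2\blacktriangledown\le\tfrac32\triangle_{\lambda,\chi,\tau}\le\tfrac94 rN$ --- turns it into a scalar quadratic inequality $\Vert\bfDelta^{(n)}+\bfDelta^{\hat\btheta}\Vert_2^2+\tfrac12 R+(\sd_1 N-\blacktriangle/\sigma)_+^2\le D^2+\calO(\sf_1+r)N$; completing the square in $N$ --- with (iv') ($2\sf_1\le\sigma\sd_1$) and \eqref{cond1:general:norm} ($r<\sigma\sd_1$) controlling the $N^2$--coefficient --- delivers the $(2\sqrt2/\sd_1)D+\clubsuit_2(\sf_1,r)$ branch of \eqref{prop:suboptimal:rate:eq2} and, after reinserting this bound to control $\calO(\sf_1+r)N$, inequality \eqref{prop:suboptimal:rate:eq1} with $\spadesuit_2(\sf_1,r)=(4\sf_1+3r)^2/\sd_1^2$. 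In the complementary regime $D^2+2\sf_1 N>3P$, the same inequality gives $R\lesssim D^2+\sf_1 N$ directly (no cone needed), and plugging this crude control of $\blacktriangle,\blacktriangledown$ into \eqref{plan:master} yields a quadratic whose solution is the $D^2/(\sigma\sd_1)$ branch of \eqref{prop:suboptimal:rate:eq2}, again together with \eqref{prop:suboptimal:rate:eq1}.

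I expect the main obstacle to be the branching step: the bookkeeping that makes the sign-indefinite error terms $D^2$ and $2\sf_1 N$ get dominated either by the regularized mass $P$ (producing cone membership with the sharp constant $c_0=3$) or by the quadratic term surviving from $\ATP$, together with keeping the nuisance $(\sd_1 N-\blacktriangle/\sigma)_+$ nonnegative and positive on the relevant event. This is precisely what constraints \eqref{cond1:general:norm} and (iv') are calibrated for; the remainder is routine algebra with the constants of Definition~\ref{def:design:property}.
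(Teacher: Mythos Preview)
Your chaining of Lemmas~\ref{lemma:recursion:1st:order:condition}, \ref{lemma:MP+ATP}, \ref{lemma:A1:B}/\ref{lemma:A1} and \ref{lemma:aug:rest:convexity} is exactly the skeleton of the paper's proof, and your derivation of the dimension-reduction inequality $Q\le\tfrac97 P+\tfrac47(D^2+2\sf_1 N)$ is a correct reorganization of the paper's bound $\blacktriangle+\blacksquare+\blacktriangledown\le\triangle_{\lambda,\chi,\tau}$. The substantive difference is in the case split. The paper does \emph{not} branch on $D^2+2\sf_1 N\lessgtr 3P$. It first splits on whether $\sd_1 G\le\blacktriangle/(2\sigma)$ or $\sd_1 G\ge\blacktriangle/(2\sigma)$, i.e.\ on whether the positive part $(\sd_1 G-\blacktriangle/\sigma)_+$ is allowed to vanish; in the former case it argues by contradiction that $\triangle\le0$ (cone membership would give $\blacktriangle\le 2rG<2\sigma\sd_1G$, violating the case hypothesis), then uses (iv') to get $2\sf_1G-\blacktriangle\le-\blacktriangle/2$, which collapses the master inequality to $2\blacksquare+x^2+\blacktriangle/2\le D^2$ and hence $\sd_1 G\le D^2/\sigma$. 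In the latter case it further sub-branches on $\sf_1G\gtrless H$ and on the sign of $\triangle$, extracting $\sd_1^2G^2/4$ on the left and solving the resulting scalar quadratics in $G$.

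Your two-way split does not isolate the ``positive part vanishes'' regime, and this is a genuine gap. In your cone branch, $R\le 8rN$ together with \eqref{cond1:general:norm} only gives $\blacktriangle/\sigma<2\sd_1 N$, which does not preclude $\blacktriangle/\sigma\in(\sd_1 N,2\sd_1 N)$; there $(\sd_1 N-\blacktriangle/\sigma)_+=0$ and your ``completing the square in $N$'' step has no quadratic term to complete. In your no-cone branch you claim the outcome is the $D^2/(\sigma\sd_1)$ bound, but the paper's route to that bound is precisely the contradiction argument above (which lives in its Case~1, not in a quadratic-solving step). Your identification of which branch produces which of the two alternatives in \eqref{prop:suboptimal:rate:eq2} is therefore inverted, and the mechanism by which (iv') and \eqref{cond1:general:norm} enter---namely, to force $\triangle\le0$ when the positive part is inactive---is missing from your plan. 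The fix is to adopt the paper's first split on $\sd_1 G\lessgtr\blacktriangle/(2\sigma)$; after that, your cone/no-cone dichotomy essentially coincides with the paper's Case~2.2 vs.\ Case~2.1.
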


Using Proposition \ref{proposition:properties:subgaussian:designs}, we can show the bound in \eqref{prop:suboptimal:rate:eq1} is a near-optimal oracle inequality for the triplet 
$[\bfDelta_{\bfB},\bfDelta_{\bfGamma},\bfDelta^{\hat\btheta}]$. Still, it is suboptimal for 
$[\bfDelta_{\bfB},\bfDelta_{\bfGamma}]$. Next, we show that the bounds for $\bfDelta^{\hat\btheta}$, implied by Proposition \ref{prop:suboptimal:rate}, are enough to obtain the near-optimal rate for 
$[\bfDelta_{\bfB},\bfDelta_{\bfGamma}]$. First, we prove the following lemma --- an easy consequence of the first-order condition for fixed
$\btheta\equiv\hat\btheta$. 
\begin{lemma}\label{lemma:recursion:delta:bb:general:norm}
For all $[\bfB,\bfGamma]\in(\mdR^p)^2$ such that $\Vert\bfB\Vert_\infty\le\sa$,
\begin{align}
\langle \bfDelta^{(n)},\frX^{(n)}(\bfDelta_{\bfB}+\bfDelta_{\bfGamma})\rangle &\le 
\langle\bxi^{(n)}-\bfDelta^{\hat\btheta},\frX^{(n)}(\bfDelta_{\bfB}+\bfDelta_{\bfGamma})\rangle\\
&+\lambda(\calR(\bfB)-\calR(\hat\bfB))
+\chi(\calS(\bfGamma)-\calS(\hat\bfGamma)).
\label{lemma:recursion:delta:bb:general:norm:eq}
\end{align}
\end{lemma}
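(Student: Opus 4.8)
The plan is to rerun the first-order-condition argument behind Lemma~\ref{lemma:recursion:1st:order:condition}, but now comparing the optimal triple against a feasible competitor that keeps the nuisance block frozen at $\hat\btheta$ instead of replacing it by $\btheta^*$. Since $[\hat\bfB,\hat\bfGamma,\hat\btheta]$ solves the joint convex problem \eqref{equation:aug:slope:rob:estimator:general}, fixing the last block at $\hat\btheta$ shows that $[\hat\bfB,\hat\bfGamma]$ minimizes the convex function
$$
\phi(\bfB,\bfGamma):=g(\bfB,\bfGamma)+\lambda\calR(\bfB)+\chi\calS(\bfGamma)
$$
over the convex set $\{[\bfB,\bfGamma]\in(\mdR^p)^2:\Vert\bfB\Vert_\infty\le\sa\}$, where $g$ is the least-squares part of \eqref{equation:aug:slope:rob:estimator:general} evaluated at $\btheta\equiv\hat\btheta$. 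The map $g$ is smooth and convex and depends on $(\bfB,\bfGamma)$ only through $\bfB+\bfGamma$, so $\nabla_\bfB g=\nabla_\bfGamma g=-\tfrac1n\frX^*(\hat\br)$ at $[\hat\bfB,\hat\bfGamma]$, with $\hat\br$ the (unscaled) residual of the fit and $\frX^*\bu=\sum_{i\in[n]}u_i\bfX_i$ the adjoint of $\frX$.

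Next I would extract the linearized optimality inequality. Fixing any $[\bfB,\bfGamma]$ with $\Vert\bfB\Vert_\infty\le\sa$, the whole segment $[\bfB_t,\bfGamma_t]:=(1-t)[\hat\bfB,\hat\bfGamma]+t[\bfB,\bfGamma]$ is feasible for $t\in(0,1)$, hence $\phi([\bfB_t,\bfGamma_t])\ge\phi([\hat\bfB,\hat\bfGamma])$; bounding $\calR(\bfB_t)\le(1-t)\calR(\hat\bfB)+t\calR(\bfB)$ and likewise for $\calS$, dividing by $t$ and letting $t\downarrow0$ turns this into
$$
\big\langle\nabla_\bfB g(\hat\bfB,\hat\bfGamma),\,\bfDelta_\bfB+\bfDelta_\bfGamma\big\rangle\ \le\ \lambda\big(\calR(\bfB)-\calR(\hat\bfB)\big)+\chi\big(\calS(\bfGamma)-\calS(\hat\bfGamma)\big),
$$
using $\hat\bfB-\bfB=\bfDelta_\bfB$, $\hat\bfGamma-\bfGamma=\bfDelta_\bfGamma$ and merging the two smooth directional derivatives via $\nabla_\bfB g=\nabla_\bfGamma g$.

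Finally I would convert the left-hand side into a residual inner product: by adjointness of $\frX$ and the scaling convention $A^{(n)}=A/\sqrt n$ it equals $-\langle\hat\br^{(n)},\,\frX^{(n)}(\bfDelta_\bfB+\bfDelta_\bfGamma)\rangle$, and by model \eqref{equation:structural:equation}, $\by=\boldf+\sqrt n\,\btheta^*+\bxi$, together with $\bfDelta=\frX(\hat\bfB+\hat\bfGamma)-\boldf$ and $\bfDelta^{\hat\btheta}=\hat\btheta-\btheta^*$, one has $\hat\br^{(n)}=\bxi^{(n)}-\bfDelta^{(n)}-\bfDelta^{\hat\btheta}$ --- the same scaled fit residual used in the proof of Lemma~\ref{lemma:recursion:1st:order:condition}. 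Substituting this and moving $\langle\bxi^{(n)}-\bfDelta^{\hat\btheta},\frX^{(n)}(\bfDelta_\bfB+\bfDelta_\bfGamma)\rangle$ to the right-hand side gives exactly \eqref{lemma:recursion:delta:bb:general:norm:eq}. I do not expect any genuine obstacle here: this is the ``easy'' companion of Lemma~\ref{lemma:recursion:1st:order:condition}, and the only two things to keep straight are that restricting the joint minimizer $[\hat\bfB,\hat\bfGamma,\hat\btheta]$ to the $(\bfB,\bfGamma)$-coordinates still minimizes the restricted convex objective (which legitimizes the linearized inequality), and the sign/scaling bookkeeping in the residual identity, which can be imported verbatim from the proof of Lemma~\ref{lemma:recursion:1st:order:condition}. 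Note that, unlike in that lemma, the term $-\langle\bfDelta^{\hat\btheta},\frX^{(n)}(\cdot)\rangle$ remains on the right-hand side, since with $\btheta$ held fixed there is no $\tau\Vert\cdot\Vert_\sharp$ increment to absorb it; this is precisely why the cruder recursion is recorded separately and later combined with the bound on $\bfDelta^{\hat\btheta}$ furnished by Proposition~\ref{prop:suboptimal:rate}.
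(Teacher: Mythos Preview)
Your proposal is correct and follows essentially the same approach as the paper: both observe that fixing $\btheta=\hat\btheta$ makes $[\hat\bfB,\hat\bfGamma]$ a minimizer of the restricted convex problem, derive the first-order optimality inequality, and then rewrite the residual using $\by^{(n)}=\boldf^{(n)}+\btheta^*+\bxi^{(n)}$. The only cosmetic difference is that the paper invokes subgradients $\bfV\in\partial\calR(\hat\bfB)$, $\bfW\in\partial\calS(\hat\bfGamma)$ explicitly, whereas you obtain the same linearized inequality via the convex-combination/directional-derivative route; the two are equivalent formulations of the same KKT condition.
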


In case of label contamination and/or additive matrix decomposition, a key difference with the standard ``Lasso proof'' is that $\MP_{\calR,0,0}(\sf_1,\sf_2,0,0)$ is not sufficient to upper bound \eqref{lemma:recursion:delta:bb:general:norm:eq}. Indeed, the noise 
$\bxi^{(n)}$ is shifted by $-\bfDelta^{\hat\btheta}$ and the multiplier process depends on the decomposed error $\bfDelta_{\bfB}+\bfDelta_{\bfGamma}$. Suppose $\MP_{\calR,\calS,\calQ}(\sf_1,\sf_2,\sf_3,\sf_4)$ and $\IP_{\calR,\calS,\calQ}(\sb_1,\sb_2,\sb_3,\sb_4)$ hold. The next lemma states that, if the nuisance error $\bfDelta^{\hat\btheta}$ is ``not too large'' compared to the noise, then the ``perturbed multiplier process'' 
$$
[\bfDelta_{\bfB},\bfDelta_{\bfGamma},\bfDelta^{\hat\btheta}]\mapsto\langle\bxi^{(n)}-\bfDelta^{\hat\btheta},\frX^{(n)}(\bfDelta_{\bfB}+\bfDelta_{\bfGamma})\rangle
$$ 
in \eqref{lemma:recursion:delta:bb:general:norm:eq} can be effectively upper bounded. For the lower bound, we assume 
$\ATP$ and the low-spikeness condition hold.

\begin{lemma}\label{lemma:MP+IP+ATP}
Suppose conditions (i)-(iii) of Theorem \ref{thm:improved:rate} hold. For some $\sf_*\ge0$, let $[\bfB,\bfGamma]$ such that
$\Vert\bfB\Vert_\infty\le\sa$ and 
$
|\langle\bfDelta_{\bfB},\bfDelta_{\bfGamma}\rangle|\le \sf_*\calS(\bfDelta_{\bfGamma}).
$
Define the quantities
\begin{align}
\hat\blacktriangle & := \left((\sigma\sd_2)\vee\left(2\sf_2 
+ 2\sb_2\Vert\bfDelta^{\hat\btheta}\Vert_2\right)\right)\calR(\bfDelta_{\bfB}) 
+ \left((\sigma\sd_3)\vee\left(2\sf_3 + 2\sf_* + 2\sb_3\Vert\bfDelta^{\hat\btheta}\Vert_2\right)\right)\calS(\bfDelta_{\bfGamma}),\\
\hat\blacktriangledown & := \lambda \big(\calR(\bfB) - \calR(\hat\bfB)\big) 
+\chi\big(\calS(\bfGamma) - \calS(\hat\bfGamma)\big). 
\end{align}

Then 
\begin{align}
\Vert\bfDelta^{(n)}\Vert_2^2
+ \left(\sd_1\Vert[\bfDelta_{\bfB},\bfDelta_{\bfGamma}]\Vert_{\Pi} - (\nicefrac{\hat\blacktriangle}{\sigma}) \right)_+^2&\le
\Vert\frX^{(n)}(\bfB+\bfGamma)-\boldf^{(n)}\Vert_2^2\\
& + \left(2\sf_1 + 2\sb_1\Vert\bfDelta^{\hat\btheta}\Vert_2 + 2\sb_4\Vert\bfDelta^{\hat\btheta}\Vert_\sharp\right)\Vert[\bfDelta_{\bfB},\bfDelta_{\bfGamma}]\Vert_{\Pi}\\
&+ \hat\blacktriangle + 2\hat\blacktriangledown.
\label{lemma:MP+IP+ATP:eq}
\end{align}
\end{lemma}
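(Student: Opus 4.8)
The argument parallels that of Lemma \ref{lemma:MP+ATP}, but uses Lemma \ref{lemma:recursion:delta:bb:general:norm} (the first-order condition with $\btheta$ frozen at $\hat\btheta$) instead of Lemma \ref{lemma:recursion:1st:order:condition}, and it brings in $\IP$ to absorb the shift of the noise by $-\bfDelta^{\hat\btheta}$. First I would start from the inequality of Lemma \ref{lemma:recursion:delta:bb:general:norm}, which is available since $\Vert\bfB\Vert_\infty\le\sa$, and rewrite its left-hand side via the identity $\frX^{(n)}(\bfDelta_{\bfB}+\bfDelta_{\bfGamma})=\bfDelta^{(n)}-\big(\frX^{(n)}(\bfB+\bfGamma)-\boldf^{(n)}\big)$ together with the polarization identity $2\langle a,b\rangle=\Vert a\Vert_2^2+\Vert b\Vert_2^2-\Vert a-b\Vert_2^2$. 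After multiplying by $2$ this turns the first-order inequality into
$$\Vert\bfDelta^{(n)}\Vert_2^2+\Vert\frX^{(n)}(\bfDelta_{\bfB}+\bfDelta_{\bfGamma})\Vert_2^2\le\Vert\frX^{(n)}(\bfB+\bfGamma)-\boldf^{(n)}\Vert_2^2+2\langle\bxi^{(n)}-\bfDelta^{\hat\btheta},\frX^{(n)}(\bfDelta_{\bfB}+\bfDelta_{\bfGamma})\rangle+2\hat\blacktriangledown,$$
so it remains to lower bound the quadratic term $\Vert\frX^{(n)}(\bfDelta_{\bfB}+\bfDelta_{\bfGamma})\Vert_2^2$ and to upper bound the linear term.

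Second, I would apply $\ATP_{\calR,\calS,\Vert\cdot\Vert_\sharp}$ to the triple $[\bfDelta_{\bfB},\bfDelta_{\bfGamma},\bf0]$, so that the $\sd_4$-term disappears and $\Vert[\bfDelta_{\bfB},\bfDelta_{\bfGamma},\bf0]\Vert_\Pi=\Vert[\bfDelta_{\bfB},\bfDelta_{\bfGamma}]\Vert_\Pi$; combined with the low-spikeness hypothesis $|\llangle\bfDelta_{\bfB},\bfDelta_{\bfGamma}\rrangle_\Pi|\le\sf_*\calS(\bfDelta_{\bfGamma})$, this yields $\Vert\frX^{(n)}(\bfDelta_{\bfB}+\bfDelta_{\bfGamma})\Vert_2^2\ge\big\{\sd_1\Vert[\bfDelta_{\bfB},\bfDelta_{\bfGamma}]\Vert_\Pi-\sd_2\calR(\bfDelta_{\bfB})-\sd_3\calS(\bfDelta_{\bfGamma})\big\}_+^2-2\sf_*\calS(\bfDelta_{\bfGamma})$. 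Since each maximum defining $\hat\blacktriangle$ dominates $\sigma\sd_2$, resp.\ $\sigma\sd_3$, one has $\sd_2\calR(\bfDelta_{\bfB})+\sd_3\calS(\bfDelta_{\bfGamma})\le\hat\blacktriangle/\sigma$, so the right-hand side above further dominates $\big(\sd_1\Vert[\bfDelta_{\bfB},\bfDelta_{\bfGamma}]\Vert_\Pi-\hat\blacktriangle/\sigma\big)_+^2-2\sf_*\calS(\bfDelta_{\bfGamma})$; a short case split handles the degenerate regime where the argument of $\{\cdot\}_+$ produced by $\ATP$ is negative, in which case $\sd_1\Vert[\bfDelta_{\bfB},\bfDelta_{\bfGamma}]\Vert_\Pi\le\sd_2\calR(\bfDelta_{\bfB})+\sd_3\calS(\bfDelta_{\bfGamma})\le\hat\blacktriangle/\sigma$, so the target square vanishes and $\Vert\frX^{(n)}(\bfDelta_{\bfB}+\bfDelta_{\bfGamma})\Vert_2^2\ge0$ already suffices. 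Inserting this lower bound into the displayed inequality leaves a residual $2\sf_*\calS(\bfDelta_{\bfGamma})$ on the right-hand side.

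Third, I would split the linear term as $\langle\bxi^{(n)}-\bfDelta^{\hat\btheta},\cdot\rangle=\langle\bxi^{(n)},\cdot\rangle-\langle\bfDelta^{\hat\btheta},\cdot\rangle$. The first piece is controlled by $\MP_{\calR,\calS,\Vert\cdot\Vert_\sharp}$ evaluated at $[\bfDelta_{\bfB},\bfDelta_{\bfGamma},\bf0]$ (the $\sf_4$-term drops), giving $\sf_1\Vert[\bfDelta_{\bfB},\bfDelta_{\bfGamma}]\Vert_\Pi+\sf_2\calR(\bfDelta_{\bfB})+\sf_3\calS(\bfDelta_{\bfGamma})$; the second by $\IP_{\calR,\calS,\Vert\cdot\Vert_\sharp}$ with $\bu=\bfDelta^{\hat\btheta}$, giving $\big(\sb_1\Vert\bfDelta^{\hat\btheta}\Vert_2+\sb_4\Vert\bfDelta^{\hat\btheta}\Vert_\sharp\big)\Vert[\bfDelta_{\bfB},\bfDelta_{\bfGamma}]\Vert_\Pi+\sb_2\Vert\bfDelta^{\hat\btheta}\Vert_2\,\calR(\bfDelta_{\bfB})+\sb_3\Vert\bfDelta^{\hat\btheta}\Vert_2\,\calS(\bfDelta_{\bfGamma})$. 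Collecting terms, the coefficient of $\Vert[\bfDelta_{\bfB},\bfDelta_{\bfGamma}]\Vert_\Pi$ becomes $2\sf_1+2\sb_1\Vert\bfDelta^{\hat\btheta}\Vert_2+2\sb_4\Vert\bfDelta^{\hat\btheta}\Vert_\sharp$ as in the statement, while the remaining $\calR$- and $\calS$-contributions, together with the leftover $2\sf_*\calS(\bfDelta_{\bfGamma})$, equal $(2\sf_2+2\sb_2\Vert\bfDelta^{\hat\btheta}\Vert_2)\calR(\bfDelta_{\bfB})+(2\sf_3+2\sf_*+2\sb_3\Vert\bfDelta^{\hat\btheta}\Vert_2)\calS(\bfDelta_{\bfGamma})\le\hat\blacktriangle$ by the definition of $\hat\blacktriangle$. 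Assembling the three steps yields \eqref{lemma:MP+IP+ATP:eq}.

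The delicate point is the second step: one must arrange that the restricted-strong-convexity residual $\sd_2\calR(\bfDelta_{\bfB})+\sd_3\calS(\bfDelta_{\bfGamma})$ sitting \emph{inside} the square produced by $\ATP$ and the $\MP$/$\IP$ linear terms sitting \emph{outside} it are simultaneously absorbed by the single quantity $\hat\blacktriangle$ without double counting, and that the degenerate regime where the convexity bound is vacuous is treated by hand --- which is precisely why $\hat\blacktriangle$ is built from maxima of the form $(\sigma\sd_i)\vee(\cdot)$. The additional presence of the shift $-\bfDelta^{\hat\btheta}$, forcing the joint use of $\IP$ next to $\MP$ and $\ATP$, is what makes this lemma genuinely more involved than the corresponding estimate in the contamination-free setting.
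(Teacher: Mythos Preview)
Your proposal is correct and follows essentially the same route as the paper's proof: start from Lemma \ref{lemma:recursion:delta:bb:general:norm}, expand via the parallelogram identity, lower-bound $\Vert\frX^{(n)}(\bfDelta_{\bfB}+\bfDelta_{\bfGamma})\Vert_2^2$ through $\ATP$ at $\bu=\mathbf{0}$ together with the spikeness bound, and upper-bound the shifted linear term by $\MP$ at $\bu=\mathbf{0}$ plus $\IP$ at $\bu=\bfDelta^{\hat\btheta}$, then absorb all $\calR$/$\calS$ contributions into $\hat\blacktriangle$ via its $\vee$-structure. The only cosmetic difference is that the paper leaves the last absorption step implicit (``follows from the previous display''), whereas you spell it out; your separate ``degenerate regime'' case split is unnecessary since the monotonicity of $x\mapsto(x)_+^2$ already yields $\big(\sd_1 G-\sd_2\calR-\sd_3\calS\big)_+^2\ge\big(\sd_1 G-\hat\blacktriangle/\sigma\big)_+^2$ directly from $\sd_2\calR+\sd_3\calS\le\hat\blacktriangle/\sigma$.
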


With no contamination ($\bfDelta^{\hat\btheta}=\bf0$, $\tau=0$), Lemmas \ref{lemma:MP+ATP} and \ref{lemma:MP+IP+ATP} coincide. In that case, Proposition \ref{prop:suboptimal:rate} implies the optimal bound for $[\bfDelta_{\bfB},\bfDelta_{\bfGamma}]$. Lemma \ref{lemma:MP+IP+ATP} improves upon  Lemma \ref{lemma:MP+ATP} in case of label contamination. Next, we discuss how this lemma is used in the proof of Theorem \ref{thm:improved:rate}. The complete proof is presented in the supplement. 

Lemma \ref{lemma:MP+IP+ATP} gives a recursion on the parameter error $[\bfDelta_{\bfB},\bfDelta_{\bfGamma}]$ --- instead of 
$[\bfDelta_{\bfB},\bfDelta_{\bfGamma},\bfDelta^{\hat\btheta}]$ as in Lemma \ref{lemma:MP+ATP}. Instead of a parameter estimate, $\bfDelta^{\hat\btheta}$ is seen as a nuisance error perturbing the noise levels $(\sf_1,\sf_2,\sf_3)$ and $(\sigma\sd_1,\sigma\sd_2,\sigma\sd_3)$. As expected, this perturbation affects the tuning of the hyper-parameters 
$(\lambda,\chi)$ and the corresponding rate for 
$[\bfDelta_{\bfB},\bfDelta_{\bfGamma}]$. For \eqref{lemma:MP+IP+ATP:eq} to be meaningful, 
$\bfDelta^{\hat\btheta}$ must be small enough compared to the noise. Precisely, the auxiliary Proposition \ref{prop:suboptimal:rate} implies that 
$\|\bfDelta^{\hat\btheta}\|_2\le\sc_*\sigma$ and 
$\tau\|\bfDelta^{\hat\btheta}\|_\sharp\le\sc_*^2\sigma^2$,  in case we assume \eqref{cond8:general:norm}-\eqref{cond9:general:norm}. Using Proposition \ref{proposition:properties:subgaussian:designs}, we can show that these conditions hold with high probability --- including miss-specified models satisfying 
$(\nicefrac{1}{n})\Vert\frX(\bfB+\bfGamma)-\boldf\Vert_2^2\lesssim\sigma^2$. 

\begin{remark}[The relevance of $\IP$]\label{rem:relevance:IP}
$\IP$ over the triplet $[\bfV,\bfW,\bu]$ is a major tool in the proof of Lemma \ref{lemma:MP+IP+ATP}. Examining this lemma, we see that the thresholds for $(\lambda,\chi)$ are perturbed, respectively, by terms of order $\sb_2\Vert\bfDelta^{\hat\btheta}\Vert_2$ and 
$\sb_3\Vert\bfDelta^{\hat\btheta}\Vert_2$ and the coefficient of $\Vert[\bfDelta_{\bfB},\bfDelta_{\bfGamma}]\Vert_{\Pi}$ in inequality \eqref{lemma:MP+IP+ATP:eq} is perturbed by a term of order $\sb_1\Vert\bfDelta^{\hat\btheta}\Vert_2+\sb_4\Vert\bfDelta^{\hat\btheta}\Vert_\sharp$. The rate optimality of estimator \eqref{equation:aug:slope:rob:estimator:general} follows from these precise expressions and the bounds 
$\|\bfDelta^{\hat\btheta}\|_2\le\sc_*\sigma$ and 
$\tau\|\bfDelta^{\hat\btheta}\|_\sharp\le\sc_*^2\sigma^2$. Indeed, Proposition \ref{proposition:properties:subgaussian:designs} reveals that the constants $(\sb_1,\sb_2,\sb_3,\sb_4)$ are sharp --- it can be shown that the mere use of dual-norm inequalities instead of $\IP$ do not entail the optimal rate for $[\bfDelta_{\bfB},\bfDelta_{\bfGamma}]$.
\end{remark}

\begin{remark}[Proofs in \cite{2019dalalyan:thompson}]\label{rem:comparison:dalalyan:thompson:2}
\cite{2019dalalyan:thompson} studies well-specified robust sparse regression with Huber's loss and Gaussian distributions. There are two ``high level ideas'' used in \cite{2019dalalyan:thompson} which we borrow. Their first idea is to identify $\IP_{\Vert\cdot\Vert_1,0,\Vert\cdot\Vert_1}(\sb_1,\sb_2,0,\sb_4)$ over the pair $[\bv,\bu]\in\re^p\times\re^n$ as the sufficient design property to handle label contamination with Huber's loss --- in case the parameter is known to be sparse and well-specified. To prove this property they use  Chevet's inequality for gaussian processes. Their second idea is to treat $\bfDelta^{\hat\btheta}$ as a nuisance parameter, using a ``two-stage'' proof. The first stage establishes the optimal bound for the pair 
$[\bfDelta_{\bb^*},\bfDelta^{\hat\btheta}]$. The second establishes the optimal bound for $\bfDelta_{\bb^*}$, using the nuisance bound for $\bfDelta^{\hat\btheta}$. 

In this work, we study a broader model: miss-specified RTRMD. As such, the arguments in the proof of Theorem \ref{thm:improved:rate} have substantial changes, both on technical details and structural design properties. On a fundamental level, we give three  contributions when compared to the analysis in \cite{2019dalalyan:thompson}. The first is to identify the new design property $\PP_{\calR,\calS}(\sc_1,\sc_2,\sc_3,\sc_4)$ over the pair $[\bfV,\bfW]\in(\mdR^p)^2$ as the sufficient property to handle additive matrix decomposition in trace regression. 

The second is to identify the more general version $\IP_{\calR,\calS,\Vert\cdot\Vert_\sharp}(\sb_1,\sb_2,\sb_3,\sb_4)$ over the triplet $[\bfV,\bfW,\bu]\in(\mdR^p)^2\times\re^n$, and its relation with $\PP$, as the sufficient design properties to handle, simultaneously, label contamination and additive matrix decomposition. For this, we fundamentally need to use Chevet's inequality and the product process inequality of Theorem \ref{thm:product:process}. We are not aware of a similar application of product processes in high-dimensional estimation. 

The proof in \cite{2019dalalyan:thompson} handles the design-noise interaction in the simplest way: invoking $\MP_{\Vert\cdot\Vert_1,0,\Vert\cdot\Vert_1}(0,\sf_2,0,\sf_4)$ over the pair $[\bv,\bu]$ via the dual-norm inequality. Consequently, they do not attain the subgaussian rate. Our third contribution is to derive a  multiplier process inequality (Theorem \ref{thm:mult:process}) and obtain the general version  $\MP_{\calR,\calS,\Vert\cdot\Vert_\sharp}(\sf_1,\sf_2,\sf_3,\sf_4)$ over the triplet $[\bfV,\bfW,\bu]$. It leads to sharp constants in terms of $\delta$ when the noise is feature-dependent --- recall the discussion in Section \ref{s:MP:in:M-estimation} and comparison with \cite{2018bellec:lecue:tsybakov}. The $\MP$ property with constant $\sf_1\neq0$ is fundamental to achieve the optimal subgaussian rate in $\delta$ with $\delta$-adaptive estimators.
\end{remark}

\section{Second general theorem}\label{s:proof:main:paper:q=1}
In what follows, $\calR$ is a decomposable norm in 
$\mdR^p$. The main result of this section is Theorem \ref{thm:improved:rate:q=1'}. To stated it, we fix the positive constants $\{\sa_i\}$, $\{\sb_i\}$, $\{\sd_i\}$ and $\{\sf_i\}$ in Definition \ref{def:design:property}. Theorem \ref{thm:response:sparse-low-rank:regression:q=1} will follow from Theorem \ref{thm:improved:rate:q=1'} and Proposition \ref{proposition:properties:subgaussian:designs}. Before stating Theorem \ref{thm:improved:rate:q=1'}, we simplify some of the notation in Definition \ref{def:rate:notation}. Given $\bfB,\bfV\in\mdR^p$ and $\alpha,c_0>0$, we define
$
r_{\lambda,\alpha,c_0}(\bfV|\bfB):=\{\lambda^2R_{\calR,c_0}^2(\bfV|\bfB) + \alpha^2\}^{1/2}.
$
Given $\sf,\sc>0$, we let 
$
\spadesuit_1(\sf,\sc):=(\nicefrac{2}{\sd_1^2})\left(
2\sf + \sc \right)^2
$
and 
$
\clubsuit_1(\sf,\sc):=(\nicefrac{4}{\sd_1})\left(
2\sf + \sc\right).
$

\begin{theorem}[$q=1$ \& no matrix decomposition]\label{thm:improved:rate:q=1'}
Grant Assumption \ref{assump:label:contamination} and model \eqref{equation:structural:equation}. Consider the solution $\hat\bfB$ of  \eqref{equation:sorted:Huber} with $q=1$ and hyper-parameters $\lambda,\tau>0$. Let $\hat\sigma:=\Vert\bxi^{(n)}\Vert_2$ and constants $\sc_*>0$ and $\sc_n\ge0$ such that $\sc_*+\sc_n\in[0,1/4)$. Assume that:
\begin{itemize}\itemsep=0pt
\item[\rm (i)] $(\frX,\bxi)$ satisfies $\MP_{\calR,0,\Vert\cdot\Vert_\sharp}(\sf_1,\sf_2,0,\sf_4)$. 
\item[\rm{(ii)}] $\frX$ satisfies $\ATP_{\calR,0,\Vert\cdot\Vert_\sharp}(\sd_1,\sd_2,0,\sd_4)$.
\item[\rm(iii)] $\frX$ satisfies $\IP_{\calR,0,\Vert\cdot\Vert_\sharp}\left(\sb_1,\sb_2,0,\sb_4\right)$.
\item[\rm (iv)] The hyper-parameters $(\lambda,\tau)$ satisfy
\begin{align}
(1-4(\sc_n+\sc_*))\hat\sigma\lambda &\ge4\left[\sf_2 + \frac{\sf_1\sd_2}{\sd_1}
+\left(
\sb_2 + \frac{\sb_1\sd_2}{\sd_1}
\right)\sc_*\hat\sigma
+2(\nicefrac{\sb_4}{\tau})\frac{\sd_2}{\sd_1}\sc_*^2\hat\sigma^2
\right],\\
(1-4\sc_n)\hat\sigma\tau&\ge 4\left[\sf_4 + \frac{\sf_1\sd_4}{\sd_1}\right].
\end{align}
\item[\rm (v)] For 
$
\hat\sf_1:=\sf_1 + \sb_1(\sc_*\sigma) + 
2(\nicefrac{\sb_4}{\hat\sigma\tau})(\sc_*^2\sigma^2)
$
one has 
$56[(\nicefrac{\hat\sf_1}{\sd_1})+\hat\sigma\sc_n]\le3\hat\sigma$. 
\end{itemize}

Let any $D\ge0$ and $\bfB$ satisfying the constraints
\begin{align}
\Vert\frX^{(n)}(\bfB)-\boldf^{(n)}\Vert_2 &= D\le\hat\sigma\sc_n,\label{cond4:general:norm:q=1'}\\
r:=r_{\hat\sigma\lambda,\hat\sigma\tau\Omega,6}(\bfDelta_{\bfB}|\bfB)&\le\left\{\frac{1}{112}\bigvee \frac{1}{28[(\nicefrac{\sd_2}{\lambda})\vee(\nicefrac{\sd_4}{\tau})]}\right\}\hat\sigma\sd_1, \label{cond1:general:norm:q=1'}\\
D^2 + \spadesuit_1\left(\sf_1,R\right)
&\le\sc_*^2\hat\sigma^2,\label{cond8:general:norm:q=1'}\\
(\nicefrac{4D}{\sd_1}) + (\nicefrac{2}{\sd_1})\clubsuit_1
\left(\sf_1,R\right)
&\le\sc_*\hat\sigma,\label{cond9:general:norm:q=1'}
\end{align}
where $R:=(\hat\sigma\sc_n)\vee(3r)$. 

Define the quantities
$\hat r:=r_{\hat\sigma\lambda,0,6}(\bfDelta_{\bfB}|\bfB)$,  $\hat R:=(\hat\sigma\sc_n)\vee(3\hat r)$ and
\begin{align}
F&:= \sf_1 + \sb_1\left[
(\nicefrac{4D}{\sd_1}) + (\nicefrac{2}{\sd_1})\clubsuit_1
\left(\sf_1,R\right)\right]
 + 2(\nicefrac{\sb_4}{\hat\sigma\tau})\left[
D^2 + \spadesuit_1\left(\sf_1,R\right)
\right].
\end{align}

Then 
\begin{align}
&(\nicefrac{\hat\sigma\lambda}{2})\calR(\bfDelta_{\bfB}) + \Vert\frX^{(n)}(\hat\bfB)-\boldf^{(n)}\Vert_2^2
\le D^2 + \spadesuit_1(F,\hat R),\label{thm:improved:rate:q=1':eq1}\\
&\Vert\frX^{(n)}(\bfDelta_{\bfB})\Vert_2
\le 2D + \clubsuit_1(F,\hat R), \label{thm:improved:rate:q=1':eq2}\\
&\Vert\bfDelta_{\bfB}\Vert_\Pi\le (\nicefrac{1}{\sd_1})
[2D + \clubsuit_1(F,\hat R)] 
+ (\nicefrac{2\sd_2}{\sd_1\hat\sigma\lambda})[D^2 + \spadesuit_1(F,\hat R)]. 
\label{thm:improved:rate:q=1':eq3}
\end{align}
\end{theorem}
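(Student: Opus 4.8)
The plan is to mirror the proof of Theorem~\ref{thm:improved:rate}, keeping its three-layer architecture --- a first-order optimality recursion, its control via $\MP$ and $\ATP$ (and, for the nuisance term, $\IP$), and a two-stage argument that first treats $\bfDelta^{\hat\btheta}$ as one coordinate of a triplet and then as a nuisance perturbation of the noise level --- while inserting the single new ingredient specific to $q=1$. That ingredient is the square-root structure: $\rho_1=\Vert\cdot\Vert_2$ has subgradient $\bv\mapsto\bv/\Vert\bv\Vert_2$ at $\bv\neq\bf0$, so the ($\sqrt n$-normalized) residual norm of \eqref{equation:aug:slope:rob:estimator:q=1} sits in the denominator of the optimality condition. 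Since that norm concentrates around $\hat\sigma=\Vert\bxi^{(n)}\Vert_2$, the whole argument is run ``at scale $\hat\sigma$'': this is exactly why condition~(iv) prescribes $\hat\sigma\lambda$ and $\hat\sigma\tau$ above the design/noise thresholds, why the rate quantities $r,\hat r$ and $R,\hat R$ all carry the factor $\hat\sigma$, and why $\hat\sigma$ replaces $\sigma$ in the conclusion --- which is precisely what makes the estimator adaptive to the noise level.

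Concretely, I would proceed in four steps. \emph{Step 1 (optimality recursion).} Write $\widehat{\mathbf r}$ for the ($\sqrt n$-normalized) residual of \eqref{equation:aug:slope:rob:estimator:q=1} at $(\hat\bfB,\hat\btheta)$. Summing the stationarity conditions in $\bfB$ and in $\btheta$ gives, for every admissible $[\bfB,\btheta]$, the $q=1$ analog of Lemma~\ref{lemma:recursion:1st:order:condition},
\begin{align*}
\frac{1}{\Vert\widehat{\mathbf r}\Vert_2}\,\langle\widehat{\mathbf r},\,\frM^{(n)}(\bfDelta_{\bfB},\bfDelta^{\hat\btheta})\rangle
\;\le\;\lambda\big(\calR(\bfB)-\calR(\hat\bfB)\big)+\tau\big(\Vert\btheta^*\Vert_\sharp-\Vert\hat\btheta\Vert_\sharp\big);
\end{align*}
expanding $\widehat{\mathbf r}$ through the model \eqref{equation:structural:equation} splits the left-hand side into a multiplier process term in $\bxi^{(n)}$, a term quadratic in $\frM^{(n)}(\bfDelta_{\bfB},\bfDelta^{\hat\btheta})$, and an approximation term controlled by $D$. \emph{Step 2 (pinning the scale).} By the triangle inequality and \eqref{equation:structural:equation}, $\Vert\widehat{\mathbf r}\Vert_2$ differs from $\hat\sigma$ by at most a constant times $D+\Vert\frX^{(n)}(\bfDelta_{\bfB})\Vert_2+\Vert\bfDelta^{\hat\btheta}\Vert_2$, so on the region cut out by \eqref{cond4:general:norm:q=1'} and the provisional error bounds one gets $(1-4(\sc_n+\sc_*))\hat\sigma\le\Vert\widehat{\mathbf r}\Vert_2\le(1+4(\sc_n+\sc_*))\hat\sigma$; this is where the factors $1-4(\sc_n+\sc_*)$ and $1-4\sc_n$ in condition~(iv), and the requirement $\sc_n+\sc_*<1/4$, come from.

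\emph{Step 3 (process inequalities).} With the denominator now comparable to $\hat\sigma$, invoke $\MP_{\calR,0,\Vert\cdot\Vert_\sharp}$ and $\ATP_{\calR,0,\Vert\cdot\Vert_\sharp}$ as in Lemmas~\ref{lemma:MP+ATP} and~\ref{lemma:aug:rest:convexity}, and $\IP_{\calR,0,\Vert\cdot\Vert_\sharp}$ as in Lemma~\ref{lemma:MP+IP+ATP}, all thresholds rescaled by $\hat\sigma$; condition~(iv) then makes the regularizer coefficients dominate the design/noise ones, so $\bfDelta_{\bfB}$ (and the relevant triplet) land in the dimension-reduction cone of Definition~\ref{def:dim:red:cones}, while condition~(v) together with \eqref{cond1:general:norm:q=1'} keeps the self-bounding coefficient of $\Vert\bfDelta_{\bfB}\Vert_\Pi$ below $\hat\sigma\sd_1/2$. \emph{Step 4 (two stages).} First, the $q=1$ counterpart of Proposition~\ref{prop:suboptimal:rate} --- obtained from Steps~1--3 using only $\MP$, $\ATP$ and Lemma~\ref{lemma:aug:rest:convexity} --- yields the nuisance bounds $\Vert\bfDelta^{\hat\btheta}\Vert_2\le\sc_*\hat\sigma$ and $\hat\sigma\tau\Vert\bfDelta^{\hat\btheta}\Vert_\sharp\le\sc_*^2\hat\sigma^2$ under \eqref{cond8:general:norm:q=1'}--\eqref{cond9:general:norm:q=1'}. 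Second, starting from the fixed-$\btheta$ recursion (the $q=1$ analog of Lemma~\ref{lemma:recursion:delta:bb:general:norm}) and feeding these nuisance bounds into the perturbed multiplier process of Lemma~\ref{lemma:MP+IP+ATP}, one gets the recursion which, after solving for $\Vert\frX^{(n)}(\bfDelta_{\bfB})\Vert_2$, produces \eqref{thm:improved:rate:q=1':eq1} and \eqref{thm:improved:rate:q=1':eq2} with $\hat R=(\hat\sigma\sc_n)\vee(3\hat r)$ jointly absorbing $D$ and the cone radius $\hat r$; \eqref{thm:improved:rate:q=1':eq3} then follows from \eqref{thm:improved:rate:q=1':eq2} by cone membership, $\ATP$ (equivalently $\TP$) and the dual-norm estimate $\hat\sigma\lambda\,\calR(\bfDelta_{\bfB})\lesssim r\,\Vert\bfDelta_{\bfB}\Vert_\Pi$.

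The step I expect to be the main obstacle is the self-referential character of Steps~1--2: $\Vert\widehat{\mathbf r}\Vert_2$ depends on the very errors one is trying to control, so the a priori constraints \eqref{cond1:general:norm:q=1'}--\eqref{cond9:general:norm:q=1'} must be shown not merely to be sufficient hypotheses but to propagate forward to the conclusion, closing the loop via a continuity/bootstrap argument around $\hat\sigma$. The delicate bookkeeping is in tracking how the $\hat\sigma$-rescaled quantities interact with $R=(\hat\sigma\sc_n)\vee(3r)$, with the $\IP$-induced perturbations $\sb_1\Vert\bfDelta^{\hat\btheta}\Vert_2$ and $\sb_4\Vert\bfDelta^{\hat\btheta}\Vert_\sharp$ of the multiplier coefficients, and with the breakdown-point condition~(v); the process-inequality inputs are already available from Lemmas~\ref{lemma:MP+ATP}--\ref{lemma:MP+IP+ATP} and Proposition~\ref{prop:suboptimal:rate}, now used with $\calS\equiv0$ so that $\PP$ plays no role here.
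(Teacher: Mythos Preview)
Your two-stage architecture is right, but Step~2 --- bounding $\Vert\widehat{\mathbf r}\Vert_2$ a priori and then closing the loop via a ``bootstrap/continuity'' argument --- is not how the circularity is broken, and I do not see how to make such an argument rigorous here (the minimizer is fixed; there is no continuous path to walk along). The paper never bounds $\Vert\widehat{\mathbf r}\Vert_2$. Instead, in the KKT condition the penalty term $-\lambda\Vert\hat\bxi^{(n)}\Vert_2\llangle\bfV,\bfDelta_{\bfB}\rrangle$ (with $\bfV\in\partial\calR(\hat\bfB)$) is split as $-\hat\sigma\lambda\llangle\bfV,\bfDelta_{\bfB}\rrangle+\lambda(\hat\sigma-\Vert\hat\bxi^{(n)}\Vert_2)\llangle\bfV,\bfDelta_{\bfB}\rrangle$, and the second piece is bounded by $(\Vert\frM^{(n)}(\bfDelta_{\bfB},\bfDelta^{\hat\btheta})\Vert_2+\Vert\calE_{\bfB}^{(n)}\Vert_2)\,\lambda\calR(\bfDelta_{\bfB})$ using only the triangle inequality on $\hat\bxi^{(n)}=\bxi^{(n)}-\calE_{\bfB}^{(n)}-\frM^{(n)}(\bfDelta_{\bfB},\bfDelta^{\hat\btheta})$. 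This explicit error term is then carried \emph{inside} the recursion, which is run in the variable $G:=\Vert\frM^{(n)}(\bfDelta_{\bfB},\bfDelta^{\hat\btheta})\Vert_2$ (not $\Vert[\bfDelta_{\bfB},\bfDelta^{\hat\btheta}]\Vert_\Pi$). Once cone membership plus $\ATP$ give $\hat\sigma(\lambda\calR(\bfDelta_{\bfB})+\tau\Vert\bfDelta^{\hat\btheta}\Vert_\sharp)\le(28r/\sd_1)G$, the error term contributes $(56r/\hat\sigma\sd_1)G^2$, absorbed into $G^2$ precisely by \eqref{cond1:general:norm:q=1'} (whence the constant $1/112$), while the $\Vert\calE_{\bfB}^{(n)}\Vert_2$-part contributes $\hat\sigma\sc_n(\lambda\calR+\tau\Vert\cdot\Vert_\sharp)$ and is moved to the left of condition~(iv) --- that is where the factors $1-4\sc_n$ and, in the second stage, $1-4(\sc_n+\sc_*)$ actually come from, not from sandwiching $\Vert\widehat{\mathbf r}\Vert_2$.

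A second device you omit: alongside the KKT inequality, the paper derives an \emph{auxiliary} inequality by directly comparing objective values at $[\hat\bfB,\hat\btheta]$ and $[\bfB,\btheta^*]$ (convexity of $\Vert\cdot\Vert_2$, no subgradient). After $\MP$ and $\ATP$ this becomes $0\le((\sf_1/\sd_1)+\hat\sigma\sc_n)G+\triangle$, and it is this inequality --- not the KKT one --- that forces cone membership in the relevant case, with constant $6$ rather than $3$ (hence $r_{\cdot,\cdot,6}$ in the statement, unlike $r_{\cdot,\cdot,3}$ in Theorem~\ref{thm:improved:rate}). Both devices are repeated verbatim in the second stage with $G$ replaced by $\Vert\frX^{(n)}(\bfDelta_{\bfB})\Vert_2$; the error term there picks up an extra $\Vert\bfDelta^{\hat\btheta}\Vert_2\,\lambda\calR(\bfDelta_{\bfB})$ from $\Vert\frM^{(n)}\Vert_2\le\Vert\frX^{(n)}(\bfDelta_{\bfB})\Vert_2+\Vert\bfDelta^{\hat\btheta}\Vert_2$, which after the stage-one bound $\Vert\bfDelta^{\hat\btheta}\Vert_2\le\sc_*\hat\sigma$ is what introduces $\sc_*$ into the $\lambda$-condition of~(iv).
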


Theorem \ref{thm:improved:rate:q=1'} has a similar format to Theorem \ref{thm:improved:rate} but with a few differences. The first obvious one is that constraints \eqref{cond5:general:norm}-\eqref{cond6:general:norm} are excluded. Indeed, Theorem \ref{thm:improved:rate:q=1'} is not concerned with matrix decomposition ($\sa=\infty$, $\sf_3=\sd_3=\sb_3=\sf_*=0$, $\calS\equiv0$). The main difference is that $(\hat\sigma\lambda,\hat\sigma\tau)$ in condition (iv) of Theorem \ref{thm:improved:rate:q=1'} substitutes $(\lambda,\tau)$ in condition (iv) of Theorem \ref{thm:improved:rate}. By Bernstein's inequality, $\sigma/2\le\hat\sigma\le3\sigma/2$ with high probability --- assuming $n\gtrsim\sigma^2(1+\log(1/\delta))$. This justifies why the tuning 
$(\lambda,\tau)$ in Theorem \ref{thm:response:sparse-low-rank:regression:q=1} is adaptive to $\sigma$. Another difference is that the constraints \eqref{cond4:general:norm:q=1'} and \eqref{cond8:general:norm:q=1'}-\eqref{cond9:general:norm:q=1'} require the constants $(\sc_n,\sc_*)$ to be $\calO(1) r_{n,d_{\eff},\delta}(\rho,\mu_*)$ --- recall \eqref{def:r:n}. This justifies why the approximation error in Theorem \ref{thm:response:sparse-low-rank:regression:q=1} must be 
$\calO(\sigma) r_{n,d_{\eff},\delta}(\rho,\mu_*)$ instead of $\calO(\sigma)$ as in Theorem \ref{thm:response:sparse-low-rank:regression}. 

From the previous discussion, it is not surprising that the road map of the proofs of Theorem \ref{thm:improved:rate:q=1'} and Theorem \ref{thm:response:sparse-low-rank:regression:q=1} are, respectively, an adaptation of the proofs of Theorem \ref{thm:improved:rate} and Theorem \ref{thm:response:sparse-low-rank:regression}. The details are left to Section 33
in the supplement. For completeness, we present a brief description of the proof of Theorem \ref{thm:improved:rate:q=1'}. To facilitate, we use as reference the proof of Theorem \ref{thm:improved:rate} in Section \ref{s:proof:main:paper} and give pointers to the changes in the supplement. 
\begin{enumerate}
\item Lemma 31
in Section 33
is a variation of Lemma \ref{lemma:recursion:1st:order:condition}. Inequality (90)
in Lemma 31
is similar to \eqref{lemma:recursion:1st:order:condition:eq} but with the addition of the error term
\begin{align}
\left(\Vert\frM^{(n)}(\bfDelta_{\bfB},\bfDelta^{\hat\btheta})\Vert_2+\Vert\calE_{\bfB}^{(n)}\Vert_2\right)\left(
\lambda\calR(\bfDelta_{\bfB})+\tau\Vert\bfDelta^{\hat\btheta}\Vert_\sharp
\right).\label{eq:error:term:augmented}  
\end{align}
In above, $\calE_{\bfB}:=\frX(\bfB)-\boldf$ denotes the miss-specification error. The term above appears because we use the adaptive loss 
$\calL_{\tau\bomega,1}$. To address this term, Lemma 31
states the auxiliary inequality (89)
. 
\item Lemma 32
in Section 33
is a variation of Lemma \ref{lemma:MP+ATP}. Inequality (92)
in Lemma 32
is similar to \eqref{lemma:MP+ATP:eq} but with the addition of the error term \eqref{eq:error:term:augmented}.\footnote{Because it handles matrix decomposition, inequality \eqref{lemma:MP+ATP:eq} states a recursion in the variable $\Vert[\bfDelta_{\bfB},\bfDelta_{\bfGamma},\bfDelta^{\hat\btheta}]\Vert_{\Pi}$. The presence of the error term \eqref{eq:error:term:augmented} justifies why inequality (92)
states a recursion in the variable $\Vert\frM^{(n)}(\bfDelta^{\hat\bfB},\bfDelta^{\hat\btheta})\Vert_2$.} Examining Lemma 32
, we note that the miss-specification errors 
$\lambda\Vert\calE_{\bfB}^{(n)}\Vert_2$ and $\tau\Vert\calE_{\bfB}^{(n)}\Vert_2$ impact the choice of $(\lambda,\tau)$. This fact implies the constant $\sc_n$ to be of the order of the minimax rate. Like Lemma 31
, Lemma 32
states the auxiliary inequality (91)
.
\item Lemma 32
entails Proposition 8
in Section 33
, a variation of Proposition \ref{prop:suboptimal:rate}.
\item Lemma 33
in Section 33
is a variation of Lemma 19
. Inequality (102)
in Lemma 33
is analogous to \eqref{lemma:recursion:delta:bb:general:norm:eq} 
with the addition of the error term
\begin{align}
\left(\Vert\frX^{(n)}(\bfDelta_{\bfB})+\Vert\bfDelta^{\hat\btheta}\Vert_2
+\Vert\calE_{\bfB}^{(n)}\Vert_2\right)\lambda\calR(\bfDelta_{\bfB}).\label{eq:error:term}  
\end{align}
To address it, Lemma 33
states the auxiliary inequality (101)
.
\item Finally, Lemma 34
in Section 33
is a variation of Lemma \ref{lemma:MP+IP+ATP}. 
The major difference is the addition of the term \eqref{eq:error:term} in inequality (105)
of Lemma 34
.\footnote{Inequality \eqref{lemma:MP+IP+ATP:eq} states a recursion in the variable $\Vert[\bfDelta_{\bfB},\bfDelta_{\bfGamma}]\Vert_{\Pi}$ while (105)
states a recursion in the variable $\Vert\frX^{(n)}(\bfDelta^{\hat\bfB})\Vert_2$.} Consequently, the corruption error 
$\lambda\Vert\bfDelta^{\hat\btheta}\Vert_2$ and the miss-specification error 
$\lambda\Vert\calE_{\bfB}^{(n)}\Vert_2$ affect the choice of $\lambda$. This is why the constants $(\sc_n,\sc_*)$ need to be of the order of the minimax rate. To address \eqref{eq:error:term}, Lemma 34
states the auxiliary inequality (104)
.
\end{enumerate}
The proof of Theorem \ref{thm:improved:rate:q=1'} follows from Lemma 34
and Proposition 8
 in the supplement. Proposition 8
 is invoked to give precise bounds on the nuisance error $\bfDelta^{\hat\btheta}$.

\section{Simulation results}\label{s:simulation}
We report simulation results in \texttt{R} with synthetic data demonstrating agreement between theory and practice. The code is in \href{https://github.com/philipthomp/Outlier-robust-regression}{https://github.com/philipthomp/Outlier-robust-regression}. We simulate with standard gaussian design entries and gaussian noise. In experiments j) and k) below, $\sigma=0.1$; in the others,we use 
$\sigma=1$. The purpose of our experiments is threefold. The first is to identify empirically the guarantees of Theorems \ref{thm:tr:reg:matrix:decomp} and \ref{thm:response:sparse-low-rank:regression} for the estimators \eqref{equation:aug:slope:rob:estimator:general} and  \eqref{equation:aug:slope:rob:estimator:q=2} with $q=2$. Namely, the linear growth of the mean standard error 
($\sqrt{\texttt{MSE}}$) with respect to $\epsilon$ and of the mean squared error ($\texttt{MSE}$) with respect to $(r,s)$.  Secondly, we wish to verify empirically the superiority of ``sorted'' Huber regression to classical Huber regression. Thirdly, we compare  robust estimators with non-robust regularized estimators. Our solvers use a batch version of an alternated proximal gradient method on the separable variables $[\bfB,\bfGamma,\btheta]$.\footnote{The proximal map of the Slope or $\ell_1$ norms are computed with the function \texttt{sortedL1Prox()} of the \texttt{SLOPE} R package \cite{2015bogdan:berg:sabatti:su:candes}. The 
($\ell_\infty$-constrained) proximal map of the nuclear norm is computed via ($\ell_\infty$-constrained) soft-thresholding of the singular value decomposition.} When using the Slope norm, we always set $A = \bar A = 10$ in the sequences $\bomega$ and $\bar\bomega$. ``Huber'' and ``Sorted Huber'' denote Huber regression and Sorted-Huber regression respectively. Due to lack of space, we leave to future work numerical results for the robust estimator \eqref{equation:aug:slope:rob:estimator:q=1} regarding adaptation to $\sigma$. We implement the following experiments:

\begin{enumerate}
\item[a)] We conduct Sorted-Huber sparse linear regression with estimator \eqref{equation:aug:slope:rob:estimator:q=2} ($q=2$) with $p=100$, $n=1000$, $s = \{15, 25, 35\}$ and 100 repetitions. We take $\calR$ to be the Slope norm in $\re^p$. Respectively, $\bb^*$ and $\btheta^*$ are set with the first $s$ and $o$ entries equal to $10$ and zero otherwise. See Figure \ref{fig.robust.sparse.linear.reg}(a).
\item[b)] Same set-up of a) but with $s=25$ and parameter coordinates with modulus $50$. This time, we compare ``Huber'' and ``Sorted Huber''. See Figure \ref{fig.robust.linear.reg:HS}. The first 25 entries estimated by ``Huber'' and ``Sorted Huber'' fluctuated around $40$ and $48$ respectively.
\item[c)] Same set-up of b) but with $\bb^*$ and $\btheta^*$ having non-zero coordinates set to $1$ and $1000$ respectively. We compare \eqref{equation:aug:slope:rob:estimator:q=2} ($q=2$) with sparse regression with Slope norm regularization (``Slope reg''). See Figure  \ref{fig.robust.sparse.linear.reg}(b).
\item[d)] We conduct Sorted-Huber low-rank trace regression with estimator \eqref{equation:aug:slope:rob:estimator:q=2} ($q=2$) with $d_1=d_2=10$, $n=1000$, $r = \{1, 2, 3\}$ and 50 repetitions. We take $\calR$ to be the nuclear norm. The low-rank parameter is generated by randomly choosing the spaces of left and right singular vectors with all nonzero singular values equal to $B:=10$. The corruption vector $\btheta^*$ is set to have the first $o$ entries equal to $M:=10$ and zero otherwise. See Figure \ref{fig.robust.low-rank.linear.reg}(a).
\item[e)] Same set-up of d) but with $r=5$, $B=M=100$. This time, we compare ``Huber'' and ``Sorted Huber''. See Figure \ref{fig.robust.low-rank.linear.reg}(b).
\item[f)] Same set-up of e) but with $B=100$ and $M=1000$. We compare estimator \eqref{equation:aug:slope:rob:estimator:q=2} ($q=2$) with trace regression with nuclear norm regularization (``Nuclear norm''). See Figure  \ref{fig.robust.low-rank.linear.reg}(c).
\item[g)] We conduct non-corrupted trace regression with additive matrix decomposition ($\epsilon=0$) using estimator \eqref{equation:aug:slope:rob:estimator:general} with $(\tau,\btheta)=(0,\mathbf{0})$, $d_1=d_2=10$, $n=1000$, $s = \{5, 80\}$ and 20 repetitions for varying $r$. We take $\calR$ and $\calS$ to be the nuclear norm and $\ell_1$-norm respectively. The low-rank parameter is generated by randomly choosing the spaces of left and right singular vectors such that $\sa^*=1$. The sparse matrix parameter is simulated with the non-zero entries of value $G:=10$ chosen uniformly at random. See Figure \ref{fig.trace-reg-MD}(a).
\item[h)] Similar set-up of h) but for $r=5$ and varying $s$. See Figure \ref{fig.trace-reg-MD}(b).
\item[i)] We conduct Sorted-Huber trace regression with additive decomposition with estimator  \eqref{equation:aug:slope:rob:estimator:general} for varying $\epsilon$ and three different cases: 
$(r,s)=(1,5)$, $(r,s)=(3,5)$ and $(r,s)=(5,5)$. The corruption entries are set to $M=1$. The other configurations, except for $(\tau,\btheta)\neq(0,\mathbf{0})$,  are the same as in g). See Figure \ref{fig.robust.trace-reg-MD}(a).
\item[j)] Same set-up of i) but with $\sigma=0.1$, $M=0.5$ and $(r,s)=(1,5)$. This time we compare ``Huber'' and ``Sorted Huber''. See Figure \ref{fig.robust.trace-reg-MD}(b).
\item[k)] Same set-up of j). We compare estimator \eqref{equation:aug:slope:rob:estimator:general} with trace regression regularized with nuclear norm plus the $\ell_1$-norm (``Low-rank + sparse reg''). See Figure \ref{fig.robust.trace-reg-MD}(c).
\end{enumerate}
The plots identify the linear growth expected from the theory and the superiority of Sorted Huber regression in comparison to Huber regression and non-robust methods. 

\begin{figure}
\hspace*{\fill}
\subcaptionbox{}{\includegraphics[scale=0.27]{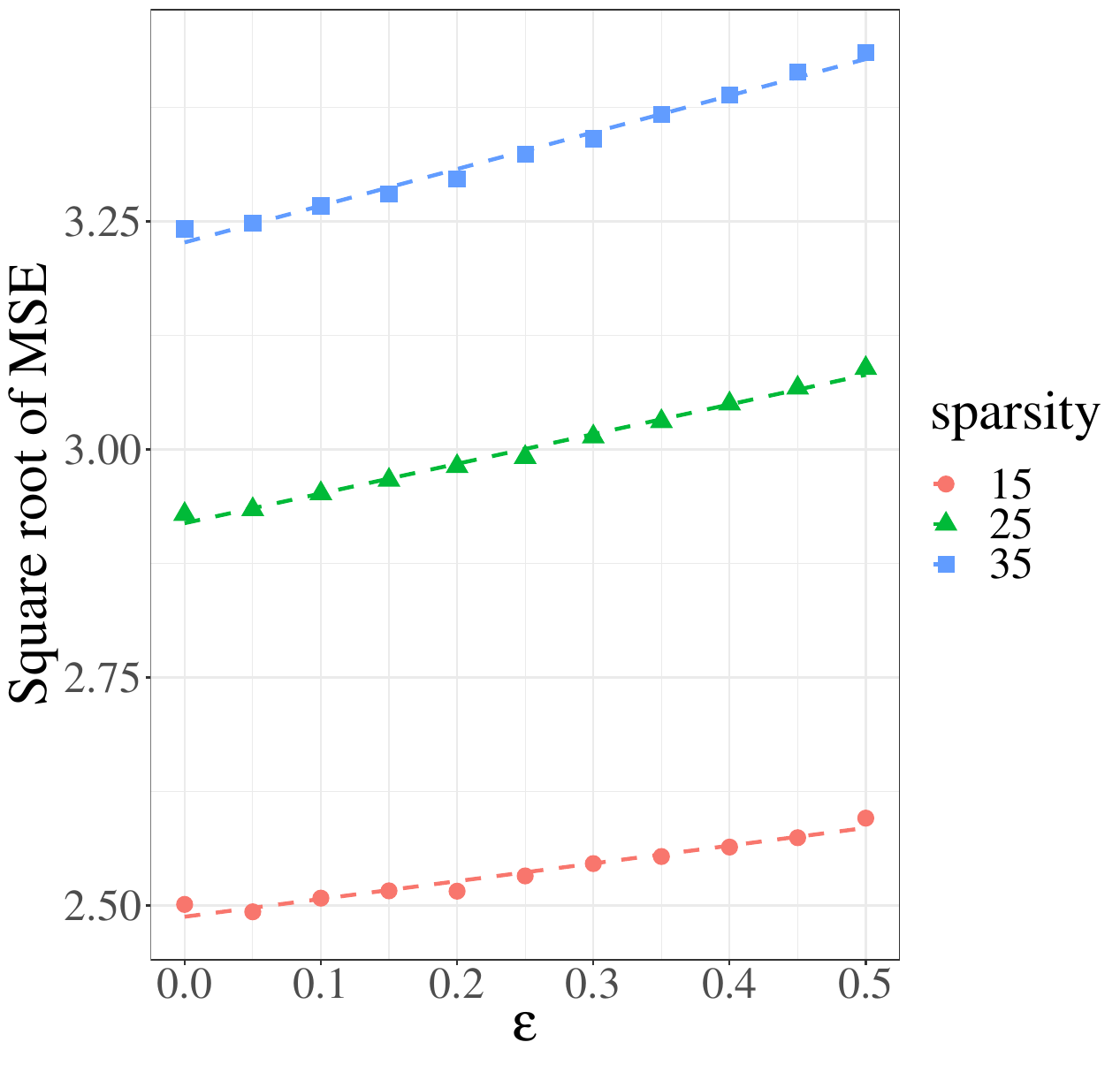}}
\hfill
\subcaptionbox{}{\includegraphics[scale=0.32]{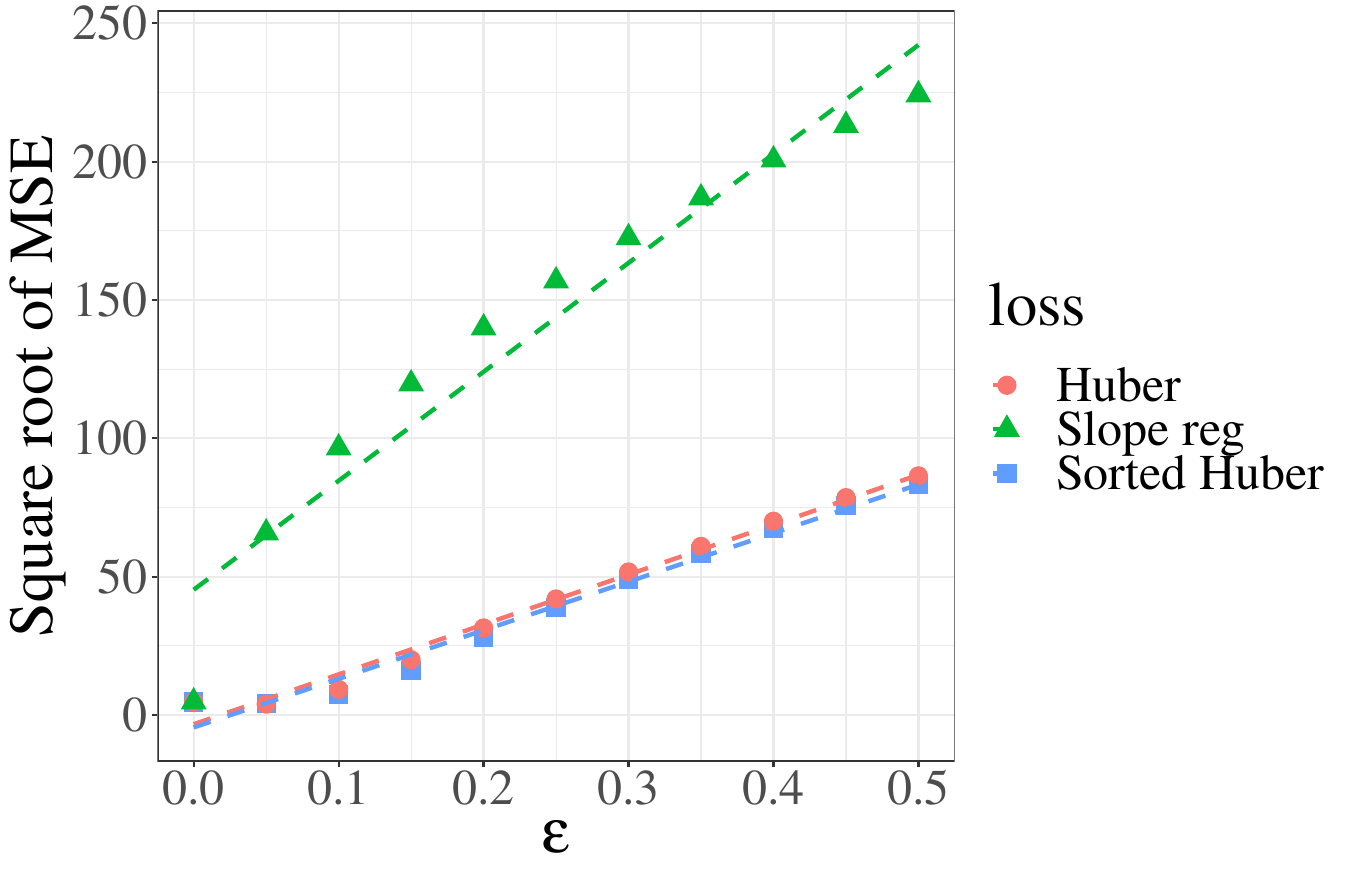}}
\hspace*{\fill}
\caption{Robust sparse linear regression:
different levels of sparsity (a) and comparisons between methods (b).}\label{fig.robust.sparse.linear.reg}
\end{figure}

\begin{figure}
\subcaptionbox{}{\includegraphics[scale=0.22]{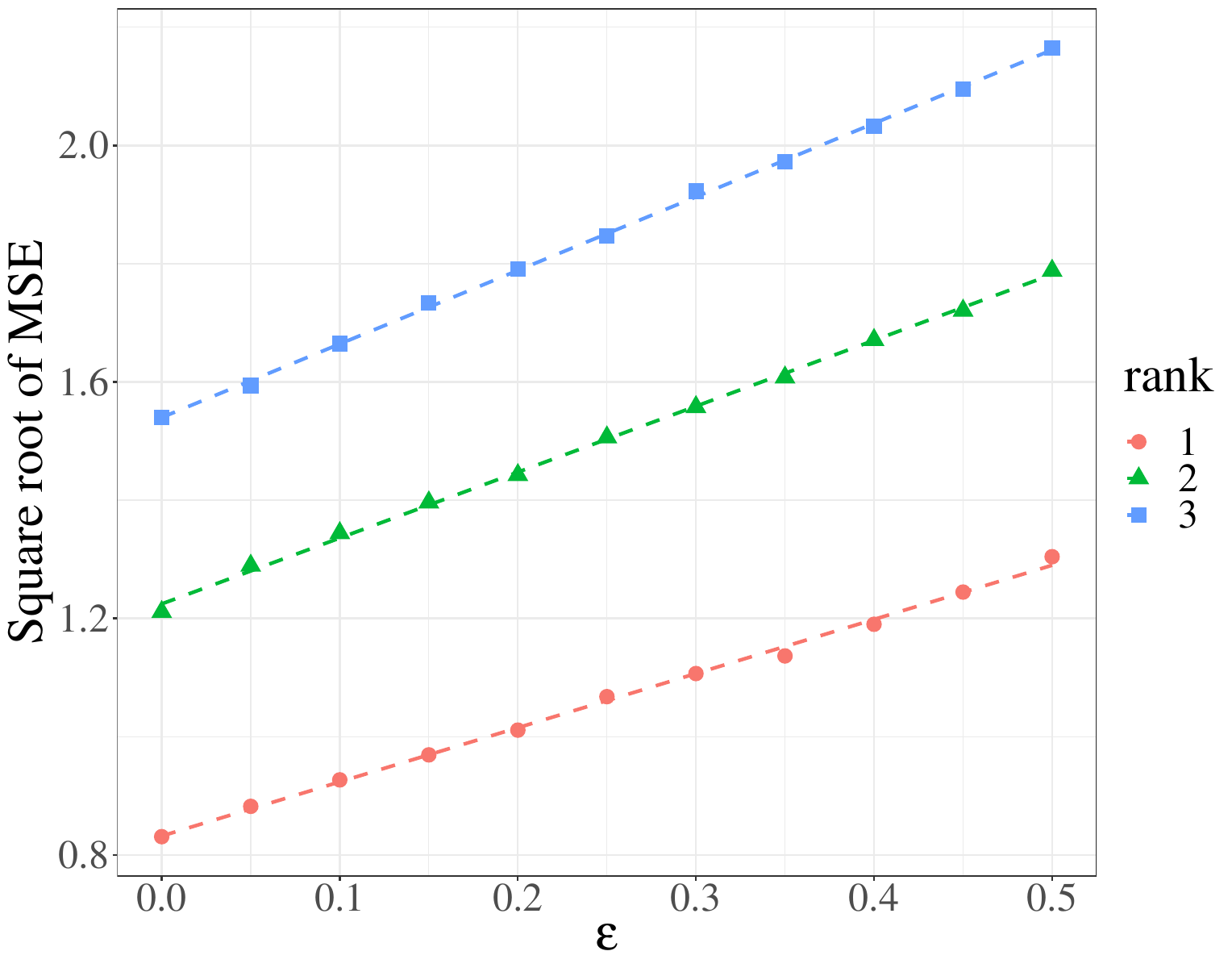}}
\subcaptionbox{}{\includegraphics[scale=0.3]{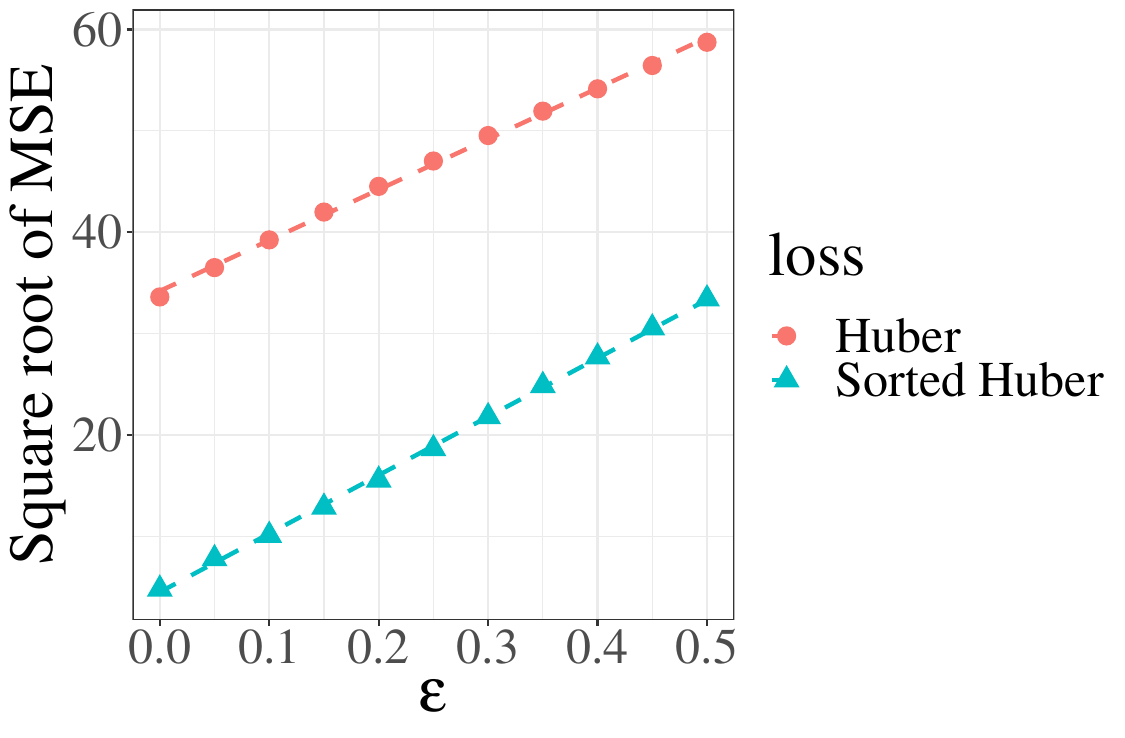}}
\subcaptionbox{}{\includegraphics[scale=0.28]{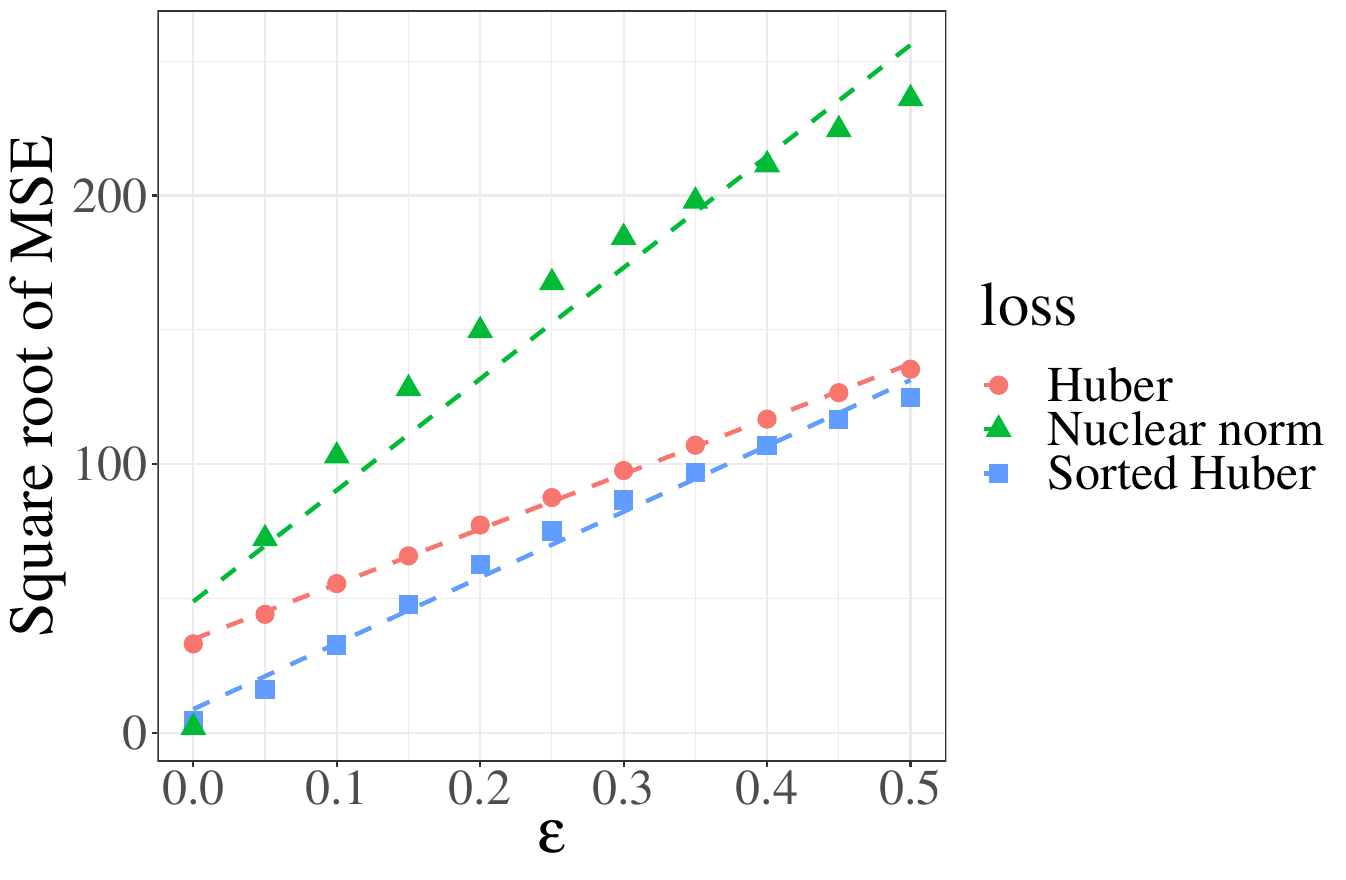}}
\caption{Robust low-rank trace regression: different values of rank (a), and comparisons between methods (b,c).}\label{fig.robust.low-rank.linear.reg}
\end{figure}

\begin{figure}
\hspace*{\fill}%
\subcaptionbox{Fixed sparsity.}{\includegraphics[scale=0.27]{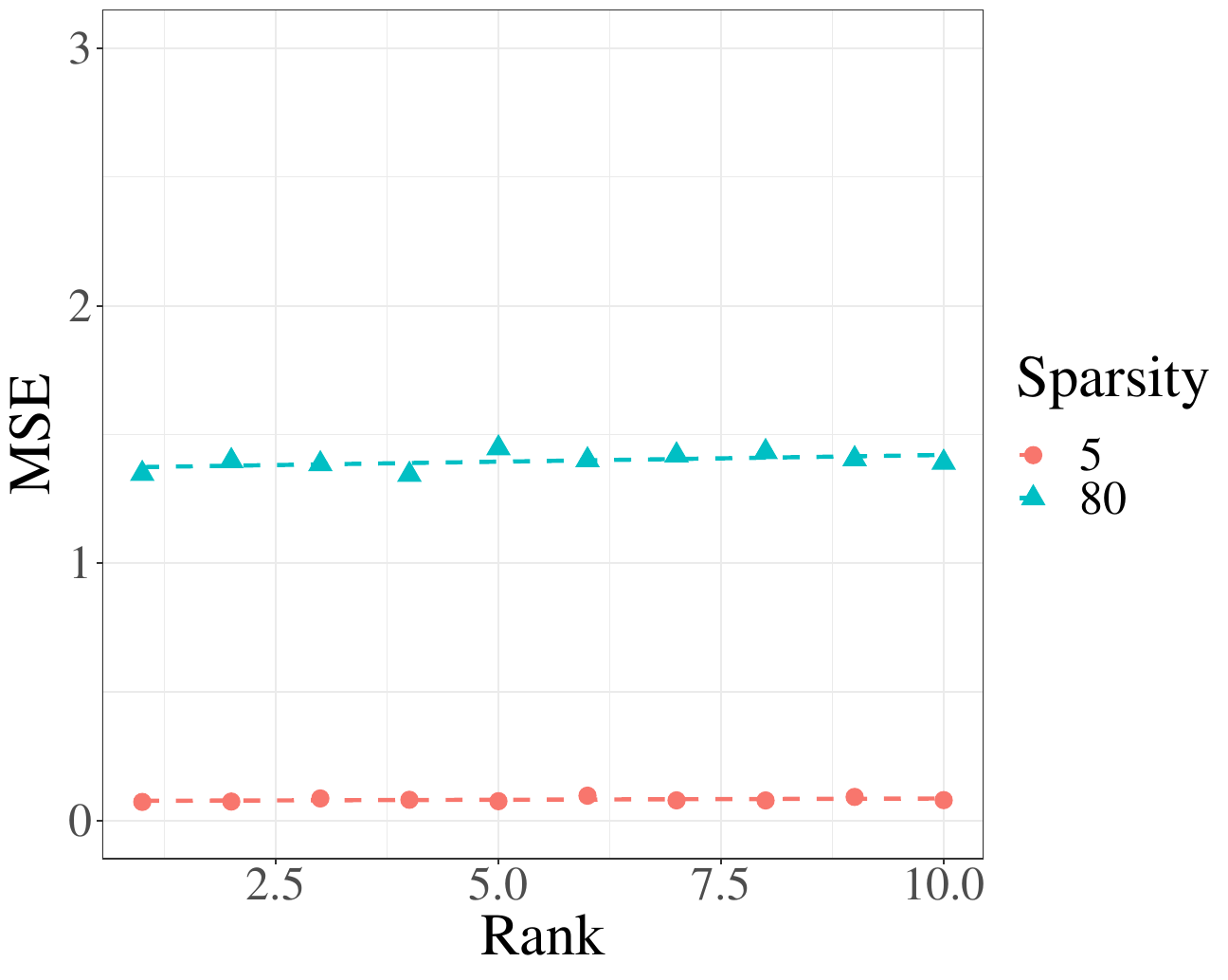}}\hfill%
\subcaptionbox{Fixed rank.}{\includegraphics[scale=0.3]{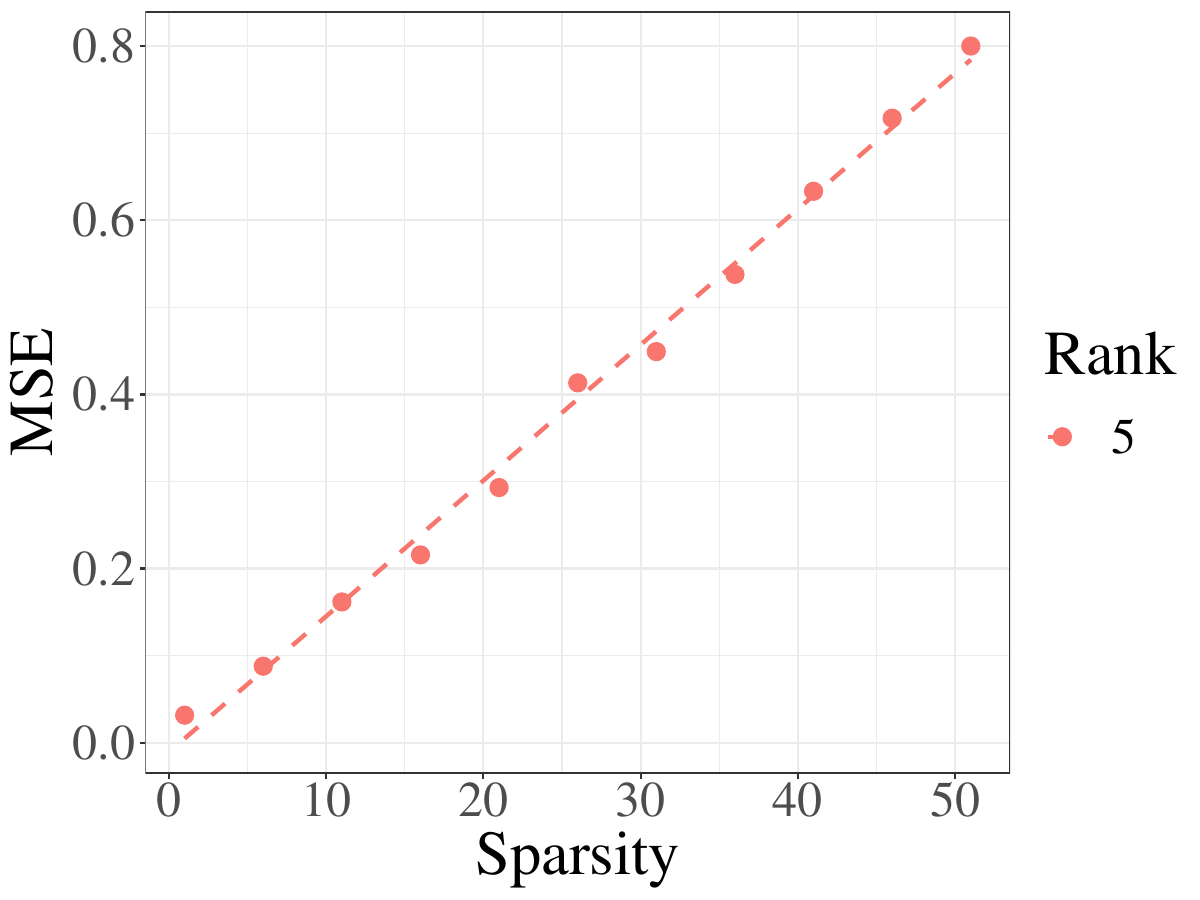}}
\hspace*{\fill}%
\caption{Trace regression with additive matrix decomposition with $\epsilon=0$}\label{fig.trace-reg-MD}
\end{figure}

\begin{figure}
\subcaptionbox{}{\includegraphics[scale=0.22]{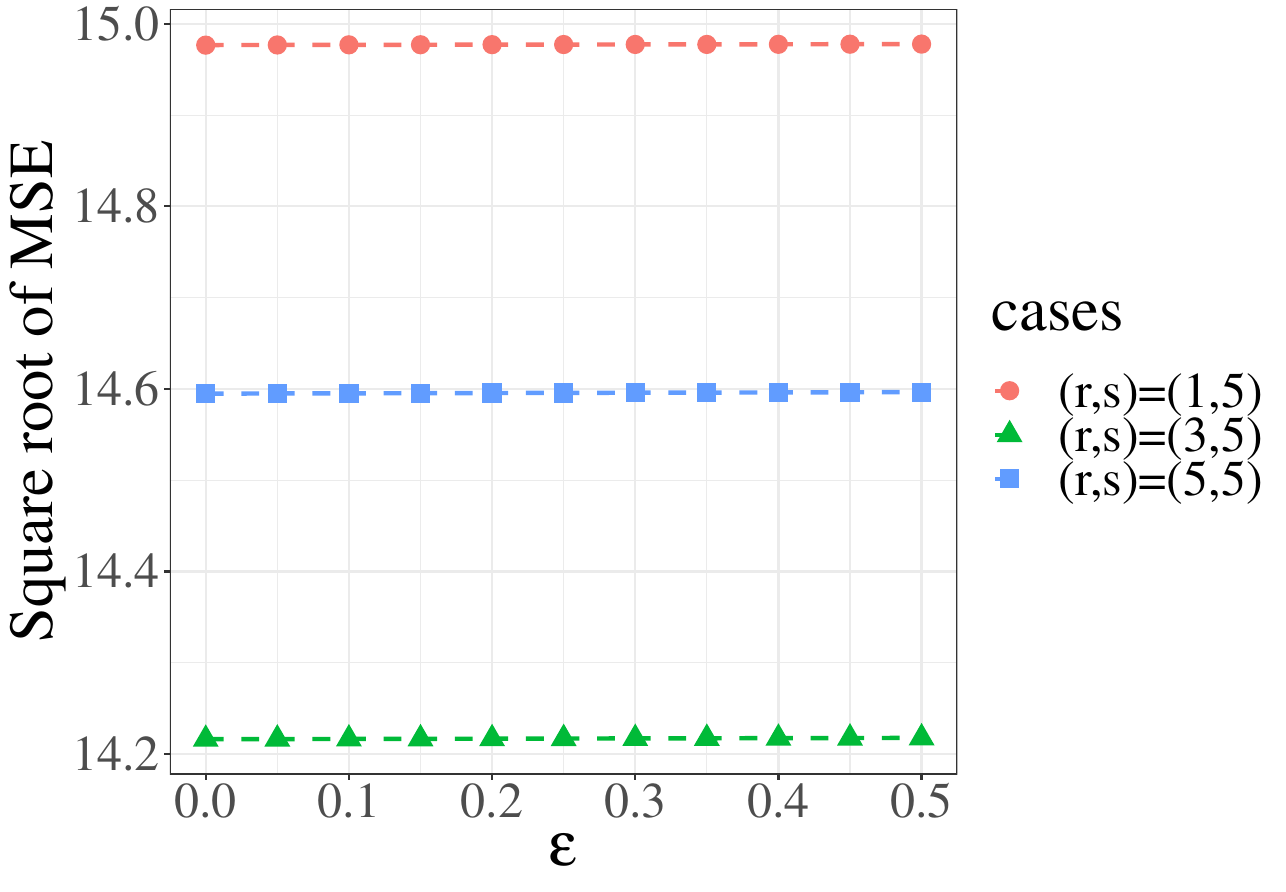}}
\subcaptionbox{}{\includegraphics[scale=0.3]{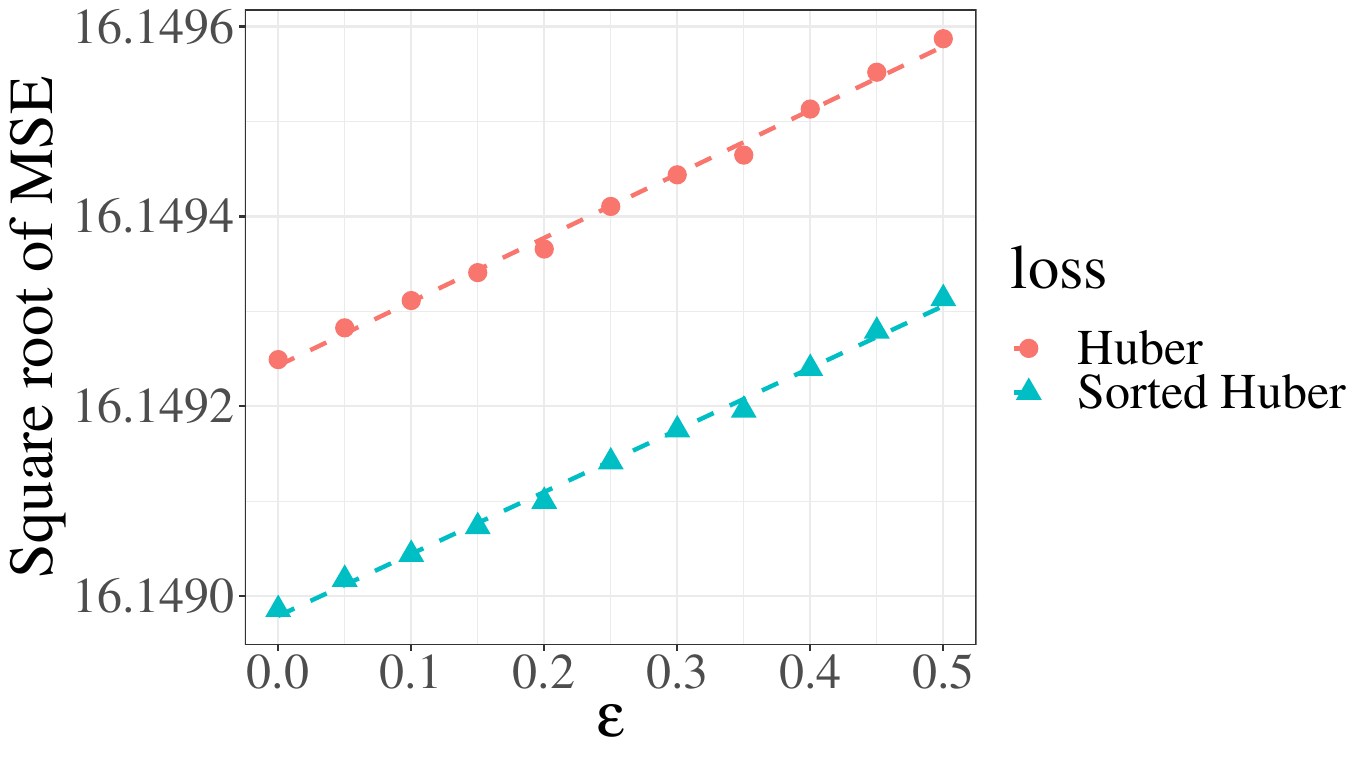}}
\subcaptionbox{}{\includegraphics[scale=0.28]{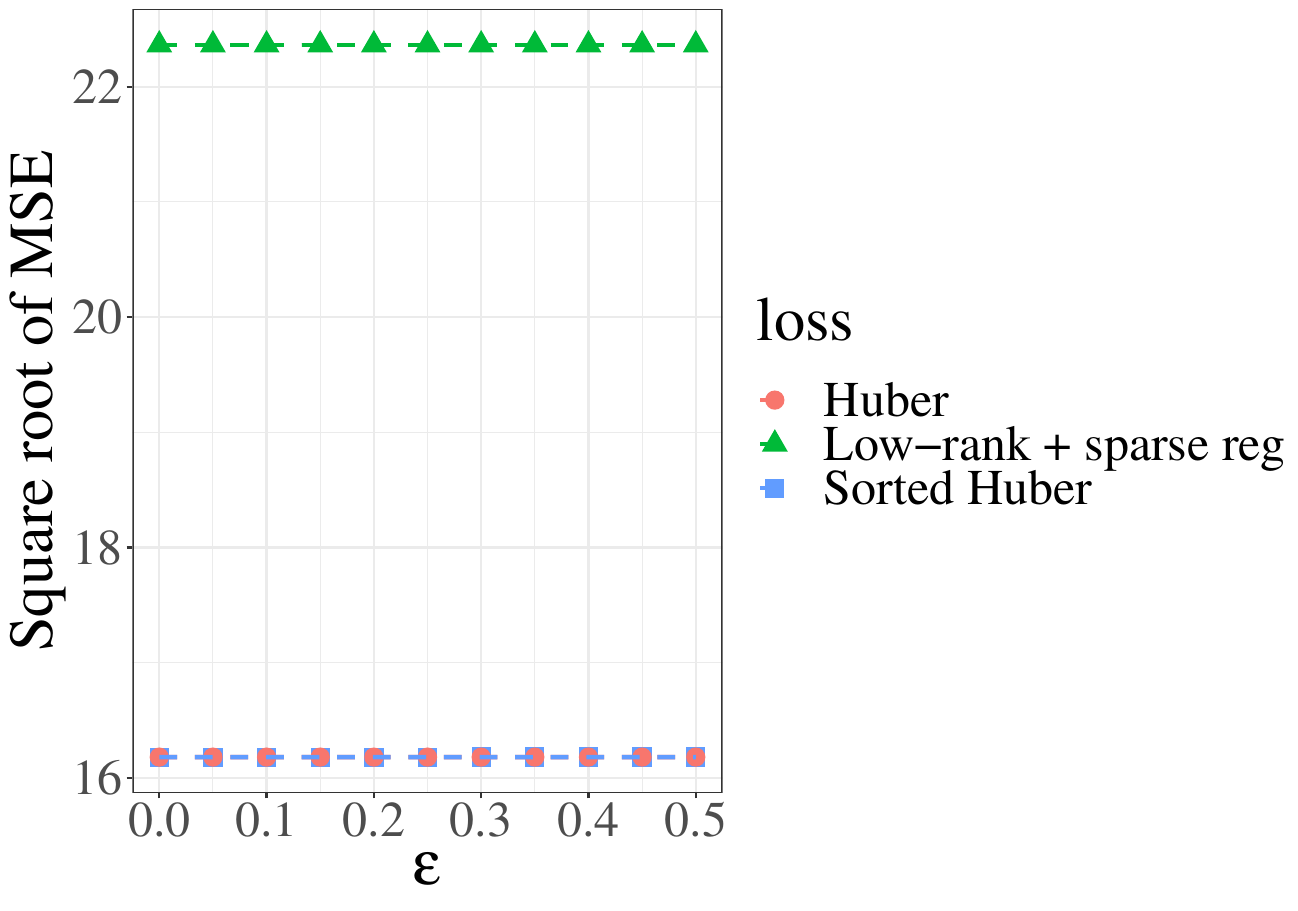}}
\caption{Robust trace regression with additive decomposition: different values of rank/sparsity (a), and comparisons between methods (b,c).}\label{fig.robust.trace-reg-MD}
\end{figure}

\section{Discussion}\label{s:discussion}
We present theoretical and empirical improvements  resorting to ``sorted'' variations of the classical Huber's loss. Instead of the $\ell_1$ norm, the loss in Definition \ref{def:sorted:Huber:loss} is based on a variational formula regularized by the slope norm --- treating each label outlier magnitude individually. This use of the slope norm refines an existing robust loss and should be contrasted to \cite{2015bogdan:berg:sabatti:su:candes,2018bellec:lecue:tsybakov}. There, the slope norm is used as a refined regularization norm when estimating the sparse parameter.\footnote{While the estimator \eqref{equation:aug:slope:rob:estimator:q=2} of $[\bb^*,\btheta^*]$ can be written as a slope estimator with the augmented design $[\mbX^{(n)} -\bfI_n]$, this point of view is not enough to entail the optimal rate. Indeed, 
$\IP$ is crucially needed to entail optimality when estimating only $\bb^*$. See Remarks \ref{rem:relevance:PP+IP}-\ref{rem:relevance:IP} and comments after Proposition \ref{prop:suboptimal:rate}.} More generally, this work identifies three design properties, 
$\PP$, $\IP$ and $\MP$, which jointly entail that the robust estimator \eqref{equation:sorted:Huber:general} is near-optimal in terms of dimension, $\epsilon$ and $\delta$ --- when dealing simultaneously with matrix decomposition, label contamination, featured-dependent noise and miss-specification. These properties are based on sharp concentration inequalities stated in Section \ref{s:multiplier:process:main}. We believe these could be useful elsewhere, e.g., non-parametric least squares regression \cite{2019han:wellner,2022kuchibhotla:patra} and compressive sensing theory \cite{2015dirksen}. We reemphasize that there seems to be no prior estimation theory for RTRMD --- with $\PP$ being a new application of the product process inequality. Under the incoherence assumption, it would be interesting to investigate further when exact recovery is possible or how nonconvex optimization approaches behave on the model RTRMD \cite{2020chen:fan:ma:yan-nonconvex}.

{\footnotesize
\bibliography{references_AoS.bib}}

\renewcommand*\contentsname{Summary of the paper}
\tableofcontents

\newpage

\begin{tcolorbox}
\large\textbf{Supplementary Material} 
\end{tcolorbox}

In this supplement we give detailed proofs of the results stated in the main text. It is organized as follows:

\begin{itemize}
\item The supplement starts proving Lemmas \ref{lemma:TP} and \ref{lemma:ATP} and Proposition \ref{proposition:properties:subgaussian:designs} in Section 
\ref{s:subgaussian:properties} of the main text --- see Sections \ref{proof:lemma:TP} to \ref{s:prop:MP}. Auxiliary lemmas are recalled in Section \ref{s:decomposable:norms}. 
\item We then pass to the proof of Theorem \ref{thm:improved:rate} in Section 
\ref{s:proof:main:paper} of the main text. The auxiliary lemmas are proved in order from Sections \ref{s:proof:lemma:recursion:1st:order:condition} to \ref{s:proof:lemma:MP+IP+ATP}. The proof of Theorem \ref{thm:improved:rate} is concluded in Section \ref{s:proof:thm:improved:rate:supp}. 
\item The proof of Theorem \ref{thm:tr:reg:matrix:decomp} in Section
\ref{s:main:result:1} of the main text is given Section \ref{proof:thm:tr:reg:matrix:decomp}. It follows from Theorem \ref{thm:improved:rate} and Proposition \ref{proposition:properties:subgaussian:designs}. The lower bound --- Proposition \ref{prop:lower:bound:tr:matrix:decomp} is proved in Section \ref{s:proof:prop:lower:bound:tr:matrix:decomp}.
\item The proof of Theorem \ref{thm:response:sparse-low-rank:regression} in Section \ref{s:main:result:1} in the main text is given in Sections \ref{s:proof:thm:response:sparse-low-rank:regression:i} to \ref{s:thm:response:sparse-low-rank:regression:iii}. It also follows Theorem \ref{thm:improved:rate} and Proposition \ref{proposition:properties:subgaussian:designs}. 
\item The proof of Theorem \ref{thm:improved:rate:q=1'} in Section \ref{s:proof:main:paper:q=1} of the main text is presented in Section \ref{s:proof:thm:improved:rate:q=1'}. This theorem and Proposition \ref{proposition:properties:subgaussian:designs} entail Theorem \ref{thm:response:sparse-low-rank:regression:q=1} in Section \ref{s:main:result:1} of the main text --- its proof is given in Sections \ref{s:proof:thm:response:sparse-low-rank:regression:q=1:i'} to \ref{s:proof:thm:response:sparse-low-rank:regression:q=1:iii'}.  
\item The proofs of Theorems \ref{thm:mult:process} and \ref{thm:product:process} in Section \ref{s:multiplier:process:main} of the main text and needed peeling lemmas are presented in the Appendix.
\end{itemize}

Unless otherwise stated, $C>0$ and $c\in(0,1)$ will denote universal constants that may change within the text.

\section{Proof of Lemma \ref{lemma:TP}}\label{proof:lemma:TP}
From $\PP_{\calR,\calR}(\alpha_1,\alpha_2,\alpha_3,\alpha_4)$, 
\begin{align}
|\Vert\frX^{(n)}(\bfV)\Vert_2^2-\Vert\bfV\Vert_\Pi^2|
&\le \alpha_1\Vert\bfV\Vert_\Pi^2 
+(\alpha_2+\alpha_3)\calR(\bfV)\Vert\bfV\Vert_\Pi
+\alpha_4\calR^2(\bfV)\\
&\le \Vert\bfV\Vert_\Pi^2\left(\alpha_1 + (\nicefrac{\alpha^2}{2})\right)
+\calR^2(\bfV)\left(
\alpha_4 + \frac{(\alpha_2+\alpha_3)^2}{2\alpha^2}
\right).
\end{align} 
Let $0<\alpha<\sqrt{2(1-\alpha_1)}$. Rearranging and taking the square-root, it follows that $\TP_{\calR}(\sa_1,\sa_2)$ holds with constants
$
\sa_1:=\{1-\alpha_1-(\nicefrac{\alpha^2}{2})\}^{1/2}
$
and
$
\sa_2:=\{
\frac{(\alpha_2+\alpha_3)^2}{2\alpha^2} + \alpha_4
\}^{1/2}.
$
The proof is finished once we set $\alpha^2:=(1-\alpha_1)/2$.

\section{Proof of Lemma \ref{lemma:ATP}}
Let $\alpha:=(\sa_1\wedge\bar\sa_1)/2\sqrt{2}$ and 
$\beta:=\sb_1+\sc_1$. By assumption $b:=\beta+2\alpha^2<(\sa_1\wedge\bar\sa_1)^2$ and 
$\sd_1^2=(\sa_1\wedge\bar\sa_1)^2-b$. From $\TP$, 
\begin{align}
\sd_1^2\Vert[\bfV,\bfW,\bu]\Vert_{\Pi}^2 &\le
\sa_1^2\Vert\bfV\Vert_{\Pi}^2 + \bar\sa_1^2\Vert\bfW\Vert_{\Pi}^2 + \Vert\bu\Vert_2^2
-b\Vert[\bfV,\bfW,\bu]\Vert_{\Pi}^2\\
&\le \left(\Vert\frX^{(n)}(\bfV)\Vert_2+\sa_2\calR(\bfV)\right)^2
+ \left(\Vert\frX^{(n)}(\bfW)\Vert_2+\bar\sa_2\calS(\bfW)\right)^2 +\Vert\bu\Vert_2^2\\
&-b\Vert[\bfV,\bfW,\bu]\Vert_{\Pi}^2,  
\end{align}
so, after taking the square-root,\footnote{Here we used the relation 
$
\sqrt{(A+B)^2+(\bar A+\bar B)^2+C^2}
\le \sqrt{A^2+\bar A^2+C^2} + B + \bar B
$
for positive numbers $(A,\bar A,B, \bar B, C)$. 
} 
\begin{align}
\sd_1\Vert[\bfV,\bfW,\bu]\Vert_{\Pi}&\le
\left\{
\Vert\frX^{(n)}(\bfV)\Vert_2^2 + \Vert\frX^{(n)}(\bfW)\Vert_2^2
+ \left(\Vert\bu\Vert_2^2 - b\Vert[\bfV,\bfW,\bu]\Vert_{\Pi}^2\right)_+
\right\}^{\frac{1}{2}} \\
&+\sa_2\calR(\bfV)+\bar\sa_2\calS(\bfW). 
\label{lemma:ATP:eq1}
\end{align}
In what follows, it is enough to consider the case 
$
\Vert\bu\Vert_2^2 \ge b\Vert[\bfV,\bfW,\bu]\Vert_{\Pi}^2.
$

Taking the squares, 
\begin{align}
\Vert\frX^{(n)}(\bfV)\Vert_2^2 + \Vert\frX^{(n)}(\bfW)\Vert_2^2 + \Vert\bu\Vert_2^2 & = \Vert\frX^{(n)}(\bfV+\bfW) + \bu\Vert_2^2\\
&- 2\langle\frX^{(n)}(\bfV),\frX^{(n)}(\bfW)\rangle 
- 2\langle\frX^{(n)}(\bfV+\bfW),\bu\rangle.
\end{align}

By $\IP$ and Young's inequality,
\begin{align}
T_1:=-2\langle\frX^{(n)}(\bfV+\bfW),\bu\rangle &\le 
2\sb_1\left\Vert[\bfV,\bfW]\right\Vert_{\Pi}
\Vert\bu\Vert_2
+2\sb_2\calR(\bfV)\Vert\bu\Vert_2\\
&+2\sb_3\calS(\bfV)\Vert\bu\Vert_2
+2\sb_4\left\Vert[\bfV,\bfW]\right\Vert_\Pi\calQ(\bu)\\
&\le (\sb_1+\alpha^2)(\Vert[\bfV,\bfW]\Vert_{\Pi}^2+\Vert\bu\Vert_2^2)
+\frac{\sb_2^2}{\alpha^2}\calR^2(\bfV)\\
&+\frac{\sb_3^2}{\alpha^2}\calS^2(\bfW)
+\frac{\sb_4^2}{\alpha^2}\calQ^2(\bu).
\end{align}

Let $T_2:=-2\langle\frX^{(n)}(\bfV),\frX^{(n)}(\bfW)\rangle$. By $\PP$ and Young's inequality, 
\begin{align}
T_2 + 2\llangle\bfV,\bfW\rrangle_{\Pi} &\le 
2\sc_1\left\Vert\bfV\right\Vert_{\Pi}\Vert\bfW\Vert_\Pi
+2\sc_2\calR(\bfV)\Vert\bfW\Vert_\Pi
+2\sc_3\left\Vert\bfV\right\Vert_\Pi\calS(\bfW)\\
&+2\gamma\sc_2\sc_3\calR(\bfV)\calS(\bfW)\\
&\le (\sc_1+\alpha^2)(\Vert\bfV\Vert_{\Pi}^2+\Vert\bfW\Vert_{\Pi}^2)\\
&+\left(\frac{\sc_2^2}{\alpha^2}+\gamma\sc_2^2\right)\calR^2(\bfV)
+\left(\frac{\sc_3^2}{\alpha^2}+\gamma\sc_3^2\right)\calS^2(\bfW).
\end{align}

Let $T_0:=\Vert\frX^{(n)}(\bfV)\Vert_2^2 + \Vert\frX^{(n)}(\bfW)\Vert_2^2 + \Vert\bu\Vert_2^2$. We thus conclude that 
\begin{align}
T_0 &\le \Vert\frM^{(n)}(\bfV+\bfW,\bu)\Vert_2^2 -2\llangle\bfV,\bfW\rrangle_{\Pi}\\
&+(\sb_1+\sc_1+2\alpha^2)\Vert[\bfV,\bfW]\Vert_{\Pi}^2
+(\sb_1+\alpha^2)\Vert\bu\Vert_2^2\\
&+\left(\frac{\sb_2^2}{\alpha^2}+\frac{\sc_2^2}{\alpha^2}+\gamma\sc_2^2\right)\calR^2(\bfV)
+\left(\frac{\sb_3^2}{\alpha^2}+\frac{\sc_3^2}{\alpha^2}+\gamma\sc_3^2\right)\calS^2(\bfW)
+\frac{\sb_4^2}{\alpha^2}\calQ^2(\bu). 
\label{lemma:ATP:eq2}
\end{align}

From \eqref{lemma:ATP:eq1}-\eqref{lemma:ATP:eq2} and definitions of 
$(\beta,b)$, 
\begin{align}
\sd_1\Vert[\bfV,\bfW,\bu]\Vert_{\Pi}&\le
\left\{
T_0 - b\Vert[\bfV,\bfW,\bu]\Vert_{\Pi}^2
\right\}^{\frac{1}{2}}
+\sa_2\calR(\bfV)+\bar\sa_2\calS(\bfW)\\
&\le\left\{
\Vert\frM^{(n)}(\bfV+\bfW,\bu)\Vert_2^2 -2\llangle\bfV,\bfW\rrangle_{\Pi}
\right\}_+^{\frac{1}{2}} \\
&+ \left(
\left\{\frac{\sb_2^2}{\alpha^2}+\frac{\sc_2^2}{\alpha^2}+\gamma\sc_2^2\right\}^{\frac{1}{2}}
+\sa_2
\right)\calR(\bfV)
+\left(
\left\{\frac{\sb_3^2}{\alpha^2}+\frac{\sc_3^2}{\alpha^2}+\gamma\sc_3^2\right\}^{\frac{1}{2}}
+\bar\sa_2
\right)\calS(\bfW)\\
&+\frac{\sb_4}{\alpha}\calQ(\bu).
\end{align}

\section{Proof of Proposition \ref{proposition:properties:subgaussian:designs}, item (i)}
\label{s:prop:PP}

We start with the following lemma. 
\begin{lemma}\label{lemma:prod:process}
Suppose that $\bfX$ is $L$-subgaussian. Let $\calB_1$ and $\calB_2$ be bounded subsets of 
$\mbB_\Pi$. For any $n\ge1$ and $t\ge1$, with probability at least $1-e^{-t}$, it holds that
\begin{align}
\sup_{[\bfV,\bfW]\in \calB_1\times\calB_2}\left|\llangle\bfV,\bfW\rrangle_n-\llangle\bfV,\bfW\rrangle_\Pi\right|&\le \frac{CL^2}{n}\mathscr{G}\big(\frS^{1/2}(\calB_1))\mathscr{G}\big(\frS^{1/2}(\calB_2))\\
&+ \frac{CL^2}{\sqrt{n}}\left[\mathscr{G}\big(\frS^{1/2}(\calB_1)+\mathscr{G}\big(\frS^{1/2}(\calB_2)\right]\\
&+CL^2\left(\frac{t}{n}+\sqrt{\frac{t}{n}}\right).
\end{align}
\end{lemma}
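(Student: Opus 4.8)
The plan is to derive Lemma \ref{lemma:prod:process} as an immediate consequence of the product process inequality of Theorem \ref{thm:product:process}. I would instantiate that theorem with the underlying variable $X=\bfX$ (the multiplier $\xi$ plays no role here) and with the two classes of linear functionals
$$
F:=\{\bfX\mapsto\llangle\bfX,\bfV\rrangle:\bfV\in\calB_1\},\qquad
G:=\{\bfX\mapsto\llangle\bfX,\bfW\rrangle:\bfW\in\calB_2\}.
$$
For $f=\llangle\cdot,\bfV\rrangle$ and $g=\llangle\cdot,\bfW\rrangle$ one has $\frac{1}{n}\sum_{i\in[n]}f(\bfX_i)g(\bfX_i)=\llangle\bfV,\bfW\rrangle_n$ and $\esp[f(\bfX)g(\bfX)]=\llangle\bfV,\bfW\rrangle_\Pi$, so the product process $A(f,g)$ is exactly $\llangle\bfV,\bfW\rrangle_n-\llangle\bfV,\bfW\rrangle_\Pi$, and $\sup_{(f,g)\in F\times G}|A(f,g)|$ is precisely the quantity we must control.

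The remaining work is to estimate the quantities entering the right-hand side of Theorem \ref{thm:product:process}, namely $\gamma_2(F)$, $\gamma_2(G)$, $\bar\Delta(F)$, $\bar\Delta(G)$ and $\sup_{(f,g)}\Vert fg-\probn fg\Vert_{\psi_1}$, in terms of the Gaussian widths. Since $\bfX$ is $L$-subgaussian and $\Vert\bfV\Vert_\Pi^2=\llangle\frS(\bfV),\bfV\rrangle=\Vert\frS^{1/2}(\bfV)\Vert_F^2$, the pseudo-distance on $F$ obeys $\dist(f,f')=|\llangle\bfX,\bfV-\bfV'\rrangle|_{\psi_2}\le L\Vert\frS^{1/2}(\bfV-\bfV')\Vert_F$. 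Hence $\bfV\mapsto\llangle\cdot,\bfV\rrangle$ is $L$-Lipschitz from $\calB_1$ equipped with $\Vert\cdot\Vert_\Pi$ --- equivalently, from $\frS^{1/2}(\calB_1)$ with the Frobenius metric --- onto $(F,\dist)$; pushing an admissible sequence forward through this map and invoking Talagrand's majorizing measure theorem gives $\gamma_2(F)\le L\,\gamma_2(\frS^{1/2}(\calB_1),\Vert\cdot\Vert_F)\lesssim L\,\mathscr G(\frS^{1/2}(\calB_1))$, and likewise $\gamma_2(G)\lesssim L\,\mathscr G(\frS^{1/2}(\calB_2))$. Because $\calB_1,\calB_2\subset\mbB_\Pi$, the subgaussian hypothesis also yields $\bar\Delta(F)=\sup_{\bfV\in\calB_1}|\llangle\bfX,\bfV\rrangle|_{\psi_2}\le L$ and $\bar\Delta(G)\le L$. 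Finally, the Orlicz product inequality $\Vert fg\Vert_{\psi_1}\le\Vert f\Vert_{\psi_2}\Vert g\Vert_{\psi_2}$ together with $|\esp[fg]|\le\Vert fg\Vert_{\psi_1}$ gives $\sup_{(f,g)\in F\times G}\Vert fg-\probn fg\Vert_{\psi_1}\le 2\sup_{\bfV\in\calB_1,\bfW\in\calB_2}\Vert\llangle\cdot,\bfV\rrangle\Vert_{\psi_2}\Vert\llangle\cdot,\bfW\rrangle\Vert_{\psi_2}\le 2L^2$.

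Substituting these five bounds into the conclusion of Theorem \ref{thm:product:process}, taking $u=t\ge1$, and absorbing all numerical factors into a single universal constant $C$ reproduces the claimed inequality. The one step that is not purely mechanical is the equivalence $\gamma_2(F)\asymp L\,\mathscr G(\frS^{1/2}(\calB_1))$: it relies on the majorizing measure theorem to convert the $\psi_2$-chaining functional of the class $F$ into the expected supremum of the canonical Gaussian process over the index set, which is exactly $\mathscr G(\frS^{1/2}(\calB_1))$ once one notes that $\frS^{1/2}$ realizes $\Vert\cdot\Vert_\Pi$ as the Euclidean metric of that set. All the other ingredients --- the identification of $A(f,g)$, the $L$-subgaussian estimates for $\dist$ and $\bar\Delta$, and the $\psi_1$--$\psi_2$ Young inequality --- are standard, so I do not anticipate a genuine obstacle beyond carefully tracking the dependence on $L$ and the universal constants. (The $\calR$- and $\calS$-dependent refinement in Proposition \ref{proposition:properties:subgaussian:designs}(i) will then follow from this lemma by a two-parameter peeling argument, carried out separately.)
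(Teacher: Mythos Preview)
Your proposal is correct and matches the paper's own proof essentially line for line: the paper also applies Theorem \ref{thm:product:process} to the linear classes $F=\{\llangle\cdot,\bfV\rrangle:\bfV\in\calB_1\}$ and $G=\{\llangle\cdot,\bfW\rrangle:\bfW\in\calB_2\}$, then invokes Talagrand's majorizing measure theorem to get $\gamma_2(F)\asymp L\,\mathscr G(\frS^{1/2}(\calB_1))$ and notes $\bar\Delta(F)\lesssim L$. Your write-up is in fact more explicit than the paper's one-line justification, particularly in handling the $\psi_1$ term, but the argument is the same.
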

Lemma \ref{lemma:prod:process} is immediate from Theorem \ref{thm:product:process} applied to the classes $F:=\{\bfV\in\calB_1:\llangle\cdot,\bfV\rrangle\}$ and $G:=\{\bfW\in\calB_2:\llangle\cdot,\bfW\rrangle\}$ and Talagrand's majorizing theorems \cite{2014talagrand}.\footnote{By these theorems, 
$\gamma_2(F)\asymp L\mathscr{G}\big(\frS^{1/2}(\calB_1))$. We also note that 
$\bar\Delta(F)\lesssim L$.} The next proposition is a restatement of item (i) of Proposition \ref{proposition:properties:subgaussian:designs}. We prove it using Lemma \ref{lemma:prod:process} and the peeling Lemma \ref{lemma:peeling:product:process} in Appendix \ref{Peeling lemmas}. 
\begin{proposition}[$\PP$]\label{prop:PP}
Suppose that $\bfX$ is $L$-subgaussian. For all $\delta\in(0,1)$
and $n\in\mathbb{N}$, with probability at least
$1-\delta$, the following property holds: for all
$[\bfV,\bfW]\in(\mdR^p)^2$,
\begin{align}
\left|\llangle\bfW,\bfV\rrangle_n-\llangle\bfW,\bfV\rrangle_\Pi\right|&\le 
CL^2\left(
\frac{1+\log(1/\delta)}{n} + \frac{1+\sqrt{\log(1/\delta)}}{\sqrt{n}}
\right)\Vert\bfV\Vert_\Pi\Vert\bfW\Vert_\Pi\\
&+CL^2\left(\frac{1}{n}+\frac{1}{\sqrt{n}}\right)\mathscr G\left(\calR(\bfV)\frS^{1/2}(\mbB_\calR)\cap\Vert\bfV\Vert_\Pi\mbB_F\right)
\Vert\bfW\Vert_\Pi\\
&+CL^2\left(\frac{1}{n}+\frac{1}{\sqrt{n}}\right)\mathscr G\left(\calS(\bfW)\frS^{1/2}(\mbB_\calS)\cap\Vert\bfW\Vert_\Pi\mbB_F\right)\big\|\bfV\big\|_\Pi
\\
&+\frac{CL^2}{n}
\mathscr G\left(\calR(\bfV)\frS^{1/2}(\mbB_\calR)\cap\Vert\bfV\Vert_\Pi\mbB_F\right)
\cdot
\mathscr G\left(\calS(\bfW)\frS^{1/2}(\mbB_\calS)\cap\Vert\bfW\Vert_\Pi\mbB_F\right).
\end{align}
\end{proposition}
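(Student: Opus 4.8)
The plan is to deduce Proposition \ref{prop:PP} from the single-scale estimate of Lemma \ref{lemma:prod:process} by a two-parameter peeling argument, exploiting that the process $(\bfV,\bfW)\mapsto\llangle\bfV,\bfW\rrangle_n-\llangle\bfV,\bfW\rrangle_\Pi$ is bilinear and that $\mu\mapsto\mathscr G(\mu\,\frS^{1/2}(\mbB_\calR)\cap\mbB_F)$ is nondecreasing and subadditive. First I would discard the degenerate cases $\bfV=0$ or $\bfW=0$ (both sides vanish) and, by bilinearity, reduce to unit vectors $\Vert\bfV\Vert_\Pi=\Vert\bfW\Vert_\Pi=1$; after rescaling, the target is a bound over all such $\bfV,\bfW$ with $\calR(\bfV)\le\mu$ and $\calS(\bfW)\le\nu$ that depends on $\mu,\nu$ only through $\mathscr G_1(\mu):=\mathscr G(\mu\,\frS^{1/2}(\mbB_\calR)\cap\mbB_F)$, $\mathscr G_2(\nu):=\mathscr G(\nu\,\frS^{1/2}(\mbB_\calS)\cap\mbB_F)$ and $\log(1/\delta)$, into which one finally substitutes $\mu\asymp\calR(\bfV)$ and $\nu\asymp\calS(\bfW)$.

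For fixed $\mu,\nu\ge1$ set $\calB_1(\mu):=\{\bfV:\Vert\bfV\Vert_\Pi\le1,\ \calR(\bfV)\le\mu\}$ and $\calB_2(\nu):=\{\bfW:\Vert\bfW\Vert_\Pi\le1,\ \calS(\bfW)\le\nu\}$, both subsets of $\mbB_\Pi$. Since $\Vert\frS^{1/2}(\bfV)\Vert_2=\Vert\bfV\Vert_\Pi$, one has $\frS^{1/2}(\calB_1(\mu))\subseteq\mu\,\frS^{1/2}(\mbB_\calR)\cap\mbB_F$, hence $\mathscr G(\frS^{1/2}(\calB_1(\mu)))\le\mathscr G_1(\mu)$, and similarly for $\calB_2(\nu)$. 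I would then apply Lemma \ref{lemma:prod:process} to $\calB_1(2^j)\times\calB_2(2^k)$ for every $j,k\ge0$ with confidence parameter $t_{j,k}:=\log(1/\delta)+c(j+k)+c'$ chosen so that $\sum_{j,k\ge0}e^{-t_{j,k}}\le\delta$, and union-bound. On the resulting event, for all $j,k$, the supremum over $\calB_1(2^j)\times\calB_2(2^k)$ is at most $\tfrac{CL^2}{n}\mathscr G_1(2^j)\mathscr G_2(2^k)+\tfrac{CL^2}{\sqrt n}(\mathscr G_1(2^j)+\mathscr G_2(2^k))+CL^2(\tfrac{t_{j,k}}{n}+\sqrt{\tfrac{t_{j,k}}{n}})$. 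Given arbitrary unit $\bfV,\bfW$, pick the least $j$ with $\calR(\bfV)\le2^j$ ($j=0$ if $\calR(\bfV)\le1$) and likewise $k$; then $2^j\le2(\calR(\bfV)\vee1)$ and $[\bfV,\bfW]\in\calB_1(2^j)\times\calB_2(2^k)$. Monotonicity and subadditivity of $\mathscr G_1,\mathscr G_2$ — the latter from the inclusion $(\mu+\mu')K\cap B\subseteq(\mu K\cap B)+(\mu'K\cap B)$ — give $\mathscr G_1(2^j)\lesssim\mathscr G_1(\calR(\bfV))+\mathscr G_1(1)$, while the $t_{j,k}$-term splits into the free contributions $\log(1/\delta)/n$, $\sqrt{\log(1/\delta)/n}$ plus $\lesssim(j+k)/n+\sqrt{(j+k)/n}$ with $j+k\lesssim(\log\calR(\bfV))_+ +(\log\calS(\bfW))_+$. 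Rescaling back by bilinearity restores the factors $\Vert\bfV\Vert_\Pi,\Vert\bfW\Vert_\Pi$ and yields the stated inequality. This bookkeeping is exactly what the two-parameter peeling Lemma \ref{lemma:peeling:product:process} packages, so I would invoke it rather than rederiving it.

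The main obstacle is the double peeling: splitting the budget $\delta$ over the doubly-indexed family while keeping the additive $\log(1/\delta)/n$ and $\sqrt{\log(1/\delta)/n}$ terms free of any complexity factor, and — the genuinely delicate step — absorbing the spurious $\sqrt{(j+k)/n}$ and $(j+k)/n$ terms generated by peeling into the Gaussian-width terms instead of leaving an extraneous $\log$ of the norm ratios. This absorption uses that $\mathscr G_1(\mu)\ge\mathscr G_1(1)$ already dominates $\sqrt{(\log\mu)_+}$ over the relevant range of $\mu$ (a Sudakov-type lower bound on the width of $\mu\,\frS^{1/2}(\mbB_\calR)\cap\mbB_F$), together with the subadditivity just noted; both facts are geometric and design-independent but must be stated with care. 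Everything else — the application of Lemma \ref{lemma:prod:process}, which itself rests on Theorem \ref{thm:product:process} and Talagrand's majorizing theorems to identify $\gamma_2$ with $L$ times the Gaussian width — is routine.
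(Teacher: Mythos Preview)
Your approach --- apply Lemma~\ref{lemma:prod:process} to the slabs $\{\Vert\bfV\Vert_\Pi\le1,\calR(\bfV)\le r\}\times\{\Vert\bfW\Vert_\Pi\le1,\calS(\bfW)\le\bar r\}$, then invoke the two-parameter peeling Lemma~\ref{lemma:peeling:product:process}, then rescale by homogeneity --- is exactly the paper's proof.

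One correction to your discussion of the ``genuinely delicate step'': Lemma~\ref{lemma:peeling:product:process} does \emph{not} peel dyadically in $h=\calR(\bfV)$ and then absorb the spurious $\sqrt{(j+k)/n}$ via a Sudakov-type lower bound on $\mathscr G_1$. It instead partitions by the level sets of $g\circ h$, with $\mu_k=\mu\eta^{k-1}$; the overshoot $g(\nu_{\ell+1})-\eta^2 g\circ h(\bfV)\le\mu\eta^\ell(1-\eta)$ is negative and, for $\eta>1$ large, dominates the $\sqrt{\log(\ell+1)}$ and $\log(\ell+1)$ coming from the union bound. This works for \emph{any} nondecreasing $g$ and needs no geometric lower bound on the width. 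Your Sudakov claim $\mathscr G_1(\mu)\gtrsim\sqrt{(\log\mu)_+}$ is actually false in general: once $\mu\,\frS^{1/2}(\mbB_\calR)\supseteq\mbB_F$ the width saturates at $\mathscr G(\mbB_F)$ while $\log\mu$ keeps growing, so the dyadic-in-$h$ route you sketch would not close as written. Since you ultimately defer to Lemma~\ref{lemma:peeling:product:process}, the proof is fine, but your explanation of \emph{why} it works is off.
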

\begin{proof}
Let $r,\bar r>0$ and define the sets
\begin{align}
V_1:=\{\bfV:\Vert\bfV\Vert_\Pi\le 1,\calR(\bfV)\le r\},\quad
V_{2}:=\{\bfW:\Vert\bfW\Vert_\Pi\le1,\calS(\bfW)\le \bar r\}.
\end{align}
Note that, 
\begin{align}
\mathscr{G}\left(\frS^{1/2}(V_1)\right)&\le r\mathscr G\left(\frS^{1/2}(\mbB_\calR)\cap r^{-1}\mbB_F\right)=:g(r),\\
\mathscr{G}\left(\frS^{1/2}(V_2)\right)&\le \bar r\mathscr G\left(\frS^{1/2}(\mbB_\calS)\cap \bar r^{-1}\mbB_F\right)=:\bar g(\bar r).
\end{align}

By Lemma \ref{lemma:prod:process}, for any $r,\bar r>0$ and $\delta\in(0,1/c]$, we have with probability at least $1-c\delta$,
\begin{align}
\sup_{[\bfV,\bfW]\in V_1\times V_2}|\llangle\bfW,\bfV\rrangle_n-\llangle\bfW,\bfV\rrangle_\Pi|
&\le \frac{CL^2}{n}\cdot g(r)\bar g(\bar r) + \frac{CL^2}{\sqrt{n}}[g(r) + \bar g(\bar r)]\\
&+CL^2\left(\sqrt{\frac{\log(1/\delta)}{n}}+\frac{\log(1/\delta)}{n}\right).
\end{align}

We now invoke Lemma \ref{lemma:peeling:product:process} with the set $V:=\mbB_\Pi\times\mbB_\Pi$, functions
\begin{align}
M(\bfV,\bfW)&:=-\left|\llangle\bfW,\bfV\rrangle_n-\llangle\bfW,\bfV\rrangle_\Pi\right|, \\
h(\bfV,\bfW)&:=\calR(\bfV)\\
\bar h(\bfV,\bfW)&:=\calS(\bfW), 
\end{align}
functions $g$ and $\bar g$ as stated above and constant $b:=CL^2$. The claim follows from such lemma and the homogeneity of norms.
\end{proof}

\section{Proof of Proposition \ref{proposition:properties:subgaussian:designs}, item (ii)}
\label{s:prop:TP}

By item (i) of Proposition \ref{proposition:properties:subgaussian:designs},    with probability$\ge1-\delta$, $\PP_{\calR,\calR}$ holds with constants
\begin{align}
\sc_1 &= CL^2\left(
\frac{1+\log(1/\delta)}{n} + \frac{1+\sqrt{\log(1/\delta)}}{\sqrt{n}}
\right),\\
\sc_2=\sc_3 &= CL^2\left(\frac{1}{n}+\frac{1}{\sqrt{n}}\right)\mathscr G\left(\frS^{1/2}(\mbB_\calR)\right),\\
\sc_4 &= \frac{CL^2}{n}
\mathscr G^2\left(\frS^{1/2}(\mbB_\calR)\right). 
\end{align}
The claim follows from this and Lemma \ref{lemma:TP} with constant 
$\alpha^2=(1-\sc_1)/2$ --- noting that, by assumption $\sc_1\in(0,1)$.

\section{Proof of Proposition \ref{proposition:properties:subgaussian:designs}, item (iii)}
\label{s:prop:IP}

We start with the following lemma, stating a high-probability version of Chevet's inequality. This result is suggested as an exercise in Vershynin \cite{2018vershynin}. We give a proof for completeness.
\begin{lemma}\label{lemma:chevet}
Suppose that $\bfX$ is $L$-subgaussian. Let $V$ be any bounded subset of 
$\mbB_\Pi\times\mbB_2^{n}$ Define $V_1 := \{\bfV:\exists\, \bu \text{ s.t. } (\bfV,\bu)\in V\}$
and $V_2 := \{\bu:\exists\, \bfV \text{ s.t. } (\bfV,\bu)\in V\}$.

Then, for any $n\ge1$ and $t>0$, with probability at least $1-2\exp(-t^2)$,
$$
\sup_{[\bfV,\bu]\in V}\langle\bu,\frX(\bfV)\rangle \le  CL[\mathscr G\big(\frS^{1/2}(V_1)) + \mathscr G\big(V_2\big) + t].
$$
\end{lemma}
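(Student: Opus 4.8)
The plan is to prove the high-probability version of Chevet's inequality (Lemma \ref{lemma:chevet}) by combining a Gaussian comparison step with a concentration/union-of-tails step. The statement controls the $L$-subgaussian bilinear process $\sup_{[\bfV,\bu]\in V}\langle\bu,\frX(\bfV)\rangle$, where $\frX(\bfV)=(\llangle\bfX_i,\bfV\rrangle)_{i\in[n]}$, by the sum of the two Gaussian widths $\mathscr G(\frS^{1/2}(V_1))$ and $\mathscr G(V_2)$ plus the deviation term $t$.

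First I would reduce to the Gaussian case via a generic-chaining / Talagrand majorizing-measure argument. Write the process as $Z_{(\bfV,\bu)} := \sum_{i\in[n]} \bu_i \llangle\bfX_i,\bfV\rrangle$. Conditionally on $\bu$, this is (up to the $L$-subgaussian constant) a subgaussian process in $\bfV$ with metric $\|\bfV-\bfV'\|$ proportional to $L\|\bu\|_2\,\|\bfV-\bfV'\|_\Pi$; conditionally on $\bfV$, it is subgaussian in $\bu$ with metric proportional to $L\|\bfV\|_\Pi\,\|\bu-\bu'\|_2$. The joint process is subgaussian with respect to the metric $\mathsf d((\bfV,\bu),(\bfV',\bu')) \lesssim L(\|\bu\|_2\|\bfV-\bfV'\|_\Pi + \|\bfV\|_\Pi\|\bu-\bu'\|_2)$, and since $V\subset\mbB_\Pi\times\mbB_2^n$ this is dominated by $L(\|\bfV-\bfV'\|_\Pi + \|\bu-\bu'\|_2)$. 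By Talagrand's generic chaining bound (the majorizing-measure theorem, as in \cite{2014talagrand}) and the sub-additivity $\gamma_2(V,\,d_1+d_2)\lesssim\gamma_2(V_1,d_1)+\gamma_2(V_2,d_2)$, the expected supremum is bounded by $CL[\mathscr G(\frS^{1/2}(V_1)) + \mathscr G(V_2)]$, using that $\gamma_2$ in the $\|\cdot\|_\Pi$-metric on $V_1$ matches the Gaussian width $\mathscr G(\frS^{1/2}(V_1))$ (since $\|\frS^{1/2}(\bfV)\|_2 = \|\bfV\|_\Pi$) and similarly $\gamma_2(V_2,\|\cdot\|_2)\asymp\mathscr G(V_2)$.

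Second I would upgrade the bound on the expectation to a high-probability bound. The supremum of a subgaussian process concentrates: one may invoke Talagrand's concentration inequality for suprema of subgaussian chaos, or more simply bound the $\psi_2$-diameter of $V$ and apply a Borell--TIS-type / generic-chaining tail bound, giving $\prob\{\sup Z > \esp\sup Z + s\} \le 2\exp(-cs^2/\Delta^2)$ where $\Delta \lesssim L\,\diam_\Pi(V_1)\cdot\diam_2(V_2) \lesssim L$ by the boundedness assumption. Setting $s \asymp Lt$ and absorbing constants yields, with probability at least $1-2\exp(-t^2)$, the claimed bound $\sup_{[\bfV,\bu]\in V}\langle\bu,\frX(\bfV)\rangle \le CL[\mathscr G(\frS^{1/2}(V_1)) + \mathscr G(V_2) + t]$.

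The main obstacle is the passage from expectation to tail probability with the correct normalization: one needs the deviation term to scale linearly in $t$ (not $\gamma_2(V)\cdot t$), which is exactly the subtlety flagged in Remark \ref{rem:multiplier:process:mendelson}. This requires using a concentration inequality whose deviation term involves only the \emph{diameter} of the index set rather than its full complexity --- i.e. a Talagrand-type bound for subgaussian processes where the fluctuation is governed by $\bar\Delta$, not $\gamma_2$. The boundedness hypothesis $V\subset\mbB_\Pi\times\mbB_2^n$ makes the diameter $\calO(1)$, so this is available; the care is in stating the chaos concentration correctly (possibly via decoupling the bilinear form into two linear pieces and handling each with a Gaussian concentration argument, exactly as in the symmetrization-comparison sketch referenced in the proof of Proposition \ref{proposition:properties:subgaussian:designs}). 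Everything else is routine generic chaining and width sub-additivity.
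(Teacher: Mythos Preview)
Your approach is correct and essentially the same as the paper's: both bound the $\psi_2$-increments of $Z_{(\bfV,\bu)}:=\langle\bu,\frX(\bfV)\rangle$ by $L$ times the natural metric on $\mbB_\Pi\times\mbB_2^n$ and then invoke a generic-chaining tail bound whose deviation term scales with the diameter. The paper streamlines the width-splitting step by constructing an explicit Gaussian comparison process $W_{(\bfV,\bu)}:=L(\llangle\bfV,\frS^{1/2}(\bfXi)\rrangle+\langle\bu,\bxi\rangle)$ with matching increment variance, so that $\esp[\sup W]$ immediately decomposes as $\mathscr G(\frS^{1/2}(V_1))+\mathscr G(V_2)$, and applies Talagrand's majoration--minoration (Theorems 8.5.5 and 8.6.1 in \cite{2018vershynin}) to obtain expectation and tail in one shot rather than in two separate steps.
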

\begin{proof}
For each $(\bfV,\bu)\in V$, we define
\begin{align}
Z_{\bfV,\bu}&:=
\langle\bu,\frX(\bfV)\rangle=\sum_{i\in[n]}\bu_i\llangle\bfX_i,\bfV\rrangle,\qquad
W_{\bfV,\bu}:= L(\llangle\bfV,\frS^{1/2}(\bfXi)\rrangle+\langle\bu,\bxi\rangle),	
\end{align}
where $\bfXi\in\mdR^{p}$ and $\bxi\in\re^n$ are independent each one having iid $\calN(0,1)$ entries. Therefore,
$(\bfV,\bu)\mapsto W_{\bfV,\bu}$ defines a centered Gaussian process indexed by $V$.

We may easily bound the $\psi_2$-norm of the increments using rotation invariance of sub-Gaussian random variables. Indeed,  using that $\{\bfX_i\}$ is an iid sequence and Proposition 2.6.1  in \cite{2018vershynin}, given $[\bfV,\bu]$ and $[\bfV',\bu']$ in $V$, 
\begin{align}
|Z_{\bfV,\bu}-Z_{\bfV',\bu'}|_{\psi_2}^2&=\left|\sum_{i\in[n]}\llangle\bfX_i,\bu_i\bfV-\bu'_i\bfV'\rrangle\right|_{\psi_2}^2\\
&\le C\sum_{i\in[n]}\left|\llangle\bfX_i,\bu_i\bfV-\bu_i'\bfV'\rrangle\right|_{\psi_2}^2\\
&\le2C\sum_{i\in[n]}\left|\llangle\bfX_i,(\bu_i-\bu_i')\bfV\rrangle\right|^2_{\psi_2}+2C\sum_{i\in[n]}\left|\llangle\bfX_i,\bu_i'(\bfV-\bfV')\rrangle\right|_{\psi_2}^2\\
&\le2CL^2\Vert\bu-\bu'\Vert_2^2\Vert\bfV\Vert_\Pi^2
+2CL^2\Vert\bu'\Vert_2^2\Vert\bfV-\bfV'\Vert_\Pi^2\le2CL^2\dist([\bfV,\bu],[\bfV',\bu']),
\label{lemma:aux2:eq1}
\end{align}
with the pseudo-metric $\dist([\bfV,\bu],[\bfV',\bu']):=\sqrt{\Vert\bu-\bu'\Vert_2^2+\Vert\bfV-\bfV'\Vert_\Pi^2}$, using that 
$\Vert\bfV\Vert_\Pi\le1$ and $\Vert\bu'\Vert_2\le1$.  On the other hand, by definition of the process $W$ it is easy to check that 
\begin{align}
\esp[(W_{\bfV,\bu}-W_{\bfV',\bu'})^2]=L^2(\Vert \bfV-\bfV'\Vert_\Pi^2+\Vert \bu-\bu'\Vert_2^2).
\label{lemma:aux2:eq2}
\end{align}	

From \eqref{lemma:aux2:eq1},\eqref{lemma:aux2:eq2}, we conclude that the processes $W$ and $Z$ satisfy the conditions of Talagrand's majoration and minoration generic chaining bounds for sub-Gaussian processes (e.g. Theorems 8.5.5 and 8.6.1 in \cite{2018vershynin}). Hence, for any $t\ge0$, with probability at least $1-2e^{-t^2}$,
\begin{align}
\sup_{[\bfV,\bu]\in V}|Z_{\bfV,\bu}|\le CL\left\{\esp\left[\sup_{[\bfV,\bu]\in V}W_{\bfV,\bu}\right]+t\right\}.\label{lemma:aux2:eq3}
\end{align}
In above we used that $Z_{\bfV_0,\bu_0}=0$ at $[\bfV_0,\bu_0]=0$ and that the diameter of $V\subset\mbB_\Pi^{m_1\times m_2}\times\mbB_2^{n}$ under the metric 
$\dist$ is less than $2\sqrt{2}$. We also have
\begin{align}
\esp\bigg[\sup_{[\bfV,\bu]\in V}
W_{\bfV,\bu}\bigg]\le \esp\bigg[\sup_{\bfV\in V_1}\llangle\bfXi,\frS^{1/2}(\bfV)\rrangle\bigg]+
\esp\bigg[\sup_{\bu\in V_2}\langle\bu,\bxi\rangle\bigg]=
\mathscr G\big(\frS^{1/2}(V_1))+\mathscr G(V_2).
\end{align}
Joining the two previous inequalities complete the proof of the claimed inequality. 
\end{proof}

The next proposition is a restatement of item (iii) of Proposition \ref{proposition:properties:subgaussian:designs}. We prove it using Lemma \ref{lemma:chevet} and the peeling Lemma \ref{lemma:peeling:chevet} in Appendix \ref{Peeling lemmas}.
\begin{proposition}[$\IP$]\label{prop:gen:IP}
Suppose that $\bfX$ is $L$-subgaussian. For all 
$\delta\in(0,1)$ and $n\in\mathbb{N}$, with probability at least
$1-\delta$, the following property holds: for all
$[\bfV,\bfW,\bu]\in(\mdR^p)^2\times \re^n$,
\begin{align}
\langle\bu,\frX^{(n)}(\bfV+\bfW)\rangle &\le CL
\frac{1+\sqrt{\log(1/\delta)}}{\sqrt{n}}\|[\bfV,\bfW]\|_\Pi
\Vert\bu\Vert_2\\
&+CL\frac{\mathscr
G\big(\calR(\bfV)\frS^{1/2}(\mbB_\calR)\cap\|[\bfV,\bfW]\|_\Pi\mbB_F\big)}{\sqrt{n}}\Vert\bu\Vert_2\\
&+CL\frac{\mathscr
G\big(\calS(\bfW)\frS^{1/2}(\mbB_\calS)\cap\|[\bfV,\bfW]\|_\Pi\mbB_F\big)}{\sqrt{n}}\Vert\bu\Vert_2\\
&+CL\frac{\mathscr G\big(\calQ(\bu)\mathbb B_\calQ\cap \Vert\bu\Vert_2\mathbb B_2^n\big)}{\sqrt{n}}\|[\bfV,\bfW]\|_\Pi.
\end{align}
\end{proposition}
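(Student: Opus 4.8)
The plan is to derive $\IP$ from the two–variable Chevet estimate of Lemma~\ref{lemma:chevet} by a scale-by-scale reduction followed by a multi-parameter peeling argument. First I would split the bilinear form as $\langle\bu,\frX^{(n)}(\bfV+\bfW)\rangle=\langle\bu,\frX^{(n)}(\bfV)\rangle+\langle\bu,\frX^{(n)}(\bfW)\rangle$ and treat the two summands symmetrically, applying Chevet's inequality for the first with the pair of norms $(\calR,\calQ)$ and for the second with the pair $(\calS,\calQ)$.

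For the first summand, fix positive scales $\rho,r,s,t$ and set
\[
\calV(\rho,r):=\{\bfV\in\mdR^p:\Vert\bfV\Vert_\Pi\le\rho,\ \calR(\bfV)\le r\},\qquad
\calU(s,t):=\{\bu\in\re^n:\Vert\bu\Vert_2\le t,\ \calQ(\bu)\le s\}.
\]
Rescaling so that the $\Pi$–ball and the $\ell_2$–ball have radius one, and applying Lemma~\ref{lemma:chevet} to $V:=(\rho^{-1}\calV(\rho,r))\times(t^{-1}\calU(s,t))$, one gets, with probability at least $1-2e^{-u^2}$,
\[
\sup_{\bfV\in\calV(\rho,r),\,\bu\in\calU(s,t)}\langle\bu,\frX^{(n)}(\bfV)\rangle\ \le\ \frac{CL}{\sqrt{n}}\Big(t\,\mathscr G\big(\frS^{1/2}(\calV(\rho,r))\big)+\rho\,\mathscr G\big(\calU(s,t)\big)+\rho t\,u\Big).
\]
Homogeneity of the norms together with the identity $\Vert\bfV\Vert_\Pi=\Vert\frS^{1/2}(\bfV)\Vert_F$ lets me extract the scales: $\mathscr G(\frS^{1/2}(\calV(\rho,r)))\le\mathscr G\big(r\,\frS^{1/2}(\mbB_\calR)\cap\rho\,\mbB_F\big)$ and $\mathscr G(\calU(s,t))\le\mathscr G\big(s\,\mbB_\calQ\cap t\,\mbB_2^{n}\big)$. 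The same argument with $\bfV,\calR$ replaced by $\bfW,\calS$ gives the companion bound for $\langle\bu,\frX^{(n)}(\bfW)\rangle$.

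It remains to make both estimates simultaneously valid for every $[\bfV,\bfW,\bu]$, which I would do with the multi-parameter peeling Lemma~\ref{lemma:peeling:chevet} in Appendix~\ref{Peeling lemmas}: partition according to the dyadic value of each of $\Vert[\bfV,\bfW]\Vert_\Pi$, $\calR(\bfV)$, $\calS(\bfW)$, $\calQ(\bu)$, $\Vert\bu\Vert_2$, apply the fixed–scale bound on each cell with $u\asymp\sqrt{\log(1/\delta)}$ (the logarithm of the number of cells being absorbed, since the Gaussian-width terms already carry the scales), and union bound. Using $\Vert\bfV\Vert_\Pi\le\Vert[\bfV,\bfW]\Vert_\Pi$ and $\Vert\bfW\Vert_\Pi\le\Vert[\bfV,\bfW]\Vert_\Pi$ (from $\Vert[\bfV,\bfW]\Vert_\Pi^2=\Vert\bfV\Vert_\Pi^2+\Vert\bfW\Vert_\Pi^2$), I may take $\rho=\Vert[\bfV,\bfW]\Vert_\Pi$ in both width sets; with $r=\calR(\bfV)$, $s=\calQ(\bu)$, $t=\Vert\bu\Vert_2$ this produces exactly the terms $\mathscr G(\calR(\bfV)\frS^{1/2}(\mbB_\calR)\cap\Vert[\bfV,\bfW]\Vert_\Pi\mbB_F)\Vert\bu\Vert_2/\sqrt{n}$ and $\mathscr G(\calQ(\bu)\mbB_\calQ\cap\Vert\bu\Vert_2\mbB_2^{n})\Vert[\bfV,\bfW]\Vert_\Pi/\sqrt{n}$ of the statement, while the deviation term becomes $CL\,(1+\sqrt{\log(1/\delta)})\Vert[\bfV,\bfW]\Vert_\Pi\Vert\bu\Vert_2/\sqrt{n}$. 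Adding the two summands and relabeling constants yields the claim.

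The main obstacle will be the bookkeeping of the peeling step: one peels in five homogeneity parameters at once, and must verify that the union-bound cost is dominated by the width terms and that the residual deviation keeps the clean form $(1+\sqrt{\log(1/\delta)})\Vert[\bfV,\bfW]\Vert_\Pi\Vert\bu\Vert_2/\sqrt{n}$ rather than acquiring a spurious $\log(1/\delta)$ factor or a dependence on the scales. This is precisely what Lemma~\ref{lemma:peeling:chevet} is tailored for, so the real work is checking its hypotheses with $M([\bfV,\bfW,\bu])=\langle\bu,\frX^{(n)}(\bfV)\rangle$ (resp.\ with $\bfW$), the functionals $h=\calR(\bfV)$ (resp.\ $\calS(\bfW)$) and $\calQ(\bu)$, and the width functions $\rho\mapsto\mathscr G(r\,\frS^{1/2}(\mbB_\calR)\cap\rho\,\mbB_F)$ and $t\mapsto\mathscr G(s\,\mbB_\calQ\cap t\,\mbB_2^{n})$.
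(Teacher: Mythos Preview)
Your proposal is correct and follows essentially the paper's route: split $\langle\bu,\frX^{(n)}(\bfV+\bfW)\rangle$ into the $\bfV$- and $\bfW$-pieces, apply Lemma~\ref{lemma:chevet} to each at fixed scales, invoke the two-parameter peeling Lemma~\ref{lemma:peeling:chevet} with $(h,\bar h)=(\calR,\calQ)$ (resp.\ $(\calS,\calQ)$), and union-bound. One organizational point: you need not peel in the five parameters you list---the paper simply works on $\mbB_\Pi\times\mbB_2^n$, peels only in $(\calR,\calQ)$ and $(\calS,\calQ)$, and recovers the general case at the very end by dividing through by $\Vert[\bfV,\bfW]\Vert_\Pi$ and $\Vert\bu\Vert_2$ using bilinearity; this keeps Lemma~\ref{lemma:peeling:chevet} applicable as stated and avoids the five-parameter bookkeeping you flag as the main obstacle.
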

\begin{proof}
Let $R_1,R_2,R_3>0$ and define the sets
\begin{align}
V_1&:=\{\bfV\in\mdR^{p}:\Vert\bfV\Vert_\Pi\le 1,\calR(\bfV)\le R_1\},\label{prop:gen:IP:eq1}\\
V_2&:=\{\bu\in\re^n:\Vert\bu\Vert_2\le1,\calQ(\bu)\le R_2\},\label{prop:gen:IP:eq2}\\
V_3&:=\{\bfW\in\mdR^{p}:\Vert\bfW\Vert_\Pi\le 1,\calS(\bfV)\le R_3\}.\label{prop:gen:IP:eq3}
\end{align}

We note that
\begin{align}
\mathscr{G}\left(\frS^{1/2}(V_1)\right)&\le R_1\mathscr G\left(\frS^{1/2}(\mbB_\calR)\cap R_1^{-1}\mbB_F\right)=:g_1(R_1),\label{prop:gen:IP:eq4}\\
\mathscr{G}(V_{2})&\le R_2\mathscr{G}\left(\mathbb{B}_\calQ\cap R_2^{-1}\mathbb{B}_2^n\right)=:g_2(R_2),\label{prop:gen:IP:eq5}\\
\mathscr{G}\left(\frS^{1/2}(V_3)\right)&\le R_3\mathscr G\left(\frS^{1/2}(\mbB_\calS)\cap R_3^{-1}\mbB_F\right)=:g_3(R_3).
\label{prop:gen:IP:eq6}
\end{align}

By Lemma \ref{lemma:chevet}, we have that, for any $R_1,R_2>0$ and 
$\delta\in(0,1)$, with probability at least $1-\delta$, the following inequality holds:
\begin{align}
\sup_{[\bfV,\bu]\in V_1\times V_2}\langle\bu,\frX^{(n)}(\bfV)\rangle 
&\le  \frac{CL}{\sqrt{n}}g_1(R_1) + \frac{CL}{\sqrt{n}}g_2(R_2)+\frac{CL}{\sqrt{n}}\sqrt{\log(2/\delta)}.
\end{align}
Next, we invoke Lemma \ref{lemma:peeling:chevet} with the set $V:=\mbB_\Pi\times\mbB_2^n$, functions
\begin{align}
M(\bfV,\bu)&:=-\langle\bu,\frX^{(n)}(\bfV)\rangle,\\
h(\bfV,\bu)&:=\calR(\bfV)\\
\bar h(\bfV,\bu)&:=\calQ(\bu), 
\end{align}
functions $g:=g_1$ and $\bar g:=g_2$ and constant $b:=CL$. By this lemma, given $\delta\in(0,1)$, with probability at least $1-\delta$, for all $[\bfV,\bu]\in \mbB_\Pi\times\mbB_2^n$, 
\begin{align}
\langle\bu,\frX^{(n)}(\bfV)\rangle 
&\le  \frac{CL}{\sqrt{n}}\mathscr
G\left(\calR(\bfV)\frS^{1/2}(\mbB_\calR)\cap\mbB_F\right)\\
&+ \frac{CL}{\sqrt{n}}\mathscr
G\left(\calQ(\bu)\mbB_\calQ\cap\mbB_2^n\right)\\
&+\frac{CL}{\sqrt{n}}(1+\sqrt{\log(1/\delta)}).
\end{align}

Similarly, we will invoke Lemma \ref{lemma:chevet} with set $V_3\times V_2$ and Lemma \ref{lemma:peeling:chevet} with set $V:=\mbB_\Pi\times\mbB_2^n$, functions
\begin{align}
M(\bfW,\bu)&:=-\langle\bu,\frX^{(n)}(\bfW)\rangle,\\
h(\bfW,\bu)&:=\calS(\bfV)\\
\bar h(\bfW,\bu)&:=\calQ(\bu), 
\end{align}
functions $g:=g_3$ and $\bar g:=g_2$ and constant $b:=CL$. We obtain that, for any $\delta\in(0,1)$, with probability at least $1-\delta$, for all $[\bfW,\bu]\in \mbB_\Pi\times\mbB_2^n$, 
\begin{align}
\langle\bu,\frX^{(n)}(\bfW)\rangle 
&\le  \frac{CL}{\sqrt{n}}\mathscr
G\left(\calS(\bfW)\frS^{1/2}(\mbB_\calS)\cap\mbB_F\right)\\
&+ \frac{CL}{\sqrt{n}}\mathscr
G\left(\calQ(\bu)\mbB_\calQ\cap\mbB_2^n\right)\\
&+\frac{CL}{\sqrt{n}}(1+\sqrt{\log(1/\delta)}).
\end{align}

By an union bound, we obtain that, for every $\delta\in(0,1)$, with probability at least $1-\delta$, for all $[\bfV,\bfW,\bu]\in \mbB_\Pi\times\mbB_\Pi\times\mbB_2^n$, 
\begin{align}
\langle\bu,\frX^{(n)}(\bfV + \bfW)\rangle&\le 
\frac{CL}{\sqrt{n}}\mathscr
G\left(\calR(\bfV)\frS^{1/2}(\mbB_\calR)\cap\mbB_F\right)
+\frac{CL}{\sqrt{n}}\mathscr
G\left(\calS(\bfW)\frS^{1/2}(\mbB_\calS)\cap\mbB_F\right)\\
&+ \frac{2CL}{\sqrt{n}}\mathscr
G\left(\calQ(\bu)\mbB_\calQ\cap\mbB_2^n\right)\\
&+\frac{2CL}{\sqrt{n}}(1+\sqrt{\log(1/\delta)}).
\end{align}
To finish, we use that, for any $[\bfV,\bfW,\bu]$ with non-zero coordinates, 
$
\left[
\frac{\bfV}{\Vert[\bfV,\bfW]\Vert_\Pi}, 
\frac{\bfW}{\Vert[\bfV,\bfW]\Vert_\Pi}, 
\frac{\bu}{\Vert\bu\Vert_2}
\right]
$
belongs $\mbB_\Pi\times\mbB_\Pi\times\mbB_2^n$ and use homogeneity of norms. 
\end{proof}

\section{Proof of Proposition \ref{proposition:properties:subgaussian:designs}, item (iv)}
\label{s:prop:MP}

We start by stating two auxiliary lemmas. The next result follows from a tail symmetrization-contraction argument and the Gaussian concentration inequality. 
\begin{lemma}[Proposition 9.2 in \cite{2018bellec:lecue:tsybakov}]\label{lemma:noise:concentration}
Assume $\sigma:=|\xi|_{\psi_2}<\infty$. Let $U$ be any bounded subset of 
$\mbB^n_2$. For any $n\ge1$ and $t>0$, with probability at least $1-\exp(-t^2/2)$,
$$
\sup_{\bu\in U}\langle\bxi,\bu\rangle \le  C\sigma\left[\mathscr G\big(U\big) + t\right].
$$
\end{lemma}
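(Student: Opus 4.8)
The plan is to deduce the sub-Gaussian bound from the Gaussian one and then invoke the Gaussian concentration inequality. Write $h_U(\bv):=\sup_{\bu\in U}\langle\bv,\bu\rangle$ for the support function of $U$, so that $\sup_{\bu\in U}\langle\bxi,\bu\rangle=h_U(\bxi)$; this $h_U$ is convex, subadditive and positively homogeneous, and because $U\subset\mbB_2^n$ the map $\bv\mapsto h_U(\bv)$ is $1$-Lipschitz on $\re^n$. Let $\bepsilon=(\epsilon_i)_{i\in[n]}$ be iid Rademacher signs and $\bg=(g_i)_{i\in[n]}$ a standard Gaussian vector, both independent of $\bxi$, and let $\bepsilon\odot\bxi$ denote the coordinatewise product.

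First I would perform a tail symmetrization. Using that $\xi$ is centered and the coordinates of $\bxi$ are independent, Jensen's inequality together with the identity in law $\bxi-\bxi'\stackrel{d}{=}\bepsilon\odot(\bxi-\bxi')$ (for an independent copy $\bxi'$) and the subadditivity of $h_U$ yields, for every nondecreasing convex $\Phi:\re\to\re$,
\begin{align}
\esp\,\Phi\big(h_U(\bxi)\big)\ \le\ \esp\,\Phi\big(2\,h_U(\bepsilon\odot\bxi)\big).
\end{align}
Next comes the contraction step, which replaces the sub-Gaussian weights by Gaussian ones. Each coordinate $\epsilon_i\xi_i$ is symmetric with $|\epsilon_i\xi_i|_{\psi_2}\le\sigma$, hence lies below $C_0\sigma g_i$ in the convex order for a universal constant $C_0$ (a comparison of the tail integrals $a\mapsto\esp(\,\cdot\,-a)_+$). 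Replacing the coordinates of $\bepsilon\odot\bxi$ by those of $C_0\sigma\bg$ one at a time --- legitimate because $\bv\mapsto\Phi(2\,h_U(\bv))$ is convex and the coordinates are independent --- gives $\esp\,\Phi\big(2\,h_U(\bepsilon\odot\bxi)\big)\le\esp\,\Phi\big(2C_0\sigma\,h_U(\bg)\big)$.

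Finally I would specialize to $\Phi(x)=e^{\lambda x}$ with $\lambda>0$: since $\bg\mapsto h_U(\bg)$ is $1$-Lipschitz, the Gaussian concentration (Borell--TIS) inequality gives $\esp\,e^{\mu(h_U(\bg)-\mathscr G(U))}\le e^{\mu^2/2}$ for all $\mu\in\re$, whence $\esp\,e^{\lambda h_U(\bxi)}\le\exp\{2C_0\sigma\lambda\,\mathscr G(U)+2C_0^2\sigma^2\lambda^2\}$; a Chernoff bound followed by the choice $r=2C_0\sigma t$ then produces the stated inequality with $C=2C_0$. The main obstacle is the contraction step: one must verify that a symmetric $\psi_2$-bounded real random variable is dominated in the convex order by a Gaussian of comparable scale, and that this comparison can be propagated coordinatewise through the convex functional $\Phi\circ(2\,h_U)$; every other step is routine. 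Since the statement is verbatim Proposition 9.2 in \cite{2018bellec:lecue:tsybakov}, one may of course also simply cite it.
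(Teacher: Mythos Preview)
The paper does not prove this lemma itself; it cites Proposition~9.2 of \cite{2018bellec:lecue:tsybakov} and records only that the result ``follows from a tail symmetrization-contraction argument and the Gaussian concentration inequality,'' which is exactly the route you sketch. Your argument is correct, including the flagged convex-ordering step: for symmetric $Z$ with $|Z|_{\psi_2}\le1$, a single-crossing comparison between $t\mapsto\min(1,2e^{-t^2})$ and $t\mapsto\prob(|C_0g|>t)$ reduces the integrated-tail condition $\esp(|Z|-a)_+\le\esp(C_0|g|-a)_+$ (all $a\ge0$) to the single case $a=0$, namely $C_0\,\esp|g|\ge\int_0^\infty\min(1,2e^{-t^2})\,dt$, which holds once $C_0\gtrsim1.6$.
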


Next, we state the following lemma. 
\begin{lemma}\label{lemma:mult:process}
Suppose that $\bfX$ is $L$-subgaussian. Let $V$ be a bounded subset of $\mbB_\Pi$. There exists universal constant $c>0$, such that for all $n\ge1$, $u,v\ge 1$, with probability at least $1-ce^{-u/4}-ce^{-nv}$,
\begin{align}
\sup_{\bfV\in V}\langle\bxi^{(n)},\frX^{(n)}(\bfV)\rangle &\le C
\left(\sqrt{v}+1\right)\frac{\sigma L}{\sqrt{n}}\mathscr G\big(\frS^{1/2}(V))
+C\sigma L\left(\sqrt{\frac{2u}{n}}+\frac{u}{n}
+\sqrt{\frac{uv}{n}}\right). 
\end{align}
\end{lemma}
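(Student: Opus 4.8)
The plan is to recognize the process $\bfV\mapsto\langle\bxi^{(n)},\frX^{(n)}(\bfV)\rangle$ as a subgaussian \emph{multiplier process} in the sense of Section~\ref{s:multiplier:process:main} and then invoke Theorem~\ref{thm:mult:process}. For $\bfV\in\mdR^p$ let $f_{\bfV}$ denote the linear functional $\bfX\mapsto\llangle\bfX,\bfV\rrangle$, and set $F:=\{f_{\bfV}:\bfV\in V\}\cup\{0\}$. Since $\bfX$ is $L$-subgaussian we have $\|f_{\bfV}\|_{\psi_2}=|\llangle\bfX,\bfV\rrangle|_{\psi_2}\le L\|\bfV\|_\Pi<\infty$, so $F\subseteq L_{\psi_2}$; and since $\esp[\xi\bfX]=0$ we get $\esp[\xi f_{\bfV}(\bfX)]=\llangle\esp[\xi\bfX],\bfV\rrangle=0$, whence
\[
M(f_{\bfV})=\frac1n\sum_{i\in[n]}\xi_i\llangle\bfX_i,\bfV\rrangle=\langle\bxi^{(n)},\frX^{(n)}(\bfV)\rangle .
\]
Applying Theorem~\ref{thm:mult:process} to this class with reference point $f_0:=0\in F$ (so $M(f_0)=0$) and using $\|\xi\|_{\psi_2}=|\xi|_{\psi_2}=\sigma$, we obtain that, with probability at least $1-ce^{-u/4}-ce^{-nv}$,
\[
\sup_{\bfV\in V}\langle\bxi^{(n)},\frX^{(n)}(\bfV)\rangle\;\le\;\sup_{f\in F}|M(f)-M(f_0)|\;\lesssim\;(\sqrt v+1)\,\sigma\,\frac{\gamma_2(F)}{\sqrt n}+\Big(\sqrt{\tfrac{2u}{n}}+\tfrac{u}{n}+\sqrt{\tfrac{uv}{n}}\Big)\,\sigma\,\bar\Delta(F).
\]

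It then remains to control the two geometric quantities $\gamma_2(F)$ and $\bar\Delta(F)$. The pseudo-metric on $F$ satisfies $\dist(f_{\bfV},f_{\bfW})=|\llangle\bfX,\bfV-\bfW\rrangle|_{\psi_2}\le L\|\bfV-\bfW\|_\Pi$, so $\bfV\mapsto f_{\bfV}$ is an $L$-Lipschitz surjection from $(V\cup\{0\},\|\cdot\|_\Pi)$ onto $F$; hence $\gamma_2(F)\lesssim L\,\gamma_2(V\cup\{0\},\|\cdot\|_\Pi)$ by monotonicity of $\gamma_2$ under Lipschitz maps. Now the Gaussian process $\bfV\mapsto\llangle\frS^{1/2}(\bfV),\bfXi\rrangle$ (with $\bfXi$ a standard Gaussian matrix) is centered with canonical metric exactly $\|\cdot\|_\Pi$, since $\esp[(\llangle\frS^{1/2}(\bfV),\bfXi\rrangle-\llangle\frS^{1/2}(\bfW),\bfXi\rrangle)^2]=\|\bfV-\bfW\|_\Pi^2$; so by Talagrand's majorizing measure theorem $\gamma_2(V\cup\{0\},\|\cdot\|_\Pi)\asymp\esp\sup_{\bfV\in V\cup\{0\}}\llangle\frS^{1/2}(\bfV),\bfXi\rrangle\asymp\mathscr G(\frS^{1/2}(V))$ (the adjunction of $0$ costs at most an additive term of the order of the diameter, which is $\le1$ since $V\subseteq\mbB_\Pi$, and $\esp\sup_{\bfV\in V}\llangle\frS^{1/2}(\bfV),\bfXi\rrangle\ge0$ because $V\neq\emptyset$). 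Likewise $\bar\Delta(F)=\sup_{\bfV\in V}|\llangle\bfX,\bfV\rrangle|_{\psi_2}\le L\sup_{\bfV\in V}\|\bfV\|_\Pi\le L$ because $V\subseteq\mbB_\Pi$. Substituting $\gamma_2(F)\lesssim L\,\mathscr G(\frS^{1/2}(V))$ and $\bar\Delta(F)\le L$ into the displayed bound yields the claimed inequality with an absolute constant $C$.

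The only genuinely delicate ingredient is the identification of $\gamma_2$ of the index set, under the $\Pi$-pseudonorm, with the Gaussian width $\mathscr G(\frS^{1/2}(V))$, together with the slight care needed when adjoining $0$ to $V$; this is where Talagrand's majorizing/minorizing theorems and the behaviour of $\gamma_2$ under Lipschitz maps enter. Everything else is bookkeeping with the $L$-subgaussian hypothesis, the vanishing of the centering term $\esp[\xi\bfX]=0$, and $\|\xi\|_{\psi_2}=\sigma$. This reduction is entirely parallel to the one used for the product process in Lemma~\ref{lemma:prod:process} (cf.\ the accompanying footnote there on $\gamma_2(F)\asymp L\mathscr G(\frS^{1/2}(\cdot))$).
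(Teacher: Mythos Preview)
Your proposal is correct and follows essentially the same route as the paper: apply Theorem~\ref{thm:mult:process} to the linear class $F=\{\llangle\cdot,\bfV\rrangle:\bfV\in V\}$, then invoke Talagrand's majorizing measure theorems to identify $\gamma_2(F)\asymp L\,\mathscr G(\frS^{1/2}(V))$ and note $\bar\Delta(F)\le L$ since $V\subset\mbB_\Pi$. The paper states this in one line (and the accompanying footnote to Lemma~\ref{lemma:prod:process}); your additional care about adjoining $0$ as the reference point $f_0$ and the Lipschitz-map argument for $\gamma_2$ are reasonable elaborations of that same sketch.
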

The previous lemma is immediate from Theorem \ref{thm:mult:process} applied to the class $F:=\{\bfV\in\calB:\llangle\cdot,\bfV\rrangle\}$ and Talagrand's majorizing theorems.

Finally, the next proposition is a restatement of item (iv) of Proposition \ref{proposition:properties:subgaussian:designs}. We prove it using Lemmas \ref{lemma:noise:concentration} and \ref{lemma:mult:process} and two peeling lemmas: Lemma \ref{lemma:peeling:chevet} (with $\bar g=\bar h\equiv0$) and Lemma \ref{lemma:peeling:multiplier:process} in Appendix \ref{Peeling lemmas}. We recall the following definition:
\begin{align}
\triangle_{n}(\delta)&:=(\nicefrac{1}{\sqrt{n}})[1 + \sqrt{\log(1/\delta)}]
+(\nicefrac{1}{n})[1 + \log(1/\delta) + \sqrt{\log(1/\delta)}].  
\end{align}
Next, we also define the functions
\begin{align}
g_{\calR}(\bfV,\bfW,\bu)&:=\mathscr
G\left(\calR(\bfV)\frS^{1/2}(\mbB_\calR)\cap\Vert[\bfV,\bfW,\bu]\Vert_{\Pi}\mbB_F\right),\\
g_{\calS}(\bfV,\bfW,\bu)&:=\mathscr
G\left(\calS(\bfW)\frS^{1/2}(\mbB_\calS)\cap\Vert[\bfV,\bfW,\bu]\Vert_{\Pi}\mbB_F\right),\\
g_{\calQ}(\bfV,\bfW,\bu)&:=\mathscr
G\left(\calQ(\bu)\mbB_\calQ\cap\Vert[\bfV,\bfW,\bu]\Vert_\Pi\mbB_2^n\right).
\end{align}

\begin{proposition}[$\MP$]\label{prop:gen:MP}
For all $n\in\mathbb{N}$ and all $\delta\in(0,1)$, with probability at least $1-\delta$, for all $[\bfV,\bfW,\bu]\in \mbB_\Pi\times\mbB_\Pi\times\mbB_2^n$, 
\begin{align}
\langle\bxi^{(n)},\frM^{(n)}(\bfV+\bfW,\bu)\rangle 
&\le C\sigma L\cdot\triangle_n(\delta)\cdot\Vert[\bfV,\bfW,\bu]\Vert_\Pi\\
&+C\sigma L\left\{[1+(\nicefrac{1}{\sqrt{n}})\sqrt{\log(1/\delta)}]\frac{1}{\sqrt{n}}
+\frac{1}{n}\right\}g_{\calR}(\bfV,\bfW,\bu)\\
& +C\sigma L\left\{[1+(\nicefrac{1}{\sqrt{n}})\sqrt{\log(1/\delta)}]\frac{1}{\sqrt{n}}+\frac{1}{n}\right\}g_{\calS}(\bfV,\bfW,\bu)\\
&+C\frac{\sigma}{\sqrt{n}}g_{\calQ}(\bfV,\bfW,\bu).
\end{align}
\end{proposition}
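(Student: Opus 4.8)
The plan is to reduce everything to the three concentration inequalities already at hand by splitting the centered multiplier process along the decomposition $\frM^{(n)}(\bfV+\bfW,\bu)=\frX^{(n)}(\bfV)+\frX^{(n)}(\bfW)+\bu$, namely
\begin{align*}
\langle\bxi^{(n)},\frM^{(n)}(\bfV+\bfW,\bu)\rangle
=\langle\bxi^{(n)},\frX^{(n)}(\bfV)\rangle
+\langle\bxi^{(n)},\frX^{(n)}(\bfW)\rangle
+\langle\bxi^{(n)},\bu\rangle .
\end{align*}
The first two summands are multiplier processes over the classes $\{\llangle\cdot,\bfV\rrangle:\bfV\in\mbB_\Pi\}$ and $\{\llangle\cdot,\bfW\rrangle:\bfW\in\mbB_\Pi\}$ — and here one genuinely needs a multiplier (rather than empirical) bound, since $\bxi$ is allowed to depend on $\bfX$ — while the third is a linear process in the noise vector alone, independent of the design.

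For the $\bfV$-summand, fix $r>0$ and set $V_1:=\{\bfV:\Vert\bfV\Vert_\Pi\le1,\ \calR(\bfV)\le r\}$, so that $\mathscr G(\frS^{1/2}(V_1))\le r\,\mathscr G\big(\frS^{1/2}(\mbB_\calR)\cap r^{-1}\mbB_F\big)=:g_1(r)$. Applying Lemma \ref{lemma:mult:process} with confidence parameters $u\asymp\log(1/\delta)$ and $v\asymp 1+\log(1/\delta)/n$ gives, with probability $\ge 1-c\delta$, a bound of the form $\lesssim\sigma L\,(1/\sqrt n)(\sqrt v+1)\,g_1(r)+\sigma L(\sqrt{u/n}+u/n+\sqrt{uv/n})$. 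One then removes the two normalizations $\Vert\bfV\Vert_\Pi\le1$ and $\calR(\bfV)\le r$ by the one-parameter peeling Lemma \ref{lemma:peeling:multiplier:process} applied to $M(\bfV):=-\langle\bxi^{(n)},\frX^{(n)}(\bfV)\rangle$, $h(\bfV):=\calR(\bfV)$, complexity $g:=g_1$ and scale $b\asymp\sigma L$; homogeneity of $\Vert\cdot\Vert_\Pi$ and of $\calR$ turns this into the claimed bound with $\Vert[\bfV,\bfW,\bu]\Vert_\Pi$-coefficient $\lesssim\sigma L\,\triangle_n(\delta)$ and $g_{\calR}$-coefficient $\lesssim\sigma L\,\lozenge_n(\delta)$. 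The $\bfW$-summand is handled identically after replacing $(\calR,\mbB_\calR,g_{\calR})$ by $(\calS,\mbB_\calS,g_{\calS})$.

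For the noise summand, fix $R>0$, apply Lemma \ref{lemma:noise:concentration} to $U:=\{\bu:\Vert\bu\Vert_2\le1,\ \calQ(\bu)\le R\}$ with $t\asymp\sqrt{\log(1/\delta)}$ and $\mathscr G(U)\le R\,\mathscr G(\mbB_\calQ\cap R^{-1}\mbB_2^n)$, then peel with Lemma \ref{lemma:peeling:chevet} taking $\bar g=\bar h\equiv0$ (only the norm $\calQ$ is present). Since the Gaussian concentration inequality contributes $\sqrt{\log(1/\delta)}$ \emph{only additively}, the peeled bound keeps the clean coefficient $\lesssim(\sigma/\sqrt n)g_{\calQ}$, with the residual $\Vert\bu\Vert_2$-term $\lesssim\sigma\sqrt{\log(1/\delta)/n}$ absorbed into $\triangle_n(\delta)$. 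A union bound over the three events, followed by the substitutions $\bfV\mapsto\bfV/\Vert[\bfV,\bfW,\bu]\Vert_\Pi$, $\bfW\mapsto\bfW/\Vert[\bfV,\bfW,\bu]\Vert_\Pi$, $\bu\mapsto\bu/\Vert[\bfV,\bfW,\bu]\Vert_2$ — together with $\Vert[\bfV,\bfW]\Vert_\Pi\le\Vert[\bfV,\bfW,\bu]\Vert_\Pi$ and monotonicity of Gaussian width under the intersections $\cap\,\Vert[\bfV,\bfW,\bu]\Vert_\Pi\mbB_F$ and $\cap\,\Vert[\bfV,\bfW,\bu]\Vert_2\mbB_2^n$ — produces the stated inequality.

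The substantive difficulty is bookkeeping rather than ideas: the three inputs have structurally different tails — Theorem \ref{thm:mult:process} (hence Lemma \ref{lemma:mult:process}) carries two confidence parameters $(u,v)$ and both a $u/n$ and a $\sqrt{uv/n}$ term, whereas Lemma \ref{lemma:noise:concentration} carries only an additive $t$ — and these must be threaded through the peeling so that, after homogenization, $\log(1/\delta)$ appears exactly in the combinations $\triangle_n(\delta)$, $\lozenge_n(\delta)$ and $\sqrt{\log(1/\delta)/n}$, and, crucially, never multiplies the complexities $g_{\calR},g_{\calS},g_{\calQ}$. Obtaining $\lozenge_n(\delta)$ (rather than a coefficient involving $\sqrt{\log(1/\delta)}\,g_{\calR}$) is precisely where the choice $v\asymp 1+\log(1/\delta)/n$ is used, and it is the technical point that distinguishes this bound from the one obtained via the dual-norm inequality (cf. Remark \ref{rem:multiplier:process:mendelson} and Section \ref{s:MP:in:M-estimation}).
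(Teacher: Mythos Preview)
Your proposal is correct and follows essentially the same approach as the paper: split $\frM^{(n)}(\bfV+\bfW,\bu)$ into its three summands, apply Lemma \ref{lemma:mult:process} (with $u\asymp\log(1/\delta)$, $v\asymp 1\vee\log(1/\delta)/n$) followed by the peeling Lemma \ref{lemma:peeling:multiplier:process} to each of the design terms, apply Lemma \ref{lemma:noise:concentration} followed by Lemma \ref{lemma:peeling:chevet} (with $\bar g=\bar h\equiv0$) to the noise term, take a union bound, and homogenize. One cosmetic slip: your normalization ``$\bu\mapsto\bu/\Vert[\bfV,\bfW,\bu]\Vert_2$'' should read $\bu\mapsto\bu/\Vert[\bfV,\bfW,\bu]\Vert_\Pi$ (the paper divides all three coordinates by the same quantity $\Vert[\bfV,\bfW,\bu]\Vert_\Pi$), which is also what is needed to land on the intersections $\cap\,\Vert[\bfV,\bfW,\bu]\Vert_\Pi\mbB_F$ and $\cap\,\Vert[\bfV,\bfW,\bu]\Vert_\Pi\mbB_2^n$ appearing in $g_{\calR},g_{\calS},g_{\calQ}$.
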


\begin{proof}
Given $R_1,R_2,R_3>0$, we recall the definitions of the sets $V_1$, $V_2$ and $V_3$ in \eqref{prop:gen:IP:eq1}, \eqref{prop:gen:IP:eq2} and \eqref{prop:gen:IP:eq3} and functions $g_1$, $g_2$ and $g_3$ in \eqref{prop:gen:IP:eq4}, \eqref{prop:gen:IP:eq5} and \eqref{prop:gen:IP:eq6}.  

By Lemma \ref{lemma:mult:process}, there is constant $c\ge1$, such that, for any $R_1>0$ and 
$\delta\in(0,1/c]$, with probability at least $1-2c\delta$, 
\begin{align}
\sup_{\bfV\in V_1}\langle\bxi^{(n)},\frX^{(n)}(\bfV)\rangle 
&\le  C\left(\sqrt{\frac{\log(1/\delta)}{n}}+1\right)\frac{\sigma L}{\sqrt{n}}g_1(R_1)\\
& +C\sigma L\left(\sqrt{\frac{\log(1/\delta)}{n}}+\frac{\log(1/\delta)}{n}\right).
\end{align}
Next, we invoke Lemma \ref{lemma:peeling:multiplier:process} with the set $V:=\mbB_\Pi$, functions
\begin{align}
M(\bfV)&:=-\langle\bxi^{(n)},\frX^{(n)}(\bfV)\rangle,\\
h(\bfV)&:=\calR(\bfV), 
\end{align}
function $g:=g_1$ and constant $b:=C\sigma L$. By this lemma, given 
$\delta\in(0,1/2c]$, with probability at least $1-2c\delta$, for all $\bfV\in\mbB_\Pi$, 
\begin{align}
\langle\bxi^{(n)},\frX^{(n)}(\bfV)\rangle 
&\le C\left\{[1+(\nicefrac{1}{\sqrt{n}})\sqrt{\log(1/\delta)}]\frac{\sigma L}{\sqrt{n}}
+\frac{\sigma L}{n}\right\}\mathscr
G\left(\calR(\bfV)\frS^{1/2}(\mbB_\calR)\cap\mbB_F\right)\\
&+C(\nicefrac{\sigma L}{\sqrt{n}})[1 + \sqrt{\log(1/\delta)}]
+C(\nicefrac{\sigma L}{n})[\log(1/\delta) + \sqrt{\log(1/\delta)}].  
\end{align}

Proceeding exactly like above but with set $V_3$, norm $\calS$ and function $g:=g_3$, we get that for all $\delta\in(0,1/2c]$, with probability at least $1-2c\delta$, for all $\bfW\in\mbB_\Pi$, 
\begin{align}
\langle\bxi^{(n)},\frX^{(n)}(\bfW)\rangle 
&\le C\left\{[1+(\nicefrac{1}{\sqrt{n}})\sqrt{\log(1/\delta)}]\frac{\sigma L}{\sqrt{n}}
+\frac{\sigma L}{n}\right\}\mathscr
G\left(\calS(\bfW)\frS^{1/2}(\mbB_\calS)\cap\mbB_F\right)\\
&+C(\nicefrac{\sigma L}{\sqrt{n}})[1 + \sqrt{\log(1/\delta)}]
+C(\nicefrac{\sigma L}{n})[\log(1/\delta) + \sqrt{\log(1/\delta)}].  
\end{align}

Finally, by Lemma \ref{lemma:noise:concentration}, for any $R_2>0$ and $\delta\in(0,1)$, with probability at least $1-\delta$,
\begin{align}
\sup_{\bu\in V_2}\langle\bxi^{(n)},\bu\rangle \le  C\frac{\sigma}{\sqrt{n}}\left[g_2(R_2) + \sqrt{\log(1/\delta)}\right].
\end{align}
We now invoke Lemma \ref{lemma:peeling:chevet} with set $V:=\mbB_2^n$, functions
\begin{align}
M(\bu)&:=-\langle\bxi^{(n)},\bu\rangle,\\
h(\bu)&:=\calQ(\bu), 
\end{align}
function $g:=g_2$ (and $\bar g=\bar h\equiv0$) and constant $b:=C\sigma$. We get that, for any $\delta\in(0,1)$, with probability at least $1-\delta$, for all $\bu\in\mbB_2^n$, 
\begin{align}
\langle\bxi^{(n)},\bu\rangle \le  C\frac{\sigma}{\sqrt{n}}\mathscr
G\left(\calQ(\bu)\mbB_\calQ\cap\mbB_2^n\right) + C\frac{\sigma}{\sqrt{n}}\left[1 + \sqrt{\log(1/\delta)}\right].
\end{align}

By an union bound, we obtain that, for every $\delta\in(0,1/(4c+1)]$, with probability at least $1-(4c+1)\delta$, for all $[\bfV,\bfW,\bu]\in \mbB_\Pi\times\mbB_\Pi\times\mbB_2^n$, 
\begin{align}
\langle\bxi^{(n)},\frM^{(n)}(\bfV+\bfW,\bu)\rangle &=
\langle\bxi^{(n)},\frX^{(n)}(\bfV)\rangle
+\langle\bxi^{(n)},\frX^{(n)}(\bfW)\rangle
+\langle\bxi^{(n)},\bu\rangle\\
&\le C\left\{[1+(\nicefrac{1}{\sqrt{n}})\sqrt{\log(1/\delta)}]\frac{\sigma L}{\sqrt{n}}
+\frac{\sigma L}{n}\right\}\mathscr
G\left(\calR(\bfV)\frS^{1/2}(\mbB_\calR)\cap\mbB_F\right)\\
& +C\left\{[1+(\nicefrac{1}{\sqrt{n}})\sqrt{\log(1/\delta)}]\frac{\sigma L}{\sqrt{n}}+\frac{\sigma L}{n}\right\}\mathscr
G\left(\calS(\bfV)\frS^{1/2}(\mbB_\calS)\cap\mbB_F\right)\\
&+C\frac{\sigma}{\sqrt{n}}\mathscr
G\left(\calQ(\bu)\mbB_\calQ\cap\mbB_2^n\right)\\
&+3C(\nicefrac{\sigma L}{\sqrt{n}})[1 + \sqrt{\log(1/\delta)}]
+2C(\nicefrac{\sigma L}{n})[\log(1/\delta) + \sqrt{\log(1/\delta)}],
\end{align}
where we used that $L\ge1$. 

To finish, we use that, for any $[\bfV,\bfW,\bu]$ with non-zero coordinates, the vector
$$
\left[
\frac{\bfV}{\Vert[\bfV,\bfW,\bu]\Vert_\Pi}, 
\frac{\bfW}{\Vert[\bfV,\bfW,\bu]\Vert_\Pi}, 
\frac{\bu}{\Vert[\bfV,\bfW,\bu]\Vert_\Pi}
\right]
$$
belongs to $\mbB_\Pi\times\mbB_\Pi\times\mbS_2^n$ and use homogeneity of norms.
\end{proof}

\section{Lemmas for decomposable norms}\label{s:decomposable:norms}

Recall Definition \ref{def:decomposable:norm} in Section \ref{s:proof:main:paper}. We first remind the reader that the $\ell_1$ and nuclear norms are decomposable.
\begin{example}[$\ell_1$-norm]
Given $\bfB\in\mdR^{p}$ with \emph{sparsity support} 
$\mathscr S(\bfB):=\{[j,k]:\bfB_{j,k}\neq0\}$, the $\ell_1$-norm in 
$\mdR^{p}$ satisfies the above decomposability condition with the map
$
\bfV\mapsto\calP^\perp_{\bfB}(\bfV):=\bfV_{\calS(\bfB)^c}
$
where $\bfV_{\calS(\bfB)^c}$ denotes the $d_1\times d_2$ matrix whose entries are zero at indexes in $\mathscr S(\bfB)$. 
\end{example}
\begin{example}[Nuclear norm]
Let $\bfB\in\mdR^{p}$ with rank $r:=\rank(\bfB)$, singular values $\{\sigma_j\}_{j\in[r]}$ and singular vector decomposition $\bfB=\sum_{j\in[r]}\sigma_j\bu_j\bv_j^\top$. Here  $\{\bu_j\}_{j\in[r]}$ are the left singular vectors spanning the subspace $\calU$ and $\{\bv_j\}_{j\in[r]}$ are the right singular vectors spanning the subspace $\calV$. The pair $(\calU,\calV)$ is sometimes referred as the \emph{low-rank support} of $\bfB$. Given subspace $S\subset\re^\ell$ let  
$\bfP_{S^\perp}$ denote the matrix defining the orthogonal projection onto $S^\perp$. Then, the map
$
\bfV\mapsto\calP^\perp_{\bfB}(\bfV):=\bfP_{\calU^\perp}\bfV\bfP_{\calV^\perp}^\top
$
satisfy the decomposability condition for the nuclear norm $\Vert\cdot\Vert_N$. 
\end{example}

In the framework of regularized least-squares regression, decomposability is mostly useful because of the following lemma.  
\begin{lemma}[\cite{2012negahban:ravikumar:wainwright:yu}]\label{lemma:A1:B} 
Let $\calR$ be a decomposable norm over $\mdR^p$. Let $\bfB,\hat\bfB\in\mdR^{p}$ and 
$\bfV:=\hat\bfB-\bfB$. Then, for any $\nu\in[0,1]$, 
\begin{align}
\nu\calR(\bfV)+\calR(\bfB) - \calR(\hat\bfB)\le
(1+\nu)\calR(\calP_\bfB(\bfV)) -(1-\nu)\calR(\calP_\bfB^\perp(\bfV)).
\end{align}
\end{lemma}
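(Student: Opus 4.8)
The plan is to simply unfold the definition of a decomposable norm (Definition~\ref{def:decomposable:norm}) at the two points $\bfB$ and $\hat\bfB$ and to combine the resulting identities with the triangle inequality. Write $\calP:=\calP_{\bfB}$ and $\calP^\perp:=\calP^\perp_{\bfB}$ for the linear maps attached to $\bfB$, so that $\calP^\perp$ is linear, $\calP:=\mathrm{id}-\calP^\perp$ is linear, $\calP^\perp(\bfB)=0$ (hence $\calP(\bfB)=\bfB$), and $\calR(\bfW)=\calR(\calP(\bfW))+\calR(\calP^\perp(\bfW))$ for every $\bfW\in\mdR^p$. First I would record, using linearity of $\calP,\calP^\perp$ and $\hat\bfB=\bfB+\bfV$, the identities $\calP^\perp(\hat\bfB)=\calP^\perp(\bfB)+\calP^\perp(\bfV)=\calP^\perp(\bfV)$ and $\calP(\hat\bfB)=\calP(\bfB)+\calP(\bfV)=\bfB+\calP(\bfV)$.

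The two key steps are then the following. Applying decomposability to $\hat\bfB$ with respect to the splitting attached to $\bfB$, and then the triangle inequality $\calR(\bfB+\calP(\bfV))\ge\calR(\bfB)-\calR(\calP(\bfV))$, gives
\[
\calR(\hat\bfB)=\calR\big(\bfB+\calP(\bfV)\big)+\calR\big(\calP^\perp(\bfV)\big)\ge\calR(\bfB)-\calR\big(\calP(\bfV)\big)+\calR\big(\calP^\perp(\bfV)\big),
\]
i.e.\ $\calR(\bfB)-\calR(\hat\bfB)\le\calR(\calP(\bfV))-\calR(\calP^\perp(\bfV))$. Secondly, by the triangle inequality (or, more precisely, by decomposability applied to $\bfV$ itself) one has $\calR(\bfV)\le\calR(\calP(\bfV))+\calR(\calP^\perp(\bfV))$; multiplying this by $\nu\ge0$ and adding the two displayed inequalities yields
\[
\nu\calR(\bfV)+\calR(\bfB)-\calR(\hat\bfB)\le(1+\nu)\calR\big(\calP(\bfV)\big)-(1-\nu)\calR\big(\calP^\perp(\bfV)\big),
\]
which is exactly the claimed bound.

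There is no genuine obstacle in this argument: the only thing requiring a little care is to keep track that the decomposition of $\calR(\hat\bfB)$ is taken with respect to the projection maps $\calP_{\bfB},\calP^\perp_{\bfB}$ \emph{associated to $\bfB$} (not to $\hat\bfB$), which is legitimate precisely because Definition~\ref{def:decomposable:norm} asserts the additive splitting $\calR(\bfW)=\calR(\calP_{\bfB}(\bfW))+\calR(\calP^\perp_{\bfB}(\bfW))$ for \emph{all} $\bfW$. The remaining ingredients are linearity of the projections and two uses of the triangle inequality; the restriction $\nu\le1$ is not needed for the inequality itself, only $\nu\ge0$ is, but stating it for $\nu\in[0,1]$ matches the form in which the lemma is applied.
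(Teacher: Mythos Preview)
Your argument is correct and is exactly the standard proof of this decomposability inequality: apply the splitting $\calR(\hat\bfB)=\calR(\calP_{\bfB}(\hat\bfB))+\calR(\calP_{\bfB}^\perp(\hat\bfB))$, use $\calP_{\bfB}^\perp(\bfB)=0$ and the reverse triangle inequality on the first piece, then add $\nu$ times the decomposition of $\calR(\bfV)$. The paper itself does not supply a proof---it simply cites the result from \cite{2012negahban:ravikumar:wainwright:yu}---so there is nothing further to compare; your write-up is precisely what one would expect.
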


Next, we state a well known lemma for the Slope norm that improves upon the previous lemma --- when comparing it with the $\ell_1$-norm. 
\begin{lemma}[\cite{2018bellec:lecue:tsybakov}]\label{lemma:A1}
Let $o\in[n]$, $\btheta,\hat\btheta\in\re^n$ such that $\Vert\btheta\Vert_0\le o$. Set 
$\bu:=\hat\btheta-\btheta$. Then
$
\|\btheta\|_\sharp-\|\hat\btheta\|_\sharp
\le\sum_{i=1}^o\omega_i\bu_i^\sharp-\sum_{i=o+1}^n\omega_i\bu_i^\sharp.
$
In particular, for any $\nu\in[0,1]$, 
\begin{align}
\nu\|\bu\|_\sharp+\|\btheta\|_\sharp - \|\hat\btheta\|_\sharp \le
(1+\nu)\sum_{i=1}^o\omega_i\bu_i^\sharp -(1-\nu)\sum_{i=o+1}^n\omega_i\bu_i^\sharp.
\end{align}
\end{lemma}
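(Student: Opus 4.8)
The plan is to first establish the ``master'' inequality $\|\btheta\|_\sharp-\|\hat\btheta\|_\sharp\le\sum_{i=1}^o\omega_i\bu_i^\sharp-\sum_{i=o+1}^n\omega_i\bu_i^\sharp$; the displayed refinement then follows immediately by adding $\nu\|\bu\|_\sharp=\nu\sum_{i=1}^o\omega_i\bu_i^\sharp+\nu\sum_{i=o+1}^n\omega_i\bu_i^\sharp$ to both sides and collecting the two blocks of terms. I would write $S:=\{i\in[n]:\btheta_i\neq0\}$, so that $|S|\le o$, and decompose $\bu=\bu_S+\bu_{S^c}$ into its restrictions to $S$ and to its complement; since $\btheta$ is supported on $S$ one has $\hat\btheta-\bu_S=\btheta+\bu_{S^c}$, with $\btheta$ and $\bu_{S^c}$ supported on disjoint index sets.

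The core step is a lower bound for $\|\btheta+\bu_{S^c}\|_\sharp$. I would use the variational description of the Slope norm coming from the rearrangement inequality, $\|\bv\|_\sharp=\max_\pi\sum_{i\in[n]}\omega_i|\bv_{\pi(i)}|$ over all permutations $\pi$ of $[n]$, together with the fact that disjointness of supports gives $|\btheta_{\pi(i)}+(\bu_{S^c})_{\pi(i)}|=|\btheta_{\pi(i)}|+|(\bu_{S^c})_{\pi(i)}|$ for every $i$ and $\pi$. Evaluating the right-hand side at the particular permutation that lists the coordinates in $S$ first (in decreasing order of $|\btheta|$) and those in $S^c$ afterwards (in decreasing order of $|\bu|$) yields $\|\btheta+\bu_{S^c}\|_\sharp\ge\|\btheta\|_\sharp+\sum_{j\ge1}\omega_{|S|+j}(\bu_{S^c})_j^\sharp$. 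I would then invoke the elementary order-statistics comparison $(\bu_{S^c})_j^\sharp\ge\bu_{|S|+j}^\sharp$ for all $j\ge1$ (among the $|S|+j$ largest absolute entries of $\bu$, at most $|S|$ sit in $S$, hence at least $j$ sit in $S^c$), which, combined with $|S|\le o$ and the monotonicity of $\omega$, gives $\sum_{j\ge1}\omega_{|S|+j}(\bu_{S^c})_j^\sharp\ge\sum_{j\ge1}\omega_{|S|+j}\bu_{|S|+j}^\sharp\ge\sum_{i=o+1}^n\omega_i\bu_i^\sharp$. Altogether $\|\btheta\|_\sharp+\sum_{i=o+1}^n\omega_i\bu_i^\sharp\le\|\btheta+\bu_{S^c}\|_\sharp$.

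To close the argument I would bound $\|\btheta+\bu_{S^c}\|_\sharp$ from above by the triangle inequality in the opposite direction, $\|\btheta+\bu_{S^c}\|_\sharp=\|\hat\btheta-\bu_S\|_\sharp\le\|\hat\btheta\|_\sharp+\|\bu_S\|_\sharp$, and use the crude estimate $\|\bu_S\|_\sharp=\sum_{j\ge1}\omega_j(\bu_S)_j^\sharp\le\sum_{j=1}^{|S|}\omega_j\bu_j^\sharp\le\sum_{i=1}^o\omega_i\bu_i^\sharp$, valid because a submultiset has smaller order statistics, the terms are nonnegative, and $|S|\le o$. Chaining the last two displays and rearranging produces the master inequality. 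The only genuine subtlety — and the reason the proof must go through the permutation representation and the order-statistics comparisons rather than a naive support split — is the non-separability of $\|\cdot\|_\sharp$: in general $\|\btheta+\bu_{S^c}\|_\sharp\neq\|\btheta\|_\sharp+\|\bu_{S^c}\|_\sharp$. Everything else is bookkeeping with nonincreasing rearrangements and the monotonicity of $\bomega$; this is Lemma A.1 of \cite{2018bellec:lecue:tsybakov}, whose argument I would essentially reproduce.
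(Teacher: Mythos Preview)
Your proof is correct and follows the standard argument from \cite{2018bellec:lecue:tsybakov}, which is exactly what the paper defers to (the paper states the lemma as a citation and gives no proof of its own). The support split, the variational form of the Slope norm, the order-statistics comparison $(\bu_{S^c})_j^\sharp\ge\bu_{|S|+j}^\sharp$, and the final triangle-inequality step are all correct and are precisely the ingredients of the original proof.
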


\section{Proof of Lemma \ref{lemma:recursion:1st:order:condition}}\label{s:proof:lemma:recursion:1st:order:condition}
The first order condition of \eqref{equation:aug:slope:rob:estimator:general} at 
$[\hat\bfB,\hat\bfGamma,\hat\btheta]$ is equivalent to the statement: there exist $\bfV\in\partial\calR(\hat\bfB)$, $\bfW\in\partial\calS(\hat\bfGamma)$ and $\bu\in\partial\Vert\hat\btheta\Vert_\sharp$ such that for all $[\bfB,\bfGamma,\btheta]$ such that $\Vert\bfB\Vert_\infty\le\sa$,
\begin{align}
\sum_{i\in[n]}\left[y_i^{(n)}-\frX^{(n)}_i(\widehat\bfB + \widehat\bfGamma)
-\hat\btheta_i\right]\llangle\bfX^{(n)}_i,\hat\bfB-\bfB\rrangle&\ge\lambda\llangle\bfV,\hat\bfB-\bfB\rrangle,\\
\sum_{i\in[n]}\left[y_i^{(n)}-\frX^{(n)}_i(\widehat\bfB + \widehat\bfGamma)
-\hat\btheta_i\right]\llangle\bfX^{(n)}_i,\hat\bfGamma-\bfGamma\rrangle&\ge\chi\llangle\bfW,\hat\bfGamma-\bfGamma\rrangle,\\
\langle\by^{(n)}-\frX^{(n)}(\hat\bfB+\hat\bfGamma)-\hat\btheta,\hat\btheta-\btheta\rangle&\ge\tau\langle\bu,\hat\btheta-\btheta\rangle.\label{equation:first:order:condition}
\end{align}
Setting $\btheta=\btheta^*$ and using that 
$
\by^{(n)} = \boldf^{(n)} + \btheta^* + \bxi^{(n)},
$
we obtain, for $[\bfB,\bfGamma]$ such that $\Vert\bfB\Vert_\infty\le\sa$,
\begin{align}
\sum_{i\in[n]}\left[ \bfDelta_i^{(n)} +\bfDelta_i^{\hat\btheta}\right]\llangle\bfX^{(n)}_i,\bfDelta_{\bfB}\rrangle&\le\sum_{i\in[n]}\xi_i^{(n)}\llangle\bfX_i^{(n)},\bfDelta_{\bfB}\rrangle-\lambda\llangle\bfV,\bfDelta_{\bfB}\rrangle,\\
\sum_{i\in[n]}\left[ \bfDelta_i^{(n)} +\bfDelta_i^{\hat\btheta}\right]\llangle\bfX^{(n)}_i,\bfDelta_{\bfGamma}\rrangle&\le\sum_{i\in[n]}\xi_i^{(n)}\llangle\bfX_i^{(n)},\bfDelta_{\bfGamma}\rrangle-\chi\llangle\bfV,\bfDelta_{\bfGamma}\rrangle,\\
\left\langle \bfDelta^{(n)} +\bfDelta^{\hat\btheta},\bfDelta^{\hat\btheta}\right\rangle
&\le\langle\bxi^{(n)},\bfDelta^{\hat\btheta}\rangle - \tau\langle\bu,\bfDelta^{\hat\btheta}\rangle.
\end{align}
Summing the above inequalities, 
\begin{align}
\langle \bfDelta^{(n)} +\bfDelta^{\hat\btheta}, \frM^{(n)}(\bfDelta_{\bfB}+\bfDelta_{\bfGamma},\bfDelta^{\hat\btheta})\rangle &\le
\langle\bxi^{(n)},\frM^{(n)}(\bfDelta_{\bfB}+\bfDelta_{\bfGamma},\bfDelta^{\hat\btheta})\rangle\\
&-\lambda\llangle\bfV,\bfDelta_{\bfB}\rrangle -\chi\llangle\bfV,\bfDelta_{\bfGamma}\rrangle - \tau\langle\bu,\bfDelta^{\hat\btheta}\rangle\\
&\le\langle\bxi^{(n)},\frM^{(n)}(\bfDelta_{\bfB} + \bfDelta_{\bfGamma},\bfDelta^{\hat\btheta})\rangle\\
&+\lambda \big(\calR(\bfB) - \calR(\hat\bfB)\big) 
+\chi\big(\calS(\bfGamma) - \calS(\hat\bfGamma)\big)
+ \tau\big(\|\btheta^*\|_\sharp -\|\hat\btheta\|_\sharp\big),
\label{lemma:dim:reduction:eq0} 
\end{align}
where we used that\footnote{By the definition of the subdifferential of $\calR$ at $\hat\bfB$, there is $\bfV$ such that 
$\calR^*(\bfV)\le 1$ and $\llangle\bfV,\hat\bfB\rrangle = \calR(\hat\bfB)$. Hence, 
$
-\llangle\bfDelta_{\bfB},\bfV\rrangle = \llangle\bfB-\hat\bfB,\bfV\rrangle =
\llangle\bfB,\bfV\rrangle -\calR(\hat\bfB)\le 
\calR(\bfB)-\calR(\hat\bfB).
$}
$-\llangle\bfDelta_{\bfB},\bfV\rrangle\le\calR(\bfB)-\calR(\hat\bfB)$, $-\llangle\bfDelta_{\bfGamma},\bfW\rrangle\le\calS(\bfGamma)-\calS(\hat\bfGamma)$ and $-\langle\bfDelta^{\hat\btheta},\bu\rangle \le \|\btheta^*\|_\sharp-\|\hat\btheta\|_\sharp$.

\section{Proof of Lemma \ref{lemma:MP+ATP}}
By the parallelogram law,
\begin{align}
&\langle \bfDelta^{(n)} +\bfDelta^{\hat\btheta}, \frM^{(n)}(\bfDelta_{\bfB}+\bfDelta_{\bfGamma},\bfDelta^{\hat\btheta})\rangle =\\
&=\frac{1}{2}\Vert\bfDelta^{(n)} + \bfDelta^{\hat\btheta}\Vert_2^2
+ \frac{1}{2}\Vert\frM^{(n)}(\bfDelta_{\bfB}+\bfDelta_{\bfGamma},\bfDelta^{\hat\btheta})\Vert_2^2
-\frac{1}{2}\Vert\frX^{(n)}(\bfB+\bfGamma)-\boldf^{(n)}\Vert_2^2.
\end{align}

$\ATP$ implies in particular that
\begin{align}
\Vert\frM^{(n)}(\bfDelta_{\bfB}+\bfDelta_{\bfGamma},\bfDelta^{\hat\btheta})\Vert_2^2 &\ge\left( \sd_1\Vert[\bfDelta_{\bfB},\bfDelta_{\bfGamma},\bfDelta^{\hat\btheta}]\Vert_{\Pi}
-\sd_2\calR(\bfDelta_{\bfB}) 
-\sd_3\calS(\bfDelta_{\bfGamma}) 
-\sd_4\Vert\bfDelta^{\hat\btheta}\Vert_\sharp
\right)_+^2\\
&- 2|\llangle\bfDelta_{\bfB},\bfDelta_{\bfGamma}\rrangle_\Pi|, 
\end{align}
noticing that, by assumption,
$$
|\llangle\bfDelta_{\bfB},\bfDelta_{\bfGamma}\rrangle_\Pi|\le\sf_*\calS(\bfDelta_{\bfGamma}).
$$ 

$\MP$ implies that
\begin{align}
\langle\bxi^{(n)},\frM^{(n)}(\bfDelta_{\bfB} + \bfDelta_{\bfGamma},\bfDelta^{\hat\btheta})\rangle &\le
\sf_1\Vert[\bfDelta_{\bfB},\bfDelta_{\bfGamma},\bfDelta^{\hat\btheta}]\Vert_{\Pi} 
+\sf_2\calR(\bfDelta_{\bfB}) 
+\sf_3\calS(\bfDelta_{\bfGamma}) 
+\sf_4\Vert\bfDelta^{\hat\btheta}\Vert_\sharp. 
\end{align}

The claim of the lemma follows from the four previous displays and Lemma \ref{lemma:recursion:1st:order:condition}.

\section{Proof of Lemma \ref{lemma:aug:rest:convexity}}
In what follows, $R_{\bfB}:=R_{\calR}(\bfV|\bfB)=\Psi_{\calR}(\calP_{\bfB}(\bfV))\mu(\calC_{\bfB}(2c_0))$. Similarly, $R_{\bfGamma}:=R_{\calS}(\bfW|\bfGamma)$. By Cauchy-Schwarz,
\begin{align}
	\triangle_{\lambda,\chi,\tau}(\bfV,\bfW,\bu)&\le(\nicefrac{3}{2})\left(
	\lambda\calR(\calP_{\bfB}(\bfV)) + \chi\calS(\calP_{\bfGamma}(\bfW)) + \eta\|\bu\|_2\right)\\
	&\le (\nicefrac{3}{2})\{\lambda^2R_{\bfB}^2 + \chi^2R_{\bfGamma}^2+ \tau^2\eta^2\}^{1/2}\Vert[\bfV,\bfW,\bu]\Vert_\Pi,
\end{align}
that is, \eqref{lemma:aug:rest:convexity:eq2}. We now split our arguments in four cases.  
\begin{description}
\item[Case 1:] $\bfV\in\calC_{\bfB}(2c_0)$ and $\bfW\in\calC_{\bfGamma}(2c_0)$.

Decomposability of $(\calR,\calS)$ and $[\bfV,\bfW,\bu]\in\calC_{\bfB,\bfGamma}(c_0,\gamma_{\calR},\gamma_{\calS},\eta)$ and Cauchy-Schwarz further imply
\begin{align}
\lambda\calR(\bfV) +\chi\calR(\bfW) +\tau \|\bu\|_\sharp 
&\le (c_0+1)(\lambda\calR(\calP_{\bfB}(\bfV)) +\chi\calR(\calP_{\bfGamma}(\bfW)) + \tau\eta\|\bu\|_2)\\
&\le (c_0+1)\{\lambda^2R_{\bfB}^2 + \chi^2R_{\bfGamma}^2 +\tau^2\eta^2\}^{1/2}\Vert[\bfV,\bfW,\bu]\Vert_\Pi.
\end{align}

\item[Case 2:] $\bfV\notin\calC_{\bfB}(2c_0)$ and 
$\bfW\in\calC_{\bfGamma}(2c_0)$.

As $[\bfV,\bfW,\bu]\in\calC_{\bfB,\bfGamma}(c_0,\gamma_{\calR},\gamma_{\calS},\eta)$, we get 
\begin{align} 
c_0\gamma_{\calR}\calR(\calP_{\bfB}(\bfV))\le
c_0\left[\gamma_{\calS}\calS(\calP_{\bfGamma}(\bfW)) + \eta\Vert\bu\Vert_2\right].
\end{align}
Hence, 
\begin{align}
\lambda\calR(\bfV) +\chi\calS(\bfW)+\tau\big\|\bu\big\|_\sharp &\le
(c_0+1)(\lambda\calR(\calP_{\bfB}(\bfV)) +\chi\calR(\calP_{\bfGamma}(\bfW)) + \tau\eta\|\bu\|_2)\\
&\le 2(c_0+1)(\chi\calR(\calP_{\bfGamma}(\bfW)) + \tau\eta\|\bu\|_2)\\
&\le 2(c_0+1)\{\chi^2R_{\bfGamma}^2 +\tau^2\eta^2\}^{1/2}
\Vert[\bfW,\bu]\Vert_\Pi. 
\end{align}

\item[Case 3:] $\bfV\in\calC_{\bfB}(2c_0)$ and 
$\bfW\notin\calC_{\bfGamma}(2c_0)$.

This case follows very similarly to Case 2, exchanging the roles between 
$(\bfV,\calR)$ and $(\bfW,\calS)$. This leads to the bounds:
\begin{align}
\lambda\calR(\bfV) +\chi\calS(\bfW)+\tau\big\|\bu\big\|_\sharp &\le 
2(c_0+1)\{\lambda^2R_{\bfB}^2 +\tau^2\eta^2\}^{1/2}
\Vert[\bfV,\bu]\Vert_\Pi.
\end{align}

\item[Case 4:] $\bfV\notin\calC_{\bfB}(2c_0)$ and 
$\bfW\notin\calC_{\bfGamma}(2c_0)$.

As $[\bfV,\bfW,\bu]\in\calC_{\bfB,\bfGamma}(c_0,\gamma_{\calR},\gamma_{\calS},\eta)$, we get 
\begin{align} 
c_0\left[\gamma_{\calR}\calR(\calP_{\bfB}(\bfV)) + \gamma_{\calS}\calS(\calP_{\bfGamma}(\bfW))
\right]\le c_0\eta\Vert\bu\Vert_2.
\end{align}
Hence, 
\begin{align}
\lambda\calR(\bfV) +\chi\calS(\bfW)+\tau\big\|\bu\big\|_\sharp &\le
(c_0+1)(\lambda\calR(\calP_{\bfB}(\bfV)) +\chi\calR(\calP_{\bfGamma}(\bfW)) + \tau\eta|\bu\|_2)\\
&\le 2(c_0+1)\tau\eta\|\bu\|_2. 
\end{align}

\end{description}
Relation \eqref{lemma:aug:rest:convexity:eq3} follows by taking the largest bounds among all four cases.

\section{Proof of Proposition \ref{prop:suboptimal:rate}}
Let
$\blacksquare:=(\nicefrac{\lambda}{4})\calR(\bfDelta_{\bfB}) +
(\nicefrac{\chi}{4})\calS(\bfDelta_{\bfGamma}) + 
(\nicefrac{\tau}{4})\|\bfDelta^{\hat\btheta}\|_\sharp$. By Lemma \ref{lemma:MP+ATP}, 
\begin{align}
2\blacksquare +\Vert\bfDelta^{(n)} +\bfDelta^{\hat\btheta}\Vert_2^2
+ \left(\sd_1\Vert[\bfV,\bfW,\bu]\Vert_{\Pi} - (\nicefrac{1}{\sigma})\blacktriangle \right)_+^2&\le
\Vert\frX^{(n)}(\bfB+\bfGamma)-\boldf^{(n)}\Vert_2^2\\
&+ \left(2\sf_1\Vert[\bfV,\bfW,\bu]\Vert_{\Pi} - \blacktriangle\right)
+ 2(\blacktriangle + \blacksquare +\blacktriangledown). 
\end{align}

By Lemmas \ref{lemma:A1:B} and \ref{lemma:A1} (with $\nu=1/2$) and condition (iii'),   
\begin{align}
\blacktriangle + \blacksquare +\blacktriangledown & =
\left[((\sigma\sd_2)\vee(2\sf_2)) + (\nicefrac{\lambda}{4})\right]\calR(\bfDelta_{\bfB}) + \lambda \big(\calR(\bfB) - \calR(\hat\bfB)\big)\\
&+\left[((\sigma\sd_3)\vee(2\sf_3 + 2\sf_*)) + (\nicefrac{\chi}{4})\right]\calS(\bfDelta_{\bfGamma}) +\chi\big(\calS(\bfGamma) - \calS(\hat\bfGamma)\big)\\
&+\left[((\sigma\sd_4)\vee(2\sf_4)) + (\nicefrac{\tau}{4})\right]
\|\bfDelta^{\hat\btheta}\|_\sharp+\tau\big(\|\btheta^*\|_\sharp -\|\hat\btheta\|_\sharp\big)\\
&\le(\nicefrac{\lambda}{2})\calR(\bfDelta_{\bfB}) + \lambda \big(\calR(\bfB) - \calR(\hat\bfB)\big)\\
&+(\nicefrac{\chi}{2})\calS(\bfDelta_{\bfGamma})+\chi \big(\calS(\bfGamma) - \calS(\hat\bfGamma)\big)\\
&+(\nicefrac{\tau}{2})\|\bfDelta^{\hat\btheta}\|_\sharp+\tau\big(\|\btheta^*\|_\sharp -\|\hat\btheta\|_\sharp\big)\\
&\le \triangle_{\lambda,\chi,\tau}(\bfDelta_{\bfB},\bfDelta_{\bfGamma},\bfDelta^{\hat\btheta}|\bfB,\bfGamma).
\end{align}

Next, we will define some local variables for convenience of notation. Let 
$G:=\Vert[\bfDelta_{\bfB},\bfDelta_{\bfGamma},\bfDelta^{\hat\btheta}]\Vert_{\Pi}$, 
$
D:=\Vert\frX^{(n)}(\bfB+\bfGamma)-\boldf^{(n)}\Vert_2, 
$
$
x:=\Vert\bfDelta^{(n)} + \bfDelta^{\hat\btheta}\Vert_2,
$
and 
$r:=r_{\lambda,\chi,\tau\Omega,3}(\bfDelta_{\bfB},\bfDelta_{\bfGamma}|\bfB,\bfGamma)$. Define also $\triangle:=\triangle_{\lambda,\chi,\tau}(\bfDelta_{\bfB},\bfDelta_{\bfGamma},\bfDelta^{\hat\btheta}|\bfB,\bfGamma)$ and 
\begin{align}
H&:=(\nicefrac{3\lambda}{2})(\calR\circ\calP_{\bfB})(\bfDelta_{\bfB})
+(\nicefrac{3\chi}{2})(\calS\circ\calP_{\bfGamma})(\bfDelta_{\bfGamma})
+(\nicefrac{3\tau\Omega}{2})\Vert\bfDelta^{\hat\btheta}\Vert_2,\\
I&:=(\nicefrac{\lambda}{2})(\calR\circ\calP_{\bfB}^\perp)(\bfDelta_{\bfB})
+(\nicefrac{\chi}{2})(\calS\circ\calP_{\bfGamma}^\perp)(\bfDelta_{\bfGamma})
+(\nicefrac{\tau}{2})\sum_{i=o+1}^n\omega_i(\bfDelta^{\hat\btheta})_i^\sharp.
\end{align}
In particular, 
$
\triangle = H - I. 
$

The previous two bounds entail 
\begin{align}
2\blacksquare + x^2 + (\sd_1 G - (\nicefrac{\blacktriangle}{\sigma}))_+^2
&\le D^2 + (2\sf_1G - \blacktriangle) +2\triangle.\label{proof:prop:suboptimal:rate:eq0}
\end{align}
We split our argument in two cases. 
\begin{description}
\item[Case 1:] $\sd_1G\le\blacktriangle/2\sigma$. We next show that 
$\triangle\le0$. If $\triangle>0$ then 
$[\bfDelta_{\bfB},\bfDelta_{\bfGamma},\bfDelta^{\hat\btheta}]\in\calC_{\bfB,\bfGamma}(3,\gamma_{\calR},\gamma_{\calS},\Omega)$. By Lemma \ref{lemma:aug:rest:convexity} and (iii'), 
$
\blacktriangle\le(\nicefrac{1}{4})[\lambda\calR(\bfDelta_{\bfB}) + \chi\calS(\bfDelta_{\bfGamma}) +\tau\big\|\bfDelta^{\hat\btheta}\big\|_\sharp]
\le 2rG<2\sigma\sd_1 G, 
$
where we have used condition \eqref{cond1:general:norm}. This is a contradiction, showing that $\triangle\le0$.

Now, condition (iv') implies that $(2\sf_1G - \blacktriangle)\le\sigma(\sd_1G-(\nicefrac{\blacktriangle}{\sigma}))\le -\blacktriangle/2$. We thus obtain from \eqref{proof:prop:suboptimal:rate:eq0} that 
\begin{align}
2\blacksquare + x^2 + \frac{\blacktriangle}{2} \le D^2. \label{proof:prop:suboptimal:rate:eq1}
\end{align}
In particular, 
\begin{align}
\sd_1G \le \frac{D^2}{\sigma}. \label{proof:prop:suboptimal:rate:eq2}
\end{align}

\item[Case 2:] $\sd_1G\ge\blacktriangle/2\sigma$. In particular, from \eqref{proof:prop:suboptimal:rate:eq0},
\begin{align}
2\blacksquare + x^2 + \frac{\sd_1^2 G^2}{4} &\le D^2 + 2\sf_1G +2\triangle.\label{proof:prop:suboptimal:rate:eq3}
\end{align}
We next consider two cases.
\begin{description}
\item[Case 2.1:] $\sf_1G\ge H$. Hence, $\triangle\le H\le \sf_1G$. From \eqref{proof:prop:suboptimal:rate:eq3}, 
\begin{align}
2\blacksquare + x^2 + \frac{\sd_1^2 G^2}{4} &\le D^2 + 4\sf_1G.
\end{align}
From $4\sf_1G\le\frac{16\sf_1^2}{\sd_1^2} + \frac{\sd_1^2G^2}{4}$, we get 
\begin{align}
2\blacksquare + x^2 &\le D^2 + \frac{16\sf_1^2}{\sd_1^2}.\label{proof:prop:suboptimal:rate:eq4} 
\end{align}
If we use instead $4\sf_1G\le\frac{32\sf_1^2}{\sd_1^2} + \frac{\sd_1^2G^2}{8}$, we get 
\begin{align}
\frac{\sd_1^2 G^2}{8} &\le D^2 + \frac{32\sf_1^2}{\sd_1^2}.\label{proof:prop:suboptimal:rate:eq5}
\end{align}

\item[Case 2.2:] $\sf_1G\le H$. Suppose first $\triangle\le0$. From $2\sf_1G\le\frac{4\sf_1^2}{\sd_1^2} + \frac{\sd_1^2G^2}{4}$ and \eqref{proof:prop:suboptimal:rate:eq3}, we get 
\begin{align}
2\blacksquare + x^2 &\le D^2 + \frac{4\sf_1^2}{\sd_1^2}.\label{proof:prop:suboptimal:rate:eq6}
\end{align}
If instead we use $2\sf_1G\le\frac{8\sf_1^2}{\sd_1^2} + \frac{\sd_1^2G^2}{8}$, we obtain 
\begin{align}
\frac{\sd_1^2 G^2}{8} &\le D^2 + \frac{8\sf_1^2}{\sd_1^2}.\label{proof:prop:suboptimal:rate:eq7}
\end{align}

Suppose now $\triangle\ge0$. Hence $[\bfDelta_{\bfB},\bfDelta_{\bfGamma},\bfDelta^{\hat\btheta}]\in\calC_{\bfB,\bfGamma}(3,\gamma_{\calR},\gamma_{\calS},\Omega)$. By Lemma \ref{lemma:aug:rest:convexity}, $\triangle\le 3rG/2$. From \eqref{proof:prop:suboptimal:rate:eq3}, 
\begin{align}
2\blacksquare + x^2 + \frac{\sd_1^2 G^2}{4} &\le D^2 + (2\sf_1 + 3r)G.
\end{align}
Proceeding similarly before, we obtain from the displayed bound that 
\begin{align}
2\blacksquare + x^2 &\le D^2 + \frac{(2\sf_1 + 3r)^2}{\sd_1^2},\label{proof:prop:suboptimal:rate:eq8}\\
\frac{\sd_1^2 G^2}{8} &\le D^2 + \frac{2(2\sf_1 + 3r)^2}{\sd_1^2}.\label{proof:prop:suboptimal:rate:eq9}
\end{align}
\end{description}
\end{description}
The proof of \eqref{prop:suboptimal:rate:eq1} follows by taking the largest of the bounds in \eqref{proof:prop:suboptimal:rate:eq1}, \eqref{proof:prop:suboptimal:rate:eq4}, \eqref{proof:prop:suboptimal:rate:eq6} and \eqref{proof:prop:suboptimal:rate:eq8}. The proof of \eqref{prop:suboptimal:rate:eq2} follows by taking the largest of the bounds in \eqref{proof:prop:suboptimal:rate:eq2}, \eqref{proof:prop:suboptimal:rate:eq5}, \eqref{proof:prop:suboptimal:rate:eq7} and \eqref{proof:prop:suboptimal:rate:eq9}.

\section{Proof of Lemma \ref{lemma:recursion:delta:bb:general:norm}}
As $[\hat\bfB,\hat\bfGamma,\hat\btheta]$ is the minimizer of \eqref{equation:aug:slope:rob:estimator:general}, in particular
\begin{align}
[\hat\bfB,\hat\bfGamma]\in 
&\argmin_{[\bfB,\bfGamma]:\Vert\bfB\Vert_\infty\le\sa}\left\{\frac{1}{2}\Vert\by^{(n)}-\frX^{(n)}(\bfB+\bfGamma)-\hat\btheta\Vert_2^2+\lambda\calR(\bfB)+\chi\calS(\bfGamma)\right\}.
\end{align}
By the first order condition, there exist $\bfV,\bfW\in\mdR^p$ such that 
$\calR^*(\bfV)\le1$, $\llangle\bfV,\hat\bfB\rrangle=\calR(\hat\bfB)$, $\calS^*(\bfW)\le1$, $\llangle\bfW,\hat\bfGamma\rrangle=\calS(\hat\bfGamma)$, such that, for all $[\bfB,\bfGamma]$ with $\Vert\bfB\Vert_\infty\le\sa$,
\begin{align}
0&\le\sum_{i\in[n]}\left[\frX^{(n)}_i(\hat\bfB+\hat\bfGamma)+\hat\btheta_i-y_i^{(n)}\right]\llangle\bfX^{(n)}_i,\bfB-\hat\bfB\rrangle+\lambda\llangle\bfV,\bfB-\hat\bfB\rrangle,\\
0&\le\sum_{i\in[n]}\left[\frX^{(n)}_i(\hat\bfB+\hat\bfGamma)+\hat\btheta_i-y_i^{(n)}\right]\llangle\bfX^{(n)}_i,\bfGamma-\hat\bfGamma\rrangle+\chi\llangle\bfW,\bfGamma-\hat\bfGamma\rrangle. 
\end{align}
Summing the previous inequalities and using that
$
\by^{(n)} = \boldf^{(n)} + \btheta^* + \bxi^{(n)}
$
we obtain, for all $[\bfB,\bfGamma]$ with $\Vert\bfB\Vert_\infty\le\sa$,
\begin{align}
\left\langle \bfDelta^{(n)},\frX^{(n)}(\bfDelta_{\bfB}+\bfDelta_{\bfGamma})\right\rangle &\le\left\langle\frX^{(n)}(\bfDelta_{\bfB}+\bfDelta_{\bfGamma}),\bxi^{(n)}
-\bfDelta^{\hat\btheta}\right\rangle-\lambda\llangle\bfDelta_{\bfB},\bfV\rrangle-\chi\llangle\bfDelta_{\bfGamma},\bfW\rrangle.
\end{align}

Moreover, 
$-\llangle\bfDelta_{\bfB},\bfV\rrangle\le\calR(\bfB)-\calR(\hat\bfB)$. Similarly,  $-\llangle\bfDelta_{\bfGamma},\bfW\rrangle\le\calS(\bfGamma)-\calS(\hat\bfGamma)$. Combining these bounds with the previous displays finishes the proof.

\section{Proof of Lemma \ref{lemma:MP+IP+ATP}}\label{s:proof:lemma:MP+IP+ATP}
By the parallelogram law,
\begin{align}
&\langle \bfDelta^{(n)}, \frX^{(n)}(\bfDelta_{\bfB}+\bfDelta_{\bfGamma})\rangle =\\
&=\frac{1}{2}\Vert\bfDelta^{(n)}\Vert_2^2
+ \frac{1}{2}\Vert\frX^{(n)}(\bfDelta_{\bfB}+\bfDelta_{\bfGamma})\Vert_2^2
- \frac{1}{2}\Vert\frX^{(n)}(\bfB+\bfGamma)-\boldf^{(n)}\Vert_2^2.
\end{align}

By $\ATP$ (with variable $\bu=\bf0$), 
\begin{align}
\Vert\frX^{(n)}(\bfDelta_{\bfB}+\bfDelta_{\bfGamma})\Vert_2^2 &\ge\left( \sd_1\Vert[\bfDelta_{\bfB},\bfDelta_{\bfGamma}]\Vert_{\Pi}
-\sd_2\calR(\bfDelta_{\bfB}) 
-\sd_3\calS(\bfDelta_{\bfGamma})
\right)_+^2 - 2|\langle\bfDelta_{\bfB},\bfDelta_{\bfGamma}\rangle|.
\end{align}
Also, by assumption, 
$$
|\langle\bfDelta_{\bfB},\bfDelta_{\bfGamma}\rangle|\le\sf_*\calS(\bfDelta_{\bfGamma}).
$$ 

The previous three displays and \eqref{lemma:recursion:delta:bb:general:norm:eq} imply
\begin{align}
\Vert\bfDelta^{(n)}\Vert_2^2 + \left( \sd_1\Vert[\bfDelta_{\bfB},\bfDelta_{\bfGamma}]\Vert_{\Pi}
-\sd_2\calR(\bfDelta_{\bfB}) 
-\sd_3\calS(\bfDelta_{\bfGamma})
\right)_+^2 &\le \Vert\frX^{(n)}(\bfB+\bfGamma)-\boldf^{(n)}\Vert_2^2\\
&+ 2\langle\bxi^{(n)}-\bfDelta^{\hat\btheta},\frX^{(n)}(\bfDelta_{\bfB}+\bfDelta_{\bfGamma})\rangle
+ 2\sf_*\calS(\bfDelta_{\bfGamma})\\
&+2\lambda(\calR(\bfB)-\calR(\hat\bfB)) 
+ 2\chi(\calS(\bfGamma)-\calS(\hat\bfGamma)). 
\label{proof:lemma:MP+IP+PP+ATP:eq1}
\end{align}

By $\IP$,
\begin{align}
\langle-\bfDelta^{\hat\btheta},\frX^{(n)}(\bfDelta_{\bfB}+\bfDelta_{\bfGamma})\rangle 
&\le \sb_1\Vert[\bfDelta_{\bfB},\bfDelta_{\bfGamma}]\Vert_{\Pi}\Vert\bfDelta^{\hat\btheta}\Vert_2 
+\sb_2\calR(\bfDelta_{\bfB})\Vert\bfDelta^{\hat\btheta}\Vert_2 
+\sb_3\calS(\bfDelta_{\bfGamma})\Vert\bfDelta^{\hat\btheta}\Vert_2\\
&+\sb_4\Vert[\bfDelta_{\bfB},\bfDelta_{\bfGamma}]\Vert_{\Pi}\Vert\bfDelta^{\hat\btheta}\Vert_\sharp.  
\end{align}

By $\MP$ (with variable $\bu=\bf0$),
\begin{align}
\langle\bxi^{(n)},\frX^{(n)}(\bfDelta_{\bfB} + \bfDelta_{\bfGamma})\rangle &\le
\sf_1\Vert[\bfDelta_{\bfB},\bfDelta_{\bfGamma}]\Vert_{\Pi} 
+\sf_2\calR(\bfDelta_{\bfB}) 
+\sf_3\calS(\bfDelta_{\bfGamma}).
\end{align}

The two previous displays imply
\begin{align}
&\langle\bxi^{(n)}-\bfDelta^{\hat\btheta},\frX^{(n)}(\bfDelta_{\bfB}+\bfDelta_{\bfGamma})\rangle\\
&\le\Vert[\bfDelta_{\bfB},\bfDelta_{\bfGamma}]\Vert_{\Pi}\left(
\sb_1\Vert\bfDelta^{\hat\btheta}\Vert_2 + \sb_4\Vert\bfDelta^{\hat\btheta}\Vert_\sharp + \sf_1
\right) + \calR(\bfDelta_{\bfB})(\sb_2\|\bfDelta^{\hat\btheta}\|_2 + \sf_2)
+\calS(\bfDelta_{\bfGamma})(\sb_3\|\bfDelta^{\hat\btheta}\|_2 + \sf_3). 
\end{align}

The proof of \eqref{lemma:MP+IP+ATP:eq} follows from the previous display and \eqref{proof:lemma:MP+IP+PP+ATP:eq1}. 

\section{Proof of Theorem \ref{thm:improved:rate}}\label{s:proof:thm:improved:rate:supp}
Let $\hat\blacksquare:=(\nicefrac{\lambda}{4})\calR(\bfDelta_{\bfB}) +
(\nicefrac{\chi}{4})\calS(\bfDelta_{\bfGamma})$. By Lemma \ref{lemma:MP+IP+ATP}, 
\begin{align}
2\hat\blacksquare +\Vert\bfDelta^{(n)}\Vert_2^2
+ \left(\sd_1\Vert[\bfDelta_{\bfB},\bfDelta_{\bfGamma}]\Vert_{\Pi} - (\nicefrac{\hat\blacktriangle}{\sigma}) \right)_+^2&\le
\Vert\frX^{(n)}(\bfB+\bfGamma)-\boldf^{(n)}\Vert_2^2\\
& + \left(2\sf_1 + 2\sb_1\Vert\bfDelta^{\hat\btheta}\Vert_2 + 2\sb_4\Vert\bfDelta^{\hat\btheta}\Vert_\sharp\right)\Vert[\bfDelta_{\bfB},\bfDelta_{\bfGamma}]\Vert_{\Pi} - \hat\blacktriangle\\
&+ 2(\hat\blacktriangle + \hat\blacksquare +\hat\blacktriangledown).\label{proof:thm:improved:rate:eq0'}
\end{align}

Additionally, all conditions of Proposition \ref{prop:suboptimal:rate} hold. Hence, 
\begin{align}
\|\bfDelta^{\hat\btheta}\|_2&\le
(\nicefrac{D^2}{\sigma\sd_1})\bigvee\left((\nicefrac{2\sqrt{2}}{\sd_1})D + \clubsuit_2(\sf_1,r)\right) \le\sc_*\sigma,\label{proof:thm:improved:rate:eq:club}\\ 
(\nicefrac{\tau}{2})\|\bfDelta^{\hat\btheta}\|_\sharp &\le D^2 + \spadesuit_2(\sf_1,r)
\le\sc_*^2\sigma^2, \label{proof:thm:improved:rate:eq:spade}
\end{align}
where we have used conditions \eqref{cond8:general:norm}-\eqref{cond9:general:norm}. In particular, by (iv), 
\begin{align}
(\sigma\sd_2)\vee(2\sf_2+2\sb_2\|\bfDelta^{\hat\btheta}\|_2)&\le\frac{\lambda}{4},\label{proof:thm:improved:rate:eq0''}\\
(\sigma\sd_3)\vee(2\sf_3 + 2\sf_* + 2\sb_3\|\bfDelta^{\hat\btheta}\|_2)&\le\frac{\chi}{4}.\label{proof:thm:improved:rate:eq0'''}
\end{align} 

By Lemma \ref{lemma:A1:B} (with $\nu=1/2$) and \eqref{proof:thm:improved:rate:eq0''}-\eqref{proof:thm:improved:rate:eq0'''},  
\begin{align}
\hat\blacktriangle + \hat\blacksquare + \hat\blacktriangledown & =
\left[((\sigma\sd_2)\vee(2\sf_2+2\sb_2\|\bfDelta^{\hat\btheta}\|_2)) + (\nicefrac{\lambda}{4})\right]\calR(\bfDelta_{\bfB}) + \lambda \big(\calR(\bfB) - \calR(\hat\bfB)\big)\\
&+\left[((\sigma\sd_3)\vee(2\sf_3 + 2\sf_* + 2\sb_3\|\bfDelta^{\hat\btheta}\|_2)) + (\nicefrac{\chi}{4})\right]\calS(\bfDelta_{\bfGamma}) +\chi\big(\calS(\bfGamma) - \calS(\hat\bfGamma)\big)\\
&\le(\nicefrac{\lambda}{2})\calR(\bfDelta_{\bfB}) + \lambda \big(\calR(\bfB) - \calR(\hat\bfB)\big)
+(\nicefrac{\chi}{2})\calS(\bfDelta_{\bfGamma})+\chi \big(\calS(\bfGamma) - \calS(\hat\bfGamma)\big)\\
&\le \triangle_{\lambda,\chi,0}(\bfDelta_{\bfB},\bfDelta_{\bfGamma},\bf0|\bfB,\bfGamma)=:\triangle.\label{proof:thm:improved:rate:eq0''''}
\end{align}

Next, we define the local variables
\begin{align}
H&:=(\nicefrac{3\lambda}{2})(\calR\circ\calP_{\bfB})(\bfDelta_{\bfB})
+(\nicefrac{3\chi}{2})(\calS\circ\calP_{\bfGamma})(\bfDelta_{\bfGamma}),\\
I&:=(\nicefrac{\lambda}{2})(\calR\circ\calP_{\bfB}^\perp)(\bfDelta_{\bfB})
+(\nicefrac{\chi}{2})(\calS\circ\calP_{\bfGamma}^\perp)(\bfDelta_{\bfGamma}), 
\end{align}
noting that $\triangle = H - I$. Recall 
$
D=\Vert\frX^{(n)}(\bfB+\bfGamma)-\boldf^{(n)}\Vert_2.
$
For convenience, we define the additional local variables 
$G:=\Vert[\bfDelta_{\bfB},\bfDelta_{\bfGamma}]\Vert_{\Pi}$, 
$
x:=\Vert\bfDelta^{(n)}\Vert_2,
$
and 
$\hat r:=r_{\lambda,\chi,0,3}(\bfDelta_{\bfB},\bfDelta_{\bfGamma}|\bfB,\bfGamma)$.  
Finally, let us define the auxiliary variables
\begin{align}
F&:=\sf_1 + \sb_1\left(
(\nicefrac{D^2}{\sigma\sd_1})\bigvee\left((\nicefrac{2\sqrt{2}}{\sd_1})D + \clubsuit_2(\sf_1,r)\right)
\right) + (2\sb_4/\tau)\left(
D^2 + \spadesuit_2(\sf_1,r)
\right),\\
\hat\sf_1&:=\sf_1 + \sb_1 (\sc_*\sigma)+ (2\sb_4/\tau)(\sc_*^2\sigma^2). 
\end{align}
Note that, from \eqref{proof:thm:improved:rate:eq:club}-\eqref{proof:thm:improved:rate:eq:spade} and condition (v), $F\le\hat\sf_1\le\sigma\sd_1/2$. 

From \eqref{proof:thm:improved:rate:eq0'},  \eqref{proof:thm:improved:rate:eq:club}-\eqref{proof:thm:improved:rate:eq:spade}, 
and \eqref{proof:thm:improved:rate:eq0''''}, 
\begin{align}
2\hat\blacksquare + x^2 + (\sd_1 G - (\nicefrac{\hat\blacktriangle}{\sigma}))_+^2
&\le D^2 + (2F G - \hat\blacktriangle) + 2\triangle.\label{proof:thm:improved:rate:eq0}
\end{align}
The rest of the proof uses similar arguments used in the proof of Proposition \ref{prop:suboptimal:rate}. 

We split our argument in two cases. 
\begin{description}
\item[Case 1:] $\sd_1 G\le\hat\blacktriangle/2\sigma$. We next show that 
$\triangle\le0$. If $\triangle>0$ then 
$[\bfDelta_{\bfB},\bfDelta_{\bfGamma},\bold{0}]\in\calC_{\bfB,\bfGamma}(3,\gamma_{\calR},\gamma_{\calS},0)$. By Lemma \ref{lemma:aug:rest:convexity} and \eqref{proof:thm:improved:rate:eq0''}-\eqref{proof:thm:improved:rate:eq0'''},
$
\hat \blacktriangle\le(\nicefrac{1}{4})[\lambda\calR(\bfDelta_{\bfB}) + \chi\calS(\bfDelta_{\bfGamma})]
\le 2\hat rG<2\sigma\sd_1 G, 
$
where we have used condition \eqref{cond1:general:norm}. This is a contradiction, hence $\triangle\le0$.

From \eqref{proof:thm:improved:rate:eq:club}-\eqref{proof:thm:improved:rate:eq:spade} and condition (v), $(2FG - \hat\blacktriangle)\le(2\hat\sf_1G - \hat\blacktriangle)\le\sigma(\sd_1\hat G-(\nicefrac{\hat\blacktriangle}{\sigma}))\le -\hat\blacktriangle/2$. We thus obtain from \eqref{proof:thm:improved:rate:eq0} that 
\begin{align}
2\hat\blacksquare + x^2 + \frac{\hat\blacktriangle}{2} \le D^2. \label{proof:thm:improved:rate:eq1}
\end{align}
In particular, 
\begin{align}
\sd_1G \le \frac{D^2}{\sigma}. \label{proof:thm:improved:rate:eq2}
\end{align}

\item[Case 2:] $\sd_1G\ge\hat\blacktriangle/2\sigma$. In particular, from \eqref{proof:thm:improved:rate:eq0},
\begin{align}
2\hat\blacksquare + x^2 + \frac{\sd_1^2 G^2}{4} &\le D^2 + 2FG +2\triangle.\label{proof:thm:improved:rate:eq3}
\end{align}
We next consider two cases.
\begin{description}
\item[Case 2.1:] $FG\ge H$. Hence, $\triangle\le H\le FG$. From \eqref{proof:thm:improved:rate:eq3}, 
\begin{align}
2\hat\blacksquare + x^2 + \frac{\sd_1^2 G^2}{4} &\le D^2 + 4FG.
\end{align}
From $4FG\le\frac{16F^2}{\sd_1^2} + \frac{\sd_1^2G^2}{4}$, we get 
\begin{align}
2\hat\blacksquare + x^2 &\le D^2 + \frac{16F^2}{\sd_1^2}.\label{proof:thm:improved:rate:eq4} 
\end{align}
If we use instead $4FG\le\frac{32F^2}{\sd_1^2} + \frac{\sd_1^2G^2}{8}$, we get 
\begin{align}
\frac{\sd_1^2 G^2}{8} &\le D^2 + \frac{32F^2}{\sd_1^2}.\label{proof:thm:improved:rate:eq5}
\end{align}

\item[Case 2.2:] $FG\le H$. Suppose first $\triangle\le0$. From $2FG\le\frac{4F^2}{\sd_1^2} + \frac{\sd_1^2G^2}{4}$ and \eqref{proof:thm:improved:rate:eq3}, we get 
\begin{align}
2\hat\blacksquare + x^2 &\le D^2 + \frac{4F^2}{\sd_1^2}.\label{proof:thm:improved:rate:eq6}
\end{align}
If instead we use $2FG\le\frac{8F^2}{\sd_1^2} + \frac{\sd_1^2G^2}{8}$, we obtain 
\begin{align}
\frac{\sd_1^2 G^2}{8} &\le D^2 + \frac{8F^2}{\sd_1^2}.\label{proof:thm:improved:rate:eq7}
\end{align}

Suppose now $\triangle\ge0$. Hence $[\bfDelta_{\bfB},\bfDelta_{\bfGamma},\bold{0}]\in\calC_{\bfB,\bfGamma}(3,\gamma_{\calR},\gamma_{\calS},0)$. By Lemma \ref{lemma:aug:rest:convexity}, $\triangle\le 3\hat rG/2$. From \eqref{proof:thm:improved:rate:eq3}, 
\begin{align}
2\hat\blacksquare + x^2 + \frac{\sd_1^2 G^2}{4} &\le D^2 + (2F + 3\hat r)G.
\end{align}
Proceeding similarly as before, we obtain from the displayed bound that 
\begin{align}
2\hat\blacksquare + x^2 &\le D^2 + \frac{(2F+3\hat r)^2}{\sd_1^2},\label{proof:thm:improved:rate:eq8}\\
\frac{\sd_1^2 G^2}{8} &\le D^2 + \frac{2(2F+3\hat r)^2}{\sd_1^2}.\label{proof:thm:improved:rate:eq9}
\end{align}
\end{description}
\end{description}
The proof of \eqref{thm:improved:rate:main:rate:eq1} follows by taking the largest of the bounds in \eqref{proof:thm:improved:rate:eq1}, \eqref{proof:thm:improved:rate:eq4}, \eqref{proof:thm:improved:rate:eq6} and \eqref{proof:thm:improved:rate:eq8}. The proof of \eqref{thm:improved:rate:main:rate:eq2} follows by taking the largest of the bounds in \eqref{proof:thm:improved:rate:eq2}, \eqref{proof:thm:improved:rate:eq5}, \eqref{proof:thm:improved:rate:eq7} and \eqref{proof:thm:improved:rate:eq9}.

\section{Proof of Theorem \ref{thm:tr:reg:matrix:decomp}}\label{proof:thm:tr:reg:matrix:decomp}

In the following $\calR:=\Vert\cdot\Vert_N$, 
$\calS:=\Vert\cdot\Vert_1$, $\sa=\sa^*/\sqrt{n}$ and $\sf_*=2\sa^*/\sqrt{n}$. Recall that $\bfSigma$ is the identity matrix. We know that 
$\mathscr{G}(\bfSigma^{1/2}\mbB_{\Vert\cdot\Vert_N})\lesssim\sqrt{d_1+d_2}$, $\mathscr{G}(\bfSigma^{1/2}\mbB_1^p)\lesssim\sqrt{\log p}$ and 
$\mathscr{G}(\mbB_\sharp)\lesssim1$. Next, we assume that $n\ge C_0L^4(1+\log(1/\delta))$ for an absolute constant to be determined next. We will also use that $L\ge1$. In the following, $C>0$ is the universal constant stated in Proposition \ref{proposition:properties:subgaussian:designs}. Without loss on generality, we assume the constant $\sc_*$ in Theorem \ref{thm:improved:rate} is $\ge1$. No effort is made to optimize the numerical constants.

By Proposition \ref{proposition:properties:subgaussian:designs}(i) and taking $C_0\ge1$ large enough, we get that, on an event 
$\calE_1$ of probability $\ge1-\delta$, $\PP_{\Vert\cdot\Vert_N,\Vert\cdot\Vert_1}(\sc_1(\delta),\sc_2,\sc_3,\sc_4)$ holds with constants
\begin{align}
\sc_1(\delta) \asymp CL^2\frac{1+\sqrt{\log(1/\delta)}}{\sqrt{n}},
\quad
\sc_2\asymp CL^2\sqrt{\frac{d_1+d_2}{n}}, \\
\sc_3\asymp CL^2\sqrt{\frac{\log p}{n}},
\quad
\sc_4 \asymp CL^2\sqrt{\frac{d_1+d_2}{n}}\cdot\sqrt{\frac{\log p}{n}}.
\end{align}
By Proposition \ref{proposition:properties:subgaussian:designs}(ii) and taking $C_0\gtrsim C^2$ large enough, we get that, on an event $\calE_2$ of probability $\ge1-2\delta$, $\TP_{\Vert\cdot\Vert_N}(\sa_1,\sa_2)$ and 
$\TP_{\Vert\cdot\Vert_1}(\bar\sa_1,\bar\sa_2)$ hold with constants 
$\sa_1=\bar\sa_1\in (0,1)$ and 
\begin{align}
\sa_2 \asymp CL^2\sqrt{\frac{d_1+d_2}{n}},\quad
\bar\sa_2\asymp CL^2\sqrt{\frac{\log p}{n}}.
\end{align}
By Proposition \ref{proposition:properties:subgaussian:designs}(iii), on an event 
$\calE_3$ of probability $\ge1-\delta$, $\IP_{\Vert\cdot\Vert_N,\Vert\cdot\Vert_1,\Vert\cdot\Vert_\sharp}(\sb_1(\delta),\sb_2,\sb_3,\sb_4)$ holds with constants
$\sb_4\asymp\frac{CL}{\sqrt{n}}$, 
\begin{align}
\sb_1(\delta) \asymp CL\frac{1+\sqrt{\log(1/\delta)}}{\sqrt{n}},
\quad
\sb_2\asymp CL\sqrt{\frac{d_1+d_2}{n}}, 
\quad
\sb_3\asymp CL\sqrt{\frac{\log p}{n}}.
\end{align}
By Proposition \ref{proposition:properties:subgaussian:designs}(iv) and $C_0\ge1$ large enough, we have that, on an event $\calE_4$ of probability $\ge1-\delta$, $\MP_{\Vert\cdot\Vert_N,\Vert\cdot\Vert_1,\Vert\cdot\Vert_\sharp}(\sf_1(\delta),\sf_2,\sf_3,\sf_4)$ holds with constants $\sf_4\asymp\frac{C\sigma}{\sqrt{n}}$, 
\begin{align}
\sf_1(\delta) \asymp C\sigma L\frac{2+3\sqrt{\log(1/\delta)}}{\sqrt{n}},
\quad
\sf_2\asymp C\sigma L\sqrt{\frac{d_1+d_2}{n}},
\quad
\sf_3\asymp C\sigma L\sqrt{\frac{\log p}{n}}.
\end{align}
Finally, by Lemma \ref{lemma:ATP} and taking $C_0\gtrsim C^2$ large enough, we have that, on the event $\calE_1\cap\calE_2\cap\calE_3\cap\calE_4$ of probability $\ge1-5\delta$, all the previous stated properties hold and 
$\ATP_{\Vert\cdot\Vert_N,\Vert\cdot\Vert_1,\Vert\cdot\Vert_\sharp}(\sd_1,\sd_2,\sd_3,\sd_4)$ holds with constants $\sd_1\in(0,1)$, 
$\sd_4\asymp\frac{CL}{\sqrt{n}}$, 
\begin{align}
\sd_2\asymp CL^2\sqrt{\frac{d_1+d_2}{n}}, \quad
\sd_3\asymp CL^2\sqrt{\frac{\log p}{n}}.
\end{align}

The rest of the proof will happen on the event 
$\calE_1\cap\calE_2\cap\calE_3\cap\calE_4$. We will invoke the general Theorem \ref{thm:improved:rate}. Conditions (i)-(iii) were shown previously. We now verify conditions (iv)-(v). Note that
\begin{align}
4[(\sigma\sd_2)\vee(2\sf_2+2\sc_*\sigma\sb_2)]
\lesssim (1+\sc_*)C\sigma L^2\sqrt{\frac{d_1+d_2}{n}}\asymp\lambda.
\end{align}
Also, 
\begin{align}
4[(\sigma\sd_3)\vee(2\sf_3+2\sf_*+2\sc_*\sigma\sb_3)]
\lesssim (1+\sc_*)C\sigma L^2\sqrt{\frac{\log p}{n}}+\frac{\sa^*}{\sqrt{n}}\asymp\chi.
\end{align}
Next, we choose $\tau\asymp C_1\sc_*^2C\sigma L^2/\sqrt{n}$ for some absolute constant $C_1\ge1$. We have 
$
4[(\sigma\sd_4)\vee(2\sf_4)]\lesssim C\sigma L/\sqrt{n}\lesssim\tau. 
$
This shows (iv). Finally, by the choice of $\tau$, 
\begin{align}
\hat\sf_1 =\sf_1 + \sc_*(\sigma\sb_1) + 2\sc_*^2\sigma^2(\nicefrac{\sb_4}{\tau})
\lesssim (1+\sc_*)C\sigma L\frac{1+\sqrt{\log(1/\delta)}}{\sqrt{n}}
+ \frac{1}{C_1L}\sigma, 
\end{align}
which can be set strictly less than $\sigma\sd_1/2\asymp\sigma$ taking $C_0\gtrsim C^2(1+\sc_*)^2$ and $C_1\ge1$ large enough. Hence, (v) holds.

In what follows, we verify the conditions \eqref{cond4:general:norm}-\eqref{cond9:general:norm}. Let $[\bfB,\bfGamma]$ with $D=\Vert\frX^{(n)}(\bfB+\bfGamma)-\boldf^{(n)}\Vert_2$, $\rank(\bfB)\le r$, 
$\Vert\bfGamma\Vert_0\le s$ and $\Vert\bfB\Vert_\infty\le\sa^*/\sqrt{n}$. Conditions \eqref{cond4:general:norm}-\eqref{cond5:general:norm} hold. Condition \eqref{cond6:general:norm} also holds since, by the dual-norm inequality, isotropy and the facts that $\Vert\hat\bfB\Vert_\infty\le\sa^*/\sqrt{n}$ and $\Vert\bfB\Vert_\infty\le\sa^*/\sqrt{n}$,
$$
|\langle\bfDelta_{\bfB},\bfDelta_{\bfGamma}\rangle_\Pi|\le
\Vert\bfDelta_{\bfB}\Vert_\infty\Vert\bfDelta_{\bfGamma}\Vert_1
\le\frac{2\sa^*}{\sqrt{n}}\Vert\bfDelta_{\bfGamma}\Vert_1.
$$
We have that $\Psi_{\Vert\cdot\Vert_N}(\calP_{\bfB}(\bfDelta_{\bfB}))\le\sqrt{r}$. By isotropy $\mu(\bfB):=\mu\left(\calC_{\bfB,\Vert\cdot\Vert_N}(6)\right)=1$. Similarly, $\Psi_{\Vert\cdot\Vert_1}(\calP_{\bfGamma}(\bfDelta_{\bfGamma}))\le\sqrt{s}$ and
$\mu(\bfGamma):=\mu\left(\calC_{\bfGamma,\Vert\cdot\Vert_1}(6)\right)=1$. By the choice of $(\lambda,\chi,\tau)$,
\begin{align}
r^2 &\asymp (1+\sc_*)^2C^2\sigma^2 L^4\cdot\frac{r(d_1+d_2)}{n} + (1+\sc_*)^2C^2\sigma^2 L^4\cdot\frac{s\log p}{n} + \frac{(\sa^*)^2s}{n}\\
&\quad\quad + C_1^2\sc_*^4C^2\sigma^2L^2\epsilon\log(e/\epsilon),
\end{align}
where we used that $\Omega^2\le 2o\log(en/o)$. We have 
$r<\sigma\sd_1\asymp\sigma$ if
\begin{align}
C_2(1+\sc_*)CL^2\cdot
\sqrt{\frac{r(d_1+d_2)}{n}}&<1,\label{thm:response:matrix:decomp:eq1}\\
C_2(1+\sc_*)CL^2\cdot
\sqrt{\frac{s\log p}{n}} + \sa^*\sqrt{\frac{s}{n}}
&<1,\label{thm:response:matrix:decomp:eq2}\\
C_1\sc_*^2CL^2\sqrt{\epsilon\log(e/\epsilon)}<c_1,\label{thm:response:matrix:decomp:eq3}
\end{align}
for some absolute constants $C_2\ge1$ and $c_1\in(0,1)$.

Next, we show that $D^2+\spadesuit_2(\sf_1,r)\le\sc_*^2\sigma^2$. For this to be true it is sufficient that $D\le\frac{\sc_*\sigma}{3}$ and 
$\spadesuit_2^{1/2}(\sf_1,r)\le\frac{3\sc_*\sigma}{4}$. Note that
$$
\spadesuit_2^{1/2}(\sf_1,r) =\frac{4\sf_1+3r}{\sd_1}
\lesssim C\sigma L\frac{1+\sqrt{\log(1/\delta)}}{\sqrt{n}}
+r,
$$
which is not greater than $\frac{3\sc_*\sigma}{4}$ if
\begin{align}
\calO(1)C L\frac{1+\sqrt{\log(1/\delta)}}{\sqrt{n}}&\le\frac{3\sc_*}{16}\\
\calO(1)(1+\sc_*)CL^2\cdot
\sqrt{\frac{r(d_1+d_2)}{n}}&\le\frac{3\sc_*}{16},\\
\calO(1)(1+\sc_*)CL^2\cdot
\sqrt{\frac{s\log p}{n}} + \calO(1)\sa^*\sqrt{\frac{s}{n}}&\le\frac{3\sc_*}{16}, \\
\calO(1)C_1\sc_*^2CL^2\sqrt{\epsilon\log(e/\epsilon)}&\le \frac{3\sc_*}{16}.
\end{align}

Next, we show that 
$
\left[(\nicefrac{D^2}{\sigma\sd_1})\right]\bigvee\left[(\nicefrac{2\sqrt{2}}{\sd_1})D + \clubsuit_2(\sf_1,r)\right]\le \sc_*\sigma.
$
For that to happen it is sufficient that 
$
D\le [(\sqrt{\sd_1 \sc_*})\wedge(\sd_1\sc_*/6\sqrt{2})]\sigma
$
and $\clubsuit_2(\sf_1,r)\le\frac{3\sc_*\sigma}{4}$. Since $\sd_1\asymp1$ and $\sc_*\ge1$, the first relation is guaranteed if $D\lesssim \sqrt{\sc_*}\sigma$. We have 
$$
\clubsuit_2(\sf_1,r) =\frac{16\sf_1+12r}{\sd_1^2}
\lesssim C\sigma L\frac{1+\sqrt{\log(1/\delta)}}{\sqrt{n}}
+ r,
$$
which is not greater than $\frac{3\sc_*\sigma}{4}$ if all the four conditions in the previous display are met --- up to enlarging the constants if necessary. 

Optimizing the previous inequalities, we conclude that for conditions \eqref{cond4:general:norm}-\eqref{cond9:general:norm} to hold it is sufficient to take $\sc_*\asymp1$ large enough, $C_0\gtrsim C^2$ large enough and that \eqref{thm:response:matrix:decomp:eq1}, \eqref{thm:response:matrix:decomp:eq2}, \eqref{thm:response:matrix:decomp:eq3} hold (possibly enlarging $C_2$ if necessary) and $D\lesssim\sigma$  holds (possibly decreasing the constant if necessary).

In conclusion, assume $n\ge C_0L^4(1+\log(1/\delta))$ with $C_0\gtrsim C^2$ large enough and set the hyper-parameters to be
\begin{align}
\lambda\asymp C\sigma L^2\sqrt{\frac{d_1+d_2}{n}},\quad
\chi\asymp C\sigma L^2\sqrt{\frac{\log p}{n}}+\frac{\sa^*}{\sqrt{n}},\quad
\tau\asymp\frac{C_1C\sigma L^2}{\sqrt{n}}.
\end{align}
Assume \eqref{thm:response:matrix:decomp:eq1}, \eqref{thm:response:matrix:decomp:eq2}, \eqref{thm:response:matrix:decomp:eq3} hold. Let $[\bfB,\bfGamma]$ with $D=\Vert\frX^{(n)}(\bfB+\bfGamma)-\boldf^{(n)}\Vert_2\lesssim\sigma$, 
$\rank(\bfB)\le r$, $\Vert\bfGamma\Vert_0\le s$ and 
$\Vert\bfB\Vert_\infty\le\sa^*/\sqrt{n}$. Then conditions (i)-(v) and conditions \eqref{cond4:general:norm}-\eqref{cond9:general:norm} of Theorem \ref{thm:improved:rate} are satisfied, implying \eqref{thm:improved:rate:main:rate:eq1}-\eqref{thm:improved:rate:main:rate:eq2} for such $[\bfB,\bfGamma]$. We now verify the statement of Theorem \ref{thm:tr:reg:matrix:decomp}. 

First, note that
\begin{align}
\hat r^2 &\lesssim 
 C^2\sigma^2 L^4\cdot\frac{r(d_1+d_2)}{n}
+ C^2\sigma^2 L^4\cdot\frac{s\log p}{n} + \frac{(\sa^*)^2s}{n}\\
r^2&\lesssim \hat r^2 + C_1^2C^2\sigma^2L^4\epsilon\log(e/\epsilon),\\
\end{align}
Next, using that $D\lesssim\sigma$, $\sd_1\asymp1$, $n\ge C_0L^4(1+\log(1/\delta))$ and $\sb_1r\le \frac{\sigma\sb_1^2}{2}+\frac{r^2}{2\sigma}$,
\begin{align}
F &\lesssim \sf_1 + \sb_1(\sigma + \sf_1 + r ) +\frac{1}{C_1L\sigma}(D^2+(\sf_1+r)^2)\\
&\lesssim (1+\sb_1)\sf_1 + \sigma\sb_1 + \frac{\sigma\sb_1^2}{2}+\frac{r^2}{2\sigma}
+\frac{D}{C_1 L} + \frac{\sf_1^2}{C_1L\sigma} +\frac{r^2}{C_1L\sigma}\\
&\lesssim \frac{D}{C_1 L} + \sf_1 + \sigma\sb_1 + \frac{r^2}{C_1L\sigma}.
\end{align}

From \eqref{thm:response:matrix:decomp:eq1}-\eqref{thm:response:matrix:decomp:eq3}, we can write
\begin{align}
\frac{r^4}{C_1^2L^2\sigma^2} &\lesssim 
(\nicefrac{C}{C_1})^2\sigma^2 L^2\cdot\frac{r(d_1+d_2)}{n} + 
(\nicefrac{C}{C_1})^2\sigma^2 L^2\cdot\frac{s\log p}{n} 
+ \frac{1}{C_1^2L^2\sigma^2}\cdot\frac{(\sa^*)^2s}{n}\\
&+ C_1^2C^4\sigma^2 L^6\epsilon^2\log^2(e/\epsilon). 
\end{align}

We have that
\begin{align}
\spadesuit_2(F,\hat r)  = \frac{1}{\sd_1^2}(4F+3\hat r)^2\
\lesssim F^2 + \hat r^2 
\lesssim  \frac{D^2}{C_1^2 L^2} 
+ \sf_1^2 + \sigma^2\sb_1^2 + \hat r^2 + \frac{r^4}{C_1^2L^2\sigma^2}.
\end{align}
It follows that
\begin{align}
D^2 + \spadesuit_2(F,\hat r) & \le D^2\left(1+\frac{\calO(1)}{C_1^2L^2}\right)\\
&+ \calO(1)C^2\cdot\sigma^2L^2\frac{1+\log(1/\delta)}{n}\\
&+ \calO(1)C^2\cdot\sigma^2 L^4\cdot\frac{r(d_1+d_2)}{n}
+ \calO(1)C^2\cdot\sigma^2 L^4\cdot\frac{s\log p}{n}
+ \left(1+\frac{\calO(1)}{C_1^2\sigma^2L^2}\right)\frac{(\sa^*)^2s}{n}\\
&+\calO(1)C^4\cdot C_1^2\sigma^2 L^6\epsilon^2\log^2(e/\epsilon). 
\end{align}
This establishes \eqref{thm:tr:reg:matrix:decomp:eq1} in Theorem \ref{thm:tr:reg:matrix:decomp} using \eqref{thm:improved:rate:main:rate:eq1} in Theorem \ref{thm:improved:rate}.

Note that, since $D\lesssim \sigma$ and 
$\sd_1\asymp1$, 
$$
(\nicefrac{D^2}{\sigma\sd_1})\bigvee\left[(\nicefrac{2\sqrt{2}}{\sd_1})D + \clubsuit_2(F,\hat r)\right]
\lesssim D + \clubsuit_2(F,\hat r).
$$
Also, $\clubsuit_2(F,\hat r)\lesssim\spadesuit_2^{1/2}(F,\hat r)$. Thus
\begin{align}
(\nicefrac{D^2}{\sigma\sd_1})\bigvee\left[(\nicefrac{2\sqrt{2}}{\sd_1})D + \clubsuit_2(F,\hat r)\right]&\le D\left(\calO(1)+\frac{\calO(1)}{C_1L}\right)\\
&+ \calO(1)C\cdot\sigma L\frac{1+\sqrt{\log(1/\delta)}}{\sqrt{n}}\\
&+ \calO(1)C\cdot\sigma L^2\cdot\sqrt{\frac{r(d_1+d_2)}{n}}\\
&+ \calO(1)C\cdot\sigma L^2\cdot\sqrt{\frac{s\log p}{n}}
+ \left(1+\frac{\calO(1)}{C_1\sigma L}\right)\sa^*\sqrt{\frac{s}{n}}\\
&+\calO(1)C^2\cdot C_1\sigma L^3\epsilon\log(e/\epsilon).
\end{align}
This establishes \eqref{thm:tr:reg:matrix:decomp:eq2} in Theorem \ref{thm:tr:reg:matrix:decomp} using \eqref{thm:improved:rate:main:rate:eq2} in Theorem \ref{thm:improved:rate}.

\section{Proof of Proposition \ref{prop:lower:bound:tr:matrix:decomp}}\label{s:proof:prop:lower:bound:tr:matrix:decomp}

In trace-regression with matrix decomposition, the design is random. Up to conditioning on the feature data, the proof of Proposition \ref{prop:lower:bound:tr:matrix:decomp} follows almost identical arguments in the proof of Theorem 2 in \cite{2012agarwal:negahban:wainwright} for the matrix decomposition problem with identity design $(\frX=I)$. We present a sketch here for completeness.  

We prepare the ground so to apply Fano's method. For any 
$\bfTheta:=[\bfB,\bfGamma]\in(\mdR^p)^2$, we define for convenience
$
\Vert\bfTheta\Vert_F^2:=\Vert\bfB\Vert_F^2
+\Vert\bfGamma\Vert_F^2.
$
Given $\eta>0$ and $M\in\mathbb{N}$, a $\eta$-packing of $\calA(r,s,\sa^*)$ of size $M$ is a finite subset 
$\calA=\{\bfTheta_1,\ldots,\bfTheta_M\}$ of 
$\calA(r,s,\sa^*)$ satisfying 
$
\Vert\bfTheta_\ell-\bfTheta_k\Vert_F\ge\eta
$
for all $\ell\neq k$. For model \eqref{equation:least-squares:regression}, let $y_1^n:=\{y_i\}_{i\in[n]}$ and $\bfX_1^n:=\{\bfX_i\}_{i\in[n]}$. 
For any $k\in[M]$ and $i\in[n]$, we will denote by 
$
P^k_{y_1^n|\bfX_1^n}
$
(or $P^k_{y_i|\bfX_1^n}$)
the conditional distribution of $y_1^n$ (or $y_i$) given 
$\bfX_1^n$ corresponding to the model \eqref{equation:least-squares:regression} with parameters $\bfTheta_k=[\bfB_k,\bfGamma_k]$ belonging to the packing $\calA$. Being normal distributions, they are mutually absolute continuous. We denote by $\KL(\probn\Vert\bfQ)$ the Kullback-Leibler divergence between probability measures $\probn$ and $\bfQ$. In that setting, Fano's method assures that
\begin{align}
\inf_{\hat\bfTheta}\sup_{\bfTheta^*\in\calA(r,s,\sa^*)}\prob_{\bfTheta^*}
\left\{\Vert\hat\bfTheta-\bfTheta^*\Vert_F^2
\ge\eta^2\right\}\ge
1-\frac{\frac{1}{\binom{M}{2}}\sum_{k,\ell=1}^M\esp_{\bfX_1^n}\KL\left(P^k_{y_1^n|\bfX_1^n}\big\Vert P^\ell_{y_1^n|\bfX_1^n}\right)+\log 2}{\log M}.
\label{equation:Fano}
\end{align}
The proof follows from an union bound on the following two separate lower bounds. 

\emph{Lower bound on the low-spikeness bias}. It is sufficient to give a lower bound on $\calA(1,s,\sa^*)$. We define the subset $\calA$ of $\calA(1,s,\sa^*)$ with size $M=4$ by
\begin{align}
\calA:=\{[\bfB^*,-\bfB^*],[-\bfB^*,\bfB^*],
(\nicefrac{1}{2})[\bfB^*,-\bfB^*],
[\bf0,\bf0]\}
\end{align}
using the matrix $\bfB^*\in\mdR^p$ defined by
$$
\bfB^*:=\frac{\sa^*}{\sqrt{n}}
\left[\begin{array}{c}
1\\
0\\
\vdots\\
0
\end{array}\right]
\underbrace{\left[\begin{array}{ccccccc}
1 &
1 &
\cdots &
1 &
0 &
\cdots &
0 
\end{array}\right]^\top}_{\boldf^\top},
$$
where $\boldf\in\re^{d_2}$ has $s$ unit coordinates. It is easy to check that $\calA$ is a $\eta$-packing of 
$\calA(1,s,\sa^*)$ with $\eta=c_0\sa^*\sqrt{\frac{s}{n}}$ for some constant $c_0>0$. For any element $[\bfB_k,\bfGamma_k]$ of $\calA$, $\bfB_k+\bfGamma_k=0$ implying $P^k_{y_1^n|\bfX_1^n}\sim\calN_n(0,n\sigma^2\bfI)$. Hence, for any $k\neq\ell$, 
$$
\KL\left(P^k_{y_1^n|\bfX_1^n}\big\Vert P^\ell_{y_1^n|\bfX_1^n}\right)=0.
$$
From \eqref{equation:Fano}, one obtains a lower bound with rate of order $\sa^*\sqrt{\frac{s}{n}}$ with positive probability. 

\emph{Lower bound on the estimation error}. From the packing constructions in Lemmas 5 and 6 in \cite{2012agarwal:negahban:wainwright}, one may show that, for $d_1,d_2\ge10$, $\sa^*\ge 32\sqrt{\log p}$, $s<p$ and any $\eta>0$, there exists a $\eta$-packing $\calA=\{\bfTheta_k\}_{k\in[M]}$ of $\calA(r,s,\sa^*)$ with size 
\begin{align}
M\ge \frac{1}{4}\exp\left\{\frac{s}{2}\log\frac{p-s}{s/2}+\frac{r(d_1+d_2)}{256}\right\}
,\label{equation:M:lower:bound}
\end{align}
satisfying $\Vert\bfTheta_k\Vert_F\le3\eta$ for any $k\in[M]$. By independence of $\{\xi_i\}_{i\in[n]}$ and $\bfX_1^n$ and isotropy,
\begin{align}
\esp_{\bfX_1^n}\KL\left(P^k_{y_1^n|\bfX_1^n}\big\Vert P^\ell_{y_1^n|\bfX_1^n}\right)=\sum_{i\in[n]}\esp_{\bfX_1^n}\KL\left(P^k_{y_i|\bfX_i}\big\Vert P^\ell_{y_i|\bfX_i}\right)
=\frac{n\Vert\bfTheta_k-\bfTheta_k\Vert_F^2}{2\sigma^2}\le \frac{18n}{\sigma^2}\eta^2.
\end{align}
From \eqref{equation:Fano} and \eqref{equation:M:lower:bound}, one then checks that tacking
\begin{align}
\eta^2:=c_o\sigma^2\left\{\frac{r(d_1+d_2)}{n}
+\frac{s}{n}\log\left(\frac{p-s}{s/2}\right)\right\},
\end{align}
for some constant $c_0>0$, one obtains a lower bound with rate of order $\eta^2$ with positive probability.

\section{Proof of Theorem \ref{thm:response:sparse-low-rank:regression}, case (i)}\label{s:proof:thm:response:sparse-low-rank:regression:i}
In the following $\calR:=\Vert\cdot\Vert_1$ and $\calS\equiv0$, 
$\hat\bfGamma=\bfGamma=\bold{0}$, $\sa=\infty$ and $\sf_*=\chi=0$. A standard Gaussian maximal inequality implies 
$\mathscr{G}(\bfSigma^{1/2}\mbB_1^p)\lesssim\rho_1(\bfSigma)\sqrt{\log p}$ and Proposition E.2 in  \cite{2018bellec:lecue:tsybakov} implies 
$\mathscr{G}(\mbB_\sharp)\lesssim1$. Next, we assume that $n\ge C_0L^4(1+\log(1/\delta))$ for an absolute constant to be determined next. We will also use that $L\ge1$. In the following, $C>0$ is the universal constant stated in Proposition \ref{proposition:properties:subgaussian:designs}. Without loss on generality, we assume the constant $\sc_*$ in Theorem \ref{thm:improved:rate} is $\ge1$. No effort is made to optimize the numerical constants. 

By Proposition \ref{proposition:properties:subgaussian:designs}(ii) and taking $C_0\gtrsim C^2$ large enough, we get that, on an event 
$\calE_1$ of probability $\ge1-\delta$, $\TP_{\Vert\cdot\Vert_1}(\sa_1,\sa_2)$ holds with constants $\sa_1\in(0,1)$ and 
\begin{align}
\sa_2 \asymp CL^2\rho_1(\bfSigma)\sqrt{\frac{\log p}{n}}.
\end{align}
By Proposition \ref{proposition:properties:subgaussian:designs}(iii), on an event 
$\calE_2$ of probability $\ge1-\delta$, $\IP_{\Vert\cdot\Vert_1,0,\Vert\cdot\Vert_\sharp}(\sb_1(\delta),\sb_2,0,\sb_4)$ holds with constants
$\sb_4\asymp\frac{CL}{\sqrt{n}}$, 
\begin{align}
\sb_1(\delta) \asymp CL\frac{1+\sqrt{\log(1/\delta)}}{\sqrt{n}},
\quad\mbox{and}\quad
\sb_2\asymp CL\rho_1(\bfSigma)\sqrt{\frac{\log p}{n}}.
\end{align}
By Proposition \ref{proposition:properties:subgaussian:designs}(iv) and $C_0\ge1$ large enough, we have that, on an event $\calE_3$ of probability $\ge1-\delta$, $\MP_{\Vert\cdot\Vert_1,0,\Vert\cdot\Vert_\sharp}(\sf_1(\delta),\sf_2,0,\sf_4)$ holds with constants $\sf_4\asymp\frac{C\sigma}{\sqrt{n}}$, 
\begin{align}
\sf_1(\delta) \asymp C\sigma L\frac{1+\sqrt{\log(1/\delta)}}{\sqrt{n}},
\quad\mbox{and}\quad
\sf_2\asymp C\sigma L\rho_1(\bfSigma)\sqrt{\frac{\log p}{n}}.
\end{align}
Finally, from Lemma \ref{lemma:ATP:calS=0} and taking $C_0\gtrsim C^2$ large enough, we have that, on the event $\calE_1\cap\calE_2\cap\calE_3$ of probability $\ge1-3\delta$, all the previous stated properties hold and 
$\ATP_{\Vert\cdot\Vert_1,0,\Vert\cdot\Vert_\sharp}(\sd_1,\sd_2,0,\sd_4)$ holds\footnote{Since there is no matrix decomposition, the proof of Theorem \ref{thm:response:sparse-low-rank:regression} only needs a particular version of the mentioned properties --- with $\calS\equiv0$ and $\bfW\equiv\bf0$. We give a proof derived from the unified Theorem \ref{thm:improved:rate} --- which handles matrix decomposition.} with constants $\sd_1\in(0,1)$, $\sd_4\asymp\frac{CL}{\sqrt{n}}$, 
\begin{align}
\sd_2\asymp CL^2\rho_1(\bfSigma)\sqrt{\frac{\log p}{n}}.
\end{align}

The rest of the proof will happen on the event 
$\calE_1\cap\calE_2\cap\calE_3$. We will invoke the general Theorem \ref{thm:improved:rate}. Conditions (i)-(iii) were shown previously. We now verify conditions (iv)-(v). Note that
\begin{align}
4[(\sigma\sd_2)\vee(2\sf_2+2\sc_*\sigma\sb_2)]
\lesssim (1+\sc_*)C\sigma L^2\rho_1(\bfSigma)\sqrt{\frac{\log p}{n}}\asymp\lambda.
\end{align}
Next, we choose $\tau\asymp C_1\sc_*^2C\sigma L^2/\sqrt{n}$ for some absolute constant $C_1\ge1$. We have 
$
4[(\sigma\sd_4)\vee(2\sf_4)]\lesssim C\sigma L/\sqrt{n}\lesssim\tau. 
$
This shows (iv). Finally, by the choice of $\tau$, 
\begin{align}
\hat\sf_1 =\sf_1 + \sc_*(\sigma\sb_1) + 2\sc_*^2\sigma^2(\nicefrac{\sb_4}{\tau})
\lesssim (1+\sc_*)C\sigma L\frac{1+\sqrt{\log(1/\delta)}}{\sqrt{n}}
+ \frac{1}{C_1L}\sigma, 
\end{align}
which can be set strictly less than $\sigma\sd_1/2\asymp\sigma$ taking $C_0\gtrsim C^2(1+\sc_*)^2$ and $C_1\ge1$ large enough. Hence, (v) holds.

In what follows, we verify the conditions \eqref{cond4:general:norm}-\eqref{cond9:general:norm}. Let $\bb$ with $D=\Vert\frX^{(n)}(\bb)-\boldf^{(n)}\Vert_2$ and $\Vert\bb\Vert_0\le s$. Note that conditions \eqref{cond5:general:norm}-\eqref{cond6:general:norm} are trivially satisfied. We have that $\Psi_{\Vert\cdot\Vert_1}(\calP_{\bb}(\bfDelta_{\bb}))\le\sqrt{s}$.  Let us denote $\mu(\bb):=\mu\left(\calC_{\bb,\Vert\cdot\Vert_1}(6)\right)$.
By the choice of $(\lambda,\tau)$,
\begin{align}
r^2 &\asymp (1+\sc_*)^2C^2\sigma^2 L^4\rho_1^2(\bfSigma)\mu^2(\bb)
\cdot\frac{s\log p}{n}\\
& + C_1^2\sc_*^4C^2\sigma^2L^4\epsilon\log(e/\epsilon),
\end{align}
where we used that $\Omega^2\le 2o\log(en/o)$. We have 
$r<\sigma\sd_1\asymp\sigma$ if
\begin{align}
C_2(1+\sc_*)CL^2\rho_1(\bfSigma)
\mu(\bb)\cdot
\sqrt{\frac{s\log p}{n}}&<1,\label{thm:response:sparse-low-rank:reg:eq1}\\
C_1\sc_*^2CL^2\sqrt{\epsilon\log(e/\epsilon)}<c_1,\label{thm:response:sparse-low-rank:reg:eq2}
\end{align}
for some absolute constants $C_2\ge1$ and $c_1\in(0,1)$. 

Next, we show that $D^2+\spadesuit_2(\sf_1,r)\le\sc_*^2\sigma^2$. For this to be true it is sufficient that $D\le\frac{\sc_*\sigma}{3}$ and 
$\spadesuit_2^{1/2}(\sf_1,r)\le\frac{3\sc_*\sigma}{4}$. Note that
$$
\spadesuit_2^{1/2}(\sf_1,r) =\frac{4\sf_1+3r}{\sd_1}
\lesssim C\sigma L\frac{1+\sqrt{\log(1/\delta)}}{\sqrt{n}}
+r,
$$
which is not greater than $\frac{3\sc_*\sigma}{4}$ if
\begin{align}
\calO(1)C L\frac{1+\sqrt{\log(1/\delta)}}{\sqrt{n}}&\le\frac{\sc_*}{4}\\
\calO(1)(1+\sc_*)CL^2\rho_1(\bfSigma)
\mu(\bb)\cdot
\sqrt{\frac{s\log p}{n}}&\le\frac{\sc_*}{4},\\
\calO(1)C_1\sc_*^2CL^2\sqrt{\epsilon\log(e/\epsilon)}&\le \frac{\sc_*}{4}.
\end{align}

Next, we show that 
$
\left[(\nicefrac{D^2}{\sigma\sd_1})\right]\bigvee\left[(\nicefrac{2\sqrt{2}}{\sd_1})D + \clubsuit_2(\sf_1,r)\right]\le \sc_*\sigma.
$
For that to happen it is sufficient that 
$
D\le [(\sqrt{\sd_1 \sc_*})\wedge(\sd_1\sc_*/6\sqrt{2})]\sigma
$
and $\clubsuit_2(\sf_1,r)\le\frac{3\sc_*\sigma}{4}$. Since $\sd_1\asymp1$ and $\sc_*\ge1$, the first relation is guaranteed if $D\lesssim \sqrt{\sc_*}\sigma$. We have 
$$
\clubsuit_2(\sf_1,r) =\frac{16\sf_1+12r}{\sd_1^2}
\lesssim C\sigma L\frac{1+\sqrt{\log(1/\delta)}}{\sqrt{n}}
+ r,
$$
which is not greater than $\frac{3\sc_*\sigma}{4}$ if the previous three conditions in display are met --- up to enlarging the constants if necessary.  

Optimizing the previous inequalities, we conclude that for conditions \eqref{cond4:general:norm}-\eqref{cond9:general:norm} to hold it is sufficient to take $\sc_*\asymp1$ large enough, $C_0\gtrsim C^2$ large enough and that \eqref{thm:response:sparse-low-rank:reg:eq1}-\eqref{thm:response:sparse-low-rank:reg:eq2} and $D\lesssim\sigma$ (possibly with a small enough constant) hold.

In conclusion, assume $n\ge C_0L^4(1+\log(1/\delta))$ with $C_0\gtrsim C^2$ large enough and set the hyper-parameters to be
\begin{align}
\lambda\asymp C\sigma L^2\rho_1(\bfSigma)\sqrt{\frac{\log p}{n}},\quad
\tau\asymp\frac{C_1C\sigma L^2}{\sqrt{n}}.
\end{align}
Assume \eqref{thm:response:sparse-low-rank:reg:eq2} holds. Let any 
$\bb\in\re^p$ such that $D=\Vert\frX^{(n)}(\bb)-\boldf^{(n)}\Vert_2\lesssim\sigma$ and \eqref{thm:response:sparse-low-rank:reg:eq1} all hold. Then conditions (i)-(v) and conditions \eqref{cond4:general:norm}-\eqref{cond9:general:norm} of Theorem \ref{thm:improved:rate} are satisfied, implying \eqref{thm:improved:rate:main:rate:eq1}-\eqref{thm:improved:rate:main:rate:eq2} for such $\bb$. We now verify the statement of Theorem \ref{thm:response:sparse-low-rank:regression}, case (i). 

First, note that
\begin{align}
\hat r^2 = s\lambda^2\mu^2(\bb) &\lesssim 
 C^2\sigma^2 L^4\rho_1^2(\bfSigma)\mu^2(\bb)
\cdot\frac{s\log p}{n}\\
r^2&\lesssim \hat r^2 + C_1^2C^2\sigma^2L^4\epsilon\log(e/\epsilon),\\
\end{align}
Next, using that $D\lesssim\sigma$, $\sd_1\asymp1$, $n\ge C_0L^4(1+\log(1/\delta))$ and $\sb_1r\le \frac{\sigma\sb_1^2}{2}+\frac{r^2}{2\sigma}$,
\begin{align}
F &\lesssim \sf_1 + \sb_1(\sigma + \sf_1 + r ) +\frac{1}{C_1\sigma L}(D^2+(\sf_1+r)^2)\\
&\lesssim (1+\sb_1)\sf_1 + \sigma\sb_1 + \frac{\sigma\sb_1^2}{2}+\frac{r^2}{2\sigma}
+\frac{D}{C_1L} + \frac{\sf_1^2}{C_1\sigma L} +\frac{r^2}{C_1\sigma L}\\
&\lesssim \frac{D}{C_1 L} + \sf_1 + \sigma\sb_1 + \frac{r^2}{C_1\sigma L}.
\end{align}

From \eqref{thm:response:sparse-low-rank:reg:eq1}-\eqref{thm:response:sparse-low-rank:reg:eq2}, we can write
\begin{align}
\frac{r^4}{C_1^2\sigma^2L^2} &\lesssim 
(\nicefrac{C}{C_1})^2\sigma^2 L^2\rho_1^2(\bfSigma)\mu^2(\bb)
\cdot\frac{s\log p}{n} 
+ C_1^2C^4\sigma^2 L^6\epsilon^2\log^2(e/\epsilon). 
\end{align}

We have that
\begin{align}
\spadesuit_2(F,\hat r) & = \frac{1}{\sd_1^2}(4F+3\hat r)^2\\
&\lesssim F^2 + \hat r^2 \\
&\lesssim  \frac{D^2}{C_1^2L^2} 
+ \sf_1^2 + \sigma^2\sb_1^2 + \hat r^2 + \frac{r^4}{C_1^2\sigma^2L^2}.
\end{align}
It follows that
\begin{align}
D^2 + \spadesuit_2(F,\hat r) & \le D^2\left(1+\frac{\calO(1)}{C_1^2L^2}\right)\\
&+ \calO(1)C^2\cdot\sigma^2L^2\frac{1+\log(1/\delta)}{n}
+ \calO(1)C^2\cdot\sigma^2 L^4\rho_1^2(\bfSigma)\mu^2(\bb)
\cdot\frac{s\log p}{n} \\
&+\calO(1)C^4\cdot C_1^2\sigma^2 L^6\epsilon^2\log^2(e/\epsilon). 
\end{align}
This establishes \eqref{thm:response:sparse-low-rank:regression:eq1} in Theorem \ref{thm:response:sparse-low-rank:regression}, case (i) using \eqref{thm:improved:rate:main:rate:eq1} in Theorem \ref{thm:improved:rate}.

Note that, since $D\lesssim \sigma$ and 
$\sd_1\asymp1$, 
$$
(\nicefrac{D^2}{\sigma\sd_1})\bigvee\left[(\nicefrac{2\sqrt{2}}{\sd_1})D + \clubsuit_2(F,\hat r)\right]
\lesssim D + \clubsuit_2(F,\hat r).
$$
Also, $\clubsuit_2(F,\hat r)\lesssim\spadesuit_2^{1/2}(F,\hat r)$. Thus
\begin{align}
(\nicefrac{D^2}{\sigma\sd_1})\bigvee\left[(\nicefrac{2\sqrt{2}}{\sd_1})D + \clubsuit_2(F,\hat r)\right]&\le D\left(\calO(1)+\frac{\calO(1)}{C_1L}\right)\\
&+ \calO(1)C\cdot\sigma L\frac{1+\sqrt{\log(1/\delta)}}{\sqrt{n}}
+ \calO(1)C\cdot\sigma L^2\rho_1(\bfSigma)\mu(\bb)
\cdot\sqrt{\frac{s\log p}{n}}\\
&+\calO(1)C^2\cdot C_1\sigma L^{3}\epsilon\log(e/\epsilon).
\end{align}
This establishes \eqref{thm:response:sparse-low-rank:regression:eq2} in Theorem \ref{thm:response:sparse-low-rank:regression}, case (i) using \eqref{thm:improved:rate:main:rate:eq2} in Theorem \ref{thm:improved:rate}.

\section{Proof of Theorem \ref{thm:response:sparse-low-rank:regression}, case (ii)}\label{s:proof:thm:response:sparse-low-rank:regression:ii}

We need to consider different cones. Recall that 
$\Omega:=\sqrt{\sum_{i=1}^o\omega_i^2}$ and let $\bar\Omega_s:=\sqrt{\sum_{j=1}^s\bar\omega_j^2}$. 
\begin{definition}\label{def:cones:slope:norm}
For $c_0,\gamma>0$, let 
\begin{align}
\overline\calC_s(c_0)&:=\left\{\bv\in\re^p
:\sum_{j=s+1}^p\bar\omega_j\bv_j^\sharp\le c_0\bar\Omega_s\Vert\bv\Vert_2\right\},\\
\overline\calC_s(c_0,\gamma)&:=\left\{[\bv,\bu]\in\re^p\times\re^n: \gamma\sum_{j=s+1}^p\bar\omega_j\bv_j^\sharp
+\sum_{i=o+1}^n\omega_i\bu_i^\sharp\le c_0\left[\gamma\bar\Omega_s\Vert\bv\Vert_2+\Omega\Vert\bu\Vert_2\right]\right\}.
\end{align}
\end{definition}

\begin{definition}\label{def:r:slope:norm}
Given $[\bv,\bu]\in\re^p\times\re^n$, $s\in[n]$ and $c_0,\alpha>0$, define 
\begin{align}
r_{\lambda,\alpha,c_0}(s):=\{\lambda^2\bar\Omega_s^2\mu^2(\overline\calC_s(2c_0))+\alpha^2\}^{1/2},
\end{align}
and 
\begin{align}
\triangle_{\lambda,\tau}(\bv,\bu)&:=(\nicefrac{3\lambda\bar\Omega_s}{2})\Vert\bv\Vert_2
-(\nicefrac{\lambda}{2})\sum_{j=s+1}^p\bar\omega_j\bv_j^\sharp
+(\nicefrac{3\tau\Omega}{2})\Vert\bu\Vert_2 -(\nicefrac{\tau}{2})\sum_{i=o+1}^n\omega_i\bu_i^\sharp.
\end{align}
\end{definition}

The proof with $\calR=\Vert\cdot\Vert_\sharp$, the Slope norm in $\re^p$, follows a similar path to the proof of Theorem \ref{thm:response:sparse-low-rank:regression}, case (i). First, we claim that a very similar theorem to Theorem \ref{thm:improved:rate} holds but with the minor changes:
\begin{quote}
We set $\calS\equiv0$, $\hat\bfGamma=\bfGamma=\bold{0}$, $\sa=\infty$ and 
$\sf_*=\chi=0$. As \eqref{cond5:general:norm}-\eqref{cond6:general:norm} are trivially satisfied they can be removed. Also, we replace $r:=r_{\lambda,\chi,\tau\Omega,3}(\bfDelta_{\bfB},\bfDelta_{\bfGamma}|\bfB,\bfGamma)$ with $r:=r_{\lambda,\tau\Omega,3}(s)$ and $\hat r:=r_{\lambda,\chi,0,3}(\bfDelta_{\bfB},\bfDelta_{\bfGamma}|\bfB,\bfGamma)$ with $\hat r:=r_{\lambda,0,3}(s)$.
\end{quote}
Let us call it Theorem \ref{thm:improved:rate}'. Using this theorem, setting $\mu(\bb):=\mu(\overline\calC_s(6))$ for given $\bb$ with 
$\Vert\bb\Vert_0\le s$, using the bound $\mathscr{G}(\bfSigma^{1/2}\mbB_\sharp)\lesssim\rho_1(\bfSigma)$ --- which follows from Proposition E.2 in  \cite{2018bellec:lecue:tsybakov} --- and the fact that 
$\bar\Omega_s\le2s\log(ep/s)$, the proof of Theorem \ref{thm:response:sparse-low-rank:regression}, case (ii) follows very similar arguments to the case (i). 

Next, we highlight the minor changes in the proof of Theorem \ref{thm:improved:rate}'. Lemmas \ref{lemma:recursion:1st:order:condition}-\ref{lemma:MP+ATP} are unchanged. First, we obtain a variation of Lemma \ref{lemma:aug:rest:convexity} --- with a similar proof.\footnote{In fact, we only need to consider the cases $\bv\in\overline\calC_s(2c_0)$ and 
$\bv\notin\overline\calC_s(2c_0)$.}. 

\begin{lemma}\label{lemma:aug:rest:convexity:slope}
Define $\gamma:=\lambda/\tau$ and let $c_0>0$. Then, for any $[\bv,\bu]\in\overline\calC_{s}(c_0,\gamma)$, 
\begin{align}
\triangle_{\lambda,\tau}(\bv,\bu)&\le(\nicefrac{3}{2})\cdot r_{\lambda,\tau\Omega,c_0}(s)\cdot\Vert[\bv,\bu]\Vert_\Pi,\label{lemma:aug:rest:convexity:slope:eq2}\\
\lambda\Vert\bv\Vert_\sharp + \tau\big\|\bu\big\|_\sharp&\le 
2(c_0+1)\cdot r_{\lambda,\tau\Omega,c_0}(s)\cdot
\Vert[\bv,\bu]\Vert_\Pi.\label{lemma:aug:rest:convexity:slope:eq3}
\end{align}
\end{lemma}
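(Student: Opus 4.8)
The plan is to imitate the proof of Lemma~\ref{lemma:aug:rest:convexity} essentially verbatim, with one simplification and one change of bookkeeping. The simplification is that there is a single primal variable $\bv$, so the four-case split of that lemma collapses to the two cases $\bv\in\overline\calC_s(2c_0)$ and $\bv\notin\overline\calC_s(2c_0)$ indicated in the footnote to the statement. The change is that the role played in Lemma~\ref{lemma:aug:rest:convexity} by the support projection $\calP_\bfB$ (through $\calR(\calP_\bfB(\bfV))=\Psi_\calR(\calP_\bfB(\bfV))\Vert\calP_\bfB(\bfV)\Vert_F$) is now played by the head block of the weighted coordinates of $\bv$: the only inputs beyond elementary algebra are the Cauchy--Schwarz estimates $\sum_{j=1}^{s}\bar\omega_j\bv_j^\sharp\le\bar\Omega_s\Vert\bv\Vert_2$ and $\sum_{i=1}^{o}\omega_i\bu_i^\sharp\le\Omega\Vert\bu\Vert_2$, together with the definition of $\mu(\cdot)$, which gives $\Vert\bv\Vert_2\le\mu(\overline\calC_s(2c_0))\Vert\bv\Vert_\Pi$ whenever $\bv\in\overline\calC_s(2c_0)$. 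Write $r:=r_{\lambda,\tau\Omega,c_0}(s)=\{\lambda^2\bar\Omega_s^2\mu^2(\overline\calC_s(2c_0))+\tau^2\Omega^2\}^{1/2}$. The structural observation used repeatedly is the following: if $\Omega\Vert\bu\Vert_2\le\gamma\bar\Omega_s\Vert\bv\Vert_2$, then plugging this into the defining inequality of $\overline\calC_s(c_0,\gamma)$ (Definition~\ref{def:cones:slope:norm}) yields $\sum_{j>s}\bar\omega_j\bv_j^\sharp\le 2c_0\bar\Omega_s\Vert\bv\Vert_2$, i.e.\ $\bv\in\overline\calC_s(2c_0)$; contrapositively, off $\overline\calC_s(2c_0)$ one has $\lambda\bar\Omega_s\Vert\bv\Vert_2<\tau\Omega\Vert\bu\Vert_2$ (since $\gamma=\lambda/\tau$).

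For \eqref{lemma:aug:rest:convexity:slope:eq2} I would argue as follows. If $\bv\in\overline\calC_s(2c_0)$, discard the two negative tail terms in $\triangle_{\lambda,\tau}$, replace $\Vert\bv\Vert_2$ by $\mu(\overline\calC_s(2c_0))\Vert\bv\Vert_\Pi$, and apply Cauchy--Schwarz in $\re^2$ to the pairs $(\lambda\bar\Omega_s\mu(\overline\calC_s(2c_0)),\tau\Omega)$ and $(\Vert\bv\Vert_\Pi,\Vert\bu\Vert_2)$, obtaining $\triangle_{\lambda,\tau}(\bv,\bu)\le\tfrac32 r\Vert[\bv,\bu]\Vert_\Pi$. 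If $\bv\notin\overline\calC_s(2c_0)$, keep the $\bv$-tail term and bound it below by $c_0\lambda\bar\Omega_s\Vert\bv\Vert_2$, discard the $\bu$-tail term, and use $\lambda\bar\Omega_s\Vert\bv\Vert_2<\tau\Omega\Vert\bu\Vert_2$: then $\triangle_{\lambda,\tau}(\bv,\bu)\le(\tfrac32-c_0)\lambda\bar\Omega_s\Vert\bv\Vert_2+\tfrac{3\tau\Omega}{2}\Vert\bu\Vert_2\le\tfrac{3\tau\Omega}{2}\Vert\bu\Vert_2\le\tfrac32 r\Vert[\bv,\bu]\Vert_\Pi$, where the last line uses $c_0\ge\tfrac32$ (true in every invocation, where $c_0$ is an absolute constant $\ge 3$), $\tau\Omega\le r$ and $\Vert\bu\Vert_2\le\Vert[\bv,\bu]\Vert_\Pi$.

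For \eqref{lemma:aug:rest:convexity:slope:eq3} I would write $\Vert\bv\Vert_\sharp=\sum_{j=1}^{s}\bar\omega_j\bv_j^\sharp+\sum_{j>s}\bar\omega_j\bv_j^\sharp$ and $\Vert\bu\Vert_\sharp=\sum_{i=1}^{o}\omega_i\bu_i^\sharp+\sum_{i>o}\omega_i\bu_i^\sharp$. The head blocks are at most $\bar\Omega_s\Vert\bv\Vert_2$ and $\Omega\Vert\bu\Vert_2$; the tail blocks, after multiplying the defining inequality of $\overline\calC_s(c_0,\gamma)$ by $\tau$ and recalling $\lambda=\tau\gamma$, satisfy $\lambda\sum_{j>s}\bar\omega_j\bv_j^\sharp+\tau\sum_{i>o}\omega_i\bu_i^\sharp\le c_0(\lambda\bar\Omega_s\Vert\bv\Vert_2+\tau\Omega\Vert\bu\Vert_2)$. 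Hence $\lambda\Vert\bv\Vert_\sharp+\tau\Vert\bu\Vert_\sharp\le(c_0+1)(\lambda\bar\Omega_s\Vert\bv\Vert_2+\tau\Omega\Vert\bu\Vert_2)$, and it remains to show $\lambda\bar\Omega_s\Vert\bv\Vert_2+\tau\Omega\Vert\bu\Vert_2\le 2r\Vert[\bv,\bu]\Vert_\Pi$. This follows from the same dichotomy: on $\overline\calC_s(2c_0)$ one gets $\le r\Vert[\bv,\bu]\Vert_\Pi$ by the Cauchy--Schwarz step above, and off it $\lambda\bar\Omega_s\Vert\bv\Vert_2<\tau\Omega\Vert\bu\Vert_2$ gives $\le 2\tau\Omega\Vert\bu\Vert_2\le 2r\Vert[\bv,\bu]\Vert_\Pi$. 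Combining, $\lambda\Vert\bv\Vert_\sharp+\tau\Vert\bu\Vert_\sharp\le 2(c_0+1)r\Vert[\bv,\bu]\Vert_\Pi$.

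The argument is purely deterministic and short; no concentration input enters (the probabilistic content has already been isolated in Definition~\ref{def:r:slope:norm} and the properties $\IP$, $\ATP$, $\MP$). The only point that requires care — and it is exactly the point shared with Lemma~\ref{lemma:aug:rest:convexity} — is the degenerate regime $\bv\notin\overline\calC_s(2c_0)$, where one must exploit membership of $[\bv,\bu]$ in the \emph{joint} cone $\overline\calC_s(c_0,\gamma)$ to show that the $\bv$-contribution is absorbed by the $\bu$-contribution; once that is in place, everything reduces to Cauchy--Schwarz and the head/tail decomposition of the Slope norm, with the numerical constants $\nicefrac{3}{2}$ and $2(c_0+1)$ tracked exactly as in Lemma~\ref{lemma:aug:rest:convexity}.
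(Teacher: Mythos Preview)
Your proposal is correct and follows exactly the approach the paper indicates: the two-case split $\bv\in\overline\calC_s(2c_0)$ versus $\bv\notin\overline\calC_s(2c_0)$, mirroring Lemma~\ref{lemma:aug:rest:convexity} with the head/tail decomposition of the Slope norm and the Cauchy--Schwarz bounds $\sum_{j\le s}\bar\omega_j\bv_j^\sharp\le\bar\Omega_s\Vert\bv\Vert_2$, $\sum_{i\le o}\omega_i\bu_i^\sharp\le\Omega\Vert\bu\Vert_2$. Your derivation of the key implication ``$\bv\notin\overline\calC_s(2c_0)$ and $[\bv,\bu]\in\overline\calC_s(c_0,\gamma)$ imply $\lambda\bar\Omega_s\Vert\bv\Vert_2<\tau\Omega\Vert\bu\Vert_2$'' is exactly the analogue of the step in Case~2 of Lemma~\ref{lemma:aug:rest:convexity}, and your treatment of \eqref{lemma:aug:rest:convexity:slope:eq3} matches the paper's argument line by line.

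The only point to flag is the one you already identify: your off-cone bound for \eqref{lemma:aug:rest:convexity:slope:eq2} uses $c_0\ge\tfrac32$, whereas the lemma is stated for all $c_0>0$. This is harmless because every invocation in the paper has $c_0\in\{3,6\}$, and indeed the paper's own proof of the parent inequality \eqref{lemma:aug:rest:convexity:eq2} is written without a case split (relying on the projection $\calP_\bfB(\bfV)$ lying trivially in the cone), a shortcut that does not carry over verbatim to the Slope setting since here the head term involves the full $\Vert\bv\Vert_2$ rather than a projected quantity. Your case split is the right way to handle this, and your caveat on $c_0$ is honest and sufficient.
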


Using this lemma, we obtain a variation of Proposition \ref{prop:suboptimal:rate} --- again with a similar proof, using 
$\triangle:=\triangle_{\lambda,\tau}(\bfDelta_{\bb},\bfDelta^{\hat\btheta})$ instead of $\triangle:=\triangle_{\lambda,\chi,\tau}(\bfDelta_{\bfB},\bfDelta_{\bfGamma},\bfDelta^{\hat\btheta}|\bfB,\bfGamma)$ and $r:=r_{\lambda,\tau\Omega,3}(s)$ instead of $r:=r_{\lambda,\chi,\tau\Omega,3}(\bfDelta_{\bfB},\bfDelta_{\bfGamma}|\bfB,\bfGamma)$. 
\begin{proposition}\label{prop:suboptimal:rate:slope}
Suppose the conditions (i)-(ii) of Theorem \ref{thm:improved:rate}' hold and, additionally,
\begin{itemize}
\item[\rm (iii')] 
$
\lambda\ge4[(\sigma\sd_2)\vee(2\sf_2)],
$ 
and
$
\tau\ge 4[(\sigma\sd_4)\vee(2\sf_4)].
$
\item[\rm (iv')] $2\sf_1\le\sigma\sd_1$. 
\end{itemize}
For any $D\ge0$ and $\bb$ satisfying the constraints
\eqref{cond4:general:norm} (with $\bfGamma=\bold{0}$) and \eqref{cond1:general:norm} (with $r:=r_{\lambda,\tau\Omega,3}(s)$),
\begin{align}
(\nicefrac{1}{2})(\lambda\Vert\bfDelta_{\bb}\Vert_\sharp + \tau\Vert\bfDelta^{\hat\btheta}\Vert_\sharp)
+ \Vert\bfDelta^{(n)} + \bfDelta^{\hat\btheta}\Vert_2^2
&\le D^2 + \spadesuit_2(\sf_1,r), \label{prop:suboptimal:rate:slope:eq1}
\end{align}
where $r := r_{\lambda,\tau\Omega,3}(s)$. 
Moreover, 
\begin{align}
\Vert[\bfDelta_{\bb},\bfDelta^{\hat\btheta}]\Vert_{\Pi} &\le
\left[(\nicefrac{D^2}{\sigma\sd_1})\right]\bigvee\left[(\nicefrac{2\sqrt{2}}{\sd_1})D + \clubsuit_2(\sf_1,r)\right]. 
\label{prop:suboptimal:rate:slope:eq2}
\end{align}
\end{proposition}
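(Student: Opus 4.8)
The plan is to follow the proof of Proposition \ref{prop:suboptimal:rate} almost verbatim, after specializing to the no-decomposition setting ($\calS\equiv0$, $\bfGamma=\bf0$, $\chi=\sf_*=0$, $\sa=\infty$) and replacing the two ingredients that are specific to the general decomposable-norm analysis: the comparison Lemma \ref{lemma:A1:B} is replaced by the Slope comparison Lemma \ref{lemma:A1}, and Lemma \ref{lemma:aug:rest:convexity} is replaced by its Slope counterpart Lemma \ref{lemma:aug:rest:convexity:slope}. Concretely, I would first invoke Lemma \ref{lemma:MP+ATP} (in the simplified $\calS\equiv0$ form) with $\bfGamma=\bf0$ to obtain the basic recursion
\[
2\blacksquare+x^2+\left(\sd_1 G-(\nicefrac{\blacktriangle}{\sigma})\right)_+^2\le D^2+\left(2\sf_1 G-\blacktriangle\right)+2\left(\blacktriangle+\blacksquare+\blacktriangledown\right),
\]
where $G:=\Vert[\bfDelta_{\bb},\bfDelta^{\hat\btheta}]\Vert_{\Pi}$, $x:=\Vert\bfDelta^{(n)}+\bfDelta^{\hat\btheta}\Vert_2$, $\blacksquare:=(\nicefrac{\lambda}{4})\Vert\bfDelta_{\bb}\Vert_\sharp+(\nicefrac{\tau}{4})\Vert\bfDelta^{\hat\btheta}\Vert_\sharp$, and $\blacktriangle,\blacktriangledown$ denote the quantities of Lemma \ref{lemma:MP+ATP} (with the obvious simplifications, in particular all $\calS$-terms vanish).

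Next I would apply Lemma \ref{lemma:A1} with $\nu=1/2$ to both $\bfDelta^{\hat\btheta}$ (using $\Vert\btheta^*\Vert_0\le o$) and $\bfDelta_{\bb}$ (using $\Vert\bb\Vert_0\le s$), and combine it with the threshold hypothesis (iii') to bound $\blacktriangle+\blacksquare+\blacktriangledown\le\triangle_{\lambda,\tau}(\bfDelta_{\bb},\bfDelta^{\hat\btheta})=:\triangle$; I would then split $\triangle=H-I$ into its on-support part $H:=(\nicefrac{3\lambda\bar\Omega_s}{2})\Vert\bfDelta_{\bb}\Vert_2+(\nicefrac{3\tau\Omega}{2})\Vert\bfDelta^{\hat\btheta}\Vert_2$ and the nonnegative off-support remainder $I$. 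From here the proof is the two-case dichotomy of Proposition \ref{prop:suboptimal:rate}: when $\sd_1 G\le\blacktriangle/2\sigma$, one shows $\triangle\le0$ --- for if $\triangle>0$ then $[\bfDelta_{\bb},\bfDelta^{\hat\btheta}]\in\overline\calC_s(3,\lambda/\tau)$, and Lemma \ref{lemma:aug:rest:convexity:slope} together with constraint \eqref{cond1:general:norm} forces $\blacktriangle\le 2rG<2\sigma\sd_1 G$, a contradiction --- after which hypothesis (iv') yields $2\blacksquare+x^2+\blacktriangle/2\le D^2$ and hence $\sd_1 G\le D^2/\sigma$; when $\sd_1 G\ge\blacktriangle/2\sigma$, one runs the Cases 2.1--2.2 argument, bounding $\triangle\le H\le\sf_1 G$ when $\sf_1 G\ge H$, or $\triangle\le(\nicefrac{3}{2})rG$ by Lemma \ref{lemma:aug:rest:convexity:slope} when the iterate lies in $\overline\calC_s(3,\lambda/\tau)$, and absorbing the terms linear in $G$ by Young's inequality (with the splittings $4\sf_1 G\le 16\sf_1^2/\sd_1^2+\sd_1^2 G^2/4$, $2\sf_1 G\le 4\sf_1^2/\sd_1^2+\sd_1^2 G^2/4$, and their variants). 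Inequalities \eqref{prop:suboptimal:rate:slope:eq1} and \eqref{prop:suboptimal:rate:slope:eq2} then follow by taking the worst of the resulting bounds, exactly as at the end of the proof of Proposition \ref{prop:suboptimal:rate}.

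The step I expect to be the real (though modest) obstacle is not any single inequality --- each is a mechanical copy of the Proposition \ref{prop:suboptimal:rate} computation --- but the bookkeeping guaranteeing that Lemma \ref{lemma:A1}, applied simultaneously to the Slope norm on $\re^p$ and the weighted rearrangement norm on $\re^n$, reproduces precisely the functional $\triangle_{\lambda,\tau}$ of Definition \ref{def:r:slope:norm}, and that the associated dimension-reduction cone is $\overline\calC_s(3,\lambda/\tau)$ of Definition \ref{def:cones:slope:norm} rather than the three-component cone $\calC_{\bfB,\bfGamma}$. Once that identification is made, Lemma \ref{lemma:aug:rest:convexity:slope} supplies the control $\lambda\Vert\bfDelta_{\bb}\Vert_\sharp+\tau\Vert\bfDelta^{\hat\btheta}\Vert_\sharp\le 2(c_0+1)\,r_{\lambda,\tau\Omega,c_0}(s)\,G$ on that cone, and every remaining estimate carries over word for word.
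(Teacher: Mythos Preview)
Your proposal is correct and follows essentially the same approach as the paper: the paper explicitly states that Proposition~\ref{prop:suboptimal:rate:slope} is proved ``with a similar proof'' to Proposition~\ref{prop:suboptimal:rate}, using $\triangle:=\triangle_{\lambda,\tau}(\bfDelta_{\bb},\bfDelta^{\hat\btheta})$ and $r:=r_{\lambda,\tau\Omega,3}(s)$ in place of their decomposable-norm counterparts, and invoking Lemma~\ref{lemma:aug:rest:convexity:slope} in place of Lemma~\ref{lemma:aug:rest:convexity}. Your outline reproduces this substitution scheme precisely, including the use of Lemma~\ref{lemma:A1} (applied both in $\re^p$ with weights $\bar\bomega$ and sparsity $s$, and in $\re^n$ with weights $\bomega$ and sparsity $o$) to obtain $\blacktriangle+\blacksquare+\blacktriangledown\le\triangle_{\lambda,\tau}$, and the same Case~1/Case~2 dichotomy thereafter.
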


Next, Lemmas \ref{lemma:recursion:delta:bb:general:norm}-\ref{lemma:MP+IP+ATP} are unchanged. Using the previous auxiliary results, we claim that the proof of Theorem \ref{thm:improved:rate}' follows the same arguments in the proof of  Theorem \ref{thm:improved:rate} --- using 
$\triangle:=\triangle_{\lambda,0}(\bfDelta_{\bb},\bold{0})$ instead of 
$\triangle:=\triangle_{\lambda,\chi,0}(\bfDelta_{\bfB},\bfDelta_{\bfGamma},\bold{0}|\bfB,\bfGamma)$, $r:=r_{\lambda,0,3}(s)$ instead of $r:=r_{\lambda,\chi,0,3}(\bfDelta_{\bfB},\bfDelta_{\bfGamma}|\bfB,\bfGamma)$ and $\overline\calC_s(3)$ instead of $\calC_{\bfB,\bfGamma}(3,\gamma_{\calR},\gamma_{\calS},0)$. 

\section{Proof of Theorem \ref{thm:response:sparse-low-rank:regression}, case (iii)}\label{s:thm:response:sparse-low-rank:regression:iii}

Invoking Theorem \ref{thm:improved:rate}, the proof follows exact the same guidelines as in the Proof of Theorem \ref{thm:response:sparse-low-rank:regression}, case (i) but using the nuclear norm $\calR:=\Vert\cdot\Vert_N$. We highlight next the minor changes. First, $\mathscr{G}(\frS^{1/2}(\mbB_{\Vert\cdot\Vert_N}))\le\rho_N(\bfSigma)(\sqrt{d_1}+\sqrt{d_2})$ by Lemma H.1 in \cite{2011negahban:wainwright} --- up to an absolute constant. As a consequence, in the constants $\{\sa_i\}$, $\{\sb_i\}$, $\{\sc_i\}$, $\{\sd_i\}$ and $\{\sf_i\}$, we have to replace the factor $\rho_1(\bfSigma)\sqrt{\frac{\log p}{n}}$ by $\rho_N(\bfSigma)\sqrt{\frac{d_1+d_2}{n}}$. Second, we have $\Psi_{\Vert\cdot\Vert_N}(\calP_{\bfB}(\bfDelta_{\bfB}))\le\sqrt{r}$ --- so that we replace $\mu(\bb)=\mu\left(\calC_{\bb,\Vert\cdot\Vert_1}(6)\right)$ by $\mu(\bfB)=\mu\left(\calC_{\bfB,\Vert\cdot\Vert_N}(6)\right)$. 

\section{Proof of Theorem \ref{thm:improved:rate:q=1'}}\label{s:proof:thm:improved:rate:q=1'}
Throughout this section $\calR$ is a decomposable norm on 
$\mdR^p$ and we grant Assumption \ref{assump:label:contamination} and model \eqref{equation:structural:equation}. We set 
$\hat\bxi:=\by-\frX(\hat\bfB)-\sqrt{n}\hat\btheta$, 
$\bfDelta:=\frX(\hat\bfB)-\boldf$ and $\hat\sigma:=\Vert\bxi^{(n)}\Vert_2$. Given $\bfB$, we define the quantities
$
\bxi_{\bfB} := \by-\frX(\bfB)-\sqrt{n}\btheta^*
$
and
$
\calE_{\bfB} := \frX(\bfB) - \boldf.
$
Note that $\bxi=\bxi_{\bfB}+\calE_{\bfB}$. 

\begin{lemma}\label{lemma:recursion:1st:order:condition:q=1'}
For all $\bfB\in\mdR^p$, it holds that
\begin{align}
0&\le \langle\bxi^{(n)}_{\bfB},\frM^{(n)}(\bfDelta_{\bfB},\bfDelta^{\hat\btheta})\rangle
+(\hat\sigma\lambda)\big(\calR(\bfB) - \calR(\hat\bfB)\big) +  
(\hat\sigma\tau)\big(\|\btheta^*\|_\sharp -\|\hat\btheta\|_\sharp\big)\\
&+\Vert\calE_{\bfB}^{(n)}\Vert_2\left(\lambda\calR(\bfDelta_{\bfB})
+ \tau\Vert\bfDelta^{\hat\btheta}\Vert_\sharp\right).
\label{lemma:recursion:1st:order:condition:q=1':eq1}
\end{align}
and also
\begin{align}
\langle \bfDelta^{(n)} +\bfDelta^{\hat\btheta}, \frM^{(n)}(\bfDelta_{\bfB},\bfDelta^{\hat\btheta})\rangle  
&\le \langle\bxi^{(n)},\frM^{(n)}(\bfDelta_{\bfB},\bfDelta^{\hat\btheta})\rangle\\
&+(\hat\sigma\lambda)\big(\calR(\bfB) - \calR(\hat\bfB)\big) 
+(\hat\sigma\tau)\big(\|\btheta^*\|_\sharp -\|\hat\btheta\|_\sharp\big)\\
&+\left(\Vert\frM^{(n)}(\bfDelta_{\bfB},\bfDelta^{\hat\btheta})\Vert_2+\Vert\calE_{\bfB}^{(n)}\Vert_2\right)\left(
\lambda\calR(\bfDelta_{\bfB})+\tau\Vert\bfDelta^{\hat\btheta}\Vert_\sharp
\right).
\label{lemma:recursion:1st:order:condition:q=1':eq2}
\end{align}
\end{lemma}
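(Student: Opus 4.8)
The plan is to read off both inequalities from the subgradient optimality conditions of \eqref{equation:aug:slope:rob:estimator:q=1}, following the proof of Lemma~\ref{lemma:recursion:1st:order:condition} but keeping track of the feature peculiar to the $q=1$ loss. Writing the data-fit term as $\Vert\by^{(n)}-\frM^{(n)}(\bfB,\btheta)\Vert_2$ (with $\frM^{(n)}(\bfV,\bu)=\frX^{(n)}(\bfV)+\bu$) and recalling $\hat\bxi^{(n)}=\by^{(n)}-\frM^{(n)}(\hat\bfB,\hat\btheta)$, the point absent in the $q=2$ analysis is that the subdifferential of the (non-squared) $\ell_2$-norm at $[\hat\bfB,\hat\btheta]$ involves the \emph{normalized} residual $\hat u:=\hat\bxi^{(n)}/\Vert\hat\bxi^{(n)}\Vert_2$ — an arbitrary element of $\mbB_2^n$ when $\hat\bxi^{(n)}=\bf0$, a degenerate case dispatched directly by the objective-value comparison below. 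Stationarity then gives $\bfV\in\partial\calR(\hat\bfB)$, $\bw\in\partial\Vert\hat\btheta\Vert_\sharp$ with $\langle\hat u,\frM^{(n)}(\hat\bfB-\bfB,\hat\btheta-\btheta)\rangle=\lambda\llangle\bfV,\hat\bfB-\bfB\rrangle+\tau\langle\bw,\hat\btheta-\btheta\rangle$ for all $[\bfB,\btheta]$; multiplying through by $\Vert\hat\bxi^{(n)}\Vert_2>0$ shows that $[\hat\bfB,\hat\btheta]$ satisfies \emph{exactly} the first-order relations of an augmented square-loss estimator with penalties $\big(\Vert\hat\bxi^{(n)}\Vert_2\lambda,\ \Vert\hat\bxi^{(n)}\Vert_2\tau\big)$.

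From here I would mimic the proof of Lemma~\ref{lemma:recursion:1st:order:condition}: test the relation at the reference point $[\bfB,\btheta^*]$, apply the subgradient inequalities for $\calR$ and $\Vert\cdot\Vert_\sharp$, and substitute $\by^{(n)}=\boldf^{(n)}+\btheta^*+\bxi^{(n)}$ together with the two ways of writing the residual, $\hat\bxi^{(n)}=\bxi^{(n)}-\bfDelta^{(n)}-\bfDelta^{\hat\btheta}=\bxi_{\bfB}^{(n)}-\frM^{(n)}(\bfDelta_{\bfB},\bfDelta^{\hat\btheta})$. Expanding $\hat\bxi^{(n)}$ the first way reproduces \eqref{lemma:recursion:1st:order:condition:eq} with $\chi=0$, $\bfGamma=\bf0$ and $(\lambda,\tau)$ replaced by $(\Vert\hat\bxi^{(n)}\Vert_2\lambda,\,\Vert\hat\bxi^{(n)}\Vert_2\tau)$; expanding it the second way, and using $\langle\bxi_{\bfB}^{(n)},\frM^{(n)}(\bfDelta_{\bfB},\bfDelta^{\hat\btheta})\rangle-\Vert\frM^{(n)}(\bfDelta_{\bfB},\bfDelta^{\hat\btheta})\Vert_2^2=\langle\hat\bxi^{(n)},\frM^{(n)}(\bfDelta_{\bfB},\bfDelta^{\hat\btheta})\rangle$, yields the companion relation underlying \eqref{lemma:recursion:1st:order:condition:q=1':eq1} — with a nonnegative term $\Vert\frM^{(n)}(\bfDelta_{\bfB},\bfDelta^{\hat\btheta})\Vert_2^2$ on the left to be discarded (or used to absorb a cross term via Young's inequality).

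The remaining, and main, step is to replace the data-dependent factor $\Vert\hat\bxi^{(n)}\Vert_2$ by $\hat\sigma=\Vert\bxi^{(n)}\Vert_2$ at the cost of the error terms displayed in the statement; this is exactly what the auxiliary inequalities (the paper's (89), (91)) do. Two elementary bounds suffice: comparing the objective of \eqref{equation:aug:slope:rob:estimator:q=1} at $[\hat\bfB,\hat\btheta]$ with its value at $[\bfB,\btheta^*]$ gives $\Vert\hat\bxi^{(n)}\Vert_2\le\Vert\bxi_{\bfB}^{(n)}\Vert_2+\lambda(\calR(\bfB)-\calR(\hat\bfB))+\tau(\Vert\btheta^*\Vert_\sharp-\Vert\hat\btheta\Vert_\sharp)$; and, since $\hat\bxi^{(n)}=\bxi_{\bfB}^{(n)}-\frM^{(n)}(\bfDelta_{\bfB},\bfDelta^{\hat\btheta})$ and $\bxi_{\bfB}^{(n)}=\bxi^{(n)}-\calE_{\bfB}^{(n)}$, the triangle inequality gives $\big|\,\Vert\hat\bxi^{(n)}\Vert_2-\hat\sigma\,\big|\le\Vert\calE_{\bfB}^{(n)}\Vert_2+\Vert\frM^{(n)}(\bfDelta_{\bfB},\bfDelta^{\hat\btheta})\Vert_2$. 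Writing the $\Vert\hat\bxi^{(n)}\Vert_2$-scaled penalty difference as $\hat\sigma[\cdots]+(\Vert\hat\bxi^{(n)}\Vert_2-\hat\sigma)[\cdots]$ and controlling the second piece with $|\lambda(\calR(\bfB)-\calR(\hat\bfB))|\le\lambda\calR(\bfDelta_{\bfB})$, $|\tau(\Vert\btheta^*\Vert_\sharp-\Vert\hat\btheta\Vert_\sharp)|\le\tau\Vert\bfDelta^{\hat\btheta}\Vert_\sharp$ produces precisely the correction $(\Vert\frM^{(n)}(\bfDelta_{\bfB},\bfDelta^{\hat\btheta})\Vert_2+\Vert\calE_{\bfB}^{(n)}\Vert_2)(\lambda\calR(\bfDelta_{\bfB})+\tau\Vert\bfDelta^{\hat\btheta}\Vert_\sharp)$ of \eqref{lemma:recursion:1st:order:condition:q=1':eq2}; for \eqref{lemma:recursion:1st:order:condition:q=1':eq1} one instead keeps the $\bxi_{\bfB}^{(n)}$-form, discards the nonnegative $\Vert\frM^{(n)}(\bfDelta_{\bfB},\bfDelta^{\hat\btheta})\Vert_2^2$, and invokes the objective-value bound, which feeds in only $\Vert\calE_{\bfB}^{(n)}\Vert_2$. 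The delicate part is this last bit of bookkeeping: the residual/noise discrepancy unavoidably couples to $\lambda\calR(\bfDelta_{\bfB})+\tau\Vert\bfDelta^{\hat\btheta}\Vert_\sharp$ and to the miss-specification error $\Vert\calE_{\bfB}^{(n)}\Vert_2$, and it is exactly this coupling that later forces $\sc_n$ (and $D$) to be taken of the order of the minimax rate in Theorem~\ref{thm:improved:rate:q=1'}.
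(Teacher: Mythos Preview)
Your argument for \eqref{lemma:recursion:1st:order:condition:q=1':eq2} matches the paper's: use the first-order conditions at $[\hat\bfB,\hat\btheta]$, expand $\hat\bxi^{(n)}=\bxi^{(n)}-\bfDelta^{(n)}-\bfDelta^{\hat\btheta}$, and replace the factor $\Vert\hat\bxi^{(n)}\Vert_2$ by $\hat\sigma$ via $\big|\Vert\hat\bxi^{(n)}\Vert_2-\hat\sigma\big|\le\Vert\calE_{\bfB}^{(n)}\Vert_2+\Vert\frM^{(n)}(\bfDelta_{\bfB},\bfDelta^{\hat\btheta})\Vert_2$.

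For \eqref{lemma:recursion:1st:order:condition:q=1':eq1} there is a genuine gap. Your FOC route gives
\[
0\le \langle\bxi_{\bfB}^{(n)},\frM^{(n)}(\bfDelta_{\bfB},\bfDelta^{\hat\btheta})\rangle-\Vert\frM^{(n)}(\bfDelta_{\bfB},\bfDelta^{\hat\btheta})\Vert_2^2+\Vert\hat\bxi^{(n)}\Vert_2\,P,
\]
with $P:=\lambda(\calR(\bfB)-\calR(\hat\bfB))+\tau(\Vert\btheta^*\Vert_\sharp-\Vert\hat\btheta\Vert_\sharp)$. The multiplier of $P$ is still $\Vert\hat\bxi^{(n)}\Vert_2$, whose distance to $\hat\sigma$ is controlled only by $\Vert\calE_{\bfB}^{(n)}\Vert_2+\Vert\frM^{(n)}(\bfDelta_{\bfB},\bfDelta^{\hat\btheta})\Vert_2$, not by $\Vert\calE_{\bfB}^{(n)}\Vert_2$ alone. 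Neither the objective-value bound $\Vert\hat\bxi^{(n)}\Vert_2\le\Vert\bxi_{\bfB}^{(n)}\Vert_2+P$ nor absorbing the cross term into the discarded $\Vert\frM^{(n)}(\bfDelta_{\bfB},\bfDelta^{\hat\btheta})\Vert_2^2$ eliminates the surplus --- at best you are left with an extra $\tfrac14\big(\lambda\calR(\bfDelta_{\bfB})+\tau\Vert\bfDelta^{\hat\btheta}\Vert_\sharp\big)^2$ that the stated lemma does not allow. The paper sidesteps this by \emph{not} using the FOC for \eqref{lemma:recursion:1st:order:condition:q=1':eq1}: it starts from the objective comparison $0\le\Vert\bxi_{\bfB}^{(n)}\Vert_2-\Vert\hat\bxi^{(n)}\Vert_2+P$ and linearizes $\Vert\cdot\Vert_2$ at the \emph{reference} residual $\bxi_{\bfB}^{(n)}$ (rather than at $\hat\bxi^{(n)}$), so that after multiplying through by $\Vert\bxi_{\bfB}^{(n)}\Vert_2$ one obtains
\[
0\le\langle\bxi_{\bfB}^{(n)},\frM^{(n)}(\bfDelta_{\bfB},\bfDelta^{\hat\btheta})\rangle+\Vert\bxi_{\bfB}^{(n)}\Vert_2\,P.
\]
Now the multiplier is $\Vert\bxi_{\bfB}^{(n)}\Vert_2$, and since $\bxi_{\bfB}=\bxi-\calE_{\bfB}$ one has $\big|\Vert\bxi_{\bfB}^{(n)}\Vert_2-\hat\sigma\big|\le\Vert\calE_{\bfB}^{(n)}\Vert_2$ exactly --- which is why only $\Vert\calE_{\bfB}^{(n)}\Vert_2$ enters \eqref{lemma:recursion:1st:order:condition:q=1':eq1}. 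The asymmetry between the two inequalities is thus intentional: linearize at $\hat\bxi^{(n)}$ for \eqref{lemma:recursion:1st:order:condition:q=1':eq2}, at $\bxi_{\bfB}^{(n)}$ for \eqref{lemma:recursion:1st:order:condition:q=1':eq1}.
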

\begin{proof}
We first prove \eqref{lemma:recursion:1st:order:condition:q=1':eq1}. It is enough to assume 
$\bxi_{\bfB}\neq\bf0$. Comparing the minimality of 
$[\hat\bfB,\hat\btheta]$ with $[\bfB,\btheta^*]$ and by convexity of 
$\bz\mapsto\Vert\bz\Vert_2$,
\begin{align}
0&\le \Vert\bxi^{(n)}_{\bfB}\Vert_2 - \Vert\hat\bxi^{(n)}\Vert_2
+\lambda \big(\calR(\bfB) - \calR(\hat\bfB)\big) +  
\tau\big(\|\btheta^*\|_\sharp -\|\hat\btheta\|_\sharp\big)\\
&\le -\left\langle\frac{\bxi^{(n)}_{\bfB}}{\Vert\bxi^{(n)}_{\bfB}\Vert_2},\hat\bxi^{(n)}-\bxi^{(n)}_{\bfB}\right\rangle
+\lambda \big(\calR(\bfB) - \calR(\hat\bfB)\big) +  
\tau\big(\|\btheta^*\|_\sharp -\|\hat\btheta\|_\sharp\big).
\end{align}
Using $\bxi^{(n)}_{\bfB}=\hat\bxi^{(n)}+\frM^{(n)}(\bfDelta_{\bfB},\bfDelta^{\hat\btheta})$ we get  
\begin{align}
0\le \langle\bxi^{(n)}_{\bfB},\frM^{(n)}(\bfDelta_{\bfB},\bfDelta^{\hat\btheta})\rangle
+\Vert\bxi^{(n)}_{\bfB}\Vert_2\lambda\big(\calR(\bfB) - \calR(\hat\bfB)\big) +  
\Vert\bxi^{(n)}_{\bfB}\Vert_2\tau\big(\|\btheta^*\|_\sharp -\|\hat\btheta\|_\sharp\big).
\end{align}

We now write
\begin{align}
&\Vert\bxi^{(n)}_{\bfB}\Vert_2\lambda\big(\calR(\bfB) - \calR(\hat\bfB)\big) + \Vert\bxi^{(n)}_{\bfB}\Vert_2\tau\big(\|\btheta^*\|_\sharp -\|\hat\btheta\|_\sharp\big)\\
&=\Vert\bxi^{(n)}\Vert_2\lambda\big(\calR(\bfB) - \calR(\hat\bfB)\big) +  
\Vert\bxi^{(n)}\Vert_2\tau\big(\|\btheta^*\|_\sharp -\|\hat\btheta\|_\sharp\big)\\
&+(\Vert\bxi^{(n)}_{\bfB}\Vert_2-\Vert\bxi^{(n)}\Vert_2)\lambda\big(\calR(\bfB) - \calR(\hat\bfB)\big) +  
(\Vert\bxi^{(n)}_{\bfB}\Vert_2-\Vert\bxi^{(n)}\Vert_2)\tau\big(\|\btheta^*\|_\sharp -\|\hat\btheta\|_\sharp\big).
\label{proof:lemma:recursion:1st:order:condition:q=1':eq1}
\end{align}
By convexity of $\bfW\mapsto\calR(\bfW)$ and 
$\btheta\mapsto\Vert\btheta\Vert_\sharp$, there exist $\bfV_{\bfB}\in\calR(\bfB)$ and $\bu_{\btheta^*}\in\partial\Vert\btheta^*\Vert_\sharp$ such that
\begin{align}
&(\Vert\bxi^{(n)}_{\bfB}\Vert_2-\Vert\bxi^{(n)}\Vert_2)\lambda\big(\calR(\bfB) - \calR(\hat\bfB)\big) +  
(\Vert\bxi^{(n)}_{\bfB}\Vert_2-\Vert\bxi^{(n)}\Vert_2)\tau\big(\|\btheta^*\|_\sharp -\|\hat\btheta\|_\sharp\big)\\
&\le (\Vert\bxi^{(n)}_{\bfB}\Vert_2-\Vert\bxi^{(n)}\Vert_2)\lambda
\left(
-\llangle\bfV_{\bfB},\hat\bfB-\bfB\rrangle
\right) +  
(\Vert\bxi^{(n)}_{\bfB}\Vert_2-\Vert\bxi^{(n)}\Vert_2)\tau
\left(
-\llangle\bu_{\btheta^*},\hat\btheta-\btheta^*\rrangle
\right)\\
&\le \Vert\calE_{\bfB}^{(n)}\Vert_2\lambda\calR(\bfDelta_{\bfB})
+ \Vert\calE_{\bfB}^{(n)}\Vert_2\tau\Vert\bfDelta^{\hat\btheta}\Vert_\sharp, 
\label{proof:lemma:recursion:1st:order:condition:q=1':eq2}
\end{align}
where we used that
$
\bxi_{\bfB}=\bxi - \calE_{\bfB}, 
$ 
$|\llangle\bfV_{\bfB},\bfDelta_{\bfB}\rrangle|\le\calR(\bfDelta_{\bfB})$ and 
$|\langle\bu_{\btheta^*},\bfDelta^{\hat\btheta}\rangle|\le\Vert\bfDelta^{\hat\btheta}\Vert_\sharp$. Equation \eqref{lemma:recursion:1st:order:condition:q=1':eq1} follow from the three previous displays. 

We now show \eqref{lemma:recursion:1st:order:condition:q=1':eq2}. The first order condition of \eqref{equation:aug:slope:rob:estimator:q=1} at $[\hat\bfB,\hat\btheta]$ is equivalent to the statement: there exist 
$\bfV\in\partial\calR(\hat\bfB)$ and $\bu\in\partial\Vert\hat\btheta\Vert_\sharp$ such that for all $[\bfB,\btheta]$,
\begin{align}
\sum_{i\in[n]}\left[y_i^{(n)}-\frX^{(n)}_i(\widehat\bfB)-\hat\btheta_i\right]\llangle\bfX^{(n)}_i,\hat\bfB-\bfB\rrangle&\ge\lambda\Vert\hat\bxi^{(n)}\Vert_2\llangle\bfV,\hat\bfB-\bfB\rrangle,\\
\langle\by^{(n)}-\frX^{(n)}(\hat\bfB)-\hat\btheta,\hat\btheta-\btheta\rangle&\ge\tau\Vert\hat\bxi^{(n)}\Vert_2\langle\bu,\hat\btheta-\btheta\rangle.
\end{align}
Setting $\btheta=\btheta^*$ and using that 
$
\by^{(n)}=\boldf^{(n)}+\btheta^*+\bxi^{(n)}
$
we obtain that for all $\bfB$, 
\begin{align}
\sum_{i\in[n]}\left[ \bfDelta_i^{(n)} +\bfDelta_i^{\hat\btheta}\right]\llangle\bfX^{(n)}_i,\bfDelta_{\bfB}\rrangle&\le\sum_{i\in[n]}\xi_i^{(n)}\llangle\bfX_i^{(n)},\bfDelta_{\bfB}\rrangle-\lambda\Vert\hat\bxi^{(n)}\Vert_2\llangle\bfV,\bfDelta_{\bfB}\rrangle,\\
\left\langle \bfDelta^{(n)} +\bfDelta^{\hat\btheta},\bfDelta^{\hat\btheta}\right\rangle
&\le\langle\bxi^{(n)},\bfDelta^{\hat\btheta}\rangle - \tau\Vert\hat\bxi^{(n)}\Vert_2\langle\bu,\bfDelta^{\hat\btheta}\rangle.
\end{align}
Summing the above inequalities,
\begin{align}
\langle \bfDelta^{(n)} +\bfDelta^{\hat\btheta}, \frM^{(n)}(\bfDelta_{\bfB},\bfDelta^{\hat\btheta})\rangle &\le
\langle\bxi^{(n)},\frM^{(n)}(\bfDelta_{\bfB},\bfDelta^{\hat\btheta})\rangle\\
&-\lambda\Vert\hat\bxi^{(n)}\Vert_2\llangle\bfV,\bfDelta_{\bfB}\rrangle - \tau\Vert\hat\bxi^{(n)}\Vert_2\langle\bu,\bfDelta^{\hat\btheta}\rangle. 
\end{align}
We now write 
\begin{align}
&-\lambda\Vert\hat\bxi^{(n)}\Vert_2\llangle\bfV,\bfDelta_{\bfB}\rrangle - \tau\Vert\hat\bxi^{(n)}\Vert_2\langle\bu,\bfDelta^{\hat\btheta}\rangle\\
&=-\lambda\Vert\bxi^{(n)}\Vert_2\llangle\bfV,\bfDelta_{\bfB}\rrangle - \tau\Vert\bxi^{(n)}\Vert_2\langle\bu,\bfDelta^{\hat\btheta}\rangle\\
&+\lambda(\Vert\bxi^{(n)}\Vert_2-\Vert\hat\bxi^{(n)}\Vert_2)\llangle\bfV,\bfDelta_{\bfB}\rrangle 
+\tau(\Vert\bxi^{(n)}\Vert_2-\Vert\hat\bxi^{(n)}\Vert_2)\langle\bu,\bfDelta^{\hat\btheta}\rangle
\end{align}

Using that $-\llangle\bfDelta_{\bfB},\bfV\rrangle\le\calR(\bfB)-\calR(\hat\bfB)$ and $-\langle\bfDelta^{\hat\btheta},\bu\rangle \le \|\btheta^*\|_\sharp-\|\hat\btheta\|_\sharp$, we get that 
\begin{align}
-\lambda\Vert\bxi^{(n)}\Vert_2\llangle\bfV,\bfDelta_{\bfB}\rrangle - \tau\Vert\bxi^{(n)}\Vert_2\langle\bu,\bfDelta^{\hat\btheta}\rangle
\le (\hat\sigma\lambda)\big(\calR(\bfB) - \calR(\hat\bfB)\big) +  
(\hat\sigma\tau)\big(\|\btheta^*\|_\sharp -\|\hat\btheta\|_\sharp\big). 
\end{align}

Finally, using the facts that $\hat\bxi^{(n)}=\bxi^{(n)}-\calE_{\bfB}^{(n)}-\frM^{(n)}(\bfDelta_{\bfB},\bfDelta^{\hat\btheta})$, 
$|\llangle\bfV,\bfDelta_{\bfB}\rrangle|\le\calR(\bfDelta_{\bfB})$ and 
$|\langle\bu,\bfDelta^{\hat\btheta}\rangle|\le\Vert\bfDelta^{\hat\btheta}\Vert_\sharp$
we get 
\begin{align}
&\lambda(\Vert\bxi^{(n)}\Vert_2-\Vert\hat\bxi^{(n)}\Vert_2)\llangle\bfV,\bfDelta_{\bfB}\rrangle 
+\tau(\Vert\bxi^{(n)}\Vert_2-\Vert\hat\bxi^{(n)}\Vert_2)\langle\bu,\bfDelta^{\hat\btheta}\rangle\\
&\le \left(\Vert\frM^{(n)}(\bfDelta_{\bfB},\bfDelta^{\hat\btheta})\Vert_2+\Vert\calE_{\bfB}^{(n)}\Vert_2\right)\left(
\lambda\calR(\bfDelta_{\bfB})+\tau\Vert\bfDelta^{\hat\btheta}\Vert_\sharp
\right). 
\end{align}
Equation \eqref{lemma:recursion:1st:order:condition:q=1':eq2} follows from the four previous displays. 
\end{proof}

Next, we upper (and lower) bound \eqref{lemma:recursion:1st:order:condition:q=1':eq2} using $\MP$ (and $\ATP$). 
\begin{lemma}\label{lemma:MP+ATP:q=1'}
Suppose conditions (i)-(ii) of Theorem \ref{thm:improved:rate:q=1'} hold. Let $\bfB\in\mdR^p$ and
define the quantities
\begin{align}
\blacktriangle_1 & := \left(\sf_2 + \frac{\sf_1\sd_2}{\sd_1}+\Vert\calE_{\bfB}^{(n)}\Vert_2\lambda\right)\calR(\bfDelta_{\bfB}) 
+ \left(\sf_4 + \frac{\sf_1\sd_4}{\sd_1}+\Vert\calE_{\bfB}^{(n)}\Vert_2\tau\right)\Vert\bfDelta^{\hat\btheta}\Vert_\sharp,\\
\blacktriangledown_1 & :=(\hat\sigma\lambda) \big(\calR(\bfB) - \calR(\hat\bfB)\big) 
+(\hat\sigma\tau)\big(\|\btheta^*\|_\sharp -\|\hat\btheta\|_\sharp\big).
\end{align}

Then 
\begin{align}
0\le \left(\frac{\sf_1}{\sd_1}+\Vert\calE_{\bfB}^{(n)}\Vert_2\right)\Vert\frM^{(n)}(\bfDelta_{\bfB},\bfDelta^{\hat\btheta})\Vert_2
+\blacktriangle_1 + \blacktriangledown_1,\label{lemma:MP+ATP:q=1':eq1}
\end{align}
and also
\begin{align}
\Vert\bfDelta^{(n)} +\bfDelta^{\hat\btheta}\Vert_2^2
+ \Vert\frM^{(n)}(\bfDelta_{\bfB},\bfDelta^{\hat\btheta})\Vert_2^2&\le
\Vert\frX^{(n)}(\bfB)-\boldf^{(n)}\Vert_2^2\\
& +\frac{2\sf_1}{\sd_1}\Vert\frM^{(n)}(\bfDelta_{\bfB},\bfDelta^{\hat\btheta})\Vert_2
+ 2(\blacktriangle_1 + \blacktriangledown_1)\\
&+2\Vert\frM^{(n)}(\bfDelta_{\bfB},\bfDelta^{\hat\btheta})\Vert_2\left(
\lambda\calR(\bfDelta_{\bfB})+\tau\Vert\bfDelta^{\hat\btheta}\Vert_\sharp
\right).
\label{lemma:MP+ATP:q=1':eq2}
\end{align}
\end{lemma}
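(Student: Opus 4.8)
The plan is to prove Lemma \ref{lemma:MP+ATP:q=1'} by combining the two inequalities of Lemma \ref{lemma:recursion:1st:order:condition:q=1'} with the design properties $\MP_{\calR,0,\Vert\cdot\Vert_\sharp}(\sf_1,\sf_2,0,\sf_4)$ and $\ATP_{\calR,0,\Vert\cdot\Vert_\sharp}(\sd_1,\sd_2,0,\sd_4)$, following closely the argument of Lemma \ref{lemma:MP+ATP}. The only genuinely new feature compared to Lemma \ref{lemma:MP+ATP} is the presence of the error term \eqref{eq:error:term:augmented}, namely $(\Vert\frM^{(n)}(\bfDelta_{\bfB},\bfDelta^{\hat\btheta})\Vert_2+\Vert\calE_{\bfB}^{(n)}\Vert_2)(\lambda\calR(\bfDelta_{\bfB})+\tau\Vert\bfDelta^{\hat\btheta}\Vert_\sharp)$, coming from the use of the scale-free loss $\calL_{\tau\bomega,1}$; this term must be absorbed partly into $\blacktriangle_1$ (the part with $\Vert\calE_{\bfB}^{(n)}\Vert_2$) and partly into the explicit multiplier-process factor in \eqref{lemma:MP+ATP:q=1':eq2} (the part with $\Vert\frM^{(n)}(\bfDelta_{\bfB},\bfDelta^{\hat\btheta})\Vert_2$). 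The key tool to convert $\MP$-style bounds written in terms of $\Vert[\bfDelta_{\bfB},\bfDelta^{\hat\btheta}]\Vert_\Pi$ into bounds in terms of $\Vert\frM^{(n)}(\bfDelta_{\bfB},\bfDelta^{\hat\btheta})\Vert_2$ is $\ATP$: it gives $\sd_1\Vert[\bfDelta_{\bfB},\bfDelta^{\hat\btheta}]\Vert_\Pi \le \Vert\frM^{(n)}(\bfDelta_{\bfB},\bfDelta^{\hat\btheta})\Vert_2 + \sd_2\calR(\bfDelta_{\bfB}) + \sd_4\Vert\bfDelta^{\hat\btheta}\Vert_\sharp$ (here $\calS\equiv0$, $\bfW\equiv\bf0$, so the cross term $\llangle\bfDelta_{\bfB},\bf0\rrangle_\Pi$ vanishes and the square-root is clean).

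Here is the order of steps. First, for \eqref{lemma:MP+ATP:q=1':eq1}: start from \eqref{lemma:recursion:1st:order:condition:q=1':eq1}, upper bound $\langle\bxi^{(n)}_{\bfB},\frM^{(n)}(\bfDelta_{\bfB},\bfDelta^{\hat\btheta})\rangle$ by writing $\bxi_{\bfB}=\bxi-\calE_{\bfB}$, applying $\MP$ to the $\bxi$-part (giving $\sf_1\Vert[\bfDelta_{\bfB},\bfDelta^{\hat\btheta}]\Vert_\Pi + \sf_2\calR(\bfDelta_{\bfB}) + \sf_4\Vert\bfDelta^{\hat\btheta}\Vert_\sharp$) and Cauchy--Schwarz plus $\Vert\frM^{(n)}\Vert_2$-bound to the $\calE_{\bfB}$-part; then use the $\ATP$ inequality above to replace $\sf_1\Vert[\bfDelta_{\bfB},\bfDelta^{\hat\btheta}]\Vert_\Pi$ by $\frac{\sf_1}{\sd_1}(\Vert\frM^{(n)}(\bfDelta_{\bfB},\bfDelta^{\hat\btheta})\Vert_2 + \sd_2\calR(\bfDelta_{\bfB}) + \sd_4\Vert\bfDelta^{\hat\btheta}\Vert_\sharp)$; collecting the $\calR(\bfDelta_{\bfB})$ and $\Vert\bfDelta^{\hat\btheta}\Vert_\sharp$ coefficients into $\blacktriangle_1$ and the penalty-difference terms into $\blacktriangledown_1$ yields \eqref{lemma:MP+ATP:q=1':eq1}. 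Second, for \eqref{lemma:MP+ATP:q=1':eq2}: apply the parallelogram law to $\langle\bfDelta^{(n)}+\bfDelta^{\hat\btheta},\frM^{(n)}(\bfDelta_{\bfB},\bfDelta^{\hat\btheta})\rangle$ exactly as in the proof of Lemma \ref{lemma:MP+ATP}, i.e.
$$
\langle \bfDelta^{(n)}+\bfDelta^{\hat\btheta},\frM^{(n)}(\bfDelta_{\bfB},\bfDelta^{\hat\btheta})\rangle=\tfrac12\Vert\bfDelta^{(n)}+\bfDelta^{\hat\btheta}\Vert_2^2+\tfrac12\Vert\frM^{(n)}(\bfDelta_{\bfB},\bfDelta^{\hat\btheta})\Vert_2^2-\tfrac12\Vert\frX^{(n)}(\bfB)-\boldf^{(n)}\Vert_2^2,
$$
then bound the right-hand side of \eqref{lemma:recursion:1st:order:condition:q=1':eq2}: apply $\MP$ to $\langle\bxi^{(n)},\frM^{(n)}(\bfDelta_{\bfB},\bfDelta^{\hat\btheta})\rangle$, then use $\ATP$ once more to convert $\sf_1\Vert[\bfDelta_{\bfB},\bfDelta^{\hat\btheta}]\Vert_\Pi$ into $\frac{\sf_1}{\sd_1}\Vert\frM^{(n)}(\bfDelta_{\bfB},\bfDelta^{\hat\btheta})\Vert_2$ plus norm terms absorbed into $\blacktriangle_1$, keep the last error term of \eqref{lemma:recursion:1st:order:condition:q=1':eq2} as is (its $\Vert\calE_{\bfB}^{(n)}\Vert_2$-part is absorbed into $\blacktriangle_1$, its $\Vert\frM^{(n)}\Vert_2$-part is the last line of \eqref{lemma:MP+ATP:q=1':eq2}), multiply through by $2$, and rearrange. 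The $\calR(\bfDelta_{\bfB})$-coefficients $\sf_2+\frac{\sf_1\sd_2}{\sd_1}+\Vert\calE_{\bfB}^{(n)}\Vert_2\lambda$ and $\Vert\bfDelta^{\hat\btheta}\Vert_\sharp$-coefficients $\sf_4+\frac{\sf_1\sd_4}{\sd_1}+\Vert\calE_{\bfB}^{(n)}\Vert_2\tau$ are exactly $\blacktriangle_1$.

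The main obstacle, as usual in these lemmas, is bookkeeping: making sure that every instance of $\Vert[\bfDelta_{\bfB},\bfDelta^{\hat\btheta}]\Vert_\Pi$ produced by $\MP$ is consistently converted (via $\ATP$, and only once) to the right measure $\Vert\frM^{(n)}(\bfDelta_{\bfB},\bfDelta^{\hat\btheta})\Vert_2$ without double-counting the penalty norms, and that the $\calE_{\bfB}$-dependent pieces land in $\blacktriangle_1$ while the $\frM^{(n)}$-dependent pieces of the scale-free error term stay as an explicit factor. One must be slightly careful that in \eqref{lemma:MP+ATP:q=1':eq1} the factor multiplying $\Vert\frM^{(n)}(\bfDelta_{\bfB},\bfDelta^{\hat\btheta})\Vert_2$ is $\frac{\sf_1}{\sd_1}+\Vert\calE_{\bfB}^{(n)}\Vert_2$, which means the $\Vert\calE_{\bfB}^{(n)}\Vert_2$-part of the error term \eqref{eq:error:term:augmented} is split: the $\Vert\calE_{\bfB}^{(n)}\Vert_2\cdot\frM^{(n)}$ contribution goes to this factor in \eqref{lemma:MP+ATP:q=1':eq1}, while the $\Vert\calE_{\bfB}^{(n)}\Vert_2\cdot(\lambda\calR(\bfDelta_{\bfB})+\tau\Vert\bfDelta^{\hat\btheta}\Vert_\sharp)$ contribution goes to $\blacktriangle_1$. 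No new concentration or convexity input beyond Lemmas \ref{lemma:recursion:1st:order:condition:q=1'} and the two cited design properties is required; the argument is purely algebraic rearrangement and Young/Cauchy--Schwarz, so the proof is short once the terms are matched up correctly.
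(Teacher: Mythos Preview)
Your proposal is correct and follows essentially the same approach as the paper's own proof: both start from the two inequalities of Lemma \ref{lemma:recursion:1st:order:condition:q=1'}, apply $\MP$ to bound $\langle\bxi^{(n)},\frM^{(n)}(\bfDelta_{\bfB},\bfDelta^{\hat\btheta})\rangle$, use the rearranged $\ATP$ inequality $\sd_1\Vert[\bfDelta_{\bfB},\bfDelta^{\hat\btheta}]\Vert_\Pi\le\Vert\frM^{(n)}(\bfDelta_{\bfB},\bfDelta^{\hat\btheta})\Vert_2+\sd_2\calR(\bfDelta_{\bfB})+\sd_4\Vert\bfDelta^{\hat\btheta}\Vert_\sharp$ to convert the $\sf_1\Vert[\bfDelta_{\bfB},\bfDelta^{\hat\btheta}]\Vert_\Pi$ term, write $\bxi_{\bfB}=\bxi-\calE_{\bfB}$ with Cauchy--Schwarz for \eqref{lemma:MP+ATP:q=1':eq1}, and use the parallelogram identity for \eqref{lemma:MP+ATP:q=1':eq2}. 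The bookkeeping of which pieces of the error term land in $\blacktriangle_1$ versus the explicit $\Vert\frM^{(n)}\Vert_2$-factor is handled exactly as in the paper (your mention of Young's inequality is not actually needed here, only Cauchy--Schwarz).
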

\begin{proof}
By the parallelogram law,
\begin{align}
&\langle \bfDelta^{(n)} +\bfDelta^{\hat\btheta}, \frM^{(n)}(\bfDelta_{\bfB},\bfDelta^{\hat\btheta})\rangle =\\
&=\frac{1}{2}\Vert\bfDelta^{(n)} + \bfDelta^{\hat\btheta}\Vert_2^2
+ \frac{1}{2}\Vert\frM^{(n)}(\bfDelta_{\bfB},\bfDelta^{\hat\btheta})\Vert_2^2
-\frac{1}{2}\Vert\frX^{(n)}(\bfB)-\boldf^{(n)}\Vert_2^2.
\end{align}

$\ATP$ implies in particular that
\begin{align}
\Vert\frM^{(n)}(\bfDelta_{\bfB},\bfDelta^{\hat\btheta})\Vert_2 &\ge\sd_1\Vert[\bfDelta_{\bfB},\bfDelta^{\hat\btheta}]\Vert_{\Pi}
-\sd_2\calR(\bfDelta_{\bfB}) 
-\sd_4\Vert\bfDelta^{\hat\btheta}\Vert_\sharp.
\end{align}

$\MP$ implies that
\begin{align}
\langle\bxi^{(n)},\frM^{(n)}(\bfDelta_{\bfB},\bfDelta^{\hat\btheta})\rangle &\le
\sf_1\Vert[\bfDelta_{\bfB},\bfDelta^{\hat\btheta}]\Vert_{\Pi} 
+\sf_2\calR(\bfDelta_{\bfB}) 
+\sf_4\Vert\bfDelta^{\hat\btheta}\Vert_\sharp. 
\end{align}

The proof of \eqref{lemma:MP+ATP:q=1':eq2} follows from the three previous displays and inequality \eqref{lemma:recursion:1st:order:condition:q=1':eq2} of Lemma \ref{lemma:recursion:1st:order:condition:q=1'}. 

The proof of \eqref{lemma:MP+ATP:q=1':eq1} follows from the two previous displays, inequality \eqref{lemma:recursion:1st:order:condition:q=1':eq1} and the fact that 
\begin{align}
\langle\bxi^{(n)}_{\bfB},\frM^{(n)}(\bfDelta_{\bfB},\bfDelta^{\hat\btheta})\rangle & =
\langle\bxi^{(n)},\frM^{(n)}(\bfDelta_{\bfB},\bfDelta^{\hat\btheta})\rangle
-\langle\calE^{(n)}_{\bfB},\frM^{(n)}(\bfDelta_{\bfB},\bfDelta^{\hat\btheta})\rangle\\
& \le 
\langle\bxi^{(n)},\frM^{(n)}(\bfDelta_{\bfB},\bfDelta^{\hat\btheta})\rangle
+\Vert\calE^{(n)}_{\bfB}\Vert_2\Vert\frM^{(n)}(\bfDelta_{\bfB},\bfDelta^{\hat\btheta})\Vert_2. 
\end{align}
\end{proof}

The previous lemmas entail the next proposition. For convenience, given $[\bfV,\bfB,\bu]$,  we define
\begin{align}
\triangle_{\lambda,\tau}(\bfV,\bu|\bfB)&:=(\nicefrac{3\lambda}{2})(\calR\circ\calP_{\bfB})(\bfV) 
-(\nicefrac{\lambda}{2})(\calR\circ\calP_{\bfB}^\perp)(\bfV)\\
&+(\nicefrac{3\tau\Omega}{2})\Vert\bu\Vert_2 -(\nicefrac{\tau}{2})\sum_{i=o+1}^n\omega_i\bu_i^\sharp.
\end{align}
\begin{proposition}\label{prop:suboptimal:rate:q=1'}
Suppose the conditions (i)-(ii) of Theorem \ref{thm:improved:rate:q=1'} hold and, additionally, for some $\sc_n\in[0,1/4)$, 
\begin{itemize}
\item[\rm (iii')] 
$
(1-4\sc_n)\hat\sigma\lambda\ge4[\sf_2 + (\nicefrac{\sf_1\sd_2}{\sd_1})]
$
and 
$
(1-4\sc_n)\hat\sigma\tau\ge 4[\sf_4 + (\nicefrac{\sf_1\sd_4}{\sd_1})].
$
\item[\rm (iv')] 
$
56\left(\nicefrac{\sf_1}{\sd_1}+\hat\sigma\sc_n\right)\le3\hat\sigma.
$
\end{itemize}
For any $D\ge0$ and $\bfB$ satisfying the constraints
\eqref{cond4:general:norm:q=1'}-\eqref{cond1:general:norm:q=1'},  
\begin{align}
(\nicefrac{\hat\sigma}{2})(\lambda\calR(\bfDelta_{\bfB}) + \tau\Vert\bfDelta^{\hat\btheta}\Vert_\sharp)
+ \Vert\bfDelta^{(n)} + \bfDelta^{\hat\btheta}\Vert_2^2
&\le D^2 + \spadesuit_1\left(\sf_1,R\right), \label{prop:suboptimal:rate:q=1':eq1}
\end{align}
where $r:=r_{\hat\sigma\lambda,\hat\sigma\tau\Omega,6}(\bfDelta_{\bfB}|\bfB)$ and $R:=(\hat\sigma\sc_n)\vee(3r)$. Moreover, 
\begin{align}
\Vert\frM^{(n)}(\bfDelta_{\bfB},\bfDelta^{\hat\btheta})\Vert_2
\le 2D + \clubsuit_1\left(\sf_1,R\right).
\label{prop:suboptimal:rate:q=1':eq2}
\end{align}
\end{proposition}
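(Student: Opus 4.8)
The plan is to mimic the proof of Proposition~\ref{prop:suboptimal:rate} (the $q=2$ analogue), the two genuinely new features being the bilinear ``square-root'' error term $2\Vert\frM^{(n)}(\bfDelta_{\bfB},\bfDelta^{\hat\btheta})\Vert_2\big(\lambda\calR(\bfDelta_{\bfB})+\tau\Vert\bfDelta^{\hat\btheta}\Vert_\sharp\big)$ present in \eqref{lemma:MP+ATP:q=1':eq2} of Lemma~\ref{lemma:MP+ATP:q=1'} and the systematic substitution $(\lambda,\chi,\tau,\sigma)\rightsquigarrow(\hat\sigma\lambda,0,\hat\sigma\tau,\hat\sigma)$. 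I will use both inequalities of Lemma~\ref{lemma:MP+ATP:q=1'} (which already fold the first-order condition of Lemma~\ref{lemma:recursion:1st:order:condition:q=1'} together with $\MP$ and $\ATP$), the decomposability Lemmas~\ref{lemma:A1:B} and~\ref{lemma:A1}, and Lemma~\ref{lemma:aug:rest:convexity} specialized to $\calS\equiv0$, $\bfGamma=\bf0$. Write $\Vert\calE_{\bfB}^{(n)}\Vert_2=D\le\hat\sigma\sc_n$ (from \eqref{cond4:general:norm:q=1'}), $r=r_{\hat\sigma\lambda,\hat\sigma\tau\Omega,6}(\bfDelta_{\bfB}|\bfB)$, $R=(\hat\sigma\sc_n)\vee(3r)$, $\blacksquare=\tfrac{\hat\sigma}{4}\big(\lambda\calR(\bfDelta_{\bfB})+\tau\Vert\bfDelta^{\hat\btheta}\Vert_\sharp\big)$ and $\triangle=\triangle_{\hat\sigma\lambda,\hat\sigma\tau}(\bfDelta_{\bfB},\bfDelta^{\hat\btheta}|\bfB)$.

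\textbf{Step 1 (a priori control of the penalty).} Adding $\blacksquare$ to \eqref{lemma:MP+ATP:q=1':eq1} and invoking Lemmas~\ref{lemma:A1:B} and~\ref{lemma:A1} with $\nu=\tfrac12$ on the penalty differences inside $\blacktriangledown_1$, condition~(iii') together with $D\le\hat\sigma\sc_n$ makes the coefficients of $\calR(\bfDelta_{\bfB})$ and $\Vert\bfDelta^{\hat\btheta}\Vert_\sharp$ in $\blacktriangle_1+\blacksquare+\blacktriangledown_1$ at most $\tfrac{\hat\sigma\lambda}{2}$ and $\tfrac{\hat\sigma\tau}{2}$, whence $\blacktriangle_1+\blacksquare+\blacktriangledown_1\le\triangle$ (using $\sum_{i\le o}\omega_i(\bfDelta^{\hat\btheta})_i^\sharp\le\Omega\Vert\bfDelta^{\hat\btheta}\Vert_2$); combined with \eqref{lemma:MP+ATP:q=1':eq1} this gives the a priori inequality $\blacksquare\le(\tfrac{\sf_1}{\sd_1}+D)\Vert\frM^{(n)}(\bfDelta_{\bfB},\bfDelta^{\hat\btheta})\Vert_2+\triangle$.

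\textbf{Step 2 (main recursion and case analysis).} Plug this into \eqref{lemma:MP+ATP:q=1':eq2}. The same manipulation as in Step~1 gives $2\blacktriangle_1+2\blacksquare+2\blacktriangledown_1\le2\triangle$, so moving $2\blacksquare$ to the left turns \eqref{lemma:MP+ATP:q=1':eq2} into
\[
2\blacksquare+\Vert\bfDelta^{(n)}+\bfDelta^{\hat\btheta}\Vert_2^2+\Vert\frM^{(n)}(\bfDelta_{\bfB},\bfDelta^{\hat\btheta})\Vert_2^2\le D^2+\tfrac{2\sf_1}{\sd_1}\Vert\frM^{(n)}(\bfDelta_{\bfB},\bfDelta^{\hat\btheta})\Vert_2+2\triangle+\tfrac{8}{\hat\sigma}\blacksquare\,\Vert\frM^{(n)}(\bfDelta_{\bfB},\bfDelta^{\hat\btheta})\Vert_2 .
\]
The obstruction is the last term: by the Step~1 inequality it is $\le\tfrac{8}{\hat\sigma}(\tfrac{\sf_1}{\sd_1}+D)\Vert\frM^{(n)}(\bfDelta_{\bfB},\bfDelta^{\hat\btheta})\Vert_2^2+\tfrac{8}{\hat\sigma}\triangle\Vert\frM^{(n)}(\bfDelta_{\bfB},\bfDelta^{\hat\btheta})\Vert_2$; condition~(iv') makes the prefactor $\tfrac{8}{\hat\sigma}(\tfrac{\sf_1}{\sd_1}+D)$ a small absolute constant, and a case split on whether $[\bfDelta_{\bfB},\bfDelta^{\hat\btheta},\bf0]$ lies in the cone $\calC_{\bfB,\bf0}$ (if so, $\triangle\le\tfrac32 r\Vert[\bfDelta_{\bfB},\bfDelta^{\hat\btheta}]\Vert_\Pi$ by Lemma~\ref{lemma:aug:rest:convexity}, and $\ATP$ together with the alternative $r\le\tfrac1{28[(\sd_2/\lambda)\vee(\sd_4/\tau)]}\hat\sigma\sd_1$ of \eqref{cond1:general:norm:q=1'} forces $\Vert[\bfDelta_{\bfB},\bfDelta^{\hat\btheta}]\Vert_\Pi\lesssim\tfrac1{\sd_1}\Vert\frM^{(n)}(\bfDelta_{\bfB},\bfDelta^{\hat\btheta})\Vert_2$; otherwise $\triangle\le0$) shows $\tfrac{8}{\hat\sigma}\triangle\Vert\frM^{(n)}(\bfDelta_{\bfB},\bfDelta^{\hat\btheta})\Vert_2$ is a small constant times $\Vert\frM^{(n)}(\bfDelta_{\bfB},\bfDelta^{\hat\btheta})\Vert_2^2$ by the alternative $r\le\tfrac1{112}\hat\sigma\sd_1$ of \eqref{cond1:general:norm:q=1'}. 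Absorbing both into the $\Vert\frM^{(n)}(\bfDelta_{\bfB},\bfDelta^{\hat\btheta})\Vert_2^2$ on the left, one arrives (up to rescaling the surviving constants) at exactly the inequality obtained midway through the proof of Proposition~\ref{prop:suboptimal:rate}, with $\sigma$ replaced by $\hat\sigma$ and $\chi$ by $0$. From there the four-branch dichotomy of that proof carries over verbatim — ``$\triangle\le0$'' handled by contradiction via Lemma~\ref{lemma:aug:rest:convexity} and \eqref{cond1:general:norm:q=1'}, ``$\triangle>0$'' giving cone membership and $\triangle\le\tfrac32 r\Vert[\bfDelta_{\bfB},\bfDelta^{\hat\btheta}]\Vert_\Pi$, followed by Young's inequality applied to $\tfrac{2\sf_1}{\sd_1}\Vert\frM^{(n)}(\bfDelta_{\bfB},\bfDelta^{\hat\btheta})\Vert_2$ and $r\Vert[\bfDelta_{\bfB},\bfDelta^{\hat\btheta}]\Vert_\Pi$ — and taking the largest branch bound gives $2\blacksquare+\Vert\bfDelta^{(n)}+\bfDelta^{\hat\btheta}\Vert_2^2\le D^2+\spadesuit_1(\sf_1,R)$, which is \eqref{prop:suboptimal:rate:q=1':eq1}, while the parallel bounds on $\Vert\frM^{(n)}(\bfDelta_{\bfB},\bfDelta^{\hat\btheta})\Vert_2^2$ give $\Vert\frM^{(n)}(\bfDelta_{\bfB},\bfDelta^{\hat\btheta})\Vert_2\le 2D+\clubsuit_1(\sf_1,R)$, which is \eqref{prop:suboptimal:rate:q=1':eq2}; the quantity $R=(\hat\sigma\sc_n)\vee(3r)$ records the maximum of the miss-specification scale $D\le\hat\sigma\sc_n$ and the cone scale $3r$.

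\textbf{Main obstacle.} I expect the crux to be exactly the absorption step just described: \eqref{lemma:MP+ATP:q=1':eq1} is one-sided with $\Vert\frM^{(n)}(\bfDelta_{\bfB},\bfDelta^{\hat\btheta})\Vert_2$ appearing with a positive sign, so extracting from it enough control on $\lambda\calR(\bfDelta_{\bfB})+\tau\Vert\bfDelta^{\hat\btheta}\Vert_\sharp$ to swallow the bilinear error term in \eqref{lemma:MP+ATP:q=1':eq2} requires feeding back the $\ATP$ lower bound and the cone geometry, and it closes only thanks to the specific numerical thresholds baked into hypotheses~(iv') and \eqref{cond1:general:norm:q=1'} (this is the role of the constants $\tfrac1{112}$, $\tfrac1{28}$ and $56$). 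Everything after that point is the same bookkeeping as in the $q=2$ case.
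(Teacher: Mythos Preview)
Your proposal is essentially correct and very close to the paper's argument: both start from the two inequalities of Lemma~\ref{lemma:MP+ATP:q=1'}, use condition~(iii') with Lemmas~\ref{lemma:A1:B}--\ref{lemma:A1} to obtain $\blacktriangle_1+\blacksquare+\blacktriangledown_1\le\triangle$, and then face the same ``absorption'' problem for the bilinear term $2G(\lambda\calR(\bfDelta_{\bfB})+\tau\Vert\bfDelta^{\hat\btheta}\Vert_\sharp)=(8/\hat\sigma)\blacksquare G$, resolved using~(iv') and both alternatives of \eqref{cond1:general:norm:q=1'}.

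The one organizational difference: the paper does \emph{not} substitute your Step~1 bound for $\blacksquare$ into the bilinear term. Instead it splits directly on whether $(\sf_1/\sd_1+\hat\sigma\sc_n)G\ge H$ (with $H=(3\hat\sigma\lambda/2)(\calR\circ\calP_{\bfB})(\bfDelta_{\bfB})+(3\hat\sigma\tau\Omega/2)\Vert\bfDelta^{\hat\btheta}\Vert_2$). In Case~1 it uses decomposability to get $\hat\sigma(\lambda\calR+\tau\Vert\cdot\Vert_\sharp)\le(14/3)(\sf_1/\sd_1+\hat\sigma\sc_n)G$ directly, then (iv') absorbs. In Case~2 the a~priori inequality \eqref{lemma:MP+ATP:q=1':eq1} forces $I\le 2H$, hence membership in the cone $\calC_{\bfB,\bf0}(6,\lambda/\tau,0,\Omega)$, and Lemma~\ref{lemma:aug:rest:convexity} (with $c_0=6$, giving the factor $2(6+1)=14$) plus $\ATP$ yields $\Vert[\bfDelta_{\bfB},\bfDelta^{\hat\btheta}]\Vert_\Pi\le(2/\sd_1)G$ and $\hat\sigma(\lambda\calR+\tau\Vert\cdot\Vert_\sharp)\le(28r/\sd_1)G$; then $56r\le\hat\sigma\sd_1/2$ absorbs. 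So the paper has only a \emph{two}-case split, not four, and the recursion variable throughout is $G=\Vert\frM^{(n)}(\bfDelta_{\bfB},\bfDelta^{\hat\btheta})\Vert_2$ --- there is no $(\sd_1G-\blacktriangle/\sigma)_+^2$ term here, so the $q=2$ ``four-branch dichotomy'' of Proposition~\ref{prop:suboptimal:rate} does \emph{not} carry over verbatim as you suggest. Your own absorption-then-$\triangle$-sign split is a valid alternative that lands on the same final bounds; just drop the reference to the $q=2$ case structure.
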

\begin{proof}
Let
$\blacksquare_1:=(\nicefrac{\hat\sigma\lambda}{4})\calR(\bfDelta_{\bfB}) +
(\nicefrac{\hat\sigma\tau}{4})\|\bfDelta^{\hat\btheta}\|_\sharp$.
By Lemma \ref{lemma:MP+ATP:q=1'} and $\Vert\calE_{\bfB}^{(n)}\Vert_2\le\hat\sigma\sc_n$, we have 
\begin{align}
0\le \left(\frac{\sf_1}{\sd_1}+\hat\sigma\sc_n\right)\Vert\frM^{(n)}(\bfDelta_{\bfB},\bfDelta^{\hat\btheta})\Vert_2
+\blacktriangle_1+\blacktriangledown_1,
\end{align}
and also
\begin{align}
2\blacksquare_1 +\Vert\bfDelta^{(n)} +\bfDelta^{\hat\btheta}\Vert_2^2
+ \Vert\frM^{(n)}(\bfDelta_{\bfB},\bfDelta^{\hat\btheta})\Vert_2^2&\le
\Vert\frX^{(n)}(\bfB)-\boldf^{(n)}\Vert_2^2\\
&+ (\nicefrac{2\sf_1}{\sd_1})\Vert\frM^{(n)}(\bfDelta_{\bfB},\bfDelta^{\hat\btheta})\Vert_2
+ 2(\blacktriangle_1+\blacksquare_1 +\blacktriangledown_1)\\
&+2\Vert\frM^{(n)}(\bfDelta_{\bfB},\bfDelta^{\hat\btheta})\Vert_2\left(
\lambda\calR(\bfDelta_{\bfB})+\tau\Vert\bfDelta^{\hat\btheta}\Vert_\sharp
\right).
\end{align}

By Lemmas \ref{lemma:A1:B} and \ref{lemma:A1} (with 
$\nu=1/2$) and conditions (iii') and $\Vert\calE_{\bfB}\Vert_2\le\hat\sigma\sc_n$,
\begin{align}
\blacktriangle_1 + \blacksquare_1 +\blacktriangledown_1 & \le
\left[\sf_2 + \frac{\sf_1\sd_2}{\sd_1} + \hat\sigma\lambda\sc_n + (\nicefrac{\hat\sigma\lambda}{4})\right]\calR(\bfDelta_{\bfB}) + (\hat\sigma\lambda) \big(\calR(\bfB) - \calR(\hat\bfB)\big)\\
&+\left[\sf_4 + \frac{\sf_1\sd_4}{\sd_1} + \hat\sigma\tau\sc_n + (\nicefrac{\hat\sigma\tau}{4})\right]
\|\bfDelta^{\hat\btheta}\|_\sharp+(\hat\sigma\tau)\big(\|\btheta^*\|_\sharp -\|\hat\btheta\|_\sharp\big)\\
&\le(\nicefrac{\hat\sigma\lambda}{2})\calR(\bfDelta_{\bfB}) + (\hat\sigma\lambda) \big(\calR(\bfB) - \calR(\hat\bfB)\big)\\
&+(\nicefrac{\hat\sigma\tau}{2})\|\bfDelta^{\hat\btheta}\|_\sharp+(\hat\sigma\tau)\big(\|\btheta^*\|_\sharp -\|\hat\btheta\|_\sharp\big)\\
&\le \triangle_{\hat\sigma\lambda,\hat\sigma\tau}(\bfDelta_{\bfB},\bfDelta^{\hat\btheta}|\bfB).
\end{align}

Next, we will define some local variables for convenience of notation. Let 
$G:=\Vert\frM^{(n)}(\bfDelta_{\bfB},\bfDelta^{\hat\btheta})\Vert_2$, 
$
D:=\Vert\frX^{(n)}(\bfB)-\boldf^{(n)}\Vert_2, 
$
$
x:=\Vert\bfDelta^{(n)} + \bfDelta^{\hat\btheta}\Vert_2,
$
and 
$r:=r_{\hat\sigma\lambda,\hat\sigma\tau\Omega,6}(\bfDelta_{\bfB}|\bfB)$. Define also 
$\triangle:=\triangle_{\hat\sigma\lambda,\hat\sigma\tau}(\bfDelta_{\bfB},\bfDelta^{\hat\btheta}|\bfB)$ and 
\begin{align}
H&:=(\nicefrac{3\hat\sigma\lambda}{2})(\calR\circ\calP_{\bfB})(\bfDelta_{\bfB})
+(\nicefrac{3\hat\sigma\tau\Omega}{2})\Vert\bfDelta^{\hat\btheta}\Vert_2,\\
I&:=(\nicefrac{\hat\sigma\lambda}{2})(\calR\circ\calP_{\bfB}^\perp)(\bfDelta_{\bfB})
+(\nicefrac{\hat\sigma\tau}{2})\sum_{i=o+1}^n\omega_i(\bfDelta^{\hat\btheta})_i^\sharp.
\end{align}
In particular, 
$
\triangle = H - I. 
$

The previous three bounds entail the two inequalities:
\begin{align}
0&\le \left(\nicefrac{\sf_1}{\sd_1}+\hat\sigma\sc_n\right)G
+\triangle,\label{proof:prop:suboptimal:rate:q=1':eq0'}\\
2\blacksquare_1 + x^2 + G^2
&\le D^2 + \left(\nicefrac{2\sf_1}{\sd_1}\right)G +2\left(\lambda\calR(\bfDelta_{\bfB})+\tau\Vert\bfDelta^{\hat\btheta}\Vert_\sharp\right)G
+ 2\triangle.\label{proof:prop:suboptimal:rate:q=1':eq0}
\end{align}
We split our argument in two cases. 

\begin{description}
\item[Case 1:] $\left(\nicefrac{\sf_1}{\sd_1}+\hat\sigma\sc_n\right)G\ge H$. Hence, $\triangle\le H\le \left(\nicefrac{\sf_1}{\sd_1}+\hat\sigma\sc_n\right)G$. From \eqref{proof:prop:suboptimal:rate:q=1':eq0'}, 
$
I\le \left(\nicefrac{\sf_1}{\sd_1}+\hat\sigma\sc_n\right)G + H
\le 2\left(\nicefrac{\sf_1}{\sd_1}+\hat\sigma\sc_n\right)G.
$
This fact and decomposability imply 
\begin{align}
\hat\sigma\left(\lambda\calR(\bfDelta_{\bfB})+\tau\Vert\bfDelta^{\hat\btheta}\Vert_\sharp\right)
\le 2I+\frac{2H}{3} 
\le \frac{14}{3}\left(\nicefrac{\sf_1}{\sd_1}+\hat\sigma\sc_n\right)G.
\end{align}
From \eqref{proof:prop:suboptimal:rate:q=1':eq0}, we get 
\begin{align}
2\blacksquare_1 + x^2 + G^2
&\le D^2 + \left(\nicefrac{2\sf_1}{\sd_1}\right)G +
\frac{28}{3\hat\sigma}\left(\nicefrac{\sf_1}{\sd_1}+\hat\sigma\sc_n\right)G^2 + 2\triangle\\
&\le D^2 + 2\left(\nicefrac{2\sf_1}{\sd_1}+\hat\sigma\sc_n\right)G +
\frac{28}{3\hat\sigma}\left(\nicefrac{\sf_1}{\sd_1}+\hat\sigma\sc_n\right)G^2, 
\end{align}
which, together with condition (iv'), implies 
\begin{align}
2\blacksquare_1 + x^2 + \frac{G^2}{2}
&\le D^2 + 2\left(\nicefrac{2\sf_1}{\sd_1}+\hat\sigma\sc_n\right)G. 
\end{align}

From $2\left(\nicefrac{2\sf_1}{\sd_1}+\hat\sigma\sc_n\right)G\le2\left(\nicefrac{2\sf_1}{\sd_1}+\hat\sigma\sc_n\right)^2 + \frac{G^2}{2}$, we get 
\begin{align}
2\blacksquare_1 + x^2 &\le D^2 + 2\left(\nicefrac{2\sf_1}{\sd_1}+\hat\sigma\sc_n\right)^2.\label{proof:prop:suboptimal:rate:q=1':eq4} 
\end{align}
If we use instead $2\left(\nicefrac{2\sf_1}{\sd_1}+\hat\sigma\sc_n\right)G\le4\left(\nicefrac{2\sf_1}{\sd_1}+\hat\sigma\sc_n\right)^2 + \frac{G^2}{4}$, we get 
\begin{align}
\frac{G^2}{4} &\le D^2 + 4\left(\nicefrac{2\sf_1}{\sd_1}+\hat\sigma\sc_n\right)^2.\label{proof:prop:suboptimal:rate:q=1':eq5}
\end{align}

\item[Case 2:] $\left(\nicefrac{\sf_1}{\sd_1}+\hat\sigma\sc_n\right)G\le H$. From \eqref{proof:prop:suboptimal:rate:q=1':eq0'}, $0\le H+\triangle=2H-I$ so that $[\bfDelta_{\bfB},\bold{0}, \bfDelta^{\hat\btheta}]\in\calC_{\bfB,\bold{0}}(6,\gamma,0,\Omega)$, where we defined $\gamma:=\lambda/\tau$. By Lemma \ref{lemma:aug:rest:convexity}, 
\begin{align}
\triangle&\le(\nicefrac{3}{2})r\Vert[\bfDelta_{\bfB},\bfDelta^{\hat\btheta}]\Vert_{\Pi}, \\
(\hat\sigma\lambda)\calR(\bfDelta_{\bfB}) + (\hat\sigma\tau)\big\|\bfDelta^{\hat\btheta}\big\|_\sharp &\le 14r \Vert[\bfDelta_{\bfB},\bfDelta^{\hat\btheta}]\Vert_\Pi.
\end{align}
The second inequality above and $\ATP$ imply that
\begin{align}
\sd_1\Vert[\bfDelta_{\bfB},\bfDelta^{\hat\btheta}]\Vert_{\Pi} &\le G
+[(\nicefrac{\sd_2}{\lambda})\vee(\nicefrac{\sd_4}{\tau})](\lambda\calR(\bfDelta_{\bfB}) + \tau\big\|\bfDelta^{\hat\btheta}\big\|_\sharp)\\
&\le G + [(\nicefrac{\sd_2}{\lambda})\vee(\nicefrac{\sd_4}{\tau})]
14(\nicefrac{r}{\hat\sigma})\Vert[\bfDelta_{\bfB},\bfDelta^{\hat\btheta}]\Vert_{\Pi}.
\end{align}
By condition \eqref{cond1:general:norm:q=1'}, $14[(\nicefrac{\sd_2}{\lambda})\vee(\nicefrac{\sd_4}{\tau})](\nicefrac{r}{\hat\sigma})\le\sd_1/2$, implying $(\sd_1/2)\Vert[\bfDelta_{\bfB},\bfDelta^{\hat\btheta}]\Vert_{\Pi}\le G$. 
We conclude that 
\begin{align}
\triangle&\le(\nicefrac{3r}{\sd_1})G,\\
\hat\sigma\lambda\calR(\bfDelta_{\bfB}) + \hat\sigma\tau\big\|\bfDelta^{\hat\btheta}\big\|_\sharp &\le (\nicefrac{28r}{\sd_1})G. 
\end{align}

From \eqref{proof:prop:suboptimal:rate:q=1':eq0}, 
\begin{align}
2\blacksquare_1 + x^2 + G^2 &\le D^2 + \left(\nicefrac{2\sf_1}{\sd_1}\right)G + (\nicefrac{56r}{\hat\sigma\sd_1})G^2
+ (\nicefrac{6r}{\sd_1})G\\
&= D^2 + \left(\frac{2\sf_1 + 6r}{\sd_1}\right)G + (\nicefrac{56r}{\hat\sigma\sd_1})G^2, 
\end{align}
which, together with $56r\le\hat\sigma\sd_1/2$ --- as stated in condition \eqref{cond1:general:norm:q=1'} ---, entails
\begin{align}
2\blacksquare_1 + x^2 + \frac{G^2}{2} &\le  D^2 + 2\left(\frac{\sf_1 + 3r}{\sd_1}\right)G. 
\end{align}

Proceeding similarly as before, we obtain from the displayed bound that 
\begin{align}
2\blacksquare_1 + x^2 &\le D^2 + 2\left(\frac{\sf_1 + 3r}{\sd_1}\right)^2,\label{proof:prop:suboptimal:rate:q=1':eq8}\\
\frac{G^2}{4} &\le D^2 + 4\left(\frac{\sf_1 + 3r}{\sd_1}\right)^2.\label{proof:prop:suboptimal:rate:q=1':eq9}
\end{align}
\end{description}
The proof of \eqref{prop:suboptimal:rate:q=1':eq1} follows by taking the largest of the bounds in  \eqref{proof:prop:suboptimal:rate:q=1':eq4} and \eqref{proof:prop:suboptimal:rate:q=1':eq8}. The proof of \eqref{prop:suboptimal:rate:q=1':eq2} follows by taking the largest of the bounds in \eqref{proof:prop:suboptimal:rate:q=1':eq5} and \eqref{proof:prop:suboptimal:rate:q=1':eq9}.
\end{proof}

We will later invoke Proposition \ref{prop:suboptimal:rate:q=1'} in the proof of Theorem \ref{thm:improved:rate:q=1'} when bounding the nuisance error 
$\bfDelta^{\hat\btheta}$. Next, we prove the following lemma --- an easy consequence of the first order condition when fixing 
$\btheta\equiv\hat\btheta$. 

\begin{lemma}\label{lemma:recursion:delta:bb:general:norm:q=1'}
For all $\bfB\in\mdR^p$, 
\begin{align}
0&\le \langle\bxi^{(n)}_{\bfB}-\bfDelta^{\hat\btheta},\frX^{(n)}(\bfDelta_{\bfB})\rangle
+(\hat\sigma\lambda)\big(\calR(\bfB) - \calR(\hat\bfB)\big)
+ \left(\Vert\calE_{\bfB}^{(n)}\Vert_2 + \Vert\bfDelta^{\hat\btheta}\Vert_2\right)\lambda\calR(\bfDelta_{\bfB}). \label{lemma:recursion:delta:bb:general:norm:q=1':eq1}
\end{align}
and also
\begin{align}
\langle \bfDelta^{(n)},\frX^{(n)}(\bfDelta_{\bfB})\rangle &\le 
\langle\bxi^{(n)}-\bfDelta^{\hat\btheta},\frX^{(n)}(\bfDelta_{\bfB})\rangle+(\hat\sigma\lambda)(\calR(\bfB)-\calR(\hat\bfB))\\
&+\left(\Vert\frM^{(n)}(\bfDelta_{\bfB},\bfDelta^{\hat\btheta})\Vert_2
+ \Vert\calE_{\bfB}^{(n)}\Vert_2\right)
\lambda\calR(\bfDelta_{\bfB}).
\label{lemma:recursion:delta:bb:general:norm:q=1':eq2}
\end{align}
\end{lemma}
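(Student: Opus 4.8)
\textbf{Proof plan for Lemma \ref{lemma:recursion:delta:bb:general:norm:q=1'}.}

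The plan is to proceed exactly as in the proof of Lemma \ref{lemma:recursion:1st:order:condition:q=1'}, but now keeping the nuisance variable fixed at $\btheta\equiv\hat\btheta$ and comparing only over the $\bfB$-coordinate. First I would record the first-order optimality condition of \eqref{equation:aug:slope:rob:estimator:q=1} with $q=1$ restricted to $\bfB$: since $[\hat\bfB,\hat\btheta]$ minimizes the objective, $\hat\bfB$ minimizes $\bfB\mapsto\Vert\by^{(n)}-\frX^{(n)}(\bfB)-\hat\btheta\Vert_2 + \lambda\calR(\bfB)$, so there exists $\bfV\in\partial\calR(\hat\bfB)$ with $\calR^*(\bfV)\le1$ and $\llangle\bfV,\hat\bfB\rrangle=\calR(\hat\bfB)$ such that, for all $\bfB$,
\begin{align}
\sum_{i\in[n]}\left[\frX^{(n)}_i(\hat\bfB)+\hat\btheta_i-y_i^{(n)}\right]\llangle\bfX^{(n)}_i,\hat\bfB-\bfB\rrangle \ge -\lambda\Vert\hat\bxi^{(n)}\Vert_2\llangle\bfV,\hat\bfB-\bfB\rrangle.
\end{align}
Rearranging and using $\by^{(n)}=\boldf^{(n)}+\btheta^*+\bxi^{(n)}$, together with $\frX^{(n)}_i(\hat\bfB)+\hat\btheta_i - y_i^{(n)} = -\bfDelta_i^{(n)} - \bfDelta_i^{\hat\btheta} + \bxi_i^{(n)}$ (recall $\bfDelta=\frX(\hat\bfB)-\boldf$), one gets the basic inequality $\langle\bfDelta^{(n)}+\bfDelta^{\hat\btheta},\frX^{(n)}(\bfDelta_{\bfB})\rangle \le \langle\bxi^{(n)},\frX^{(n)}(\bfDelta_{\bfB})\rangle - \lambda\Vert\hat\bxi^{(n)}\Vert_2\llangle\bfV,\bfDelta_{\bfB}\rrangle$.

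The two claimed inequalities then come from two different ways of processing the $-\lambda\Vert\hat\bxi^{(n)}\Vert_2\llangle\bfV,\bfDelta_{\bfB}\rrangle$ term, mirroring the two parts of Lemma \ref{lemma:recursion:1st:order:condition:q=1'}. For \eqref{lemma:recursion:delta:bb:general:norm:q=1':eq2}, I would split $\Vert\hat\bxi^{(n)}\Vert_2 = \hat\sigma + (\Vert\hat\bxi^{(n)}\Vert_2 - \hat\sigma)$; the $\hat\sigma$ part combined with $-\llangle\bfV,\bfDelta_{\bfB}\rrangle\le\calR(\bfB)-\calR(\hat\bfB)$ gives the $(\hat\sigma\lambda)(\calR(\bfB)-\calR(\hat\bfB))$ term, while for the correction part I use $\hat\bxi^{(n)}=\bxi^{(n)}-\calE_{\bfB}^{(n)}-\frM^{(n)}(\bfDelta_{\bfB},\bfDelta^{\hat\btheta})$ (here $\frM^{(n)}(\bfDelta_{\bfB},\bfDelta^{\hat\btheta})=\frX^{(n)}(\bfDelta_{\bfB})+\bfDelta^{\hat\btheta}$, matching $\bfDelta^{(n)}+\bfDelta^{\hat\btheta}$), the triangle/reverse-triangle inequality $|\Vert\hat\bxi^{(n)}\Vert_2-\hat\sigma|\le \Vert\calE_{\bfB}^{(n)}\Vert_2 + \Vert\frM^{(n)}(\bfDelta_{\bfB},\bfDelta^{\hat\btheta})\Vert_2$, and $|\llangle\bfV,\bfDelta_{\bfB}\rrangle|\le\calR(\bfDelta_{\bfB})$, yielding the error term $(\Vert\frM^{(n)}(\bfDelta_{\bfB},\bfDelta^{\hat\btheta})\Vert_2 + \Vert\calE_{\bfB}^{(n)}\Vert_2)\lambda\calR(\bfDelta_{\bfB})$. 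Finally, moving $\langle\bfDelta^{\hat\btheta},\frX^{(n)}(\bfDelta_{\bfB})\rangle$ to the right-hand side turns $\langle\bxi^{(n)},\cdot\rangle$ into $\langle\bxi^{(n)}-\bfDelta^{\hat\btheta},\cdot\rangle$, which is \eqref{lemma:recursion:delta:bb:general:norm:q=1':eq2}. Inequality \eqref{lemma:recursion:delta:bb:general:norm:q=1':eq1} is obtained analogously but starting from the zeroth-order comparison $\Vert\hat\bxi^{(n)}\Vert_2 + \lambda\calR(\hat\bfB)\le\Vert\by^{(n)}-\frX^{(n)}(\bfB)-\hat\btheta\Vert_2 + \lambda\calR(\bfB)$ with the convexity bound on $\bz\mapsto\Vert\bz\Vert_2$ at $\bxi_{\bfB}^{(n)}-\bfDelta^{\hat\btheta}=\by^{(n)}-\frX^{(n)}(\bfB)-\hat\btheta$ (note this equals $\hat\bxi^{(n)}+\frX^{(n)}(\bfDelta_{\bfB})$), using $\Vert\calE_{\bfB}^{(n)}\Vert_2 + \Vert\bfDelta^{\hat\btheta}\Vert_2$ as the bound for the discrepancy between $\Vert\bxi_{\bfB}^{(n)}-\bfDelta^{\hat\btheta}\Vert_2$ and $\hat\sigma=\Vert\bxi^{(n)}\Vert_2$ via $\bxi_{\bfB}=\bxi-\calE_{\bfB}$.

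The only mild subtlety — and the step I would check most carefully — is the bookkeeping of which "noise'' vector appears where: the square-root loss forces us to linearize $\Vert\cdot\Vert_2$ at a point that differs from $\bxi^{(n)}$ by both the misspecification error $\calE_{\bfB}^{(n)}$ and the corruption error $\bfDelta^{\hat\btheta}$, and getting the exact constants in front of $\lambda\calR(\bfDelta_{\bfB})$ requires tracking these two errors separately rather than lumping them. Everything else is a direct transcription of the argument in Section \ref{s:proof:thm:improved:rate:q=1'} with the $\btheta$-subgradient inequality suppressed (since $\btheta$ is frozen), so no new ideas are needed beyond those already used for Lemma \ref{lemma:recursion:1st:order:condition:q=1'}.
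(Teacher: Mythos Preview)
Your proposal is correct and follows essentially the same approach as the paper's proof: for \eqref{lemma:recursion:delta:bb:general:norm:q=1':eq2} use the KKT condition of the $\bfB$-subproblem and split $\Vert\hat\bxi^{(n)}\Vert_2=\hat\sigma+(\Vert\hat\bxi^{(n)}\Vert_2-\hat\sigma)$ with $\hat\bxi^{(n)}=\bxi^{(n)}-\calE_{\bfB}^{(n)}-\frM^{(n)}(\bfDelta_{\bfB},\bfDelta^{\hat\btheta})$; for \eqref{lemma:recursion:delta:bb:general:norm:q=1':eq1} use the zeroth-order comparison linearized at $\hat\bxi_{\bfB}^{(n)}:=\bxi_{\bfB}^{(n)}-\bfDelta^{\hat\btheta}$ and the analogous split of $\Vert\hat\bxi_{\bfB}^{(n)}\Vert_2$. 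One cosmetic slip: your intermediate identity should read $\frX^{(n)}_i(\hat\bfB)+\hat\btheta_i-y_i^{(n)}=\bfDelta_i^{(n)}+\bfDelta_i^{\hat\btheta}-\bxi_i^{(n)}$ (opposite sign), and correspondingly the displayed KKT inequality should have the direction reversed or the arguments $\bfB-\hat\bfB$ in place of $\hat\bfB-\bfB$; with this corrected your stated ``basic inequality'' is right and the rest goes through unchanged.
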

\begin{proof}
Observe that 
\begin{align}
\hat\bfB\in 
&\argmin_{\bfB}\left\{\Vert\by^{(n)}-\frX^{(n)}(\bfB)-\hat\btheta\Vert_2+\lambda\calR(\bfB)\right\}.
\end{align}
Let us define $\hat\bxi_{\bfB}:=\by-\frX(\bfB)-\sqrt{n}\hat\btheta$. We first claim that 
\begin{align}
0\le \langle\hat\bxi_{\bfB}^{(n)},\frX^{(n)}(\bfDelta_{\bfB})\rangle
+\Vert\hat\bxi_{\bfB}^{(n)}\Vert_2\lambda\big(\calR(\bfB) - \calR(\hat\bfB)\big).\label{proof:lemma:recursion:delta:bb:general:norm:q=1':eq1}
\end{align}
It is enough to assume $\hat\bxi_{\bfB}\neq\bf0$. Comparing the minimality of $\hat\bfB$ with $\bfB$ and by convexity of 
$\bu\mapsto\Vert\bu\Vert_2$,
\begin{align}
0&\le \Vert\hat\bxi^{(n)}_{\bfB}\Vert_2 - \Vert\hat\bxi^{(n)}\Vert_2
+\lambda \big(\calR(\bfB) - \calR(\hat\bfB)\big) \\
&\le -\left\langle\frac{\hat\bxi^{(n)}_{\bfB}}{\Vert\hat\bxi^{(n)}_{\bfB}\Vert_2},\hat\bxi^{(n)}-\bxi^{(n)}_{\bfB}\right\rangle
+\lambda \big(\calR(\bfB) - \calR(\hat\bfB)\big), 
\end{align}
implying \eqref{proof:lemma:recursion:delta:bb:general:norm:q=1':eq1} since 
$\hat\bxi^{(n)}_{\bfB}=\hat\bxi^{(n)}+\frX^{(n)}(\bfDelta_{\bfB})$. Note also, that $\hat\bxi_{\bfB}^{(n)}=\bxi_{\bfB}^{(n)}-\bfDelta^{\hat\btheta}$. 

We now write
\begin{align}
\Vert\hat\bxi^{(n)}_{\bfB}\Vert_2\lambda\big(\calR(\bfB) - \calR(\hat\bfB)\big)&=\Vert\bxi^{(n)}\Vert_2\lambda\big(\calR(\bfB) - \calR(\hat\bfB)\big)\\
&+(\Vert\hat\bxi^{(n)}_{\bfB}\Vert_2-\Vert\bxi^{(n)}\Vert_2)\lambda\big(\calR(\bfB) - \calR(\hat\bfB)\big).
\label{proof:lemma:recursion:delta:bb:general:norm:q=1':eq2}
\end{align}
By convexity of $\bfW\mapsto\calR(\bfW)$, there exist $\bfV_{\bfB}\in\calR(\bfB)$ such that
\begin{align}
(\Vert\hat\bxi^{(n)}_{\bfB}\Vert_2-\Vert\bxi^{(n)}\Vert_2)\lambda\big(\calR(\bfB) - \calR(\hat\bfB)\big)
&\le (\Vert\hat\bxi^{(n)}_{\bfB}\Vert_2-\Vert\bxi^{(n)}\Vert_2)\lambda
\left(
-\llangle\bfV_{\bfB},\hat\bfB-\bfB\rrangle
\right)\\
&\le (\Vert\calE_{\bfB}^{(n)}\Vert_2 + \Vert\bfDelta^{\hat\btheta}\Vert_2)\lambda\calR(\bfDelta_{\bfB}), 
\label{proof:lemma:recursion:delta:bb:general:norm:q=1':eq3}
\end{align}
where we used that
$
\hat\bxi_{\bfB}^{(n)} = \bxi^{(n)} - \calE_{\bfB}^{(n)} - \bfDelta^{\hat\btheta}
$
and 
$|\llangle\bfV_{\bfB},\bfDelta_{\bfB}\rrangle|\le\calR(\bfDelta_{\bfB})$. Equation \eqref{lemma:recursion:delta:bb:general:norm:q=1':eq1} follow from the two previous displays and \eqref{proof:lemma:recursion:delta:bb:general:norm:q=1':eq1}.

We now prove \eqref{lemma:recursion:delta:bb:general:norm:q=1':eq2}. The KKT conditions imply that there exists $\bfV\in\mdR^p$ with 
$\calR^*(\bfV)\le1$ and  $\llangle\bfV,\hat\bfB\rrangle=\calR(\hat\bfB)$ such that, for all $\bfB\in\mdR^p$,
\begin{align}
0&\le\sum_{i\in[n]}\left[\frX^{(n)}_i(\widehat\bfB)+\widehat\btheta_i-y_i^{(n)}\right]\llangle\bfX^{(n)}_i,\bfB-\hat\bfB\rrangle+\lambda\Vert\hat\bxi^{(n)}\Vert_2\llangle\bfV,\bfB-\hat\bfB\rrangle.
\end{align}
Using that $\by^{(n)}=\boldf^{(n)}+\btheta^*+\bxi^{(n)}$, we arrive at
\begin{align}
0&\le\left\langle\bfDelta^{(n)},-\frX^{(n)}(\bfDelta_{\bfB})\right\rangle + \langle\bxi^{(n)}-\bfDelta^{\hat\btheta},\frX^{(n)}(\bfDelta_{\bfB})\rangle
-\lambda\Vert\hat\bxi^{(n)}\Vert_2\llangle\bfDelta_{\bfB},\bfV\rrangle.
\end{align}

We now write 
\begin{align}
-\lambda\Vert\hat\bxi^{(n)}\Vert_2\llangle\bfV,\bfDelta_{\bfB}\rrangle
&=-\lambda\Vert\bxi^{(n)}\Vert_2\llangle\bfV,\bfDelta_{\bfB}\rrangle\\
&+\lambda(\Vert\bxi^{(n)}\Vert_2-\Vert\hat\bxi^{(n)}\Vert_2)\llangle\bfV,\bfDelta_{\bfB}\rrangle.  
\end{align}

Using that $-\llangle\bfDelta_{\bfB},\bfV\rrangle\le\calR(\bfB)-\calR(\hat\bfB)$, we get that 
\begin{align}
-\lambda\Vert\bxi^{(n)}\Vert_2\llangle\bfV,\bfDelta_{\bfB}\rrangle
\le (\hat\sigma\lambda)\big(\calR(\bfB) - \calR(\hat\bfB)\big). 
\end{align}

Finally, using the facts that $\hat\bxi^{(n)}=\bxi^{(n)}-\calE_{\bfB}^{(n)}-\frM^{(n)}(\bfDelta_{\bfB},\bfDelta^{\hat\btheta})$ and
$|\llangle\bfV,\bfDelta_{\bfB}\rrangle|\le\calR(\bfDelta_{\bfB})$
we get 
\begin{align}
\lambda(\Vert\bxi^{(n)}\Vert_2-\Vert\hat\bxi^{(n)}\Vert_2)\llangle\bfV,\bfDelta_{\bfB}\rrangle 
&\le \left(\Vert\frM^{(n)}(\bfDelta_{\bfB},\bfDelta^{\hat\btheta})\Vert_2+\Vert\calE_{\bfB}^{(n)}\Vert_2\right)
\lambda\calR(\bfDelta_{\bfB}). 
\end{align}
Equation \eqref{lemma:recursion:delta:bb:general:norm:q=1':eq2} follows from the four previous displays.
\end{proof}

Using the previous lemma and $\IP$ --- in addition to $\MP$ and $\ATP$ ---, we obtain the following  lemma. 
\begin{lemma}\label{lemma:MP+IP+ATP:q=1'}
Suppose conditions (i)-(iii) of Theorem \ref{thm:improved:rate} hold. Define the quantities
\begin{align}
\hat\blacktriangle_1 & := \left[
\sf_2 +\frac{\sf_1\sd_2}{\sd_1}
+\left(\sb_2 + \lambda + \frac{\sb_1\sd_2}{\sd_1}\right)\|\bfDelta^{\hat\btheta}\|_2 + \frac{\sb_4\sd_2}{\sd_1}\|\bfDelta^{\hat\btheta}\|_\sharp
+\lambda\Vert\calE_{\bfB}^{(n)}\Vert_2
\right]\calR(\bfDelta_{\bfB}),\\
\hat\blacktriangledown_1 & := (\hat\sigma\lambda) \big(\calR(\bfB) - \calR(\hat\bfB)\big).
\end{align}

Then 
\begin{align}
0\le \left[(\nicefrac{\sf_1}{\sd_1})
+(\nicefrac{\sb_1}{\sd_1})\|\bfDelta^{\hat\btheta}\|_2
+(\nicefrac{\sb_4}{\sd_1})\|\bfDelta^{\hat\btheta}\|_\sharp+\Vert\calE_{\bfB}^{(n)}\Vert_2\right]\Vert\frX^{(n)}(\bfDelta_{\bfB})\Vert_2
+\hat\blacktriangle_1 + \hat\blacktriangledown_1,\label{lemma:MP+IP+ATP:q=1':eq1}
\end{align}
and also
\begin{align}
\Vert\bfDelta^{(n)}\Vert_2^2 + \Vert\frX^{(n)}(\bfDelta_{\bfB})\Vert_2^2 &\le \Vert\frX^{(n)}(\bfB)-\boldf^{(n)}\Vert_2^2\\
& + \left[
(\nicefrac{2\sf_1}{\sd_1})
+(\nicefrac{2\sb_1}{\sd_1})\|\bfDelta^{\hat\btheta}\|_2
+(\nicefrac{2\sb_4}{\sd_1})\|\bfDelta^{\hat\btheta}\|_\sharp
\right]\Vert\frX^{(n)}(\bfDelta_{\bfB})\Vert_2\\
&+ 2(\hat\blacktriangle_1 + \hat\blacktriangledown_1) 
+2\Vert\frX^{(n)}(\bfDelta_{\bfB})\Vert_2\lambda\calR(\bfDelta_{\bfB}). 
\label{lemma:MP+IP+ATP:q=1':eq2}
\end{align}
\end{lemma}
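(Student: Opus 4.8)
\textbf{Proof plan for Lemma \ref{lemma:MP+IP+ATP:q=1'}.}

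The plan is to mirror the structure of the proof of Lemma \ref{lemma:MP+IP+ATP} in Section \ref{s:proof:lemma:MP+IP+ATP}, but starting from the two first-order recursions \eqref{lemma:recursion:delta:bb:general:norm:q=1':eq1}--\eqref{lemma:recursion:delta:bb:general:norm:q=1':eq2} of Lemma \ref{lemma:recursion:delta:bb:general:norm:q=1'} instead of the single recursion \eqref{lemma:recursion:delta:bb:general:norm:eq}. The common engine is: lower bound the terms involving $\frX^{(n)}(\bfDelta_{\bfB})$ via $\ATP$ (with $\calS\equiv0$, $\bfW=\bf0$, $\bu=\bf0$), and upper bound the ``perturbed multiplier'' $\langle\bxi^{(n)}-\bfDelta^{\hat\btheta},\frX^{(n)}(\bfDelta_{\bfB})\rangle$ by splitting it into $\langle\bxi^{(n)},\frX^{(n)}(\bfDelta_{\bfB})\rangle$ (controlled by $\MP$) and $\langle-\bfDelta^{\hat\btheta},\frX^{(n)}(\bfDelta_{\bfB})\rangle$ (controlled by $\IP$), exactly as in the display chain following \eqref{proof:lemma:MP+IP+PP+ATP:eq1}. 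The new ingredient here is bookkeeping of the additional error terms of the form $(\Vert\frX^{(n)}(\bfDelta_{\bfB})\Vert_2+\Vert\calE_{\bfB}^{(n)}\Vert_2)\lambda\calR(\bfDelta_{\bfB})$ (resp.\ $(\Vert\calE_{\bfB}^{(n)}\Vert_2+\Vert\bfDelta^{\hat\btheta}\Vert_2)\lambda\calR(\bfDelta_{\bfB})$) which appear because we use the square-root loss $\calL_{\tau\bomega,1}$; these get absorbed into $\hat\blacktriangle_1$ (the $\lambda\Vert\calE_{\bfB}^{(n)}\Vert_2$ and $\lambda\Vert\bfDelta^{\hat\btheta}\Vert_2$ coefficients of $\calR(\bfDelta_{\bfB})$) and into the stand-alone term $2\Vert\frX^{(n)}(\bfDelta_{\bfB})\Vert_2\lambda\calR(\bfDelta_{\bfB})$ in \eqref{lemma:MP+IP+ATP:q=1':eq2}.

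Concretely, for \eqref{lemma:MP+IP+ATP:q=1':eq2} I would start from \eqref{lemma:recursion:delta:bb:general:norm:q=1':eq2}, apply the parallelogram identity
$\langle\bfDelta^{(n)},\frX^{(n)}(\bfDelta_{\bfB})\rangle=\tfrac12\Vert\bfDelta^{(n)}\Vert_2^2+\tfrac12\Vert\frX^{(n)}(\bfDelta_{\bfB})\Vert_2^2-\tfrac12\Vert\frX^{(n)}(\bfB)-\boldf^{(n)}\Vert_2^2$, then lower bound $\Vert\frX^{(n)}(\bfDelta_{\bfB})\Vert_2$ by $\ATP$ in the form $\Vert\frX^{(n)}(\bfDelta_{\bfB})\Vert_2\ge\sd_1\Vert\bfDelta_{\bfB}\Vert_\Pi-\sd_2\calR(\bfDelta_{\bfB})$, and bound $\langle\bxi^{(n)},\frX^{(n)}(\bfDelta_{\bfB})\rangle\le\sf_1\Vert\bfDelta_{\bfB}\Vert_\Pi+\sf_2\calR(\bfDelta_{\bfB})$ by $\MP$ and $\langle-\bfDelta^{\hat\btheta},\frX^{(n)}(\bfDelta_{\bfB})\rangle\le\sb_1\Vert\bfDelta_{\bfB}\Vert_\Pi\Vert\bfDelta^{\hat\btheta}\Vert_2+\sb_2\calR(\bfDelta_{\bfB})\Vert\bfDelta^{\hat\btheta}\Vert_2+\sb_4\Vert\bfDelta_{\bfB}\Vert_\Pi\Vert\bfDelta^{\hat\btheta}\Vert_\sharp$ by $\IP$ (all with $\bfW=\bf0$). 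The key algebraic move, as in Section \ref{s:proof:lemma:MP+IP+ATP}, is to re-express the coefficient $\Vert\bfDelta_{\bfB}\Vert_\Pi$ via $\sd_1\Vert\bfDelta_{\bfB}\Vert_\Pi\le\Vert\frX^{(n)}(\bfDelta_{\bfB})\Vert_2+\sd_2\calR(\bfDelta_{\bfB})$, so that every appearance of $\Vert\bfDelta_{\bfB}\Vert_\Pi$ is traded for a multiple of $\Vert\frX^{(n)}(\bfDelta_{\bfB})\Vert_2$ plus a multiple of $\calR(\bfDelta_{\bfB})$; this is what produces the $\sf_1\sd_2/\sd_1$, $\sb_1\sd_2/\sd_1$, $\sb_4\sd_2/\sd_1$ coefficients inside $\hat\blacktriangle_1$ and the $(\nicefrac{2\sf_1}{\sd_1})+(\nicefrac{2\sb_1}{\sd_1})\|\bfDelta^{\hat\btheta}\|_2+(\nicefrac{2\sb_4}{\sd_1})\|\bfDelta^{\hat\btheta}\|_\sharp$ coefficient of $\Vert\frX^{(n)}(\bfDelta_{\bfB})\Vert_2$. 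The remaining $\lambda$-terms $(\Vert\frX^{(n)}(\bfDelta_{\bfB})\Vert_2+\Vert\calE_{\bfB}^{(n)}\Vert_2)\lambda\calR(\bfDelta_{\bfB})$ from \eqref{lemma:recursion:delta:bb:general:norm:q=1':eq2} and $(\Vert\calE_{\bfB}^{(n)}\Vert_2+\Vert\bfDelta^{\hat\btheta}\Vert_2)\lambda\calR(\bfDelta_{\bfB})$ from $-\bfDelta^{\hat\btheta}$ split off: the $\Vert\calE_{\bfB}^{(n)}\Vert_2\lambda$ and $\Vert\bfDelta^{\hat\btheta}\Vert_2\lambda$ pieces join $\hat\blacktriangle_1$, while $\Vert\frX^{(n)}(\bfDelta_{\bfB})\Vert_2\lambda\calR(\bfDelta_{\bfB})$ is kept explicitly. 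Multiplying the resulting inequality by $2$ and relabelling $-\langle\bfDelta_{\bfB},\bfV\rangle\le\calR(\bfB)-\calR(\hat\bfB)$ into $2\hat\blacktriangledown_1$ gives \eqref{lemma:MP+IP+ATP:q=1':eq2}. For \eqref{lemma:MP+IP+ATP:q=1':eq1} the same substitutions are applied directly to \eqref{lemma:recursion:delta:bb:general:norm:q=1':eq1}, which is a single-sided inequality, so no parallelogram step is needed — only the $\ATP$ trade $\sd_1\Vert\bfDelta_{\bfB}\Vert_\Pi\le\Vert\frX^{(n)}(\bfDelta_{\bfB})\Vert_2+\sd_2\calR(\bfDelta_{\bfB})$ applied to the $\sf_1,\sb_1,\sb_4$ terms, which is exactly what yields the bracket $(\nicefrac{\sf_1}{\sd_1})+(\nicefrac{\sb_1}{\sd_1})\|\bfDelta^{\hat\btheta}\|_2+(\nicefrac{\sb_4}{\sd_1})\|\bfDelta^{\hat\btheta}\|_\sharp+\Vert\calE_{\bfB}^{(n)}\Vert_2$ multiplying $\Vert\frX^{(n)}(\bfDelta_{\bfB})\Vert_2$.

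The main obstacle I anticipate is purely organizational rather than conceptual: keeping the numerous perturbation terms correctly distributed between the three ``slots'' ($\hat\blacktriangle_1$ as the coefficient of $\calR(\bfDelta_{\bfB})$, $\hat\blacktriangledown_1$ as the norm-difference slack, and the explicit $\Vert\frX^{(n)}(\bfDelta_{\bfB})\Vert_2\lambda\calR(\bfDelta_{\bfB})$ term), since both $\Vert\calE_{\bfB}^{(n)}\Vert_2$ and $\Vert\bfDelta^{\hat\btheta}\Vert_2$ enter through two separate channels (the square-root-loss error term and the $\IP$ bound on $-\bfDelta^{\hat\btheta}$). One should be careful that $\IP$ is invoked with the norm $\Vert\cdot\Vert_\sharp$ as $\calQ$ so that the $\sb_4\Vert\bfDelta^{\hat\btheta}\Vert_\sharp$ term appears, matching $\MP_{\calR,0,\Vert\cdot\Vert_\sharp}$ and $\ATP_{\calR,0,\Vert\cdot\Vert_\sharp}$ in the hypotheses, and that one also picks up a $\sb_4\sd_2/\sd_1$ contribution inside $\hat\blacktriangle_1$ from trading the $\Vert\bfDelta_{\bfB}\Vert_\Pi$ factor multiplying $\sb_4\Vert\bfDelta^{\hat\btheta}\Vert_\sharp$. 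Everything else is the same pattern as Lemmas \ref{lemma:MP+ATP} and \ref{lemma:MP+IP+ATP}.
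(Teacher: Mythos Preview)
Your plan is correct and matches the paper's proof essentially step for step: parallelogram identity, then $\MP$, $\IP$ and $\ATP$ (all with $\bfW=\bf0$, $\bu=\bf0$), followed by the substitution $\sd_1\Vert\bfDelta_{\bfB}\Vert_\Pi\le\Vert\frX^{(n)}(\bfDelta_{\bfB})\Vert_2+\sd_2\calR(\bfDelta_{\bfB})$ to trade every $\Vert\bfDelta_{\bfB}\Vert_\Pi$ for $\Vert\frX^{(n)}(\bfDelta_{\bfB})\Vert_2$ and $\calR(\bfDelta_{\bfB})$. One small bookkeeping correction: the extra error term in \eqref{lemma:recursion:delta:bb:general:norm:q=1':eq2} carries $\Vert\frM^{(n)}(\bfDelta_{\bfB},\bfDelta^{\hat\btheta})\Vert_2$, not $\Vert\frX^{(n)}(\bfDelta_{\bfB})\Vert_2$, so you first bound $\Vert\frM^{(n)}(\bfDelta_{\bfB},\bfDelta^{\hat\btheta})\Vert_2\le\Vert\frX^{(n)}(\bfDelta_{\bfB})\Vert_2+\Vert\bfDelta^{\hat\btheta}\Vert_2$ by the triangle inequality --- this is where the $\lambda\Vert\bfDelta^{\hat\btheta}\Vert_2$ piece of $\hat\blacktriangle_1$ actually originates (not from $\IP$).
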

\begin{proof}
By the parallelogram law,
\begin{align}
&\langle \bfDelta^{(n)}, \frX^{(n)}(\bfDelta_{\bfB})\rangle =
\frac{1}{2}\Vert\bfDelta^{(n)}\Vert_2^2
+ \frac{1}{2}\Vert\frX^{(n)}(\bfDelta_{\bfB})\Vert_2^2
- \frac{1}{2}\Vert\frX^{(n)}(\bfB)-\boldf^{(n)}\Vert_2^2.
\end{align}

The previous display, \eqref{lemma:recursion:delta:bb:general:norm:q=1':eq2} and 
$\Vert\frM^{(n)}(\bfDelta_{\bfB},\bfDelta^{\hat\btheta})\Vert_2\le 
\Vert\frX^{(n)}(\bfDelta_{\bfB})\Vert_2+\Vert\bfDelta^{\hat\btheta}\Vert_2$ imply
\begin{align}
\Vert\bfDelta^{(n)}\Vert_2^2 + \Vert\frX^{(n)}(\bfDelta_{\bfB})\Vert_2^2 &\le \Vert\frX^{(n)}(\bfB)-\boldf^{(n)}\Vert_2^2\\
&+ 2\langle\bxi^{(n)}-\bfDelta^{\hat\btheta},\frX^{(n)}(\bfDelta_{\bfB})\rangle\\
&+2(\hat\sigma\lambda)(\calR(\bfB)-\calR(\hat\bfB))\\
&+2\Vert\frX^{(n)}(\bfDelta_{\bfB})\Vert_2\lambda\calR(\bfDelta_{\bfB}) + 2(\Vert\calE_{\bfB}^{(n)}\Vert_2+\Vert\bfDelta^{\hat\btheta}\Vert_2)\lambda\calR(\bfDelta_{\bfB}).
\label{proof:lemma:MP+IP+PP+ATP:q=1':eq1}
\end{align}

By $\IP$ (with variable $\bfW=\bold{0}$),
\begin{align}
\langle-\bfDelta^{\hat\btheta},\frX^{(n)}(\bfDelta_{\bfB})\rangle 
&\le \sb_1\Vert\bfDelta_{\bfB}\Vert_{\Pi}\Vert\bfDelta^{\hat\btheta}\Vert_2 
+\sb_2\calR(\bfDelta_{\bfB})\Vert\bfDelta^{\hat\btheta}\Vert_2
+\sb_4\Vert\bfDelta_{\bfB}\Vert_{\Pi}\Vert\bfDelta^{\hat\btheta}\Vert_\sharp.  
\end{align}

By $\MP$ (with variables $\bfW=\bold{0}$ and $\bu=\bold{0}$),
\begin{align}
\langle\bxi^{(n)},\frX^{(n)}(\bfDelta_{\bfB})\rangle &\le
\sf_1\Vert\bfDelta_{\bfB}\Vert_{\Pi} 
+\sf_2\calR(\bfDelta_{\bfB}).
\end{align}

By $\ATP$ (with variables $\bfW=\bold{0}$ and $\bu=\bold{0}$), 
\begin{align}
\Vert\frX^{(n)}(\bfDelta_{\bfB})\Vert_2 &\ge \sd_1\Vert\bfDelta_{\bfB}\Vert_{\Pi}-\sd_2\calR(\bfDelta_{\bfB}).
\end{align}

The three previous displays imply
\begin{align}
\langle\bxi^{(n)}-\bfDelta^{\hat\btheta},\frX^{(n)}(\bfDelta_{\bfB})\rangle
&\le\Vert\bfDelta_{\bfB}\Vert_{\Pi}\left(
\sb_1\Vert\bfDelta^{\hat\btheta}\Vert_2 + \sb_4\Vert\bfDelta^{\hat\btheta}\Vert_\sharp + \sf_1
\right) + \calR(\bfDelta_{\bfB})(\sb_2\|\bfDelta^{\hat\btheta}\|_2 + \sf_2)\\
&\le \Vert\frX^{(n)}(\bfDelta_{\bfB})\Vert_2
\left[
(\nicefrac{\sb_1}{\sd_1})\|\bfDelta^{\hat\btheta}\|_2
+(\nicefrac{\sb_4}{\sd_1})\|\bfDelta^{\hat\btheta}\|_\sharp
+(\nicefrac{\sf_1}{\sd_1})
\right]\\
&+ \calR(\bfDelta_{\bfB})\left[
(\nicefrac{\sd_2}{\sd_1})\left(
\sb_1\Vert\bfDelta^{\hat\btheta}\Vert_2 + \sb_4\Vert\bfDelta^{\hat\btheta}\Vert_\sharp + \sf_1
\right)
+\sb_2\|\bfDelta^{\hat\btheta}\|_2 + \sf_2
\right]. 
\end{align}
The proof of \eqref{lemma:MP+IP+ATP:q=1':eq2} follows from the previous display and \eqref{proof:lemma:MP+IP+PP+ATP:q=1':eq1}. 

The proof of \eqref{lemma:MP+IP+ATP:q=1':eq1} follows from the previous display, inequality \eqref{lemma:recursion:delta:bb:general:norm:q=1':eq1} and the fact that 
\begin{align}
\langle\bxi^{(n)}_{\bfB}-\bfDelta^{\hat\btheta},\frX^{(n)}(\bfDelta_{\bfB})\rangle & =
\langle\bxi^{(n)}-\bfDelta^{\hat\btheta},\frX^{(n)}(\bfDelta_{\bfB})\rangle
-\langle\calE^{(n)}_{\bfB},\frX^{(n)}(\bfDelta_{\bfB})\rangle\\
& \le 
\langle\bxi^{(n)}-\bfDelta^{\hat\btheta},\frX^{(n)}(\bfDelta_{\bfB})\rangle
+\Vert\calE^{(n)}_{\bfB}\Vert_2\Vert\frX^{(n)}(\bfDelta_{\bfB})\Vert_2. 
\end{align}
\end{proof}

We conclude with the proof of Theorem \ref{thm:improved:rate:q=1'}. It uses Proposition \ref{prop:suboptimal:rate:q=1'} and Lemma \ref{lemma:MP+IP+ATP:q=1'}. 
\begin{proof}[Proof of Theorem \ref{thm:improved:rate:q=1'}]
Let
$\hat\blacksquare_1:=(\nicefrac{\hat\sigma\lambda}{4})\calR(\bfDelta_{\bfB})$.
By Lemma \ref{lemma:MP+IP+ATP:q=1'} and $\Vert\calE_{\bfB}^{(n)}\Vert_2\le\hat\sigma\sc_n$, we have 
\begin{align}
0\le \left[(\nicefrac{\sf_1}{\sd_1})
+(\nicefrac{\sb_1}{\sd_1})\|\bfDelta^{\hat\btheta}\|_2
+(\nicefrac{\sb_4}{\sd_1})\|\bfDelta^{\hat\btheta}\|_\sharp + \hat\sigma\sc_n\right]\Vert\frX^{(n)}(\bfDelta_{\bfB})\Vert_2
+\hat\blacktriangle_1 + \hat\blacktriangledown_1,\label{proof:thm:improved:rate:q=1':ineq1} 
\end{align}
and also
\begin{align}
2\hat\blacksquare_1 + \Vert\bfDelta^{(n)}\Vert_2^2 + \Vert\frX^{(n)}(\bfDelta_{\bfB})\Vert_2^2 &\le \Vert\frX^{(n)}(\bfB)-\boldf^{(n)}\Vert_2^2\\
& + \left[
(\nicefrac{2\sf_1}{\sd_1})
+(\nicefrac{2\sb_1}{\sd_1})\|\bfDelta^{\hat\btheta}\|_2
+(\nicefrac{2\sb_4}{\sd_1})\|\bfDelta^{\hat\btheta}\|_\sharp
\right]\Vert\frX^{(n)}(\bfDelta_{\bfB})\Vert_2\\
&+ 2(\hat\blacktriangle_1 + \hat\blacksquare_1 + \hat\blacktriangledown_1) 
+2\Vert\frX^{(n)}(\bfDelta_{\bfB})\Vert_2\lambda\calR(\bfDelta_{\bfB}).\label{proof:thm:improved:rate:q=1':ineq2}
\end{align}

For convenience, let $R:=(\hat\sigma\sc_n)\vee(3r)$. All conditions of Proposition \ref{prop:suboptimal:rate:q=1'} hold. Hence, 
\begin{align}
\|\bfDelta^{\hat\btheta}\|_2&\le
(\nicefrac{4D}{\sd_1}) + (\nicefrac{2}{\sd_1})\clubsuit_1
\left(\sf_1,R\right)
 \le\sc_*\hat\sigma,\label{proof:thm:improved:rate:q=1':eq:club}\\ 
(\nicefrac{\hat\sigma\tau}{2})\|\bfDelta^{\hat\btheta}\|_\sharp &\le D^2 + \spadesuit_1\left(\sf_1,R\right)
\le\sc_*^2\hat\sigma^2, \label{proof:thm:improved:rate:q=1':eq:spade}
\end{align}
where we have used conditions \eqref{cond8:general:norm:q=1'}-\eqref{cond9:general:norm:q=1'}. In particular, by the condition on $\lambda$ in (iv), 
\begin{align}
\sf_2 + \frac{\sf_1\sd_2}{\sd_1} 
+ \left(\sb_2 + \lambda + \frac{\sb_1\sd_2}{\sd_1}\right)\|\bfDelta^{\hat\btheta}\|_2 + \frac{\sb_4\sd_2}{\sd_1}\|\bfDelta^{\hat\btheta}\|_\sharp
+ \hat\sigma\lambda\sc_n &\le\frac{\hat\sigma\lambda}{4}.\label{proof:thm:improved:rate:q=1':eq0''}
\end{align}

By Lemmas \ref{lemma:A1:B} (with $\nu=1/2$), \eqref{proof:thm:improved:rate:q=1':eq0''} and $\Vert\calE_{\bfB}\Vert_2\le\hat\sigma\sc_n$,
\begin{align}
\hat\blacktriangle_1 + \hat\blacksquare_1 + \hat\blacktriangledown_1 & \le
\left[\sf_2 + \frac{\sf_1\sd_2}{\sd_1} 
+ \left(\sb_2 + \lambda + \frac{\sb_1\sd_2}{\sd_1}\right)\|\bfDelta^{\hat\btheta}\|_2 + \frac{\sb_4\sd_2}{\sd_1}\|\bfDelta^{\hat\btheta}\|_\sharp
+ \hat\sigma\lambda\sc_n + (\nicefrac{\hat\sigma\lambda}{4})\right]\calR(\bfDelta_{\bfB})\\
& + (\hat\sigma\lambda) \big(\calR(\bfB) - \calR(\hat\bfB)\big)\\
&\le(\nicefrac{\hat\sigma\lambda}{2})\calR(\bfDelta_{\bfB}) + (\hat\sigma\lambda) \big(\calR(\bfB) - \calR(\hat\bfB)\big)\\
&\le \triangle_{\hat\sigma\lambda,0}(\bfDelta_{\bfB},\bold{0}|\bfB)=:\triangle.\label{proof:thm:improved:rate:q=1':eq:triangle}
\end{align}

Next, we define the local variables
\begin{align}
H&:=(\nicefrac{3\hat\sigma\lambda}{2})(\calR\circ\calP_{\bfB})(\bfDelta_{\bfB}),\\
I&:=(\nicefrac{\hat\sigma\lambda}{2})(\calR\circ\calP_{\bfB}^\perp)(\bfDelta_{\bfB}).
\end{align}
In particular, 
$
\triangle = H - I. 
$
Recall 
$
D:=\Vert\frX^{(n)}(\bfB)-\boldf^{(n)}\Vert_2. 
$
Define also $G:=\Vert\frX^{(n)}(\bfDelta_{\bfB})\Vert_2$, 
$
x:=\Vert\bfDelta^{(n)}\Vert_2,
$
and 
$\hat r:=r_{\hat\sigma\lambda,0,6}(\bfDelta_{\bfB}|\bfB)$. Finally, let us define the auxiliary variables
\begin{align}
F&:= \sf_1 + \sb_1\left[
(\nicefrac{4D}{\sd_1}) + (\nicefrac{2}{\sd_1})\clubsuit_1
\left(\sf_1,R\right)\right]
 + 2(\nicefrac{\sb_4}{\hat\sigma\tau})\left[
D^2 + \spadesuit_1\left(\sf_1,R\right)
\right],\\
\hat\sf_1&:=\sf_1 + \sb_1(\sc_*\sigma) + 
2(\nicefrac{\sb_4}{\hat\sigma\tau})(\sc_*^2\sigma^2).
\end{align}
Note that, from \eqref{proof:thm:improved:rate:q=1':eq:club}-\eqref{proof:thm:improved:rate:q=1':eq:spade} and $F\le\hat\sf_1$.

From \eqref{proof:thm:improved:rate:q=1':ineq1}-\eqref{proof:thm:improved:rate:q=1':ineq2}, \eqref{proof:thm:improved:rate:q=1':eq:club}-\eqref{proof:thm:improved:rate:q=1':eq:spade} and \eqref{proof:thm:improved:rate:q=1':eq:triangle}, 
\begin{align}
0&\le \left(\nicefrac{F}{\sd_1}+\hat\sigma\sc_n\right)G
+\triangle,\label{proof:thm:improved:rate:q=1':eq0'}\\
2\hat\blacksquare_1 + x^2 + G^2
&\le D^2 + 2(\nicefrac{F}{\sd_1})G +2\lambda\calR(\bfDelta_{\bfB})G
+ 2\triangle.\label{proof:thm:improved:rate:q=1':eq0}
\end{align}
The rest of the proof is similar to the proof of Proposition \ref{prop:suboptimal:rate:q=1'}. 

We split our argument in two cases. 
\begin{description}
\item[Case 1:] $\left(\nicefrac{F}{\sd_1}+\hat\sigma\sc_n\right)G\ge H$. Hence, $\triangle\le H\le \left(\nicefrac{F}{\sd_1}+\hat\sigma\sc_n\right)G$. From \eqref{proof:thm:improved:rate:q=1':eq0'}, 
$
I\le \left(\nicefrac{F}{\sd_1}+\hat\sigma\sc_n\right)G + H
\le 2\left(\nicefrac{F}{\sd_1}+\hat\sigma\sc_n\right)G.
$
This fact and decomposability imply 
\begin{align}
\hat\sigma\lambda\calR(\bfDelta_{\bfB})
\le 2I+\frac{2H}{3} 
\le \frac{14}{3}\left(\nicefrac{F}{\sd_1}+\hat\sigma\sc_n\right)G.
\end{align}
From \eqref{proof:thm:improved:rate:q=1':eq0}, we get 
\begin{align}
2\hat\blacksquare_1 + x^2 + G^2
&\le D^2 + 2(\nicefrac{F}{\sd_1})G +
\frac{28}{3\hat\sigma}\left(\nicefrac{F}{\sd_1}+\hat\sigma\sc_n\right)G^2 + 2\triangle\\
&\le D^2 + 2\left(\nicefrac{2F}{\sd_1}+\hat\sigma\sc_n\right)G +
\frac{28}{3\hat\sigma}\left(\nicefrac{F}{\sd_1}+\hat\sigma\sc_n\right)G^2.
\end{align}
Using that $F\le\hat\sf_1$ and $56[(\nicefrac{\hat\sf_1}{\sd_1})+\hat\sigma\sc_n]\le3\hat\sigma$ --- as stated in (v) ---, we get
\begin{align}
2\hat\blacksquare_1 + x^2 + \frac{G^2}{2}
&\le D^2 + 2\left(2F+\hat\sigma\sc_n\right)G. 
\end{align}

From $2\left(2F+\hat\sigma\sc_n\right)G\le2\left(\nicefrac{2F}{\sd_1}+\hat\sigma\sc_n\right)^2 + \frac{G^2}{2}$, we get 
\begin{align}
2\hat\blacksquare_1 + x^2 &\le D^2 + 2\left(\nicefrac{2F}{\sd_1}+\hat\sigma\sc_n\right)^2.\label{proof:thm:improved:rate:q=1':eq4} 
\end{align}
If we use instead $2\left(\nicefrac{2F}{\sd_1}+\hat\sigma\sc_n\right)G\le4\left(\nicefrac{2F}{\sd_1}+\hat\sigma\sc_n\right)^2 + \frac{G^2}{4}$, we get 
\begin{align}
\frac{G^2}{4} &\le D^2 + 4\left(\nicefrac{2F}{\sd_1}+\hat\sigma\sc_n\right)^2.\label{proof:thm:improved:rate:q=1':eq5}
\end{align}

\item[Case 2:] $\left(\nicefrac{F}{\sd_1}+\hat\sigma\sc_n\right)G\le H$. From \eqref{proof:thm:improved:rate:q=1':eq0'}, $0\le H+\triangle=2H-I$ so that 
$\bfDelta_{\bfB}\in\calC_{\bfB}(6)$. By Lemma \ref{lemma:aug:rest:convexity}, 
\begin{align}
\triangle&\le(\nicefrac{3}{2})\hat r\Vert\bfDelta_{\bfB}\Vert_{\Pi}, \\
(\hat\sigma\lambda)\calR(\bfDelta_{\bfB})&\le 14\hat r \Vert\bfDelta_{\bfB}\Vert_\Pi.
\end{align}
The second inequality above and $\ATP$ imply that
\begin{align}
\sd_1\Vert\bfDelta_{\bfB}\Vert_{\Pi} &\le G
+(\nicefrac{\sd_2}{\lambda})\lambda\calR(\bfDelta_{\bfB})\label{proof:thm:improved:rate:q=1':eq6}\\
&\le G + (\nicefrac{\sd_2}{\lambda})
14(\nicefrac{r}{\hat\sigma})\Vert\bfDelta_{\bfB}\Vert_{\Pi}.
\end{align}
By condition \eqref{cond1:general:norm:q=1'}, $14[(\nicefrac{\sd_2}{\lambda})\vee(\nicefrac{\sd_4}{\tau})](\nicefrac{\hat r}{\hat\sigma})\le\sd_1/2$, implying $(\sd_1/2)\Vert\bfDelta_{\bfB}\Vert_{\Pi}\le G$. 
We conclude that 
\begin{align}
\triangle&\le(\nicefrac{3\hat r}{\sd_1})G,\\
\hat\sigma\lambda\calR(\bfDelta_{\bfB})&\le (\nicefrac{28\hat r}{\sd_1})G.
\end{align}

From \eqref{proof:thm:improved:rate:q=1':eq0}, 
\begin{align}
2\hat\blacksquare_1 + x^2 + G^2 &\le D^2 + 2(\nicefrac{F}{\sd_1})G + (\nicefrac{56\hat r}{\hat\sigma\sd_1})G^2
+ (\nicefrac{6\hat r}{\sd_1})G\\
&= D^2 + \left(\frac{2F + 6\hat r}{\sd_1}\right)G + (\nicefrac{56\hat r}{\hat\sigma\sd_1})G^2, 
\end{align}
which, together with $56\hat r\le\hat\sigma\sd_1/2$ --- as implied by condition \eqref{cond1:general:norm:q=1'} ---, entails
\begin{align}
2\hat\blacksquare_1 + x^2 + \frac{G^2}{2} &\le  D^2 + 2\left(\frac{F + 3\hat r}{\sd_1}\right)G. 
\end{align}

Proceeding similarly as before, we obtain from the displayed bound that 
\begin{align}
2\hat \blacksquare_1 + x^2 &\le D^2 + 2\left(\frac{F+3\hat r}{\sd_1}\right)^2,\label{proof:thm:improved:rate:q=1':eq8}\\
\frac{G^2}{4} &\le D^2 + 4\left(\frac{F+3\hat r}{\sd_1}\right)^2.\label{proof:thm:improved:rate:q=1':eq9}
\end{align}
\end{description}
The proof of \eqref{thm:improved:rate:q=1':eq1} follows by taking the largest of the bounds in  \eqref{proof:thm:improved:rate:q=1':eq4} and \eqref{proof:thm:improved:rate:q=1':eq8}. The proof of \eqref{thm:improved:rate:q=1':eq2} follows by taking the largest of the bounds in \eqref{proof:thm:improved:rate:q=1':eq5} and \eqref{proof:thm:improved:rate:q=1':eq9}. The proof of \eqref{thm:improved:rate:q=1':eq3} follows from \eqref{proof:thm:improved:rate:q=1':eq6} --- namely, $\ATP$ --- and \eqref{thm:improved:rate:q=1':eq1}-\eqref{thm:improved:rate:q=1':eq2}. 
\end{proof}

\section{Proof sketch of Theorem \ref{thm:response:sparse-low-rank:regression:q=1}, case (i')}\label{s:proof:thm:response:sparse-low-rank:regression:q=1:i'}
In the following $\calR:=\Vert\cdot\Vert_1$. Next, we assume that $n\ge C_0L^4(1+\log(1/\delta))$ and $n\ge C_0\sigma^2(1+\log(1/\delta))$ for an absolute constant to be determined next. We will also use that $L\ge1$. In the following, $C>0$ is the universal constant stated in Proposition  \ref{proposition:properties:subgaussian:designs}. No effort is made to optimize the numerical constants. 

Invoking Proposition \ref{proposition:properties:subgaussian:designs} and taking $C_0\gtrsim C^2$, we know that on the event 
$\calE_1\cap\calE_2\cap\calE_3$ --- stated in Section \ref{s:proof:thm:response:sparse-low-rank:regression:i} --- of probability 
$\ge1-3\delta$, the properties 
$\TP_{\Vert\cdot\Vert_1}(\sa_1,\sa_2)$, $\IP_{\Vert\cdot\Vert_1,0,\Vert\cdot\Vert_\sharp}(\sb_1(\delta),\sb_2,0,\sb_4)$, 
$\MP_{\Vert\cdot\Vert_1,0,\Vert\cdot\Vert_\sharp}(\sf_1(\delta),\sf_2,0,\sf_4)$ 
and $\ATP_{\Vert\cdot\Vert_1,0,\Vert\cdot\Vert_\sharp}(\sd_1,\sd_2,0,\sd_4)$ all hold --- see Section \ref{s:proof:thm:response:sparse-low-rank:regression:i} for the expression of the constants $\{\sa_i\}$, 
$\{\sb_i\}$, $\{\sd_i\}$ and $\{\sf_i\}$. Enlarging $C_0$ if necessary,  Bernstein's inequality implies that, on an event $\calE_4$ of probability at least $1-\delta$, $\sigma/2\le\hat\sigma\le3\sigma/2$. The proof will work on the event $\calE_1\cap\calE_2\cap\calE_3\cap\calE_4$ of probability 
$\ge1-4\delta$. 

Next, we will use the notation $\mu(\bb):=\mu\left(\calC_{\bb,\Vert\cdot\Vert_1}(6)\right)$ and 
\begin{align}
\calF&:=\calF(s,s\log p,\rho_1(\bfSigma),\sfC),\\
\calF_0&:=\calF(s,s\log p,\rho_1(\bfSigma),\sc_{0,n}^2,\sfC),
\end{align}
for $\sc_{0,n}$ and $\sfC$ to be determined. By definition
$\mu_*:=\sup_{\bb\in\calF}\mu(\bb)<\infty$. We will take 
\begin{align}
\sc_{0,n}\asymp r_{n,s\log p,\delta}(\rho_1(\bfSigma),\mu_*) = L\frac{1+\sqrt{\log(1/\delta)}}{\sqrt{n}} + L^2\rho_1(\bfSigma)\mu_*\sqrt{\frac{s\log p}{n}}.
\end{align}
Also, we take $\sc_n:=2\sc_{0,n}$ and $\sc_*\asymp\sc_n$. 

Next, we invoke Theorem \ref{thm:improved:rate:q=1'}. Conditions (i)-(iii) are met. Now, we note that, by definition of $\calF$ and assumptions of the theorem,   
\begin{align}
\calO(1)CL^2\rho_1(\bfSigma)
\mu_*\sqrt{\frac{s\log p}{n}}&<1,\label{thm:response:sparse-low-rank:reg:q=1':eq1}\\
CL\sqrt{\epsilon\log(e/\epsilon)}<c_1,\label{thm:response:sparse-low-rank:reg:q=1':eq2}
\end{align}
choosing appropriate absolute constants $c_1\in(0,1)$ and $\sfC\ge1$. In that case, by changing constants if necessary, we can assume $\sc_n,\sc_*\in(0,1)$ are small enough. Thus, if we choose 
$
\lambda \asymp C\sigma L^2\rho_1(\bfSigma)\sqrt{\nicefrac{\log p}{n}}
$
and 
$
\tau\asymp C\sigma L/\sqrt{n}, 
$
it is easy to check that conditions (iv)-(v) of Theorem \ref{thm:improved:rate:q=1'} are met.

In remains to verify conditions \eqref{cond4:general:norm:q=1'}-\eqref{cond9:general:norm:q=1'} for $\bb\in\calF_0$. Condition \eqref{cond4:general:norm:q=1'} is met: $D=\Vert\frX^{(n)}(\bb)-\boldf^{(n)}\Vert_2\le\sc_{0,n}\sigma\le2\sc_{0,n}\hat\sigma$. Let $r:=r_{\hat\sigma\lambda,\hat\sigma\tau\Omega,6}(\bfDelta_{\bb}|\bb)$. By the choice  of $(\lambda,\tau)$, one checks that \eqref{cond1:general:norm:q=1'} is satisfied if \eqref{thm:response:sparse-low-rank:reg:q=1':eq1}-\eqref{thm:response:sparse-low-rank:reg:q=1':eq2} hold.

Let $R:=(\hat\sigma\sc_n)\vee(3r)$ and $r_*:=\sup_{\bb\in\calF}r$. We have $R\lesssim r_*$. Next, we show that $D^2+\spadesuit_1(\sf_1,R)\le\sc_*^2\sigma^2$. For this to be true it is sufficient that $D\le\frac{\sc_*\sigma}{3}$ and 
$\spadesuit_1^{1/2}(\sf_1,R)\le\frac{3\sc_*\sigma}{4}$. By adjusting constants, we can have $D\le\hat\sigma\sc_n\le\frac{\sc_*\sigma}{3}$. Note that
$$
\spadesuit_1^{1/2}(\sf_1,R) \lesssim\sf_1+R
\lesssim C\sigma L\frac{1+\sqrt{\log(1/\delta)}}{\sqrt{n}}
+r_*,
$$
which, by definition of $\sc_*$ and \eqref{thm:response:sparse-low-rank:reg:q=1':eq1}-\eqref{thm:response:sparse-low-rank:reg:q=1':eq2}, can be shown to be  not greater than $\frac{3\sc_*\sigma}{4}$ --- adjusting the numerical constants if necessary. Hence, condition \eqref{cond8:general:norm:q=1'} holds. The verification of \eqref{cond9:general:norm:q=1'} is similar. 

We now verify the statement of Theorem \ref{thm:response:sparse-low-rank:regression}, case (i) --- using the bounds \eqref{thm:improved:rate:q=1':eq1}-\eqref{thm:improved:rate:q=1':eq2} of Theorem \ref{thm:improved:rate:q=1'}. Let $\hat r:=r_{\hat\sigma\lambda,0,6}(\bfDelta_{\bfB}|\bfB)$,  $\hat R:=(\hat\sigma\sc_n)\vee(3\hat r)$ and $\hat r_*:=\sup_{\bb\in\calF}\hat r$. We have $\hat R\lesssim \hat r_*$. We claim that, using $D\lesssim\sigma c_{0,n}$ and \eqref{thm:response:sparse-low-rank:reg:q=1':eq1}-\eqref{thm:response:sparse-low-rank:reg:q=1':eq2}, similar computations used in the proof of Theorem \ref{thm:response:sparse-low-rank:regression}(i) to bound the terms $D^2 + \spadesuit_1(F,\hat R)$ and $2D + \clubsuit_1(F,\hat R)$ entail the bounds in 
\eqref{thm:response:sparse-low-rank:regression:q=1':eq1}-\eqref{thm:response:sparse-low-rank:regression:q=1':eq2}. 

\section{Proof sketch of Theorem \ref{thm:response:sparse-low-rank:regression:q=1}, case (ii')}\label{s:proof:thm:response:sparse-low-rank:regression:q=1:ii'}

The proof of Theorem \ref{thm:response:sparse-low-rank:regression:q=1}(ii')
needs Definitions \ref{def:cones:slope:norm}-\ref{def:r:slope:norm} used in the proof of Theorem \ref{thm:response:sparse-low-rank:regression}, case (ii) --- see Section \ref{s:proof:thm:response:sparse-low-rank:regression:ii}. 

The proof with $\calR=\Vert\cdot\Vert_\sharp$, the Slope norm in $\re^p$, follows a similar path to the proof of Theorem \ref{thm:response:sparse-low-rank:regression:q=1}, case (i'). We claim that a very similar theorem to Theorem \ref{thm:improved:rate:q=1'} holds but with the minor changes:
\begin{quote}
We replace $r:=r_{\hat\sigma\lambda,\hat\sigma\tau\Omega,6}(\bfDelta_{\bfB}|\bfB)$ with $r:=r_{\hat\sigma\lambda,\hat\sigma\tau\Omega,6}(s)$ and $\hat r:=r_{\hat\sigma\lambda,0,6}(\bfDelta_{\bfB}|\bfB)$ with $\hat r:=r_{\hat\sigma\lambda,0,6}(s)$.
\end{quote}
Let us call it Theorem \ref{thm:improved:rate:q=1'}'. Using this theorem, setting $\mu(\bb):=\mu(\overline\calC_s(6))$ for given $\bb$ with 
$\Vert\bb\Vert_0\le s$, using the bound $\mathscr{G}(\bfSigma^{1/2}\mbB_\sharp)\lesssim\rho_1(\bfSigma)$ --- which follows from Proposition E.2 in  \cite{2018bellec:lecue:tsybakov} --- and the fact that 
$\bar\Omega_s\le2s\log(ep/s)$, the proof of Theorem \ref{thm:response:sparse-low-rank:regression:q=1}, case (ii') follows very similar arguments to the case (i'). 

Next, we highlight the minor changes in the proof of Theorem \ref{thm:improved:rate:q=1'}'. Lemmas \ref{lemma:recursion:1st:order:condition:q=1'}-\ref{lemma:MP+ATP:q=1'} are unchanged. Instead of Lemma \ref{lemma:aug:rest:convexity} we used Lemma \ref{lemma:aug:rest:convexity:slope} --- see Section \ref{s:proof:thm:response:sparse-low-rank:regression:ii}. Using these lemmas, we obtain a variation of Proposition \ref{prop:suboptimal:rate:q=1'} --- again with a similar proof, using 
$\triangle:=\triangle_{\hat\sigma\lambda,\hat\sigma\tau}(\bfDelta_{\bb},\bfDelta^{\hat\btheta})$ instead of $\triangle:=\triangle_{\hat\sigma\lambda,\hat\sigma\tau}(\bfDelta_{\bfB},\bfDelta^{\hat\btheta}|\bfB)$ and $r:=r_{\hat\sigma\lambda,\hat\sigma\tau\Omega,6}(s)$ instead of $r:=r_{\hat\sigma\lambda,\hat\sigma\tau\Omega,6}(\bfDelta_{\bfB}|\bfB)$.
Lemmas \ref{lemma:recursion:delta:bb:general:norm:q=1'}-\ref{lemma:MP+IP+ATP:q=1'} are unchanged. Using all these auxiliary results, we claim that the proof of Theorem \ref{thm:improved:rate:q=1'}' follows the same arguments in the proof of  Theorem \ref{thm:improved:rate:q=1'} --- using $\triangle:=\triangle_{\hat\sigma\lambda,0}(\bfDelta_{\bb},\bold{0})$ instead of 
$\triangle:=\triangle_{\hat\sigma\lambda,0}(\bfDelta_{\bfB},\bold{0}|\bfB)$, $r:=r_{\hat\sigma\lambda,0,6}(s)$ instead of $r:=r_{\hat\sigma\lambda,0,3}(\bfDelta_{\bfB}|\bfB)$ and $\overline\calC_s(6)$ instead of $\calC_{\bfB}(6)$.

\section{Proof sketch of Theorem \ref{thm:response:sparse-low-rank:regression:q=1}, case (iii')}\label{s:proof:thm:response:sparse-low-rank:regression:q=1:iii'}

The exact same comments in Section \ref{s:thm:response:sparse-low-rank:regression:iii} apply --- but invoking Theorem \ref{thm:improved:rate:q=1'} instead of Theorem \ref{thm:improved:rate}. 

\begin{appendix}

\section{Peeling lemmas}\label{Peeling lemmas}

This section presents several peeling lemmas. This is a well known technique in Empirical Process theory in order to lift confidence statements from a compact subset to the entire set. Throughout this section, $g,\bar g$  are  right-continuous, non-decreasing functions from $\mathbb{R}_+$ to $\mathbb{R}_+$, $V$ is an arbitrary set and $h,\bar h$ are functions from $V$ to $\mathbb{R}_+$. We let $g^{-1}$ be the generalized inverse of $g$ defined by $g^{-1}(x) = \inf\{a\in\mathbb{R}_+: g(a)\ge x\}$; we use the same notation for the generalized inverse of $\bar g$. $\mathbb{N}^*$ is the set of natural numbers excluding zero. In what follows, $C>0$ is an universal constant that may change within the text. 

\subsection{Peeling for $\PP$}

\begin{lemma}\label{lemma:peeling:product:process}
Let $b>0$ be a constant and $c\ge1$ be an universal constant. Assume that, for every $r,\bar r>0$ and every $\delta\in(0,1/c]$, the event $A(r,\bar r,\delta)$ defined by the inequality
\begin{align}
\inf_{\bv\in V: (h,\bar h)(\bv)\le (r,\bar r)}
M(\bv) &\ge -\frac{b}{n} g(r)\bar g(\bar r)
-\frac{b}{\sqrt{n}}(g(r) + \bar g(\bar r))
- b\left(\sqrt{\frac{\log(1/\delta)}{n}}
+\frac{\log(1/\delta)}{n}
\right),
\end{align}
has probability at least $1-c\delta$. 

Then, for every $\delta\in(0,1/c]$, with probability at least $1-c\delta$, it holds that, for all 
$\bv\in V$,
\begin{align}
M(\bv) &\ge -C\frac{b}{n}g\circ h(\bv)\cdot
\bar g\circ\bar h(\bv)
-Cb\left(\frac{1}{n}+\frac{1}{\sqrt{n}}\right)g\circ h(\bv)
-Cb\left(\frac{1}{n}+\frac{1}{\sqrt{n}}\right)\bar g\circ\bar h(\bv)\\
& -Cb\left(\frac{1}{\sqrt{n}}+\frac{1}{n}\right)
-Cb\left(\sqrt{\frac{\log(1/\delta)}{n}}
+\frac{\log(1/\delta)}{n}
\right).
\end{align}
\end{lemma}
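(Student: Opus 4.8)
\textbf{Proof plan for Lemma \ref{lemma:peeling:product:process}.}

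The plan is to run a standard two-parameter dyadic peeling argument, where the ``peeling'' is performed simultaneously over the two ``level'' functions $h$ and $\bar h$. First I would discard the trivial region: if $h(\bv)\le\sqrt{n}$ and $\bar h(\bv)\le\sqrt{n}$ (or some fixed scale chosen to match the last two error terms), we can just apply the hypothesis $A(r,\bar r,\delta)$ with $r=\bar r$ equal to that scale and $g,\bar g$ evaluated there, which by monotonicity dominates the claimed bound. Next, for the nontrivial region, I would introduce a doubly-indexed dyadic grid: for $(k,\ell)\in\mathbb{N}^*\times\mathbb{N}^*$ set $r_k:=2^k$ and $\bar r_\ell:=2^\ell$ (shifted so that $r_1,\bar r_1$ sit at the cutoff scale above), and let $V_{k,\ell}:=\{\bv\in V: h(\bv)\le r_k,\ \bar h(\bv)\le\bar r_\ell\}$. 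On the event $A(r_k,\bar r_\ell,\delta_{k,\ell})$ — with a per-cell failure probability $\delta_{k,\ell}$ to be summed — every $\bv\in V_{k,\ell}$ satisfies the displayed lower bound with $g(r_k)$, $\bar g(\bar r_\ell)$ and $\log(1/\delta_{k,\ell})$ in place of the generic quantities.

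The second step is the allocation of the failure budget. I would take $\delta_{k,\ell}:=\delta\cdot 2^{-(k+\ell)}$ (or $\delta\,c_{k,\ell}$ for any summable double sequence $c_{k,\ell}$ with $\sum_{k,\ell\ge1}c_{k,\ell}\le 1$), so that by a union bound over all cells the total failure probability is at most $c\delta\sum_{k,\ell\ge1}2^{-(k+\ell)}=c\delta$. The key point that makes peeling work is that $\log(1/\delta_{k,\ell})=\log(1/\delta)+(k+\ell)\log 2$, and each $\bv$ in the nontrivial region lies in some minimal cell $(k,\ell)$ with $r_{k-1}<h(\bv)\le r_k$ (so $r_k\le 2h(\bv)$) and likewise $\bar r_\ell\le 2\bar h(\bv)$; in particular $k\lesssim 1+\log_2 h(\bv)$ and $\ell\lesssim 1+\log_2\bar h(\bv)$. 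Thus on the intersection of all the $A(r_k,\bar r_\ell,\delta_{k,\ell})$ (probability $\ge1-c\delta$) we get, for every such $\bv$,
\begin{align}
M(\bv)&\ge -\frac{b}{n}g(r_k)\bar g(\bar r_\ell)-\frac{b}{\sqrt n}\big(g(r_k)+\bar g(\bar r_\ell)\big)-b\Big(\sqrt{\tfrac{\log(1/\delta)+(k+\ell)\log2}{n}}+\tfrac{\log(1/\delta)+(k+\ell)\log2}{n}\Big).
\end{align}
Using $g(r_k)\le g(2h(\bv))\le C\,g\circ h(\bv)$ — here one invokes right-continuity and monotonicity, together with the mild doubling $g(2x)\lesssim g(x)$ that holds for the concrete Gaussian-width functions $g(r)=r\,\mathscr G(\cdot\cap r^{-1}\mathbb B_F)$ appearing in the applications, or alternatively absorbs the factor by enlarging $C$ — and likewise $\bar g(\bar r_\ell)\le C\,\bar g\circ\bar h(\bv)$, the first three terms convert to the stated ones. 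For the last group, one uses $\sqrt{(k+\ell)/n}\lesssim \frac{1}{\sqrt n}(\sqrt k+\sqrt\ell)\lesssim \frac{1}{\sqrt n}(1+\log_2 h(\bv)+\log_2\bar h(\bv))$ and, since $\log_2 x\lesssim x$, this is dominated by $\frac{1}{\sqrt n}(1+g\circ h(\bv)+\bar g\circ\bar h(\bv))$ up to constants (using that $g,\bar g$ grow at least linearly, which again holds for the relevant Gaussian-width functions) — and similarly for the $(k+\ell)/n$ term. Collecting everything, and recombining with the trivial region, yields exactly the claimed inequality with a new universal constant $C$.

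The main obstacle — the only genuinely delicate point — is the passage from $g(r_k)$ back to $g\circ h(\bv)$ and, more importantly, the absorption of the peeling overhead $(k+\ell)\log 2$ into the error terms involving $g\circ h$ and $\bar g\circ\bar h$: this requires that $g$ and $\bar g$ are at least linearly growing (equivalently $g^{-1}$ grows at most linearly), which is not literally assumed in the abstract statement but is satisfied in all the intended applications; I would therefore either add this as a hypothesis on $g,\bar g$ or state the lemma with the extra terms $g\circ g^{-1}$-type corrections and note they are $O(g\circ h)$ in the cases of interest. Everything else — the union bound, the dyadic bookkeeping, the monotonicity manipulations — is routine. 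A completely analogous one-parameter version of this argument underlies Lemma \ref{lemma:peeling:chevet} and Lemma \ref{lemma:peeling:multiplier:process}.
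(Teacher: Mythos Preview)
Your high-level strategy --- two-parameter dyadic peeling with a summable allocation of failure probabilities --- is the right one, and matches the paper. But you miss the one technical device that makes the lemma hold \emph{as stated}, without any growth or doubling hypotheses on $g,\bar g$.

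The paper does not partition on the values of $h$ and $\bar h$; it partitions on the values of $g\circ h$ and $\bar g\circ\bar h$. Concretely, it sets $\mu_k:=\mu\eta^{k-1}$, defines $V_{k,\bar k}:=\{\bv:\mu_k\le g\circ h(\bv)<\mu_{k+1},\ \mu_{\bar k}\le \bar g\circ\bar h(\bv)<\mu_{\bar k+1}\}$, and applies the hypothesis at $r=\nu_{k+1}:=g^{-1}(\mu_{k+1})$, $\bar r=\bar g^{-1}(\mu_{\bar k+1})$. This buys two things that your scheme does not. First, $g(\nu_{\ell+1})=\mu\eta^\ell\le\eta\cdot g\circ h(\bv)$ automatically, with no doubling assumption on $g$ whatsoever --- the passage from the grid value back to $g\circ h(\bv)$ is free. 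Second, and more subtly, the paper writes $g(\nu_{\ell+1})\le \eta^2 g\circ h(\bv)+\mu(\eta^\ell-\eta^{\ell+1})$ with a deliberately \emph{negative} correction term $\mu\eta^\ell(1-\eta)$, and similarly for the product and $\bar g$ pieces. The peeling overhead $\log(\ell+1)$ (from a polynomial allocation $\delta_{k,\bar k}\propto (k\bar k)^{-1-\epsilon}$) is then absorbed not into $g\circ h(\bv)$ but into these negative slack terms: since $\sup_{\ell\ge1}\log(\ell+1)/\eta^\ell<\infty$, one chooses $\eta>1$ large enough that the net contribution $\lozenge_{\ell,\bar\ell}\le 0$. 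No growth rate of $g$ is ever invoked.

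Your route --- peel on $h$, use $g(2x)\lesssim g(x)$ and $g(x)\gtrsim x$ --- would indeed work for the specific Gaussian-width functions in the applications, and you correctly flag that these hypotheses are not in the lemma. But the lemma is stated and proved for arbitrary right-continuous non-decreasing $g,\bar g$, so as written your argument has a genuine gap. The fix is exactly the change of variable above: peel on the range of $g\circ h$, not on its domain.
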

\begin{proof}
Let $\mu>0$ and $\eta,\epsilon>1$ be parameters to be chosen later on.  We set $\mu_0=0$ and, for $k\ge1$, $\mu_k := \mu \eta^{k-1}$. We may partition the set $V$ with the sets 
$$
V_{k,\bar k} :=\{\bv\in V : \mu_k\le (g\circ h)(\bv)< \mu_{k+1},
\mu_{\bar k}\le (\bar g\circ \bar h)(\bv)< \mu_{\bar k+1}
\},
$$ 
defined for $k,\bar k\in\mathbb{N}$.
Given $m\in\mathbb{N}^*$, we set
$\nu_m  :=g^{-1}(\mu_m)$ and $\bar\nu_{m}:=\bar g^{-1}(\mu_{m})$. Clearly, $ g(\nu_m)=\mu_m$ and 
$\bar g(\bar\nu_{m})=\mu_{m}$.

Fix $\delta\in(0,1/c]$. An union bound and the fact that $\sum_{k\ge 1,\bar k\ge 1} (k\bar k)^{-1-\epsilon}\le
(1+\epsilon^{-1})^2$ imply that the event
$$
A := \bigcap_{k\ge1,\bar k\ge1}^\infty A\left(\nu_k, \bar\nu_{\bar k},\frac{\epsilon^2\delta}{(1+\epsilon)^2(k\bar k)^{1+\epsilon}}\right),	
$$
has a probability at least $1-c\delta$. 

To ease notation, define
$\triangle(\delta):=\log\{(1+\epsilon)^2/(\epsilon^2 \delta)\}$ and $\triangle_{k,\bar k}:=({1+\epsilon})\log (k\bar k)$. We assume in the sequel that the event $A$ is realized, that is, 
\begin{align}
		\forall k,\bar k\in\mathbb{N}^*
		\quad
		\begin{cases}
    \forall\bv\in V \text{ such that }(h,\bar h)(\bv)\le (\nu_{k},\bar \nu_{\bar k})\text{ we have } \\
		M(\bv) \ge 
		- \frac{b}{n}g(\nu_{k})\bar g(\bar\nu_{k})		
		- \frac{b}{\sqrt{n}}g(\nu_{k})
		- \frac{b}{\sqrt{n}}\bar g(\bar \nu_{\bar k})\\
		-(\nicefrac{b}{\sqrt{n}})\sqrt{\triangle(\delta)+\triangle_{k,\bar k}}
		-(\nicefrac{b}{n})[\triangle(\delta)+\triangle_{k,\bar k}].
		\end{cases}\label{lemma:peeling:PP:eq1}
\end{align}

For every $\bv\in V$, there are $\ell,\bar \ell\in\mathbb{N}$ such that
$\bv\in V_{\ell,\bar \ell}$. We now consider several cases. 

\begin{description}
\item[Case 1:] $\ell=\bar\ell=0$. Since $h(\bv)\le\nu_1$, $\bar h(\bv)\le\bar\nu_1$, \eqref{lemma:peeling:PP:eq1} with $k=\bar k=1$ leads to
\begin{align}
    M(\bv) \ge - \frac{b}{n}\mu^2
    - \frac{b}{\sqrt{n}}\mu
		- \frac{b}{\sqrt{n}}\mu
		-\frac{b}{\sqrt{n}}\sqrt{\triangle(\delta)}
		-\frac{b}{n}\triangle(\delta).
    \end{align}

\item[Case 2:] $\ell\ge1$ and $\bar \ell\ge 1$. Using that $h(\bv)\le \nu_{\ell+1}$ and $\bar h(\bv)\le \bar\nu_{\bar \ell+1}$, we will invoke \eqref{lemma:peeling:PP:eq1} for the indexes $(k,\bar k)=(\ell+1,\bar\ell+1)$. Additionally, we observe that, since 
$\bv\in V_{\ell,\bar\ell}$, and 
$\mu\eta^{\ell+1}=\eta^2\mu_{\ell}$,
\begin{align}
g(\nu_{\ell+1}) = \eta\mu^{\ell} =  \eta\mu^{\ell+1} + \eta\mu^{\ell} - \eta\mu^{\ell+1}\le \eta^2 g\circ h(\bv) + \mu\eta^\ell - \mu\eta^{\ell+1}.\label{lemma:peeling:PP:eq2}
\end{align}
Similarly, 
\begin{align}
\bar g(\nu_{\bar\ell+1}) &\le \eta^2 \bar g\circ \bar h(\bv) + \mu\eta^{\bar\ell} - \mu\eta^{\bar\ell+1},\label{lemma:peeling:PP:eq3}\\
g(\nu_{\ell+1})\cdot\bar g(\nu_{\bar\ell+1})&\le
\eta^4g\circ h(\bv)\cdot \bar g\circ \bar h(\bv) 
+ \mu^2\eta^\ell\eta^{\bar\ell} - \mu^2\eta^{\ell+1}\eta^{\bar\ell+1}. 
\label{lemma:peeling:PP:eq4}
\end{align}

Next, we will use the previous bounds in \eqref{lemma:peeling:PP:eq1}. To ease notation, we define the quantity
\begin{align}
\lozenge_{\ell,\bar \ell}&:=\frac{b}{n}\mu^2(\eta^\ell\eta^{\bar \ell}
- \eta^{\ell+1}\eta^{\bar \ell+1})
+\frac{b}{\sqrt{n}}\mu(\eta^\ell-\eta^{\ell+1})
+\frac{b}{\sqrt{n}}\mu(\eta^{\bar\ell}-\eta^{\bar\ell+1})\\
&+\frac{b}{\sqrt{n}}\sqrt{\triangle(\delta)+\triangle_{\ell+1,\bar \ell+1}} - \frac{b}{\sqrt{n}}\sqrt{\triangle(\delta)}\\
& +\frac{b}{n}[\triangle(\delta)+\triangle_{\ell+1,\bar \ell+1}]
-\frac{b}{n}\triangle(\delta).
\end{align}
In conclusion, from \eqref{lemma:peeling:PP:eq1} with 
$(k,\bar k)=(\ell+1,\bar\ell+1)$ and the bounds \eqref{lemma:peeling:PP:eq2}, \eqref{lemma:peeling:PP:eq3} and \eqref{lemma:peeling:PP:eq4}, we may write 
\begin{align}
    M(\bv) + \lozenge_{\ell,\bar \ell} &\ge -\frac{b}{n}\eta^4g\circ h(\bv)\cdot\bar g\circ\bar h(\bv)
    -\frac{b}{\sqrt{n}}\eta^2g\circ h(\bv)
    -\frac{b}{\sqrt{n}}\eta^2\bar g\circ\bar h(\bv)\\
    &-\frac{b}{\sqrt{n}}\sqrt{\triangle(\delta)}   
    -\frac{b}{n}\triangle(\delta).\label{lemma:peeling:PP:eq5}
\end{align}

To finish, we claim that, by appropriately fixing $\mu>0$ and 
$\epsilon,\eta>1$, we can show
$
\sup_{\ell,\bar \ell\ge1}\lozenge_{\ell,\bar \ell}\le 0.
$
We prove this claim next. 

First, 
\begin{align}
T_1(\ell,\bar\ell)&:=\frac{b}{n}\triangle_{\ell+1,\bar\ell+1} + \frac{b\mu^2}{n}(\eta^\ell\eta^{\bar \ell}
-\eta^{\ell+1}\eta^{\bar \ell+1}) \\
&=\frac{b}{n}\eta^\ell\eta^{\bar\ell}\left[
(1+\epsilon)\frac{\log(\ell+1)+\log(\bar\ell+1)}{\eta^\ell\eta^{\bar\ell}}
+\mu^2(1-\eta^2)
\right].
\end{align}
Since, for $\eta>1$,
$$
\sup_{\ell,\bar\ell\ge1}\frac{\log(\ell+1)+\log(\bar\ell+1)}{\eta^\ell\eta^{\bar\ell}}\le C, 
$$
we can fix $\epsilon>1$ and $\mu>0$ and take $\eta>1$ large enough such that
$
\sup_{\ell,\bar\ell\ge1}T_1(\ell,\bar\ell)\le0
$. 

Second, we note that 
\begin{align}
\frac{b}{\sqrt{n}}\sqrt{\triangle(\delta)+\triangle_{\ell+1,\bar \ell+1}} - \frac{b}{\sqrt{n}}\sqrt{\triangle(\delta)}
&\le \frac{b}{\sqrt{n}}\sqrt{\triangle_{\ell+1,\bar\ell+1}}. 
\end{align}
Hence, 
\begin{align}
T_2(\ell,\bar\ell)&:=\frac{b}{\sqrt{n}}\mu(\eta^\ell-\eta^{\ell+1})
+\frac{b}{\sqrt{n}}\mu(\eta^{\bar\ell}-\eta^{\bar\ell+1})\\
&+\frac{b}{\sqrt{n}}\sqrt{\triangle(\delta)+\triangle_{\ell+1,\bar \ell+1}} - \frac{b}{\sqrt{n}}\sqrt{\triangle(\delta)}\\
&\le \frac{b}{\sqrt{n}}\left[
\sqrt{\log(\ell+1)} + \mu(\eta^\ell-\eta^{\ell+1})
\right]\\
&+\frac{b}{\sqrt{n}}\left[
\sqrt{\log(\bar\ell+1)} + \mu(\eta^{\bar\ell}-\eta^{\bar\ell+1})
\right]\\
&= \frac{b}{\sqrt{n}}\eta^{\ell}\left[
\frac{\sqrt{\log(\ell+1)}}{\eta^\ell} + \mu(1-\eta)
\right]\\
&+\frac{b}{\sqrt{n}}\eta^{\ell}\left[
\frac{\sqrt{\log(\ell+1)}}{\eta^\ell} + \mu(1-\eta)
\right].\label{lemma:peeling:PP:eq4'}
\end{align}
Since, for $\eta>1$,
$
\sup_{\ell\ge1}\frac{\sqrt{\log(\ell+1)}}{\eta^\ell}\le C, 
$
again, we may fix $\epsilon>1$ and $\mu>0$ and take $\eta>1$ large enough such that
$
\sup_{\ell,\bar\ell\ge1}T_2(\ell,\bar\ell)\le0
$.

We thus conclude that 
$
\sup_{\ell,\bar \ell\ge1}\lozenge_{\ell,\bar \ell}\le 
\sup_{\ell,\bar \ell\ge1}[T_1(\ell,\bar \ell)+T_2(\ell,\bar \ell)]\le 0, 
$
as claimed. 

\item[Case 3:] $\ell\ge1$ and $\bar\ell=0$.
Since $h(\bv)\le \nu_{\ell+1}$ and $\bar h(\bv)\le\bar\nu_1$, we get from \eqref{lemma:peeling:PP:eq1} with $(k,\bar k)=(\ell+1,1)$ and \eqref{lemma:peeling:PP:eq2}, 
\begin{align}
    M(\bv) + \square_{\ell}&\ge -\frac{b}{n}\mu\eta^2g\circ h(\bv)
    -\frac{b}{\sqrt{n}}\eta^2g\circ h(\bv)
    -\frac{b}{\sqrt{n}}\mu
    -\frac{b}{\sqrt{n}}\sqrt{\triangle(\delta)}   
    -\frac{b}{n}\triangle(\delta), 
    \end{align}
where have defined
\begin{align}
\square_{\ell}&:=\frac{b}{n}\mu^2(\eta^\ell
- \eta^{\ell+1})
+\frac{b}{\sqrt{n}}\mu(\eta^\ell-\eta^{\ell+1})\\
&+\frac{b}{\sqrt{n}}\sqrt{\triangle(\delta)+\triangle_{\ell+1,1}} - \frac{b}{\sqrt{n}}\sqrt{\triangle(\delta)}\\
& +\frac{b}{n}[\triangle(\delta)+\triangle_{\ell+1,1}]
-\frac{b}{n}\triangle(\delta).
\end{align}

We claim that, by appropriately fixing $\mu>0$ and 
$\epsilon>1$ and taking $\eta>1$ large enough, we can show
$
\sup_{\ell\ge1}\square_{\ell}\le 0.
$
Indeed, the reasoning is analogous to the one in Case 2 so we omit it. 

\item[Case 4:] $\ell=0$ and $\bar\ell\ge1$. 
By analogy with Case 3, using that $h(\bv)\le \nu_1$ and $\bar h(\bv)\le \nu_{\bar \ell+1}$, we get from \eqref{lemma:peeling:PP:eq1} with $(k,\bar k)=(1,\bar\ell+1)$ and \eqref{lemma:peeling:PP:eq3},
\begin{align}
    M(\bv) + \bigcirc_{\bar\ell} &\ge -\frac{b}{n}\mu\eta^2\bar g\circ \bar h(\bv)
    -\frac{b}{\sqrt{n}}\mu
    -\frac{b}{\sqrt{n}}\eta^2\bar g\circ \bar h(\bv)
    -\frac{b}{\sqrt{n}}\sqrt{\triangle(\delta)}   
    -\frac{b}{n}\triangle(\delta),
    \end{align}
where the term $\bigcirc_{\bar\ell}$ can be shown to satisfy 
$
\sup_{\bar\ell\ge1}\bigcirc_{\bar \ell}\le0, 
$
for fixed $\mu>0$ and $\epsilon>1$ and large enough $\eta>1$. 
\end{description}

The proof of the statement of the lemma is finished once we join the lower bounds established in the four cases.
\end{proof}

\subsection{Peeling for $\IP$}

The following lemma is a restatement of Lemma 6 in \cite{2019dalalyan:thompson}. The proof follows by similar arguments used in the proof of Lemma \ref{lemma:peeling:product:process}. 
\begin{lemma}\label{lemma:peeling:chevet}
Let $b>0$ be a constant and $c\ge1$ be an universal constant. Assume that, for every $r,\bar r>0$ and every $\delta\in(0,1/c]$, the event $A(r,\bar r,\delta)$ defined by the inequality
\begin{align}
\inf_{\bv\in V: (h,\bar h)(\bv)\le (r,\bar r)}
M(\bv) &\ge -\frac{b}{\sqrt{n}}g(r)  -\frac{b}{\sqrt{n}}\bar g(\bar r)
- \frac{b}{\sqrt{n}}\sqrt{\log(2/\delta)},
\end{align}
has probability at least $1-c\delta$. 

Then, for every $\delta\in(0,1/c]$, with probability at least $1-c\delta$, it holds that, for all 
$\bv\in V$,
\begin{align}
M(\bv) &\ge -\frac{Cb}{\sqrt{n}}g\circ h(\bv)
-\frac{Cb}{\sqrt{n}}\bar g\circ\bar h(\bv)
-\frac{Cb}{\sqrt{n}}(1+\sqrt{\log(1/\delta)}).
\end{align}
\end{lemma}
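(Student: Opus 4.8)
The final statement to prove is Lemma~\ref{lemma:peeling:chevet}, a two‑parameter peeling lemma for the ``Chevet‑type'' inequality. Since the paper explicitly says the proof follows by similar arguments to Lemma~\ref{lemma:peeling:product:process}, the plan is to mimic that argument but with the simpler complexity structure: here the ``bad'' term is $\frac{b}{\sqrt n}(g(r)+\bar g(\bar r)+\sqrt{\log(2/\delta)})$ rather than the quadratic‑in‑complexity object $\frac{b}{n}g(r)\bar g(\bar r)+\frac{b}{\sqrt n}(g(r)+\bar g(\bar r))+\cdots$, so the bookkeeping is lighter.

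\textbf{Setup and partition.} First I would introduce parameters $\mu>0$ and $\eta,\epsilon>1$ to be fixed at the end, set $\mu_0=0$, $\mu_k:=\mu\eta^{k-1}$ for $k\ge1$, and partition $V$ into the cells
$$
V_{k,\bar k}:=\{\bv\in V: \mu_k\le (g\circ h)(\bv)<\mu_{k+1},\ \mu_{\bar k}\le(\bar g\circ\bar h)(\bv)<\mu_{\bar k+1}\},\qquad k,\bar k\in\mathbb N,
$$
and define $\nu_m:=g^{-1}(\mu_m)$, $\bar\nu_m:=\bar g^{-1}(\mu_m)$, so that $g(\nu_m)=\mu_m$ and $\bar g(\bar\nu_m)=\mu_m$ by right‑continuity. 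Next, fix $\delta\in(0,1/c]$ and apply the hypothesis on the events $A(\nu_k,\bar\nu_{\bar k},\cdot)$ at confidence levels $\frac{\epsilon^2\delta}{(1+\epsilon)^2(k\bar k)^{1+\epsilon}}$; a union bound together with $\sum_{k,\bar k\ge1}(k\bar k)^{-1-\epsilon}\le(1+\epsilon^{-1})^2$ gives that the intersection event $A$ has probability at least $1-c\delta$. On $A$ we have, for all $k,\bar k\ge1$ and all $\bv$ with $h(\bv)\le\nu_k$, $\bar h(\bv)\le\bar\nu_{\bar k}$,
$$
M(\bv)\ge -\tfrac{b}{\sqrt n}g(\nu_k)-\tfrac{b}{\sqrt n}\bar g(\bar\nu_{\bar k})-\tfrac{b}{\sqrt n}\sqrt{\triangle(\delta)+\triangle_{k,\bar k}},
$$
where $\triangle(\delta):=\log\{2(1+\epsilon)^2/(\epsilon^2\delta)\}$ and $\triangle_{k,\bar k}:=(1+\epsilon)\log(k\bar k)$.

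\textbf{Case analysis.} Then I would run the same four cases as in Lemma~\ref{lemma:peeling:product:process}, according to which cell $V_{\ell,\bar\ell}$ contains a given $\bv$. In the cases with $\ell\ge1$ (resp. $\bar\ell\ge1$) I invoke the bound at index $k=\ell+1$ (resp. $\bar k=\bar\ell+1$) and use the elementary inequality $g(\nu_{\ell+1})=\eta\mu\eta^{\ell-1}\le\eta^2(g\circ h)(\bv)+\mu(\eta^\ell-\eta^{\ell+1})$ (since $(g\circ h)(\bv)\ge\mu_\ell=\mu\eta^{\ell-1}$), and similarly for $\bar g$. Collecting the ``leftover'' terms into a quantity $\lozenge_{\ell,\bar\ell}$ (and its one‑sided analogues $\square_\ell$, $\bigcirc_{\bar\ell}$ in the mixed cases), I reduce to checking $\sup_{\ell,\bar\ell\ge1}\lozenge_{\ell,\bar\ell}\le0$, etc. Each such supremum has the shape $\frac{b}{\sqrt n}\eta^\ell[\eta^{-\ell}\sqrt{\log(\ell+1)}+\mu(1-\eta)]$ plus analogous terms; since $\sup_{\ell\ge1}\eta^{-\ell}\sqrt{\log(\ell+1)}$ and $\sup_{\ell,\bar\ell\ge1}\eta^{-\ell-\bar\ell}\log(\ell\bar k)$ are finite for every $\eta>1$, one first fixes $\epsilon$ and $\mu$ and then takes $\eta$ large enough to force all these leftover terms to be $\le0$. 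The case $\ell=\bar\ell=0$ is immediate from the bound at $k=\bar k=1$. Finally, combining the four cases and absorbing $\triangle(\delta)$ into $C(1+\sqrt{\log(1/\delta)})/\sqrt n$ yields the claimed inequality with a universal constant $C$.

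\textbf{Main obstacle.} The only genuinely delicate point is the choice of the parameters $(\mu,\epsilon,\eta)$ and verifying that the various leftover sums are uniformly nonpositive; this is routine but must be done in the right order (fix $\epsilon$, then $\mu$, then enlarge $\eta$), exactly as in the proof of Lemma~\ref{lemma:peeling:product:process}. Everything else is bookkeeping: the $1/n$ cross term present in the product‑process peeling is simply absent here, which strictly simplifies the estimates, so no new idea is required beyond what is already used for Lemma~\ref{lemma:peeling:product:process}. I would therefore present the proof as ``identical to that of Lemma~\ref{lemma:peeling:product:process}, deleting the quadratic complexity term'', spelling out only the modified display for the surviving terms.
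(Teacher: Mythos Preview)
Your proposal is correct and follows precisely the route the paper indicates: it explicitly states that the proof ``follows by similar arguments used in the proof of Lemma~\ref{lemma:peeling:product:process}'' (and cites it as a restatement of Lemma~6 in \cite{2019dalalyan:thompson}), and your write-up faithfully reproduces that argument with the quadratic cross term $\tfrac{b}{n}g(r)\bar g(\bar r)$ deleted and the confidence term adjusted to $\sqrt{\log(2/\delta)}$. The case analysis, the geometric peeling scales $\mu_k=\mu\eta^{k-1}$, the union bound with weights $(k\bar k)^{-1-\epsilon}$, and the order in which $(\epsilon,\mu,\eta)$ are fixed all match the paper's template exactly.
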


\subsection{Peeling for $\MP$}

To prove Proposition \ref{prop:gen:MP}, we will use Lemma \ref{lemma:peeling:chevet} (with $\bar g=\bar h\equiv0$) and the following lemma. 
\begin{lemma}\label{lemma:peeling:multiplier:process}
Let $b>0$ be a constant and $c\ge1$ be universal constants. Assume that for every $r>0$ and every $\delta\in(0,1/c]$, the event $A(r,\delta)$ defined by the inequality
\begin{align}
\inf_{\bv\in V: h(\bv)\le r}
M(\bv) &\ge - \left[1+\frac{1}{\sqrt{n}}\sqrt{\log(1/\delta)}\right]b\frac{g(r)}{\sqrt{n}}\\
&-\frac{b}{\sqrt{n}}\sqrt{\log(1/\delta)}-\frac{b}{n}\log(1/\delta),
\end{align}
has probability at least $1-c\delta$. 

Then, with probability at least $1-c\delta$, we have that, for all $\bv\in V$, 
\begin{align}
M(\bv) &\ge - C\left\{[1+(\nicefrac{1}{\sqrt{n}})\sqrt{\triangle(\delta)}]\frac{b}{\sqrt{n}}
+\frac{b}{n}\right\}g\circ h(\bv)\\
&-C(\nicefrac{b}{\sqrt{n}})[1 + \sqrt{\triangle(\delta)}]
-C(\nicefrac{b}{n})[\triangle(\delta) + \sqrt{\triangle(\delta)}]. 
\end{align}
\end{lemma}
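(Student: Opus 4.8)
The plan is to run the geometric peeling scheme of Lemma~\ref{lemma:peeling:product:process}, now with a single complexity index. First I would fix parameters $\mu>0$ and $\eta,\epsilon>1$ (tuned at the end), set $\mu_0:=0$, $\mu_k:=\mu\eta^{k-1}$ for $k\ge1$, and partition $V$ into the shells $V_k:=\{\bv\in V:\mu_k\le(g\circ h)(\bv)<\mu_{k+1}\}$, $k\in\mathbb N$. Writing $\nu_m:=g^{-1}(\mu_m)$ (so $g(\nu_m)=\mu_m$, taking $g$ continuous as in the applications), and $\delta_k:=\tfrac{\epsilon\delta}{(1+\epsilon)k^{1+\epsilon}}$ for $\delta\in(0,1/c]$, the bound $\sum_{k\ge1}k^{-1-\epsilon}\le1+\epsilon^{-1}$ gives that the event $A:=\bigcap_{k\ge1}A(\nu_k,\delta_k)$ has probability at least $1-c\delta$. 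On $A$, for every $k\ge1$ and every $\bv\in V$ with $h(\bv)\le\nu_k$,
\begin{align}
M(\bv)\ge -\Big[1+\tfrac{\sqrt{u_k}}{\sqrt n}\Big]\tfrac{b\,g(\nu_k)}{\sqrt n}-\tfrac{b\sqrt{u_k}}{\sqrt n}-\tfrac{b\,u_k}{n},\qquad u_k:=\log(1/\delta_k)=\triangle(\delta)+(1+\epsilon)\log k ,
\end{align}
with $\triangle(\delta):=\log\{(1+\epsilon)/(\epsilon\delta)\}$ up to relabeling of constants.

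Given an arbitrary $\bv\in V$, I would locate the shell $\ell$ with $\bv\in V_\ell$. If $\ell=0$, then $h(\bv)\le\nu_1$ and the displayed bound at $k=1$ already has the form claimed. If $\ell\ge1$, then $h(\bv)\le\nu_{\ell+1}$; apply the bound at $k=\ell+1$ and insert the peeling relation
\begin{align}
g(\nu_{\ell+1})=\mu\eta^\ell=\eta^2\mu\eta^{\ell-1}+\mu\eta^\ell(1-\eta)\le\eta^2(g\circ h)(\bv)+\mu\eta^\ell(1-\eta),
\end{align}
together with $\sqrt{u_{\ell+1}}\le\sqrt{\triangle(\delta)}+\sqrt{(1+\epsilon)\log(\ell+1)}$ and, where convenient, $(g\circ h)(\bv)<\mu\eta^\ell$. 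Routing: the summands carrying $\sqrt{\triangle(\delta)}$ (and the bare constant $1$) produce exactly the target leading coefficient $C\{[1+\tfrac{\sqrt{\triangle(\delta)}}{\sqrt n}]\tfrac b{\sqrt n}+\tfrac bn\}(g\circ h)(\bv)$ and the $\bv$-independent remainder $C\tfrac b{\sqrt n}(1+\sqrt{\triangle(\delta)})+C\tfrac bn(\triangle(\delta)+\sqrt{\triangle(\delta)})$, with $C$ absorbing the powers of $\eta$. What is left over is a family of $\ell$-indexed error terms of orders $\tfrac b{\sqrt n}\sqrt{\log(\ell+1)}$, $\tfrac bn\log(\ell+1)$, and the new one $\tfrac1n\,\mu\eta^\ell\,\sqrt{(1+\epsilon)\log(\ell+1)}$ (times a power of $\eta$) coming from the coupling $[1+\tfrac{\sqrt{u_{\ell+1}}}{\sqrt n}]\,g(\nu_{\ell+1})$, to be weighed against the negative ``slack'' $[1+\tfrac{\sqrt{u_{\ell+1}}}{\sqrt n}]\tfrac{b\mu\eta^\ell(1-\eta)}{\sqrt n}$.

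The main obstacle is precisely this last family: unlike in the Chevet and product-process peeling lemmas, the confidence parameter enters as a multiple of the complexity $g(r)$, so after peeling it multiplies $\sqrt{\log(\ell+1)}$ against $g(\nu_{\ell+1})\asymp\mu\eta^\ell$. The plan is to absorb all $\ell$-indexed terms into the slack exactly as $T_1,T_2$ (and $\square_\ell,\bigcirc_{\bar\ell}$) are handled in the proof of Lemma~\ref{lemma:peeling:product:process}: group them with $[1+\tfrac{\sqrt{u_{\ell+1}}}{\sqrt n}]\tfrac{b\mu\eta^\ell(1-\eta)}{\sqrt n}$, which—since $1-\eta<0$ and the bracket dominates $\tfrac{\sqrt{(1+\epsilon)\log(\ell+1)}}{\sqrt n}$—itself carries a factor $\sqrt{\log(\ell+1)}$; then factor out $\mu\eta^\ell$ (resp.\ $\eta^\ell$) and use that $\sup_{\ell\ge1}\tfrac{\log(\ell+1)}{\eta^\ell}$ and $\sup_{\ell\ge1}\tfrac{\sqrt{\log(\ell+1)}}{\eta^\ell}$ are finite and shrink to $0$ as $\eta\to\infty$, so that choosing $\eta$ large and then $\mu$ large (with $\epsilon>1$ fixed) forces the aggregate to be $\le0$, now with one extra summand to track relative to the product-process case. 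I expect the delicate point to be checking that the power of $\eta$ attached to the coupling overhead does not exceed what the slack $(\eta-1)$ supplies; in the applications this is moot, since there $g$ is bounded—of the form $R\mapsto R\,\mathscr G(\text{fixed set}\cap R^{-1}\mathbb B)\le\mathscr G(\mathbb B)$—so only finitely many shells are non-empty and one may union-bound with uniform weights instead, simplifying the bookkeeping. Combining the two location cases and relabeling the universal constant then yields the stated inequality on an event of probability at least $1-c\delta$.
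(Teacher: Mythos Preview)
Your scheme is exactly the paper's: one-index geometric shells $V_\ell$, union over the events $A(\nu_k,\delta_k)$ with $\delta_k=\tfrac{\epsilon\delta}{(1+\epsilon)k^{1+\epsilon}}$, case split $\ell=0$ versus $\ell\ge1$, and the peeling identity $g(\nu_{\ell+1})=\eta^2\mu\eta^{\ell-1}+\mu(\eta^\ell-\eta^{\ell+1})$. The only structural difference is where the coupling overhead $\tfrac{b}{n}\sqrt{\triangle_{\ell+1}}\,g(\nu_{\ell+1})$ is sent.

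Your plan---absorb it into the slack $[1+\tfrac{\sqrt{u_{\ell+1}}}{\sqrt n}]\tfrac{b\mu(\eta^\ell-\eta^{\ell+1})}{\sqrt n}$ by exploiting that the bracket already contains a $\sqrt{\triangle_{\ell+1}}/\sqrt n$ contribution---does not close, and for exactly the $\eta$-power reason you anticipate. Bounding $g\circ h(\bv)<\mu\eta^\ell$, the overhead is $\tfrac{b}{n}\eta^2\sqrt{\triangle_{\ell+1}}\,\mu\eta^\ell$, while the $\sqrt{\triangle_{\ell+1}}$-carrying piece of the slack is $\tfrac{b}{n}(1-\eta)\sqrt{\triangle_{\ell+1}}\,\mu\eta^\ell$; their sum carries the coefficient $\eta^2+1-\eta$, which is strictly positive for every $\eta>1$. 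No choice of $\eta$ or $\mu$ rescues this, so the aggregate cannot be forced $\le0$.

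The paper instead does \emph{not} feed the coupling overhead into the residual $\lozenge_\ell$ at all. It routes the entire cross term $\tfrac{b}{n}\sqrt{\triangle_{\ell+1}}\,\mu\eta^\ell$ directly into the coefficient of $g\circ h(\bv)$ via $\sqrt{\triangle_{\ell+1}}\,\mu\eta^\ell\le C\sqrt{1+\epsilon}\,\eta^2\,(g\circ h)(\bv)$; this is precisely the origin of the extra $+\tfrac{b}{n}$ in the conclusion's coefficient. What remains, $\lozenge_\ell=T_1(\ell)+T_2(\ell)$, is then free of any $\mu\eta^\ell\sqrt{\log(\ell+1)}$ term and is handled exactly as in Lemma~\ref{lemma:peeling:product:process}. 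So if you follow the paper, drop the slack-absorption for the coupling and instead push that term into the $\tfrac{b}{n}\,(g\circ h)(\bv)$ budget. (That said, the paper's inequality $\sqrt{\triangle_{\ell+1}}\,\mu\eta^\ell\le C\sqrt{1+\epsilon}\,\mu\eta^{\ell+1}$ reads as $\sqrt{\log(\ell+1)}\le C\eta$, which is not uniform in $\ell$; your instinct that this is the delicate spot is correct, and your fallback---$g$ bounded in the application, hence finitely many shells---is the honest way to close it for Proposition~\ref{prop:gen:MP}.)
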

\begin{proof}
Let $\mu>0$ and $\eta,\epsilon>1$ be two parameters to be chosen later on.  We set $\mu_0=0$ and, for $k\ge1$, $\mu_k := \mu \eta^{k-1}$. Define, for $k\in\mathbb{N}$, the sets
$$
V_k :=\{\bv\in V : \mu_k\le (g\circ h)(\bv)< \mu_{k+1}\}.
$$ 
For $k\in\mathbb{N}^*$, we set $\nu_k  :=g^{-1}(\mu_k)$.

An union bound and the fact that $\sum_{k\ge 1} k^{-1-\epsilon}\le
1+\epsilon^{-1}$ imply that the event
$$
A := \bigcap_{k=1}^\infty A\left(\nu_k,\frac{\epsilon\delta}{(1+\epsilon)k^{1+\epsilon}}\right),	
$$
has a probability at least $1-c\delta$. For convenience, we define $\triangle(t):=\log\{(1+\epsilon)/(\epsilon t)\}$ and $\triangle_k:=({1+\epsilon})\log k$. Throughout the proof, assume that this event is realized:
\begin{align}
		\forall k\in\mathbb{N}^*
		\quad
		\begin{cases}
    \forall\bv\in V \text{ such that } h(\bv)\le \nu_k\text{ we have } \\
		M(\bv) \ge - [1+(\nicefrac{1}{\sqrt{n}})\sqrt{\triangle(\delta)+\triangle_k}]b\frac{g(\nu_k)}{\sqrt{n}}
		-(\nicefrac{b}{\sqrt{n}})\sqrt{\triangle(\delta)+\triangle_k}	
		-(\nicefrac{b}{n})[\triangle(\delta)+\triangle_k].
		\end{cases}\label{lemma:peeling:MP:eq1}
\end{align}

For every $\bv\in V$, there is 
$\ell\in\mathbb{N}$ such that
$\bv\in V_\ell$. 

\begin{description}
\item[Case 1:] $\ell=0$. In that case, \eqref{lemma:peeling:MP:eq1} with $k=1$ and using $g(\nu_1)=\mu$ lead to
\begin{align}
    M(\bv) &\ge - [1+(\nicefrac{1}{\sqrt{n}})\sqrt{\triangle(\delta)}](\nicefrac{b\mu}{\sqrt{n}})
    -(\nicefrac{b}{\sqrt{n}})\sqrt{\triangle(\delta)}
		-(\nicefrac{b}{n})\triangle(\delta).
    \end{align} 

\item[Case 2:] $\ell\ge1$. Since $v\in V_\ell$, $h(\bv)\le\nu_{\ell+1}$, so we can invoke \eqref{lemma:peeling:MP:eq1} for $k=\ell+1$. 

First, we make some observations. Recall that $g(\nu_{\ell+1})=\mu\eta^{\ell}$. Since $\eta>1$, there is universal constant $C\ge1$ such that
\begin{align}
\sqrt{\triangle_{\ell+1}}\mu\eta^{\ell}
\le C\sqrt{(1+\epsilon)}\mu\eta^{\ell+1}
\le C\sqrt{(1+\epsilon)}\eta^2g\circ h(\bv).  
\end{align}
Additionally, 
\begin{align}
\mu\eta^{\ell} = \mu\eta^{\ell+1} + \mu(\eta^{\ell} - \eta^{\ell+1})\le \eta^2 g\circ h(\bv) + \mu(\eta^{\ell} - \eta^{\ell+1}).
\end{align}
We thus conclude that
\begin{align}
[1+(\nicefrac{1}{\sqrt{n}})\sqrt{\triangle(\delta)+\triangle_{\ell+1}}]b\frac{g(\nu_{\ell+1})}{\sqrt{n}}
&\le [1+(\nicefrac{1}{\sqrt{n}})\sqrt{\triangle(\delta)}]\frac{b\mu\eta^{\ell}}{\sqrt{n}}
+ \sqrt{\triangle_{\ell+1}}\frac{b\mu\eta^{\ell}}{n}\\
&\le [1+(\nicefrac{1}{\sqrt{n}})\sqrt{\triangle(\delta)}]\frac{b\eta^2}{\sqrt{n}}g\circ h(\bv)\\
&+[1+(\nicefrac{1}{\sqrt{n}})\sqrt{\triangle(\delta)}]\frac{b\mu}{\sqrt{n}}(\eta^\ell-\eta^{\ell+1})\\
&+C\sqrt{(1+\epsilon)}\frac{\eta^2 b}{n}g\circ h(\bv).
\label{lemma:peeling:MP:eq2}
\end{align}

In order to use the previous bound in \eqref{lemma:peeling:MP:eq1}, it will be convenient to define the quantity:
\begin{align}
\lozenge_\ell&:= [1+(\nicefrac{1}{\sqrt{n}})\sqrt{\triangle(\delta)}]\frac{b\mu}{\sqrt{n}}(\eta^\ell-\eta^{\ell+1})\\
&+(\nicefrac{b}{\sqrt{n}})\sqrt{\triangle(\delta)+\triangle_{\ell+1}}-(\nicefrac{b}{\sqrt{n}})\sqrt{\triangle(\delta)}\\	
&+(\nicefrac{b}{n})[\triangle(\delta)+\triangle_{\ell+1}]
-(\nicefrac{b}{n})\triangle(\delta).
\end{align}

In conclusion, from \eqref{lemma:peeling:MP:eq1} (with $k=\ell+1$) and \eqref{lemma:peeling:MP:eq2}, we obtain
\begin{align}
M(\bv) + \lozenge_{\ell} &\ge - \left\{[1+(\nicefrac{1}{\sqrt{n}})\sqrt{\triangle(\delta)}]\frac{b\eta^2}{\sqrt{n}}
+C\sqrt{(1+\epsilon)}\frac{\eta^2 b}{n}
\right\}g\circ h(\bv)\\
		&-(\nicefrac{b}{\sqrt{n}})\sqrt{\triangle(\delta)}	
		-(\nicefrac{b}{n})\triangle(\delta).    
\end{align}

Next, we claim that, by appropriately fixing $\mu>0$ and 
$\epsilon,\eta>1$, we can show
\begin{align}
\sup_{\ell\ge1}\lozenge_{\ell}\le 0.\label{lemma:peeling:MP:eq3}
\end{align}
We present the proof of the claim next. 

First, since $\triangle(\delta)\ge1$, 
\begin{align}
T_1(\ell)&:= \sqrt{\triangle(\delta)}\frac{b\mu}{n}(\eta^\ell-\eta^{\ell+1}) + \frac{b}{n}\triangle_{\ell+1}\\
&\le \sqrt{\triangle(\delta)}\frac{b}{n}\left[
\mu(\eta^\ell-\eta^{\ell+1}) + \log(\ell+1)
\right]\\
&\le\sqrt{\triangle(\delta)}\frac{b}{n}\eta^{\ell} 
\left[
\mu(1-\eta) + \frac{\log(\ell+1)}{\eta^\ell}
\right].
\end{align}
Since, for $\eta>1$,
$
\sup_{\ell\ge1}\frac{\log(\ell+1)}{\eta^\ell}\le C, 
$
we may fix $\epsilon>1$ and $\mu>0$ and take $\eta>1$ large enough such that
$
\sup_{\ell\ge1}T_1(\ell)\le0
$.

Secondly, 
\begin{align}
T_2(\ell)&:=\frac{b}{\sqrt{n}}\mu(\eta^\ell-\eta^{\ell+1})
+\frac{b}{\sqrt{n}}\sqrt{\triangle(\delta) + \triangle_{\ell+1}} 
-\frac{b}{\sqrt{n}}\sqrt{\triangle(\delta)}\\
&\le \frac{b}{\sqrt{n}}\left[
\sqrt{\log(\ell+1)} + \mu(\eta^\ell-\eta^{\ell+1})
\right]\\
&\le \frac{b}{\sqrt{n}}\eta^\ell\left[
\frac{\sqrt{\log(\ell+1)}}{\eta^\ell} + \mu(1-\eta)
\right]. 
\end{align}
Again, since, for $\eta>1$,
$
\sup_{\ell\ge1}\frac{\sqrt{\log(\ell+1)}}{\eta^\ell}\le C, 
$
we may fix $\epsilon>1$ and $\mu>0$ and take $\eta>1$ large enough such that
$
\sup_{\ell\ge1}T_2(\ell)\le0
$.

In conclusion, since $\lozenge_\ell=T_1(\ell)+T_2(\ell)$, we have proved \eqref{lemma:peeling:MP:eq3}.
\end{description}

Joining the lower bounds from both cases, we prove the statement of the lemma.
\end{proof}

\section{The Multiplier Process}\label{s:multiplier:process}
In this section we prove Theorem \ref{thm:mult:process}. We refer to the definitions and notations in Section \ref{s:multiplier:process:main}. Given $f,g\in L_{\psi_2}$, we set
$
\langle f,g\rangle_n:=\hat\probn fg
$
and
$
\Vert f\Vert_n:=\sqrt{\langle f,f\rangle_n}.
$
We recall the H\"older-type inequality $\Vert fg\Vert_{\psi_1}\le\Vert f\Vert_{\psi_2}\Vert g\Vert_{\psi_2}$.

Our proof is inspired by Dirksen's method \cite{2015dirksen} which obtained concentration inequalities for the \emph{quadratic process}. One key observation used by Dirksen \cite{2015dirksen} and Bednorz \cite{2014bednorz} is that one must bound the chain differently for $k\le \lfloor\log_2 n\rfloor$, the so called ``subgaussian path'' and $k\ge \lfloor\log_2 n\rfloor$, the ``subexponential path''. In bounding the multiplier process, we additionally introduce a ``lazy walked'' chain,  a technique already present in Talagrand's original bound for the \emph{empirical process} \cite{2014talagrand}. 

We first present some preliminary lemmas. 
\begin{lemma}\label{lemma:increment:bounds:mult:process}
Let $f,f' \in L_{\psi_2}$. 

If for $k\in\mbN$, $2^{k/2}\le\sqrt{n}$, then for any $u\ge1$, with probability at least $1-2\exp(-(2^k+u))$,
\begin{align}
|M(f)-M(f')|\le \left[(1+\sqrt{2})\frac{2^{k/2}}{\sqrt{n}}+\sqrt{\frac{2u}{n}}+\frac{u}{n}\right]\Vert\xi\Vert_{\psi_2}\Vert f-f'\Vert_{\psi_2}.
\end{align}

If for $k\in\mbN$, $2^{k/2}\ge\sqrt{n}$, then for any $u\ge1$, with probability at least $1-2\exp(-(2^k+u))$,
\begin{align}
\Vert f-f'\Vert_{n}\le(\sqrt{u}+2^{k/2})\frac{[2(1+\sqrt{2})+1]^{1/2}}{\sqrt{n}}\dist(f,f').
\end{align}
\end{lemma}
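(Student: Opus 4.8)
\textbf{Plan of proof for Lemma \ref{lemma:increment:bounds:mult:process}.}

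The idea is to control the single increment $M(f)-M(f')$ (respectively $\Vert f-f'\Vert_n^2$) by Bernstein-type deviation inequalities for sums of independent subexponential (respectively subgaussian-squared) random variables, and to match the two regimes $2^{k/2}\le\sqrt n$ and $2^{k/2}\ge\sqrt n$ so that the sub-Gaussian part of the Bernstein bound dominates in the first case while the sub-exponential part is absorbed by $2^{k/2}$ in the second. Set $g:=f-f'$; without loss of generality normalise so that $\Vert g\Vert_{\psi_2}=1$, and write $Z_i:=\xi_i g(X_i)-\esp[\xi g(X)]$, so $M(f)-M(f')=\frac1n\sum_{i\in[n]}Z_i$. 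The key elementary fact is the Hölder-type bound $\Vert \xi g(X)\Vert_{\psi_1}\le\Vert\xi\Vert_{\psi_2}\Vert g\Vert_{\psi_2}=\Vert\xi\Vert_{\psi_2}$ recalled in the text, hence each $Z_i$ is centred with $\Vert Z_i\Vert_{\psi_1}\le 2\Vert\xi\Vert_{\psi_2}$ (up to the usual centring constant). I would then apply Bernstein's inequality for independent mean-zero $\psi_1$ random variables (e.g. the standard form: for all $t\ge0$, $\prob\{|\frac1n\sum Z_i|\ge t\}\le 2\exp(-cn\min\{t^2/K^2,t/K\})$ with $K\asymp\Vert\xi\Vert_{\psi_2}$), and choose the deviation level $t$ so that the exponent equals $2^k+u$.

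For the first regime, $2^{k/2}\le\sqrt n$: solving $n\min\{t^2/K^2,t/K\}\asymp 2^k+u$ gives a bound of the form $t\lesssim K\big(\sqrt{(2^k+u)/n}+(2^k+u)/n\big)$; since $2^{k/2}\le\sqrt n$ one has $2^k/n\le\sqrt{2^k/n}\le 2^{k/2}/\sqrt n$, so the quadratic-in-$t$ (sub-Gaussian) branch governs the $2^k$-contribution and the displayed constant $(1+\sqrt2)2^{k/2}/\sqrt n+\sqrt{2u/n}+u/n$ emerges after splitting $\sqrt{2^k+u}\le 2^{k/2}+\sqrt u$ and tracking the $\sqrt2$'s. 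For the second regime, $2^{k/2}\ge\sqrt n$: here I would instead bound $\Vert g\Vert_n^2=\frac1n\sum_i g(X_i)^2$, noting $g(X_i)^2$ is $\psi_1$ with $\Vert g(X_i)^2\Vert_{\psi_1}\le\Vert g(X_i)\Vert_{\psi_2}^2=\dist(f,f')^2/\,$(normalisation). Applying Bernstein again with exponent $2^k+u$, and using that $\sqrt{(2^k+u)/n}$ now dominates $\sqrt{(2^k+u)/n}\cdot\mathbf 1$ relative to $(2^k+u)/n$ is \emph{not} true — rather, since $2^{k/2}\ge\sqrt n$ we have $(2^k+u)/n\ge 1$, so the \emph{linear} branch controls and gives $\Vert g\Vert_n^2\lesssim \dist^2\big(1+(2^k+u)/n\big)\lesssim \dist^2(2^k+u)/n$; taking square roots and using $\sqrt{2^k+u}\le 2^{k/2}+\sqrt u$ yields $\Vert g\Vert_n\le (2^{k/2}+\sqrt u)\frac{[2(1+\sqrt2)+1]^{1/2}}{\sqrt n}\dist(f,f')$, where the constant $[2(1+\sqrt2)+1]^{1/2}$ is exactly what one gets by being careful with the constant in the $\psi_1$ Bernstein bound ($1+\sqrt2$ appears in the standard $\psi_1$ concentration constant) plus the extra $+1$ from the ``$1+$'' in $1+(2^k+u)/n$.

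The main obstacle is bookkeeping of absolute constants so that the stated numbers ($(1+\sqrt2)$, $\sqrt2$, $[2(1+\sqrt2)+1]^{1/2}$) come out exactly rather than as an unspecified $C$: this forces me to use a precise version of Bernstein's inequality for $\psi_2$ (respectively $\psi_2^2=\psi_1$) variables with explicit constants, carefully handle the centring (subtracting the mean costs a factor in the $\psi_1$-norm), and split $\sqrt{2^k+u}\le 2^{k/2}+\sqrt u$ and $\min\{a,b\}$ at the right place. Everything else is routine: independence gives the product structure, the Hölder-type $\psi_1$ bound gives the tail class, and the regime hypothesis $2^{k/2}\lessgtr\sqrt n$ selects which branch of the Bernstein bound is active. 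This single-increment lemma then feeds the generic-chaining argument for Theorem \ref{thm:mult:process} (sub-Gaussian path for $k\le\lfloor\log_2 n\rfloor$, sub-exponential path plus lazy-walk for larger $k$), but that is the next step, not part of this lemma.
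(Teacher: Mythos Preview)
Your proposal is correct and follows essentially the same route as the paper: both inequalities come from the Bernstein-type $\psi_1$ deviation bound, applied to the centred variables $\xi_i(f-f')(X_i)$ in the first regime and to $(f-f')^2(X_i)$ in the second, with the choice of deviation level $v=2^k+u$ (resp.\ $v\asymp (2^k+u)/n$) and the split $\sqrt{2^k+u}\le 2^{k/2}+\sqrt{u}$ producing the stated constants. The only minor difference is that in the second regime the paper actually sets $v:=2^k u/n$ (product) rather than $(2^k+u)/n$, yielding failure probability $2\exp(-2^k u)$; your additive choice is arguably cleaner and matches the lemma's stated probability $2\exp(-(2^k+u))$ more directly, and both forms are accepted by the downstream union bound (Lemma~\ref{lemma:union:bound:chaining:lazy}).
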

\begin{proof}
Suppose $2^{\frac{k}{2}}\le \sqrt{n}$. We first note that, by the triangle and H\"older-type inequalities for the norm $\psi_1$, we have
$\xi f-\xi f'\in L_{\psi_1}$. The Bernstein-type inequality implies that, for all $v\ge0$, with probability at least $1-2e^{-v}$,
\begin{align}
|M(f)-M(f')|&=|\hat\probn[(\xi f-\xi f')-\probn(\xi f-\xi f')]|\\
&\le\Vert \xi f-\xi f'-\probn(\xi f-\xi f')\Vert_{\psi_1}
\left(\sqrt{\frac{2v}{n}}+\frac{v}{n}\right).
\end{align}
Taking $v:=2^{k}+u$ and using that $2^{\frac{k}{2}}\le \sqrt{n}$,  we get 
\begin{align}
\sqrt{\frac{2v}{n}}+\frac{v}{n}
\le (1+\sqrt{2})\frac{2^{k/2}}{\sqrt{n}}+\sqrt{\frac{2u}{n}}+\frac{u}{n},
\end{align}
establishing the the first inequality claimed in the lemma.

Suppose now $2^{k/2}\ge\sqrt{n}$. The second inequality is proved in \cite{2015dirksen}, Lemma 5.4. We include the proof for completeness with slightly better constants. We first claim that, for any $v\ge1$, with probability at least $1-2\exp(-nv)$,
\begin{align}
\Vert f-f'\Vert_n\le[2(1+\sqrt{2})+1]^{1/2}\Vert f-f'\Vert_{\psi_2}\sqrt{v}. 
\end{align}
Indeed, by Bernstein's inequality, for any $v\ge1$, with probability at least $1-2e^{-nv}$, 
\begin{align}
|\hat\probn[(f-f')^2-\probn(f-f')^2]|\le \Vert (f-f')^2-\probn(f-f')^2\Vert_{\psi_1}\left(\sqrt{2v}+v\right)
\le2(1+\sqrt{2})\Vert (f-f')^2\Vert_{\psi_1}v.
\end{align}
For $v\ge1$, we also have $|\probn (f-f')^2|\le\Vert (f-f')^2\Vert_{\psi_1}\le\Vert (f-f')^2\Vert_{\psi_1}v$. This, triangle inequality, $\Vert (f-f')^2\Vert_{\psi_1}=\Vert f-f'\Vert_{\psi_2}^2$ and the display imply the claim.

Now, since $2^{k/2}\ge\sqrt{n}$, setting $v:=2^{k}u\frac{1}{n}\ge1$ we get $\exp(-nv)=\exp(-2^ku)$, entailing the claim.
\end{proof}

The following result is a straightforward modification of Lemma A.4 by Dirksen \cite{2015dirksen}.
\begin{lemma}\label{lemma:union:bound:chaining:lazy}
Fix $1\le p<\infty$, $u\ge2$ and set $\ell:=\lfloor
\log_2p\rfloor$. For every $n>\ell$ let $(\Omega_i^{(k)})_{i\in I_k}$ be a collection of events satisfying 
\begin{align}
\prob(\Omega_i^{(k)})\le2\exp(-(2^k + u)), \quad\mbox{for all $i\in I_k$}
\end{align}
or
\begin{align}
\prob(\Omega_i^{(k)})\le2\exp(-2^ku), \quad\mbox{for all $i\in I_k$}. 
\end{align}

If $|I_k|\le 2^{2^k+1}$, then for an absolute constant $c>0$, 
\begin{align}
\prob\left(\cup_{k>\ell}\cup_{i\in I_k}\Omega^{(k)}_i\right)\le c\exp(-pu/4).
\end{align}	
\end{lemma}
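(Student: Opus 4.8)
The plan is to prove Lemma~\ref{lemma:union:bound:chaining:lazy} by a direct union bound over the index $k>\ell$ and then over $i\in I_k$, using the assumed bounds on each $\prob(\Omega_i^{(k)})$ together with the cardinality bound $|I_k|\le 2^{2^k+1}$, and finally summing the resulting geometric-type series. First I would fix $u\ge 2$ and $\ell:=\lfloor\log_2 p\rfloor$. In the first case, for each $k>\ell$,
\begin{align}
\prob\Big(\cup_{i\in I_k}\Omega_i^{(k)}\Big)\le |I_k|\cdot 2\exp\big(-(2^k+u)\big)\le 2^{2^k+1}\cdot 2\exp\big(-(2^k+u)\big)=4\,\exp\big(2^k\log 2-2^k-u\big).
\end{align}
Since $\log 2<1$, we have $2^k\log 2 - 2^k = -(1-\log 2)2^k<0$, and because $k>\ell\ge\log_2 p - 1$, i.e. $2^k> p/2$, one gets $2^k\log 2 - 2^k\le -(1-\log 2)p/2$. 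Hence each term is at most $4\exp(-(1-\log 2)p/2)\exp(-u)$ --- but a cleaner route is to keep the $2^k$-dependence so the sum over $k$ converges: write $2^k\log 2 - 2^k = -c_0 2^k$ with $c_0=1-\log2>0$, and note that for $k>\ell$ the values $2^k$ are $p/2, p, 2p,\dots$ (up to the floor), so $\sum_{k>\ell}\exp(-c_0 2^k)\le \sum_{j\ge 0}\exp(-c_0 (p/2) 2^j)\le C\exp(-c_0 p/2)$ for an absolute constant $C$, using that the exponents grow at least geometrically.

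Next I would combine this with the factor $e^{-u}$. Since $u\ge 2$ and $p\ge 1$, the product $\exp(-c_0 p/2)\exp(-u)$ can be bounded by $\exp(-pu/4)$ up to adjusting the absolute constant: indeed $c_0 p/2 + u \ge pu/4$ whenever, say, $u\ge 2$ and $p\ge 1$, because $u\ge 2$ gives $u\ge pu/ (2p)\cdot 2$... more carefully, $pu/4\le u/4 + (p-1)u/4$ and one checks $c_0 p/2\ge (p-1)u/4$ fails for large $u$, so instead I would argue directly: $\exp(-c_0 2^k - u)\le \exp(-\tfrac14\, 2^k u)$ for all $k>\ell$ and $u\ge 2$, since $c_0 2^k + u\ge \tfrac14 2^k u$ is equivalent to $u(1-\tfrac14 2^k)\le -c_0 2^k$... this is the wrong direction when $2^k$ is small. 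The correct and clean bound: for $k>\ell$, $2^k> p/2\ge 1/2$; one does \emph{not} get $2^k u$ in the exponent per term, but after summing over $k$ the total is $\lesssim \exp(-c_0 p/2)e^{-u}$, and this is $\le c\exp(-pu/4)$ because $u\ge 2$ forces $e^{-u}=e^{-u/2}e^{-u/2}\le e^{-u/2}e^{-1}$ and $e^{-c_0p/2}e^{-u/2}\le e^{-pu/8}\cdot(\text{const})$ when... honestly the inequality $c_0p/2 + u/2 \ge pu/8$ holds for $u\ge 2$, $p\ge1$: rearrange to $u/2 (1 - p/4)\le ...$; for $p\le 4$ the coefficient $1-p/4\ge 0$ so LHS $\ge$ RHS trivially needs checking, for $p>4$ it is $c_0 p/2 \ge u(p/8 - 1/2)$, i.e. $c_0/2 \ge u(1/8 - 1/(2p))$, which for $u$ unbounded fails. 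So the honest conclusion is that the $p$ in the exponent of $\exp(-pu/4)$ must come from the $2^k> p/2$ and the $u$ from the $e^{-u}$ factor \emph{together} as $\exp(-(c_0 p/2 + u))$, and one then simply notes $c_0 p/2 + u\ge pu/4$ is the claim to verify --- which does hold for $u=2$ ($c_0 p/2 + 2\ge p/2$, true) but NOT for large $u$.

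Given this tension, the right strategy --- and the main obstacle --- is to be careful about where the factor $u$ multiplies $2^k$. In the \emph{first} case the hypothesis is $\prob(\Omega_i^{(k)})\le 2\exp(-(2^k+u))$, so $u$ appears additively; then the conclusion $\exp(-pu/4)$ cannot literally hold for all large $u$ unless $p\le 4$, which suggests that Dirksen's Lemma~A.4 (which this restates) actually has $p$ bounded or the conclusion reads $c\exp(-u/4)\exp(-cp)$ or similar --- so I would re-examine the precise normalization. The cleanest correct proof: in the first case bound $\prob(\cup\cup)\le \sum_{k>\ell} 4\exp(-c_0 2^k)e^{-u}\le c\, e^{-c_0 p/2} e^{-u}\le c\, e^{-u}$, and since the intended application has $p$ playing the role of the ``outer'' confidence parameter with $2^k\ge p/2$ already encoding it, one rewrites $e^{-c_0 p/2}e^{-u}$; if the target is genuinely $\exp(-pu/4)$ then one uses instead that in the application $u$ itself is $\gtrsim 1$ and the per-$k$ term $\exp(-(2^k+u))$ with $2^k>p/2$ gives $\exp(-(p/2+u))\le\exp(-pu/4)$ precisely when $p/2+u\ge pu/4$, i.e. $2+4u/p\ge u$, i.e. $u\le 2p/(p-4)$ --- valid for $p\le 4$ or bounded $u$. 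In the \emph{second} case, $\prob(\Omega_i^{(k)})\le 2\exp(-2^k u)$, the union over $i$ gives $2^{2^k+1}\cdot 2\exp(-2^k u)=4\exp(-2^k(u-\log2))$, and since $u\ge 2>\log 2$ and $2^k>p/2$ for $k>\ell$, each term is $\le 4\exp(-\tfrac p2(u-\log2))\le 4\exp(-\tfrac{p u}{4})$ (using $u-\log 2\ge u/2$ for $u\ge 2\log 2$), and summing the geometrically-growing exponents over $k>\ell$ multiplies by an absolute constant. So the second case is clean and gives exactly $c\exp(-pu/4)$; the first case is the delicate one, and I would resolve it by the same geometric-sum argument after noting $2^k+u\ge \tfrac{pu}{4}$ holds in the regime of interest, or by stating the bound as $c\exp(-u/4)$ when $p$ is absorbed differently --- I expect reconciling the exact constant in the additive-$u$ case to be the main technical point.
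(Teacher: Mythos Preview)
The paper does not prove this lemma; it simply records it as ``a straightforward modification of Lemma~A.4 by Dirksen~\cite{2015dirksen}.'' Your approach---union bound over $i\in I_k$, then sum over $k>\ell$ using that the exponents $2^k$ double---is exactly the standard argument, and your treatment of the \emph{multiplicative} hypothesis is correct and complete. One small sharpening: for $k>\ell=\lfloor\log_2 p\rfloor$ you actually have $2^k\ge 2^{\ell+1}>p$ (not merely $p/2$), so with $u\ge 2$ one gets $u-\log 2\ge u/2$ and hence per term $4\exp(-2^k(u-\log 2))\le 4\exp(-2^k u/2)\le 4\exp(-pu/2)$; summing the doubling exponents gives $\le c\exp(-pu/2)\le c\exp(-pu/4)$.

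Your hesitation about the \emph{additive} hypothesis is well founded and is not a gap in your reasoning but an imprecision in the stated lemma. Under $\prob(\Omega_i^{(k)})\le 2\exp(-(2^k+u))$ the union bound yields at best $\lesssim \exp\big(-(1-\log 2)\,2^{\ell+1}-u\big)\asymp\exp(-(1-\log 2)p-u)$, and $(1-\log 2)p+u\ge pu/4$ fails for large $p$ and $u$, so the conclusion $c\exp(-pu/4)$ cannot hold in that generality. What resolves this is the way the paper actually uses the lemma: the additive case is invoked only in the proof of Theorem~\ref{thm:mult:process}, where there is no $p$-parameter (effectively $p=1$, $\ell=0$) and the conclusion used is $\prob(\Omega_{\calI,u}^c)\le c e^{-u/4}$, which your computation delivers immediately. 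The multiplicative case, for which your argument is clean, is what is used with general $p$ in the proof of Theorem~\ref{thm:product:process}. So the right reading is: the additive clause gives $c\,e^{-u/4}$ (equivalently, restrict it to $p=1$), and the multiplicative clause gives $c\,e^{-pu/4}$; with that reading your proof is complete.
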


\begin{proof}[Proof of Theorem \ref{thm:mult:process}]
Let $(F_k)$ be an optimal admissible sequences for 
$\gamma_{2}(F)$. Let $(\calF_k)$ be defined by $\calF_0:=F_0$ and $\calF_k:=\cup_{j\le k}F_j$ so that $|\calF_k|\le2|F_k|=2^{2^k+1}$. Set $k_0:=\min\{k\ge1:2^{k/2}>\sqrt{n}\}$ and let us define 
$
\calI:=\{k\in\mbN:\ell<k<k_0\}
$
and
$
\calJ:=\{k\in\mbN:k\ge k_0\}
$. 
Given $k\in\mbN$ and $f\in F$, let 
$
\Pi_k(f)\in\argmin_{f'\in\calF_k}\dist(f,f'),
$
Given $f\in F$, we take some $\Pi_0(f)\in F$ and for any $j\in\mathbb{N}$, we define the ``lazy walk'' chain selection by:  
\begin{align}
k_j(f):=\inf\left\{j\ge k_{j-1}(f):\dist(f,\Pi_{j}(f))\le\frac{1}{2}\dist(f,\Pi_{k_{j-1}(f)}(f))\right\}.
\end{align}
For simplicity of notation, we will set $\pi_j(f):=\Pi_{k_j(f)}(f)$. For $f\in F$, our proof will rely on the chain: 
\begin{align}
M(f)-M(\pi_0(f))
&=\sum_{j:k_j(f)\in\calJ}\left[M(\pi_{j+1}(f))-M(\pi_{j}(f))\right]
+\sum_{j:k_j(f)\in\calI}\left[M(\pi_{j}(f))-M(\pi_{j-1}(f))\right],\label{thm:mult:process:eq:chain}
\end{align}
where we have used that $\cup_{k\ge0}\calF_k$ is dense on $F$. 

Fix $u\ge1$. Given any $k\in\mathbb{N}$, define the event 
$\Omega_{k,\calI,u}$ for which, for all $f,f'\in \calF_k$, we have
\begin{align}
|M(f)-M(f')|\le \left[(1+\sqrt{2})\frac{2^{k/2}}{\sqrt{n}}+\sqrt{\frac{2u}{n}}+\frac{u}{n}\right]\Vert\xi\Vert_{\psi_2}\Vert f-f'\Vert_{\psi_2}.\label{thm:mult:process:eq1}
\end{align}
Define also the event $\Omega_{k,\calJ,u}$ for which, for all $f,f'\in \calF_k$, we have
\begin{align}
\Vert f-f'\Vert_n&\le(\sqrt{u}+2^{k/2})\frac{[2(1+\sqrt{2})+1]^{1/2}}{\sqrt{n}}\Vert f-f'\Vert_{\psi_2}.\label{thm:mult:process:eq2}
\end{align}
For simplicity, define the vector 
$\bxi:=(\xi_i)_{i\in[n]}$ and $\Vert\bxi\Vert_n:=\frac{1}{\sqrt{n}}\Vert\bxi\Vert_2$. Given $v\ge1$, we define the event 
$\Omega_{\xi,v}$, for which
\begin{align}
\Vert\bxi\Vert_n\le[2(1+\sqrt{2})+1]^{1/2}\Vert\xi\Vert_{\psi_2}\sqrt{v}.\label{thm:mult:process:eq2'} 
\end{align}

By an union bound over all possible pairs 
$(\pi_{k-1}(f),\pi_{k}(f))$ we have $|\Omega_{k,\calI,u}|\le|\calF_{k-1}||\calF_{k}|\le2^{2^{k+1}}$. If 
$\Omega_{\calI,u}:=\cap_{k\in\calI}\Omega_{k,\calI,u}$, the first bound on Lemma \ref{lemma:increment:bounds:mult:process} for $k\in\calI$ and Lemma \ref{lemma:union:bound:chaining:lazy} imply that there is universal constant $c>0$
$$
\prob(\Omega_{\calI,u}^c)\le ce^{-u/4}.
$$
Similarly, the second bound in Lemma \ref{lemma:increment:bounds:mult:process} for $k\in\calJ$ and Lemma \ref{lemma:union:bound:chaining:lazy} imply that for the event  
$\Omega_{\calJ,u}:=\cap_{k\in\calJ}\Omega_{k,\calJ,u}$, we have 
$$
\prob(\Omega_{\calJ,u}^c)\le ce^{-u/4}.
$$
Using Bernstein's inequality for $\{\xi_i\}_{i\in[n]}$ we get $\prob(\Omega_{\xi,v}^c)\le ce^{-vn}$. Hence, the event 
$\Omega_{u,v}:=\Omega_{\calI,u}\cap\Omega_{\calJ,u}\cap\Omega_{\xi,v}$ has 
$\prob(\Omega_{u,v}^c)\le ce^{-u/4}+ce^{-vn}$. 

We next fix $u\ge2$ and $v\ge1$ and assume that $\Omega_{u,v}$ always holds. We now bound the chain over $\calI$ and $\calJ$ separately.

\begin{description}

\item[\textbf{Part 1}: \emph{The subgaussian path $\calI$}.] Given $j$ such that $k_j(f)\in\calI$, since $\pi_j(f),\pi_{j-1}(f)\in\calF_{k_j(f)}$, we may apply \eqref{thm:mult:process:eq1} to $k:=k_j(f)$ so that  
\begin{align}
|M(\pi_{j}(f))-M(\pi_{j-1}(f))|\le \left[(1+\sqrt{2})\frac{2^{k_j(f)/2}}{\sqrt{n}}+\sqrt{\frac{2u}{n}}+\frac{u}{n}\right]\Vert\xi\Vert_{\psi_2}\Vert \pi_{j}(f)-\pi_{j-1}(f)\Vert_{\psi_2}.
\end{align}
We note that, by triangle inequality and minimality of $k_{j-1}(f)$,  
\begin{align}
\Vert \pi_{j}(f)-\pi_{j-1}(f)\Vert_{\psi_2}
\le\dist(f,\calF_{k_j(f)})+\dist(f,\calF_{k_{j-1}(f)})\le\dist(f,\calF_{k_j(f)})+2\dist(f,\calF_{k_{j}(f)-1}),
\end{align}
so that 
\begin{align}
\sum_{j:k_j(f)\in\calI}2^{k_j(f)/2}\Vert \pi_{j}(f)-\pi_{j-1}(f)\Vert_{\psi_2}
\le(1+2\sqrt{2})\gamma_2(F). \label{thm:mult:process:sum:gamma1}
\end{align}
Moreover, by the definition of the lazy walked chain and a geometric series bound,
\begin{align}
\sum_{j:k_j(f)\in\calI}\Vert \pi_{j}(f)-\pi_{j-1}(f)\Vert_{\psi_2}\le4\dist(f,\calF_0)\le4\bar\Delta(F). \label{thm:mult:process:sum:delta1}
\end{align}
We thus conclude that 
\begin{align}
\left|\sum_{j:k_j(f)\in\calI}[M(\pi_{j}(f))-M(\pi_{j-1}(f))]\right|&\le 
4\left(\sqrt{\frac{2u}{n}}+\frac{u}{n}\right)\Vert\xi\Vert_{\psi_2}\bar\Delta(F)
+(1+\sqrt{2})(1+2\sqrt{2})\Vert\xi\Vert_{\psi_2}\frac{\gamma_{2}(F)}{\sqrt{n}}.	\label{thm:mult:process:subgaussian}
\end{align}

\item[\textbf{Part 2}: \emph{The subexponential path $\calJ$}.] Let us denote by $\bfQ$ the joint distribution of $(\xi,X)$ and $\hat\bfQ$ the empirical distribution associated to $\{(\xi_i,X_i)\}_{i\in[n]}$. In particular, $M(f)=\hat\bfQ(\cdot)f-\bfQ\hat\bfQ(\cdot)f$. By Jensen's and triangle inequalities,
\begin{align}
\left|\sum_{j:k_j(f)\in\calJ}[M(\pi_{j+1}(f))-M(\pi_{j}(f))]\right|
&\le\sum_{j:k_j(f)\in\calJ}\hat\bfQ(\cdot)\left|\pi_{j+1}(f)-\pi_{j}(f)\right|\\
&+\sum_{j:k_j(f)\in\calJ}\bfQ\hat\bfQ(\cdot)\left|\pi_{j+1}(f)-\pi_{j}(f)\right|.\label{thm:mult:process:eq3}
\end{align}
For convenience, we set $\hat T_j:=\hat\bfQ(\cdot)\left|\pi_{j+1}(f)-\pi_{j}(f)\right|$.

Given $j$ such that $k_j(f)\in\calJ$, since 
$\pi_{j+1}(f),\pi_{j}(f)\in\calF_{k_{j+1}(f)}$, we may apply \eqref{thm:mult:process:eq2} to $k:=k_{j+1}(f)$. This fact, \eqref{thm:mult:process:eq2'} and Cauchy-Schwarz yield 
\begin{align}
\hat T_j&\le\Vert\bxi\Vert_n\Vert\pi_{j+1}(f)-\pi_{j}(f)\Vert_n
\le\sqrt{v}[2(1+\sqrt{2})+1]\Vert\xi\Vert_{\psi_2}(\sqrt{u}+2^{k_{j+1}(f)/2})\frac{1}{\sqrt{n}}\Vert \pi_{j+1}(f)-\pi_{j}(f)\Vert_{\psi_2}. 
\end{align}
In a similar fashion, we can also state that with probability at least $1-ce^{-u/4}$,  
\begin{align}
\frac{\hat T_j}{\Vert\bxi\Vert_n}\le(\sqrt{u}+2^{k_{j+1}(f)/2})\frac{[2(1+\sqrt{2})+1]^{1/2}}{\sqrt{n}}\Vert\pi_{j+1}(f)-\pi_{j}(f)\Vert_{\psi_2},
\end{align}
so integrating the tail leads to
\begin{align}
\left\{\esp\left(\frac{\hat T_j}{\Vert\bxi\Vert_n}\right)^2\right\}^{1/2}\le c2^{k_{j+1}(f)/2}\frac{[2(1+\sqrt{2})+1]^{1/2}}{\sqrt{n}}\Vert\pi_{j+1}(f)-\pi_{j}(f)\Vert_{\psi_2},
\end{align}
and by H\"older's inequality,
\begin{align}
\probn\hat T_j\le \left\{\esp\Vert\bxi\Vert_n^2\right\}^{1/2}\left\{\esp\left(\frac{\hat T_j}{\Vert\bxi\Vert_n}\right)^2\right\}^{1/2}
\le c[2(1+\sqrt{2})+1]^{1/2}\frac{\Vert\xi\Vert_{\psi_2}}{\sqrt{n}}\frac{2^{k_{j+1}(f)/2}\Vert\pi_{j+1}(f)-\pi_{j}(f)\Vert_{\psi_2}}{\sqrt{n}}.
\end{align}
We thus conclude from \eqref{thm:mult:process:eq3} and a analogous reasoning to \eqref{thm:mult:process:sum:gamma1} and \eqref{thm:mult:process:sum:delta1} that 
\begin{align}
\left|\sum_{j:k_j(f)\in\calJ}[M(\pi_{j+1}(f))-M(\pi_{j}(f))]\right|&\le 
c_1^2\sqrt{v}\Vert\xi\Vert_{\psi_2}\sqrt{u}\frac{4\bar\Delta(F)}{\sqrt{n}}\\
&+\left(c_1^2\sqrt{v}+c_1\frac{c}{\sqrt{n}}\right)\Vert\xi\Vert_{\psi_2}(1+2\sqrt{2})\frac{\gamma_2(F)}{\sqrt{n}},
\end{align}
with $c_1:=[2(1+\sqrt{2})+1]^{1/2}$.
\end{description}

From the above bound, \eqref{thm:mult:process:subgaussian} and \eqref{thm:mult:process:eq:chain} we conclude that, for any $u\ge2$ and $v\ge1$, on the event $\Omega_{u,v}$ of probability at least $1-ce^{-u/4}-ce^{-nv}$, we have the bound stated in the theorem.
\end{proof}

\section{The Product Process}\label{s:product:process}
In this section we prove Theorem \ref{thm:product:process} --- restate it here in a more general form. We refer to notation and definitions in Section \ref{s:multiplier:process:main}.
\begin{theorem}[Product process]\label{thm:product:process:2}
Let $F,G$ be subclasses of $L_{\psi_2}$. For any $1\le p<\infty$,
\begin{align}
\Lpnorm{\sup_{(f,g)\in F\times G}|A(f,g)|}\lesssim\frac{\gamma_{2,p}(F)\gamma_{2,p}(G)}{n}
+\bar\Delta(F)\frac{\gamma_{2,p}(G)}{\sqrt{n}}+\bar\Delta(G)\frac{\gamma_{2,p}(F)}{\sqrt{n}}+\bar\Delta(F)
\bar\Delta(G)\left(\sqrt{\frac{p}{n}}+\frac{p}{n}\right).
\end{align}
In particular, there exist universal constants $c,C>0$, such that for all $n\ge1$ and $u\ge 1$, with probability at least $1-e^{-u}$,
\begin{align}
\sup_{(f,g)\in F\times G}\left|A(f,g)\right|&\le C\left[\frac{\gamma_{2}(F)\gamma_{2}(G)}{n}
+\bar\Delta(F)\frac{\gamma_{2}(G)}{\sqrt{n}}+\bar\Delta(G)\frac{\gamma_{2}(F)}{\sqrt{n}}\right]\\
&+c\sup_{(f,g)\in F\times G}\Vert fg-\probn fg\Vert_{\psi_1}\left(\sqrt{\frac{u}{n}}+\frac{u}{n}\right).
\end{align}
\end{theorem}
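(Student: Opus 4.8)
The plan is to adapt Dirksen's generic-chaining proof of the quadratic process inequality \cite{2015dirksen}, the crucial modification being that we chain over the \emph{product} index set $F\times G$ and exploit the bilinear identity
$$
fg-f'g'=(f-f')g+f'(g-g')
$$
together with the H\"older-type bound $\Vert fg\Vert_{\psi_1}\le\Vert f\Vert_{\psi_2}\Vert g\Vert_{\psi_2}$, which reduces the control of each chaining link to a single application of Bernstein's inequality for an empirical average of a $\psi_1$ random variable. First I would fix admissible sequences $(F_k)$ and $(G_k)$ that are (nearly) optimal for $\gamma_{2,p}(F)$ and $\gamma_{2,p}(G)$, set $\pi_k(f)$ and $\pi_k(g)$ to be nearest-point selections in $F_k$, $G_k$, and write the telescoping chain
$$
A(f,g)-A(\pi_0(f),\pi_0(g))=\sum_{k\ge1}\Big[A(\pi_k(f),\pi_k(g))-A(\pi_{k-1}(f),\pi_{k-1}(g))\Big].
$$
Each link decomposes, via $ab-a'b'=(a-a')b+a'(b-b')$, into a piece built from the $F$-increment $\Delta_k^F:=\pi_k(f)-\pi_{k-1}(f)$ times $\pi_k(g)$ and a symmetric piece; the product admissible sequence $(F_k\times G_k)$ has cardinality at most $2^{2^{k+1}}$, so after a harmless reindexing the union bounds over links at level $k$ fall under the peeling Lemma~\ref{lemma:union:bound:chaining:lazy}.

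The core of the argument is the two-regime split that already appears in Lemma~\ref{lemma:increment:bounds:mult:process}: along the \emph{subgaussian path} $\{k:2^{k/2}\le\sqrt{n}\}$ the Bernstein deviation $\sqrt{u/n}+u/n$ with $u\asymp 2^k$ reduces to $2^{k/2}/\sqrt{n}$, so the link contribution is of order
$$
\frac{2^{k/2}}{\sqrt{n}}\,\Vert\Delta_k^F\Vert_{\psi_2}\,\Vert\pi_k(g)\Vert_{\psi_2}+\frac{2^{k/2}}{\sqrt{n}}\,\Vert\pi_{k-1}(f)\Vert_{\psi_2}\,\Vert\Delta_k^G\Vert_{\psi_2},
$$
and, bounding $\Vert\pi_k(g)\Vert_{\psi_2}\le\bar\Delta(G)+\dist(g,G_{k-1})$ and summing the geometric weights, this produces exactly the mixed terms $\bar\Delta(F)\gamma_{2,p}(G)/\sqrt{n}+\bar\Delta(G)\gamma_{2,p}(F)/\sqrt{n}$ plus a cross term to be absorbed below. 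Along the \emph{subexponential path} $\{k:2^{k/2}>\sqrt{n}\}$ the term $u/n\asymp 2^k/n$ dominates; here, following Dirksen, I would pass to the empirical norm $\Vert\cdot\Vert_n$ for the ``slow'' factor (controlled by $\Vert\cdot\Vert_{\psi_2}$ up to the event of Lemma~\ref{lemma:increment:bounds:mult:process}) so that the link contribution is of order $(2^k/n)\Vert\Delta_k^F\Vert_{\psi_2}\Vert\pi_k(g)\Vert_n$, and the doubly-chained part $\sum 2^k\Vert\Delta_k^F\Vert_{\psi_2}\Vert\Delta_k^G\Vert_{\psi_2}/n$ telescopes to $\gamma_{2,p}(F)\gamma_{2,p}(G)/n$. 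Collecting the three regimes, the base point $A(\pi_0(f),\pi_0(g))$ (bounded directly by Bernstein with the $\psi_1$-diameter $\sup_{f,g}\Vert fg-\probn fg\Vert_{\psi_1}$ and the raw deviation $\sqrt{u/n}+u/n$), and integrating the resulting tail gives the $\Lpnorm{\cdot}$ statement; the high-probability form with $\gamma_2=\gamma_{2,1}$ then follows by taking $p=1$ and Talagrand's majorizing equivalence.

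The main obstacle is the bookkeeping of the ``slowly varying'' factors $\pi_k(g)$ and $\pi_{k-1}(f)$: unlike in the quadratic case one cannot reduce to a single class via the parallelogram law — this is precisely why $F\neq G$ is essential — so both chains must be advanced simultaneously, and the naive bound $\sum_k\Vert\Delta_k^F\Vert_{\psi_2}\Vert\pi_{k-1}(g)\Vert_{\psi_2}$ need not telescope unless the distances $\dist(f,F_k)$ decay geometrically. Handling this, either by a lazy-walk chain selection as in the proof of Theorem~\ref{thm:mult:process} or by absorbing the non-telescoping remainder into the $\gamma_{2,p}(F)\gamma_{2,p}(G)/n$ term along the subexponential path, together with checking that the product index sets remain admissible after reindexing, is the delicate point; everything else is a routine, if lengthy, concatenation of Bernstein estimates and geometric-series bounds.
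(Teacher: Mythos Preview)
Your architecture --- simultaneous chaining over $F\times G$, the bilinear split $fg-f'g'=(f-f')g+f'(g-g')$, the two-regime analysis, and the base point via Bernstein --- is the paper's approach. Two points, however, need correction.

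First, on the subexponential path you say the slow factor $\Vert\pi_k(g)\Vert_n$ is ``controlled by $\Vert\cdot\Vert_{\psi_2}$ up to the event of Lemma~\ref{lemma:increment:bounds:mult:process}''. This is the real gap: that lemma only bounds \emph{increments} $\Vert\pi_{k+1}(g)-\pi_k(g)\Vert_n$. To control $\sup_{k\ge k_0,\,f}\Vert\pi_k(f)\Vert_n$ uniformly one needs Dirksen's full quadratic-process inequality, which the paper invokes as Proposition~\ref{prop:quadratic:process} and which yields $\Vert\pi_k(f)\Vert_n\lesssim\bar\Delta(F)+\gamma_{2,p}(F)/\sqrt n$. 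The paper then applies Cauchy--Schwarz, $|\hat\probn\,\pi_k(f)[\Pi_{k+1}(g)-\Pi_k(g)]|\le\Vert\pi_k(f)\Vert_n\,\Vert\Pi_{k+1}(g)-\Pi_k(g)\Vert_n$, bounds the second factor by the increment lemma, and sums; the term $\gamma_{2,p}(F)\gamma_{2,p}(G)/n$ arises exactly as $(\gamma_{2,p}(F)/\sqrt n)\cdot(\gamma_{2,p}(G)/\sqrt n)$ from this step. Your ``doubly-chained part'' gestures at this but does not supply the uniform control; the expression $(2^k/n)\Vert\Delta_k^F\Vert_{\psi_2}\Vert\pi_k(g)\Vert_n$ you wrote does not match either a direct Bernstein or the Cauchy--Schwarz route, and summing a direct Bernstein bound $(2^k/n)\Vert\Delta_k^F\Vert_{\psi_2}\bar\Delta(G)$ over $k\ge k_0$ diverges.

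Second, your lazy-walk worry is imported from the multiplier-process proof and does not apply here. In the product process the Bernstein scaling is $v=2^ku$ (Lemma~\ref{lemma:increment:bounds}), so every subgaussian link already carries the weight $u\,2^{k/2}/\sqrt n$; since $\pi_k(g)\in G$ one has $\Vert\pi_k(g)\Vert_{\psi_2}\le\bar\Delta(G)$ outright (your extra $\dist(g,G_{k-1})$ is redundant and there is no ``cross term'' to absorb), and $\sum_k 2^{k/2}\Vert\Delta_k^F\Vert_{\psi_2}\lesssim\gamma_{2,p}(F)$ directly. The paper uses no lazy walk for this theorem.
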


Before proving the theorem we will need some auxiliary results. 
\begin{lemma}\label{lemma:increment:bounds}
Let $f,f'$ and $g,g'$ in $L_{\psi_2}$. 

If for $k\in\mbN$, $2^{k/2}\le\sqrt{n}$, then for any $u\ge1$, with probability at least $1-2\exp(-2^ku)$,
\begin{align}
|A(f,g)-A(f',g')|\le 2(1+\sqrt{2})\frac{u2^{k/2}}{\sqrt{n}}\Vert fg-f'g'\Vert_{\psi_1}.
\end{align}

If for $k\in\mbN$, $2^{k/2}\ge\sqrt{n}$, then for any $u\ge1$, with probability at least $1-2\exp(-2^ku)$,
\begin{align}
\Vert f-f'\Vert_{n}\le\sqrt{u}2^{k/2}\frac{[2(1+\sqrt{2})+1]^{1/2}}{\sqrt{n}}\dist(f,f').
\end{align}
\end{lemma}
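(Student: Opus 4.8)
The plan is to derive both inequalities directly from Bernstein's inequality for sub-exponential random variables, in the same spirit as the proof of Lemma~\ref{lemma:increment:bounds:mult:process}; the only work is calibrating the deviation parameter differently in the two regimes $2^{k/2}\le\sqrt n$ and $2^{k/2}\ge\sqrt n$.

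\textbf{First inequality ($2^{k/2}\le\sqrt n$).} First I would observe that $A(f,g)-A(f',g')=\hat\probn\big[h-\probn h\big]$ with $h:=fg-f'g'$, and that, by the H\"older-type inequality $\Vert fg\Vert_{\psi_1}\le\Vert f\Vert_{\psi_2}\Vert g\Vert_{\psi_2}$ and the triangle inequality, $h\in L_{\psi_1}$. Bernstein's inequality then yields, for every $v\ge0$, with probability at least $1-2e^{-v}$,
\[
|A(f,g)-A(f',g')|\le \Vert h-\probn h\Vert_{\psi_1}\Big(\sqrt{\tfrac{2v}{n}}+\tfrac{v}{n}\Big).
\]
Since $|\probn h|\le\esp|h|\le\Vert h\Vert_{\psi_1}$, centering at most doubles the norm, $\Vert h-\probn h\Vert_{\psi_1}\le 2\Vert h\Vert_{\psi_1}$. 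I then choose $v:=2^ku$. Because $2^{k/2}\le\sqrt n$ we have $2^k\le n$, hence $2^k/n=(2^{k/2}/\sqrt n)^2\le 2^{k/2}/\sqrt n$, so $v/n\le u\,2^{k/2}/\sqrt n$, while $\sqrt{2v/n}=\sqrt2\,\sqrt u\,2^{k/2}/\sqrt n\le\sqrt2\,u\,2^{k/2}/\sqrt n$ (using $\sqrt u\le u$ for $u\ge1$). Adding these gives the bracket $(1+\sqrt2)u\,2^{k/2}/\sqrt n$, and multiplying by $2\Vert h\Vert_{\psi_1}$ produces the stated constant $2(1+\sqrt2)$ with probability $1-2e^{-2^ku}$.

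\textbf{Second inequality ($2^{k/2}\ge\sqrt n$).} This is exactly the intermediate claim already proved inside the proof of Lemma~\ref{lemma:increment:bounds:mult:process}; I would reproduce it here. Write $\Vert f-f'\Vert_n^2=\hat\probn(f-f')^2$ and note $(f-f')^2\in L_{\psi_1}$ with $\Vert(f-f')^2\Vert_{\psi_1}=\Vert f-f'\Vert_{\psi_2}^2=\dist(f,f')^2$. Bernstein's inequality gives, for $v\ge1$, with probability at least $1-2e^{-nv}$,
\[
\big|\hat\probn[(f-f')^2-\probn(f-f')^2]\big|\le \Vert(f-f')^2-\probn(f-f')^2\Vert_{\psi_1}(\sqrt{2v}+v)\le 2(1+\sqrt2)\Vert(f-f')^2\Vert_{\psi_1}\,v,
\]
using $\sqrt{2v}\le\sqrt2\,v$, $v\le v$ and the doubling bound; also $\probn(f-f')^2\le\Vert(f-f')^2\Vert_{\psi_1}\le\Vert(f-f')^2\Vert_{\psi_1}v$. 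Adding, $\hat\probn(f-f')^2\le[2(1+\sqrt2)+1]\,\dist(f,f')^2\,v$. Since $2^{k/2}\ge\sqrt n$, the choice $v:=2^ku/n\ge1$ is admissible, $e^{-nv}=e^{-2^ku}$, and $\sqrt v=\sqrt u\,2^{k/2}/\sqrt n$; taking square roots yields the claimed bound with constant $[2(1+\sqrt2)+1]^{1/2}$.

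I do not expect a genuine obstacle: the entire content is these two applications of Bernstein's inequality plus the two regime-dependent calibrations $v=2^ku$ and $v=2^ku/n$, which mirror Dirksen's sub-gaussian/sub-exponential split already used for the multiplier process in Section~\ref{s:multiplier:process}. The only point requiring care is bookkeeping the numerical constants $2(1+\sqrt2)$ and $[2(1+\sqrt2)+1]^{1/2}$, which originate respectively from the centering-doubling step and from folding the mean $\probn(f-f')^2$ into the deviation bound.
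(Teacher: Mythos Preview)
Your proposal is correct and follows essentially the same route as the paper's own proof, which simply refers back to Lemma~\ref{lemma:increment:bounds:mult:process} and notes that the scaling $v:=2^ku$ (rather than $v:=2^k+u$) is used together with $\sqrt{2u}+u\le(\sqrt{2}+1)u$ for $u\ge1$. Your derivation of the constants via the centering-doubling step and the regime-dependent choices $v=2^ku$ and $v=2^ku/n$ matches the paper exactly.
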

\begin{proof}
The proof is as in the proof of Lemma \ref{lemma:increment:bounds:mult:process} using Bernstein's inequality. A minor difference is that it is used the larger scaling $v:=2^ku$ for all $u\ge1$ instead of $v:=2^k+u$ for all $u>0$. The fact that 
$\sqrt{2u}+u\le (\sqrt{2}+1)u$ for $u\ge1$ is used in the first inequality of the lemma. 
\end{proof}

As in the proof of Theorem \ref{thm:mult:process}, we combine Dirksen's method \cite{2015dirksen} with Talagrand's \cite{2014talagrand} ``lazy-walked'' chain. One difference now is that we will explicitly need Dirksen's bound for the quadratic process
$$
A(f):=\hat\probn(f^2-\probn f^2).
$$
The following proposition is a corollary of the proof of Theorem 5.5 in \cite{2015dirksen}.
\begin{proposition}[Dirksen \cite{2015dirksen}, Theorem 5.5]\label{prop:quadratic:process}
Let $F\subset L_{\psi_2}$. Given $1\le p<\infty$, set $\ell:=\lfloor\log_2p\rfloor$ and $k_0:=\min\{k>\ell:2^{k/2}>\sqrt{n}\}$. Let $(\calF_k)$ be an optimal admissible sequence for $\gamma_{2,p}(F,\dist)$ and, for any $f\in F$ and $k\in\mbN$, let $\pi_k(f)\in\argmin_{f'\in\calF_\ell}\dist(f,f')$.  Then there exists universal constant $c>0$ such that for all $n\in\mathbb{N}$ and $u\ge2$, with probability at least $1-ce^{-pu/4}$,
\begin{align}
\sup_{f\in F}\sup_{k\ge k_0}\left|A(\pi_{k}(f))\right|^{1/2}-\sup_{f\in F}\left|A(\pi_{k_0}(f))\right|^{1/2}&\le  
\sqrt{u}\left[25\frac{\gamma_{2,p}(F,\dist)}{\sqrt{n}}+\left(85	\frac{\bar\Delta(F)\gamma_{2,p}(F,\dist)}{\sqrt{n}}\right)^{1/2}\right].
\end{align}

Moreover, for all $n\in\mathbb{N}$ and $u\ge1$, with probability at least $1-ce^{-pu/4}$,
\begin{align}
\sup_{f\in F}\left|A(\pi_{k_0}(f))\right|^{1/2}\le\sqrt{u}[4(1+\sqrt{2})+2]^{1/2}\bar\Delta(F).
\end{align}
\end{proposition}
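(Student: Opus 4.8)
The plan is to prove Theorem~\ref{thm:product:process:2} by a direct two-parameter generic chaining on the product process $A(f,g)$ over $F\times G$, following the blueprint of the proof of Theorem~\ref{thm:mult:process}: Dirksen's splitting of the chain into a ``subgaussian path'' and a ``subexponential path'' at the threshold $k_0:=\min\{k>\ell:2^{k/2}>\sqrt n\}$, with $\ell:=\lfloor\log_2 p\rfloor$, combined with Talagrand's lazy-walked chain on the subexponential part. Crucially one does \emph{not} polarize via $fg=\tfrac14[(f+g)^2-(f-g)^2]$ --- as stressed in Remark~\ref{rem:multiplier:process:mendelson}, the bound genuinely requires $F\neq G$ --- so the chaining is run on $A(f,g)$ itself. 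First I would fix optimal admissible sequences for $\gamma_{2,p}(F)$ and $\gamma_{2,p}(G)$, pass to the cumulative versions $(\calF_k)$, $(\calG_k)$ (so $|\calF_k|,|\calG_k|\le 2^{2^k+1}$), build a simultaneous chain $\pi_k(f)\in\argmin_{\calF_k}\dist(f,\cdot)$, $\rho_k(g)\in\argmin_{\calG_k}\dist(g,\cdot)$ using the lazy-walk selection of the proof of Theorem~\ref{thm:mult:process} on the range $k\ge k_0$, and telescope $A(f,g)-A(\pi_0 f,\rho_0 g)$ along the links.

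The core is the increment decomposition. Writing $ab-a'b'=(a-a')(b-b')+(a-a')b'+a'(b-b')$ with $a=\pi_k f$, $a'=\pi_{k-1}f$, $b=\rho_k g$, $b'=\rho_{k-1}g$, each chain link splits into a product-of-increments term $A(\pi_k f-\pi_{k-1}f,\rho_k g-\rho_{k-1}g)$ and two increment-times-root terms $A(\pi_k f-\pi_{k-1}f,\rho_{k-1}g)$ and $A(\pi_{k-1}f,\rho_k g-\rho_{k-1}g)$. On the subgaussian path I would apply the first increment bound of Lemma~\ref{lemma:increment:bounds} together with $\|h_1 h_2\|_{\psi_1}\le\|h_1\|_{\psi_2}\|h_2\|_{\psi_2}$ --- at confidence scale $v=2^k+u$ for the two root terms, and at the ``doubled'' scale $v=2^k u$ for the product term so that the union over the $\lesssim 2^{4\cdot 2^k}$ admissible quadruples (handled by Lemma~\ref{lemma:union:bound:chaining:lazy}) can be afforded. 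Using $\|\pi_k f-\pi_{k-1}f\|_{\psi_2}\le 2\dist(f,\calF_{k-1})$, $\|\rho_{k-1}g\|_{\psi_2}\le 2\bar\Delta(G)$ and the standard bookkeeping $\sum_k 2^{k/2}\dist(f,\calF_k)\lesssim\gamma_{2,p}(F)$, the root terms sum to $\bar\Delta(G)\gamma_{2,p}(F)/\sqrt n+\bar\Delta(F)\gamma_{2,p}(G)/\sqrt n$, while the product term, after writing $\|h_1\|_{\psi_2}\|h_2\|_{\psi_2}=2^{-k}d_k^F d_k^G$ with $d_k^F:=2^{k/2}\|\pi_k f-\pi_{k-1}f\|_{\psi_2}$ and using $2^{k/2}\le\sqrt n$, sums to $(u/n)\sum_k d_k^F d_k^G\lesssim (u/n)\gamma_{2,p}(F)\gamma_{2,p}(G)$ plus a cross remainder of order $\sqrt{u/n}\,\bar\Delta(F)\gamma_{2,p}(G)$ that merges into the mixed terms. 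The root $A(\pi_0 f,\rho_0 g)$ is a single Bernstein fluctuation $\lesssim\|\pi_0 f\rho_0 g-\probn(\pi_0 f\rho_0 g)\|_{\psi_1}(\sqrt{u/n}+u/n)$; bounding it by $\sup_{(f,g)}\|fg-\probn fg\|_{\psi_1}(\sqrt{u/n}+u/n)$ gives the residual of the ``particular'' high-probability statement, and by $\bar\Delta(F)\bar\Delta(G)(\sqrt{p/n}+p/n)$ the residual of the $L_p$ statement.

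For the subexponential path $k\ge k_0$ the increment bound on $A$ is too weak, so I would instead control empirical $L_2$ norms: the second bound of Lemma~\ref{lemma:increment:bounds} gives $\|\pi_{j+1}f-\pi_j f\|_n$ and its $G$-analogue, and $|\hat\probn(h_1 h_2)-\probn(h_1 h_2)|\le\|h_1\|_n\|h_2\|_n+\|h_1\|_{L_2}\|h_2\|_{L_2}$ turns each link into a product of such norms. Because a link contains a ``root'' factor $\pi_{k-1}f$ (resp.\ $\rho_{k-1}g$), one needs $\sup_f\|\pi_k f\|_n$ uniformly; this is where Proposition~\ref{prop:quadratic:process} enters: $\|\pi_k f\|_n^2=\|\pi_k f\|_{L_2}^2+A(\pi_k f)$, and the proposition bounds $\sup_f|A(\pi_k f)|^{1/2}$ by $\sqrt u\big(\gamma_{2,p}(F)/\sqrt n+(\bar\Delta(F)\gamma_{2,p}(F)/\sqrt n)^{1/2}+\bar\Delta(F)\big)$, hence $\sup_f\|\pi_k f\|_n\lesssim\sqrt u\,(\bar\Delta(F)+\gamma_{2,p}(F)/\sqrt n)$, and likewise for $G$. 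Telescoping along the lazy walk (geometric decay of the chain diameters, $2^{k/2}\ge\sqrt n$) then produces subexponential contributions of the same orders as above. Collecting all pieces gives the stated $L_p$ bound; integrating tails (or keeping $u$ explicit) gives the high-probability version. I expect the main obstacle to be precisely the mechanism that forces the $\gamma_{2,p}(F)\gamma_{2,p}(G)$ term down to order $1/n$ rather than $1/\sqrt n$: running Bernstein at the doubled confidence scale for products of two level-$k$ increments and checking the enlarged $\sim 2^{4\cdot 2^k}$-fold union still closes via Lemma~\ref{lemma:union:bound:chaining:lazy}, and, on the subexponential path, invoking Proposition~\ref{prop:quadratic:process} to bound the empirical norms of the roots $\pi_k f$, $\rho_k g$ uniformly over the \emph{whole} classes --- a genuinely two-class phenomenon with no symmetric shortcut. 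The accompanying lazy-walk bookkeeping and constant-tracking are routine but must be done carefully to keep every constant absolute.
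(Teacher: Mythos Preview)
Your proposal targets the wrong statement. The quoted statement is Proposition~\ref{prop:quadratic:process}, Dirksen's bound for the \emph{quadratic} process $A(f)=\hat\probn(f^2-\probn f^2)$; the paper does not prove it but simply cites it as a corollary of Theorem~5.5 in \cite{2015dirksen}. What you have written is a proof sketch for Theorem~\ref{thm:product:process:2} (the product process), in which Proposition~\ref{prop:quadratic:process} is \emph{used} as a black box on the subexponential path. So there is nothing in your proposal that addresses the proposition itself.

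If your intent was really Theorem~\ref{thm:product:process:2}, then your outline is broadly in the right spirit but differs from the paper in two places, and one of your complications is unnecessary. First, the paper does \emph{not} run a lazy-walked chain for the product process; it uses the ordinary nearest-point selections $\pi_k(f)\in\calF_k$, $\Pi_k(g)\in\calG_k$ throughout. Second, on the subgaussian path the paper uses only the two-term split $fg-f'g'=(f-f')g+f'(g-g')$ inside the $\psi_1$ norm together with the first bound of Lemma~\ref{lemma:increment:bounds} at scale $2^ku$, which yields only the mixed terms $\bar\Delta(F)\gamma_{2,p}(G)/\sqrt n+\bar\Delta(G)\gamma_{2,p}(F)/\sqrt n$. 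There is no product-of-increments term and no need for a ``doubled confidence'' trick; your three-term decomposition and the associated union-bound gymnastics are avoidable. The $\gamma_{2,p}(F)\gamma_{2,p}(G)/n$ contribution in the paper comes entirely from the subexponential path: after Cauchy--Schwarz one bounds $\|\pi_k(f)\|_n$ via $A(\pi_k(f))$ and invokes Proposition~\ref{prop:quadratic:process}, whose right-hand side already contains $\gamma_{2,p}(F)/\sqrt n$; multiplying this by the summed increments $\sum_k 2^{k/2}\dist(\Pi_{k+1}(g),\Pi_k(g))/\sqrt n\lesssim\gamma_{2,p}(G)/\sqrt n$ produces the $1/n$ product term directly.
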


Finally, besides Lemma \ref{lemma:union:bound:chaining:lazy}, we need some additional lemmas from Dirksen \cite{2015dirksen}.

\begin{lemma}[Dirksen \cite{2015dirksen}, Lemma A.3]\label{lemma:dirksen:A3}
Fix $1\le p<\infty$, set $\ell:=\lfloor\log_2 p\rfloor$ and let $(X_t)_{t\in T}$ be a finite collection of real-valued random variables with $|T|\le2^{2^\ell}$. 

Then 
\begin{align}
\left(\esp\sup_{t\in T}|X_t|^p\right)^{1/p}\le
2\sup_{t\in T}(\esp|X_t|^p)^{1/p}.\label{lemma:dirksen:A3:moment}
\end{align}
\end{lemma}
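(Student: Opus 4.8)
\textbf{Proof proposal for Lemma~\ref{lemma:dirksen:A3:moment} (the moment comparison lemma).}

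The statement to prove is: for a finite index set $T$ with $|T|\le 2^{2^\ell}$ where $\ell=\lfloor\log_2 p\rfloor$, one has $\left(\esp\sup_{t\in T}|X_t|^p\right)^{1/p}\le 2\sup_{t\in T}(\esp|X_t|^p)^{1/p}$. The plan is the standard ``union bound inside the $L_p$ norm'' argument: bound the supremum over $T$ by the $\ell_p$-sum over $T$, pull the expectation inside the sum, and then absorb the cardinality factor $|T|^{1/p}\le 2$ using the hypothesis on $|T|$. This is a short, self-contained calculation requiring no probabilistic input beyond monotonicity and linearity of expectation.

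First I would write $\sup_{t\in T}|X_t|^p \le \sum_{t\in T}|X_t|^p$, which is valid pointwise since all terms are nonnegative. Taking expectations and using linearity gives
\begin{align}
\esp\sup_{t\in T}|X_t|^p \le \sum_{t\in T}\esp|X_t|^p \le |T|\cdot\sup_{t\in T}\esp|X_t|^p.
\end{align}
Raising both sides to the power $1/p$ yields
\begin{align}
\left(\esp\sup_{t\in T}|X_t|^p\right)^{1/p} \le |T|^{1/p}\left(\sup_{t\in T}\esp|X_t|^p\right)^{1/p} = |T|^{1/p}\sup_{t\in T}\left(\esp|X_t|^p\right)^{1/p},
\end{align}
where the last equality is just continuity/monotonicity of $x\mapsto x^{1/p}$ on $[0,\infty)$ together with the fact that a supremum commutes with a monotone transformation.

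It then remains to check that $|T|^{1/p}\le 2$. By hypothesis $|T|\le 2^{2^\ell}$ with $\ell=\lfloor\log_2 p\rfloor$, so $2^\ell\le p$ and hence $|T|\le 2^p$, giving $|T|^{1/p}\le 2$. Substituting this bound into the previous display completes the proof. I do not expect any genuine obstacle here: the only point that requires the slightest care is the exponent bookkeeping $2^\ell\le 2^{\log_2 p}=p$, which follows immediately from $\ell\le\log_2 p$ and monotonicity of $t\mapsto 2^t$; everything else is a one-line union bound. (If one wanted the version stated with a general collection rather than finitely many variables, a routine truncation/monotone-convergence step would extend it, but as worded the collection is already finite, so no such step is needed.)
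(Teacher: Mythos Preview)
Your proof is correct and is exactly the standard union-bound argument. The paper itself does not give a proof of this lemma; it is simply quoted from Dirksen's work, and your argument is the expected one-line computation behind that result.
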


\begin{lemma}[Dirksen \cite{2015dirksen}, Lemma A.5]\label{lemma:dirksen:A5}
Fix $1\le p<\infty$ and $0<\alpha<\infty$. Let $\gamma\ge0$ and suppose that $\xi$ is a positive random variable such that for some $c\ge1$ and $u_*>0$, for all $u\ge u_*$, 
\begin{align}
\prob(\xi>\gamma u)\le c\exp(-pu^\alpha/4).\label{lemma:dirksen:A5:tail}
\end{align}
Then, for a constant $\tilde c_\alpha>0$, depending only on $\alpha$, 
\begin{align}
(\esp\xi^p)^{1/p}\le\gamma(\tilde c_\alpha c+u_*).	
\end{align}
\end{lemma}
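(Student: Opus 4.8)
The plan is to prove Lemma~\ref{lemma:dirksen:A5} by a ``layer-cake plus change of variables'' computation, the only genuinely delicate point being that the multiplicative constant coming out of the resulting Gamma integral is controlled uniformly in $p$ by a quantity depending on $\alpha$ alone.

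First I would write, via the layer-cake formula and the substitution $s=t^p$,
\[
\esp\xi^p=\int_0^\infty\prob(\xi>t)\,pt^{p-1}\,dt,
\]
and then substitute $t=\gamma u$ (the case $\gamma=0$ being trivial), obtaining $\esp\xi^p=p\gamma^p\int_0^\infty u^{p-1}\prob(\xi>\gamma u)\,du$. I split this integral at $u_*$. On $[0,u_*]$ the trivial bound $\prob(\xi>\gamma u)\le1$ gives $p\gamma^p\int_0^{u_*}u^{p-1}\,du=\gamma^pu_*^p$. On $[u_*,\infty)$ I insert the hypothesis~\eqref{lemma:dirksen:A5:tail} and then enlarge the domain back to $[0,\infty)$ (all integrands being nonnegative), which yields
\[
p\gamma^p\int_{u_*}^\infty u^{p-1}\prob(\xi>\gamma u)\,du\le pc\,\gamma^p\int_0^\infty u^{p-1}e^{-pu^\alpha/4}\,du.
\]

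Next I would evaluate the last integral by the substitution $v=pu^\alpha/4$, i.e.\ $u=(4v/p)^{1/\alpha}$, $du=\tfrac1\alpha(4/p)^{1/\alpha}v^{1/\alpha-1}\,dv$, which turns it into $\tfrac1\alpha(4/p)^{p/\alpha}\Gamma(p/\alpha)$. Collecting the two pieces and using the elementary subadditivity $(a+b)^{1/p}\le a^{1/p}+b^{1/p}$ for $a,b\ge0,\ p\ge1$, together with $c^{1/p}\le c$ (since $c\ge1$), I arrive at
\[
(\esp\xi^p)^{1/p}\le\gamma u_*+\gamma c\Bigl[\tfrac{p}{\alpha}\Bigl(\tfrac4p\Bigr)^{p/\alpha}\Gamma\!\Bigl(\tfrac p\alpha\Bigr)\Bigr]^{1/p}.
\]
It then remains to show that the bracketed factor is bounded by some $\tilde c_\alpha$ uniformly in $p\ge1$; granting this and setting $\tilde c_\alpha$ accordingly gives exactly $(\esp\xi^p)^{1/p}\le\gamma(\tilde c_\alpha c+u_*)$.

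The main obstacle — and the step requiring care — is precisely this uniform-in-$p$ bound, since at first glance $\Gamma(p/\alpha)^{1/p}$ looks like it grows. The resolution is that the factor $(4/p)^{1/\alpha}$ produced by the change of variables exactly cancels the dominant $(p/\alpha)^{p/\alpha}$ growth of $\Gamma(p/\alpha)$: writing $\Gamma(x)=\Gamma(x+1)/x$ and invoking Stirling's bound $\Gamma(x+1)\le K\sqrt{x+1}\,((x+1)/e)^{x+1}$ (valid for all $x\ge0$, $K$ absolute), one finds that $\tfrac p\alpha(\tfrac4p)^{p/\alpha}\Gamma(\tfrac p\alpha)$ is at most a product of a $(p/\alpha)^{O(1)}$ factor, absolute constants, and a $(4/(\alpha e))^{p/\alpha}$-type factor, so that after taking the $p$-th root every surviving term is of the form $(p/\alpha)^{O(1/p)}$ (bounded over $p\ge1$) times a function of $\alpha$. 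Tracking constants — in particular noting $\sup_{p\ge1}p^{1/p}=e^{1/e}$ and $(1/\alpha)^{1/p}\le\max(1,1/\alpha)$ for $p\ge1$ — one obtains the desired $\tilde c_\alpha$ depending only on $\alpha$, which completes the proof.
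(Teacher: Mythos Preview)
Your proof is correct. The paper does not give its own proof of this lemma --- it is quoted verbatim from Dirksen's work and used as a black box --- so there is nothing in the paper to compare against. Your layer-cake plus Gamma-integral computation, with the Stirling bound to control $[\tfrac{p}{\alpha}(4/p)^{p/\alpha}\Gamma(p/\alpha)]^{1/p}$ uniformly in $p$, is the standard route to such moment-from-tail estimates and matches what one would expect Dirksen's original argument to look like.
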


\begin{lemma}[Dirksen \cite{2015dirksen}, Lemma A.2]\label{lemma:moments:p} 
Let $0<\alpha<\infty$. If a random variable $X$ satisfies, for some $a_1,a_2>0$,
\begin{align}
\prob(|X|\ge a_1u+a_2\sqrt{u})\le \exp(-u),
\end{align}
for all $u\ge0$, then, for absolute constant $C>0$, for all $p\ge1$, 
\begin{align}
(\esp|X|^p)^{1/p}
\le C(a_1p+a_2\sqrt{p}).
\end{align}
\end{lemma}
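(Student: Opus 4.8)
The plan is to derive the moment bound directly from the tail bound by the layer-cake (Fubini) formula, with no chaining or concentration machinery needed; the parameter $\alpha$ in the statement plays no role here (it is carried over from the adjacent Lemma \ref{lemma:dirksen:A5}) and can simply be dropped.

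\emph{Step 1: invert the tail.} The map $\phi(u):=a_1u+a_2\sqrt u$ is a continuous, strictly increasing bijection of $[0,\infty)$ onto $[0,\infty)$, so for every $s\ge0$ there is a unique $u_0=u_0(s)\ge0$ with $\phi(u_0(s))=s$, and the hypothesis then gives $\prob(|X|\ge s)\le e^{-u_0(s)}$. The elementary point is a lower bound on $u_0(s)$: distinguishing whether $a_1u_0(s)\ge a_2\sqrt{u_0(s)}$ or not yields $u_0(s)\ge\tfrac14\min\{s/a_1,\ s^2/a_2^2\}$, hence $\prob(|X|\ge s)\le e^{-s/(4a_1)}+e^{-s^2/(4a_2^2)}$ for all $s\ge0$.

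\emph{Step 2: integrate and take roots.} I would then write $\esp|X|^p=\int_0^\infty p\,s^{p-1}\,\prob(|X|>s)\,ds$ and insert the bound from Step 1, splitting into two Gamma-type integrals. The first is $\int_0^\infty p\,s^{p-1}e^{-s/(4a_1)}\,ds=(4a_1)^p\,\Gamma(p+1)$; the second, after the substitution $v=s^2/(4a_2^2)$, equals a universal constant times $(2a_2)^p\,p\,\Gamma(p/2)$. Stirling's bound, made uniform down to $p=1$ via $\Gamma(x+1)\le x^x$ for $x\ge1$ together with a crude estimate on the bounded range $1\le p\le2$, gives $\Gamma(p+1)^{1/p}\le Cp$ and $\Gamma(p/2)^{1/p}\le C\sqrt p$. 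Taking $p$-th roots and using $(A^p+B^p)^{1/p}\le A+B$ then produces $(\esp|X|^p)^{1/p}\le C_1a_1p+C_2a_2\sqrt p\le C(a_1p+a_2\sqrt p)$, which is the claim.

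I do not foresee a real obstacle. The only places requiring a little care are the two-case verification of $u_0(s)\ge\tfrac14\min\{s/a_1,\ s^2/a_2^2\}$ in Step 1, and keeping the Stirling estimates uniform all the way down to $p=1$ in Step 2; both are routine. An essentially equivalent alternative would be to change variables $s=\phi(u)$ directly inside the moment integral, bounding $ds=(a_1+\tfrac{a_2}{2\sqrt u})\,du$ and then the two resulting contributions separately, but the layer-cake split keeps the bookkeeping lightest, so that is the route I would take.
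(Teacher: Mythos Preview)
Your proposal is correct and is the standard layer-cake argument for converting a mixed subexponential--subgaussian tail into moment bounds. Note, however, that the paper does not give its own proof of this lemma: it is quoted verbatim from Dirksen \cite{2015dirksen}, Lemma~A.2, and used as a black box in the proof of Theorem~\ref{thm:product:process:2}, so there is no in-paper argument to compare against. Your derivation would serve as a self-contained replacement for that citation; the remark that the parameter $\alpha$ is vestigial here is also accurate.
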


\begin{proof}[Proof of Theorem \ref{thm:product:process:2}]
Let $(\calF_k)$ and $(\calG_k)$ be optimal admissible sequences for $\gamma_{2,p}(F)$  and 
$\gamma_{2,p}(G)$ respectively. Set $\ell:=\lfloor\log_2 p\rfloor$, $k_0:=\min\{k>\ell:2^{k/2}>\sqrt{n}\}$ and let us define 
$
\calI:=\{k\in\mbN:\ell<k<k_0\}
$
and
$
\calJ:=\{k\in\mbN:k\ge k_0\}
$. Given $(f,g)\in F\times G$, for any $k\in\mathbb{N}$, we take the usual selections
$
\pi_k(f)\in\argmin_{f'\in\calF_k}\dist(f,f'),
$
and 
$
\Pi_k(g)\in\argmin_{g'\in\calG_k}\dist(g,g').
$
For convenience, we also define 
$\calP_k(f,g):=A(\pi_{k}(f),\Pi_{k}(g))$ and 
$\mcP_k(f,g):=\pi_{k}(f)\Pi_{k}(g)$. Our proof will rely on the chain: 
\begin{align}
A(f,g)-A(\pi_\ell(f),\Pi_\ell(g))
&=\sum_{k\in\calJ}\left[\calP_{k+1}(f,g)-\calP_{k}(f,g)\right]
+\sum_{k\in\calI}\left[\calP_{k}(f,g)-\calP_{k-1}(f,g)\right],\label{thm:product:process:eq:chain}
\end{align}
where we have used that $\cup_{k\ge0}\calF_k\times\calG_k$ is dense on $F\times G$. 

Fix $u\ge2$. Given any $k\in\mathbb{N}$, define the event 
$\Omega_{k,\calI,u,p}$ for which, for all $f\in F$ and $g\in G$, we have
\begin{align}
|\calP_{k}(f,g)-\calP_{k-1}(f,g)|\le 2(1+\sqrt{2})u2^{k/2}\frac{\Vert\mcP_{k}(f,g)-\mcP_{k-1}(f,g)]\Vert_{\psi_1}}{\sqrt{n}}.\label{thm:product:process:eq1}
\end{align}
Define also the event $\Omega_{k,\calJ,u,p}$ for which, for all $f\in F$ and $g\in G$, we have both inequalities:
\begin{align}
\Vert \pi_{k+1}(f)-\pi_{k}(f)\Vert_{n}&\le\sqrt{u}2^{k/2}\frac{[2(1+\sqrt{2})+1]^{1/2}}{\sqrt{n}}\Vert\pi_{k+1}(f)-\pi_{k}(f)\Vert_{\psi_2},\\
\Vert \Pi_{k+1}(g)-\Pi_{k}(g)\Vert_{n}&\le\sqrt{u}2^{k/2}\frac{[2(1+\sqrt{2})+1]^{1/2}}{\sqrt{n}}\Vert\Pi_{k+1}(g)-\Pi_{k}(g)\Vert_{\psi_2}.\label{thm:product:process:eq2}
\end{align}
By an union bound over all possible 4-tuples 
$(\pi_{k-1}(f),\pi_{k}(f),\Pi_{k-1}(g),\Pi_{k}(g))$ we have that $|\Omega_{k,\calI,u,p}|\le|\calF_{k-1}||\calF_{k}||\calG_{k}||\calG_{k-1}|\le2^{2^{k+2}}$. If $\Omega_{\calI,u,p}:=\cap_{k\in\calI}\Omega_{k,\calI,u,p}$, the first bound on Lemma \ref{lemma:increment:bounds} for $k\in\calI$ and Lemma \ref{lemma:union:bound:chaining:lazy} (using that $k>\ell$ over $\calI$) imply that there is universal constant $c>0$
$$
\prob(\Omega_{\calI,u,p}^c)\le ce^{-pu/4}.
$$
Similarly, the second bound in Lemma \ref{lemma:increment:bounds} for $k\in\calJ$ and Lemma \ref{lemma:union:bound:chaining:lazy} (using that $k>\ell$ over $\calJ$) imply that for the event  
$\Omega_{\calJ,u,p}:=\cap_{k\in\calJ}\Omega_{k,\calJ,u,p}$, we have 
$$
\prob(\Omega_{\calJ,u,p}^c)\le ce^{-pu/4}.
$$
We now also define the event $\Omega_{u,p}$ as the intersection of $\Omega_{\calI,u,p}\cap\Omega_{\calJ,u,p}$ and the events for which both inequalities of Proposition \ref{prop:quadratic:process} hold for both classes $F$ and $G$. Clearly, by such proposition and the two previous displays we have 
$\prob(\Omega_{u,p}^c)\le ce^{-pu/4}$ from an union bound.

We next fix $u\ge2$ and assume that $\Omega_{u,p}$ always holds. We now bound the chain over $\calI$ and $\calJ$ separately. 

\begin{description}

\item[\textbf{Part 1}: \emph{The subgaussian path $\calI$}.] 
From \eqref{thm:product:process:eq1} and the inequality
\begin{align}
\Vert\mcP_{k}(f,g)-\mcP_{k-1}(f,g)\Vert_{\psi_1}
&\le\Vert\pi_{k}(f)-\pi_{k-1}(f)\Vert_{\psi_2}\Vert\Pi_{k}(g)\Vert_{\psi_2} 
+\Vert\pi_{k-1}(f)\Vert_{\psi_2} \Vert\Pi_{k}(g)-\Pi_{k-1}(g)\Vert_{\psi_2}\\ 
&\le \bar\Delta(G)[\dist(f,\pi_{k}(f))+\dist(f,\pi_{k-1}(f))]
+\bar\Delta(F)[\dist(g,\Pi_{k}(g))+\dist(g,\Pi_{k-1}(g))]
\end{align}
implying
\begin{align}
\left|\sum_{k\in\calI}[\calP_{k}(f,g)-\calP_{k-1}(f,g)]\right|&\le 
2(1+\sqrt{2})^2\frac{u}{\sqrt{n}}
\left[
\bar\Delta(F)\gamma_{2,p}(G)+\bar\Delta(G)\gamma_{2,p}(F)\right].	\label{thm:product:process:subgaussian}
\end{align}

\item[\textbf{Part 2}: \emph{The subexponential path $\calJ$}.]  
Note that $A(f,g)=\hat\probn fg-\probn(\hat\probn fg)$ and thus, by Jensen's and triangle inequalities,
\begin{align}
\left|\sum_{k\in\calJ}[\calP_{k+1}(f,g)-\calP_{k}(f,g)]\right|&\le
\left|\sum_{k\in\calJ}\hat\probn[\mcP_{k+1}(f,g)-\mcP_{k}(f,g)]\right|
+\left|\sum_{k\in\calJ}\probn\hat\probn[\mcP_{k+1}(f,g)-\mcP_{k}(f,g)]\right|\\
&\le\sum_{k\in\calJ}\hat\probn\left|\mcP_{k+1}(f,g)-\mcP_{k}(f,g)\right|
+\sum_{k\in\calJ}\probn\hat\probn\left|\mcP_{k+1}(f,g)-\mcP_{k}(f,g)\right|.\label{thm:product:process:eq3}
\end{align}
Let us denote $\hat T_k:=\hat\probn\left|\mcP_{k+1}(f,g)-\mcP_{k}(f,g)\right|$. We have the split
\begin{align}
\hat T_k&\le\left|\hat\probn\pi_{k}(f)[\Pi_{k+1}(g)-\Pi_{k}(g)]\right|
+\left|\hat\probn\Pi_{k+1}(g)[\pi_{k+1}(f)-\pi_{k}(f)]\right|.
\end{align}
By Cauchy-Schwarz,
\begin{align}
\left|\hat\probn\pi_{k}(f)[\Pi_{k+1}(g)-\Pi_{k}(g)]\right|
&\le\Vert\pi_{k}(f)\Vert_n\Vert\Pi_{k+1}(g)-\Pi_{k}(g)]\Vert_n\\
&\le\left[\{A(\pi_k(f))\}^{1/2}+\{\probn\pi_k^2(f)\}^{1/2}\right]\Vert\Pi_{k+1}(g)-\Pi_{k}(g)]\Vert_n,
\end{align}
which together with \eqref{thm:product:process:eq2}, bounds in Proposition \ref{prop:quadratic:process} and $\sqrt{u}\ge1$ give
\begin{align}
\left|\hat\probn\pi_{k}(f)[\Pi_{k+1}(g)-\Pi_{k}(g)]\right|&\le \frac{c_2u2^{k/2}}{\sqrt{n}}
\left[c_1\bar\Delta(F)+25\frac{\gamma_{2,p}(F)}{\sqrt{n}}
+\left\{\frac{85\bar\Delta(F)\gamma_{2,p}(F)}{\sqrt{n}}\right\}^{1/2}\right]\dist(\Pi_{k+1}(g),\Pi_{k}(g))\\
&\le c_2\frac{u2^{k/2}}{\sqrt{n}}\dist(\Pi_{k+1}(g),\Pi_{k}(g))
\left[c_3\bar\Delta(F)+c_4\frac{\gamma_{2,p}(F)}{\sqrt{n}}\right],
\end{align}
by Young's inequality and constants $c_1:=\{4(1+\sqrt{2}+2)\}^{1/2}+1$, $c_2:=\{2(1+\sqrt{2}+1)\}^{1/2}$, $c_3:=c_1+\frac{\sqrt{85}}{2}$ and $c_4:=25+\frac{\sqrt{85}}{2}$.
An identical bound gives 
\begin{align}
\left|\hat\probn\Pi_{k+1}(g)[\pi_{k+1}(f)-\pi_{k}(f)]\right|
&\le c_2\frac{u2^{k/2}}{\sqrt{n}}\dist(\pi_{k+1}(f),\pi_{k}(f))
\left[c_3\bar\Delta(G)+c_4\frac{\gamma_{2,p}(G)}{\sqrt{n}}\right].
\end{align}
We thus conclude that 
\begin{align}
\hat T_k&\le  c_2\frac{u2^{k/2}}{\sqrt{n}}\dist(\Pi_{k+1}(g),\Pi_{k}(g))
\left[c_3\bar\Delta(F)+c_4\frac{\gamma_{2,p}(F)}{\sqrt{n}}\right]\\
&+c_2\frac{u2^{k/2}}{\sqrt{n}}\dist(\pi_{k+1}(f),\pi_{k}(f))
\left[c_3\bar\Delta(G)+c_4\frac{\gamma_{2,p}(G)}{\sqrt{n}}\right].
\end{align}
Note that, in fact, we have proved that the above bound on $\hat T_k$ holds with probability at least $1-c\exp(-pu/4)$ for any $u\ge2$. Thus, from Lemma \ref{lemma:dirksen:A5} we have, for some universal constant $c_0>0$,  
\begin{align}
\probn\hat T_k&\le  c_0c_2\frac{u2^{k/2}}{\sqrt{n}}\dist(\Pi_{k+1}(g),\Pi_{k}(g))
\left[c_3\bar\Delta(F)+c_4\frac{\gamma_{2,p}(F)}{\sqrt{n}}\right]\\
&+c_0c_2\frac{u2^{k/2}}{\sqrt{n}}\dist(\pi_{k+1}(f),\pi_{k}(f))
\left[c_3\bar\Delta(G)+c_4\frac{\gamma_{2,p}(G)}{\sqrt{n}}\right].
\end{align}
Using the previous two bounds in \eqref{thm:product:process:eq3} gives, after using the triangle inequality for $\dist$, summing over $k\in\calJ$ and using the definition of $\gamma_{2,p}(F)$ and $\gamma_{2,p}(G)$ (recalling that $k>\ell$), 
\begin{align}
\left|\sum_{k\in\calJ}[\calP_{k+1}(f,g)-\calP_{k}(f,g)]\right|&\le (1+c_0)(1+2^{-1/2})c_2\frac{u}{\sqrt{n}}\gamma_{2,p}(G)
\left[c_3\bar\Delta(F)+c_4\frac{\gamma_{2,p}(F)}{\sqrt{n}}\right]\\
&+(1+c_0)(1+2^{-1/2})c_2\frac{u}{\sqrt{n}}\gamma_{2,p}(F)
\left[c_3\bar\Delta(G)+c_4\frac{\gamma_{2,p}(G)}{\sqrt{n}}\right].
\end{align}
\end{description}

From the above bound, \eqref{thm:product:process:subgaussian} and \eqref{thm:product:process:eq:chain} we conclude that, for any $u\ge2$, on the event $\Omega_{u,p}$ of probability at least $1-e^{-pu/4}$, we have 
\begin{align}
&\sup_{(f,g)\in F\times G}|A(f,g)|^{1/2}-\sup_{(f,g)\in F\times G}|A(\pi_\ell(f),\Pi_\ell(g))|^{1/2}\\
&\le \sqrt{u}\left[\frac{c_5}{\sqrt{n}}\left(\bar\Delta(F)\gamma_{2,p}(G)+\bar\Delta(G)\gamma_{2,p}(F)\right)+c_6\frac{\gamma_{2,p}(F)\gamma_{2,p}(G)}{n}\right]^{1/2},
\end{align}
with $c_5:=2(1+\sqrt{2}^2+(1+c_0)(1+2^{-1/2}))c_2c_3$ and
$c_6:=2(1+c_0)c_2c_4(1+2^{-1/2})$. This and Lemma \ref{lemma:dirksen:A5} (with $\alpha=2$) imply that 
\begin{align}
&\Lpnorm{\sup_{(f,g)\in F\times G}|A(f,g)|^{1/2}-\sup_{(f,g)\in F\times G}|A(\pi_\ell(f),\Pi_\ell(g))|^{1/2}}\\
&\le c\left[\frac{1}{\sqrt{n}}\left(\bar\Delta(F)\gamma_{2,p}(G)+\bar\Delta(G)\gamma_{2,p}(F)\right)+\frac{\gamma_{2,p}(F)\gamma_{2,p}(G)}{n}\right]^{1/2}.
\end{align}
We also have from Lemma \ref{lemma:dirksen:A3},
\begin{align}
\left(\esp\sup_{(f,g)\in F\times G}|A(\pi_\ell(f),\Pi_\ell(g))|^{p/2}\right)^{2/p}&\le4\sup_{(f,g)\in F\times G} \left(\esp|A(\pi_\ell(f),\Pi_\ell(g))|^{p/2}\right)^{2/p}\\
&\le c\sup_{(f,g)\in F\times G}\left[\Vert \mcP_\ell(f,g)-\probn\mcP_\ell(f,g)\Vert_{\psi_1}\left(\sqrt{\frac{p}{n}}+\frac{p}{n}\right)\right],
\end{align}
where the second inequality follows from Bernstein's inequality for $A(\pi_\ell(f),\Pi_\ell(g))$ and Lemma \ref{lemma:moments:p}. The two previous displays finish the proof. 
\end{proof}

\end{appendix}

\bibliographystyle{plain}

\end{document}